\documentclass[11pt]{amsart}
\usepackage{amssymb,amsfonts,dsfont,cancel}
\usepackage{graphicx}
\usepackage{amsmath}
\usepackage{latexsym}
\usepackage{color}
\usepackage{comment}
\usepackage[mathscr]{euscript}
\voffset-1.0 cm \hoffset -1.5 cm \textwidth 15 cm \textheight 23cm
\vfuzz2pt 
\hfuzz2pt 


\newtheorem{theorem}{Theorem}[section]
\theoremstyle{plain}
\newtheorem{lemma}[theorem]{Lemma}
\newtheorem{proposition}[theorem]{Proposition}

\newtheorem{corollary}[theorem]{Corollary}

\theoremstyle{remark}
\newtheorem{definition}[theorem]{Definition}
\newtheorem{example}[theorem]{Example}

\newtheorem{problem}{Problem}
\newtheorem{remark}[theorem]{Remark}

\numberwithin{equation}{section}
\newcommand{\ext}[1]{\textup{ext}\left(#1\right)}

\newcommand{\norms}[1]{\left|\!\left|\!\left|#1 \right|\!\right|\!\right|_E}
\newcommand{\normscomm}[1]{\left|\!\left|\!\left|#1 \right|\!\right|\!\right|_{E(\mathcal{M},\tau)}}
\headheight=12pt

\DeclareMathOperator{\supp}{supp}

\newcommand{\M}{\mathcal{M}}

\newcommand{\Complex}{\mathbb{C}}

\newcommand{\abs}[1]{\left\vert#1\right\vert}

\newcommand{\norm}[1]{\|#1\|}
\newcommand{\Ran}[1]{\text{Ran}\,#1}
\newcommand{\norme}[1]{\|#1\|_E}
\newcommand{\normf}[1]{\|#1\|_F}
\newcommand{\normcomm}[1]{\|#1\|_{E(\textit{M},\tau)}}
\newcommand{\Ker}[1]{\text{Ker}\,#1}
\newcommand{\Kerp}[1]{\text{Ker}^\perp#1}
\newcommand{\nonsp}{E(\mathcal{M},\tau)}
\newcommand{\Rep}[1]{\text{Re}\,#1}
\newcommand{\tr}{\textup{tr}}
\newcommand{\Imp}[1]{\text{Im}\,#1}

\def\underset#1\to#2{\mathop{#2}\limits_{#1}{ }}
\def\overset#1\to#2{\mathop{#2}\limits^{#1}{ }}

\newcommand{\one}{\textup{\textbf{1}}}
\newcommand{\tauone}{\tau(\textup{\textbf{1}})}
\newcommand{\Mtau}{\left(\mathcal{M},\tau\right)}

\begin{document}
\title[Geometry of   symmetric spaces of measurable operators]
{Geometric properties of noncommutative symmetric spaces of measurable operators and unitary matrix ideals}

\author{M.M. Czerwi\'nska}
\address{ Department of Mathematics and Statistics, University of North Florida, Jacksonville, FL 32224} \email{m.czerwinska@unf.edu}

\author
{A. Kami\'nska}
\address{Department of Mathematical Sciences, The University of
Memphis, Memphis, TN 38152} \email{kaminska@memphis.edu}

\thanks {\emph{2010 subject classification}\ {46B20, 46B28, 47L05,
47L20}}

\keywords{Symmetric spaces of measurable operators, unitary matrix spaces, rearrangement invariant spaces, $k$-extreme points, $k$-convexity, complex extreme points, complex convexity, monotonicity, (local) uniform (complex and real) convexity, $p$-convexity (concavity), (strong) smoothness, (strongly) exposed points, (uniform) Kadec-Klee properties, Banach-Saks properties, Radon-Nikod\'ym property, Krivine-Maurey stability }

\maketitle
\begin{abstract} This is a survey article  of geometric properties of  noncommutative symmetric spaces  of measurable operators $\nonsp$,  where $\M$ is a semifinite von Neumann algebra with a faithful, normal, semifinite trace $\tau$,  and $E$ is a symmetric function  space.  If $E\subset c_0$ is a symmetric sequence space then the analogous properties in the  unitary matrix ideals $C_E$ are also presented. In the preliminaries we  provide   basic  definitions and concepts illustrated by some examples and occasional proofs.  In particular we list and discuss  the properties of general singular value function, submajorization in the sense of Hardy, Littlewood and P\'olya, K\"othe duality, the spaces $L_p\Mtau$, $1\le p<\infty$,  the identification between $C_E$ and $G(B(H), \rm{tr})$ for some symmetric function space $G$, the commutative case when  $E$ is identified with $E(\mathcal{N}, \tau)$ for $\mathcal{N}$ isometric to $L_\infty$ with the standard integral trace,  trace preserving $*$-isomorphisms between $E$ and a $*$-subalgebra of $E\Mtau$, and a general method of removing  the assumption of non-atomicity of $\M$.
The main results on geometric properties are given in separate sections.  We present the results on (complex) extreme points, (complex) strict convexity, strong extreme points and midpoint local uniform convexity, $k$-extreme points and $k$-convexity,  (complex or local) uniform convexity, smoothness and strong smoothness, (strongly) exposed points, (uniform) Kadec-Klee properties, Banach-Saks properties, Radon-Nikod\'ym property and stability in the sense of Krivine-Maurey. We also state some open problems.

\end{abstract}

In 1937, John von Neumann  \cite{VN} pp. 205-218, observed that for a symmetric norm $\|\cdot\|$ in $\mathbb{R}^n$, it is possible to define a norm on the space of $n\times  n$ matrices $x$ by setting $\|x\| = \|\{s_i(x)\}_{i=1}^n\|$, where $s_i(x)$, $i=1,2\dots,n$, are eigenvalues of the matrix $|x| = (x^*x)^{1/2}$  ordered in a decreasing manner. Later on in the forties and fifties, J. von Neumann and R. Schatten developed analogous theory for infinite dimensional compact operators. They defined and studied  unitary matrix ideals $C_E$ corresponding to  a symmetric sequence Banach space $(E, \|\cdot\|_E)$. The space   consists of all compact operators $x$ on a Hilbert space such that $\{s_n(x)\}\subset E$ with the norm  $\|x\|=\|\{s_n(x)\}\|_E $, where $s_n(x)$, $n\in\mathbb{N}$, are singular numbers of $x$, that is  eigenvalues of $|x|$. For $E=\ell_1$, the space $C_E$ is called the trace class of operators or the space of nuclear operators, while if $E = \ell_2$ then it is called the class of Hilbert-Schmidt operators. The first monograph of these spaces was written by R. Schatten in 1960 \cite{Sch}, and later on in 1969 by I. C. Gohberg and M. G. Krein \cite{GK}. In 1967, C. McCarthy wrote an article on the now called the Schatten classes $C_p$, $0<p\le \infty$, that is the spaces $C_E$ when $E=\ell_p$, and showed among others that this space is uniformly convex for $1<p<\infty$ \cite{chmc}. 
The beginning of the theory of symmetric spaces of measurable operators can be traced back to the early fifties. It was then when I. Segal and J. Dixmier \cite{Seg, Dix} laid out the foundation for noncommutative $L_p\Mtau$ spaces, $0< p <\infty$,  by  introducing the concept of noncommutative integration in the  settings of semifinite von Neumann  algebras $\M$ with traces $\tau$. Inspired by their work,  V. Ov{\v{c}}innikov in 1970 studied interpolation theory in the context of measurable operators \cite{Ov1, Ov2}. In his work the emphasis was placed on the rearrangement invariant structure of the spaces. The symmetric structure of the spaces was induced by a singular value function,  the generalization of singular numbers of compact operators, and  the theory of symmetric spaces of measurable operators was initiated.  F. Yeadon continued the studies of symmetric spaces of measurable operators in articles \cite{Y1,Y2, Y3}. It is worth noting that the notion of the  singular value function of the measurable operator was introduced in a Bourbaki seminar note by Grothendieck \cite{Gro}.  In 1989, P. G.~Dodds, T. K.~Dodds and B.~de Pagter \cite{DDPnoncomm,DDPMark} presented a more general construction of symmetric spaces of measurable operaotrs $\nonsp$. In particular they used the notion of measurablility introduced by E. Nelson \cite{Nelson}, which is significantly broader  than the one applied  by V. Ov{\v{c}}innikov  and  F. Yeadon. In fact, Nelson's  notion of $\tau$-measurability of the closed operator affiliated with a semifinite von Neumann algebra with a normal, faithful, semifinite trace $\tau$ is equivalent with requiring the operator to possess an everywhere finite decreasing rearrangement.

 In the past several decades  the theory of the spaces of the measurable operators has been extensively studied  and applied. It has attracted great attention of the well known specialists in functional analysis and operator theory as J. Arazy, V. I. Chilin, P. G. Dodds, T. K. Dodds,  U. Haagerup, M. Junge, N. Kalton, F. Lust-Piquard, B. De Pagter, G. Pisier, F. Sukochev,   Q. Xu  \cite{A1981, DDPnoncomm, Hag, JP, KS, L-PS, X1989, SCH1990, DD1993}, and others. The non-commutative $L_p\Mtau$ spaces, and more general non-commutative spaces of measurable operators $E(\mathcal{M},\tau)$, share many properties with the usual $L_p$ spaces, or symmetric spaces $E$, but on the other hand they are very different. They provide interesting examples that cannot exist among the usual function or sequence spaces. They are also used as fundamental tools in some other areas of mathematics such as operator algebra theory, non-commutative geometry and non-commutative probability, as well as in mathematical physics. A very interesting survey by G. Pisier and Q. Xu \cite{PX} classifies the similarities and differences between the usual $L_p$ spaces and their non-commutative counterparts. P. Dodds, B. De Pagter and F. Sukochev are in the process of writing a monograph  on the spaces  $E(\mathcal{M},\tau)$ \cite{noncomm}. We wish to thank them for making the manuscript available to us, which has been  a great help in studies those spaces and in particular in preparation of  this survey article.   

In the early eighties J. Arazy was  the first who started to study    the geometric properties in noncommutative  matrix ideals $C_E$, making a substantial contribution in this subject. He related the properties of the symmetric sequence space $E$ to the corresponding properties of $C_E$.   His ideas influenced later V. Chilin, A. Krygin and F. Sukochev \cite{CKS1992ext,CKS1992}  and 
Q. Xu \cite{X1992}, who initiated investigation of the relation between the properties of the symmetric function space $E$ and the properties of $E\Mtau$.  
 
The purpose of the article is to collect and present  a number of results on geometric properties of the spaces $E\Mtau$ and $C_E$ which were published in various journals in the past several  decades.  Several well known and important  properties have been already  studied like different types of convexities, smoothness, $KK$-properties, Radon-Nikod\'ym property, stability. However there are still plenty  of them which have not been investigated. We hope that this article will serve not only  as a source of the known results and their references but also as a motivation for further  studies of new properties and their applications.  

 The article is divided into a number of topic sections.  Although the proofs of most statements are  not given, there are some  for which we present the proofs. In particular we give the detailed proofs in the section \ref{sec:symmfun} on symmetric function spaces, where we interpret  the spaces  $E\Mtau$ in the commutative case. It is crucial for the readers to understand this basic liaison. We also extend section \ref{sec:isom} on trace preserving $*$-isomorphisms, by some more specific results which are necessary for  detailed studies of local geometric  properties. We are trying to give  exact references  of any statement presented here in an effort to make this article clear, readable  and possible to follow by  novices in noncommutative theory of measurable operators.

The article is divided into the following sections.
\begin{itemize}
\item[(1)]  Preliminaries.
\item[(2)] Examples of symmetric spaces of measurable operators.
\item[(3)] Trace preserving isomorphisms.
\item[(4)] Non-atomic extension of $E\Mtau$.
\item[(5)] Extreme points and strict convexity.
\item[(6)] Strongly extreme points and midpoint local uniform convexity.
\item[(7)] $k$-extreme points and $k$-convexity.
\item[(8)] Complex extreme points and complex convexity.
\item[(9)] Complex local uniform convexity.
\item[(10)] $p$-convexity and $q$-concavity.
\item[(11)] Uniform and local uniform convexity.
\item[(12)] Complex uniform convexity.
\item[(13)] Smoothness.
\item[(14)] Strong smoothness.
\item[(15)]  Exposed and strongly exposed points.
\item[(16)] Kadec-Klee properties.
\item[(17)] Uniform Kadec-Klee property.
\item[(18)] Banach-Saks properties.
\item[(19)] Radon-Nikod\'ym property.
\item[(20)] Stability in the sense of Krivine-Maurey.

\end{itemize}

\section{Preliminaries}

 Let $\Complex$, $\mathbb{R}$ and $\mathbb{N}$ denote the complex, real and natural numbers, respectively. The set of non-negative real numbers will be denoted by $\mathbb{R}^+$.

Let $H$ be a complex Hilbert space, $B(H)$ the space of bounded linear operators from $H$ to $H$ and $\M\subset B(H)$ be a von Neumann algebra on a Hilbert space $H$. 

A closed and densely defined linear operator $x: D (x)\rightarrow H$, where the domain $D(x)$ is a linear subspace of $H$, is called \emph{self-adjoint}
if $x^*=x$ and \emph{normal} if $x^*x=xx^*$, meaning that the domains  of the operators on  both sides of the equations coincide.  If in addition $\langle x\xi,\xi\rangle\geq 0$ for all $\xi\in D(x)$ then $x$ is said to be a \emph{positive operator}.

Let $D$ be a non-empty subset of a partially ordered set $(X,\leq)$. If $\{x_{\alpha}\}\subset X$ is an increasing net and $x=\sup x_{\alpha}$ exists, then we write $x_{\alpha}\uparrow x$. Analogously $x_{\alpha}\downarrow x$ means that the net $\{x_{\alpha}\}\subset X$  is decreasing and    $x= \inf x_\alpha$.

 Let $\M^+$ be the space of all positive operators in $\M$.  
 The \emph{trace} $\tau$ on $\M$ is a map $\tau:\M^+\rightarrow [0,\infty]$, which satisfies the following properties.\\
(i) $\tau(x+y)=\tau(x)+\tau(y)$ for all $x,y\in \M^+$.\\
(ii) $\tau(\lambda x)=\lambda\tau(x)$ for all $x\in \M^+$ and $\lambda\in \mathbb{R}^+$.\\
(iii) $\tau(u^*xu)=\tau(x)$ whenever $x\in \M^+$ and $u$ is a unitary operator.

Moreover, the trace $\tau:\M^+\rightarrow [0,\infty]$ is called\\
(i') \emph{faithful} if $x\in\M^+$ and $\tau(x)=0$ imply that $x=0$,\\
(ii') \emph{semi-finite} if for every $x\in \M^+$ with $\tau(x)>0$ there exists $0\leq y\leq a$ such that $0<\tau(y)<\infty$,\\
(iii') \emph{normal} if $\tau(x_{\beta})\uparrow \tau(x)$ whenever $x_{\beta}\uparrow x$ in $\M^+$.

Let further $\M$ be a \emph{semifinite von Neumann algebra} that is a von Neumann algebra equipped with a semi-finite, faithful  and normal trace $\tau$ \cite{Takesaki}.

 If $x \in \M$ then $\|x\|_{\M}$ will stand for the operator norm in $B(H)$. We will denote by $\one$ the identity in $\M$ and by $P(\M)$ the complete space of all orthogonal projections in $\M$. The symbol $U(\M)$ will stand for the collection of  all unitary operators in $\M$. 
The von Neumann algebra $\M$ is called \textit{non-atomic} if it has no minimal orthogonal projections,  while  $\M$ is said to be \textit{atomic} if all minimal projections have equal positive trace. A projection $p \in P(\mathcal{M})$ is called \emph{$\sigma$-finite} (with respect to the trace $\tau$) if there exists a sequence $\{p_n\}$ in $P(\mathcal{M})$ such that $p_n\uparrow p$ and $\tau(p_n)<\infty$ for all $n\in\mathbb{N}$. If  the unit element $\one$ in $\mathcal{M}$ is $\sigma$-finite, then we say that the trace  $\tau$ on $\mathcal{M}$ is \emph{$\sigma$-finite.}

 Given a normal operator $x$, $e^x(\cdot)$ will denote its spectral measure,  that is a projection valued measure $e^x(A) \in P(\M)$ for all Borel sets $A\subset \mathbb{C}$, and such that $x=\int_{\mathbb\Complex}\lambda de^{x}(\lambda)$. If $x$ is a normal operator with the spectral measure $e^x(\cdot)$ and $f$ is a complex valued Borel function on $\Complex$,  then $f(x)$ is defined by $f(x)=\int_{\mathbb R}f(\lambda) de^{x}(\lambda)$.  For instance applying this formula we can define  a power $x^c$ for any $c\in \Complex$ of an operator $x$. The theory of the mappings $f\to f (x)$
is called the Borel functional calculus of the operator $x$.  For the theory of spectral measures and functional calculus we refer to \cite{KR, Takesaki}.   Every closed and densely defined linear operator $x$ can be written in the form $x=\Rep{x}+i\Imp{x}$, where its \textit{real part} $\Rep{x}=(x+x^*)/2$ and  \textit{imaginary part} $\Imp{x}=(x-x^*)/(2i)$ are  both self-adjoint operators. Moreover, the \textit{positive part} $x^+$ and  the  \textit{negative part} $x^-$ of a self-adjoint operator $x$ are both defined by $x^+=\int_0^\infty \lambda de^x(\lambda)$ and $x^-=\int_{-\infty}^0 \lambda de^x(\lambda)$, with $x=x^+-x^-$. Hence every closed and densely defined linear operator can be written as a linear combination of four positive operators. The range and kernel of a closed and densely defined linear operator $x$ are denoted by $\Ran{x}$ and $\Ker{x}$, respectively. The projection onto $\Ker{x}$ is called the \emph{null projection} of $x$ and is denoted by $n(x)$. The projection $s(x)=\one-n(x)$, which is the projection onto $\Kerp{x}=\overline{\Ran{x}}$, is called the \emph{support projection} of $x$. If $u\in B(H)$ satisfies $u^*u=uu^*=\one$, then $u$ is called a \emph{unitary operator}. Moreover, an operator $v\in B(H)$ is a \emph{partial isometry} if the restriction of $v$ to the orthogonal complement of its kernel is an isometry, that is $\|v(\xi)\|_{H}=\|\xi\|_{H}$ for all $\xi \in \Kerp{v}$.

If $x$ is closed and densely defined then $x^*x$ is self-adjoint and we define $|x|=\sqrt{x^*x}$.
Let us point out that in the case of operators the triangle inequality for absolute value does not hold in general. The following simple example of operators  $x$ and  $y$ given by matrices
 \[
 x=\begin{bmatrix}
 -1&0\\
 0&0
 \end{bmatrix}\quad\text{and}\quad 
  y=\frac12\begin{bmatrix}
 1&1\\
 1&1
 \end{bmatrix},
 \]
shows  that $|x + y| \nleq |x| +|y|$ \cite{AL1985}.
The analogue of the triangle inequality for operators states that for any two operators $x,y\in B(H)$ there exist unitary operators $u,v\in B(H)$ such that $|x+y| \le u|x|u^* + v|y|v^*$ \cite[Theorem 2.2]{AAP1982}.

Given a non-empty subset $S$ of $B (H)$, the \emph{commutant} $S'$ of $S$ is defined by $S'=\{x\in B(H):\,xy=yx\quad\text{for all }y\in S\}$. We say that a closed and densely defined operator $x$ is \emph{affiliated} with the von Neumann algebra $\M$, denoted  by $x\eta \M$, whenever $ux=xu$ for all unitary operators in the commutant $\M'$ of $\M$. The collection of all operators affiliated with $\M$ will be denoted by $\M^{\text{affil}}$. Since every bounded operator can be written as a linear combination of unitary operators,  $x\in \M^{\text{affil}}$ if and only if for every $y\in\M'$ and $\xi\in D(x)$ we have that $y(\xi)\in D(x)$ and $yx(\xi)=xy(\xi)$. Moreover, if  $x=u\abs{x}$ is the polar decomposition of a closed and densely defined operator $x$, then $x$ is affiliated with $\mathcal{M}$ if and only if $u\in\mathcal{M}$ and $\abs{x}$ is affiliated with $\mathcal{M}$ \cite{Takesaki}. We have then that $s(x)=u^*u=e^{\abs{x}}(0,\tauone)\in\M$ and $n(x)=\one-s(x)=e^{\abs{x}}\{0\}\in\M$.  A closed, densely defined operator $x$, affiliated with a semi-finite von Neumann algebra $\mathcal{M}$, is called $\tau$-\textit{measurable} if there exists $\lambda>0$ such that $\tau\left(e^{\abs{x}}(\lambda,\infty)\right)<\infty$.  The collection of all $\tau$-measurable operators will be denoted by $S\Mtau$. The set $S\Mtau$ is a $*$-algebra with respect to the sum and product defined as the closure of the algebraic sum and product, respectively. For every subset $X\subset S\Mtau$ we will denote  further the set of all positive elements of $X$ by $X^+$.  For $\epsilon, \delta>0$,  we define a neighborhood $V(\epsilon, \delta)$ of zero by setting
\[
V(\epsilon,\delta)=\{x\in S\Mtau:\,\tau(e^{\abs{x}}(\epsilon,\infty))\leq \delta\}.
\]
The collection of sets $V(\epsilon,\delta)$ forms a neighborhood base at zero for the metrizable Hausdorff topology $\mathscr{T}_m$ on $S\Mtau$, called the \emph{measure topology } on $S\Mtau$.  Equipped with this topology, $S\Mtau$ is a complete topological $*$-algebra. If a sequence $\{x_n\}\subset S\Mtau$ converges to $x\in S\Mtau$ with respect to $\mathscr{T}_m$, we will say that $x_n$ converges to $x$ in measure, and denote by $x_n\xrightarrow{\tau}x$. For more details and proofs  we refer readers to \cite{Nelson, Takesaki}. 

 For an operator $x\in S\Mtau  $ the distribution function $d(x)=d(\cdot, x):[0,\infty)\to[0,\infty]$ is given by
\[
d(t,x)=\tau(e^{|x|}(t,\infty)),\quad t\geq 0.
\] 
By the definition of $\tau$-measurability, $d(t,x)$ is finite for some $t\geq 0$. Moreover, $d(x)$ is decreasing, right-continuous and $\lim_{t\to\infty}d(t,x)=0$.  Note that in this paper the terms decreasing or increasing will always mean non-increasing or non-decreasing, respectively.

Given $x\in S\Mtau$,  the function $\mu(x)=\mu(\cdot,x):[0,\infty)\to[0,\infty]$ defined by
\[
\mu(t,x)=\inf\left\{s\geq 0: d(s,x)\leq t\right\},  \quad t\geq 0,
\]
 is called a \emph{decreasing rearrangement} of $x$ or a \emph{generalized singular value function} of $x$. It follows that $\mu(x)$ is  a decreasing and right-continuous function on $[0,\infty)$.
We will use the notation $\mu(\infty, x)=\lim_{t\to\infty}\mu(t,x)$. $S_0\Mtau$ will stand for the set of measurable operators $x\in S\Mtau$ for which $\mu(\infty, x)=0$.
  Observe that if $\tauone<\infty$ then $\mu(t,x)=0$ for all $t\geq \tauone$, and so $\mu(\infty, x)=0$. Using the definition of $\mu(x)$ it is easy to see that $\mu(t,x)=0$ for all $t\geq \tau(e^{|x|}(0,\infty))=\tau(s(x))$. Since $d(x)$ is right continuous, we also have that $\mu(t,x)>0$ for all $0\le t<\tau(s(x))$. Hence $\tau(s(x))=m(\supp \mu(x))$.  If $x$ is bounded, then $\mu(0,x)=\|x\|_{\M}$, and if $x$ is unbounded then $\mu(0,x)=\infty$ \cite[Lemma 2.5 (i)]{Fack-Kos1986}.
 
The trace $\tau$ on $\mathcal{M}^{+}$ extends uniquely to the functional $\tilde{\tau}: S\Mtau^{+}\to[0,\infty]$ given by $\tilde{\tau}(x)=\int_0^{\infty}\mu(x)$, $x\in S\Mtau^+$ \cite[Proposition 3.9]{DDP4}.  This extension satisfies all conditions (i) - (iii)  stated in the definition of the trace as well as all properties (i') - (iii'). It will be also denoted by $\tau$. 

It is worth to note that the sets $V(\epsilon,\delta)$  take the form $V(\epsilon,\delta)=\{x\in S\Mtau:\, \mu(\delta,x)<\epsilon\}$. Hence $x_n\xrightarrow{\tau} x$ is equivalent to $\mu(\delta, x_n-x)\to 0$ for every $\delta > 0$ \cite[Lemma 3.1]{Fack-Kos1986}.
 
Below there is a list of some basic properties of the singular value function.
\begin{lemma} For $x,y \in S\Mtau$ the following is satisfied.
\label{lm:singfun}
\begin{itemize}
\item[(1)]  If $u,v\in \M$ then $\mu( uxv)\leq \|u\|_{\M}\|v\|_{\M}\mu(x)$.
\item[(2)] $\mu(\abs{x})=\mu(x)=\mu(x^*)$ and $\mu( \alpha x)=\abs{\alpha}\mu(x)$, $\alpha \in \Complex$. 
\item[(3)]  For $0\leq x\leq y$, $\mu(t,x)\leq \mu(t,y)$ for every $t\ge 0$.
\item[(4)]    $\mu(t_1+t_2,x+y)\leq \mu(t_1,x)+\mu(t_2,y)$, $t_1, t_2\ge 0$.
\item[(5)] $\mu(f(|x|)=f(\mu(x))$ for any continuous increasing function $f$ on $[0,\infty)$ with $f(0)\geq 0$.
\item[(6)] \cite[Proposition 1.1]{czer-kam2010} If $x\in S\Mtau $ and $\abs{x}\ge\mu(\infty,x)s(x)$ then $\mu(\abs{x}-\mu(\infty,x)s(x))=\mu(x)-\mu(\infty,x)$.
\item[(7)]\cite{noncomm} If $s\geq 0$ and $p=e^{\abs{x}}(s,\infty)$ then $\mu(\abs{x}p)=\mu(x)\chi_{[0,\tau(p))}$.
\item[(8)]\cite[Corollary 1.6]{czer-kam2015} Let $x\in S\Mtau$ and $p\in \mathcal{P}(\M)$. If $px=xp=0$ and $0\leq C\leq \mu(\infty,x)$ then $\mu(x+Cp)=\mu(x)$.
\end{itemize}
\end{lemma}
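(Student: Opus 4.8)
\emph{Overall plan.} I would reduce every item to a statement about the distribution function $d(\cdot,x)=\tau(e^{|x|}(\cdot,\infty))$, using the mutual inversion $\mu(t,x)=\inf\{s\ge 0:d(s,x)\le t\}$ together with two elementary facts kept in hand throughout: (a) $s\mapsto d(s,x)$ is decreasing and right-continuous with $d(s,x)=+\infty$ for every $s<\mu(\infty,x)$; (b) the trace is monotone on $P(\M)$, invariant under Murray--von Neumann equivalence, and $\tau(p\vee q)\le\tau(p)+\tau(q)$. Items (1)--(5) are the classical generalized $s$-number calculus of Fack and Kosaki \cite{Fack-Kos1986} (see also \cite{noncomm}), so I would only recall the mechanisms; items (6)--(8) are spectral-shift computations, proved in \cite{czer-kam2010}, \cite{noncomm} and \cite{czer-kam2015}, which I would redo directly.

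\emph{Items (1)--(5).} For (2): $\mu(|x|)=\mu(x)$ is immediate from the definition of $d$, and $\mu(x)=\mu(x^*)$ follows from the polar decomposition $x=u|x|$, which gives $|x^*|=u|x|u^*$ and $e^{|x^*|}(s,\infty)=u\,e^{|x|}(s,\infty)\,u^*$ with $e^{|x|}(s,\infty)\le u^*u$ for $s>0$, so $d(s,x^*)=d(s,x)$ for $s>0$ (by (b)) and then for all $s$ by right-continuity; scaling is clear. For (3): a comparison of ranges of spectral projections shows $0\le x\le y\Rightarrow d(s,x)\le d(s,y)$, and one passes to infima. For (5): monotonicity and continuity of $f$ give $e^{f(|x|)}(f(s),\infty)=e^{|x|}(s,\infty)$, hence $d(f(s),f(|x|))=d(s,x)$, and composing infima yields $f(\mu(t,x))=\mu(t,f(|x|))$. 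For (1): if $\|u\|_{\M},\|v\|_{\M}\le 1$ then $|ux|^2=x^*u^*ux\le x^*x=|x|^2$, so $|ux|\le|x|$ and $\mu(ux)\le\mu(x)$ by (2)--(3); together with $\mu(xv)=\mu((xv)^*)=\mu(v^*x^*)\le\mu(x^*)=\mu(x)$ this gives $\mu(uxv)\le\mu(x)$, and the general bound follows by rescaling. The one needing real input is (4): using the operator triangle inequality $|x+y|\le a|x|a^*+b|y|b^*$ with unitaries $a,b$ (quoted above from \cite{AAP1982}) together with (1) and (3) reduces it to the subadditivity $\mu(t_1+t_2,p+q)\le\mu(t_1,p)+\mu(t_2,q)$ for $p,q\ge 0$, which I would obtain from the compression description $\mu(t,z)=\inf\{\|zg\|_{\M}:g\in P(\M),\ \tau(\one-g)\le t\}$ by choosing near-optimal projections $e,f$ for $p$ at level $t_1$ and $q$ at level $t_2$ and testing with $g=e\wedge f$, noting $\tau(\one-g)\le t_1+t_2$ and $\|(p+q)g\|_{\M}\le\|pe\|_{\M}+\|qf\|_{\M}$.

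\emph{Items (6)--(8).} Each is a computation of $e^{|z|}(\cdot,\infty)$ followed by the infimum formula, the recurring subtlety being fact (a). For (6), set $c=\mu(\infty,x)$; the hypothesis $|x|\ge c\,s(x)$ forces $e^{|x|}(0,c)=0$, so $e^{|x|}$ lives on $\{0\}\cup[c,\infty)$, whence $e^{|x|-c\,s(x)}(t,\infty)=e^{|x|}(t+c,\infty)$ and $d(t,|x|-c\,s(x))=d(t+c,x)$; since $d(s,x)=+\infty$ for $s<c$, the set $\{s:d(s,x)\le t\}$ lies in $[c,\infty)$ and the infimum formula gives $\mu(|x|-c\,s(x))=\mu(x)-c$. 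For (7), with $p=e^{|x|}(s,\infty)$ one has $|x|p=p|x|=\int_{(s,\infty)}\lambda\,de^{|x|}(\lambda)$, so $d(t,|x|p)=d(t,x)$ for $t\ge s$ and $d(t,|x|p)=\tau(p)$ for $0\le t<s$; since $d(s,x)=\tau(p)$ and $d$ is decreasing, this coincides with the distribution function of $\mu(x)\chi_{[0,\tau(p))}$, and the latter, being decreasing and right-continuous, equals its own rearrangement, giving $\mu(|x|p)=\mu(x)\chi_{[0,\tau(p))}$. For (8), $px=xp=0$ yields $\operatorname{ran}p\subseteq\ker x$, hence $p\le n(x)$, the cross terms in $(x+Cp)^*(x+Cp)=|x|^2+C^2p$ vanish, $p$ is orthogonal to $|x|$, and $|x+Cp|=|x|+Cp$; then $d(t,|x|+Cp)=d(t,x)$ for $t\ge C$ and $=d(t,x)+\tau(p)$ for $0\le t<C$, but $C\le\mu(\infty,x)$ forces $d(t,x)=+\infty$ on $[0,C)$, so the extra mass $\tau(p)$ never interferes with the infimum and one concludes $\mu(x+Cp)=\mu(|x|+Cp)=\mu(x)$.

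\emph{Main obstacle.} Almost everything is routine spectral bookkeeping once this picture is fixed; the only step with genuine content is the subadditivity in (4), which really needs the operator triangle inequality and a lattice/trace argument (and within the survey (1)--(5) are simply quoted from \cite{Fack-Kos1986} and \cite{noncomm}, items (6)--(8) from \cite{czer-kam2010}, \cite{noncomm} and \cite{czer-kam2015}). The pitfall to watch for is forgetting that $d(\cdot,x)$ is infinite below $\mu(\infty,x)$ --- precisely the point on which the hypotheses of (6) and (8) hinge --- and, in (2), remembering that $\tau(wpw^*)=\tau(p)$ requires $p\le w^*w$ for the partial isometry $w$ of the polar decomposition.
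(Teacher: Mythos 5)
Your proposal is correct, and it is worth noting that the paper itself proves almost nothing here: items (1)--(5) are quoted from \cite[Lemma 2.5]{Fack-Kos1986} (or \cite{LSZ2013}), items (6) and (8) are cited to \cite[Proposition 1.1]{czer-kam2010} and \cite[Corollary 1.6]{czer-kam2015}, and only (7) gets a one-line hint. For (6)--(8) your spectral computations are sound and in the same spirit as those sources: (6) correctly combines $e^{\abs{x}}(0,\mu(\infty,x))=0$, the shift $d(t,\abs{x}-\mu(\infty,x)s(x))=d(t+\mu(\infty,x),x)$, and the fact that $d(\cdot,x)=\infty$ below $\mu(\infty,x)$; (7) is exactly the paper's hint carried to completion, namely $\abs{x}p=h(\abs{x})$ with $h(\lambda)=\lambda\chi_{(s,\infty)}(\lambda)$ (the paper's ``$f(t)=\chi_{(s,\infty)}(t)$'' is a typo), followed by matching distribution functions with $\mu(x)\chi_{[0,\tau(p))}$; (8) correctly reduces to $\abs{x+Cp}=\abs{x}+Cp$ and the same infinite-distribution observation, using $C\le\mu(\infty,x)$. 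Two remarks on the quoted items. In (4) your detour is both unnecessary and, as written, not quite applicable: the inf-over-projections argument you describe for positive operators uses nothing about positivity (the inequality $\|(x+y)g\|_{\M}\le\|xg\|_{\M}+\|yg\|_{\M}$ is just the norm triangle inequality), so it proves (4) directly for arbitrary $x,y\in S\Mtau$ --- essentially the argument in \cite{Fack-Kos1986} --- whereas the operator triangle inequality you invoke is quoted from \cite{AAP1982} for bounded operators with unitaries in $B(H)$, and for unbounded $\tau$-measurable operators you would need the affiliated version with (partial) isometries in $\M$ before (1) and (3) could be applied; so (4) does not ``really need'' that inequality. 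In (5), the identity $e^{f(\abs{x})}(f(s),\infty)=e^{\abs{x}}(s,\infty)$ requires $f$ strictly increasing (with the paper's convention that increasing means non-decreasing it can fail, e.g.\ when $f$ is constant on an interval), although the stated conclusion survives with a modified argument. Since (1)--(5) are in any case cited, neither point is a genuine gap in your treatment of the lemma as the paper uses it.
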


The proof of items (1)-(5) can be found in  \cite[Lemma 2.5]{Fack-Kos1986} or \cite{LSZ2013}. Property (7) follows by the fact that $\abs{x}p=f(|x|)$, where $f(t)=\chi_{(s,\infty)}(t)$, and so $d(\lambda, \abs{x}p)=\tau(e^{f(\abs x)}(\lambda,\infty))=\tau(f^{-1}(e^{\abs x}(\lambda,\infty))$ for every $\lambda\geq 0$.

Let $I=[0,\alpha)$,  $0<\alpha\leq\infty$  or $I=\mathbb{N}$. Let $L^0=L^0[0,\alpha)$ stand for the space of all complex-valued Lebesgue measurable functions
on $[0, \alpha)$ with identification a.e. with respect to the Lebesgue measure $m$.  
Given $f\in L^0$, the \emph{distribution function} $d(f)$ of $f$ is defined by $d(\lambda,f)=m\{t > 0:\,\abs{f (t)} > \lambda\}$, for all $\lambda\geq0$. The \emph{decreasing rearrangement} of $f$ is given by $\mu(t,f) = \inf\{s > 0:\,d(s,f)\leq t\}$,  $t \geq 0$. We set $\mu(\infty, f)=\lim_{t\to\infty}\mu(t,f)$. Observe that $d(f)=d(\cdot,f)$ and $\mu(f)=\mu(\cdot,f)$ are right-continuous, decreasing functions on $[0,\infty)$. In the case of the discrete measure,
$\ell^0 = L^0(\mathbb N)$ denotes the collection of all complex valued sequences. Then  for $f=\{f(n)\} =  \{f(n)\}_{n=1}^\infty \in \ell^0$  with $\lim_n f(n) =0$,  $\mu(t,f)$ is a  finite and countably valued function on $[0,\infty)$. In this case we will identify its decreasing rearrangement $\mu(f)$ with the sequence $\left\{\mu(n-1, f)\right\}_{n=1}^\infty$.

A \emph{support} of $f\in L^0(I)$, that is the set $\{t\in I:\,f(t)\neq 0\}$ will be denoted by $\supp f$.  Moreover for $f,g\in L^0(I)$, we say that $f$  is \emph{submajorized} by $g$, in the sense of Hardy, Littlewood and P\'olya, and we write $f\prec g$ if $\int_0^t \mu(f)\leq \int_0^t\mu(g)$ for all $t\ge 0$. Observe that if $I=\mathbb N$ then $f\prec g$ means that $\sum_{i=1}^n \mu(i-1,f)\leq \sum_{i=1}^n \mu(i-1,g)$ for every $n\in\mathbb{N}$.  For operators $x,y\in S\Mtau$, $x\prec y$  denotes $\mu(x)\prec \mu(y)$.  We have that $\mu(x+y)\prec \mu(x)+\mu(y)$ \cite[Theorem 4.3 (iii)]{Fack-Kos1986} and $\mu(xy)\prec \mu(x)\mu(y)$ \cite[Theorem 4.2 (iii)]{Fack-Kos1986}.

Any Banach space $F=F(I)\subset L^0(I)$, where either  $I= [0,\alpha)$, $0< \alpha \le \infty$, or $I=\mathbb{N}$, with the norm $\normf{\cdot}$ satisfying the condition that $f\in F$ and $\normf{f}\leq\normf{g}$ whenever $0\leq f\leq g$, $f\in L^0(I)$ and $g\in F$, is a \emph{Banach function, or sequence space}, respectively. An element $f\in F$ is called \emph{order continuous} if for every $0\leq f_n\leq\abs{f}$ such that $f_n\downarrow 0$ a.e. it holds $\|f_n\|_F\downarrow 0$. By $F_a$ we will denote the set of all order continuous elements of $F$. We say that   $F$ is \emph{order continuous} if $F=F_a$.
The space $F$ is said to have the \emph{Fatou property} if for any non-negative sequence $\{f_n\}\subset F$ with $\sup_n\|f_n\|_F<\infty$,  $f\in L^0$ and $f_n\uparrow f$ a.e. we have that $f\in F$ and $\|f_n\|_F\uparrow\|f\|_F$. The space $F^\times = F^\times (I)$ is called a  K\"{o}the dual of $F$ and is defined as
\[
F^\times = \left\{f\in L^0(I): \int_I f g < \infty \ \ \text{for all} \ g\in F\right\}.
\]
The space $F^\times$ equipped with the norm 
\[
\|g\|_{F^\times} = \sup\left\{\int_I fg: \ \|g\|_F \le 1\right\}, \ \ \ g\in F^\times,
\]
is a Banach (function or sequence) space satisfying the Fatou property. It is well known that $F=F^{\times\times}$ if and only if $F$ has the Fatou property \cite{BS, Z}.

 \begin{proposition}\label{prop:infinity} \cite[Theorem 14.9]{AB}

Let $F$ be a Banach (function or sequence) space. Then the following statements are equivalent.
\begin{itemize}
\item[(i)] $F$ is order continuous.
\item[(ii)] There is no subspace of $F$ isomorphic to $\ell_\infty$.
\item[(iii)] There is no subspace of $F$ order isomorphic to $\ell_\infty$.
\item[(iv)] $F$ is separable. 
\end{itemize}
\end{proposition}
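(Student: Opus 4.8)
The plan is to establish the cycle $(\mathrm{i})\Rightarrow(\mathrm{iv})\Rightarrow(\mathrm{ii})\Rightarrow(\mathrm{iii})\Rightarrow(\mathrm{i})$. Two links are immediate. For $(\mathrm{iv})\Rightarrow(\mathrm{ii})$: every subspace of a separable metric space is separable, whereas $\ell_\infty$ is not separable, so a separable $F$ can contain no isomorphic copy of $\ell_\infty$. For $(\mathrm{ii})\Rightarrow(\mathrm{iii})$: a subspace order isomorphic to $\ell_\infty$ is a fortiori isomorphic to $\ell_\infty$, so the nonexistence of the latter forces the nonexistence of the former. Thus the content is in the two remaining arrows.

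For $(\mathrm{i})\Rightarrow(\mathrm{iv})$ I would show that an order continuous $F$ is separable by exhibiting an explicit countable dense set: the rational simple functions (finite linear combinations with coefficients in $\mathbb{Q}+i\mathbb{Q}$ of characteristic functions of sets drawn from a fixed countable algebra generating the Borel $\sigma$-algebra of $I$) that happen to lie in $F$. Given $f\in F$ one first truncates in value and on a set of finite measure: with $A_k\uparrow I$, $m(A_k)<\infty$, set $f_k=\sign(f)\,\min(|f|,k)\,\chi_{A_k}$, so that $|f-f_k|=|f|-\min(|f|,k)\chi_{A_k}\downarrow 0$ a.e.\ with $|f-f_k|\le|f|\in F$; since $F=F_a$, order continuity gives $\normf{f-f_k}\to 0$. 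Each $f_k$ is bounded and supported on a set over which $\chi$ already belongs to $F$ (because $\chi_{\{|f|>c\}}\le |f|/c\in F$), and is then approximated in $F$ by rational simple functions, the $F$-norm of the error being controlled via order continuity; in the sequence case one approximates directly by finitely supported rational sequences. This is routine once the reduction steps are in place.

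The heart of the proposition is $(\mathrm{iii})\Rightarrow(\mathrm{i})$, which I would prove contrapositively: assuming $F$ is \emph{not} order continuous, I will build a closed subspace of $F$ order isomorphic to $\ell_\infty$. Non--order continuity yields $f\in F^{+}$ and $0\le f_n\le f$ with $f_n\downarrow 0$ a.e., and since $(\normf{f_n})$ is then decreasing we may assume $\normf{f_n}\ge\delta>0$ for all $n$. The \textbf{key lemma} is a disjointification: there is a sequence $(v_n)\subset F^{+}$ with pairwise disjoint supports, $v_n\le f$ for every $n$, and $\normf{v_n}\ge\delta'>0$. Granting it, define $T\colon\ell_\infty\to F$ by letting $Ta$ be the a.e.\ pointwise sum $\sum_n a_n v_n$: this is well defined by disjointness of supports, $|Ta|\le\|a\|_\infty f$, so $Ta\in F$ because $F$ is an order ideal in $L^0(I)$; $T$ is linear, $\normf{Ta}\le\|a\|_\infty\normf{f}$, and since $|Ta|\ge|a_n|v_n$ a.e.\ for every $n$ we get $\normf{Ta}\ge\delta'\sup_n|a_n|=\delta'\|a\|_\infty$. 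Hence $T$ is an isomorphic embedding with closed range, and it carries $\vee,\wedge$ to the corresponding pointwise operations because the $v_n$ are nonnegative with disjoint supports, so $T(\ell_\infty)$ is a subspace of $F$ order isomorphic to $\ell_\infty$, contradicting $(\mathrm{iii})$. It is precisely the order ideal property of $F$ --- not norm convergence of $\sum_n a_n v_n$, which generally fails --- that delivers a copy of $\ell_\infty$ here, rather than merely of $c_0$.

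The main obstacle is the disjointification lemma, which converts the ``vertical'' failure of order continuity along $f_n\downarrow 0$ into ``horizontal'' disjoint structure. I would split $\normf{f_n}$ using the level sets of $f$ and reduce to one of two model situations. Either for some $\lambda>0$ the sets $B_n=\{f_n\ge\lambda\}$ --- which decrease to a null set, since $f_n\to 0$ a.e. --- satisfy $\normf{f\chi_{B_n}}\not\to 0$, and then the functions $f\chi_{B_n}\le f$ have shrinking supports and a passage to a subsequence of suitable differences produces disjoint pieces with norms bounded below; or the obstruction sits at the bottom, i.e.\ $\normf{\min(f,\lambda)}\not\to 0$ as $\lambda\downarrow 0$ (the behaviour of Lorentz-type spaces such as weak $L_1$), and then one slices $f$ along a sequence of levels $\lambda_n\downarrow 0$ and sets $v_n=(f-\lambda_{n+1})\chi_{\{\lambda_{n+1}<f\le\lambda_n\}}$; these slices are automatically pairwise disjoint and dominated by $f$, and the delicate point is the uniform lower bound $\normf{v_n}\ge\delta'$, extracted from $\normf{\min(f,\lambda_n)}\ge\delta$ by estimating the $F$-norms of the three pieces into which $\min(f,\lambda_n)-\min(f,\lambda_{n+1})$ decomposes along $\{f>\lambda_n\}$, $\{\lambda_{n+1}<f\le\lambda_n\}$, $\{f\le\lambda_{n+1}\}$. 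Carrying out this case analysis, together with the routine verifications above, closes the cycle and proves the proposition.
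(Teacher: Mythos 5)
Your architecture (the cycle, the $\ell_\infty$-embedding $T a=\sum_n a_nv_n$ from a disjoint order-bounded sequence via the ideal property, the density argument for (i)$\Rightarrow$(iv)) is sound, and it is genuinely more self-contained than the paper, which simply quotes \cite[Theorem 14.9]{AB} for (i)--(iii) and \cite[Theorem 5.5]{BS} for (i)$\Rightarrow$(iv). But there is a real gap exactly at the point you call delicate: the uniform lower bound in the disjointification lemma. In Case B you propose to extract $\|v_n\|_F\ge\delta'$ ``by estimating the $F$-norms of the three pieces'' of $\min(f,\lambda_n)-\min(f,\lambda_{n+1})$. This cannot work: the sequence $\|\min(f,\lambda_n)\|_F$ is monotone and bounded, hence convergent, so any triangle-inequality bookkeeping between consecutive terms produces lower bounds tending to $0$; and the remaining piece $(\lambda_n-\lambda_{n+1})\chi_{\{f>\lambda_n\}}$ need not be small for any choice of $\lambda_{n+1}$ --- e.g.\ for $f(t)=t^{-1/2}$ in the Marcinkiewicz space with $\psi(t)=\sqrt t$ one has $\lambda\,\|\chi_{\{f>\lambda\}}\|_F\equiv 1$ and $\|\min(f,\lambda)\|_F\equiv 2$ for all $\lambda$, so the ``top'' piece carries a fixed share of the norm and the difference of the two minima gives no information. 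Case A has the same defect: from $\|f\chi_{B_n}\|_F\ge\delta''$ with $\|f\chi_{B_n}\|_F$ monotone you cannot conclude by norm arithmetic alone that suitable differences $f\chi_{B_n\setminus B_m}$ have norms bounded below. Notice that completeness of $F$ never enters your (iii)$\Rightarrow$(i) argument, although it must.

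The missing idea is precisely to use completeness through non-Cauchyness of the monotone families whose increments are disjoint. In Case A: if all differences $f\chi_{B_n\setminus B_m}$ were eventually small, $g_n=f\chi_{B_n}$ would be norm-Cauchy; its limit $g$ satisfies $0\le g\le g_n$ for every $n$ (the positive cone is closed, since $\|h^-\|_F\le\|h-h_k\|_F$ when $h_k\ge0$), hence $g=0$, contradicting $\|g_n\|_F\ge\delta''$. In Case B: if $h_n=f\chi_{\{f>1/n\}}$ were norm-Cauchy, its limit is squeezed between $\sup_n h_n=f\chi_{\{f>0\}}$ and $f$, hence equals $f$, so $\|f\chi_{\{0<f\le 1/n\}}\|_F\to0$; fixing $n$ with this norm below $\delta/2$ and using the pointwise bound $\min(f,\lambda)\le\lambda\chi_{\{f>1/n\}}+f\chi_{\{0<f\le1/n\}}$ forces $\lambda\|\chi_{\{f>1/n\}}\|_F\ge\delta/2$ for all small $\lambda$, impossible because $\chi_{\{f>1/n\}}\le nf\in F$. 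Non-Cauchyness then hands you directly some $\eta>0$ and increments over pairwise disjoint index blocks --- $f\chi_{B_{n_k}\setminus B_{m_k}}$, respectively $f\chi_{\{1/m_k<f\le1/n_k\}}$ --- which are disjoint, dominated by $f$, and of norm at least $\eta$ (note $\eta$ need not be related to $\delta$, and that is enough). With this replacement your key lemma, and hence the proposition, goes through; as written, however, the lower-bound extraction is a genuine gap rather than a routine verification.
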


The conditions (i) - (iii) are equivalent by \cite[Theorem 14.9]{AB}. Moreover  every separable Banach function or sequence space must be order continuous since otherwise it contains an isomorphic copy of $\ell_\infty$ which is not separable. Here $F$ is a subspace of $L^0(I)$ with its support contained in $I$, where $I$ is either $[0,\alpha)$, $0 <\alpha\le \infty$,  equipped with the Lebesgue measure or $I=\mathbb{N}$ with the counting measure. In both cases the measure is separable. Moreover $F$ contains simple functions on the supports contained in some sequence of sets $A_n\subset I$ with finite measure and such that $\cup_n A_n = \supp F$. Thus by Theorem 5.5 on p. 27 in \cite{BS},  (i) implies (iv).

A Banach function or sequence space $F$ is called a $KB$-space whenever it is order continuous and has the Fatou property \cite{AB, KA}. We have the following result. 

\begin{proposition}\label{prop:czero} \cite[Theorem 14.13]{AB}

Let $F$ be a Banach (function or sequence) space. Then the following statements are equivalent.
\begin{itemize}
\item[(i)] $F$ is not a $KB$-space that is $F$ is either not order continuous or $F$ does not posses Fatou property. 
\item[(ii)] $c_0$ is embeddable in $F$, that is $F$ contains a subspace isomorphic to $c_0$. 
\item[(iii)] $c_0$ is lattice embeddable in $F$, that is $F$ contains a subspace order isomorphic to $c_0$. 
\end{itemize}

\end{proposition}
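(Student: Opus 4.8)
The plan is to prove the three equivalences in the cycle (iii)$\Rightarrow$(ii)$\Rightarrow$(i)$\Rightarrow$(iii). The implication (iii)$\Rightarrow$(ii) is immediate: an order isomorphism onto its range is in particular a linear isomorphism onto its range. For (ii)$\Rightarrow$(i) I argue by contraposition: a $KB$-space is weakly sequentially complete (a standard consequence of order continuity together with the Fatou property, see \cite{AB,KA}), weak sequential completeness is inherited by closed subspaces (the weak limit of a weakly Cauchy sequence from the subspace lies in the subspace, since a norm-closed subspace is weakly closed), and $c_0$ is not weakly sequentially complete --- the partial sums $e_1+\dots+e_n$ are weakly Cauchy but their only possible weak limit $(1,1,\dots)$ is not in $c_0$. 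Hence $c_0$ embeds in no $KB$-space, so (ii) fails when $F$ is a $KB$-space.

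The heart of the statement is (i)$\Rightarrow$(iii), which I split according to which half of the $KB$-property fails. If $F$ is \emph{not order continuous}, then by \propref{prop:infinity} it contains a closed sublattice lattice isomorphic to $\ell_\infty$; restricting that lattice isomorphism to the sublattice $c_0\subset\ell_\infty$ produces a closed sublattice of $F$ lattice isomorphic to $c_0$. If instead $F$ \emph{lacks the Fatou property}, then $F\subsetneq F^{\times\times}$, so I fix $0\le f\in F^{\times\times}\setminus F$ with $\|f\|_{F^{\times\times}}\le 1$ and recall that $F\hookrightarrow F^{\times\times}$ isometrically and that $F^{\times\times}$ has the Fatou property. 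The key is to approximate $f$ from below by a sequence $0\le s_N\uparrow f$ (pointwise a.e.) lying in $F$ and having \emph{pairwise disjoint increments}: if $f\chi_A\notin F$ for some finite union $A$ of the exhausting sets $A_n$ from the preliminaries, take the vertical truncations $s_N=f\chi_A\chi_{\{f\le N\}}$, whose increments are supported on the disjoint level sets $\{N<f\le N+1\}$; otherwise take the horizontal truncations $s_N=f\chi_{A_1\cup\dots\cup A_N}$, whose increments are supported on the disjoint annuli $(A_1\cup\dots\cup A_{N+1})\setminus(A_1\cup\dots\cup A_N)$. In either case $\|s_N\|_F=\|s_N\|_{F^{\times\times}}\le\|f\|_{F^{\times\times}}\le 1$ by the Fatou property of $F^{\times\times}$; so $(s_N)$ is increasing and norm bounded, and it cannot converge in $F$-norm, its only candidate limit being $f\notin F$. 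Hence $(s_N)$ is not norm Cauchy, and I pick $\eps>0$ and $N_1<N_2<\cdots$ with $\|u_j\|_F\ge\eps$, where $u_j:=s_{N_{j+1}}-s_{N_j}$.

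This $(u_j)$ finishes the proof. Its terms are pairwise disjointly supported, so for finitely many scalars $a_j$ we have $\bigl|\sum_j a_j u_j\bigr|=\sum_j|a_j|u_j=\bigvee_j|a_j|u_j$, while $\bigvee_{j\le J}u_j=\sum_{j\le J}u_j\le f$ gives $\sup_J\bigl\|\bigvee_{j\le J}u_j\bigr\|_F\le\|f\|_{F^{\times\times}}\le 1$; consequently $\eps\max_j|a_j|\le\bigl\|\sum_j a_j u_j\bigr\|_F\le\max_j|a_j|$, so $(u_j)$ is equivalent to the unit vector basis of $c_0$. Since the modulus and the finite suprema of elements of $\operatorname{span}\{u_j\}$ again belong to $\operatorname{span}\{u_j\}$, the closed span $\overline{\operatorname{span}}\{u_j\}$ is a closed sublattice of $F$ lattice isomorphic to $c_0$, which is (iii).

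The one genuinely delicate step is the construction in the non-Fatou case: producing, from the bare failure of $f\in F$, an \emph{honestly disjoint} sequence in $F$ with norms bounded below and with uniformly bounded partial suprema. The trick that makes it go through is to read the obstruction off a monotone approximation whose increments are disjoint for structural reasons (level sets of $f$, or annuli of the $\sigma$-finite exhaustion), after which blocking an increasing, norm bounded, non-convergent --- hence non-Cauchy --- sequence yields increments of norm $\ge\eps$, and the bound on the partial suprema is automatic since each is dominated by $f$. I use without proof two standard facts about Banach function spaces: the isometric inclusion $F\hookrightarrow F^{\times\times}$, and that $F$-norm convergence forces convergence in measure on sets of finite measure (see \cite{BS,Z}).
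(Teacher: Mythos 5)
Your cycle (iii)$\Rightarrow$(ii)$\Rightarrow$(i)$\Rightarrow$(iii) is sensibly organized, and the first two implications, as well as the non-order-continuous half of (i)$\Rightarrow$(iii) via \propref{prop:infinity}, are fine (note only that ``KB-space $\Rightarrow$ weakly sequentially complete'' is itself a nontrivial part of the very theorem cluster in \cite{AB} that the paper cites). The genuine gap is in your non-Fatou case, and it sits exactly where you lean on $F^{\times\times}$. First, failure of the Fatou property does not give $F\subsetneq F^{\times\times}$ as sets: the cited equivalence ``$F=F^{\times\times}$ iff Fatou'' is an equality of normed spaces, and the underlying sets can coincide while Fatou fails --- e.g.\ $\ell_\infty$ with the equivalent monotone norm $\|x\|=\|x\|_{\infty}+\limsup_n|x_n|$, whose K\"othe bidual is $\ell_\infty$ with its usual norm. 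Second, and fatally, the estimate $\|s_N\|_F=\|s_N\|_{F^{\times\times}}\le\|f\|_{F^{\times\times}}\le1$ invokes an ``isometric inclusion $F\hookrightarrow F^{\times\times}$'' which is not a standard fact: in general the inclusion is only contractive, $\|g\|_{F^{\times\times}}\le\|g\|_F$ for $g\in F$ (and in the example above it is not isometric), so the inequality available to you points the wrong way. The bound you actually need, $\sup_N\|s_N\|_F<\infty$, is essentially a weak Fatou property of $F$ and cannot be extracted from $\|f\|_{F^{\times\times}}\le1$; as written the step is circular, since you bound $F$-norms by bidual norms precisely in the case where the Fatou property is being denied.

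The construction is repairable, but only by exploiting the order continuity that is available once Case A has disposed of non-order-continuous $F$. If $F$ is order continuous and fails Fatou, then there must exist $0\le h_N\uparrow h$ a.e.\ with $\sup_N\|h_N\|_F<\infty$ and $h\notin F$: otherwise every such limit would lie in $F$, and order continuity applied to $h-h_N\downarrow0$ (dominated by $h\in F$) would give $\|h_N\|_F\uparrow\|h\|_F$, i.e.\ the Fatou property. Run your truncation scheme on this $h$ rather than on an element of $F^{\times\times}\setminus F$; the missing uniform bound then comes from order continuity instead of the bidual: each truncation $g_N\in F$ satisfies $g_N\wedge h_M\uparrow_M g_N$, hence $\|g_N\|_F\le\|g_N-g_N\wedge h_M\|_F+\|h_M\|_F$ for every $M$, where the first term tends to $0$ by order continuity, so $\|g_N\|_F\le\sup_M\|h_M\|_F$. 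With that bound in place, your blocking into disjoint increments, the two-sided $c_0$-estimates and the sublattice argument all go through unchanged; but as it stands, the non-Fatou case of (i)$\Rightarrow$(iii) does not hold up.
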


A Banach function or sequence space $E\subset L^0$ is called \emph{a symmetric space} (also called \emph{rearrangement invariant space}) if it follows from $f\in L^0$, $g\in E$ and $\mu(f)\leq \mu(g)$ that $f\in E$ and $\norme{f}\leq\norme{g}$. Therefore $\|f\|_E = \|g\|_E$ whenever $f,g\in E$ and  $d(f) = d(g)$ \cite{BS, KPS}. If from $f,g\in E$ and $f\prec g$ we have that $\norme{f}\leq\norme{g}$ then $E$  is called \emph{strongly symmetric}.  Moreover, $E$ is called \emph{fully symmetric} if  for any $f\in L^0,\,g\in E$ and $f\prec g$ it follows that $f\in E$ and $\norme{f}\leq \norme{g}$. For any symmetric space $E$ we will use the notation $E_0=\{f\in E:\, \mu(\infty, f)=0\}$.
 Any symmetric space which is order continuous or satisfies the Fatou property is strongly symmetric \cite{BS,KPS}. 
For every symmetric space $E$ we have  \cite{BS},
\[
L_1(I)\cap L_{\infty}(I)\hookrightarrow E\hookrightarrow L_1(I)+L_{\infty}(I) \ \ \ \text{if} \ \ I=[0,\alpha),  \ \ \text{and} \ \ \ell_1\hookrightarrow E\hookrightarrow \ell_\infty \ \ \ \text{if} \ \ I = \mathbb{N}.
\]
 If $E$ is a symmetric a symmetric space  then  $E^\times$ is also a symmetric space  and
\[
\|g\|_{E^\times} = \sup\left\{\int_I \mu(f) \mu(g): \|g\|_E \le 1 \right\}, \ \ \   g\in E^\times.
\]
A symmetric space over  $I=[0,\alpha)$ will be called a \emph{symmetric function space}, and over $I=\mathbb{N}$, a \emph{symmetric sequence space}.   

Given a semifinite  von Neumann algebra $\M$ with a fixed semifinite, normal faithful trace $\tau$ and a symmetric Banach function space $E$ on $[0,\alpha)$, $\alpha=\tauone$, the corresponding \emph{noncommutative space of measurable operators} $E\Mtau$ is defined by setting
\[
E\Mtau=\{x\in S\Mtau:\quad \mu(x)\in E\}, 
\]
and it is equipped with the norm 
\[
\|x\|_{\nonsp}=\norme{\mu(x)}, \  \ \ \ x\in E\Mtau.
\]
For long period of time it was only known that $\nonsp$ is complete if $E$ is strongly symmetric. This has been proved in  papers \cite{S1988, DDPMark, DDPnoncomm, SC1990}.  In 2008, N. Kalton and F. Sukochev  \cite{KS}   solved this problem in full generality showing that $E\Mtau$ is a Banach space, without requiring any additional assumptions on a symmetric Banach space  $E$. A nice exposition of their non-trivial proof can also be found in \cite[Theorem 3.5.5]{LSZ2013}. It is worth to observe that Kalton-Sukochev’s proof holds for any quasi-Banach symmetric space which is in addition $p$-convex for some $0 < p < \infty$ and that this restriction was shown to be redundant in \cite{S}.

If $E=L_p$, $1\leq p\leq \infty$, then $E\Mtau=L_p\Mtau$ with the norm $\|x\|_{L_p\Mtau}=\|\mu(x)\|_{L_p}$, is called a \emph{noncommutative $L_p$ space}. 
As shown in \cite{DDP4}, the restriction of $\tau$ from $S\Mtau^+$ to $L_1\Mtau^+$ is an additive positively homogeneous real
valued functional, for which  $\tau(x)=\int_0^\infty \mu(x)$ for all $x\in L_1\Mtau^+$.  This functional extends uniquely to a linear functional $\dot{\tau}:L_1\Mtau\to \Complex$, denoted again by $\tau$.

An element $x\in\nonsp$ is called \emph{order continuous} if for every sequence $0\leq x_n\leq |x|$ with $x_n\downarrow 0=\inf{x_n}$ it follows that $\|x_n\|_{\nonsp}\downarrow 0$.  The set of all order continuous elements in $E\Mtau$ is denoted by $(E\Mtau)_a$. If $E\Mtau= (E\Mtau)_a$ then the space $E\Mtau$ is called order continuous. It is known that if $E$ is order continuous and strongly symmetric, then so is $E\Mtau $ \cite[Proposition 2.3]{CSweak}. On the other hand if $E\Mtau$ is order continuous and $\M$ is non-atomic then $E$ must be order continuous by order  isometric embedding of $E$ into $E\Mtau$ (see Corollary \ref{cor:isomglobal}).   Moreover, if $E$ is a symmetric space on $[0,\alpha)$, which is order continuous, then it is fully symmetric \cite[Chapter II, Theorem 4.10]{KPS}, and therefore $E\Mtau $ is fully symmetric.

Let $\M$ be a semifinite von Neumann algebra acting on  a separable Hilbert space $H$. If $E$ is  separable  then $E$ is order continuous by Proposition \ref{prop:infinity}. If in addition $E$ is strongly symmetric then $E\Mtau$ is order continuous  \cite[Proposition 2.3]{CSweak}.  Thus by Corollary 6.10 in \cite{DDP2011}, if $H$ is separable and $E$ is separable strongly symmetric then $E\Mtau$ is separable (see also  \cite[Proposition 1, Theorem 2]{Medz}.  On the other hand, by isometric embedding of $E$  into $E\Mtau$ in the case of non-atomic $\M$ (see Corollary \ref{cor:isomglobal}),  if $\nonsp$ is separable, then $E$ is separable. 
Separability of $L_p\Mtau$ spaces was considered in \cite{S1}. 

If $E$ is order continuous then  the dual $E\Mtau^{*}$ can be identified with the K\"{o}the dual  $E\Mtau^{\times}$ \cite{DDP4}, where 
\[
\nonsp^{\times}=\{x\in S(\mathcal{M},\tau): xy\in L_{1}\Mtau \text{ for all }y\in \nonsp\},
\]
 and it is equipped with the norm
\[
 \|x\|_{\nonsp^{\times}}=\sup\{\tau(|xy|):y\in \nonsp\text{,}\, \normcomm{y}\leqslant 1 \}\text{,}\quad x\in \nonsp^{\times}.
\]
Therefore if $E$ is order continuous then  every functional $\Phi\in E\Mtau^{*}$ is of the form $\Phi(x)=\tau(xy)$, $x\in \nonsp$, for some $y\in E\Mtau^{\times}$ and $\|\Phi\| = \|y\|_{E^\times\Mtau}$. Observe that $\tau(xy)$ is well defined since $xy\in L_1\Mtau$.

If $E$ a strongly symmetric Banach function space on $[0,\tauone)$ then $E\Mtau ^{\times}=E^{\times}\Mtau$ and $E^{\times}$ is also a fully symmetric Banach function space  \cite[Propositions 5.4, 5.6]{DDP4}. Therefore if $E$ is an order continuous symmetric function space, and hence it is a fully symmetric function space, then  $E\Mtau^{*}$ is identified with a fully symmetric K\"{o}the dual $E^{\times}\Mtau$. In particular, $L_1\Mtau^{\times}=L_\infty\Mtau=\M$. We wish to note that we also have $\M^{\times}=L_\infty\Mtau^{\times}=L_1\Mtau$ \cite[Proposition 5.2 (viii)]{DDP4}.

For the theory of operator algebras we refer to
\cite{KR,Takesaki}, and for noncommutative Banach  spaces of measurable operators
to \cite{DDPnoncomm,LSZ2013, noncomm,P}.

\section{Examples of symmetric spaces of measurable operators}
We  discuss below how $E\Mtau$ can be identified with many known spaces, like noncommutative $L_p$ spaces, unitary matrix spaces including Schatten classes,  or  symmetric function spaces.

\subsection{Noncommutative $L_p$ spaces}
If $E=L_p[0,\tauone)$, $1\leq p<\infty$, then for $x\in L_p\Mtau$ we have 
\[
\|x\|_{L_p\Mtau}=\|\mu(x)\|_{L_p}=\left(\int_0^{\tauone}\mu(\abs{x}^p) \right)^{1/p}=\left(\tau(\abs{x}^p)\right)^{1/p}.
\]
We have that  $x\in L_\infty\Mtau$ if and only if $x\in S\Mtau$ and $\mu(x)\in L_\infty[0,\tauone)$, which is equivalent with $x\in \M$. Moreover by \cite[Lemma 2.5 (i)]{Fack-Kos1986},  
\[
\|x\|_{L_\infty\Mtau}=\|\mu(x)\|_{L_\infty}=\sup_{t\in[0,\tauone)}\mu(t,x)=\mu(0,x)=\|x\|_{\M}.
\]
  Hence $L_\infty\Mtau=\M$ with equality of norms.
 The spaces 
 \[
 L_1\Mtau+\M=\left\{x\in S\Mtau: \int_0^1\mu(x)<\infty\right\},
 \]
 \[
 L_1\Mtau\cap \M=\{x\in S\Mtau:\, \mu(x)\in L_1[0,\tauone)\cap L_\infty[0,\tauone)\}
 \]
are equipped with the norms 
\[
\|x\|_{ L_1\Mtau+\M} = \int_0^1\mu(x),  \  \ \ \|x\|_{L_1\Mtau\cap \M }=\max\{\|x\|_{L_1\Mtau},\|x\|_{\M}\},
\]
 respectively. 
If $\M$ is non-atomic we have that 
\[
L_1\Mtau\cap \M\hookrightarrow E\Mtau\hookrightarrow L_1\Mtau+\M
\]
 with the continuous embeddings \cite[Example 2.6.7]{LSZ2013}.

\subsection{Unitary matrix spaces and Schatten classes}
\label{sec:unitary}

Recall that given a maximal orthonormal system $\{e_{\alpha}\}$ in the Hilbert space $H$ the \textit{canonical trace} $\tr: B(H)^+\to [0,\infty]$ is defined by
\[
\tr(x)=\sum_{\alpha}\langle xe_{\alpha}, e_{\alpha}\rangle,\quad x\in B(H)^+.
\]
The value of $\tr(x)$ does not depend on the choice of the maximal orthonormal system in $H$. The canonical trace  $\tr$ is semi-finite, faithful and  normal. 

Given a symmetric sequence space $E\neq\ell_{\infty}$, the \emph{unitary matrix space}  $C_E$ is a subspace of a Banach space of compact operators $K(H)\subset B(H)$ for which the sequence of
 singular numbers $S(x)=\left\{s_n(x)\right\}\in E$, and it is equipped with the norm $\|x\|_{C_E}=\|S(x)\|_E$.
  Note that if $E$ is a symmetric sequence space, then  $E\neq \ell_\infty$ is equivalent with  $E\subset c_0$.
  
If $H$ is separable and $E$ is a separable sequence space then $C_E$ is separable \cite[Proposition 1, Theorem 2]{Medz}. Moreover, if $E$ is order continuous then $C_E$ is order continuous  \cite[Corollary 6.1]{CSweak}. On the other hand if $C_E$ is separable (respectively, order continuous) then the separability (respectively, order continuity) of $E$ follows by the order isometric embedding of $E$ into $C_E$ (see Corollary \ref{cor:isomglobal}).

If a symmetric sequence space $E\neq \ell_1$ then $E^\times\subset c_0$ and $C_{E^\times}$ is well defined. If $E\neq \ell_1$ is separable then $(C_E)^*$ is isometrically isomorphic to  $(C_E)^\times$ and  $(C_E)^\times=C_{E^\times}$. In this case the functionals $\Phi\in (C_E)^*$ are of the form
\[
\Phi(x)=\tr(xy),\quad x\in C_{E},\, y\in C_{E^\times},
\]
and $\|\Phi\|_{(C_E)^*}=\|y\|_{(C_E)^\times}$ \cite[Theorem 12.2]{GK}.

The unitary matrix space $C_E$ can be  identified with a symmetric  space of measurable operators $G\Mtau$ for some symmetric function space $G$ on $[0,\infty)$, and $\M = B(H)$ with canonical trace $\tr$. Using this identification, many lifting-type results from the symmetric sequence space $E$ into the space $C_E$ can be deduced from the corresponding results for the symmetric function space $E$ and the  space $\nonsp$.

  Indeed  let $G$ be the set of all real functions $f\in L_1(0,\infty)+L_\infty(0,\infty)$  such that
\[
\pi(f)=\{\pi_n(f)\}=\left\{\int_{n-1}^n\mu(f) \right\}\in E,
\]
and set $\|f\|_G=\|\pi(f)\|_E$. 
As shown in \cite[Theorem 3.6.6.]{LSZ2013}, $G$ equipped with this norm is a symmetric function space on $[0,\infty)$. 
Moreover, if $E$ is fully symmetric or order continuous then so is
$G$ \cite[Proposition 6.1]{CSweak}. It is well known
that $S\left(B(H),\tr\right)=B(H)$, where $\tr$ is the canonical trace
on $B(H)$, and the convergence $x_n\xrightarrow{\tr} x$ is equivalent
to the norm convergence $\|x-x_n\|_{B(H)}\to0$, for $x,x_n\in B(H)$ \cite[Example 2.3.2.]{LSZ2013}.
Since $E\neq\ell_\infty$, the symmetric
space of measurable operators $G\left(B(H),\tr\right)$ is a proper two-sided $*$-ideal in
$B(H)$ and therefore it is contained in $K(H)$ \cite{GK}. Thus for any $x\in
G\left(B(H),\tr\right)$ the singular value function $\mu(x)$ is of
the form $\mu(t,x)=\sum_{n=1}^\infty s_n(x)\chi_{[n-1,n)}(t)$,
$t\ge 0$, where $s_n(x) \to 0$. Therefore the spaces $C_E$  and $G\left(B(H),\tr\right)$
coincide as sets and they have identical norms $\|x\|_{C_E}=\|S(x)\|_E=\|\pi(\mu(x))\|_E=\|\mu(x)\|_G=\|x\|_{G(B(H),\tr)}$.

In particular  when $E=\ell_p$, $1\le p<\infty$, we have that $G=L_p(0,\infty)$ and $L_p\left(B(H),\tr\right)=C_p$, where $C_p$ is the space of $p$-Schatten class of operators. 
We have that $C_1\hookrightarrow C_E\hookrightarrow K(H)$ \cite[Example 2.6.7 c]{LSZ2013} with the continuous embeddings.
\subsection{Symmetric function spaces}
\label{sec:symmfun}
For the reader's convenience we include in this part the detailed explanation how the noncommutative symmetric spaces can be identified with their commutative counterparts. 
Thanks to this representation many of the results for noncommutative spaces can be interpreted for symmetric function spaces, especially in the context of relating properties of functions and their decreasing rearrangements.

Let $0< \alpha \le \infty$.
Consider the commutative von Neumann algebra 
\[\mathcal{N}=\{N_f: L_2[0,\alpha)\to
L_2[0,\alpha):\quad f\in L_\infty[0,\alpha)\},
\]
 where $N_f$ acts via pointwise multiplication on $L_2[0,\alpha)$ and the trace $\eta$ is given by integration, that is  
 \[
 N_f(g)=f\cdot g,\quad  g\in L_2[0,\alpha),\quad\text{ and  }\quad \eta(N_f)=\int_0^{\alpha} f.
 \] 
 It is straightforward to check that the map $f\mapsto N_f$ is a $*$-isomorphism from $L_\infty[0,\alpha)$ into $B(L_2[0,\alpha))$, which is also an isometry since  $\|f\|_{L_\infty}=\|N_f\|_{B(L_2[0,\alpha))}$. Therefore the von Neumann algebra $\mathcal{N}$ is commonly identified with $L_{\infty}[0,\alpha)$. 

If $N_f$ is a projection in $\mathcal{N}$ then $fg=N_f(g)=N_f(N_f(g))=f^2g$ for all $g\in L_2[0,\alpha)$. Hence for any $t\in[0,\alpha)$, $f(t)=0$ or $f(t)=1$. Consequently, the projections in $\mathcal{N}$ are given by  
\[
P(\mathcal{N})=\{N_{\chi_A}:\,A \text{ is a measurable subset of } [0,\alpha)\}.
\]
Furthermore, if $N_f$ is a unitary operator in $\mathcal{N}$ then  $N_{\chi_{[0,\alpha)}}=N_f(N_f)^*=N_f N_{\overline{f}}=N_{f\overline{f}}=N_{\abs{f}^2}$
and the unitary operators in $\mathcal{N}$ are given by
\[
U(\mathcal{N})=\{N_{f}:\,f\in L_\infty[0,\alpha),\abs{f}=\chi_{[0,\alpha)}\}.
\]

\textbf{Fact 1.} $\mathcal{N}'=\mathcal{N}$

\begin{proof}
 Clearly $\mathcal{N}\subset \mathcal{N}'$, since $\mathcal{N}$ is commutative. Let $F\in \mathcal{N}'$ that is $F$ is a bounded operator on $L_2[0,\alpha)$ and  
\begin{equation}
F(\xi\cdot g)=F(N_\xi(g))=N_\xi(F(g))=\xi\cdot F(g)\text{ for every }\xi\in L_\infty[0,\alpha), g\in L_2[0,\alpha).
\label{eq:rm:commutative1}
\end{equation} 
Hence for any measurable set $A\subset [0,\alpha)$ with $m(A)<\infty$, we have that $F(\chi_A)=F(\chi_A)\chi_A$.
In particular, $F(\chi_{[i-1,i)})=F(\chi_{[i-1,i)})\chi_{[i-1,i)}$ for every $i\in\mathbb{N}$, and so $\{F(\chi_{[i-1,i)})\}$ is a sequence of functions with disjoint supports included in $[i-1,i)$. We claim that 
\[
\sup_{i\in\mathbb{N}}\, \text{esssup}_{t\in[i-1,i)}|F(\chi_{[i-1,i)})|(t)<\infty.
\] 
In fact supposing the above is not satisfied, that is for every $n\in\mathbb{N}$ there exist $i_n\in\mathbb{N}$, a set $A_{i_n}\subset [i_n-1,i_n)$  with $m(A_{i_n}) > 0$, and such that $\abs{F(\chi_{[i_n-1,i_n)})}(t)\geq n$ for all $t\in A_{i_n}$. Taking $g_{i_n}=\frac{1}{m(A_{i_n})^{1/2}}\chi_{A_{i_n}}$ we have $\|g_{i_n}\|_{L_2}=1$, and for every $n\in\mathbb{N}$,
\begin{align*}
\|F(g_{i_n})\|_{L_2}^2&=\int_0^{\alpha} \frac{1}{m(A_{i_n})}\abs{F(\chi_{A_{i_n}})}^2=\int_0^{\alpha} \frac{1}{m(A_{i_n})}\abs{F(\chi_{A_{i_n}}\chi_{[i_n-1,i_n)})}^2\\
&=\int_0^{\alpha} \frac{1}{m(A_{i_n})}\abs{F(\chi_{[i_n-1,i_n)})}^2\chi_{A_{i_n}}\geq n^2,
\end{align*}
contradicting the fact that $F$ is a bounded operator on $L_2[0,\alpha)$.

 Hence $\sup_{i\in\mathbb{N}}\, \text{esssup}_{t\in[i-1,i)}|F(\chi_{[i-1,i)})|(t)<\infty$ and $\sum_{i=1}^\infty F(\chi_{[i-1,i)})\in L_\infty[0,\alpha)$. 

By (\ref{eq:rm:commutative1}), $F(g\chi_{[i-1,i)})=F(\chi_{[i-1,i)})g$ for every simple function $g\in L_2[0,\alpha)$. For an arbitrary $g\in L_2[0,\alpha)$, we can take a sequence of simple functions $\{g_n\}\subset L_2[0,\alpha)$ with $g_n\to g$ in $L_2[0,\alpha)$. Since $F$ is a bounded operator on $L_2[0,\alpha)$  we have that $F(g_n)\to F(g)$. Moreover, for any $i\in\mathbb N$, $g_n\chi_{[i-1,i)}\to g\chi_{[i-1,i)}$ as $n\to \infty$ in $L_2[0,\alpha)$ and $F(\chi_{[i-1,i)})\in L_\infty[0,\alpha)$. Hence
\[
F(g_n\chi_{[i-1,i)})\to F(g\chi_{[i-1,i)})\text{ and }F(g_n\chi_{[i-1,i)})=F(\chi_{[i-1,i)})g_n\to F(\chi_{[i-1,i)})g
\]
in $L_2[0,\alpha)$ for each $i\in \mathbb{N}$ as $n\to \infty$.
Thus $F(g\chi_{[i-1,i)})=F(\chi_{[i-1,i)})g$ for all $g \in L_2[0,\alpha)$.

Take next $h\in L_2[0,\alpha)$ and set $h_n=h\chi_{[0,n)}$. Then $h_n\to h$ in $L_2[0,\alpha)$ and
\begin{align*}
F(h)&=\lim_n F(h_n)=\lim_n F\left(\sum_{i=1}^nh\chi_{[i-1,i)}\right)=\lim_n\sum_{i=1}^n F(h\chi_{[i-1,i)})\\&=\lim_n\sum_{i=1}^n F(\chi_{[i-1,i)})h=\left(\sum_{i=1}^\infty F(\chi_{[i-1,i)})\right)h.
\end{align*}
Since it was shown earlier that $\sum_{i=1}^\infty F(\chi_{[i-1,i)})\in L_\infty[0,\alpha)$, we have that $F=N_{\sum_{i=1}^\infty F(\chi_{[i-1,i)})}$ and $F\in \mathcal{N}$.
\end{proof}

In the next fact we extend the operator $N_f$ from $f\in L_\infty[0,\alpha)$ to $f\in L^0[0,\alpha)$.

\textbf{Fact 2.}
Given $f\in L^0[0,\alpha)$ define the operator $N_f$ by setting 
\[
D(N_f)=\{\xi \in L_2[0,\alpha):\, f\xi\in   L_2[0,\alpha)\}
\]
and for $\xi\in D(N_f)$,
\[
N_f\xi=f\xi.
\]
The operator $N_f$ is closed and densely defined.
\begin{proof}
Observe first that the operator $N_f$ is well defined. Let $N_{f_1}=N_{f_2}$ for $f_1,f_2\in L^0[0,\alpha)$.
Setting $A_{in} = \{t\in [0,\alpha): 1/n \le |f_{i}(t)| \le n\}\cap [0,n]$, $i=1,2, n\in \mathbb{N}$, we get  $\cup_n (A_{1n}\cap A_{2n}) = [0,\alpha)$. Hence $f_i\chi_{A_{1n}\cap A_{2n}}\in L_2[0,\alpha)$ for $i=1,2$,
and $f_1\chi_{A_{1n}\cap A_{2n}}=f_2\chi_{A_{1n}\cap A_{2n}}$ 
for all $n\in\mathbb{N}$. Thus $f_1=f_2$ a.e..

Let $\xi\in L_2[0,\alpha)$, $f\in L^0[0,\alpha)$, and consider the sequence of measurable sets $A_n=\{t\in[0,\alpha):\, \abs{f(t)}\leq n\}\cup\left[\frac1n,\infty\right)$ for $n\in\mathbb{N}$. We will show that $\xi\chi_{A_n}\in D(N_f)$ and $\xi\chi_{A_n} \to \xi$ in $L_2[0,\alpha)$, which establishes that $N_f$ is densely defined. Indeed, we have 
\begin{align*}
\|\xi-\xi\chi_{A_n}\|_{L_2}&\leq \|\xi\|_{L_2}\|\|\chi_{[0,\alpha)}-\chi_{A_n}\|_{L_2}=\|\xi\|_{L_2}m(A_n^c)\\
&\leq \frac1n\|\xi\|_{L_2}\to 0\text{ as }n\to\infty.
\end{align*}
Moreover,
\[
\|f\xi\chi_{A_n}\|_{L_2}^2=\int_0^{\alpha} \abs{f\xi\chi_{A_n}}^2=\int_{A_n}\abs{f}^2\abs{\xi}^2\leq  n^2\|\xi\|_{L_2}^2.
\]

It is not difficult to see that $N_f$ is also closed. Indeed let $\xi_n\rightarrow\xi$ in $L_2[0,\alpha)$, where $\{\xi_n\}\subset D(N_f)$, and $N_f\xi_n=f\cdot \xi_n\rightarrow \beta$ in $L_2[0,\alpha)$. Then  there is a subsequence $\{\xi_{n_k}\}$ of $\xi_n$, such that  $\xi_{n_k} \to \xi$ and $f\cdot \xi_{n_k}\to \beta$   a.e. on $[0,\alpha)$.  We have then that $f\cdot \xi_{n_k}\to f\cdot \xi$ a.e. and so $\beta=f\cdot \xi$. Consequently, $N_f$ is closed.
\end{proof}

\textbf{Fact 3.}\label{fact3} $\mathcal{N}^{\text{affil}}=\{ N_f: \,\,f\in L^0[0,\alpha) \}$.
\begin{proof}
Observe first that $N_f$, $f\in L^0[0,\alpha)$, is affiliated with $\mathcal{N}$. Indeed, let $N_g\in U(\mathcal{N'})=U(\mathcal{N})$, where  $|g|=\chi_{[0,\alpha)}$. For $\xi\in D(N_f)$  we have that $N_f N_g(\xi)=fg\xi \in L_2[0,\alpha)$, and so $N_g(\xi)\in D(N_f)$. Since pointwise multiplication is a commutative operation, we get $N_f N_g(\xi)=fg\xi=gf\xi=N_gN_f(\xi)$.

It remains to show that every closed and densely defined operator $x$ on $L_2[0,\alpha)$ which is affiliated with $\mathcal{N}$, is of the form $N_f$ for $f\in L^0[0,\alpha)$.  Let $x=u\abs{x}$ be the polar decomposition of $x$.  Recall that  $x$ is affiliated with $\mathcal{N}$ if and only if $\abs{x}$ is affiliated with $\mathcal{N}$ and $u\in\mathcal{N}$. Moreover, $\abs{x}\in \mathcal{N}^{\text{affil}}$ if and only if $e^{\abs{x}}(B)\in\mathcal{N}$ for every Borel set $B$ in $[0,\alpha)$. Set $p_n=e^{\abs{x}}[n-1,n)$ and  $x_n=\abs{x}p_n$, $n\in\mathbb N$. Then $x_n$ is bounded and affiliated with $\mathcal{N}$, and therefore $x_n\in \mathcal{N''}=\mathcal{N}$. Hence there are  sequences of measurable sets $A_n$ and non-negative functions $g_n\in L_{\infty}[0,\alpha)$, such that $p_n=N_{\chi_{A_n}}$ and $x_n=N_{g_n}$. Since $\{p_n\}$ is a sequence of mutually orthogonal projections, $\{A_n\}$ is a sequence of pairwise disjoint sets. Furthermore, $g_n \xi=N_{g_n}(\xi)=x_n(\xi)=x_n p_n(\xi)=N_{g_n}N_{\chi_{A_n}}(\xi) =g_n\chi_{A_n}\xi$, for every $\xi \in L_2[0,\alpha)$. In particular the equality holds for every $\xi=\chi_F$, where $F$ is a set of finite measure. Hence  $\supp g_n \subset A_n$. Finally, by $\one=N_{\chi_{[0,\alpha)}}=\sum_{n=1}^\infty p_n=\sum_{n=1}^\infty N_{\chi_{A_n}}$, it follows that  $\cup_{n=1}^\infty A_n=[0,\alpha)$. Consider now $g=\sum_{n=1}^\infty g_n$ with the sum taken pointwise.   Let $\xi\in D(x)=D(\abs{x})$, that is $\xi\in L_2[0,\alpha)$ and $|x|(\xi)\in L_2[0,\alpha)$. Then $|x|(\xi)=\left(\sum_{n=1}^\infty x_n\right)(\xi)$ converges pointwise. By the dominated convergence theorem, $\left(\sum_{n=1}^N g_n\right) \xi=\sum_{n=1}^N g_n \xi=\sum_{n=1}^N x_n (\xi)=\left(\sum_{n=1}^N x_n\right) (\xi)$ converges to $|x|(\xi)$ in $L_2[0,\alpha)$. Consequently,
\[
N_g(\xi)=g\xi=\left(\sum_{n=1}^\infty g_n\right)\xi=\left(\sum_{n=1}^\infty x_n\right)(\xi)=\abs{x}(\xi),
\]
and $N_g(\xi)\in L_2[0,\alpha)$.
 Therefore $\abs{x}\subset N_g$, that is $D(\abs{x})\subset D(N_g)$ and for all $\xi \in D(\abs{x})$, $|x|(\xi)=N_g(\xi)$. 
 
For the converse, suppose that $\xi\in D(N_g)$, that is $\xi\in L_2[0,\alpha)$ and $ g\xi = \left(\sum_{n=1}^\infty g_n\right)\xi=\sum_{n=1}^\infty g_n\xi\in L_2[0,\alpha)$. Again by the dominated convergence theorem we have  $\sum_{n=1}^N g_n\xi\rightarrow g\xi$ in $L_2[0,\alpha)$, and $\sum_{n=1}^\infty g_n\xi$ is a norm convergent series in $L_2[0,\alpha)$. Hence $\abs{x}(\xi)=\sum_{n=1}^\infty x_n(\xi)\in L_2[0,\alpha)$ and $\xi\in D(\abs{x})$. Since also $N_g(\xi)=|x|(\xi)$ we have that $N_g\subset \abs{x}$ and consequently $N_g=\abs{x}$. 

Finally, since $x$ is affiliated with $\mathcal{N}$, $u\in\mathcal{N}$ and $u=N_h$ for some $h\in L_\infty[0,\alpha)$. Setting $f=gh$ we have that $x=u\abs{x}=N_hN_g=N_{gh}=N_f$, and $x$ is of the desired form. 
\end{proof}

\textbf{Fact 4.} The algebra of all $\eta$-measurable operators on $\mathcal{N}$ is of the form  
\[
S(\mathcal{N},\eta)=\{N_f: \, f\in L^0[0,\alpha)\text{ and } \exists A, m(A^c)<\infty, f\chi_A\in L_{\infty}[0,\alpha)\},
\]
 and is identified with 
 \[
 S([0,\alpha),m)=\{f\in L^0[0,\alpha):\, \exists A, m(A^c)<\infty, f\chi_A\in L_{\infty}[0,\alpha)\}.
 \]
\begin{proof}
It is naturally to expect that $N_f\geq 0$ if and only if $f\geq 0$ a.e.. Indeed, $N_f\geq 0$ is equivalent to $\langle N_f\xi,\xi\rangle=\langle f\xi,\xi\rangle=\int_0^{\alpha} f\abs{\xi}^2\geq 0$ for every $\xi \in L_2[0,\alpha)$. So for any $A\subset [0,\alpha)$ with finite measure, taking $\xi=\chi_A$, we get $\int_A f\geq 0$, which  is equivalent to $f\geq 0$ a.e..

 Let $x = N_f\in \mathcal{N}^{\text{affil}}$. Then by Fact 3 above, $f\in L^0[0,\alpha)$ and  $\abs{x}=N_{|f|}$.  Given $s>0$ we have that $e^{\abs{x}}(s,\infty)=N_{\chi_B}$ for some measurable set $B$, and $e^{\abs{x}}[0,s]=\one -e^{\abs{x}}(s,\infty)=N_{\chi_{[0,\alpha)}}-N_{\chi_B}=N_{\chi_{B^c}}$. Moreover,
 \[
 N_{|f|\chi_B}=N_{|f|}N_{\chi_B}=|x| e^{\abs{x}}(s,\infty)=\int_{(s,\infty)}\lambda de^{|x|}(\lambda)\geq se^{\abs{x}}(s,\infty)=N_{s\chi_B},
 \]
  and 
 \[
 N_{|f|\chi_{B^c}}=N_{|f|}N_{\chi_{B^c}}=|x| e^{\abs{x}}[0,s]=\int_{[0,s]}\lambda de^{|x|}(\lambda)\leq se^{\abs{x}}[0,s]=N_{s\chi_{B^c}}.
 \]
  Hence $|f|\chi_B\geq s\chi_B$ and $|f|\chi_{B^c}\leq s\chi_{B^c}$. 
  
  We will claim next that $B=\{t\in[0,\alpha):\, \abs{f(t)}>s\}$.
Suppose first that $e^{\abs{x}}(s,\infty)=N_{\chi_B}=0$, equivalently $B=\emptyset$. Then
\[
|f|=|f|\chi_{B^c}\leq s\chi_{B^c}=s\chi_{[0,\alpha)},
\]
and so $B=\emptyset=\{t\in [0,\alpha): |f(t)|>s\}$.
Assume now that $e^{\abs{x}}(s,\infty)\neq 0$.  Then for all $\xi\in L_2[0,\alpha)$ either $e^{|x|}(s,\infty)(\xi)=0$ or  $|x|e^{\abs{x}}(s,\infty)(\xi)\neq se^{\abs{x}}(s,\infty)(\xi)$. Indeed, suppose to the contrary that there exists $\xi\in L_2[0,\alpha)$ such that $e^{|x|}(s,\infty)(\xi)\neq 0$ and $|x|e^{\abs{x}}(s,\infty)(\xi)=se^{\abs{x}}(s,\infty)(\xi)$. Let  $\lambda>s$. Then in view of $|x|e^{|x|}(\lambda,\infty)\geq \lambda e^{|x|}(\lambda,\infty)$ we have
\begin{align*}
\lambda \langle e^{\abs{x}}(\lambda,\infty)(\xi),\xi\rangle
&\leq\langle |x|e^{\abs{x}}(\lambda,\infty)(\xi),\xi\rangle=\langle e^{\abs{x}}(\lambda,\infty)|x|e^{\abs{x}}(s,\infty)(\xi),\xi\rangle\\
&=\langle e^{\abs{x}}(\lambda,\infty)se^{\abs{x}}(s,\infty)(\xi),\xi\rangle=s\langle e^{\abs{x}}(\lambda,\infty)(\xi),\xi\rangle.
\end{align*}
 Since $\lambda>s$, it follows that $\langle e^{\abs{x}}(\lambda,\infty)(\xi),\xi\rangle=0$ for all $\lambda >s$.
 By $\langle e^{\abs{x}}(\lambda,\infty)(\xi),\xi\rangle\uparrow \langle e^{\abs{x}}(s,\infty)(\xi),\xi\rangle$ as $\lambda\downarrow s$, we have that 
\[
\|e^{\abs{x}}(s,\infty)(\xi)\|_{L_2}^2=  \langle e^{\abs{x}}(s,\infty)(\xi),e^{\abs{x}}(s,\infty)(\xi)\rangle=\langle e^{\abs{x}}(s,\infty)(\xi),\xi\rangle=0,
 \]
 which leads to a contradiction. 

Hence if $e^{|x|}(s,\infty)(\xi)\neq 0$,  $\xi\in L_2[0,\alpha)$, then  $|x|e^{\abs{x}}(s,\infty)(\xi)\neq se^{\abs{x}}(s,\infty)(\xi)$. Let $A\subset B$ with  $0<m(A)<\infty$, and choose $\xi =\chi_{A}$. Then $\xi\in L_2[0,\alpha)$ and $e^{|x|}(s,\infty)(\xi)=N_{\chi_B}(\chi_{A})=\chi_B\chi_A=\chi_A\neq 0$ a.e.. Hence 
\[
f\chi_A=N_f(\chi_A)=|x|e^{|x|}(s,\infty)(\xi)\neq s e^{|x|}(s,\infty)(\xi)=s\chi_A.
\]
Since $A$ was an arbitrary  subset of $B$ with $0<m(A)<\infty$, we have that $f(t)\neq s$ for all $t\in B$.  Consequently, $|f|\chi_B>s\chi_B$ and $|f|\chi_{B^c}\leq s\chi_{B^c}$. Hence also in this  case, $B=\{t\in[0,\alpha):\, \abs{f(t)}>s\}$.

Suppose next that $x\in S(\mathcal{N},\eta)$, that is $x\in \mathcal{N}^{\text{affil}}$ and $\eta(e^{\abs{x}}(\lambda,\infty))<\infty$ for $\lambda$ large enough.  Hence $x=N_f$ for some $f\in L^0[0,\alpha)$ and $m\{t:\abs{f(t)}>\lambda\}<\infty$.   Equivalently $x=N_f\in S(\mathcal{N},\eta)$ if and only if there exists a measurable set $A$, with $m([0,\alpha)\setminus A)<\infty$ and $f\chi_A\in L_{\infty}[0,\alpha)$.   Thus $S(\mathcal{N},\eta)$ can be identifies with the set 
\begin{align*}
S([0,\alpha),m)&=\{f\in L^0[0,\alpha):\, d(f,s)<\infty,\text{ for some } s\geq 0\}\\
&=\{f\in L^0[0,\alpha):\, \exists A, m(A^c)<\infty, f\chi_A\in L_{\infty}[0,\alpha)\}.
\end{align*}
The map $f\mapsto N_f$ is a $*$-isomorphism from $S([0,\alpha),m)$ onto $ S(\mathcal{N},\eta)$.
\end{proof}
\textbf{Fact 5.} $\mu(N_f)=\mu(f)$ and for any symmetric function space $E$ we have that $E(\mathcal{N},\eta)$ is isometrically isomorphic to the function space $E$.
\begin{proof}
Note that  $d(N_f,s)=\eta(e^{\abs{x}}(s,\infty))=m\{t:\abs{f(t)}>s\}=d(f,s)$. Hence, for $N_f\in S(\mathcal{N},\eta)$, the generalized singular value function $\mu( N_f )$ is precisely the decreasing rearrangement $\mu(f)$ of the function $f\in S([0,\alpha),m)$. 
\end{proof}

The characterizations of many local geometric properties of an operator $x$ in noncommutative spaces will include some conditions on $n(x)$ and $s(x)$, the null and range projections of $x$.
We will see frequently the  two conditions (i) and (ii) stated below  for $x\in S\Mtau$.  Those conditions  can be easily translated to the commutative settings as follows.\\

\textbf{Fact 6.}  If $\M=\mathcal{N}$,  $\tau=\eta$ and $x= N_f$ for some $f\in L^0[0,\alpha)$, then the conditions \begin{itemize}
\item[](i) $\mu(\infty,x)=0$ or (ii) $n(x)\mathcal{M}n(x^*)=0$ and $\abs{x}\geq\mu(\infty,x)s(x)$, 
\end{itemize}
where   $n(x)\M n(x^*)=0$  means that for any $y\in \M$, $n(x)yn(x^*)=0$,
are equivalent to 
\begin{itemize}
\item[](i') $\abs{f}\geq\mu(\infty,f)\chi_{[0,\alpha)}$.
\end{itemize}
\begin{proof}
By Fact 5,  if $x=N_f\in S(\mathcal{N},\eta)$, then $\mu(x)=\mu(f)$ and (i) gives $\mu(\infty,f)=0$.  It is not difficult to check that  $s(x)=N_{\chi_{\supp f}} $ and $n(x)=N_{\chi_{(\supp f)^c}}$. Similarly, $s(x^*)=N_{\chi_{\supp \overline{f}}} =N_{\chi_{\supp f}}$ and $n(x^*)=N_{\chi_{(\supp \overline{f})^c}}=N_{\chi_{(\supp f)^c}}$. Hence in view of the condition $n(x)\mathcal{N}n(x^*)=0$,  taking  $N_{\chi_{[0,\alpha)}}\in\mathcal{N}$  we get  $0=N_{\chi_{(\supp f)^c}}N_{\chi_{[0,\alpha)}}N_{\chi_{(\supp f)^c}}=N_{\chi_{(\supp f)^c}}$. Therefore $\chi_{(\supp f)^c}=0$ a.e., and so $s(x)=N_{\chi_{\supp f}}=N_{\chi_{[0,\alpha)}}$. If additionally $\abs{x}\geq \mu(\infty,x)s(x)$, then we have $N_{|f|}\geq \mu(\infty, f)N_{\chi_{[0,\alpha)}}$ and $|f|\geq \mu(\infty,f)\chi_{[0,\alpha)}$. Thus (i) and (ii) imply (i'). Suppose now that (i') holds, that is $|f|\geq \mu(\infty, f)\chi_{[0,\alpha)}$, where $x=N_f\in S(\mathcal{N},\eta)$. Then either $\mu(\infty,f)=\mu(\infty, x)=0$ or $(\supp f)^c=0$ a.e. and $n(x)=N_{\chi_{(\supp f)^c}}=0$.  Hence in either case $|x|\geq \mu(\infty,x) \one$ and either (i) or (ii) is satisfied.
\end{proof}

\section{Trace preserving isomorphisms} 
\label{sec:isom}

Recall that given two $*$-algebras $\mathcal A$ and $\mathcal B$, the mapping $\Phi:\mathcal{A}\to\mathcal{B}$ is  called a \textit{$*$-homomorphism} if $\Phi$ is an algebra homomorphism and $\Phi(x^*)=(\Phi(x))^*$ for all $x\in\mathcal{A}$. If, in addition, $\mathcal A$ and $\mathcal B$ are unital and $\Phi(\one_A)=\one_B$,  where $\one_A$ and $\one_B$ are units in $\mathcal{A}$ and $\mathcal{B}$ respectively,  then $\Phi$ is called \textit{unital $*$-homomorphism}. The term $*$-\textit{isomorphism} stands for an injective $*$-homomorphism. Observe that every  $*$-homomorphism $\Phi:\mathcal A\to\mathcal B$ is positive, that is for any $x\in \mathcal{A}$, if $x\ge 0$ then $\Phi(x)\ge 0$. Indeed, since $\Phi(\sqrt{x})=\Phi((\sqrt{x})^*)=\left(\Phi(\sqrt{x})\right)^*$, if follows that
\[
\Phi(x)=\Phi(\sqrt{x}\sqrt{x})=\Phi(\sqrt{x})\Phi(\sqrt{x})=(\Phi(\sqrt{x}))^*\Phi(\sqrt{x})=\abs{\Phi(\sqrt{x})}^2\ge 0.
\]

J. Arazy in \cite{A1} observed  that $E$ is isometric to a $1$-complemented subspace of $C_E$, and therefore many geometric properties of $C_E$ are inherited by $E$. Moreover, for each $x\in C_E$ the above isometry can be found with additional property that  it maps the singular sequence  $S(x)$ into $x$. Hence also locally,  a geometric property of $x$  can be passed along into the sequence $S(x)$.

The J. Arazy's result relies on the Schmidt representation of a compact operator. The symmetric sequence space is embedded in the subspace of diagonal operators in $B(H)$. We include below the result with an outline of a proof. 

\begin{proposition}\cite[Proposition 1.1]{A1}
\label{prop:isomarazy}
Let $E\neq \ell_\infty$ be a symmetric sequence space and $x\in C_E$. Then there exists a linear isometry  $V:E\to C_E$ such that  $V(S(x))=x$. If $x\geq 0$ then $V$ is  in addition a $*$-isomorphism. Moreover, there is a contractive projection from $C_E$ onto $V(E)$. 
\end{proposition}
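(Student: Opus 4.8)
The plan is to build the isometry $V$ explicitly from the Schmidt decomposition of $x$. Write $x = \sum_{n} s_n(x)\, \langle \cdot, \phi_n\rangle\, \psi_n$, where $\{\phi_n\}$ and $\{\psi_n\}$ are orthonormal systems in $H$ (the eigenvectors of $|x| = (x^*x)^{1/2}$ and of $|x^*|$ respectively), and $s_n(x) = s_n(x)$ are the singular numbers in decreasing order. Complete $\{\phi_n\}$ and $\{\psi_n\}$ to maximal orthonormal systems if necessary (padding with zero singular values). Then for a sequence $a = \{a_n\} \in E$ define
\[
V(a) = \sum_{n} a_n\, \langle \cdot, \phi_n\rangle\, \psi_n.
\]
First I would check that $V(a)$ is a well-defined bounded operator with $s_n(V(a)) = \mu(n-1, a) = \mu(n-1,\{|a_n|\})$: indeed $V(a)$ has singular value sequence equal to the decreasing rearrangement of $\{|a_n|\}$, since $|V(a)| = \sum_n |a_n| \langle \cdot,\phi_n\rangle \phi_n$ is diagonal in the basis $\{\phi_n\}$. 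Hence $\|V(a)\|_{C_E} = \|\mu(a)\|_E = \|a\|_E$ because $E$ is symmetric, so $V$ is a linear isometry into $C_E$, and by construction $V(S(x)) = V(\{s_n(x)\}) = x$.

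Next, for the case $x \geq 0$: then $x = |x|$, so $\psi_n = \phi_n$ and $V(a) = \sum_n a_n \langle\cdot,\phi_n\rangle\phi_n$ is the operator that is ``diagonal'' with respect to the fixed orthonormal basis $\{\phi_n\}$ (extended to a maximal system). I would verify that on the commutative $*$-algebra of such diagonal operators, $V$ is multiplicative ($V(ab) = V(a)V(b)$, entrywise product on the left, composition on the right) and $*$-preserving ($V(\bar a) = V(a)^*$), which is immediate from orthonormality of $\{\phi_n\}$; injectivity is clear since $a_n = \langle V(a)\phi_n,\phi_n\rangle$. So $V$ is a $*$-isomorphism onto its range, which is (a corner of) the von Neumann algebra of operators diagonalized by $\{\phi_n\}$ — this is exactly the kind of trace-preserving $*$-isomorphism between $\ell_\infty$-type algebras and a $*$-subalgebra of $B(H)$ alluded to in the introduction.

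Finally, for the contractive projection $P : C_E \to V(E)$: I would use the conditional-expectation-type map that compresses an operator to its ``diagonal'' relative to $\{\phi_n, \psi_n\}$, namely $P(y) = V(\{\langle y\phi_n, \psi_n\rangle\}_n)$ when this makes sense, or more robustly the averaging projection obtained by integrating $u^* y u$ over the compact group of unitaries $u = \sum_n \lambda_n \langle\cdot,\phi_n\rangle\phi_n$ with $|\lambda_n| = 1$ (with the analogous action on the $\psi_n$ side), against Haar measure; this kills all off-diagonal Schmidt coefficients and fixes $V(E)$ pointwise. Since each $u$ acts isometrically on $C_E$ by Lemma~\ref{lm:singfun}(1), the average is contractive, and it lands in $V(E)$. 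The main obstacle I anticipate is the bookkeeping needed to show $P$ is well-defined and lands exactly in $V(E)$ when $x$ has only finitely many (or infinitely many) nonzero singular values and the orthonormal systems must be padded — i.e., handling the possibly different multiplicities of zero in the domain and range of $x$ so that the averaging argument is legitimate; the isometry and $*$-isomorphism parts are routine once the Schmidt decomposition is in hand.
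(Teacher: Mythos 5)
Your construction of $V$ and the treatment of the case $x\ge 0$ coincide with the paper's proof: the paper also defines $V(\lambda)=\sum_n\lambda_n\langle\cdot,e_n\rangle f_n$ from the Schmidt representation, computes $|V(\lambda)|^2=\sum_n|\lambda_n|^2\langle\cdot,e_n\rangle e_n$ to get the isometry (using $E\subset c_0$ to see that $V(\lambda)$ is compact with singular values the decreasing rearrangement of $|\lambda|$), and observes that for $x\ge 0$ one may take $f_n=e_n$, so that $V$ is multiplicative and $*$-preserving on the diagonal algebra. Up to that point your argument is complete and essentially identical.

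The projection step, however, is where your proposal has a genuine gap, and you have flagged it yourself. Your first candidate $P(y)=V(\{\langle y\phi_n,\psi_n\rangle\}_n)$ is exactly the paper's map $Py=\sum_n\langle ye_n,f_n\rangle\langle\cdot,e_n\rangle f_n$, but the whole content of this step is to show that the diagonal sequence $\{\langle y e_n,f_n\rangle\}$ lies in $E$ and that $\|\{\langle y e_n,f_n\rangle\}\|_E\le\|y\|_{C_E}$ for \emph{every} $y\in C_E$; the paper gets both at once from the identity $\|y\|_{C_E}=\sup\{\|\{\langle y\phi_n,\psi_n\rangle\}\|_E\}$ over all pairs of orthonormal systems \cite[Proposition 2.6]{Sim}, after which $P(C_E)\subseteq V(E)$, $P|_{V(E)}=\mathrm{id}$, $P^2=P$ and $\|P\|\le1$ are immediate. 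You do not supply a substitute for this inequality. Your fallback, averaging $w_\lambda^* y\,u_\lambda$ over diagonal unitaries against Haar measure, is not routine for a general symmetric sequence space: the integral has to be made sense of as a $C_E$-valued integral (Bochner measurability fails when $C_E$ is nonseparable, e.g.\ for Marcinkiewicz-type $E$), and the natural finite averages (over signs or roots of unity in the first $N$ coordinates) are contractive but converge to the diagonal compression only entrywise/in the weak operator topology, so transferring the norm bound to the limit needs a Fatou-type lower semicontinuity of $\|\cdot\|_{C_E}$ that an arbitrary symmetric $E$ need not have. In other words, the averaging route quietly reintroduces exactly the kind of estimate that \cite[Proposition 2.6]{Sim} (equivalently, a Ky Fan/Horn-type submajorization of $\{\langle y\phi_n,\psi_n\rangle\}$ by $S(y)$ together with the symmetry of $\|\cdot\|_E$) is invoked to provide; citing or proving that inequality is the missing ingredient that would complete your argument.
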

 \begin{proof}
 Fix $x\in C_E$ and let $x=\sum_{n=1}^\infty s_n(x)\langle\cdot,e_n\rangle f_n$ be its Schmidt representation, where $\{e_n\}$ and $\{f_n\}$ are orthonormal sequences in $H$. Define $V:E\to C_E$ by 
 \[
 V(\lambda)=\sum_{n=1}^\infty \lambda_n\langle\cdot,e_n\rangle f_n,\quad\text{where}\quad \lambda=\{\lambda_n\}\in E.
 \]
 Clearly $V(S(x))=x$.  Note that $|V(\lambda)|^2=V(\lambda)^*V(\lambda)=\sum_{n=1}^\infty \overline{\lambda_n}\langle\sum_{k=1}^\infty \lambda_k\langle\cdot,e_k\rangle f_k,f_n\rangle e_n=\sum_{n=1}^\infty |\lambda_n|^2\langle\cdot,e_n\rangle e_n $. Hence the eigenvalues of $|V(\lambda)|$ are $|\lambda_n|$.  In view of $E\subset c_0$, for every $\lambda\in E$, the sequence of singular numbers $s_n(V(\lambda))=\sqrt{s_n(|V(\lambda)|^2)}$ is a decreasing permutation of $|\lambda|=\{|\lambda_n|\}$ approaching zero. Hence $V(\lambda)$ is a compact operator and $\|V(\lambda)\|_{C_E}=\|\lambda\|_E$.  If $x\geq 0$ then $x=\sum_{n=1}^\infty s_n(x)\langle\cdot,e_n\rangle e_n$ and  $V$ is also a $*$-isomorphism.
 
Define next $P:C_E\to C_E$ by
\[
Py=\sum_{n=1}^\infty\langle ye_n,f_n\rangle\langle\cdot,e_n\rangle f_n,\quad y\in C_E.
\] 
By \cite[Proposition 2.6]{Sim}, for any $y\in C_E$ we have 
\[
\|y\|_{C_E}=\sup\{\|\{\langle y\phi_n,\psi_n\rangle\}\|_E : \ \text{all orthonormal sets $\{\phi_n\},\,\{\psi_n\}$ in $H$}\}.
\] 
 Hence if $y\in C_E$, then $\{\langle ye_n,f_n\rangle\}\in E$  and $P(C_E)\subseteq V(E)$. Let $z\in V(E)$ and $\lambda=\{\lambda_n\}\in E$ be such that $V(\lambda)=z$. Then for all $n\in\mathbb{N}$, $\langle ze_n, f_n\rangle =\lambda_n$ and therefore $Pz=z$. Thus $P(C_E)= V(E)$. Moreover, $\|Py\|_{C_E}=\|\{\langle ye_n,f_n\rangle\}\|_E\leq \|y\|_{C_E}$ for every $y\in C_E$. Hence $\|P\|\leq 1$.  Finally, it is easy to verify that $P^2=P$ and so $P$ is a contractive projection from $C_E$ onto $V(E)$.
 \end{proof}

It turns out that J. Arazy's result can be extended to noncommutative symmetric function spaces $\nonsp$, but only under certain conditions  imposed on the operator $x$ itself, the trace $\tau$ and the von Neumann algebra $\M$.

\begin{proposition}\cite{noncomm}, \cite[Lemma 1.3]{CKS1992}
\label{isom2}
Let $\mathcal{M}$ be a non-atomic von Neumann algebra with a faithful, normal,  $\sigma$-finite trace $\tau$ and $x\in
S_0^+\left(\mathcal{M},\tau\right)$.  Then there exists a non-atomic commutative von Neumann subalgebra $\mathcal{N}$ in $ \M$  and a $*$-isomorphism $U$ from the $*$-algebra $S(\mathcal{N},\tau)$ onto the $*$-algebra $S\left(\left[0,\tauone\right),m\right)$ such that $x\in S(\mathcal{N},\tau)$ and $\mu(y)=\mu(Uy)$ for every $y\in S(\mathcal{N},\tau)$.
\end{proposition}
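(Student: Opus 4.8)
The plan is to realize $x$ inside a copy of $L_\infty[0,\tauone)$ that sits in $\M$ as a von Neumann subalgebra, and then to transport the identifications established in Facts 1--5. First I would take $\mathcal{A}_0$ to be the von Neumann subalgebra of $\M$ generated by the spectral projections $e^x((q,\infty))$, $q\in\mathbb{Q}^+$; it is abelian, countably generated, and $x$ is affiliated with it. Because $x\in S_0^+\Mtau$ we have $\mu(\infty,x)=0$, so $d(s,x)=\tau(e^x((s,\infty)))<\infty$ for every $s>0$; hence for each $\epsilon>0$ the pairwise orthogonal projections $e^x(\{\lambda\})$, $\lambda>\epsilon$, have traces summing to at most $d(\epsilon,x)$, so only countably many of them are nonzero and each has finite trace. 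Thus the atoms of $\mathcal{A}_0$ are the finite-trace eigenprojections $p_k:=e^x(\{\lambda_k\})$, $\lambda_k>0$, together with the null projection $n(x)=e^x(\{0\})$, whose trace may be infinite; setting $p_c:=s(x)-\sum_k p_k$ for the non-atomic part, we have $\mathcal{A}_0=\mathcal{A}_0 p_c\oplus\bigoplus_k\mathbb{C}p_k\oplus\mathbb{C}n(x)$.

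Next I would enlarge $\mathcal{A}_0$, keeping $x$ affiliated, to a non-atomic algebra. On a corner $p\M p$ with $p$ an atom of $\mathcal{A}_0$ one has $xp=\lambda p$ for a scalar $\lambda$, so \emph{every} von Neumann subalgebra of $p\M p$ commutes with $x$. Since $\M$ is non-atomic, so is $p\M p$; and since $\tau$ is $\sigma$-finite on $\M$, every projection, in particular $p$, is $\sigma$-finite, so $\tau$ restricted to $p\M p$ is a semifinite, $\sigma$-finite trace of total mass $\tau(p)\in(0,\infty]$. Using the halving and intermediate-value properties of traces in non-atomic von Neumann algebras (cf. \cite{noncomm}) one constructs, from a dyadic filtration of projections, a \emph{commutative, countably generated} non-atomic unital subalgebra $\mathcal{B}_p$ of $p\M p$; being countably generated, non-atomic and semifinitely $\sigma$-finitely traced of mass $\tau(p)$, it is trace-preservingly $*$-isomorphic to $L_\infty([0,\tau(p)),m)$, and $p\in\mathcal{B}_p$. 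Let $\mathcal{N}$ be the von Neumann subalgebra of $\M$ generated by $\mathcal{A}_0 p_c$ and the $\mathcal{B}_p$; these commute and have orthogonal central supports, so $\mathcal{N}=\mathcal{A}_0 p_c\oplus\bigoplus_p\mathcal{B}_p$ is abelian, non-atomic, countably generated, has unit $p_c+\sum_p p=\one$, contains $\mathcal{A}_0$, and carries a semifinite, $\sigma$-finite restriction of $\tau$. Since $e^x((s,\infty))\in\mathcal{N}$ and $\tau$ is unchanged on $\mathcal{N}$, the operator $x$ is $\tau|_\mathcal{N}$-measurable with the same distribution function, so $x\in S(\mathcal{N},\tau)$.

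It remains to identify $S(\mathcal{N},\tau)$ with $S([0,\tauone),m)$. The algebra $\mathcal{N}$ is a countable direct sum of commutative non-atomic von Neumann algebras with semifinite $\sigma$-finite traces, so $\mathcal{N}\cong L_\infty(\Omega,\nu)$, where $(\Omega,\nu)$ is a non-atomic $\sigma$-finite measure space with countably generated measure algebra and total mass $\tau(\one)=\tauone$; by the classical uniqueness of such measure spaces, $(\Omega,\nu)$ is measure-preservingly isomorphic to $([0,\tauone),m)$. Hence there is a trace-preserving $*$-isomorphism $\phi$ of $\mathcal{N}$ onto the algebra $\mathcal{N}_0:=\{N_f:f\in L_\infty[0,\tauone)\}$ of Fact 1 equipped with the integral trace $\eta$. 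A trace-preserving $*$-isomorphism of semifinite von Neumann algebras maps spectral projections to spectral projections and preserves the trace, hence it extends to a $*$-isomorphism of the associated $*$-algebras of $\tau$-measurable operators that preserves distribution functions and singular value functions; so $\phi$ extends to a $\mu$-preserving $*$-isomorphism $S(\mathcal{N},\tau)\to S(\mathcal{N}_0,\eta)$. Composing with the identification $S(\mathcal{N}_0,\eta)\cong S([0,\tauone),m)$, $N_g\mapsto g$, of Fact 4 — which preserves $\mu$ by Fact 5 — we obtain the desired $*$-isomorphism $U:S(\mathcal{N},\tau)\to S([0,\tauone),m)$ with $\mu(Uy)=\mu(y)$ for all $y\in S(\mathcal{N},\tau)$, in particular $\mu(Ux)=\mu(x)$.

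The heart of the proof is the construction in the second paragraph: inside each corner $p\M p$ — including the possibly infinite-trace corner $n(x)\M n(x)$, where one needs a copy of $L_\infty[0,\infty)$ — one must produce a genuinely \emph{trace-preserving} copy of $L_\infty$ of the right interval while keeping the restricted trace semifinite and $\sigma$-finite. This is exactly where the hypotheses that $\M$ be non-atomic and that $\tau$ be $\sigma$-finite are used (the hypothesis $x\in S_0$ enters in the first paragraph, to force $d(s,x)<\infty$ for every $s>0$); the remaining ingredients — the identifications of Facts 1--5 and the classification of separable, non-atomic, $\sigma$-finite measure spaces — are standard.
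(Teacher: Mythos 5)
The paper itself does not prove Proposition \ref{isom2} — it is quoted from \cite{noncomm} and \cite[Lemma 1.3]{CKS1992} — so there is no in-text argument to compare yours with; judged on its own, your construction is the standard one underlying those citations and I find it sound. The decomposition of the spectral algebra $\mathcal{A}_0$ of $x$ into its non-atomic part and its atoms (the eigenprojections $e^{x}(\{\lambda_k\})$ and $n(x)$), the refinement of each atom $p$ by a commutative non-atomic subalgebra of $p\M p$ that is trace-preservingly a copy of $L_\infty[0,\tau(p))$, and the final identification of $\mathcal{N}$ with $L_\infty[0,\tauone)$ via the classification of separable non-atomic $\sigma$-finite measure algebras, followed by the extension of the trace-preserving $*$-isomorphism to the algebras of measurable operators, together give exactly the conclusion $x\in S(\mathcal{N},\tau)$ and $\mu(Uy)=\mu(y)$; the last extension step is consistent with the paper's own toolkit, since Section 4 uses precisely such an extension to obtain $\tilde{\pi}$ from a trace-preserving $*$-isomorphism of the underlying algebras. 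Two assertions deserve an explicit line each, though neither is a genuine gap: (a) that $\sigma$-finiteness of $\tau$ on $\M$ passes to every corner $p\M p$ is not a lattice triviality (infima with an increasing sequence need not exhaust $p$); it follows because a $\sigma$-finite trace yields a faithful normal state, so $\M$ is countably decomposable, and then a maximal orthogonal family of finite-trace subprojections of $p$ is countable and sums to $p$ by semifiniteness; (b) the restriction of $\tau$ to the continuous summand $\mathcal{A}_0p_c$ must be checked to be semifinite and $\sigma$-finite before the measure-algebra classification is applied, which follows from $e^{x}((1/n,\infty))p_c\uparrow p_c$ with $\tau(e^{x}((1/n,\infty)))=d(1/n,x)<\infty$ — another point, besides the countability of the eigenvalues, where the hypothesis $x\in S_0^+\Mtau$ is used. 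With these one-line justifications added, your proof is complete at the level of rigor the cited sources themselves employ.
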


Given an operator $x\in S\Mtau$ and a projection $p\in \mathcal{P}(\M)$ we define the von Neumann algebra $\M_{p}=\{py|_{p(H)}:\quad y\in\M\}$. It is known that there is a unital  $*$-isomorphism from $S\left(\M_p,\tau_p\right)$ onto $pS\Mtau p$. Moreover, the decreasing rearrangement $\mu^{\tau_{p}}$ computed with respect to the von Neumann algebra $\left(\M_{p},\tau_{p}\right)$ is given by $\mu^{\tau_{p}}(y)=\mu(pyp)$, $y\in S\left(\M_p,\tau_p\right)$.
See \cite{CKK2012, DDPnoncomm} for details.

Using measure preserving transformations which retrieve functions from their decreasing rearrangements and the inverse operator $U^{-1}$ from  Proposition \ref{isom2} the following can be shown. 

\begin{proposition} \cite{noncomm}
\label{isom1}
Suppose that $\mathcal{M}$ is a non-atomic von Neumann algebra with a faithful, normal  trace $\tau$. 
Let $x\in\left( L_1(\mathcal{M},\tau)+\mathcal{M}\right)\cap S_0^+\left(\mathcal{M},\tau\right)$. Then there exist a non-atomic
commutative von Neumann subalgebra 
$\mathcal{N}\subset s(x)\M s(x)$ and a unital $*$-isomorphism
$V$ acting from the $*$-algebra  $S\left(\left[0,\tau(s(x))\right),m\right)$ 
into the $*$-algebra $S(\mathcal{N},\tau)$, such that 
\[ V\mu(x)=x\ \ \ \text{ and }\ \ \ \mu(V(f))=\mu(f)\ \ \text{ for all } f\in S\left(\left[0,\tau(s(x))\right),m\right).
\]
\end{proposition}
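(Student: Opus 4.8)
The plan is to deduce the statement from \propref{isom2} by first passing to the reduced von Neumann algebra $\M_{p}$ with $p=s(x)$, where the trace becomes $\sigma$-finite, and then correcting the isomorphism produced there by a measure-preserving transformation that recovers a function from its decreasing rearrangement.

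First I would set $p=s(x)$ and note that $x=pxp$ (since $x\ge 0$), so $x\in pS\Mtau p$, and $p=e^{\abs{x}}(0,\infty)$. Because $x\in S_0\Mtau$ we have $\mu(\infty,x)=0$, so for every $\eps>0$ there is $t_0$ with $\mu(t_0,x)<\eps$, whence $d(\eps,x)\le d(\mu(t_0,x),x)\le t_0<\infty$. Consequently the projections $e^{\abs{x}}(1/n,\infty)$ have finite trace and increase to $p$, so the restriction $\tau_p$ of $\tau$ to $\M_{p}$ is $\sigma$-finite; moreover $\M_{p}$ is non-atomic, $\tau_p$ is faithful and normal, and $\tau_p(\one_{\M_{p}})=\tau(p)=\tau(s(x))$. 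Let $\Theta$ be the canonical unital $*$-isomorphism of $S(\M_{p},\tau_p)$ onto $pS\Mtau p$ recalled above, for which $\mu(\Theta(z))=\mu^{\tau_p}(z)$, and put $\tilde x=\Theta^{-1}(x)$. Then $\tilde x\ge 0$ and $\mu^{\tau_p}(\tilde x)=\mu(x)$, so $\tilde x\in S_0^+(\M_{p},\tau_p)$.

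Next I would apply \propref{isom2} to the pair $(\M_{p},\tau_p)$ and the operator $\tilde x$. Writing $\beta=\tau(s(x))$, this produces a non-atomic commutative von Neumann subalgebra $\mathcal{N}_0\subseteq\M_{p}$ with $\tilde x\in S(\mathcal{N}_0,\tau_p)$, together with a $*$-isomorphism $U\colon S(\mathcal{N}_0,\tau_p)\to S([0,\beta),m)$ satisfying $\mu(Uy)=\mu^{\tau_p}(y)$ for all $y$. Put $g=U\tilde x$, a nonnegative element of $S([0,\beta),m)$ whose decreasing rearrangement $\mu(g)=\mu^{\tau_p}(\tilde x)=\mu(x)$ lies in $L_1[0,\beta)+L_\infty[0,\beta)$ by the hypothesis $x\in L_1\Mtau+\M$. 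The key technical input is then the retrieval theorem in the spirit of Ryff, in the form valid on the $\sigma$-finite interval $[0,\beta)$: there is a measure-preserving transformation $\sigma\colon[0,\beta)\to[0,\beta)$ with $g=\mu(x)\circ\sigma$. Being measure preserving, $\sigma$ induces a unital injective $*$-homomorphism $C_\sigma\colon S([0,\beta),m)\to S([0,\beta),m)$, $C_\sigma f=f\circ\sigma$, which preserves distribution functions and hence satisfies $\mu(C_\sigma f)=\mu(f)$ for every $f$.

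Finally, I would let $\mathcal{N}=\Theta(\mathcal{N}_0)$, a non-atomic commutative von Neumann subalgebra of $s(x)\M s(x)$, so that $\Theta$ restricts to a unital $*$-isomorphism of $S(\mathcal{N}_0,\tau_p)$ onto $S(\mathcal{N},\tau)$, and define
\[
V=\Theta\circ U^{-1}\circ C_\sigma\colon S([0,\beta),m)\longrightarrow S(\mathcal{N},\tau).
\]
As a composition of unital $*$-isomorphisms with a unital injective $*$-homomorphism, $V$ is a unital $*$-isomorphism into $S(\mathcal{N},\tau)$. Then $V(\mu(x))=\Theta(U^{-1}(\mu(x)\circ\sigma))=\Theta(U^{-1}(g))=\Theta(\tilde x)=x$, while for arbitrary $f\in S([0,\beta),m)$ one has $\mu(V(f))=\mu^{\tau_p}(U^{-1}(C_\sigma f))=\mu(C_\sigma f)=\mu(f)$, using the identity $\mu\circ\Theta=\mu^{\tau_p}$ and the defining property of $U$. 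I expect the main obstacle to be the retrieval step: constructing $\sigma$ on the possibly infinite interval and checking it is measure preserving, which is handled by cutting $[0,\beta)$ along the level sets of $\mu(x)$ and matching them, using non-atomicity of the interval, with the corresponding level sets of $g$.
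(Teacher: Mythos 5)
Your argument is correct and essentially the paper's own proof: the paper likewise observes that $\tau$ restricted to $s(x)\M s(x)$ is $\sigma$-finite, applies Proposition~\ref{isom2} there, sets $f=Ux$, retrieves $f$ from $\mu(f)=\mu(x)$ via a measure-preserving transformation $\sigma$ (Bennett--Sharpley, Ch.~II, Cor.~7.6, defined on $\supp f$), and defines $V(g)=U^{-1}(g\circ\sigma)$, which is exactly your $\Theta\circ U^{-1}\circ C_\sigma$ with the identification of $S(\M_{s(x)},\tau_{s(x)})$ with $s(x)S\Mtau s(x)$ made explicit. The only cosmetic difference is that you take $\sigma$ defined on all of $[0,\tau(s(x)))$; this is legitimate because $\supp (Ux)$ has null complement ($U$ is surjective, hence unital, and $s(x)$ is the unit of $\mathcal N$), so your version and the paper's coincide up to a null set.
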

\begin{proof}
Observe first that since $\mu(\infty, x)=0$, $\tau(e^{\abs{x}}(\frac 1n, \infty))<\infty$ for every $n\in\mathbb{N}$. Since $e^{\abs{x}}(\frac 1n, \infty)\uparrow e^{\abs{x}}(0,\infty)=s(x)$ the restriction $\tau|_{s(x)\M s(x)}$ is $\sigma$-finite. By Proposition \ref{isom2} there is a non-atomic commutative subalgebra $\mathcal{N}$ of $s(x)\M s(x)$ and a $*$-isomorphism $U$ from $S(\mathcal{N},\tau)$ onto $S([0,\tau(s(x))),m)$ such that  $x\in S(\mathcal{N},\tau)$ and $\mu(y)=\mu(Uy)$ for every $y\in S(\mathcal{N},\tau)$. Set $f=Ux$. Since $x\geq 0$ and every $*$-homomorphism preserves the order, $f\geq 0$. We also have $\mu(f)=\mu(Ux)=\mu(x)$. In particular $\mu(\infty, f)=\mu(\infty, x)=0$ and $m(\supp f)=m(\supp \mu(f))=m(\supp \mu(x))=\tau(s(x))$.  By \cite[ChII, Corollary 7.6]{BS}, there is a measure preserving transformation $\sigma:\supp f\to [0,\tau(s(x)))$ such that $f(t)=\mu(\sigma(t), f)=\mu(\sigma(t), x)$ for every $t\in \supp f$. The term measure preserving means that $m(\sigma^{-1}(E))=m(E)$ for every measurable subset $E\subseteq [0,\tau(s(x)))$.

Define a $*$-homomorphism $V$ from $S([0,\tau(s(x))),m)$ into $S(\mathcal{N},\tau)$ by setting
\[
V(g)=U^{-1}(g\circ \sigma),\quad g\in S([0,\tau(s(x))),m).
\]
We have, 
\[
V(\mu(x))=U^{-1}(\mu(x)\circ \sigma)=U^{-1}(\mu(f)\circ \sigma)=U^{-1}(f)=x.\]
 Moreover, for any $g\in S([0,\tau(s(x))),m)$,
 \[
\mu(V(g))=\mu(U^{-1}(g\circ \sigma))=\mu(g\circ \sigma)=\mu(g).
 \]

\end{proof}
\begin{proposition} \cite{noncomm} 
\label{isom3}
Suppose that $\mathcal{M}$ is a non-atomic von Neumann algebra with a faithful, normal,  $\sigma$-finite trace $\tau$.
Let $x\in\left( L_1(\mathcal{M},\tau)+\mathcal{M}\right)\cap S_0^+\left(\mathcal{M},\tau\right)$ and $\tau(s(x))<\infty$. Then there exist a non-atomic commutative von Neumann subalgebra $\mathcal{N}\subset\M$ and a unital $*$-isomorphism $V$ acting from the $*$-algebra $S\left(\left[0,\tauone\right),m\right)$ into the $*$-algebra $S(\mathcal{N},\tau)$, such that 
\[ V\mu(x)=x\ \ \ \text{ and }\ \ \ \mu(V(f))=\mu(f)\ \ \text{ for all } f\in S\left([0,\tauone),m\right).
\]
\end{proposition}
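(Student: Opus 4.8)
The plan is to cut $[0,\tauone)$ into the two pieces $[0,\tau(s(x)))$ and $[\tau(s(x)),\tauone)$, realize the first inside the corner $s(x)\M s(x)$ by \propref{isom1}, realize the second inside the complementary corner $n(x)\M n(x)$, and then glue the two commutative subalgebras and the two $*$-isomorphisms along the orthogonal decomposition $\one=s(x)+n(x)$. The role of the hypothesis $\tau(s(x))<\infty$ is exactly to make the lengths match: $\tau(n(x))=\tauone-\tau(s(x))$, so $[\tau(s(x)),\tauone)$ carries precisely the total mass of $\tau$ restricted to $n(x)\M n(x)$. The degenerate cases are immediate: if $x=0$ one only needs a non-atomic commutative subalgebra, while if $n(x)=0$ then $\tau(s(x))=\tauone$ and \propref{isom1} applied to $x$ is literally the assertion; so I would assume $x\neq0$ and $n(x)\neq0$, hence $0<\tau(s(x))<\infty$.

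First I would apply \propref{isom1} to $x$ (which satisfies its hypotheses since $x\in\left(L_1\Mtau+\M\right)\cap S_0^+\Mtau$), obtaining a non-atomic commutative von Neumann subalgebra $\mathcal{N}_1\subset s(x)\M s(x)$ and a unital $*$-isomorphism $V_1\colon S([0,\tau(s(x))),m)\to S(\mathcal{N}_1,\tau)$ with $V_1\mu(x)=x$ and $\mu(V_1g)=\mu(g)$ for all $g$. For the second piece I would note that $n(x)\M n(x)$ is again non-atomic (a minimal projection of a corner would be minimal in $\M$), carries the faithful, normal, $\sigma$-finite trace obtained by restricting $\tau$, and, under the identification of $S(\M_{n(x)},\tau_{n(x)})$ with $n(x)S\Mtau n(x)$ recalled just before \propref{isom1}, has decreasing rearrangements agreeing with those computed in $\M$. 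Since \propref{isom1} needs a witness operator, I would use the $\sigma$-finiteness of $n(x)$ to pick pairwise orthogonal projections $q_k\le n(x)$ with $\sum_k q_k=n(x)$ and $\tau(q_k)<\infty$, and set $z=\sum_k 2^{-k}q_k$; this is a bounded positive element of $n(x)S\Mtau n(x)$ with $s(z)=n(x)$ and $\mu(\infty,z)=0$, so \propref{isom1} applied inside $n(x)\M n(x)$ to $z$ yields a non-atomic commutative $\mathcal{N}_2\subset n(x)\M n(x)$ and a unital $*$-isomorphism $V_2\colon S([0,\tau(n(x))),m)\to S(\mathcal{N}_2,\tau)$ with $\mu(V_2g)=\mu(g)$ for all $g$ (its further conclusion about $\mu(z)$ is not needed).

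The gluing is where the verification sits, and is the step I expect to require the most care (though it is routine). I would set $\mathcal{N}=\mathcal{N}_1\oplus\mathcal{N}_2$, the von Neumann algebra generated in $\M$ by $\mathcal{N}_1\cup\mathcal{N}_2$: since $s(x)\perp n(x)$ the two subalgebras commute and annihilate one another, so $\mathcal{N}$ is a commutative von Neumann subalgebra containing $\one=s(x)+n(x)$, and it is non-atomic because a minimal projection of it would split as $e_1+e_2$ with some $e_i$ minimal in $\mathcal{N}_i$. Identifying $[\tau(s(x)),\tauone)$ with $[0,\tau(n(x)))$ by the translation $\theta$, and checking that for $f\in S([0,\tauone),m)$ both $f\chi_{[0,\tau(s(x)))}$ and $(f\chi_{[\tau(s(x)),\tauone)})\circ\theta$ lie in the corresponding spaces $S(\cdot,m)$, I would define
\[
V(f)=V_1\big(f\chi_{[0,\tau(s(x)))}\big)+V_2\big((f\chi_{[\tau(s(x)),\tauone)})\circ\theta\big).
\]
Then $V$ is a unital $*$-homomorphism: the cross terms in $V(f)V(h)$ vanish because $V_1(\cdot)\in s(x)S\Mtau s(x)$, $V_2(\cdot)\in n(x)S\Mtau n(x)$ and $s(x)\perp n(x)$, and $V(\chi_{[0,\tauone)})=s(x)+n(x)=\one$; injectivity follows by compressing $V(f)$ by $s(x)$ and by $n(x)$ and invoking injectivity of $V_1$ and $V_2$. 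The identity $V\mu(x)=x$ holds because $\mu(x)$ vanishes on $[\tau(s(x)),\tauone)$ (since $\tau(s(x))=m(\supp\mu(x))$), so only the first summand survives and equals $x$. Finally, for orthogonally supported $a\in s(x)S\Mtau s(x)$ and $b\in n(x)S\Mtau n(x)$ one has $\abs{a+b}^2=\abs{a}^2+\abs{b}^2$ with orthogonal spectral projections, hence $d(t,a+b)=d(t,a)+d(t,b)$ for $t>0$; applying this to the two summands of $V(f)$, together with $\mu(V_1g)=\mu(g)$, $\mu(V_2g)=\mu(g)$ and the additivity of the distribution function of $f$ over the partition $[0,\tauone)=[0,\tau(s(x)))\cup[\tau(s(x)),\tauone)$, gives $d(t,V(f))=d(t,f)$, i.e. $\mu(V(f))=\mu(f)$.
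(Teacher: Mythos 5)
Your argument is correct, but it takes a different route from the paper. The paper does not decompose along $\one=s(x)+n(x)$ at all: it applies \propref{isom2} once to the whole algebra $\M$ (this is where the hypothesis that $\tau$ is $\sigma$-finite enters), obtaining a single non-atomic commutative $\mathcal{N}\subset\M$ with $x\in S(\mathcal{N},\tau)$ and a $*$-isomorphism $U$ of $S(\mathcal{N},\tau)$ onto $S([0,\tauone),m)$, sets $f=Ux$ (so $m(\supp f)=\tau(s(x))<\infty$), and then extends the Ryff-type measure preserving transformation $\sigma_1:\supp f\to[0,\tau(s(x)))$ by an arbitrary measure preserving $\sigma_2:(\supp f)^c\to[\tau(s(x)),\tauone)$ — the finiteness of $\tau(s(x))$ is used exactly to see these two sets have equal measure — and defines $V(g)=U^{-1}(g\circ\sigma)$. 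You instead use \propref{isom1} twice, once for $x$ in the corner $s(x)\M s(x)$ and once for an auxiliary operator $z$ with $s(z)=n(x)$ in the corner $n(x)\M n(x)$, and glue the two isomorphisms; $\tau(s(x))<\infty$ plays the parallel role of making the translation $\theta$ measure preserving. The paper's proof is shorter given \propref{isom2} and needs no gluing checks; yours makes the direct-sum structure of $\mathcal{N}$ explicit and only consumes $\sigma$-finiteness through the witness operator $z$, but it pays with the routine verifications you list (the cross-term, unitality, injectivity and affiliation checks, non-atomicity of the corner, and additivity of distribution functions for operators supported in orthogonal corners, all of which are fine as you sketch them). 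One assertion in your proposal does deserve an explicit line: that $n(x)$ is a $\sigma$-finite projection is not immediate from $\sigma$-finiteness of $\tau$ on $\M$ (one cannot take $n(x)\wedge p_k$ for $p_k\uparrow\one$), but it follows, e.g., from Kaplansky's parallelogram law: with $p_k\uparrow\one$, $\tau(p_k)<\infty$, the projections $r_k=n(x)-n(x)\wedge p_k^{\perp}\sim n(x)\vee p_k^{\perp}-p_k^{\perp}\le p_k$ have finite trace and $r_k\uparrow n(x)$ since $n(x)\wedge p_k^{\perp}\downarrow0$. With that line added, your proof is complete.
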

\begin{proof}
The proof of this proposition is analogous to the proof above. The only difference is the lack of the restriction of $\M$ to $s(x)\M s(x)$, and the extension of the measure preserving transformation $\sigma$ to the whole interval $[0,\tauone)$. Assume that $f=U(x)$ as above, and so $m(\supp f)= \tau(s(x))<\infty$. Let $\sigma_1$ be a measure preserving transformation from $\supp f$ to $[0,\tau(s(x)))$ such that $f(t)=\mu(\sigma_1(t), f)$ for every $t\in \supp f$. Since $m(\supp f)<\infty$, we have that $(\supp f)^c$ and the interval $[m(\supp f),\tauone)$ have the same measure. Indeed, if $\tauone=\infty$ then both measures are infinite as well. If $\tauone<\infty$, then both measures are equal to $\tauone -m(\supp f)$. It is not difficult to find a measure preserving transformation $\sigma_2: (\supp f)^c\to [m(\supp f),\tauone)$. Since $f(t)=\mu(\sigma_2(t), f)=0$ for all $t\in (\supp f)^c$, setting $\sigma=\sigma_1\chi_{\supp f}+\sigma_2\chi_{(\supp f)^c}$, we get a measure preserving transformation from $[0,\tauone)$ to $[0,\tauone)$ such that $f=\mu(f)\circ \sigma$.  

Finally define a $*$-homomorphism $V$ from $S([0,\tauone),m)$ into $S(\mathcal{N},\tau)$ by setting $V(g)=U^{-1}(g\circ \sigma)$, $g\in S([0,\tauone),m)$.
\end{proof}

\begin{corollary}
\label{cor:isomglobal}
If $\M$ is non-atomic then the symmetric function space $E$ is isometrically embedded into $\nonsp$. Similarly, the symmetric sequence space $E \ne \ell_\infty$ is isometrically embedded into $C_E$. Furthermore, those embeddings are order preserving.
\end{corollary}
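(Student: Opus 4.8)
The plan is to read off both assertions from the embedding machinery already in place: the statement about $\nonsp$ from \propref{isom1} (and \propref{isom3} would do as well when $\tau$ is $\sigma$-finite), and the statement about $C_E$ from \propref{prop:isomarazy}. For $\nonsp$, since $E\hookrightarrow L_1[0,\tauone)+L_\infty[0,\tauone)\subseteq S([0,\tauone),m)$ (if $f=g+h$ with $g\in L_1$, $h\in L_\infty$, then $f\chi_A\in L_\infty$ for $A=\{|g|\le 1\}$ and $m(A^c)\le\|g\|_1<\infty$), it suffices to produce a positive operator $x\in(L_1\Mtau+\M)\cap S_0^+(\M,\tau)$ with $\tau(s(x))=\tauone$: then \propref{isom1} yields a non-atomic commutative subalgebra $\mathcal{N}\subseteq s(x)\M s(x)$ and a unital $*$-isomorphism $V\colon S([0,\tauone),m)\to S(\mathcal{N},\tau)\subseteq S\Mtau$ with $\mu(V(f))=\mu(f)$ for every $f$, and the restriction of $V$ to $E$ is the desired map. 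If $\tauone<\infty$, take $x=\one\in\M^+\subseteq L_1\Mtau+\M$; then $\mu(t,\one)=0$ for $t\ge\tauone$, so $\mu(\infty,\one)=0$ and $x\in S_0^+(\M,\tau)$, while $s(\one)=\one$ gives $\tau(s(x))=\tauone$. If $\tauone=\infty$, use semifiniteness and non-atomicity to pick mutually orthogonal projections $q_n\in P(\M)$ with $\tau(q_n)=1$ for all $n$ (each $\one-\sum_{k\le n}q_k$ has infinite trace, hence dominates a subprojection of trace $1$), and put $x=\sum_n 2^{-n}q_n$; the partial sums are Cauchy in operator norm, so $x\in\M^+$, and $\mu(\infty,x)=\lim_n 2^{-n}=0$ with $s(x)=\sum_n q_n$, $\tau(s(x))=\sum_n 1=\infty=\tauone$. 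In either case, for $f\in E$ we get $\mu(V(f))=\mu(f)\in E$, so $V(f)\in\nonsp$ and $\|V(f)\|_{\nonsp}=\|\mu(V(f))\|_E=\|\mu(f)\|_E=\|f\|_E$ by symmetry of $E$; and $V$, being a $*$-homomorphism, is positive (as recalled at the beginning of \secref{sec:isom}), so $V|_E$ is order-preserving.

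For $C_E$ with $E\neq\ell_\infty$: because $\ell_1\hookrightarrow E$, the space $C_E$ contains positive operators. Fix an orthonormal basis $\{e_n\}$ of $H$ and let $x=\sum_n 2^{-n}\langle\,\cdot\,,e_n\rangle e_n\ge 0$, whose singular sequence is $\{2^{-n}\}\in\ell_1\subseteq E$, so $x\in C_E$. Applying \propref{prop:isomarazy} to this $x$ gives a linear isometry $V\colon E\to C_E$ which, as $x\ge 0$, is moreover a $*$-isomorphism; being a $*$-homomorphism it is positive, hence order-preserving. (Equivalently, $V(\lambda)=\sum_n\lambda_n\langle\,\cdot\,,e_n\rangle e_n$ is transparently an isometric order isomorphism of $E$ onto the diagonal operators with entries in $E$.)

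The main obstacle is the subcase $\tauone=\infty$ in the first part: one cannot use $x=\one$ (it is not $\tau$-measurable, and $\mu(\infty,\one)\ne 0$), so one must build a positive $\tau$-measurable operator with $\mu(\infty,\cdot)=0$ whose support projection has trace $\tauone=\infty$; such a support projection is automatically $\sigma$-finite, and producing it is exactly where semifiniteness together with non-atomicity—which lets one peel off orthogonal finite-trace subprojections indefinitely—is used. Everything else is routine: symmetric norms depend only on the decreasing rearrangement, and $*$-homomorphisms preserve positivity.
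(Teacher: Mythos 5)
Your proof is correct and takes essentially the same route as the paper: it reads the $E\Mtau$ statement off Proposition~\ref{isom1} and the $C_E$ statement off Proposition~\ref{prop:isomarazy}, using positivity of $*$-homomorphisms for order preservation. If anything, you are more careful than the paper's own argument, which merely invokes a projection $p$ with $\tau(p)=\tauone$ (such a projection fails the $S_0^+\Mtau$ hypothesis of Proposition~\ref{isom1} when $\tauone=\infty$), whereas your explicit operator $x=\sum_n 2^{-n}q_n$ genuinely satisfies all hypotheses in that case.
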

\begin{proof}
Let $x\in S^+\Mtau$ be such that $\tau(s(x))=\tauone$. In fact there is a projection $p\in P(\M)$ such that $\tau(p)=\tauone$. Then by Proposition \ref{isom1} we can choose a $*$-isomorphism $V:S([0,\tauone),m)\to S(\mathcal{N},\tau)$, where $\mathcal{N}\subset s(x)\M s(x)$, such that $\mu(V(f))=\mu(f)$ for every $f\in S([0,\tauone),m)$. Hence if $f\in E$  then $V(f)\in \nonsp$, and $\|V(f)\|_{\nonsp}=\|\mu(V(f))\|_E=\|\mu(f)\|_E=\|f\|_E$.  As explained at the beginning of this section every $*$-homomorphism is positive. Hence $V(f)\leq V(g)$ whenever $f\leq g$ and $V$ preserves the order of $E$. For sequence spaces the claim follows  by Proposition \ref{prop:isomarazy}.
\end{proof}
\begin{corollary}
\label{cor:isom}
Let $\mathcal{M}$ be a non-atomic von Neumann algebra with a faithful, normal,  $\sigma$-finite trace $\tau$, $x\in S\Mtau$, and $\abs{x}\geq \mu(\infty,x)s(x)$. Denote by $p=s(\abs{x}-\mu(\infty,x)s(x))$ and define projection $q\in \mathcal{P}(\M)$ in the following way.
\begin{itemize}
\item[{(i)}] If $\tau(s(x))<\infty$ set $q=\one$.
\item[{(ii)}] If $\tau(s(x))=\infty$ and $\tau(p)<\infty$, set $q=s(x)$.
\item[{(iii)}] If $\tau(p)=\infty$, set $q=p$.
\end{itemize}
 Then there exist a non-atomic commutative von Neumann subalgebra $\mathcal{N}\subset q\M q$ and a unital $*$-isomorphism
$V$ acting from the $*$-algebra $S\left(\left[0,\tauone\right),m\right)$ into the $*$-algebra $S(\mathcal{N},\tau)$, such that 
\[ V\mu(x)=\abs{x}q\ \ \ \text{ and }\ \ \ \mu(V(f))=\mu(f).
\]
for all  $f\in S\left([0,\tauone),m\right)$.
\end{corollary}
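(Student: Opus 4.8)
The plan is to reduce the statement to Proposition~\ref{isom2}, applied inside a suitable reduced von Neumann algebra, and then to produce $V$ by the measure-preserving-transformation argument used in the proofs of Propositions~\ref{isom1} and~\ref{isom3}. The first move is to replace $x$ by the positive operator $y=|x|-\mu(\infty,x)s(x)\ge 0$. By Lemma~\ref{lm:singfun}(6), $\mu(y)=\mu(x)-\mu(\infty,x)$, so $\mu(\infty,y)=0$, i.e. $y\in S_0^+(\mathcal{M},\tau)$, and $p=s(y)\le s(x)$. I would record two elementary identities for later use: $\mu(x)=\mu(y)+\mu(\infty,x)\chi_{[0,\tauone)}$ (because the decreasing function $\mu(x)$ dominates its limit $\mu(\infty,x)$), and $|x|q=y+\mu(\infty,x)q$. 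The latter is checked case by case: in case~(i), $\tau(s(x))<\infty$ forces $\mu(\infty,x)=0$ (as $\mu(t,x)=0$ for $t\ge\tau(s(x))$), hence $y=|x|$ and $q=\mathbf{1}$; in case~(ii) it reads $|x|s(x)=|x|=y+\mu(\infty,x)s(x)$; in case~(iii) it follows from $yp=y$ and $s(x)p=p$.

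Next, in each case set $\mathcal{R}=q\mathcal{M}q$, with unit $q$ and trace $\tau_q$, and verify that $\mathcal{R}$ is non-atomic (a reduced algebra of a non-atomic algebra has no minimal projections); that $\tau_q(q)=\tauone$ --- trivial in case~(i), while in cases~(ii) and~(iii) one has $\tau(q)=\infty$ together with $\tau(q)\le\tauone$; that $\tau_q$ is $\sigma$-finite --- in cases~(i) and~(ii) because $\tau$ is $\sigma$-finite on $\mathcal{M}$ and $\sigma$-finiteness passes to reduced algebras, and in case~(iii) because $e^{y}(1/n,\infty)\uparrow s(y)=p$ with finite-trace terms, exactly as at the start of the proof of Proposition~\ref{isom1}; and that $y\in S(\mathcal{R},\tau_q)$ with $\mu^{\tau_q}(y)=\mu(qyq)=\mu(y)$, since $s(y)=p\le q$. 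Proposition~\ref{isom2} applied to $(\mathcal{R},\tau_q,y)$ then yields a non-atomic commutative subalgebra $\mathcal{N}\subset q\mathcal{M}q$ and a unital $*$-isomorphism $U\colon S(\mathcal{N},\tau)\to S([0,\tauone),m)$ with $y\in S(\mathcal{N},\tau)$, $U(q)=\chi_{[0,\tauone)}$, and $\mu(U(z))=\mu(z)$ for every $z$. Put $f=U(y)\ge 0$; then $\mu(f)=\mu(y)$, and $m(\supp f)=\tau(p)$ (using $\mu(f)=\mu(y)$ and $\tau(s(y))=m(\supp\mu(y))$).

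Then, as in the proofs of Propositions~\ref{isom1} and~\ref{isom3}, I would take a measure-preserving $\sigma_1\colon\supp f\to[0,\tau(p))$ with $f=\mu(f)\circ\sigma_1$ on $\supp f$ (by \cite[Ch.~II, Cor.~7.6]{BS}), a measure-preserving $\sigma_2\colon(\supp f)^{c}\to[\tau(p),\tauone)$ (both sets have measure $\tauone-\tau(p)$), observe that on $(\supp f)^{c}$ both $f$ and $\mu(f)\circ\sigma_2$ vanish (since $\mu(f)=0$ on $[\tau(p),\infty)$, as $m(\supp\mu(f))=\tau(p)$), and glue them into a measure-preserving $\sigma\colon[0,\tauone)\to[0,\tauone)$ with $f=\mu(f)\circ\sigma$ a.e. Define $V(g)=U^{-1}(g\circ\sigma)$, a unital $*$-homomorphism $S([0,\tauone),m)\to S(\mathcal{N},\tau)$. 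Since $U$ and $\sigma$ preserve singular value functions, $\mu(V(g))=\mu(qV(g)q)=\mu^{\tau_q}(V(g))=\mu(g\circ\sigma)=\mu(g)$; and, using linearity of $V$ together with the two identities above, $\mu(f)\circ\sigma=f$, and $\chi_{[0,\tauone)}\circ\sigma=\chi_{[0,\tauone)}$ with $U^{-1}(\chi_{[0,\tauone)})=q$, one gets $V(\mu(x))=U^{-1}(f)+\mu(\infty,x)q=y+\mu(\infty,x)q=|x|q$.

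I expect no single step to be deep; the real work is the uniform bookkeeping over the three cases --- keeping straight which projection plays the role of $q$, checking that $q\mathcal{M}q$ has the correct trace total and is $\sigma$-finite in that particular case, and verifying $|x|q=y+\mu(\infty,x)q$ so that the constant term lands precisely on the unit of $\mathcal{N}$ after applying $U^{-1}$. A secondary but essential point is that one should invoke Proposition~\ref{isom2} (which carries no $L_1(\mathcal{M},\tau)+\mathcal{M}$ hypothesis) together with the measure-preserving-transformation argument, rather than citing Propositions~\ref{isom1} or~\ref{isom3} verbatim, because for a general $x\in S(\mathcal{M},\tau)$ the operator $y$ need not belong to $L_1(\mathcal{M},\tau)+\mathcal{M}$.
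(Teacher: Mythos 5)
Your overall route is sound and rests on the same machinery as the paper's proof: the paper handles the three cases by applying Proposition~\ref{isom3} (cases (i)--(ii)) and Proposition~\ref{isom1} (case (iii)) to $\abs{x}$ or to $y=\abs{x}-\mu(\infty,x)s(x)$ in the appropriate reduced algebra, and then, exactly as you do, uses $V(\chi_{[0,\infty)})=q$ and linearity to convert $V\mu(y)=y$ into $V\mu(x)=\abs{x}q$. What you do differently is to refuse to quote Propositions~\ref{isom1} and~\ref{isom3} as black boxes and instead rerun their proofs uniformly (Proposition~\ref{isom2} applied in $q\M q$, followed by the measure-preserving-transformation construction). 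Your reason --- that $y$ need not belong to $L_1\Mtau+\M$, so the stated hypotheses of those propositions are not literally met --- is a fair observation: the paper's proof applies them to $y$ anyway, implicitly using that the $L_1+\M$ assumption plays no role in the $\sigma$-argument (the Ryff-type result \cite[Ch.~II, Cor.~7.6]{BS} needs only $\mu(\infty,f)=0$ over a non-atomic space). Your version buys a more scrupulous, self-contained argument at the price of redoing the bookkeeping; note also that ``$\sigma$-finiteness passes to reduced algebras,'' which you assert in case (ii), is true but deserves a line (Kaplansky's formula gives $q-q\wedge p_n^{\perp}\sim q\vee p_n^{\perp}-p_n^{\perp}\le p_n$ and $q\wedge p_n^{\perp}\downarrow 0$), a point the paper also leaves implicit.

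There is, however, one step that fails as written: the gluing in case (iii). There $\tau(p)=\infty$, $q=p$ and $\tauone=\infty$, so $[\tau(p),\tauone)$ is empty and the phrase ``both sets have measure $\tauone-\tau(p)$'' is meaningless; if $(\supp f)^{c}$ had positive measure there would be no measure-preserving $\sigma_2$ at all, and the glued $\sigma$ could not be measure preserving into $[0,\tauone)$. The repair is to show that in case (iii) the set $(\supp f)^{c}$ is null: since $s(y)=p=q$ is the unit of $\mathcal{N}$, every projection $e\in\mathcal{N}$ with $ey=y$ satisfies $e\ge s(y)=q$, so the support projection of $y$ computed inside $\mathcal{N}$ equals $q$; applying the surjective $*$-isomorphism $U$, which carries support projections to support projections, gives $\chi_{\supp f}=U(q)=\chi_{[0,\tauone)}$ a.e., and then $\sigma=\sigma_1$ already suffices (this is also the hidden reason the proof of Proposition~\ref{isom1} may define $V(g)=U^{-1}(g\circ\sigma)$ with $\sigma$ given only on $\supp f$). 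With that observation inserted, your argument goes through in all three cases and yields exactly the paper's conclusion.
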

\begin{proof}
Observe that $p=s(\abs{x}-\mu(\infty,x)s(x))=e^{\abs{x}}(\mu(\infty,x),\infty)\leq s(x)$.  Hence if $\tau(p)=\infty$ then also $\tau(s(x))=\infty$, and therefore conditions (i), (ii), and (iii) give all possible cases. Furthermore, by \cite[Proposition 1.1]{czer-kam2010}, $\mu(\abs{x}-\mu(\infty,x)s(x))=\mu(x)-\mu(\infty,x)$, and so $\abs{x}-\mu(\infty,x)s(x)\in S_0^+\Mtau$. 

Note that in either case $\tau(q)=\tauone$. Hence in view of Lemma \ref{lm:singfun} (7),  it follows that $\mu(\abs{x}q)=\mu(x)\chi_{[0,\tau(q))}=\mu(x)$.

Case (i). Since $\tau(s(x))<\infty$, we have that $\mu(\infty,x)=0$. Therefore the claim is an immediate consequence of  Proposition \ref{isom3} applied to $\abs{x}$.

Case (ii). Let $\tau(s(x))=\infty$, $\tau(p)<\infty$ and $q=s(x)$. 
Applying Proposition \ref{isom3} to the operator $\abs{x}-\mu(\infty,x)s(x)=s(x)(\abs{x}-\mu(\infty,x)s(x))s(x)\in s(x)S\Mtau s(x)$ and to the von Neumann algebra $s(x)\M s(x)$, there exist a non-atomic commutative von Neumann algebra $\mathcal{N}\subset s(x)\M s(x)$ and a $*$-isomorphism $V$ from  $S\left(\left[0,\tau(s(x))\right),m\right)= S\left(\left[0,\infty\right),m\right)$ into $S(\mathcal{N},\tau)$ such that 
\[ 
V\mu(\abs{x}-\mu(\infty,x)s(x))
=\abs{x}-\mu(\infty,x)s(x)\,\text{ and }\, \mu(V(f))=\mu(f)
\]
for all  $f\in S\left([0,\infty),m\right)$.
Since $V(\chi_{[0,\infty)})=s(x)$,
\begin{align*}
\abs{x}-\mu(\infty,x)s(x)&=V\mu(\abs{x}-\mu(\infty,x)s(x))=V(\mu(x)-\mu(\infty,x))\\
&=V\mu(x)-\mu(\infty,x)V(\chi_{[0,\infty)})=V\mu(x)-\mu(\infty,x)s(x),
\end{align*}
and consequently $V\mu(x)=\abs{x}=\abs{x}s(x)$.

Case (iii). Assume that $\tau(p)=\infty$ and $q=p$. By Proposition \ref{isom1} applied to the operator $\abs{x}-\mu(\infty,x)s(x)$ and von Neumann algebra $\M$, there exist a non-atomic commutative von Neumann algebra $\mathcal{N}\subset p\M p$ and a $*$-isomorphism $V$ from  $S\left(\left[0,\tau(p)\right),m\right)= S\left(\left[0,\infty\right),m\right)$ into $S(\mathcal{N},\tau)$ such that 
\[ 
V\mu(\abs{x}-\mu(\infty,x)s(x))=\abs{x}-
\mu(\infty,x)s(x)\,\text{ and }\,\mu(V(f))=\mu(f)
\]
for all $f\in S\left([0,\infty),m\right)$.
Since  $p\leq s(x)$, \[\abs{x}-\mu(\infty,x)s(x)=(\abs{x}-\mu(\infty,x)s(x))p
=\abs{x}p-\mu(\infty,x)p\] and $V(\chi_{[0,\infty)})=p$. Thus  again we have
\begin{align*}
\abs{x}p-\mu(\infty,x)p&=\abs{x}-\mu(\infty,x)s(x)
=V\mu(\abs{x}-\mu(\infty,x)s(x))\\
&=V\left(\mu(x)-\mu(\infty,x)\right)
=V\mu(x)-\mu(\infty,x)V(\chi_{[0,\infty)})\\
&=V\mu(x)-\mu(\infty,x)p,
\end{align*}
and $V\mu(x)=\abs{x}p$.
\end{proof}

\begin{corollary}
\label{cor:isom1}
Let $\mathcal{M}$ be a non-atomic von Neumann algebra with a faithful, normal,  $\sigma$-finite trace $\tau$, 
and $x\in S\Mtau$ with $r=e^{\abs{x}}(\mu(\infty,x),\infty)$.  
Set $q=\one$ whenever $\tau(r)<\infty$, and $q=r$ if $\tau(r)=\infty$. 

Then there exist a non-atomic commutative von Neumann subalgebra $\mathcal{N}\subseteq q\M q$ and a unital $*$-isomorphism
$V$ acting from the $*$-algebra $S\left(\left[0,\tauone\right),m\right)$ into the $*$-algebra $S(\mathcal{N},\tau)$, such that 
\[ V\mu(x)=\abs{x}r+\mu(\infty, x)V\chi_{[\tau(r),\infty)}\ \ \ \text{ and }\ \ \ \mu(Vf)=\mu(f)\ \ \text{ for all } f\in S\left([0,\tauone),m\right).
\]
\end{corollary}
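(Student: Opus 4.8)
The plan is to reduce this to Corollary~\ref{cor:isom} by applying that result not to $x$ but to the truncated positive operator $y=\abs{x}r$. First I would verify that $y$ is admissible for Corollary~\ref{cor:isom}. Since $r=e^{\abs{x}}(\mu(\infty,x),\infty)\in\M$ while $\abs{x}\eta\M$, the operator $y=f(\abs{x})$ with $f(t)=t\chi_{(\mu(\infty,x),\infty)}(t)$ is a positive, $\tau$-measurable operator affiliated with $\M$; by \lemref{lm:singfun}~(7) its singular value function is $\mu(y)=\mu(\abs{x}r)=\mu(x)\chi_{[0,\tau(r))}$, and, since $r\le s(x)$ forces the kernel of $y$ to be exactly the range of $\one-r$, one gets $s(y)=r$. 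It follows that $\mu(\infty,y)=0$ when $\tau(r)<\infty$ and $\mu(\infty,y)=\mu(\infty,x)$ when $\tau(r)=\infty$, and in either case $\abs{y}=y=\abs{x}r\ge\mu(\infty,y)r=\mu(\infty,y)s(y)$, the nontrivial inequality $\abs{x}r\ge\mu(\infty,x)r$ holding because $r$ is the spectral projection of $\abs{x}$ for the interval $(\mu(\infty,x),\infty)$. Hence Corollary~\ref{cor:isom} applies to $y$.

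Next I would pin down which branch of Corollary~\ref{cor:isom} governs $y$. With $p_y:=s(\abs{y}-\mu(\infty,y)s(y))=s\big((\abs{x}-\mu(\infty,y))r\big)$, a short spectral argument shows $p_y=r$ in both cases: trivially if $\mu(\infty,y)=0$, and because $r\perp e^{\abs{x}}\{\mu(\infty,x)\}$ if $\mu(\infty,y)=\mu(\infty,x)$. So $\tau(s(y))=\tau(p_y)=\tau(r)$, and therefore only case~(i) of Corollary~\ref{cor:isom} (when $\tau(r)<\infty$, giving $q_y=\one$) or case~(iii) (when $\tau(r)=\infty$, giving $q_y=p_y=r$) can occur, case~(ii) being excluded. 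In both cases $q_y$ equals the projection $q$ of the present statement, so Corollary~\ref{cor:isom} provides a non-atomic commutative $\mathcal{N}\subseteq q\M q$ and a unital $*$-isomorphism $V\colon S([0,\tauone),m)\to S(\mathcal{N},\tau)$ with $\mu(Vf)=\mu(f)$ for all $f$ and $V\mu(y)=\abs{y}q_y$; since $\abs{y}q_y=\abs{x}r$ (immediately if $q_y=\one$, and by $r^2=r$ if $q_y=r$), we conclude $V\mu(y)=\abs{x}r$.

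Finally I would feed in the decomposition of $\mu(x)$ that yields the stated formula. Since $\tau(r)=\tau(e^{\abs{x}}(\mu(\infty,x),\infty))=d(\mu(\infty,x),x)=m\{t:\mu(t,x)>\mu(\infty,x)\}$ and $\mu(x)$ is decreasing with limit $\mu(\infty,x)$, one has $\mu(t,x)=\mu(\infty,x)$ for a.e.\ $t\ge\tau(r)$; combined with $\mu(y)=\mu(x)\chi_{[0,\tau(r))}$ this gives $\mu(x)=\mu(y)+\mu(\infty,x)\chi_{[\tau(r),\infty)}$ in $S([0,\tauone),m)$, the second summand being $0$ when $\tau(r)=\infty$. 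Applying the linear map $V$ and substituting $V\mu(y)=\abs{x}r$ then yields $V\mu(x)=\abs{x}r+\mu(\infty,x)V\chi_{[\tau(r),\infty)}$, while $\mu(Vf)=\mu(f)$ was already recorded. I expect the main obstacle to lie in the bookkeeping of the first two paragraphs --- correctly computing $s(y)$, $\mu(\infty,y)$ and $p_y$ via spectral calculus and checking that the branch of Corollary~\ref{cor:isom} triggered by $y$ matches the $q$ prescribed here; once that alignment is in place, the conclusion is the single linear identity for $\mu(x)$.
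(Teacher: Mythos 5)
Your proposal is correct and follows essentially the same route as the paper: apply Corollary~\ref{cor:isom} to the truncated operator $\abs{x}r$ (the paper's $x_0$), check that only cases (i) and (iii) occur with the projection $q$ matching the one in the statement, and then use $\mu(x)=\mu(x)\chi_{[0,\tau(r))}+\mu(\infty,x)\chi_{[\tau(r),\infty)}$ together with linearity of $V$ and $V\mu(x_0)=\abs{x}r$. The bookkeeping of $s(\abs{x}r)$, $\mu(\infty,\abs{x}r)$ and $p$ that you flag as the main obstacle is exactly what the paper verifies, and your spectral arguments for it are sound.
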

\begin{proof}
Consider the operator $x_0=\abs{x}r$, where we have $s(x_0)=r$ and $x_0\geq \mu(\infty, x) s(x_0)$. Moreover, $\mu(x_0)=\mu(x)\chi_{[0,\tau(r))}$ by Lemma \ref{lm:singfun} (7). If $\tau(r)<\infty$, then $\mu(\infty, x_0)=0$. Otherwise $\mu(x_0)=\mu(x)$. In either case $x_0\geq \mu(\infty, x_0)s(x_0)$.  Moreover, $p=s(x_0-\mu(\infty, x_0) s(x_0))=e^{x_0}(\mu(\infty, x_0),\infty)=e^{x}(\mu(\infty, x),\infty)=r$. If $\tau(r)=\infty$ set $q=p=r$, and if $\tau(r)<\infty$, $q=\one$. By Corollary \ref{cor:isom} (i) and (iii) applied to $x_0$ there exist a non-atomic commutative von Neumann subalgebra $\mathcal{N}\subseteq q\M q$ and a unital $*$-isomorphism
$V$ acting from the $*$-algebra $S\left(\left[0,\tauone\right),m\right)$ into the $*$-algebra $S(\mathcal{N},\tau)$, such that 
\[ V\mu(x_0)=x_0q\ \ \ \text{ and }\ \ \ \mu(Vf)=\mu(f)\ \ \text{ for all } f\in S\left([0,\tauone),m\right).
\]

In case of $\tau(r)=\infty$, $\mu(x)=\mu(x_0)$ and $q=r$, and therefore  $V\mu(x)=x_0r=\abs{x}r$.

Consider now the case when $\tau(r)=\tau(e^{\abs{x}}(\mu(\infty,x),\infty))<\infty$ with $q=\one$. Since $\mu(\infty, x)=\inf\{s\geq 0:\, \tau(e^{\abs{x}}(s,\infty))<\infty\}$, we have that $\tau(e^{\abs{x}}(s,\infty))=\infty$ for all $s\in[0, \mu(\infty,x))$. Recalling the definition of $\mu(t,x)=\inf\{s\geq 0:\, \tau(e^{\abs{x}}(s,\infty))\leq t\}$, it is easy to observe that $\mu(t,x)=\mu(\infty, x)$ for all $t\geq \tau(e^{\abs{x}}(\mu(\infty,x),\infty))=\tau(r)$. Hence 
\[
\mu(x)=\mu(x)\chi_{[0,\tau(r))}+\mu(\infty,x)\chi_{[\tau(r),\infty)}=\mu(x_0)+\mu(\infty,x)\chi_{[\tau(r),\infty)},
\]
and 
\[
V\mu(x)=V\mu(x_0)+\mu(\infty,x)V\chi_{[\tau(r),\infty)}=x_0+\mu(\infty,x)V\chi_{[\tau(r),\infty)}=\abs{x}r+\mu(\infty,x)V\chi_{[\tau(r),\infty)}.
\]

\end{proof}

\section{Non-atomic extension of $\nonsp$}
We will describe below the construction of a non-atomic von Neumann algebra $\mathcal{A}$ with the trace $\kappa$, such that $E\Mtau$  embeds isometrically into $E(\mathcal{A},\kappa)$, for any symmetric function space $E$.

  Let
$\mathcal{A}=\mathcal{N}\overline\otimes \mathcal{M}$ be a tensor
product of von Neumann algebras $\mathcal{N}$ and $\mathcal{M}$, where $\mathcal{N}$ is a commutative von Neumann algebra identified with $L_{\infty}[0,1]$ with the trace $\eta$ (see section \ref{sec:symmfun}).  Let 
$\kappa= \eta\otimes \tau$ be a tensor product of the traces $\eta$ and $\tau$, that is $\kappa(N_f\otimes x)=\eta(N_f)\tau(x)$  \cite{KR,Takesaki}. It is well known that
$\left(\mathcal{A},\kappa\right)$ has no atoms \cite[Lemma 2.3.18]{LSZ2013}.

Let $\mathds{1}$ be the identity operator on $L^2[0,1]$ and denote by $\Complex\mathds{1}=\{\lambda \mathds{1}:\lambda\in\Complex\}$.
 Let $x\in
S\left(\mathcal{M},\tau\right)$ and consider a linear subspace $D$
in $ L_2[0,1]\otimes H$ generated by the vectors of the form
$\zeta\otimes\xi$, where  $\zeta\in
L_2[0,1]$ and $\xi\in D(x)\subset H$. For every $\displaystyle \alpha=\sum_{i=1}^n\zeta_i\otimes\xi_i
\in D$ define $\displaystyle
(\mathds{1}\otimes x)(\alpha)=\sum_{i=1}^n\zeta_i\otimes x(\xi_i)$.
The linear operator $\mathds{1}\otimes x:D\rightarrow L_2[0,1]\otimes
H $ with  domain $D$ is preclosed, and by Lemma 1.2 in \cite{CKS1992} its closure
$\mathds{1}\overline\otimes x$ is contained in
$S(\Complex\mathds{1}\otimes \M,\kappa)$.

The map $\pi:x\rightarrow \mathds{1}\otimes x$, $x\in\M$, is a unital trace preserving $*$-isomorphism from $\M$ onto the von Neumann subalgebra $\Complex\mathds{1}\otimes\M$. Consequently, $\pi$ extends uniquely to a $*$-isomorphism $\tilde{\pi}$ from $S\Mtau$ onto $S(\Complex\mathds{1}\otimes \M,\kappa)$ \cite{noncomm}.  In fact one can show that $\tilde{\pi}(x)=\mathds{1}\overline\otimes x$.

Since every $*$-homomorphism is an order preserving map, $x\geq 0$ if and only if $\mathds{1}\overline\otimes x\geq 0$, where $x\in S\Mtau$.
 The spectral measure $e^{\tilde{\pi}(x)}$ of $\tilde{\pi}(x)$ is given by $e^{\tilde{\pi}(x)}(B)=\pi (e^x(B))$, that is  $e^{\mathds{1}\overline\otimes x}(B)=\mathds{1}\otimes e^{x}(B)$ for any Borel set $B\subset \mathbb{R}$. Hence $\kappa\left(e^{\mathds{1}\overline\otimes x}(s,\infty)\right)=\kappa\left( \mathds{1}\otimes e^{x}(s,\infty)\right)=\tau(e^x(s,\infty))$ for any $s>0$. Consequently $\tilde{\pi}$ preserves the singular value function in the sense that $\tilde{\mu}(\mathds{1}\overline\otimes x)=\mu(x)$, where $\tilde{\mu}(\mathds{1}\overline\otimes x)$ is the singular value function of $\mathds{1}\overline\otimes x$ computed with respect to the von Neumann algebra $\Complex \mathds{1}\otimes\M$ and the trace $\kappa$ \cite[Lemma 2.3.18]{LSZ2013}.
Thus
\[
\|\tilde{\pi}(x)\|_{E(\Complex \mathds{1}\otimes\M,\kappa)}=\norme{\tilde{\mu}(\mathds{1}\overline\otimes x)}=\norme{\mu(x)}=\normcomm{x},
\]
where 
\begin{align*}
E(\Complex \mathds{1}\otimes\M,\kappa)&=\{\mathds{1}\overline\otimes x\in S(\Complex \mathds{1}\otimes\M,\kappa): \tilde{\mu}(\mathds{1}\overline\otimes x)\in E\}
\\&=\{\mathds{1}\overline\otimes x: x\in S\Mtau\text{ and }\mu(x)\in E\}.
\end{align*}

Hence $\tilde{\pi}$ is a $*$-isomorphism which is also an isometry from $\nonsp$ onto $E(\Complex \mathds{1}\otimes\M,\kappa)$.
 We refer reader to  \cite{CKS1992,noncomm,LSZ2013,  St} for details.
\subsection{ Removing the non-atomicity assumption} \label{non-atom}
Many authors investigating geometric properties of $\nonsp$ aspire to show that $\nonsp$ has the property $P$ if and only if $E$  has it.  Very often for the property $P$ to carry from $\nonsp$ into $E$ it is necessary to assume  non-atomicity of $\M$.

On the other hand, suppose we showed that if $E$ has the property $P$ then so does  $\nonsp$ for any non-atomic von Neumann algebra $\M$.  
Then this result can be extended to an arbitrary von Neumann algebra provided that the property $P$ is preserved by linear isometries and passes to subspaces. Indeed,  since $\mathcal{A}$ is non-atomic, so  $E(\mathcal{A},\kappa)$ has property $P$.  As explained in the section above, the $*$-isomorphism $\tilde{\pi} :\nonsp \to E(\Complex \mathds{1}\otimes\M,\kappa) \subset  E(\mathcal{A},\kappa)$  embeds isometrically $\nonsp$  into $E(\mathcal{A},\kappa)$. Hence $\nonsp$ must possess the property $P$, where $\M$ is an arbitrary von Neumann algebra.

\textbf{Convention.}\quad Unless stated otherwise, $\mathcal{M}$  will denote   a semifinite von Neumann algebra with a fixed semifinite, faithful, normal trace $\tau$. The symbol $E$ will stand for a symmetric function  space on $[0,\alpha)$. If $E$ is a sequence symmetric space then it is always assumed that $E\neq \ell_\infty$. Given a normed space $(X,\norm{\cdot})$, let $B_X$ and $S_X$  be the unit ball and the unit sphere in $X$, respectively. 

\section{Extreme points and strict convexity}

Let $C$ be a convex subset in a linear space. We call $x\in C$  an \textit{extreme point} of $C$ if $x\pm y\in C$ implies $y=0$. Equivalently, we can say that $x$ is an extreme point of $C$ if it does not lie in any open line segment joining two different points in $C$. That is $x$ is an extreme point of $C$ if $x=\lambda y+(1-\lambda)z$, for some $y,z\in C$ and $\lambda\in \mathbb{R}$,  implies that $x=y=z$. We say that a normed space $(X, \|\cdot\|)$ is strictly convex whenever every element of its unit sphere is an extreme point.

The Krein-Milman theorem  states that every compact and convex subset $K$ of a locally convex linear space is the closed convex hull of its extreme points.

In this section we will present the work on extreme points of the unit balls in symmetric noncommutative spaces. J. Holub in \cite{Holub} was first to characterize extreme points in the trace class $C_1$. J. Arazy extended the result to all unitary matrix spaces $C_E$. More precisely,  J. Arazy  showed the following. 

\begin{theorem} \cite[Theorem 2.1]{A1} 
\label{thm:arazyextreme}
Let $E$ be a symmetric sequence space, $x\in C_E$, $\|x\|_{C_E}=1$. Then $x$ is extreme point of $B_{C_E}$ if and only if $S(x)$ is an extreme point of $B_E$. 
\end{theorem}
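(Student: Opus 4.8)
The plan is to reduce the problem to its commutative shadow using the Arazy isometry of Proposition \ref{prop:isomarazy}, and then to invoke the known characterization of extreme points in symmetric sequence spaces. First I would recall that by Proposition \ref{prop:isomarazy} there is a linear isometry $V:E\to C_E$ with $V(S(x))=x$, and that $P$ is a contractive projection from $C_E$ onto $V(E)$. The key elementary fact is that a contractive projection carries extreme points of the ball to extreme points: if $x$ is extreme in $B_{C_E}$ and $x\in V(E)$, then, writing $x=\tfrac12(u+v)$ with $u,v\in B_{V(E)}$, applying $P$ gives $x=\tfrac12(Pu+Pv)$ with $Pu,Pv\in B_{C_E}$, whence $u=v=x$; thus $x$ is extreme in $B_{V(E)}$, and since $V$ is an isometry onto $V(E)$, $S(x)=V^{-1}(x)$ is extreme in $B_E$. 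Conversely, if $S(x)$ is extreme in $B_E$, I must show $x$ is extreme in $B_{C_E}$ — this is the direction that genuinely uses the operator structure and is the main obstacle; the contractive-projection trick does not run backwards.

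For the harder direction, suppose $x\in S_{C_E}$ with $S(x)=\{s_n(x)\}$ extreme in $B_E$, and suppose $x=\tfrac12(y+z)$ with $\|y\|_{C_E}=\|z\|_{C_E}=1$. I would first use the submajorization inequality $\mu(x+y)\prec\mu(x)+\mu(y)$ recalled in the preliminaries (here in the sequence form $S(y+z)\prec S(y)+S(z)$), together with $\|x\|_{C_E}=1$, to force $\tfrac12(S(y)+S(z))$ to have the same norm as $S(x)$ and, via the extreme point hypothesis on $S(x)$ in the strongly symmetric space $E$, to deduce tight equality: $\mu(x)=\tfrac12\mu(y)+\tfrac12\mu(z)$ at the level of rearrangements, and moreover the submajorization chain must collapse, meaning $S(y)=S(z)=S(x)$ up to the relevant equality cases. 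The next step is to promote this equality of singular value sequences to equality of the operators themselves. This is done by analyzing the polar decompositions and the spectral projections of $|x|$, $|y|$, $|z|$: the equality case in $\mu(y+z)\prec\mu(y)+\mu(z)$ for compact operators, combined with $2|x|$ sitting "on top of" $|y|+|z|$ in the appropriate sense (using the analogue $|x|\le u|y|u^*+v|z|v^*$ of the triangle inequality, or the Ky Fan norm equality cases), pins down that the eigenspaces of $|x|$, $|y|$, $|z|$ corresponding to each singular value coincide and that $y$ and $z$ share the partial isometry in the polar decomposition of $x$.

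Concretely, I would argue eigenvalue-block by eigenvalue-block: on each spectral subspace $e^{|x|}\{s_n(x)\}$, the fact that $s_n$ is an eigenvalue of $\tfrac12(|y|+|z|)$ of the full multiplicity forces, by a finite-dimensional convexity argument on that block, $|y|$ and $|z|$ to agree with $|x|$ there; handling the strict-inequality direction of the submajorization requires care precisely because $E$ need only be strongly symmetric (order continuity or the Fatou property, as noted in the preliminaries, guarantees this), so one must be sure the extreme point hypothesis on $S(x)$ is strong enough to exclude a nontrivial spread of mass across a level set of $\mu(x)$. Finally, having $|y|=|z|=|x|$, the identity $x=\tfrac12(y+z)$ together with $\|x\|_{C_E}=\|\,|x|\,\|_{C_E}$ and the uniqueness of polar decomposition forces $y=z=x$, so $x$ is extreme. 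The main obstacle, as indicated, is the equality analysis in the submajorization/Ky-Fan inequalities at the level of operators rather than scalars — this is where one must exploit compactness of elements of $C_E$ and the resulting honest spectral decompositions, rather than any abstract lattice argument.
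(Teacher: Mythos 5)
Your ``only if'' direction is fine, but note that it needs only the isometry $V$ of Proposition \ref{prop:isomarazy}: if $S(x)=\frac12(a+b)$ with $a,b\in B_E$, then $x=\frac12(V(a)+V(b))$ and extremality of $x$ gives $a=b$; the contractive projection $P$ does no work in your argument (your $u,v$ are already taken in $B_{V(E)}$). The paper itself offers no proof of this theorem --- it is quoted from Arazy --- so the real question is whether your sketch of the ``if'' direction can be completed, and as written it has genuine gaps. First, your submajorization step ($S(x)\prec\frac12(S(y)+S(z))$ forcing norm equalities) requires the norm of $E$ to be monotone under submajorization, i.e.\ that $E$ be strongly symmetric; the theorem assumes only a symmetric sequence space, and your parenthetical appeal to order continuity or the Fatou property invokes hypotheses that are not in the statement. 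Second, the ``collapse'' from $S(x)\prec\frac12(S(y)+S(z))$ and extremality of $S(x)$ to $S(y)=S(z)=S(x)$ is a Ryff-type orbit argument; the paper proves such a result only for function spaces (Theorem \ref{thm:orbitmain}, Corollary \ref{cor:orbitmain}) and explicitly warns immediately afterwards that the naive sequence analogue fails (the $\ell_1$ example with $k$-extreme points), so this step cannot simply be imported and must be proved in the sequence setting.

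The most serious gap is the passage from equality of singular value sequences to equality of operators. Knowing $S(y)=S(z)=S(x)$ and $x=\frac12(y+z)$ does not a priori give that $|y|$ and $|z|$ commute with $|x|$, preserve its spectral subspaces, or share its eigenvectors; your ``block-by-block finite-dimensional convexity argument on each spectral subspace $e^{|x|}\{s_n(x)\}$'' presupposes exactly the spectral compatibility that has to be established, and the equality-case analysis of Ky Fan/submajorization inequalities needed to establish it is the actual content of the theorem, not a routine verification. (By contrast, your very last step is recoverable: once one genuinely has $|y|=|z|=|x|$, strict convexity of the Hilbert space forces the polar isometries to agree on the support, hence $y=z=x$.) A workable route, consistent with the machinery the paper does provide, is the direct one: use the diagonal estimate $\|\{\langle y\phi_n,\psi_n\rangle\}\|_E\le\|y\|_{C_E}$ (quoted from \cite{Sim} in the proof of Proposition \ref{prop:isomarazy}) with the Schmidt bases of $x$ and with suitably rotated orthonormal systems, so that extremality of $S(x)$ in $B_E$ kills all matrix entries of a perturbation $y$ with $x\pm y\in B_{C_E}$; this avoids both the strong-symmetry assumption and the unproved equality-case analysis, and is much closer to Arazy's argument than the submajorization scheme you propose.
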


  Holub's characterization differed from  Arazy's, as he did not relate extreme operators with their sequences of singular numbers. However, we will demonstrate  below  that their descriptions are equivalent.

\begin{theorem}
 Let $x\in C_1$, $\|x\|_{C_1}=1$. 
The two results are equivalent.
\begin{itemize} 
\item[{(i)}] \cite[Theorem 3.1]{Holub} Let $x\in C_1$, $\|x\|_{C_1}=1$. Then $x$ is extreme of $B_{C_1}$ if and only if $x$ is a one-dimensional operator.
\item[{(ii)}] \cite[Theorem 2.1]{A1} Let $x\in C_1$, $\|x\|_{C_1}=1$. Then $x$ is extreme of $B_{C_1}$ if and only if $S(x)$ is extreme of $B_{\ell_1}$.
\end{itemize}
\end{theorem}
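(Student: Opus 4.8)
The plan is to exploit the fact that statements (i) and (ii) have the same hypothesis and the same left-hand side, namely that $x\in C_1$ with $\|x\|_{C_1}=1$ is an extreme point of $B_{C_1}$. Hence the two biconditionals are equivalent if and only if their right-hand sides are equivalent under the standing hypothesis, i.e. if and only if
\[
x\ \text{is a one-dimensional operator}\iff S(x)\ \text{is an extreme point of}\ B_{\ell_1},
\]
for every $x\in C_1$ with $\|x\|_{C_1}=1$. So I would simply prove this last equivalence directly, never invoking either Holub's or Arazy's characterization (although one could shortcut by combining Theorem~\ref{thm:arazyextreme} with Holub's theorem, the direct argument is cleaner and self-contained).

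For the forward direction, suppose $x=\inner{\cdot,e}f$ is one-dimensional with $\|x\|_{C_1}=1$. Then $x^*x=\|f\|^2\inner{\cdot,e}e$, so $|x|=(x^*x)^{1/2}$ has a single nonzero eigenvalue; thus $s_1(x)>0$ and $s_n(x)=0$ for $n\ge 2$, and since $\sum_n s_n(x)=\|x\|_{C_1}=1$ we get $s_1(x)=1$, that is $S(x)=e_1=(1,0,0,\dots)$. It remains to check that $e_1$ is an extreme point of $B_{\ell_1}$: if $e_1\pm y\in B_{\ell_1}$, then in particular $|1+y_1|\le 1$ and $|1-y_1|\le 1$; writing $y_1=a+ib$ and adding the squared inequalities forces $a^2+b^2\le 0$, so $y_1=0$, and then $\sum_{n\ge 2}|y_n|\le 1-|1+y_1|=0$, so $y=0$.

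For the converse I would argue contrapositively. If $x$ is not one-dimensional then, since $\|x\|_{C_1}=1$ rules out $x=0$, the operator $|x|$ has at least two nonzero eigenvalues (using $x=u|x|$ with $u$ a partial isometry so that $x$ and $|x|$ have the same rank), hence $s_1(x)\ge s_2(x)>0$. Setting $y=(s_2(x),-s_2(x),0,0,\dots)\in\ell_1\setminus\{0\}$ and using that $\{s_n(x)\}$ is decreasing (so $|s_1(x)-s_2(x)|=s_1(x)-s_2(x)$), a one-line computation gives $\|S(x)+y\|_{\ell_1}=\|S(x)-y\|_{\ell_1}=\sum_n s_n(x)=1$. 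Thus $S(x)\pm y\in B_{\ell_1}$ with $y\ne 0$, so $S(x)$ is not an extreme point of $B_{\ell_1}$, completing the equivalence.

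There is no real obstacle in this argument; the only points needing a little care are the bookkeeping of the $\ell_1$-norms of the two perturbed sequences (where the monotonicity of the singular-value sequence is exactly what makes both norms equal $1$) and the elementary verification that $e_1$ is extreme in $B_{\ell_1}$. Everything else is immediate from the definition $\|x\|_{C_1}=\|S(x)\|_{\ell_1}=\sum_n s_n(x)$ and the relation between the rank of $x$ and that of $|x|$.
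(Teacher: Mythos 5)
Your argument is correct, and it takes a somewhat different route from the paper. The paper proves the two implications (i)$\Rightarrow$(ii) and (ii)$\Rightarrow$(i) separately, in each case using the cited statement as a black box together with the known classification of the extreme points of $B_{\ell_1}$ as the (unimodular multiples of the) unit vectors $\phi_n$: from that classification it reads off that, for a decreasing nonnegative $S(x)$ of norm one, extremality forces $S(x)=\phi_1$, which is then matched with one-dimensionality via the Schmidt representation. You instead factor the whole theorem through the single equivalence ``$x$ is one-dimensional $\iff S(x)$ is extreme in $B_{\ell_1}$'' and prove it from scratch: the forward half by computing $S(x)=\phi_1$ and verifying by hand that $\phi_1$ is extreme (your treatment of complex $y_1=a+ib$ is what is needed here, since the scalars are complex), and the converse half by the explicit perturbation $y=(s_2(x),-s_2(x),0,\dots)$, where monotonicity of the singular values gives $\|S(x)\pm y\|_{\ell_1}=1$. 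This buys a self-contained argument that never quotes the description of the extreme points of $B_{\ell_1}$, at the small cost of reproving the relevant pieces of that elementary fact; the paper's proof is shorter because it simply cites it. One phrasing caveat: the two biconditionals are equivalent \emph{if} their right-hand sides are equivalent under the common hypothesis --- which is all you use and all you need --- but the converse ``only if'' you assert is not a logical truth; nothing in your proof depends on it, so this is a slip of wording rather than a gap.
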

\begin{proof}
(i)$\implies$ (ii)
If $x$ is extreme of $B_{C_1}$ then by (i), $x$ is one dimensional. Then the Schmidt representation of $x$ is $x(\cdot)=s_1(x)\langle \cdot,e_1\rangle f_1$, where $e_1, f_1$ are normalized vectors in $H$.  Since $\|x\|_{C_1}=\|S(x)\|_{\ell_1}=1$ it follows that $s_1(x)=1$ and $s_i(x)=0$, $i=2,3,\dots$.  So $S(x)=\phi_1=(1,0,\dots, 0)$ is extreme of the unit ball of $\ell_1$.

Now suppose that $S(x)$ is extreme of $B_{\ell_1}$. But the only extreme points  of the unit ball in $\ell_1$ are the unit vectors $\pm\phi_n=\{\pm\phi_n(i)\}\in \ell_1$,   where $\phi_n(i)=0$ for $i\neq n$ and $\phi_n(n)=1$.  So $S(x)=\phi_1$. It means that $x(\cdot)=\sum_{n=1}^\infty s_n(x)\langle \cdot, e_n\rangle f_n=\langle \cdot,e_1\rangle f_1$   and $x$ is one-dimensional.
Hence (i) implies that $x$ is extreme.

(ii)$\implies$ (i)
Suppose that $x$ is extreme of $B_{C_1}$ then by  (ii)  $S(x)$ is extreme of $B_{\ell_1}$ and so $S(x)=\phi_1$. Then  $x(\cdot)=\langle \cdot,e_1\rangle f_1$ and $x$ is one-dimensional. Assume next that $x$ is one-dimensional.  Then $x(\cdot)=s_1(x)\langle \cdot,e_1\rangle f_1$ and $s_1(x)=1$. Hence $S(x)$ is extreme of $B_{\ell_1}$ and  by (ii),  $x$ is extreme of $B_{C_1}$.

\end{proof}

V. Chilin, A. Krygin and F. Sukochev in \cite{CKS1992ext}  extended J. Arazy's result to  symmetric spaces of measurable operators $\nonsp$. Here the relations between extreme operators and their singular value functions become more complex. If $x$ is an extreme point of the unit ball in $\nonsp$ then $\mu(x)$ is an extreme point of the unit ball in $E$. However, $x$ does not always inherits extreme property from its singular value function $\mu(x)$. For it to happen, $\mu(x)$ has to satisfy one of the conditions listed below.

\begin{theorem}
\label{thm:ext}
\cite[Theorem 1.1]{CKS1992ext} Let $\M$ be  non-atomic. Then $x\in S_{\nonsp}$ is an extreme point of $B_{\nonsp}$ if and only if $\mu(x)\in S_E$ is an extreme point of $B_E$ and one of the following conditions hold.
\begin{itemize}
\item[{(i)}] $\mu(\infty, x)=0$,
\item[{(ii)}] $n(x)\M n(x^*)=0$  and $\abs{x}\geq \mu(\infty,x)s(x)$.
\end{itemize}

\end{theorem}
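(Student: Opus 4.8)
The plan is to prove the two implications separately, in each case converting statements about perturbations of $x$ in $\nonsp$ into statements about perturbations of $\mu(x)$ in $E$, by means of the polar decomposition $x=u\abs{x}$ and the $*$-isomorphisms of \secref{sec:isom}. Recall that by Fact~5 the commutative model $E(\mathcal{N},\eta)$ is isometrically isomorphic to $E$, and that by Fact~6 conditions (i) and (ii) are exactly the operator-theoretic form of the pointwise inequality $\abs{f}\ge\mu(\infty,f)\chi_{[0,\alpha)}$; in particular, whenever (i) or (ii) holds we have $\abs{x}\ge\mu(\infty,x)s(x)$, so Corollary~\ref{cor:isom} applies to $x$.

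\emph{Necessity.} Assume $x\in S_\nonsp$ is an extreme point of $B_\nonsp$. First, $\mu(x)$ is extreme in $B_E$: if not, write $\mu(x)=\tfrac12(g_1+g_2)$ with $g_1\ne g_2$ in $B_E$; Corollary~\ref{cor:isom1} supplies a commutative $\mathcal{N}$ and a $*$-isomorphism $V$ with $\mu(Vf)=\mu(f)$, and then $y:=u\,V\!\left(\tfrac12(g_1-g_2)\right)$ is nonzero while $\mu(x\pm y)=\mu(Vg_{1,2})=\mu(g_{1,2})$ by Lemma~\ref{lm:singfun}(1),(2), so $x\pm y\in B_\nonsp$, a contradiction. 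Second, (i) or (ii) must hold: suppose $\mu(\infty,x)=c>0$ and (ii) fails. If $n(x)\M n(x^*)\ne0$, choose $b\in n(x^*)\M n(x)$ with $b\ne0$ and $\norm{b}_\M\le1$; since $b$ carries $\Ran n(x)=\Ker x$ into $\Ran n(x^*)=\Ker x^*$, the operators $x$ and $cb$ act on orthogonal pairs of subspaces, so $\abs{x\pm cb}^2=\abs{x}^2+c^2b^*b$, and as the spectral mass of $c^2b^*b$ sits at levels $\le c^2$ while $d(s,x)=\infty$ for $s<c$, a distribution-function computation (the mechanism of Lemma~\ref{lm:singfun}(8)) gives $\mu(x\pm cb)=\mu(x)$, so $x$ is not extreme. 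If instead $\abs{x}\not\ge c\,s(x)$, pick a nonzero spectral projection $p=e^{\abs{x}}(a,b]$ with $0<a<b<c$ and $\varepsilon\in(0,\min\{a,c-b\})$; then $\abs{x\pm\varepsilon up}=\abs{x}\pm\varepsilon p\ge0$, and since this merely relocates spectral mass of $\abs{x}$ inside $(0,c)$ — mass invisible to $\mu$ because $\mu(t,x)\ge c$ for every finite $t$ — we again get $\mu(x\pm\varepsilon up)=\mu(x)$, so $x$ is not extreme.

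\emph{Sufficiency.} Assume $\mu(x)\in S_E$ is extreme in $B_E$ and that (i) or (ii) holds, and suppose $x\pm y\in B_\nonsp$; we must show $y=0$. From $2x=(x+y)+(x-y)$ and the submajorization $\mu(a+b)\prec\mu(a)+\mu(b)$ we get $\mu(x)\prec g:=\tfrac12\mu(x+y)+\tfrac12\mu(x-y)$, hence, using strong symmetry of $E$, $1=\norme{\mu(x)}\le\norme{g}\le\tfrac12\norme{\mu(x+y)}+\tfrac12\norme{\mu(x-y)}\le1$. Equality forces $g\in S_E$ and $\mu(x)$ to be an extreme point of the set $\{h:h\prec g\}\subseteq B_E$; by the structure of submajorization the extreme points of this set all have $\mu$ equal to $\mu(g)$, so $\mu(x)=g$, and then, $\mu(x)$ being extreme in $B_E$, $\mu(x+y)=\mu(x-y)=\mu(x)$. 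It remains to deduce $y=0$ from ``$\mu(x\pm y)=\mu(x)$, $x\pm y\in B_\nonsp$'' together with (i)/(ii): transporting the data through Corollary~\ref{cor:isom} into the commutative algebra $\mathcal{N}$ — where $\mu(x)$ becomes a function $f$ with $\abs{f}\ge\mu(\infty,f)\chi_{[0,\alpha)}$ — and using the polar decomposition, one checks that any admissible $y$ must be carried by $\Ker x$ with range in $\Ker x^*$ and have norm at most $\mu(\infty,x)$, precisely the room that (i) ($\mu(\infty,x)=0$) and (ii) ($n(x)\M n(x^*)=0$) respectively exclude; hence $y=0$.

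I expect the main obstacle to be this last step of the sufficiency direction, together with its necessity-side mirror: verifying that the perturbations constructed above genuinely leave $\mu$ unchanged when some of the relevant spectral or null projections have infinite trace, and, conversely, that no hidden perturbation on $\Ker x$ or $\Ker x^*$ survives once (i) or (ii) is imposed. The delicate interaction is that between the partial isometry $u$ and the pair of null projections $n(x)$, $n(x^*)$; the case $\tau(e^{\abs{x}}(\mu(\infty,x),\infty))=\infty$, in which Corollary~\ref{cor:isom}(iii) is the appropriate reduction, is the most involved, and it is the passage to the commutative model of \secref{sec:symmfun} — where $\mu(x)$ is an honest decreasing function and (i), (ii) collapse to the single inequality $\abs{f}\ge\mu(\infty,f)\chi_{[0,\alpha)}$ — that makes the analysis tractable.
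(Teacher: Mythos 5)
Your necessity-side constructions for forcing (i) or (ii) are sound (the perturbations $cb$ with $b\in n(x^*)\M n(x)$ and $\varepsilon up$ with $p=e^{\abs{x}}(a,b]\subset(0,c)$ do leave $\mu$ unchanged, by the mechanism of Lemma \ref{lm:singfun}(8)), but the argument that $\mu(x)$ must be extreme has a genuine gap. You take $y=uV\bigl(\tfrac12(g_1-g_2)\bigr)$ via Corollary \ref{cor:isom1} and claim $\mu(x\pm y)=\mu(g_{1,2})$; however that corollary gives $V\mu(x)=\abs{x}r+\mu(\infty,x)V\chi_{[\tau(r),\infty)}$, not $V\mu(x)=\abs{x}$, so $x\pm y=u\bigl(\abs{x}\pm V(\tfrac12(g_1-g_2))\bigr)$ is not $uV(g_{1,2})$ and the norm bound does not follow: when $\mu(\infty,g_1)<\mu(\infty,x)$ the leftover block $\mu(\infty,x)e^{\abs{x}}\{\mu(\infty,x)\}$ (possibly of infinite trace) can push $\|x\pm y\|_{\nonsp}$ above $1$. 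This is precisely where the paper's detailed proof of the $k$-extreme analogue (Theorem \ref{thm:noncom4}) works case by case: for $\tau(p)=\infty$ it corrects each piece by $\alpha_i e^{\abs{x}}\{\mu(\infty,x)\}$ with $0\le\alpha_i\le\mu(\infty,f_i)$ averaging to $\mu(\infty,x)$, and for $\tau(s(x))<\infty$ it upgrades $u$ to an isometry $w$ using $n(x)\sim n(x^*)$ — the latter is also what guarantees the perturbation is nonzero, which your construction does not rule out (with $q=\one$ the subalgebra $\mathcal{N}$ need not sit under $s(x)$, so $uV(\tfrac12(g_1-g_2))$ could vanish). Proving (i)/(ii) first and then running the isomorphism argument, with these corrections, is the way to repair this half.

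In the sufficiency half, the reduction via $\mu(x)\prec\tfrac12\mu(x+y)+\tfrac12\mu(x-y)$ and the Ryff-type rigidity (Corollary \ref{cor:orbitmain}) to $\mu(x+y)=\mu(x-y)=\mu(x)$ is correct, though it silently uses strong symmetry of $E$ (the standing hypothesis in the paper's analogous Theorem \ref{thm:noncom5}). But the final step — that $\mu(x\pm y)=\mu(x)$ together with (i) or (ii) forces $y=0$ — is the analytic heart of the theorem, and you only assert it: "one checks that any admissible $y$ is carried by $\Ker{x}$ into $\Ker{x^*}$ with norm at most $\mu(\infty,x)$." No argument is given, and Corollary \ref{cor:isom} cannot supply one as stated, since it only transports $\abs{x}q$ into the commutative model; the perturbation $y$ need not lie in, or interact simply with, the subalgebra $\mathcal{N}$, so "transporting the data" does not touch $y$ at all. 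In the source the paper cites, this step is a separate operator-theoretic rigidity result exploiting equality in the triangle/submajorization inequalities (compare the role of \cite[Proposition 3.5]{CKS1992ext} in the proof of Lemma \ref{lm:3} for complex extreme points); without such an argument the sufficiency direction is incomplete.
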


\begin{remark} \label{rm:removingnonatom}
 Observe that if $\mu(x)$ is extreme in $B_E$ and (i) is satisfied then $x$ is extreme in $\nonsp$, regardless whether $\M$ is non-atomic. Indeed, since $\tilde{\mu}(\mathds{1}\overline\otimes x)=\mu(x)$, we have that $\tilde{\mu}(\mathds{1}\overline\otimes x)$ is an extreme point of $B_E$ and $\tilde{\mu}(\infty,\mathds{1}\overline\otimes x)=0$. Therefore by Theorem \ref{thm:ext}, $\mathds{1}\overline\otimes x$ is an extreme point of the unit ball in $E(\mathcal{A},\kappa)$. It follows that $x$ is an extreme point of $B_{\nonsp}$. In fact letting $\|x\pm y\|_{\nonsp}\leq 1$,  where $y\in \nonsp$ we have $\|\mathds{1}\overline\otimes x\pm \mathds{1}\overline\otimes y\|_{E(\mathcal{A},\kappa)}=\|\tilde{\mu}(\mathds{1}\overline\otimes (x\pm y))\|_E=\|\mu(x\pm y)\|_E=\|x\pm y\|_{\nonsp}\leq 1$. Now since $\mathds{1}\overline\otimes x$ is extreme, $\mathds{1}\overline\otimes y=0$ and so $y=0$.

However, the extension of Theorem \ref{thm:ext} to an arbitrary von Neumann algebra can not be concluded if $\mu(x)$ is extreme and (ii) holds. The problem lies in the condition $n(x)\M n(x^*)=0$ which only implies that $n(\mathds{1}\overline\otimes x)(\Complex \mathds{1}\otimes\M)n(\mathds{1}\overline\otimes x^*)=0$ but not $n(\mathds{1}\overline\otimes x)\mathcal{A}n(\mathds{1}\overline\otimes x^*)=0$.
\end{remark}

Let us mention below other equivalent conditions to $n(x)\M n(x^*)=0$.  The \textit{center} $Z(\M)$ of the von Neumann algebra $\M$ is defined as
\[
Z(\M)=\{x\in \M:\, xy=yx \ \text{for all}\ \  y\in \M\},
\]
and for $x\in \M$ the central support projection is $z(x)=\inf\{p\in P(Z(\M)):\,x=xp\}$, where $P(Z(\M))$ is a family of orthogonal projections on $Z(\M)$.

 The projections $p$ and $q$ are said to be \emph{equivalent} (relative to the von Neumann algebra $\mathcal{M}$) denoted by $p\sim q$, if there exists a partial isometry $v\in\mathcal{M}$ such that $p=v^*v$ and $q=vv^*$. 

\begin{lemma}\cite[Volume I, Chapter V, Lemma 1.7]{Takesaki}
For two projections $e_1$ and $e_2$ in $\M$, the following statements are  equivalent.
\begin{itemize}
\item[{(i)}] $z(e_1)$ and $z(e_2)$ are not orthogonal.
\item[{(ii)}] $e_1\M e_2\neq 0$.
\item[{(iii)}] There exist nonzero projections $p_1\leq e_1$ and $p_2\leq e_2$ in $\M$ such that $p_1\sim p_2$. 
\end{itemize}
\end{lemma}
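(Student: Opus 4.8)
The plan is to prove the three conditions equivalent through two independent biconditionals: (ii)$\Leftrightarrow$(iii), handled by polar decomposition, and (i)$\Leftrightarrow$(ii), handled by the spatial description of the central support. Neither half requires semifiniteness of $\tau$.

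For (ii)$\Rightarrow$(iii) I would take $x\in e_1\M e_2$ with $x\neq0$, so $x=e_1xe_2$, and use its polar decomposition $x=v\abs{x}$ with $v\in\M$. From $x=xe_2$ one gets $s(x)\le e_2$, whence $p_2:=v^*v=s(x)\le e_2$; symmetrically $x=e_1x$ gives $p_1:=vv^*=s(x^*)\le e_1$. Both are nonzero since $x\neq0$, and $v$ implements $p_1\sim p_2$, which is (iii). The converse (iii)$\Rightarrow$(ii) is immediate: if $v\in\M$ is a partial isometry with $vv^*=p_1\le e_1$ and $v^*v=p_2\le e_2$, then $v=vv^*v=p_1vp_2=e_1ve_2$ is a nonzero element of $e_1\M e_2$.

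For (i)$\Leftrightarrow$(ii) the ingredient I would isolate first is that for a projection $e\in\M$ the central support $z(e)$ equals the orthogonal projection onto the closed linear span $[\M e H]$ of $\M e H$. Indeed this subspace is plainly $\M$-invariant and is $\M'$-invariant because $e\in\M$ commutes with $\M'$, so its projection lies in $\M\cap\M'=Z(\M)$; it dominates $e$ (as $\one\in\M$), while every central projection above $e$ has range containing $\M e H$, so it is the least such, i.e.\ it is $z(e)$. Granting this, if $z(e_1)z(e_2)=0$ then, using $e_i=z(e_i)e_i$ and centrality of the $z(e_i)$, one has $e_1xe_2=z(e_1)z(e_2)e_1xe_2=0$ for every $x\in\M$, so (ii) fails; conversely, if $e_1\M e_2=0$ then $e_1$ vanishes on the dense subset $\M e_2H$ of $z(e_2)H$, so $e_1z(e_2)=0$, and centrality of $z(e_2)$ then yields $z(e_2)z(e_1)=0$ (apply $z(e_2)$ to $\M e_1H$), so (i) fails. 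This closes the cycle.

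I expect the only genuinely substantive point to be the identification $z(e)=[\M e H]$, essentially the observation that this range projection is central; everything else is routine manipulation with the polar decomposition and with the fact that central projections commute with all of $\M$. If one wished to avoid that identification, an alternative is to invoke the Murray--von Neumann comparison theorem directly, splitting off a common central summand on which $e_1$ and $e_2$ admit comparable nonzero subprojections; but I would favor the self-contained range-projection argument above.
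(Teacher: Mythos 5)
Your proof is correct. The paper itself offers no proof of this lemma — it is simply quoted from Takesaki (Vol.\ I, Ch.\ V, Lemma 1.7) — and your argument (polar decomposition of a nonzero element of $e_1\M e_2$ for (ii)$\Leftrightarrow$(iii), together with the identification of $z(e)$ with the projection onto the closed span $[\M e H]$ for (i)$\Leftrightarrow$(ii)) is precisely the standard route taken in that reference, so there is nothing to add.
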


Therefore the following conditions are equivalent.
\begin{itemize}
\item[{(i)}] $z(n(x))$ and $z(n(x^*))$ are  orthogonal.
\item[{(ii)}] $n(x)\M n(x^*)= 0$.
\item[{(iii)}] There do not exist nonzero projections $p_1\leq n(x)$ and $p_2\leq n(x^*)$ in $\M$ such that $p_1\sim p_2$. 
\end{itemize}

It is well known that $E=E_0$ whenever $E$ is  strictly convex  \cite[Lemma 3.16]{czer-kam2015}. Thus  Theorem \ref{thm:ext} implies the following global characterization of strict convexity.
\begin{corollary}
\label{cor:extglobal}
If  $E$ is  strictly convex then $E(\mathcal{M},\tau)$ is strictly convex. If in addition $\M$ is non-atomic, then   strict convexity of $\nonsp$ implies strict convexity of $E$.
\end{corollary}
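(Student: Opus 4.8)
The plan is to obtain both implications almost formally from Remark~\ref{rm:removingnonatom}, Corollary~\ref{cor:isomglobal}, and the fact recorded just above the corollary that a strictly convex symmetric space $E$ satisfies $E = E_0$ \cite[Lemma 3.16]{czer-kam2015}.

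\emph{Step 1: $E$ strictly convex implies $\nonsp$ strictly convex, for arbitrary $\M$.} I would fix $x \in S_{\nonsp}$, so that $\mu(x) \in S_E$. Since $E$ is strictly convex, every point of its unit sphere is an extreme point of $B_E$; in particular $\mu(x)$ is extreme in $B_E$. Because $E = E_0$, one also has $\mu(\infty, x) = 0$, which is exactly condition (i) of Theorem~\ref{thm:ext}. By the first assertion of Remark~\ref{rm:removingnonatom} — which deduces extremality of $x$ from extremality of $\mu(x)$ together with $\mu(\infty,x)=0$ by passing through the non-atomic extension $E(\mathcal{A},\kappa)$, and hence imposes no atomicity hypothesis on $\M$ — the operator $x$ is an extreme point of $B_{\nonsp}$. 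As $x$ ranges over $S_{\nonsp}$, this is precisely the strict convexity of $\nonsp$.

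\emph{Step 2: if moreover $\M$ is non-atomic, strict convexity of $\nonsp$ implies strict convexity of $E$.} By Corollary~\ref{cor:isomglobal} there is a linear, hence injective, isometry $V \colon E \to \nonsp$. I would take $f \in S_E$ and suppose $f \pm h \in B_E$ for some $h \in E$. Then $V(f) \in S_{\nonsp}$ and $V(f) \pm V(h) = V(f \pm h) \in B_{\nonsp}$; since $\nonsp$ is strictly convex, $V(f)$ is an extreme point of $B_{\nonsp}$, which forces $V(h) = 0$, and hence $h = 0$ by injectivity of $V$. Thus every unit vector of $E$ is an extreme point of $B_E$, i.e. $E$ is strictly convex.

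Both steps are bookkeeping once the heavy machinery — Theorem~\ref{thm:ext} and the non-atomic-extension construction used in Remark~\ref{rm:removingnonatom} — is in place, so I do not anticipate a serious obstacle. The only genuinely non-formal ingredient is the implication ``$E$ strictly convex $\Rightarrow E = E_0$'', which is what guarantees that the mild condition (i) of Theorem~\ref{thm:ext}, rather than the more delicate condition (ii), applies to every unit vector of $\nonsp$; this is quoted from \cite[Lemma 3.16]{czer-kam2015}.
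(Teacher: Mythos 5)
Your proposal is correct and matches the paper's intended argument: the paper derives the first implication exactly from $E=E_0$ (so condition (i) of Theorem~\ref{thm:ext} holds trivially) together with the non-atomic extension argument of Remark~\ref{rm:removingnonatom}, and the converse from the isometric embedding of $E$ into $\nonsp$ given by Corollary~\ref{cor:isomglobal}. No gaps; both steps are as in the paper.
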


By Theorem \ref{thm:ext} applied to the commutative von Neumann algebra $\M=L_\infty[0,\tauone)$, we get a characterization of extreme functions of $B_E$  in terms of their decreasing rearrangements (see Section  \ref{sec:symmfun}).

\begin{corollary}
\label{cor:extce}
The following conditions are equivalent.
\begin{itemize}
\item [{(i)}]  $f$ is an extreme point of $B_E$.
\item[{(ii)}] $\mu(f)$ is an extreme point of $B_E$ and $\abs{f}\geq \mu(\infty,f)$.
\end{itemize}
\end{corollary}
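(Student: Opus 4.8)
The plan is to obtain the corollary by specializing \thmref{thm:ext} to the commutative von Neumann algebra $\mathcal{N}=L_\infty[0,\alpha)$ equipped with the integration trace $\eta$ set up in \secref{sec:symmfun}, $E$ being the given symmetric function space on $[0,\alpha)$. We may assume $\norme{f}=1$: an extreme point of $B_E$ lies on $S_E$, and if $\mu(f)$ is extreme in $B_E$ then $\norme{\mu(f)}=\norme{f}$, so $\norme{f}=1$ in that case as well.

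First I would invoke Fact~5: the assignment $g\mapsto N_g$ is an isometric $*$-isomorphism of $E$ onto $E(\mathcal{N},\eta)$ with $\mu(N_g)=\mu(g)$ for all $g\in E$. A linear bijective isometry between normed spaces sends the unit ball onto the unit ball, hence extreme points onto extreme points; so $f$ is an extreme point of $B_E$ if and only if $N_f$ is an extreme point of $B_{E(\mathcal{N},\eta)}$. Moreover $\mathcal{N}$ is non-atomic, since Lebesgue measure on $[0,\alpha)$ has no atoms, so \thmref{thm:ext} is applicable to $E(\mathcal{N},\eta)$.

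Now \thmref{thm:ext} states that $N_f$ is an extreme point of $B_{E(\mathcal{N},\eta)}$ if and only if $\mu(N_f)$ is an extreme point of $B_E$ and one of the alternatives (i) $\mu(\infty,N_f)=0$ or (ii) $n(N_f)\mathcal{N}n(N_f^*)=0$ and $\abs{N_f}\geq\mu(\infty,N_f)s(N_f)$ holds. By Fact~5 we may read $\mu(N_f)$ as $\mu(f)$, and by Fact~6 applied with $\M=\mathcal{N}$, $\tau=\eta$, $x=N_f$, the disjunction of (i) and (ii) for $N_f$ is equivalent to the single condition $\abs{f}\geq\mu(\infty,f)\chi_{[0,\alpha)}$, which is exactly what the statement abbreviates as $\abs{f}\geq\mu(\infty,f)$. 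Chaining these equivalences yields precisely ``(i)$\,\Leftrightarrow\,$(ii)'' of the corollary. (In the symmetric sequence case $E\neq\ell_\infty$ one argues directly instead: $E\subset c_0$ forces $\mu(\infty,f)=0$, and composing a rearrangement of the support of $f$ into decreasing order with multiplication by a suitable unimodular sequence gives a bijective isometry of $E$ carrying $f$ to $\mu(f)$, so $f$ is extreme in $B_E$ iff $\mu(f)$ is.)

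There is no genuine obstacle here; the content is carried entirely by \thmref{thm:ext} together with the commutative dictionary of \secref{sec:symmfun}. The only points worth a moment's care are verifying that $\mathcal{N}$ meets the non-atomicity hypothesis of \thmref{thm:ext}, and that the surjective linear isometry $g\mapsto N_g$ transports extreme points of the unit ball in both directions --- both immediate.
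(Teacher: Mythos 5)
Your argument is correct and is exactly the paper's route: the corollary is obtained by applying \thmref{thm:ext} to the commutative, non-atomic von Neumann algebra $\mathcal{N}=L_\infty[0,\alpha)$ with the integration trace, using Fact~5 (the isometric $*$-isomorphism $g\mapsto N_g$ with $\mu(N_g)=\mu(g)$, which transports extreme points of the unit ball both ways) and Fact~6 (the translation of conditions (i)--(ii) into $\abs{f}\geq\mu(\infty,f)$). Only your final parenthetical about sequence spaces is superfluous (the corollary concerns symmetric \emph{function} spaces) and its permutation argument is not quite right --- no permutation composed with unimodular multiplication carries $f$ to $\mu(f)$ when $f$ has infinite support together with infinitely many zero coordinates --- but this aside does not affect the proof of the stated result.
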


\section{Strongly extreme points and midpoint local uniform convexity}
 Given a normed space $(X, \|\cdot \|)$ we say that $x\in S_X$ is a \emph{strongly extreme point} of the unit ball  $B_X$, or $MLUR$ point of $B_X$ \cite{Lin}, if for any $\left\{y_n\right\}$, $\left\{z_n\right\}\subset B_X$, $\norm{2x-y_n-z_n}\rightarrow 0$ implies that $\norm{y_n-z_n}\rightarrow0$. Equivalently, $x\in S_X$ is a strongly extreme point if for any $\left\{y_n\right\}\subset X$, $\norm{x\pm y_n}\rightarrow 1$ implies $\norm{y_n}\rightarrow 0$. A Banach space $X$ is called \emph{midpoint locally uniformly convex} ($MLUR$) space, if every element from the unit sphere $S_X$ is a strongly extreme point. $MLUR$ spaces have characterizations in terms of approximate compactness. A normed space $X$ is a $MLUR$ space if and only if every closed ball in $X$ is an approximatively compact Chebyshev set \cite[Theorem 5.3.28]{Megg}.

\begin{proposition}\cite[Proposition 2.3]{czer-kam2010}, \cite[Proposition 56]{DDP2014}
\label{prop:oc}
 An operator $x\in \nonsp$ is order continuous element of $\nonsp$ whenever $\mu(x)$ is order continuous element of $E$.  If in addition  $\M$ is non-atomic, then if $x$ is order continuous element then so is $\mu(x)$. Therefore if $\M$ is non-atomic, $(\nonsp)_{\text{a}}=E_{\text{a}}\Mtau$.
\end{proposition}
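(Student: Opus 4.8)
The statement consists of two implications, after which the identity $(\nonsp)_{\text{a}}=E_{\text{a}}\Mtau$ is immediate; since $x$ and $\abs{x}$ have the same absolute value and $\mu(x)=\mu(\abs{x})$, an operator is order continuous in $\nonsp$ exactly when $\abs{x}$ is, so in both directions I would assume $x\geq 0$. A theme running through both parts is that the relevant operator must satisfy $\mu(\infty,x)=0$; securing this first is what makes decreasing sequences controllable.

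\emph{If $\mu(x)\in E_{\text{a}}$ then $x\in(\nonsp)_{\text{a}}$ (no non-atomicity needed).} Given $0\leq x_n\leq x$ with $x_n\downarrow 0$ in $S\Mtau$, the first step is $\mu(\infty,x)=0$: otherwise (and with $\tauone=\infty$) the functions $\mu(\infty,x)\chi_{[n,\infty)}$ lie below $\mu(x)$ and decrease to $0$ a.e. while retaining a fixed positive $E$-norm, contradicting $\mu(x)\in E_{\text{a}}$; for $\tauone<\infty$ this is automatic. The goal is then $\mu(x_n)\downarrow0$ a.e.; as $\mu(x_n)$ is decreasing by \lemref{lm:singfun}(3), this is equivalent to $x_n\to0$ in measure, i.e. $\tau(e^{x_n}(\lambda,\infty))\to0$ for every $\lambda>0$. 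This is the one substantive step, and it is exactly where $\mu(\infty,x)=0$ is used: passing through the operator monotone map $t\mapsto t/(1+t)$ reduces to a bounded sequence $y_n\downarrow0$ with $0\leq y_n\leq v:=x(\one+x)^{-1}\in\M\cap S_0\Mtau$; bounded decreasing nets in a von Neumann algebra converge strongly, so $y_n$ and the spectral projections $e^{y_n}(\lambda,\infty)$ tend to $0$ strongly, and the bound $y_n\leq \eps\one+e^{v}(\eps,\infty)$ with $\tau(e^{v}(\eps,\infty))<\infty$ transfers this strong convergence to trace convergence in the finite corner cut by $e^{v}(\eps,\infty)$, giving $\tau(e^{y_n}(\lambda,\infty))\to0$ for $\lambda>\eps$. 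Having $\mu(x_n)\downarrow0$ a.e. with $\mu(x_n)\leq\mu(x)\in E_{\text{a}}$ then yields $\|x_n\|_{\nonsp}=\|\mu(x_n)\|_E\to0$.

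\emph{If $\M$ is non-atomic and $x\in(\nonsp)_{\text{a}}$ then $\mu(x)\in E_{\text{a}}$.} First I would again get $\mu(\infty,x)=0$: if $c:=\mu(\infty,x)>0$, non-atomicity of $\M$ produces projections $q_n\leq e^{x}(c/2,\infty)$ with $q_n\downarrow0$ and $\tau(q_n)=\infty$; then $0\leq\tfrac c2 q_n\leq x$ and $\tfrac c2 q_n\downarrow 0$, yet $\|\tfrac c2 q_n\|_{\nonsp}=\tfrac c2\|\chi_{[0,\infty)}\|_E$ is a fixed positive number, contradicting order continuity of $x$. Hence $x\in(L_1\Mtau+\M)\cap S_0^+\Mtau$, and \propref{isom1} supplies a non-atomic commutative $\mathcal{N}\subseteq s(x)\M s(x)$ and a unital $*$-isomorphism $V$ from $S([0,\tau(s(x))),m)$ onto $S(\mathcal{N},\tau)$ with $V\mu(x)=x$ and $\mu(Vf)=\mu(f)$. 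Given $0\leq g_n\leq\mu(x)$ with $g_n\downarrow0$ a.e., put $y_n:=V(g_n)$. Since $V$ is a $*$-homomorphism, hence positive, $0\leq y_n\leq V\mu(x)=x$; since $V$ is an order isomorphism onto the commutative algebra $S(\mathcal{N},\tau)$ it sends $\inf_n g_n=0$ to $0$, and $\inf_n y_n=0$ holds already in $S\Mtau$ because any $0\leq z\leq y_n$ satisfies $\mu(z)\leq\mu(y_n)=\mu(g_n)\downarrow0$ a.e. (here $\mu(\infty,x)=0$ keeps the distribution functions finite, so continuity of Lebesgue measure from above applies), forcing $z=0$. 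Thus $y_n\downarrow0$ with $0\leq y_n\leq x$, and order continuity of $x$ gives $\|g_n\|_E=\|\mu(y_n)\|_E=\|y_n\|_{\nonsp}\to0$, i.e. $\mu(x)\in E_{\text{a}}$.

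The main obstacle is the measure-convergence step in the first implication, which genuinely cannot be bypassed: $x_n\downarrow0$ in the operator order does \emph{not} in general force $\mu(x_n)\downarrow0$ a.e. (the diagonal operators $x_n=\mathrm{diag}(0,\dots,0,1,1,\dots)$ on $\ell_2$ have $x_n\downarrow0$ but $\mu(x_n)\equiv\chi_{[0,\infty)}$), so the hypothesis $\mu(x)\in E_{\text{a}}$ must first be converted into the spectral smallness $\mu(\infty,x)=0$ before any truncation can run; a lesser technical point is verifying that the transported elements $V(g_n)$ really decrease to $0$ as operators in $S\Mtau$ and not merely within $\mathcal{N}$.
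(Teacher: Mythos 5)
Your argument is correct and follows essentially the same route as the paper and its cited sources: reduce both directions to the case $\mu(\infty,x)=0$, show that the singular value functions of a decreasing sequence tend to zero, and transfer the converse through the trace-preserving $*$-isomorphism $V$ of Proposition~\ref{isom1} satisfying $V\mu(x)=x$. The only deviation is that you re-derive the key step ``$x_n\downarrow 0$ with $0\le x_n\le x\in S_0\Mtau$ implies $x_n\to 0$ in measure'' from scratch via the bounded transform and normality of the trace on a finite-trace corner, where the paper's parallel argument (Proposition~\ref{prop:occe}) simply invokes \cite[Lemma 3.5]{DDP4}; both are sound.
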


In fact, using similar techniques as in \cite[Proposition 2.3]{czer-kam2010} the analogous result can be shown for a symmetric sequence space $E$ and a unitary matrix space $C_E$.
\begin{proposition}
\label{prop:occe}
Let $E$ be a symmetric sequence space. Then $S(x)\in E$ is order continuous if and only if $x\in C_E$ is order continuous.  Consequently $(C_E)_a = C_{E_a}$.
\end{proposition}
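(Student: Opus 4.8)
The plan is to mirror, in the sequence setting, the argument that yields Proposition~\ref{prop:oc} for the function case, using the identification of $C_E$ with $G(B(H),\tr)$ established in Section~\ref{sec:unitary}. Concretely, write $\M=B(H)$ with the canonical trace $\tr$, and recall that under this identification an operator $x\in C_E$ has singular value function $\mu(x)=\sum_{n\ge 1}s_n(x)\chi_{[n-1,n)}$, and $\|x\|_{C_E}=\|S(x)\|_E=\|\pi(\mu(x))\|_E$ where $\pi(f)=\{\int_{n-1}^n\mu(f)\}$. Since $G$ is a symmetric function space with $\|f\|_G=\|\pi(f)\|_E$ and (by \cite[Proposition~6.1]{CSweak}) $G$ inherits order continuity from $E$, one direction is essentially immediate: if $S(x)=\{s_n(x)\}$ is order continuous in $E$, I would show $\mu(x)$ is order continuous in $G$, and then invoke Proposition~\ref{prop:oc} (applied to $\M=B(H)$, which is atomic, so only the ``easy'' implication of that proposition is used) to conclude $x$ is order continuous in $C_E=G(B(H),\tr)$.

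First I would verify the equivalence of order continuity of $S(x)\in E$ and of $\mu(x)\in G$. If $0\le g_k\le \mu(x)$ with $g_k\downarrow 0$ a.e.\ on $[0,\infty)$, then since $\mu(x)$ is a step function constant on each $[n-1,n)$ and decreasing, one can dominate $g_k$ by the decreasing step function $\tilde g_k$ whose value on $[n-1,n)$ is $\esssup_{[n-1,n)}g_k$; then $\pi(\tilde g_k)\le \pi(\mu(x))=S(x)$ (up to rearrangement) and $\pi(\tilde g_k)\downarrow 0$ coordinatewise, so $\|\tilde g_k\|_G=\|\pi(\tilde g_k)\|_E\to 0$ by order continuity of $S(x)$ in $E$, forcing $\|g_k\|_G\to 0$. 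Conversely, order continuity of $\mu(x)\in G$ restricted to step functions supported where $\mu(x)$ lives pushes down to order continuity of $S(x)\in E$ via the same $\pi$-correspondence, using that $\pi$ is an order isometry between the relevant sublattices. This is the technical heart; it is routine but must be done carefully because $g_k$ need not itself be a step function.

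Next I would address the reverse implication: if $x\in C_E$ is order continuous then $S(x)\in E$ is order continuous. This follows from the order-isometric embedding of $E$ into $C_E$ given by Corollary~\ref{cor:isomglobal} (the sequence-space case, via Proposition~\ref{prop:isomarazy}): the map $V:E\to C_E$ with $V(S(x))$ realizing $x$ — more precisely, taking $x\ge 0$ one gets a $*$-isomorphic order-isometric copy of $E$ inside $C_E$ sending $\lambda$ to the diagonal operator with entries $\{\lambda_n\}$ — so order continuity of an element of $C_E$ restricts to order continuity of the corresponding element of $E$. Since one may replace $x$ by $|x|$ and $S(|x|)=S(x)$, this handles the general case. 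Alternatively one may argue directly: a sequence $0\le \lambda^{(k)}\le S(x)$ with $\lambda^{(k)}\downarrow 0$ coordinatewise lifts to diagonal operators $y_k$ with $0\le y_k\le \sum s_n(x)\langle\cdot,e_n\rangle e_n$ and $y_k\downarrow 0$ in $C_E$, whence $\|y_k\|_{C_E}=\|\lambda^{(k)}\|_E\to 0$.

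Finally, the identity $(C_E)_a=C_{E_a}$ is a formal consequence: $x\in (C_E)_a$ iff $x$ is order continuous in $C_E$ iff $S(x)$ is order continuous in $E$ iff $S(x)\in E_a$, and $S(x)\in E_a$ together with $S(x)\in E$ is exactly the statement that $x\in C_{E_a}$ (noting $E_a$ is an order ideal of $E$ and that membership in $C_{E_a}$ depends only on the singular values). I expect the main obstacle to be the first step — passing between order continuity of the genuinely function-theoretic object $\mu(x)\in G$ and the sequential object $S(x)\in E$ when the competing sequences $g_k$ are not step functions — though this is exactly the kind of dominated-step-function approximation already used in the proof of Proposition~\ref{prop:oc}, so it can be quoted or adapted rather than redone from scratch.
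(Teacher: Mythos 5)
Your overall plan for the forward direction (pass to $G$ and quote the easy half of Proposition~\ref{prop:oc} for $\M=B(H)$) could be made to work, but the step you yourself call the technical heart is wrong as stated. If $0\le g_k\le\mu(x)$ and $g_k\downarrow 0$ a.e., the dominating step function $\tilde g_k$ whose value on $[n-1,n)$ is the essential supremum of $g_k$ over $[n-1,n)$ need not tend to zero at all: take $s_1(x)=1$ and $g_k=\chi_{[0,1/k)}$; then $g_k\downarrow 0$ a.e.\ while $\tilde g_k=\chi_{[0,1)}$ for every $k$, so $\pi(\tilde g_k)$ does not decrease to zero coordinatewise and the appeal to order continuity of $S(x)$ collapses. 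The repair is to drop $\tilde g_k$ and work with $\pi(g_k)$ directly: since $g_k\le\mu(x)$ and $s_n(x)\to 0$ (recall $E\subset c_0$), one has $d(s,g_k)\le d(s,\mu(x))<\infty$ for every $s>0$, and $g_k\downarrow 0$ a.e.\ gives $d(s,g_k)\downarrow 0$, hence $\mu(g_k)\downarrow 0$ pointwise on $(0,\infty)$; therefore $\pi(g_k)=\left\{\int_{n-1}^n\mu(g_k)\right\}$ decreases coordinatewise to zero and is dominated by $\pi(\mu(x))=S(x)$, so order continuity of $S(x)$ in $E$ yields $\|g_k\|_G=\|\pi(g_k)\|_E\to 0$.

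Even with this repair, the detour through $G$ is heavier than necessary, and it is not what the paper does: the paper argues directly in $C_E$. If $0\downarrow x_n\le|x|$ with $\{x_n\}\subset C_E$, then $S(x_n)\le S(x)$ and, by \cite[Lemma 3.5]{DDP4}, $s_k(x_n)\downarrow_n 0$ for every $k$, so order continuity of $S(x)$ applied to the sequences $S(x_n)$ gives $\|x_n\|_{C_E}=\|S(x_n)\|_E\to 0$; no passage through $G$ or Proposition~\ref{prop:oc} is needed. Your reverse direction (transport $0\downarrow a_n\le S(x)$ through the order-preserving isometry $V$ of Proposition~\ref{prop:isomarazy} with $V(S(x))=x$ for $x\ge 0$, reducing the general case to $|x|$ since order continuity of $x$ is by definition a statement about $|x|$ and $S(|x|)=S(x)$) is exactly the paper's argument and is fine, as is the formal identification $(C_E)_a=C_{E_a}$ at the end.
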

\begin{proof}
Let $S(x)$ be order continuous in $E$ and $0\downarrow x_n\leq |x|$, $\{x_n\}\subset C_E$. Then $\{s_k(x_n)\} = S(x_n)\leq S(x) = \{s_k(x)\}$ and by \cite[Lemma 3.5]{DDP4}, $s_k(x_n)\downarrow_n 0$ for all $k\in\mathbb N$. Hence $\|x_n\|_{C_E}=\|S(x_n)\|_E\to 0$, proving that $x$ is order continuous.

Conversely, suppose that $x \ge 0$ is an order continuous element in $C_E$.  Let  $0\downarrow a_n\leq S(x)$, where $\{a_n\}\subset E$. By Proposition \ref{prop:isomarazy}, there is a $*$-isomorphism $V:E\to C_E$ such that  $V(S(x))=x$. Since $*$-isomorphism also preserves  the order, $0\downarrow V(a_n)\leq V(S(x))= x$. In view of $x$ being order continuous, $\|a_n\|_E=\|V(a_n)\|_{C_E}\to 0$ and $S(x)$ is order continuous.

\end{proof}

\begin{theorem}\cite[Theorem 2.5]{czer-kam2010}
\label{thm:1mlur}
Let $E$ be fully symmetric, and $x$ be an order continuous element of $\nonsp$. If the singular value function $\mu(x)$ is a $MLUR$ point of $B_{E_0}$ then $x$ is a $MLUR$ point of $B_{E_0\Mtau}$.
\end{theorem}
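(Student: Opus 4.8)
The plan is to reduce the statement about the noncommutative space $E_0(\M,\tau)$ to the commutative statement by means of the trace‑preserving $*$‑isomorphisms developed in Section~\ref{sec:isom}, and then to transport the $MLUR$ property along a linear isometry. First I would observe that, by Proposition~\ref{prop:oc}, since $x$ is order continuous the function $\mu(x)$ is an order continuous element of $E$, so $\mu(x)\in E_0$ (indeed $\mu(\infty,x)=0$ because $\mu(x)$ order continuous forces $\mu(\infty,x)=0$), and the hypothesis that $\mu(x)$ is an $MLUR$ point of $B_{E_0}$ makes sense. Note also $\|x\|_{\nonsp}=\|\mu(x)\|_E=1$, so $x\in S_{E_0\Mtau}$.

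The key step is to realise $x$ inside a commutative subalgebra on which $\mu$ behaves like a rearrangement. Since $x$ is order continuous, $\mu(\infty,x)=0$, so $|x|=\mu(\infty,x)s(x)$ trivially holds and $x\in S_0\Mtau$. After first passing, if necessary, to the non‑atomic extension $(\mathcal A,\kappa)=(\mathcal N\overline\otimes\M,\eta\otimes\tau)$ via the isometric $*$‑isomorphism $\tilde\pi$ of Section~\ref{non-atom} (which preserves $\mu$, order continuity, and membership in $E_0$), I may assume $\M$ non‑atomic; moreover, restricting to $s(x)\M s(x)$ I may assume $\tau$ is $\sigma$‑finite on the relevant corner since $\mu(\infty,x)=0$ gives $\tau(e^{|x|}(1/n,\infty))<\infty$. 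Then I would apply Proposition~\ref{isom1} (or Corollary~\ref{cor:isom}, case appropriate to whether $\tau(s(x))$ is finite or infinite) to $|x|$: this yields a non‑atomic commutative von Neumann subalgebra $\mathcal N\subset s(x)\M s(x)$ and a unital $*$‑isomorphism $V$ from $S([0,\tau(s(x))),m)$ into $S(\mathcal N,\tau)$ with $V\mu(x)=|x|$ and $\mu(V(f))=\mu(f)$ for all admissible $f$. Composing with a partial isometry from the polar decomposition $x=u|x|$ one recovers $x$ itself from $\mu(x)$, but for the norm estimates only $|x|$ is needed.

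Now the transport argument: $V$ restricts to a linear isometry from $E_0$ (or $E_0[0,\tau(s(x))))$) into $E_0\Mtau$, because $\|V(f)\|_{\nonsp}=\|\mu(V(f))\|_E=\|\mu(f)\|_E=\|f\|_E$. Given a sequence $\{y_n\}\subset E_0\Mtau$ with $\|x\pm y_n\|_{\nonsp}\to 1$, I want $\|y_n\|_{\nonsp}\to 0$. The natural route is to compare with the diagonal part: for any $y\in E_0\Mtau$, using Lemma~\ref{lm:singfun}(4) and the submajorization $\mu(x+y)\prec\mu(x)+\mu(y)$ together with the fact that $E$ is fully symmetric, one controls $\|x\pm y_n\|$ from below by the norms of commutative objects built from $\mu(x)$ and the "conditional expectation" of $y_n$ onto $\mathcal N$ (or onto the masa generated by the spectral projections of $|x|$). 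The point is that $\mu(x)$ being a strongly extreme point of $B_{E_0}$ means precisely that $\|\mu(x)\pm g_n\|_{E_0}\to 1 \Rightarrow \|g_n\|_{E_0}\to 0$; pulling the inequalities back through $V$ forces the relevant commutative pieces $g_n$ of $y_n$ to vanish in norm, and a further estimate (again via submajorization and full symmetry, plus order continuity to handle the "$\mu(\infty,\cdot)=0$" tail) shows the full $\|y_n\|_{\nonsp}\to 0$.

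The main obstacle I anticipate is the last implication — showing that smallness of the commutative/diagonal part of $y_n$ forces smallness of $y_n$ itself in $\nonsp$. This is exactly where full symmetry of $E$ and order continuity of $x$ (hence of $\mu(x)$, hence membership in $E_0$) must be used in an essential way: one needs an inequality of the form $\|x+y_n\|_{\nonsp}+\|x-y_n\|_{\nonsp}\ge 2\|x\|_{\nonsp}+ c\,\|y_n\|_{\nonsp}$ up to a vanishing error, or more precisely a Clarkson‑type or submajorization argument that upgrades "$\|2x-y_n-z_n\|\to 0$" to "$\|y_n-z_n\|\to 0$" after the commutative reduction. Bounding the error terms uniformly, and ensuring that the passage to the non‑atomic extension and to the corner $s(x)\M s(x)$ does not lose the $MLUR$ information, will require the careful decreasing‑rearrangement estimates from \cite{Fack-Kos1986} and the identifications of Section~\ref{sec:symmfun}.
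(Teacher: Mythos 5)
Your plan does not actually contain the proof of the hard step, and the machinery you lean on points in the wrong direction for this implication. An isometry $V$ (or $f\mapsto uV(f)$) with $V\mu(x)=|x|$ transports the $MLUR$ property \emph{from} $x$ \emph{to} $\mu(x)$: if $\|\mu(x)\pm a_n\|_E\to 1$ one applies the isometry and uses that $x$ is $MLUR$ in the big ball — this is exactly how Theorem~\ref{thm:3ce} (and the converse result, Theorem~\ref{thm:3}) are proved. It gives nothing for the present direction, because the perturbations $y_n$ with $\|x\pm y_n\|_{\nonsp}\to 1$ need not lie in, or be controlled by, the commutative copy $V(E_0)$. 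Your substitute — project $y_n$ onto a diagonal/masa part, apply the commutative hypothesis there, then ``upgrade'' to $\|y_n\|_{\nonsp}\to 0$ — leaves the upgrade unproved, and you yourself flag it as the main obstacle. Smallness of a conditional-expectation part never implies smallness of $y_n$ by soft arguments (the expectation annihilates the off-diagonal part entirely), so this step \emph{is} the theorem: it is where the actual argument must work with submajorization estimates for $\mu(x\pm y_n)$ (e.g.\ $\mu(x)\prec\tfrac12\left(\mu(x+y_n)+\mu(x-y_n)\right)$ and the $\mu(t_1+t_2,\cdot)$ inequalities), full symmetry of $E$, convergence of $y_n$ in measure, and order continuity of $x$ to pass from measure convergence plus convergence of rearrangements to norm convergence. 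Compare the complex-extreme analogue (Lemma~\ref{lm:3}, Theorem~\ref{prop:1}): the lifting direction there is proved by monotonicity/submajorization inequalities, not by any isometric transport.

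The preliminary reductions are also flawed. Proposition~\ref{prop:oc} yields ``$x$ order continuous $\Rightarrow\mu(x)$ order continuous'' only for non-atomic $\M$, so you cannot invoke it at the outset (though $\mu(\infty,x)=0$ does follow trivially from $\mu(x)\in S_{E_0}$). More seriously, passing to the non-atomic extension does not preserve the hypothesis that $x$ is an order continuous element: since $\mathcal{A}$ is non-atomic, order continuity of $\mathds{1}\overline\otimes x$ in $E(\mathcal{A},\kappa)$ is equivalent to order continuity of $\mu(x)$ in $E$, and this can fail for atomic $\M$. For instance, take $\M=\Complex\one$ with $\tau(\one)=1$ and $E=L_\infty[0,1)$: then $x=\one$ is order continuous in $E\Mtau$ and $\mu(x)=\chi_{[0,1)}$ is an $MLUR$ point of $B_{E_0}$, yet $\mu(x)$ is not an order continuous element of $L_\infty$, so $\mathds{1}\overline\otimes x$ is not order continuous in $E(\mathcal{A},\kappa)$ and the non-atomic case cannot be applied to it. Finally, the theorem as stated assumes neither non-atomicity nor $\sigma$-finiteness, which is already a signal that the intended proof avoids the isomorphism machinery of Section~\ref{sec:isom} (which needs both) and works directly with rearrangement inequalities valid over an arbitrary semifinite $\M$.
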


If $E$ is a symmetric sequence space then we always assume that $E\subset c_0$, which means that $E=E_0$. Therefore as shown in the proof of \cite[Theorem 2.9]{czer-kam2010},  Proposition \ref{prop:occe} and Theorem \ref{thm:1mlur} imply the following.  

\begin{corollary}
\label{cor:1mlurce}
Let $E$ be a fully symmetric sequence space and $x$ be an order continuous element of $C_E$. If $S(x)$ is a $MLUR$ point of $B_{E}$ then $x$ is a $MLUR$ point of $B_{C_{E}}$.
\end{corollary}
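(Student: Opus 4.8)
The plan is to carry the statement over to the setting of Theorem~\ref{thm:1mlur} through the identification of $C_E$ with a noncommutative function space. By Section~\ref{sec:unitary}, $C_E = G(B(H),\tr)$ isometrically, where $G$ is the symmetric function space on $[0,\infty)$ with $\|f\|_G=\|\pi(f)\|_E$ and $\pi(f)=\{\int_{n-1}^n\mu(f)\}$; $G$ is fully symmetric because $E$ is, and for $x\in C_E$ the singular value function is the step function $\mu(t,x)=\sum_n s_n(x)\chi_{[n-1,n)}(t)$, so that $\pi(\mu(x))=S(x)$ and $\|x\|_{C_E}=\|\mu(x)\|_G=\|S(x)\|_E$. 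Since $E\subset c_0$, every $f\in G$ has $\mu(\infty,f)=0$, hence $G=G_0$ and $C_E=G_0(B(H),\tr)$. Finally, $x$ being an order continuous element of $C_E$ is literally the statement that it is an order continuous element of $G(B(H),\tr)$, which is the remaining hypothesis of Theorem~\ref{thm:1mlur} applied with $E$ replaced by $G$; Proposition~\ref{prop:occe} and its function-space companion Proposition~\ref{prop:oc} are what tie this to $S(x)\in E_a$.

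With this dictionary, I would reproduce the proof of Theorem~\ref{thm:1mlur} (Theorem~2.5 of \cite{czer-kam2010}) for the data $(G,B(H),\tr,x)$. The crucial simplification is that every operator perturbation $y_n\in C_E$ is compact, so $\mu(y_n)$ and $\mu(x\pm y_n)$ are again step functions; therefore each appeal in that proof to ``$\mu(x)$ is a strongly extreme point of $B_{G_0}$'' is only ever made against step-function perturbations, and strong extremality of the step function $\mu(x)$ against step functions is exactly strong extremality of $\pi(\mu(x))=S(x)$ in $B_E$, i.e.\ our hypothesis. Concretely, fix a Schmidt representation $x=\sum_n s_n(x)\langle\cdot,e_n\rangle f_n$ and invoke Arazy's representation (Proposition~\ref{prop:isomarazy}): the isometry $V(\lambda)=\sum_n\lambda_n\langle\cdot,e_n\rangle f_n$ with $V(S(x))=x$, and the contractive projection $Py=\sum_n\langle ye_n,f_n\rangle\langle\cdot,e_n\rangle f_n$ of $C_E$ onto $V(E)$ with $Px=x$. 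For $\{y_n\}\subset C_E$ with $\|x\pm y_n\|_{C_E}\to1$, applying $P$ and combining contractivity with the trivial lower bound $\|x+Py_n\|_{C_E}+\|x-Py_n\|_{C_E}\ge2$ gives $\|x\pm Py_n\|_{C_E}\to1$; writing $Py_n=V(b_n)$ this reads $\|S(x)\pm b_n\|_E\to1$, so strong extremality of $S(x)$ in $B_E$ forces $\|Py_n\|_{C_E}=\|b_n\|_E\to0$.

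It then remains to absorb the off-diagonal remainder $z_n:=y_n-Py_n$. Since $\|z_n\|_{C_E}=\|y_n\|_{C_E}+o(1)$ and $\|x\pm z_n\|_{C_E}\to1$, one may assume $Py_n=0$, i.e.\ $\langle y_n e_k,f_k\rangle=0$ for all $k$, and must conclude $\|y_n\|_{C_E}\to0$. Here the order continuity of $x$ enters decisively: since $S(x)\in E_a$, $x$ is approximated in $C_E$-norm by its finite-rank truncations $x_N=\sum_{k\le N}s_k(x)\langle\cdot,e_k\rangle f_k$, which reduces the problem to perturbing a finite-rank operator with prescribed finite-dimensional range and co-range by an operator whose $(e_k,f_k)$-diagonal vanishes; the singular-value estimate from the proof of Theorem~\ref{thm:1mlur} then forces $\|y_n\|_{C_E}\to0$, so that $x$ is an $MLUR$ point of $B_{C_E}$. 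I expect this last step --- showing a perturbation with vanishing diagonal cannot be absorbed without increasing the norm --- to be the main obstacle: it is the only place where order continuity is genuinely used, and it is exactly where the argument parts ways with the naive function-space specialization (which is vacuous, e.g.\ for $G=L_1[0,\infty)$ when $E=\ell_1$), the point being that the competing perturbations here are operators and hence have step-function singular values.
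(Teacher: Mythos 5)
The first half of your direct argument is sound: by Proposition~\ref{prop:isomarazy}, contractivity of $P$ together with $Px=x$ and $\|x+Py_n\|_{C_E}+\|x-Py_n\|_{C_E}\ge 2$ gives $\|x\pm Py_n\|_{C_E}\to 1$, and writing $Py_n=V(b_n)$ the $MLUR$ hypothesis on $S(x)$ kills the diagonal part; the passage to $z_n=y_n-Py_n$ is also legitimate. But the proof stops exactly where the real difficulty begins. The decisive claim --- that a sequence with vanishing $(e_k,f_k)$-diagonal and $\|x\pm z_n\|_{C_E}\to1$ must satisfy $\|z_n\|_{C_E}\to0$ --- is only asserted, by appeal to an unspecified ``singular-value estimate from the proof of Theorem~\ref{thm:1mlur}''. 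That estimate is never stated, and it is not available to you in the form you need: the hypothesis of Theorem~\ref{thm:1mlur} (for the space $G$ on $[0,\infty)$) is that $\mu(x)$ is an $MLUR$ point of $B_{G_0}$, which you have not deduced from $S(x)$ being $MLUR$ in $B_E$; your ``plan A'' remark that the proof of \cite[Theorem 2.5]{czer-kam2010} only ever tests $\mu(x)$ against step-function perturbations is an unverified assertion about the internals of a proof you do not reproduce. The finite-rank truncation sketch does not repair this: $MLUR$ is not a quantitative or uniform property, the truncations $x_N$ are not norm-one and $S(x_N)$ need not be $MLUR$ points, so estimates at $x_N$ do not pass to the limit by a soft argument. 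A telling symptom is that the hypothesis that $E$ is fully symmetric is never used anywhere in your argument, whereas it is precisely the submajorization machinery tied to full symmetry that carries this off-diagonal step in \cite{czer-kam2010}; proving your remaining claim is essentially the whole content of the theorem, not a routine remainder.

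For comparison, the paper does not split into diagonal and off-diagonal parts at all. It identifies $C_E$ with $G(B(H),\tr)$ as in Section~\ref{sec:unitary}, notes that $E\subset c_0$ forces $E=E_0$, uses Proposition~\ref{prop:occe} to translate order continuity of $x$ into order continuity of $S(x)$, and then invokes Theorem~\ref{thm:1mlur} for $G$, with the bridge between ``$S(x)$ is $MLUR$ in $B_E$'' and ``$\mu(x)$ is $MLUR$ in $B_{G_0}$'' supplied by the argument in the proof of \cite[Theorem 2.9]{czer-kam2010}. So your plan A is essentially the paper's route with the crucial bridge left as an assertion, and your plan B is a genuinely different decomposition whose key step is missing; either way there is a genuine gap to close before this constitutes a proof.
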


Moreover, Theorem \ref{thm:1mlur} can be translated for the commutative von Neumann algebra $\M=L_\infty[0,\tauone)$ (see Section  \ref{sec:symmfun}).

\begin{corollary}
\label{cor:1mlurcomm} Let $E$ be a fully symmetric function space and $f$ be an order continuous element of $E$. If $\mu(f)$ is a $MLUR$ point of $B_{E_0}$ then $f$ is a $MLUR$ point of $B_{E_0}$.
\end{corollary}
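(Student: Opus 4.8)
The plan is to obtain this statement as the commutative specialization of Theorem~\ref{thm:1mlur}, using the identification of a symmetric function space with a noncommutative space over the abelian von Neumann algebra of Section~\ref{sec:symmfun}. No new analysis is involved; everything reduces to bookkeeping about that identification.

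First I would put $\alpha=\tauone$ and take $\mathcal{N}=L_\infty[0,\alpha)$ acting on $L_2[0,\alpha)$ by pointwise multiplication, with trace $\eta(N_g)=\int_0^\alpha g$, exactly as in Section~\ref{sec:symmfun}. By Fact~4 in that section the map $g\mapsto N_g$ is a $*$-isomorphism of the $*$-algebra $S([0,\alpha),m)$ onto $S(\mathcal{N},\eta)$; it is order preserving, since $N_g\ge 0$ if and only if $g\ge 0$ a.e., and by Fact~5 it satisfies $\mu(N_g)=\mu(g)$. Hence it restricts to a surjective isometry $\Phi$ of $E$ onto $E(\mathcal{N},\eta)$ that preserves order and, because $\mu(N_g)=\mu(g)$, carries $E_0=\{g\in E:\mu(\infty,g)=0\}$ onto the order-continuous-at-infinity subspace $(E(\mathcal{N},\eta))_0$; in particular $\Phi$ maps order continuous elements of $E$ to order continuous elements of $E(\mathcal{N},\eta)$ and sends $MLUR$ points of $B_{E_0}$ to $MLUR$ points of $B_{(E(\mathcal{N},\eta))_0}$, and conversely.

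Next I would apply Theorem~\ref{thm:1mlur} with $\M=\mathcal{N}$, $\tau=\eta$ and $x=N_f$. The hypothesis that $\mu(f)$ is an $MLUR$ point of $B_{E_0}$ is exactly the condition that theorem imposes on the singular value function $\mu(N_f)=\mu(f)$; it also forces $\mu(\infty,f)=0$, so $f\in E_0$ and $N_f=\Phi(f)\in(E(\mathcal{N},\eta))_0$. Order continuity of $f$ in $E$ passes to order continuity of $N_f$ in $E(\mathcal{N},\eta)$ through $\Phi$ (equivalently, by Proposition~\ref{prop:oc}, since $\mathcal{N}$ has no minimal projections and is therefore non-atomic). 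As $E$ is fully symmetric by assumption, all hypotheses of Theorem~\ref{thm:1mlur} hold, and it gives that $N_f$ is an $MLUR$ point of $B_{(E(\mathcal{N},\eta))_0}$. Applying $\Phi^{-1}$, which is again a surjective isometry and sends $N_f$ back to $f$, we conclude that $f$ is an $MLUR$ point of $B_{E_0}$.

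I do not expect any genuine obstacle: the substance is entirely in Theorem~\ref{thm:1mlur}, and the remaining work is the routine verification that $g\mapsto N_g$ is simultaneously an isometry and an order isomorphism onto $E(\mathcal{N},\eta)$ -- so that order continuity, membership in $E_0$, and the $MLUR$ property all transfer faithfully -- which is precisely the translation anticipated by the comment that Theorem~\ref{thm:1mlur} can be translated for the commutative von Neumann algebra $\M=L_\infty[0,\tauone)$.
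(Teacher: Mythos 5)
Your argument is correct and is exactly the route the paper intends: the corollary is obtained by specializing Theorem~\ref{thm:1mlur} to the commutative algebra $\mathcal{N}=L_\infty[0,\tauone)$ of Section~\ref{sec:symmfun}, using that $g\mapsto N_g$ is an order-preserving surjective isometry with $\mu(N_g)=\mu(g)$, so order continuity, membership in $E_0$, and the $MLUR$ property transfer both ways. The only cosmetic remark is that your parenthetical appeal to Proposition~\ref{prop:oc} is not what carries order continuity of $f$ to $N_f$ (that is done by the order isomorphism itself, as you say first), but this does not affect the proof.
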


\begin{theorem}\cite[Theorem 2.7]{czer-kam2010}
\label{thm:3}
Suppose that $\M$ is non-atomic with a $\sigma$-finite trace $\tau$. If $x$ is a $MLUR$ point of $B_{\nonsp}$ then $\mu(x)$ is a $MLUR$-point of $B_E$ and either
\begin{itemize}
\item[{(i)}] $\mu(\infty,x)=0$, or
\item[{(ii)}] $n(x)\mathcal{M}n(x^*)=0$ and $\abs{x}\geq\mu(\infty,x)s(x)$.
\end{itemize}
\end{theorem}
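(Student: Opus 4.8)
The plan is to split the statement into a \emph{structural} part --- conditions (i)--(ii) together with extremality of $\mu(x)$ --- which comes essentially for free from the extreme point theorem, and a \emph{metric} part, namely that $\mu(x)$ is actually a mid-point locally uniformly convex point of $B_E$, which is obtained by transporting the question through one of the trace preserving $*$-isomorphisms of Section~\ref{sec:isom}. For the first part, every $MLUR$ point of a unit ball is in particular an extreme point: feeding the constant sequences $y_n\equiv y$, $z_n\equiv -y$ into the definition, $\normcomm{x\pm y}\le 1$ forces $y=0$ (indeed $1=\normcomm{x}\le\tfrac12(\normcomm{x+y}+\normcomm{x-y})\le 1$ already gives $\normcomm{x\pm y}=1$). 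Since $\M$ is non-atomic, \thmref{thm:ext} then yields at once that $\mu(x)\in S_E$ is an extreme point of $B_E$ and that (i) or (ii) holds; in either case $\abs{x}\ge\mu(\infty,x)s(x)$ (trivially so when $\mu(\infty,x)=0$). So only the promotion of $\mu(x)$ from extreme to $MLUR$ in $B_E$ remains, and this is where $\sigma$-finiteness of $\tau$ is used.

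Since $(\M,\tau)$ is non-atomic and $\sigma$-finite and $\abs{x}\ge\mu(\infty,x)s(x)$, Corollary~\ref{cor:isom} supplies a non-atomic commutative von Neumann subalgebra $\mathcal N\subseteq q\M q$, with $\tau(q)=\tauone$ and $q$ as described there, together with a unital $*$-isomorphism $V\colon S([0,\tauone),m)\to S(\mathcal N,\tau)$ such that $V\mu(x)=\abs{x}q$ and $\mu(Vf)=\mu(f)$ for all $f$. By $\mu(Vf)=\mu(f)$ and symmetry of $E$, $V$ restricts to a linear isometry of $E$ onto a closed subspace $V(E)\subseteq\nonsp$, and $\normcomm{V\mu(x)}=\norme{\mu(x)}=1$. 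Since a strongly extreme point of the unit ball of a Banach space remains strongly extreme in the unit ball of every (closed) subspace containing it, and $V\colon E\to V(E)$ is a surjective isometry, it suffices to show that $\abs{x}q=V\mu(x)$ is an $MLUR$ point of $B_{\nonsp}$, knowing that $x$ is.

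This last transfer --- from $x$ to the positive ``model'' $\abs{x}q$, which by \lemref{lm:singfun}(7) has $\mu(\abs{x}q)=\mu(x)$, with $q$ equal to one of $\one$, $s(x)$, $e^{\abs{x}}(\mu(\infty,x),\infty)$ --- is the crux. Write $x=u\abs{x}$ for the polar decomposition, $u\in\M$, $u^*u=s(x)$, $uu^*=s(x^*)$. Arguing by contraposition, a witness $\{g_n\}\subset E$ with $\norme{\mu(x)\pm g_n}\to 1$ and $\norme{g_n}\ge\delta>0$ is transported to $h_n=Vg_n\in qS\Mtau q$, so that $\normcomm{\abs{x}q\pm h_n}\to 1$ and $\normcomm{h_n}\ge\delta$; one must then manufacture perturbations $\widetilde h_n$ of $x$ --- essentially $u h_n$, corrected on $\one-s(x)$ in the case $q=\one$ --- with $\normcomm{x\pm\widetilde h_n}\to 1$ and $\normcomm{\widetilde h_n}$ bounded below, contradicting that $x$ is $MLUR$. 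The main obstacle is the bookkeeping of the off-support corners: the parts of $h_n$ not absorbed by $s(x)$ on both sides, and the slice of $\abs{x}$ lost when $q=e^{\abs{x}}(\mu(\infty,x),\infty)\ne s(x)$, must be shown not to affect the limiting norms and not to destroy the lower bound on $\normcomm{\widetilde h_n}$. This is exactly where condition (ii), $n(x)\M n(x^*)=0$ --- equivalently, by the Takesaki lemma recorded after \thmref{thm:ext}, the absence of nonzero equivalent subprojections of $n(x)$ and $n(x^*)$ --- is used to annihilate those corners, together with \lemref{lm:singfun}(1),(8); the complementary situation $\mu(\infty,x)=0$ (where (ii) may fail) is handled separately, exploiting that then $Vg_n$ is orthogonal to $s(x)$ on the part of $g_n$ supported outside $\supp\mu(x)$.
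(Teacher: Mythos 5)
Your structural half is fine: $MLUR$ points are extreme, so \thmref{thm:ext} immediately gives that $\mu(x)$ is extreme in $B_E$ and that (i) or (ii) holds, hence $\abs{x}\ge\mu(\infty,x)s(x)$ in all cases; and your reduction of the metric half via Corollary~\ref{cor:isom} (transport by $V$, then multiply by the polar isometry, extended by $n(x)\sim n(x^*)$ when $\tau(s(x))<\infty$) is exactly the scheme the survey displays in its full proof of the $k$-extreme analogue, Theorem~\ref{thm:noncom4} (the survey only cites \cite[Theorem 2.7]{czer-kam2010} for the $MLUR$ statement itself). In the cases $q=\one$ and $q=s(x)$ your candidate perturbations do work with no corner terms at all, since there $V\mu(x)=\abs{x}$ and left multiplication by an isometry (resp.\ by $u$ with $u^*u=s(x)$ dominating both supports) preserves singular value functions, so $\normcomm{x\pm \widetilde h_n}=\norme{\mu(x)\pm g_n}\to 1$ and $\normcomm{\widetilde h_n}=\norme{g_n}\ge\delta$ directly contradict $MLUR$-ness of $x$.

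The genuine gap is the remaining case $q=p=e^{\abs{x}}(\mu(\infty,x),\infty)$ with $\tau(p)=\infty$, $\mu(\infty,x)>0$ and $e^{\abs{x}}\{\mu(\infty,x)\}\neq 0$, which is precisely the content you defer to ``bookkeeping of the off-support corners.'' There $V\mu(x)=\abs{x}p\neq\abs{x}$, and for $\widetilde h_n=uVg_n$ one gets $\normcomm{x\pm\widetilde h_n}=\normcomm{\abs{x}\pm Vg_n}$, where $\abs{x}\pm Vg_n=(\abs{x}p\pm Vg_n)+\mu(\infty,x)e^{\abs{x}}\{\mu(\infty,x)\}$ with two-sided disjoint summands. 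Condition (ii) only yields this decomposition; it does not ``annihilate'' the extra summand. The tool that removes it, \lemref{lm:singfun}(8), requires $\mu(\infty,x)\le\mu\left(\infty,\abs{x}p\pm Vg_n\right)$, and this is not automatic for arbitrary witnesses $g_n$: a priori $g_n$ may cancel the tail of $\mu(x)$, so $\mu(\infty,\mu(x)+g_n)$ can drop below $\mu(\infty,x)$, and then neither the limit $\normcomm{x\pm\widetilde h_n}\to 1$ nor the lower bound on $\normcomm{\widetilde h_n}$ follows from what you wrote. Some additional construction is needed here --- the analogue of the paper's choice of constants $\alpha_i$ and modified operators $V(f_i)+\alpha_i e^{\abs{x}}\{\mu(\infty,x)\}$ in the proof of Theorem~\ref{thm:noncom4} (or an argument ruling out tail-cancelling witnesses, as in \cite{czer-kam2010}) --- and supplying it is the actual substance of the theorem beyond the extreme-point case; as it stands your proposal names the obstacle but does not overcome it. (A minor point: claiming it ``suffices to show $\abs{x}q$ is $MLUR$ in $B_{\nonsp}$'' overshoots --- you only need and only use perturbations of the form $Vg_n$ --- but that looseness is harmless.)
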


The above result can be easily translated to unitary matrix spaces. 

\begin{theorem}
\label{thm:3ce}
Let $E$ be a symmetric sequence space. If $x$ is a $MLUR$ point of $B_{C_E}$ then $S(x)$ is a $MLUR$ point in $E$.
\end{theorem}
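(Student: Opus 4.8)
The plan is to transfer the $MLUR$ property from $x\in C_E$ down to $S(x)\in E$ through J. Arazy's isometric embedding from Proposition~\ref{prop:isomarazy}, so that no appeal to the non-atomic Theorem~\ref{thm:3} is needed (that theorem does not apply directly, since $C_E=G(B(H),\tr)$ lives over the atomic algebra $B(H)$, and its conclusion moreover requires a $\sigma$-finite trace). First I would note that $x$ being a strongly extreme point of $B_{C_E}$ forces $\|x\|_{C_E}=1$, hence $\|S(x)\|_E=\|x\|_{C_E}=1$ and $S(x)\in S_E$, so that the assertion is meaningful. Then, invoking Proposition~\ref{prop:isomarazy}, fix a linear isometry $V\colon E\to C_E$ with $V(S(x))=x$.

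Next I would verify the defining condition for $S(x)$ to be a strongly extreme point of $B_E$ using the equivalent sequential formulation recalled in the paper. Take an arbitrary sequence $\{a_n\}\subset E$ with $\|S(x)\pm a_n\|_E\to 1$; the goal is $\|a_n\|_E\to 0$. Since $V$ is a linear isometry,
\[
\|x\pm V(a_n)\|_{C_E}=\|V(S(x)\pm a_n)\|_{C_E}=\|S(x)\pm a_n\|_E\longrightarrow 1 .
\]
Because $x$ is an $MLUR$ point of $B_{C_E}$ and $\{V(a_n)\}\subset C_E$, the definition of a strongly extreme point yields $\|V(a_n)\|_{C_E}\to 0$, whence $\|a_n\|_E=\|V(a_n)\|_{C_E}\to 0$. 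Thus $S(x)$ is a strongly extreme point of $B_E$, i.e.\ a $MLUR$ point of $E$.

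There is essentially no hard step: the whole content is carried by Proposition~\ref{prop:isomarazy}, and the transfer is automatic because the definition of a strongly extreme point only asks for the implication ``$\|s\pm a_n\|\to 1\Rightarrow\|a_n\|\to 0$'' for arbitrary perturbation sequences, so restricting the admissible perturbations to the subspace $V(E)\subset C_E$ entails no loss. The only mild point to keep in mind is that $V$ is built from the Schmidt representation of the specific operator $x$, so it is precisely $V(S(x))=x$ for \emph{this} $x$ that is used — which is exactly what Proposition~\ref{prop:isomarazy} supplies. (One could equivalently argue via the contractive projection $P\colon C_E\to V(E)$ of Proposition~\ref{prop:isomarazy}, but it is unnecessary here; it would only be relevant for the converse implication, which in any case needs extra hypotheses such as order continuity, cf.\ Corollary~\ref{cor:1mlurce}.)
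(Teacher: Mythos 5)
Your proof is correct and is essentially identical to the paper's own argument: both take a sequence $\{a_n\}\subset E$ with $\|S(x)\pm a_n\|_E\to 1$, push it into $C_E$ via Arazy's isometry $V$ with $V(S(x))=x$ from Proposition~\ref{prop:isomarazy}, and conclude from the $MLUR$ property of $x$ that $\|a_n\|_E=\|V(a_n)\|_{C_E}\to 0$. The extra remarks about the contractive projection and the non-applicability of Theorem~\ref{thm:3} are fine but not needed.
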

\begin{proof}
Suppose $x$ is a $MLUR$ point of $B_{C_E}$ and $\|S(x)\pm a_n\|_E\to 1$ for  $\{a_n\}\subset E$.  By Proposition \ref{prop:isomarazy}, there is a linear isometry $V:E\to C_E$ such that  $V(S(x))=x$.  Hence $\|x\pm V(a_n)\|_{C_E}=\|V(S(x))\pm V(a_n)\|_{C_E} =\|V(S(x)\pm a_n)\|_{C_E} = \|S(x)\pm a_n\|_E\to 1$. Since $x$ is $MLUR$, $\|a_n\|_E=\|V(a_n)\|_{C_E}\to 0$, proving that $S(x)$ is $MLUR$. 
\end{proof}

For $\M=L_\infty[0,\tauone)$  by Theorem \ref{thm:3}  and \cite[Corollary 2.8]{czer-kam2010} we  conclude with the following result.
\begin{corollary}
Let $E$ be a fully symmetric function space and $f$ be an order continuous element  in $E$. If $f$ is a $MLUR$ point of $B_E$ then $\mu(f)$ is a $MLUR$ point of $B_E$ and $\abs{f}\geq \mu(\infty,f)$.
\end{corollary}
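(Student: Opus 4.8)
The plan is to derive this corollary from the already-established machinery rather than argue from scratch. The statement concerns a fully symmetric function space $E$, viewed via the commutative identification $E\cong E(\mathcal N,\eta)$ with $\mathcal N=L_\infty[0,\tauone)$ as in Section~\ref{sec:symmfun}, and an order continuous $f\in E$ which is an $MLUR$ point of $B_E$. I want to conclude that $\mu(f)$ is an $MLUR$ point of $B_E$ and that $\abs f\ge\mu(\infty,f)\chi_{[0,\tauone)}$. The commutative von Neumann algebra $\mathcal N$ is non-atomic and its trace $\eta$ is $\sigma$-finite (it is the Lebesgue integral on $[0,\tauone)$), so Theorem~\ref{thm:3} applies directly to $x=N_f\in S(\mathcal N,\eta)$.

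First I would invoke Theorem~\ref{thm:3} with $\M=\mathcal N$, $\tau=\eta$, and $x=N_f$: since $f$ is $MLUR$ in $B_E$, the operator $N_f$ is an $MLUR$ point of $B_{E(\mathcal N,\eta)}$ by the isometric identification of Fact~5, so $\mu(N_f)=\mu(f)$ is an $MLUR$ point of $B_E$ and one of the two conditions (i) $\mu(\infty,N_f)=0$ or (ii) $n(N_f)\mathcal N n(N_f^*)=0$ and $\abs{N_f}\ge\mu(\infty,N_f)s(N_f)$ holds. Then I would apply Fact~6 (with $\M=\mathcal N$): the disjunction of (i) and (ii) for $x=N_f$ is exactly equivalent to the single condition (i') $\abs f\ge\mu(\infty,f)\chi_{[0,\tauone)}$. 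This immediately yields the desired pair of conclusions: $\mu(f)$ is $MLUR$ in $B_E$ and $\abs f\ge\mu(\infty,f)$ pointwise a.e.

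The one point that needs a word of care is the reference to \cite[Corollary 2.8]{czer-kam2010} in the statement: presumably that corollary is precisely the commutative specialization of Theorem~\ref{thm:3} via Facts 5 and 6, so the proof is really just ``apply Theorem~\ref{thm:3} to the commutative algebra $\mathcal N$ and translate via Section~\ref{sec:symmfun}.'' I do not expect any genuine obstacle here; the content was already done in Theorem~\ref{thm:3}, and the only substantive work is checking that $\mathcal N=L_\infty[0,\tauone)$ is a legitimate instance of the hypotheses of that theorem (non-atomic, $\sigma$-finite trace), together with the order-continuity of $f$ transporting to order-continuity of $N_f$ — but that latter fact is not even needed as a hypothesis of Theorem~\ref{thm:3}, which makes no order-continuity assumption; it is carried along only for symmetry with the earlier corollaries. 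Thus the proof is a two-line citation: apply Theorem~\ref{thm:3} with $\M=L_\infty[0,\tauone)$ and use Facts 5 and 6 of Section~\ref{sec:symmfun} to rewrite conditions (i)--(ii) as $\abs f\ge\mu(\infty,f)$.
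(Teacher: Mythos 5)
Your proposal is correct and is essentially the paper's own argument: the corollary is obtained by applying Theorem~\ref{thm:3} to the commutative algebra $\M=L_\infty[0,\tauone)$ (non-atomic, with $\sigma$-finite trace) and translating conditions (i)--(ii) into $\abs{f}\geq\mu(\infty,f)$ via the identifications of Section~\ref{sec:symmfun}, which is exactly what the cited commutative specialization in \cite[Corollary 2.8]{czer-kam2010} records. Your observation that the order-continuity of $f$ is not actually needed for this direction is also accurate.
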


\begin{remark}\label{rem:MLUR} (1)  Let $F$  be a Banach function or sequence space.  Then every $MLUR$ space  $F$ is order continuous. Indeed, if $F$ is not order continuous then  by Theorem \ref{prop:infinity}, $F$ contains an isomorphic copy of $\ell_\infty$. However $\ell_\infty$ does not admit an equivalent $MLUR$ norm \cite[Thm 2.1.5]{Lin}, so $F$ can not be $MLUR$.

(2) If $E$ is an order continuous symmetric function  space then $E=E_0$. Indeed, if $E\ne E_0$ then we can construct $f\in E$ and a sequence $f_n$ such that $\mu(\infty, f_n) = \mu(\infty, f) > 0$,  $0\le f_n \le f$ and $f_n\downarrow 0$ a.e..  It follows that  $\|f_n\|_E = \|f\|_E > 0$ for all $n\in \mathbb{N}$, which contradicts order continuity of $E$. 

\end{remark}

By Remark \ref{rem:MLUR}, 
any  $MLUR$ space $E$ is order continuous and  $E=E_0$,  thus the following corollary summarizes Theorem \ref{thm:1mlur} and \ref{thm:3}.

\begin{corollary}
\label{cor:2mlur}
Suppose $\M$ has a $\sigma$-finite trace $\tau$.
\begin{itemize}
\item[(1)] Let $\M$ be a non-atomic, $E$ be fully symmetric and $x$ be an order continuous element of $\nonsp$. Then $\mu(x)$ is a $MLUR$ point of $B_{E_0}$ if and only if $x$ is a $MLUR$ point of $B_{E_0\Mtau}$.
\item[(2)] If the space $E$ is $MLUR$ then $\nonsp$ is a $MLUR$ space. If in addition $\M$ is non-atomic, then if $\nonsp$ is $MLUR$ then $E$ is $MLUR$ as well. 
\end{itemize}
\end{corollary}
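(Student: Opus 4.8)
The plan is to read the corollary as a repackaging of Theorems~\ref{thm:1mlur} and \ref{thm:3}, with Remark~\ref{rem:MLUR} controlling the ambient space, Proposition~\ref{prop:oc} and \cite[Proposition~2.3]{CSweak} handling order continuity, and Corollary~\ref{cor:isomglobal} supplying the isometric embedding of $E$ into $\nonsp$.

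\textbf{Part (1).} The ``if'' direction \emph{is} Theorem~\ref{thm:1mlur}: $E$ is fully symmetric and $x$ is order continuous, so $\mu(x)$ being an $MLUR$ point of $B_{E_0}$ makes $x$ an $MLUR$ point of $B_{E_0\Mtau}$. For the ``only if'' direction I would apply Theorem~\ref{thm:3} \emph{with $E_0$ in place of $E$}: $E_0$ is itself a symmetric Banach function space, $E_0\Mtau$ is the noncommutative space built over it, and $(E_0)_0 = E_0$; since $\M$ is non-atomic with $\sigma$-finite trace, that theorem applied over $E_0$ yields that if $x$ is an $MLUR$ point of $B_{E_0\Mtau}$ then $\mu(x)$ is an $MLUR$ point of $B_{E_0}$ (the alternatives (i), (ii) in its conclusion are immaterial here). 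The point to be careful about is that $x$ is assumed $MLUR$ only in the unit ball of the space $E_0\Mtau$, and \emph{not} in the in general larger ball $B_{\nonsp}$ --- which is exactly why Theorem~\ref{thm:3} must be invoked for $E_0$ rather than for $E$.

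\textbf{Part (2).} Suppose first that $E$ is $MLUR$. By Remark~\ref{rem:MLUR}, $E$ is order continuous, so $E = E_0$; an order continuous symmetric function space is moreover fully symmetric and strongly symmetric, hence by \cite[Proposition~2.3]{CSweak} the space $\nonsp$ is order continuous, i.e. every $x \in \nonsp$ is an order continuous element. For $x \in S_{\nonsp} = S_{E_0\Mtau}$ we have $\mu(x) \in S_E = S_{E_0}$, an $MLUR$ point of $B_{E_0}$ since $E$ is $MLUR$, so Theorem~\ref{thm:1mlur} makes $x$ an $MLUR$ point of $B_{\nonsp}$; as $x$ was arbitrary, $\nonsp$ is $MLUR$. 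This direction uses no non-atomicity of $\M$ and needs no recourse to the non-atomic extension of Section~\ref{non-atom}, since Theorem~\ref{thm:1mlur} already holds for arbitrary $\M$. Conversely, assume $\M$ non-atomic and $\nonsp$ $MLUR$. By Remark~\ref{rem:MLUR} $\nonsp$ is order continuous, hence by the order isometric embedding of $E$ into $\nonsp$ from Corollary~\ref{cor:isomglobal} the space $E$ is order continuous, so $E = E_0$. Fix the linear isometry $V\colon E \to \nonsp$ of Corollary~\ref{cor:isomglobal}. For $f \in S_E$ and $\{g_n\} \subset E$ with $\norme{f \pm g_n} \to 1$, linearity and isometry of $V$ give $\|V(f) \pm V(g_n)\|_{\nonsp} = \norme{f \pm g_n} \to 1$ with $V(f) \in S_{\nonsp}$; since $\nonsp$ is $MLUR$, $\norme{g_n} = \|V(g_n)\|_{\nonsp} \to 0$, so $f$ is an $MLUR$ point of $B_E$ and $E$ is $MLUR$.

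I do not expect a genuine obstacle: the only real subtlety is keeping track of which ground symmetric space ($E$ or $E_0$) is substituted into Theorems~\ref{thm:1mlur} and \ref{thm:3}, together with the elementary fact that strong extremality of a point is inherited under linear isometries and restricts to isometric subspaces.
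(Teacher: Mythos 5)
Your proof is correct and follows essentially the paper's intended route: the corollary is obtained by combining Theorem~\ref{thm:1mlur} with Theorem~\ref{thm:3} (the latter applied over the symmetric space $E_0$ for the ``only if'' part of (1)), Remark~\ref{rem:MLUR}, order continuity of $\nonsp$ via \cite{CSweak}, and the order isometric embedding of Corollary~\ref{cor:isomglobal} for the converse in (2). The only cosmetic point is your appeal to Remark~\ref{rem:MLUR} for the noncommutative space $\nonsp$ itself (the remark is stated for Banach function and sequence spaces), which is in any case superfluous, since your isometry argument for the converse never uses $E=E_0$.
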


Similarly, by Corollary \ref{cor:1mlurce} and Theorem \ref{thm:3ce} we have the following.
\begin{corollary}
\label{cor:3mlur}
Let $E$ be a symmetric sequence space.  
\begin{itemize}
\item[(1)] Let $E$ be fully symmetric and $x$ be an order continuous element of $C_E$. Then $S(x)$ is a $MLUR$ point of $B_{E}$ if and only if $x$ is a $MLUR$ point of $B_{C_{E}}$.
\item[(2)] The space $E$ is $MLUR$ if and only if $C_E$ is a $MLUR$ space.
\end{itemize}
\end{corollary}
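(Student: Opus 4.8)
The plan is to derive both parts of the corollary directly from Corollary~\ref{cor:1mlurce} and Theorem~\ref{thm:3ce}: those two results already furnish the two implications of~(1), and to promote~(1) to the global statement~(2) I would only need to check that the hypothesis ``$E$ is $MLUR$'' supplies the missing structure (full symmetry of $E$, and order continuity of the relevant operators).

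For part~(1) there is essentially nothing to do beyond quoting. Since a $MLUR$ point of a normed space belongs to its unit sphere, $\|x\|_{C_E}=1$ and $S(x)\in S_E$. If $S(x)$ is a $MLUR$ point of $B_E$, then because $E$ is fully symmetric and $x$ is an order continuous element of $C_E$ (the standing hypotheses of~(1)), Corollary~\ref{cor:1mlurce} gives that $x$ is a $MLUR$ point of $B_{C_E}$; and if $x$ is a $MLUR$ point of $B_{C_E}$, then Theorem~\ref{thm:3ce} gives, with no extra hypotheses, that $S(x)$ is a $MLUR$ point of $B_E$. Combining these two implications is~(1).

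For the forward implication of~(2), suppose $E$ is $MLUR$. Then $E$ is order continuous by Remark~\ref{rem:MLUR}(1); being an order continuous symmetric sequence space, $E$ is also fully symmetric (the sequence-space counterpart of \cite[Chapter~II, Theorem~4.10]{KPS}), and, $E$ being order continuous, $C_E$ is order continuous as well (Section~\ref{sec:unitary}, \cite[Corollary~6.1]{CSweak}), so every element of $C_E$ is order continuous. Now for an arbitrary $x\in S_{C_E}$ we have $S(x)\in S_E$, which is a $MLUR$ point of $B_E$ because $E$ is $MLUR$; since the hypotheses of part~(1) are now met, $x$ is a $MLUR$ point of $B_{C_E}$, and as $x$ was arbitrary, $C_E$ is $MLUR$.

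For the converse of~(2) I would avoid singular values altogether: by Proposition~\ref{prop:isomarazy} there is a linear isometry $V\colon E\to C_E$, so $V(E)$ is a closed subspace of $C_E$ isometric to $E$, and the $MLUR$ property passes to closed subspaces (if $Y\subset X$ is closed and $X$ is $MLUR$, then $S_Y\subset S_X$, so each $y\in S_Y$ is strongly extreme in $B_X$, whence $\{z_n\}\subset Y$ with $\|y\pm z_n\|\to 1$ forces $\|z_n\|\to 0$, the norm on $Y$ being the restriction of that on $X$). Hence $C_E$ being $MLUR$ implies $V(E)$, and therefore $E$, is $MLUR$. The one step that is not a bare citation — and the only place I expect to spend a line of real argument — is the implication ``$E$ order continuous $\Rightarrow$ $E$ fully symmetric'' for symmetric sequence spaces, used in the forward direction of~(2); this can be argued as in the function-space case (via a doubly-(sub)stochastic/Birkhoff decomposition of the submajorization relation, whose defining series converges absolutely in norm, so that the Fatou property is not needed), or simply invoked as known.
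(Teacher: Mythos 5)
Your proposal is correct, and in substance it is what the paper does: the paper proves this corollary in one line by citing Corollary \ref{cor:1mlurce} and Theorem \ref{thm:3ce}, and your part (1) and the forward direction of (2) (via Remark \ref{rem:MLUR}, order continuity of $C_E$, and then part (1)) fill in exactly the intended details. The one step where you genuinely depart is the converse of (2): quoting Theorem \ref{thm:3ce} alone would only show that the decreasing sequences $S(x)$, $x\in S_{C_E}$, are $MLUR$ points of $B_E$, not that every point of $S_E$ is; your argument via the isometric copy of $E$ inside $C_E$ (Proposition \ref{prop:isomarazy}, Corollary \ref{cor:isomglobal}) together with the observation that strong extremality passes to subspaces closes this cleanly, and it is in fact the same device the paper uses inside the proof of Theorem \ref{thm:3ce} and again after Theorem \ref{thm:urce} for (local) uniform convexity, so nothing is lost. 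The only auxiliary fact you invoke that the paper leaves implicit is that an order continuous symmetric sequence space is fully symmetric, needed so that part (1) applies once $E$ is $MLUR$; this is indeed known (it is the sequence analogue of the statement cited from \cite{KPS}, and one can also bypass it by passing to the identification $C_E=G\left(B(H),\tr\right)$ of Section \ref{sec:unitary}, where order continuity of $E$ transfers to the function space $G$ by \cite{CSweak} and the function-space version applies), so citing it as known is perfectly acceptable; your Birkhoff-type sketch is more than is required but does no harm.
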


\begin{problem} (i)  Generalize Theorem \ref{thm:1mlur} to the whole space $E$ instead of $E_0$.

(ii) Remove the assumption that $x$ is order continuous in Corollaries \ref{cor:2mlur} and \ref{cor:3mlur}.

(iii) Generalize Theorem 1 in \cite{S1992} to noncommutative spaces $E\Mtau$ and $C_E$. It presents equivalent conditions for strongly symmetric spaces to be $MLUR$.

\end{problem}

\section{$k$-extreme points and $k$-convexity}

 If $(X, \|\cdot \|)$ is a normed space then a point $x\in S_X$ is called \textit{$k$-extreme} of the unit ball $B_X$ if $x$ cannot be represented as an average of $k+1$,  $k\in \mathbb{N}$, linearly independent elements from the unit sphere $S_X$. Equivalently, $x$ is $k$-extreme whenever the condition $x=\frac{1}{(k+1)}\sum_{i=1}^{k+1}x_i$,  $x_i\in S_X$ for $i=1,2,\dots,k+1$, implies that $x_1, x_2,...,x_{k+1}$ are linearly dependent.
Moreover, if every element of the unit sphere $S_X$ is $k$-extreme, then $X$ is called \textit{$k$-convex}.  If $k=1$ then $1$-extreme point is an extreme point of the unit ball in $X$.

The notion of $k$-extreme points was explicitly  introduced in \cite{ZYD} and applied to theorem on uniqueness of Hahn-Banach extensions. More precisely, L. Zheng and Z. Ya-Dong  showed there that given at least $k+1$-dimensional normed linear space over the complex field, all bounded linear functionals defined on subspaces of $X$ have at most $k$-linearly
independent norm-preserving linear extensions to $X$ if and only if the conjugate space $X^*$ is $k$-convex.
In the paper \cite{BFLM} $k$-convexity and $k$-extreme points found interesting application in studying the structure of nested sequences of balls in Banach spaces.

Clearly, if $X$ is a normed space of  dimension at least $l$, where $l\geq k$, and $x\in S_X$ is a $k$-extreme point of $B_X$, then $x$ is $l$-extreme. Moreover, $1$-extreme points are just extreme points of $B_X$, and so $1$-convexity of $X$ means  strict convexity of $X$. 

The simple example below differentiates between $k$-extreme and $k+1$-extreme points.
\begin{example}
Given $k\in\mathbb{N}$, consider the $k+2$ dimensional space $\ell_1^{k+2}$, equipped with $\ell_1$ norm. The element $x=\left(\frac1{k+1},\frac 1{k+1},\dots,\frac 1{k+1},0\right)$ is a $k+1$-extreme point of $B_{\ell_1^{k+2}}$, but not $k$-extreme. 
\end{example}
 We wish to mention here that also the family of Orlicz sequence spaces exposes the difference between $k$-extreme and $k+1$-extreme points \cite{Chen}.

 We have shown in \cite{czer-kam2015} the following equivalent characterization of $k$-extreme points.
 \begin{proposition}\cite[Proposition 2.2]{czer-kam2015}
\label{lm:2}
Given a normed space $X$, an element $x\in S_X$ is $k$-extreme of $B_X$ if and only if whenever for the elements $u_i\in X$, $i=1,2,\dots, k$, the conditions $x+u_i\in B_X$ and $x-\sum_{i=1}^{k} u_i\in B_X$ imply that $u_1,u_2,\dots, u_k$ are linearly dependent.
\end{proposition}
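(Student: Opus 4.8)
The plan is to build a dictionary between the two formulations and then read off each implication from it. Given $u_1,\dots,u_k\in X$ with $x+u_i\in B_X$ for $i\le k$ and $x-\sum_{i=1}^{k}u_i\in B_X$, set $x_i:=x+u_i$ for $i\le k$ and $x_{k+1}:=x-\sum_{i=1}^{k}u_i$; conversely, given a representation $x=\tfrac1{k+1}\sum_{i=1}^{k+1}x_i$, set $u_i:=x_i-x$ for $i\le k$. In either direction one checks immediately that $\tfrac1{k+1}\sum_{i=1}^{k+1}x_i=x$ and that $x_{k+1}=x-\sum_{i=1}^{k}u_i$. I will use two elementary observations: (a) if $\|x\|=1$ and $x=\tfrac1{k+1}\sum_{i=1}^{k+1}x_i$ with $\|x_i\|\le1$ for all $i$, then $1=\|x\|\le\tfrac1{k+1}\sum_{i=1}^{k+1}\|x_i\|\le1$ forces $\|x_i\|=1$ for every $i$; and (b) since $x=\tfrac1{k+1}\sum_{i=1}^{k+1}x_i$, each $x_i$ lies in $\operatorname{span}\{x,u_1,\dots,u_k\}$, while $x$ and every $u_i=x_i-x$ lie in $\operatorname{span}\{x_1,\dots,x_{k+1}\}$.

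First I would show that the stated condition implies $x$ is $k$-extreme. Suppose $x=\tfrac1{k+1}\sum_{i=1}^{k+1}x_i$ with $x_i\in S_X$, and put $u_i=x_i-x$. Then $x+u_i=x_i\in B_X$ and $x-\sum_{i=1}^{k}u_i=x_{k+1}\in B_X$, so the hypothesis gives that $u_1,\dots,u_k$ are linearly dependent, hence $\dim\operatorname{span}\{x,u_1,\dots,u_k\}\le k$. By observation (b) the $k+1$ vectors $x_1,\dots,x_{k+1}$ all lie in that span, so they are linearly dependent, which is exactly what $k$-extremality of $x$ requires.

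For the converse, suppose $x$ is $k$-extreme and let $x+u_i\in B_X$ for $i=1,\dots,k$ and $x-\sum_{i=1}^{k}u_i\in B_X$. With $x_i$ as above, (a) gives $x_i\in S_X$, and $\tfrac1{k+1}\sum_{i=1}^{k+1}x_i=x$, so $k$-extremality forces $x_1,\dots,x_{k+1}$ to be linearly dependent, i.e.\ $\dim\operatorname{span}\{x_1,\dots,x_{k+1}\}\le k$, whence $\dim\operatorname{span}\{x,u_1,\dots,u_k\}\le k$ by (b). The crucial step, and the only place where the normed structure enters, is to rule out $x\in\operatorname{span}\{u_1,\dots,u_k\}$. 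For this I would choose, by Hahn--Banach, a functional $f\in S_{X^{*}}$ with $f(x)=\|x\|=1$. From $|1+f(u_i)|=|f(x+u_i)|\le\|x+u_i\|\le1$, each $f(u_i)$ lies in the closed unit disc centred at $-1$, so $\operatorname{Re}f(u_i)\le0$; and from $|1-\sum_{i=1}^{k}f(u_i)|=|f(x-\sum_{i=1}^{k}u_i)|\le1$ we get $\operatorname{Re}\sum_{i=1}^{k}f(u_i)\ge0$. Since each summand has non-positive real part, all of them have zero real part, and the only point of the disc centred at $-1$ lying on the imaginary axis is $0$; hence $f(u_i)=0$ for every $i$. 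Thus $u_1,\dots,u_k\in\ker f$ while $x\notin\ker f$, so $x\notin\operatorname{span}\{u_1,\dots,u_k\}$. Combined with $\dim\operatorname{span}\{x,u_1,\dots,u_k\}\le k$ this gives $\dim\operatorname{span}\{u_1,\dots,u_k\}\le k-1$, i.e.\ $u_1,\dots,u_k$ are linearly dependent.

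I expect the main obstacle to be precisely this last point in the converse: linear dependence of the $k+1$ witnesses $x_1,\dots,x_{k+1}$ does not by itself propagate to $u_1,\dots,u_k$, because a priori $x$ could lie in $\operatorname{span}\{u_1,\dots,u_k\}$; the Hahn--Banach separation rules this out, and over a complex scalar field a little care with the disc geometry is needed to pass from $\operatorname{Re}f(u_i)=0$ to $f(u_i)=0$.
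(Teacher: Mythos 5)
Your proof is correct. The survey itself gives no proof of this proposition (it is quoted from \cite{czer-kam2015}), so there is nothing in the paper to compare against; your translation between the tuple $x_i=x+u_i$, $x_{k+1}=x-\sum_{i=1}^{k}u_i$ and the data $u_i=x_i-x$ is the natural one, the norm-equality step forcing $x_i\in S_X$ is right, and you correctly identified and closed the only delicate point of the converse --- excluding $x\in\operatorname{span}\{u_1,\dots,u_k\}$ by a norming functional $f$ with $f(x)=1$, where the disc-geometry argument yields $f(u_i)=0$ also over complex scalars, so that $\dim\operatorname{span}\{x,u_1,\dots,u_k\}\le k$ indeed forces $u_1,\dots,u_k$ to be linearly dependent.
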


The next two results  extend the J. Ryff's theorem on extreme points \cite{Ryff} to $k$-extreme points. 

\begin{theorem}\cite[Theorem 2.6]{czer-kam2015}
\label{thm:orbitmain}
Let $E$ be a symmetric Banach function space and $f\in S_E$. Suppose there exists a function $g\in S_E$ such that  $f\prec g$ and $\mu(f)\neq \mu(g)$. Then $\mu(f)$ cannot be a $k$-extreme point of $B_E$ for any $k=1,2,\dots$.
\end{theorem}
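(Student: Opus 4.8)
The plan is to exploit the classical theory of majorization—specifically Ryff's characterization of the orbit of a function under doubly stochastic transformations—together with the integral representation of submajorization. Recall that $f \prec g$ with $\mu(f) \neq \mu(g)$ means there is a strict inequality $\int_0^{t_0}\mu(f) < \int_0^{t_0}\mu(g)$ at some point $t_0$, while the total integrals may or may not be equal. The key structural fact I would invoke is that submajorization $f \prec g$ on a symmetric function space allows one to write $\mu(f)$ as a genuine convex combination (or, more carefully, an average) of several distinct rearrangements of $g$, or of functions each of whose decreasing rearrangement is dominated by $\mu(g)$ hence lies in $B_E$. The heart of the matter is to produce $k+1$ \emph{linearly independent} such functions whose average is $\mu(f)$.

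First I would reduce to the case $f = \mu(f)$, i.e. $f$ itself decreasing, since the hypothesis and conclusion only involve $\mu(f)$ and $\mu(g)$. Next, using the strict inequality in the submajorization at $t_0$, I would construct an explicit ``spreading'' perturbation: choose a measurable set where $\mu(f)$ can be perturbed up and a set where it can be perturbed down, in a measure-preserving way that keeps the decreasing rearrangement submajorized by $\mu(g)$ (this is where the slack $\int_0^{t_0}\mu(g) - \int_0^{t_0}\mu(f) > 0$ is consumed). Concretely, one builds $k$ functions $u_1,\dots,u_k$, supported on pairwise disjoint pieces of a region where $\mu(f)$ is not locally constant (non-atomicity of Lebesgue measure guarantees enough room), such that $\mu(f) + u_i$ and $\mu(f) - \sum_{i=1}^k u_i$ all have decreasing rearrangement $\prec \mu(g)$, hence lie in $B_E$ by the symmetric (in fact it suffices to use strong symmetry, which follows if $E$ is fully symmetric—but one must be careful here, see below) norm inequality $\|\cdot\|_E \le \|g\|_E = 1$. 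Then Proposition~\ref{lm:2} applies: the $u_i$ are manifestly linearly independent by disjointness of supports, so $\mu(f)$ is not $k$-extreme.

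**The main obstacle** is twofold. One issue is that the inequality ``$h \prec \mu(g) \implies \|h\|_E \le \|g\|_E$'' requires $E$ to be strongly symmetric, which is not assumed in the statement—only that $E$ is a symmetric Banach function space. So the genuine difficulty is to perform the perturbation so that the perturbed functions have decreasing rearrangement \emph{equal to} $\mu(f)$ itself (or to some rearrangement of a fixed function in $B_E$), rather than merely submajorized by $\mu(g)$; this forces one to do the up-perturbation and down-perturbation on sets of equal measure at equal ``heights,'' i.e. to use the flatness failure of $\mu(f)$ on a set of positive measure. If $\mu(f)$ is strictly decreasing on an interval, one transplants mass between two thin strips symmetric about a point, producing a function with the \emph{same} distribution as $\mu(f)$, hence the same $E$-norm, equal to $1$. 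The remaining case—$\mu(f)$ constant on a ``large'' plateau—must be handled using the strict submajorization slack differently, by splitting the plateau and redistributing within the orbit of $\mu(g)$; here one does invoke that $f\prec g$ gives $\mu(f)$ in the closed convex hull of rearrangements of $g$ (Ryff / Hardy–Littlewood–Pólya), and extracts finitely many extreme rearrangements whose average is $\mu(f)$, checking they can be chosen pairwise distinct precisely because $\mu(f) \ne \mu(g)$.

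Finally I would assemble the argument: fix $k$, produce $u_1,\dots,u_k$ linearly independent with $\mu(f)\pm(\text{combinations})\in B_E$ as above, and conclude by Proposition~\ref{lm:2} that $\mu(f)$ fails to be $k$-extreme; since $k$ was arbitrary, $\mu(f)$ is not $k$-extreme for any $k$. I expect the write-up to spend most of its effort on the explicit construction of the perturbations and the verification that the decreasing rearrangements behave as claimed, with the linear independence being the easy part.
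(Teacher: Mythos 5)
Your plan correctly isolates the crux — for a merely symmetric $E$ the only usable certificates of membership in $B_E$ are pointwise domination of decreasing rearrangements by $\mu(f)$ or $\mu(g)$ (or exact equimeasurability, or convex combinations of such), not submajorization — but the two devices you propose to get around it do not close the argument. The equimeasurable-transplant idea only controls the $k$ elements $\mu(f)+u_i$: if $u_i$ is a measure-preserving swap of two strips, then $\mu(f)+u_i$ is a rearrangement of $\mu(f)$ and has norm one, but the element $\mu(f)-\sum_{i=1}^{k}u_i$ demanded by Proposition~\ref{lm:2} is \emph{not} equimeasurable with $\mu(f)$ (on the strips it takes values like $2\mu(f)-\mu(f)(\cdot\pm s)$), its rearrangement is not pointwise below $\mu(f)$, and whether it is below $\mu(g)$ is exactly the question that consumes the submajorization slack and needs a genuine construction; your write-up never addresses this minus element. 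You cannot evade it by using the definition of $k$-extremity directly either, since an average of $k+1$ rearrangements of $\mu(f)$ essentially never reproduces $\mu(f)$ nontrivially. The difficulty is concentrated in the ``crossing'' case $\mu(f)\not\le\mu(g)$: when $\mu(f)\le\mu(g)$ pointwise, disjoint bumps of height at most $\mu(g)-\mu(f)$ do give both $\mu(f)+u_i$ and $\mu(f)-\sum u_i$ rearrangements dominated by $\mu(g)$, so that case is easy; in the crossing case one must simultaneously raise $\mu(f)$ where it is below $\mu(g)$ and lower it where it is above, for all $k+1$ functions at once, and verify pointwise domination of each rearrangement — this is the actual content of the theorem and is what your sketch defers.

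The fallback you invoke for the plateau case is also not a theorem in the form you use it: Ryff/Hardy--Littlewood--P\'olya places $\mu(f)$ in the \emph{closed} convex hull of the orbit of $g$ (limits, not exact finite convex combinations), and for submajorization one needs the full orbit $\{h:\mu(h)\le\mu(g)\}$, not just rearrangements of $g$; moreover even an exact finite representation would not come with equal weights $\tfrac{1}{k+1}$, with $k+1$ terms for every $k$, or with linear independence, all of which you need. That such representations can nonetheless be built by hand (using non-atomicity to split sets finely and distribute mass combinatorially so that independence holds) is plausible — and the $\ell_1$ example following Corollary~\ref{cor:orbitmain} shows the result genuinely fails over atoms, so this splitting is unavoidable — but that construction, together with the treatment of the minus element, is precisely the missing proof rather than a routine verification.
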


\begin{corollary}\cite[Corollary 2.7]{czer-kam2015}
\label{cor:orbitmain}
 Let $E$ be a symmetric Banach function space and $f\in S_E$. If $\mu(f)$ is a $k$-extreme point of $B_E$ then for all functions $g\in S_E$ with $f\prec g$, it holds that $\mu(f)=\mu(g)$.
\end{corollary}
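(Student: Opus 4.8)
The plan is to obtain the corollary as the immediate contrapositive of Theorem~\ref{thm:orbitmain}. First I would fix $f\in S_E$ with $\mu(f)$ a $k$-extreme point of $B_E$ and an arbitrary $g\in S_E$ satisfying $f\prec g$, and aim to conclude $\mu(f)=\mu(g)$. Arguing by contradiction, I would assume $\mu(f)\neq\mu(g)$; then the pair $(f,g)$ realizes exactly the hypothesis of Theorem~\ref{thm:orbitmain} (we have $g\in S_E$, $f\prec g$, and $\mu(f)\neq\mu(g)$), so that theorem yields that $\mu(f)$ is not a $k$-extreme point of $B_E$ for any $k=1,2,\dots$, contradicting the assumption. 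Hence $\mu(f)=\mu(g)$, and since $g$ was arbitrary among elements of $S_E$ submajorizing $f$, the statement follows.

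Two small points the write-up should record. The phrase ``$\mu(f)$ is a $k$-extreme point of $B_E$'' is meaningful because $\mu(f)\in S_E$: since $\mu(f)$ is decreasing we have $\mu(\mu(f))=\mu(f)$, so symmetry of $E$ gives $\|\mu(f)\|_E=\|f\|_E=1$. Also, no hypotheses beyond ``$E$ is a symmetric Banach function space'' are used, since that is all Theorem~\ref{thm:orbitmain} requires; in particular the choice $g=\mu(f)$ always satisfies $f\prec g$ trivially, so the content of the corollary is really its assertion for the remaining, nontrivial $g$.

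Since the corollary is a logical reformulation of Theorem~\ref{thm:orbitmain}, there is no real obstacle: the only thing to verify is that the data appearing in the corollary's hypothesis match the data in the theorem's hypothesis, which they do verbatim. The genuinely hard work is already carried in Theorem~\ref{thm:orbitmain} itself --- where, given $f\prec g$ with $\mu(f)\neq\mu(g)$, one must produce $k$ linearly independent perturbations $u_1,\dots,u_k$ with $\mu(f)+u_i\in B_E$ and $\mu(f)-\sum_{i=1}^k u_i\in B_E$ in the sense of Proposition~\ref{lm:2}, typically by using a strict inequality $\int_0^{t_0}\mu(f)<\int_0^{t_0}\mu(g)$ on some interval to redistribute the mass of $\mu(f)$ in several independent ways while remaining submajorized by $\mu(g)$ and hence inside $B_E$. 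For the corollary as stated, however, invoking Theorem~\ref{thm:orbitmain} is the intended and complete route.
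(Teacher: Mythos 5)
Your proof is correct and is essentially the paper's route: the corollary is just the contrapositive of Theorem~\ref{thm:orbitmain}, applied to an arbitrary $g\in S_E$ with $f\prec g$, together with the observation that $\mu(f)\in S_E$. Nothing further is needed.
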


It is important to observe that the same characterization of the $k$-extreme points is not valid for symmetric sequence spaces. 
Consider the points $x=(\frac12,\frac12,0)$ and $y=(1,0,0)$ in $\ell_1$. It is easy to verify that $x$ is a $2$-extreme point in $\ell_1$ with $x\prec y$. However $x=\mu(x)\neq \mu(y)=y$.

It is usually easier to show that certain geometric property of $x$ translates into $\mu(x)$, rather than the other way around. The proofs of those statements will rely on some versions of the isomorphism results included in Section \ref{sec:isom}. However, it is still a challenging task. Not for every operator $x$ we have  that the isomorphism $V$ maps $\mu(x)$ into $x$, as it is for unitary matrix spaces $C_E$. We will include a full proof of the next theorem to demonstrate possible techniques one has to apply to prove that $\mu(x)$ inherits the geometric property of $x$.

We need first the following preliminary result.

\begin{lemma}\cite[Lemma 3.2 and 3.3]{czer-kam2015}
\label{lm:noncom2}
Let $\M$ be non-atomic. If $x$ is a $k$-extreme point of the unit ball $B_{\nonsp}$ then $\abs{x}\geq \mu(\infty,x)s(x)$, and either  $\mu(\infty,x)=0$ or $n(x)\mathcal{M}n(x^*)=0$.
\end{lemma}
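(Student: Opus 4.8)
## Proof proposal for Lemma~\ref{lm:noncom2}

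The plan is to argue by contraposition on each of the two conclusions separately, constructing an explicit decomposition of $x$ into a nontrivial convex combination when one of the structural conditions fails. Throughout I will use the polar decomposition $x = u\abs{x}$ with $u \in \M$, and the commutative von Neumann subalgebra machinery of Section~\ref{sec:isom} to reduce perturbations of $\abs{x}$ to perturbations of $\mu(x)$ inside a copy of a function space. The key fact driving everything is that a $k$-extreme point $x$ forces that whenever $x + u_i \in B_{\nonsp}$ for $i=1,\dots,k$ and $x - \sum_{i=1}^k u_i \in B_{\nonsp}$, the $u_i$ are linearly dependent (Proposition~\ref{lm:2}); in particular, with $k=1$ (or by taking all but one $u_i$ equal to zero) a single well-chosen perturbation $u$ with $x \pm u \in B_{\nonsp}$ and $u \neq 0$ already contradicts $k$-extremity.

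\textbf{Step 1: $\abs{x} \geq \mu(\infty,x)s(x)$.} Suppose not. Write $c = \mu(\infty,x)$ and let $q = e^{\abs{x}}[0,c) \cdot s(x) \neq 0$, the spectral projection of $\abs{x}$ on $[0,c)$ restricted to the support; on the range of $q$ the operator $\abs{x}$ is strictly below $c$. Since $\M$ is non-atomic, inside $q\M q$ I can find two equivalent nonzero subprojections $q_1 \sim q_2 \le q$ with a partial isometry $w$ implementing the equivalence, and a small $\delta > 0$ with $\abs{x} + \delta(q_1 + q_2) \le c\, s(x)$ in the sense of spectral calculus on that corner. The perturbation $u = u\,\delta(w + w^*)$ (suitably interpreted via the polar part) is nonzero, self-adjoint-type, supported where $\mu(x)$ is already at its limiting value $c$, so by Lemma~\ref{lm:singfun}(8) — adding a projection-supported constant not exceeding $\mu(\infty,\cdot)$ does not change $\mu$ — we get $\mu(x \pm u) = \mu(x)$, hence $\norm{x \pm u}_{\nonsp} = 1$ while $u \neq 0$, contradicting $k$-extremity via Proposition~\ref{lm:2}. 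The delicate point is getting the perturbation to respect the polar decomposition so that it genuinely lands in $\nonsp$ and has the same singular value function; this is where one invokes that $\abs{x} \pm (\text{something supported on } q)$ still has $\mu$ equal to $\mu(x)$ because the added piece lives in the ``flat tail'' region of the rearrangement.

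\textbf{Step 2: if $\mu(\infty,x) > 0$ then $n(x)\M n(x^*) = 0$.} Assume $c := \mu(\infty,x) > 0$ and $n(x)\M n(x^*) \neq 0$. By the equivalent conditions recorded after Lemma~1.7 of \cite{Takesaki} in the excerpt, there are nonzero projections $p_1 \le n(x)$, $p_2 \le n(x^*)$ and a partial isometry $v \in \M$ with $v^*v = p_1$, $vv^* = p_2$; shrinking, I may assume (using non-atomicity and $\sigma$-finiteness implicitly in the function-space reduction, though here only non-atomicity is needed) that $\tau(p_1) = \tau(p_2) < \infty$. Set $u = c\, v$. Then $u$ has $\mu(u) = c\,\chi_{[0,\tau(p_1))}$, and because $p_1 \perp s(x)$, $p_2 \perp s(x^*)$, the operators $x$ and $u$ have ``orthogonal'' supports on left and right; a direct computation with $\abs{x \pm u}$ — using that $(x \pm u)^*(x\pm u) = \abs{x}^2 \pm(\cdots) + c^2 p_1$ with the cross terms vanishing since $p_1 n(x)$... wait, more precisely since $u^*x = c v^* x$ and $v^* = v^* p_2$, $p_2 \le n(x^*)$ kills it — shows $\abs{x \pm u}^2 = \abs{x}^2 + c^2 p_1$, whence by Lemma~\ref{lm:singfun}(8) again (a constant $\le \mu(\infty,x)=c$ added on a projection orthogonal to $x$) we get $\mu(x \pm u) = \mu(x)$. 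So $\norm{x \pm u}_{\nonsp} = 1$ with $u \neq 0$: contradiction.

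\textbf{Main obstacle.} The routine-looking parts — verifying $\mu(x\pm u) = \mu(x)$ — are actually the crux, and both reductions hinge on Lemma~\ref{lm:singfun}(8) (from \cite{czer-kam2015}) together with the observation that the perturbations are engineered to live exactly in the region where $\mu(x)$ has already reached its asymptotic value $\mu(\infty,x)$, or on a projection orthogonal to $x$. The genuinely technical obstacle is Step~1: unlike Step~2, where the perturbation sits on a projection orthogonal to $s(x)$, here the perturbation interacts with $\abs{x}$ on its own support, so one must be careful that $\abs{x} + \delta(q_1+q_2)$ has the \emph{same} decreasing rearrangement as $\abs{x}$ rather than merely submajorizing it — this requires that the perturbed eigenvalues stay within the flat tail $[\,\cdot\,,c]$ and do not push any mass above $c$, which is exactly why we need $\abs{x}$ to genuinely fail the bound $\abs{x} \ge c\,s(x)$ on a set of positive trace. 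Non-atomicity is used precisely to split that set and produce the equivalent subprojections $q_1 \sim q_2$ so that the perturbation can be taken self-adjoint in the appropriate corner and nonzero.
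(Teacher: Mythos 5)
The decisive flaw is your reduction to a single perturbation: producing one nonzero $u$ with $x\pm u\in B_{\nonsp}$ does \emph{not} contradict $k$-extremity when $k\ge 2$. In Proposition~\ref{lm:2}, taking $u_1=u$ and $u_2=\dots=u_k=0$ yields a family that is automatically linearly dependent (it contains $0$), so no contradiction arises; indeed $k$-extreme points need not be extreme at all — the paper's own example $x=(\tfrac1{k+1},\dots,\tfrac1{k+1},0)\in\ell_1^{k+2}$ is $(k+1)$-extreme yet admits many nonzero $u$ with $x\pm u$ in the ball. As written, both your steps only prove the $k=1$ case (essentially the lemma from \cite{CKS1992ext}). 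To contradict $k$-extremity you must exhibit $k$ \emph{linearly independent} perturbations $u_1,\dots,u_k$ with $x+u_i\in B_{\nonsp}$ for each $i$ and $x-\sum_{i=1}^k u_i\in B_{\nonsp}$; this is exactly where non-atomicity is needed. For instance, your Step~2 computation is salvageable: split $p_1$ into $k$ nonzero mutually orthogonal subprojections $p_1^{(j)}$ (non-atomicity), set $u_j=\mu(\infty,x)\,v\,p_1^{(j)}$; the same cross-term cancellation gives $\abs{x+u_j}=\abs{x}+\mu(\infty,x)p_1^{(j)}$ and $\abs{x-\sum_j u_j}=\abs{x}+\mu(\infty,x)p_1$, so Lemma~\ref{lm:singfun}(8) applies (no finiteness of $\tau(p_1)$ is needed), and the $u_j$ are independent because multiplying any vanishing combination by $v^*$ on the left recovers a combination of the orthogonal projections $p_1^{(j)}$.

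Step~1 has a second, independent gap: Lemma~\ref{lm:singfun}(8) requires $px=xp=0$, i.e.\ a projection orthogonal to $x$, whereas your perturbation sits inside $s(x)$ on the spectral region where $\abs{x}<\mu(\infty,x)$, so the lemma simply does not apply and the claim $\mu(x\pm u)=\mu(x)$ is unjustified — you flag this as the "delicate point" but never supply the argument, and the construction "$u=u\,\delta(w+w^*)$" (with $u$ used both for the polar isometry and the perturbation) is not shown to keep the norm at most $1$. A workable route: since $e^{\abs{x}}(0,c)\neq 0$ with $c=\mu(\infty,x)>0$, choose $0<a<b<c$ with $r=e^{\abs{x}}(a,b)\neq 0$, split $r$ into $k$ nonzero orthogonal subprojections $r_j$, and with $x=u_x\abs{x}$ and $0<\epsilon\le\min\{a,c-b\}$ set $u_j=\epsilon\,u_x r_j$. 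Then $0\le\abs{x}-\epsilon\sum_j r_j\le\abs{x}$ gives $\mu(x-\sum_j u_j)\le\mu(x)$, while $\abs{x}+\epsilon r_j\le\abs{x}+\epsilon r$ and a direct distribution-function computation (values on $r$ stay below $c$, so $d(s,\cdot)$ is unchanged for $s\ge c$ and remains infinite for $s<c$) gives $\mu(x+u_j)\le\mu(x)$; the $u_j$ are linearly independent as before, which is the contradiction. In short, the per-perturbation singular-value computations in Step~2 are fine, but the logical backbone (one perturbation suffices) is wrong for $k\ge2$, and Step~1's appeal to Lemma~\ref{lm:singfun}(8) is invalid.
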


\begin{theorem}\cite[Theorem 3.5]{czer-kam2015}
\label{thm:noncom4}
Suppose that $\M$ is  non-atomic with a $\sigma$-finite trace $\tau$.  If $x$ is a $k$-extreme point of $B_{\nonsp}$ then $\mu(x)$ is a $k$-extreme point of $B_E$ and either
\par {\rm(i)} $\mu(\infty,x)=0$, or
\par {\rm(ii)} $n(x)\mathcal{M}n(x^*)=0$ and $\abs{x}\geq\mu(\infty,x)s(x)$.
\end{theorem}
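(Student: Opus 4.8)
The structural conclusion --- the dichotomy (i)/(ii), and in particular the inequality $\abs{x}\ge\mu(\infty,x)s(x)$ --- is already furnished by \lemref{lm:noncom2}, so the entire task is to show that $\mu(x)$ is a $k$-extreme point of $B_E$. I would argue by contraposition using the characterization in \propref{lm:2}: if $\mu(x)$ is \emph{not} $k$-extreme, fix linearly independent $g_1,\dots,g_k\in E$ with $\norme{\mu(x)+g_i}\le1$ for each $i$ and $\norme{\mu(x)-\sum_{i=1}^kg_i}\le1$; the goal is to produce linearly independent $w_1,\dots,w_k\in\nonsp$ with $\|x+w_i\|_{\nonsp}\le1$ and $\|x-\sum_{i=1}^kw_i\|_{\nonsp}\le1$, contradicting the $k$-extremality of $x$. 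Since $f\mapsto f\chi_{\supp\mu(x)}$ is a contractive projection on $E$ fixing $\mu(x)$, I would first normalize the witnesses so that each $g_i$ is supported on $\supp\mu(x)=[0,\tau(s(x)))$; retaining linear independence here needs a short separate argument, drawing on the structural conditions.

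To pass to operators I invoke Corollary~\ref{cor:isom}, which applies precisely because $\tau$ is $\sigma$-finite and $\abs{x}\ge\mu(\infty,x)s(x)$: it produces a non-atomic commutative von Neumann subalgebra $\mathcal N\subseteq q\M q$ with $\tau(q)=\tauone$, and a unital $*$-isomorphism $V\colon S\left([0,\tauone),m\right)\to S(\mathcal N,\tau)$ with $V\mu(x)=\abs{x}q$ and $\mu(Vf)=\mu(f)$ for all $f$, where $q\in\{\one,\,s(x),\,p\}$ and $p:=e^{\abs{x}}(\mu(\infty,x),\infty)$. Fixing the polar decomposition $x=u\abs{x}$, set $w_i:=uV(g_i)$; then $\mu(w_i)\le\mu(Vg_i)=\mu(g_i)\in E$ by \lemref{lm:singfun}(1), so $w_i\in\nonsp$. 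When $q=\one$ or $q=s(x)$ one has $\abs{x}q=\abs{x}$, hence $x=uV\mu(x)$, so $x+w_i=uV(\mu(x)+g_i)$ and, by \lemref{lm:singfun}(1) together with $\mu(Vf)=\mu(f)$,
\[
\|x+w_i\|_{\nonsp}=\|uV(\mu(x)+g_i)\|_{\nonsp}\le\|V(\mu(x)+g_i)\|_{\nonsp}=\norme{\mu(x)+g_i}\le1,
\]
and likewise for $x-\sum_iw_i$. Because the $g_i$ are supported on $\supp\mu(x)$, the supports of the $V(g_i)$ lie below $V(\chi_{\supp\mu(x)})=s(x)=u^*u$, so $\sum_ic_iw_i=0$ forces $\sum_ic_iV(g_i)=u^*\sum_ic_iw_i=0$, whence $\sum_ic_ig_i=0$ by injectivity of $V$, so all $c_i=0$; this contradicts $k$-extremality of $x$ in these cases.

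The remaining case $q=p$ with $\tau(p)=\infty$ and $\mu(\infty,x)>0$ --- where $\abs{x}q=\abs{x}p\ne\abs{x}$, so $x$ carries an extra tail term --- is the main obstacle. Here I would use $\abs{x}\ge\mu(\infty,x)s(x)$ once more to deduce $e^{\abs{x}}\bigl(0,\mu(\infty,x)\bigr)=0$, hence $s(x)=p+e$ with $e:=e^{\abs{x}}\{\mu(\infty,x)\}$, $\abs{x}e=\mu(\infty,x)e$, and $x=uV\mu(x)+\mu(\infty,x)ue$. With $w_i=uV(g_i)$ as above, $x+w_i=uV(\mu(x)+g_i)+\mu(\infty,x)ue$; since $V(\mu(x)+g_i)\in\mathcal N\subseteq p\M p$ has left and right supports orthogonal to those of $ue$, one gets $\abs{x+w_i}^2=\abs{V(\mu(x)+g_i)}^2+\mu(\infty,x)^2e$, and a distribution-function comparison --- using that $\tauone=\infty$ here, so $d(\lambda,\abs{V(\mu(x)+g_i)}^2)=\infty$ whenever $\lambda<\mu(\infty,\mu(x)+g_i)^2$ --- yields $\mu(x+w_i)=\mu(\mu(x)+g_i)$, hence $\|x+w_i\|_{\nonsp}\le1$, \emph{provided} $\mu(\infty,\mu(x)+g_i)\ge\mu(\infty,x)$, and similarly for $x-\sum_iw_i$. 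Securing that last inequality is the crux: the witnesses must be chosen --- exploiting $\mu(\infty,\mu(x)+g_i)\ge\mu(\infty,x)-\mu(\infty,g_i)$ and the slack in \propref{lm:2}, correcting at the tail if necessary --- so that all the perturbed functions keep tail value at least $\mu(\infty,x)$ without destroying linear independence; the linear-independence step itself is unchanged since $u^*u=s(x)\ge p$ still fixes the $V(g_i)$. Once $w_1,\dots,w_k$ are obtained, $x$ violates the criterion of \propref{lm:2}, the desired contradiction, and the proof is complete.
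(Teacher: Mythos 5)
Your skeleton (Lemma~\ref{lm:noncom2} for the dichotomy, Corollary~\ref{cor:isom} plus a case analysis over $q$, and passing linear independence through $V$) is the paper's, but two of your steps do not go through. The first is the normalization of the witnesses to $\supp\mu(x)$, which is needed only in the case $\tau(s(x))<\infty$ (i.e. $q=\one$) and is exactly where it fails: it can happen that $\mu(x)$ is not $k$-extreme \emph{only} because of perturbations living outside its support. For instance, in the Marcinkiewicz space on $[0,\infty)$ with norm $\|f\|=\sup_{t>0}t^{-1}\int_0^t\mu(f)$, the element $\mu(x)=\chi_{[0,1)}$ admits no nonzero $h$ supported in $[0,1)$ with $\|\chi_{[0,1)}\pm h\|\le 1$ (let $t\to0^+$), yet $g=\frac12\chi_{[1,2)}$ is a legitimate witness; truncation annihilates every witness, and no ``short separate argument'' can recover linear independence. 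Your choice $w_i=uV(g_i)$ has the same defect built in: since $u=us(x)$, left multiplication by $u$ kills precisely the part of $V(g_i)$ sitting under $n(x)$, so for such $\mu(x)$ your perturbations of $x$ are zero and no contradiction with $k$-extremality of $x$ is produced. This is why the paper, in the case $\tau(s(x))<\infty$, replaces $u$ by an \emph{isometry} $w$ with $x=w|x|$ and $w^*w=\one$ (available because $n(x)\sim n(x^*)$ there): then $y\mapsto wy$ is isometric and injective on all of $S\Mtau$, no support restriction on the $V$-images is needed, and perturbations of $x$ ``outside its support'' are exactly what one gets.

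The second gap is the case $q=p$, $\tau(p)=\infty$, $\mu(\infty,x)>0$, at the point you yourself call the crux: the inequality $\mu(\infty,\mu(x)+g_i)\ge\mu(\infty,x)$ (and the analogous one for $\mu(x)-\sum_i g_i$) can simply fail for the given witnesses, and ``correcting at the tail'' is not available: raising the tail of $\mu(x)+g_i$ increases the element and may push it out of $B_E$, and Proposition~\ref{lm:2} gives you no slack to exploit, since the witnesses may saturate the ball constraints. The paper sidesteps this by arguing in the averaged form $\mu(x)=\frac1{k+1}\sum_{i=1}^{k+1}f_i$ and attaching to each $V(f_i)$ its \emph{own} tail $\alpha_i e^{|x|}\{\mu(\infty,x)\}$, with $0\le\alpha_i\le\mu(\infty,f_i)$ chosen so that the $\alpha_i$ only \emph{average} to $\mu(\infty,x)$ (possible because $\mu(\infty,x)\le\frac1{k+1}\sum_i\mu(\infty,f_i)$); Lemma~\ref{lm:singfun}(8) then yields $\mu\bigl(V(f_i)+\alpha_ie^{|x|}\{\mu(\infty,x)\}\bigr)=\mu(f_i)$, so each modified operator stays in the unit ball, and the disjointness of the two components lets one pass from linear dependence of the $u x_i$ back to the $f_i$. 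Your cases $q=s(x)$, and $q=p$ with $\mu(\infty,x)=0$, do match the paper's argument, but without repairing the two points above the proof is incomplete.
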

\begin{proof}
Suppose that $x$ is a $k$-extreme point of the unit ball in $E(\mathcal{M},\tau)$. By Lemma \ref{lm:noncom2} conditions (i) or (ii) are satisfied. 

Let 
\begin{equation}
\label{eq:noncom4}
\mu(x)=\frac{1}{k+1}\sum_{i=1}^{k+1} f_i, \text{ where }f_i\in S_E, \,i=1,2,\dots, k+1.
\end{equation}
To prove that $\mu(x)$ is $k$-extreme we need to show that $f_1,f_2,\dots,f_{k+1}$ are linearly dependent.
Let 
\[
p=s(\abs{x}-\mu(\infty,x)s(x))=e^{\abs{x}}(\mu(\infty,x),\infty).
\]
By Corollary \ref{cor:isom}, there exist a projection $q\in \mathcal{P}(\M)$, a non-atomic commutative von Neumann subalgebra $\mathcal{N}\subset q\M q$ and a $*$-isomorphism $V$ acting from the $*$-algebra $S\left(\left[0,\tauone\right),m\right)$ into the $*$-algebra $S(\mathcal{N},\tau)$, such that 
\[ V\mu(x)=\abs{x}q\ \ \ \text{ and }\ \ \ \mu(V(f))=\mu(f)\ \ \text{ for all } f\in S\left([0,\tauone),m\right).
\]
Moreover, there are three choices of $q$: (1) $q=\one$ whenever $\tau(s(x))<\infty$, (2) $q=s(x)$ if $\tau(s(x))=\infty$ and $\tau(p)<\infty$, or (3) $q=p$ if $\tau(p)=\infty$.

Applying now isomorphism $V$ to the equation (\ref{eq:noncom4})  we obtain
\begin{equation}
\label{eq:noncom4.1}
\abs{x}q=V\mu(x)=\frac1{k+1}\sum_{i=1}^{k+1}V(f_i).
\end{equation}

Case (1). Let $\tau(s(x))<\infty$ and $q=\one$. Since $s(x)\sim s(x^*)$ and $\tau(s(x))<\infty$,  by \cite[Chapter 5, Proposition 1.38]{Takesaki} $n(x)\sim n(x^*)$. Then by \cite[Lemma 2.6]{czer-kam2010} there exists an isometry $w$ such that $x=w\abs{x}$. 
Therefore by (\ref{eq:noncom4.1}) we have 
\[
x=\frac1{k+1}\sum_{i=1}^{k+1}wV(f_i),
\]
and $wV(f_1),wV(f_2),\dots, wV(f_{k+1})$ are linearly dependent by the assumption that $x$ is $k$-extreme. Since $w$ and $V$ are isometries $f_1, f_2, \dots, f_{k+1}$ are linearly dependent.

Case (2). Suppose that $\tau(s(x))=\infty$, $\tau(p)<\infty$, and $q=s(x)$.
Let $x=u\abs{x}$ be the polar decomposition of $x$.  By (\ref{eq:noncom4.1})
\[
 x=\frac1{k+1}\sum_{i=1}^{k+1}uV(f_i),
\]
where $uV(f_i)\in B_{\nonsp}$, $i=1,2,\dots, k+1$.
Since $x$ is $k$-extreme there exist constants $C_1,C_2,\dots, C_{k+1}$, such that $\sum_{i=1}^{k+1}C_i\neq 0$ and $\sum_{i=1}^{k+1}C_i uV(f_i)=0$. However $q=s(x)$ is an identity in the von Neumann algebra $\mathcal{N}\subset s(x)\M s(x)$ and so $u^*uV(f_i)=s(x)V(f_i)=V(f_i)$. Consequently, 
\[
\sum_{i=1}^{k+1}C_i V(f_i)=0
\]
and since $V$ is injective $f_1,f_2,\dots, f_{k+1}$ are linearly dependent.

Case (3). Consider now the case when $q=p=e^{\abs{x}}(\mu(\infty,x),\infty)$ and $\tau(p)=\infty$. By \cite[Lemma 3.4]{czer-kam2015}, if $\mu(\infty, x)>0$ then $|x|\geq \mu(\infty,x)s(x)$ is equivalent with $e^{\abs{x}}(0,\mu(\infty, x))=0$. Hence $q^{\perp}=e^{\abs{x}}\{0\}+e^{\abs{x}}\{\mu(\infty,x)\}\geq e^{\abs{x}}\{\mu(\infty,x)\}$.

For each $i=1,2,\dots, k+1$, choose $0\leq \alpha_i\leq \mu(\infty,f_i)$ such that $\frac1{k+1}\sum_{i=1}^{k+1}\alpha_i=\mu(\infty,x)$. Such constants $\alpha_i$ exist, since by (\ref{eq:noncom4}) and by Lemma \ref{lm:singfun} (4) for all $t>0$,
\[
\mu(t, x)=\mu\left(t,\frac1{k+1}\sum_{i=1}^{k+1}fi\right)\leq \frac1{k+1}\sum_{i=1}^{k+1}\mu\left(\frac t{k+1}, f_i\right), 
\]
and so $\mu(\infty,x)\leq\frac1{k+1}\sum_{i=1}^{k+1}\mu(\infty, f_i)$.

Define operators $x_i=V(f_i)+\alpha_i e^{\abs{x}}\{\mu(\infty,x)\}$. Observe that since $q$ is an identity in $\mathcal{N}$, $q^{\perp}V(f_i)=V(f_i)q^{\perp}=0$, and so $e^{\abs{x}}\{\mu(\infty,x)\}V(f_i)=
V(f_i)e^{\abs{x}}\{\mu(\infty,x)\}=0$. Furthermore $\alpha_i\leq \mu(\infty,f_i)=\mu(\infty,V(f_i))$, and hence by Lemma \ref{lm:singfun} (8), $\mu(x_i)=\mu(V(f_i))=\mu(f_i)$. Hence $x_i\in B_{\nonsp}$ for all $i=1,2,\dots, k+1$. We have now by (\ref{eq:noncom4.1}) that 
\begin{align*}
\abs{x}&=\abs{x}q+\abs{x}e^{\abs{x}}\{\mu(\infty,x)\}
=\abs{x}q+\mu(\infty,x)e^{\abs{x}}\{\mu(\infty,x)\}\\
&=\frac1{k+1}\sum_{i=1}^{k+1}V(f_i)
+\frac1{k+1}\sum_{i=1}^{k+1}\alpha_ie^{\abs{x}}\{\mu(\infty,x)\}=\frac1{k+1}\sum_{i=1}^{k+1}x_i.
\end{align*}
Using the polar decomposition $x=u\abs{x}$, 
\[
x=\frac1{k+1}\sum_{i=1}^{k+1}ux_i=\frac1{k+1}\sum_{i=1}^{k+1}(uV(f_i)+\alpha_iue^{\abs{x}}\{\mu(\infty,x)\}),
\]
and $ux_1,ux_2,\dots, ux_{k+1}$ are linearly dependent. Since two components of $x_i$, $uV(f_i)$ and $\alpha_iue^{\abs{x}}\{\mu(\infty,x)\}$ have disjoint supports, $uV(f_1), uV(f_2), \dots, uV(f_{k+1})$ are linearly dependent. Moreover $q\leq s(x)$, and so $u^*uV(f_i)=s(x)V(f_i)=s(x)qV(f_i)=qV(f_i)=V(f_i)$.  Since  $V$ is an isometry, $f_1,f_2,\dots, f_{k+1}$ are linearly dependent.   
\end{proof}

The converse statement of Theorem \ref{thm:noncom4} is as follows.

\begin{theorem}\cite[Theorem 3.13]{czer-kam2015}
\label{thm:noncom5}
Suppose $\M$ is non-atomic with a $\sigma$-finite trace $\tau$ and $E$ is a strongly symmetric function space. An element  $x\in S_{\nonsp}$ is a $k$-extreme point of $B_{\nonsp}$ whenever $\mu(x)$ is a $k$-extreme point of $B_E$ and one of the following conditions holds. 
\begin{itemize}
\item[(i)] $\mu(\infty,x)=0$,
\item[(ii)] $n(x)\mathcal{M}n(x^*)=0$ and $\abs{x}\geq\mu(\infty,x)s(x)$.
\end{itemize}
\end{theorem}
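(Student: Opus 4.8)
The plan is to mirror the proof of \thmref{thm:noncom4}, but run it in the opposite direction. In that theorem one started from a decomposition of $\mu(x)$ in $E$, transported it up to a decomposition of $\abs{x}q$ through the $*$-isomorphism of Corollary~\ref{cor:isom}, and then used $k$-extremality of $x$. Here we instead start from an arbitrary decomposition of $x$, compress it down to a decomposition of the scalar function $\mu(x)$ in $E$ via submajorization, invoke $k$-extremality of $\mu(x)$ at the function level, and finally lift the resulting linear dependence back to the operators. Throughout we use that $\M$ is non-atomic with $\sigma$-finite trace, that $E$ is strongly symmetric, and that $\abs{x}\geq\mu(\infty,x)s(x)$ in both cases (trivially under (i)), so that Corollary~\ref{cor:isom} is available. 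Concretely, let $x=\frac1{k+1}\sum_{i=1}^{k+1}x_i$ with $x_i\in S_{\nonsp}$ (equivalently use the formulation of \propref{lm:2}); the goal is that $x_1,\dots,x_{k+1}$ are linearly dependent.

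First I would produce a rearrangement decomposition. From $\mu(x_1+\cdots+x_{k+1})\prec\mu(x_1)+\cdots+\mu(x_{k+1})$ and homogeneity of $\mu$ one gets $\mu(x)\prec g$, where $g:=\frac1{k+1}\sum_{i=1}^{k+1}\mu(x_i)$ is decreasing, so $\mu(g)=g$. Strong symmetry gives $1=\norme{\mu(x)}\le\norme{g}\le\frac1{k+1}\sum_{i=1}^{k+1}\norme{\mu(x_i)}=1$, hence $g\in S_E$; applying Corollary~\ref{cor:orbitmain} with $f=\mu(x)$ yields $\mu(x)=\mu(g)=g=\frac1{k+1}\sum_{i=1}^{k+1}\mu(x_i)$. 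Since $\mu(x)$ is a $k$-extreme point of $B_E$ and the $\mu(x_i)$ lie in $S_E$, the functions $\mu(x_1),\dots,\mu(x_{k+1})$ are linearly dependent, say $\sum_{i=1}^{k+1}c_i\mu(x_i)=0$ with $(c_i)\neq 0$.

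It remains to upgrade this to a nontrivial linear relation among the $x_i$ themselves, and here the hypotheses (i)/(ii) and the three cases of Corollary~\ref{cor:isom} enter, exactly as in \thmref{thm:noncom4}. Writing $x=u\abs{x}$ for the polar decomposition, from $u^*x=s(x)\abs{x}=\abs{x}$ we get $\abs{x}=\frac1{k+1}\sum_{i=1}^{k+1}u^*x_i$ with $\mu(u^*x_i)\le\mu(x_i)$; repeating the submajorization estimate for $\abs{x}$ and comparing with this pointwise bound forces $\mu(u^*x_i)=\mu(x_i)$ for every $i$, so no $\mu$-mass is lost when the $x_i$ are aligned against the polar part of $x$. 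One then splits into the cases $q=\one$ (when $\tau(s(x))<\infty$), $q=s(x)$ (when $\tau(s(x))=\infty$, $\tau(p)<\infty$), $q=p$ (when $\tau(p)=\infty$), where $p=e^{\abs{x}}(\mu(\infty,x),\infty)$. In the first, $n(x)\sim n(x^*)$ by \cite[Chapter 5, Proposition 1.38]{Takesaki}, so $x=w\abs{x}$ for a genuine injective isometry $w$ \cite[Lemma 2.6]{czer-kam2010}, through which the commutative structure of $\mathcal N\subset\M$ is transported faithfully; in the second, $q=s(x)$ acts as the identity of $\mathcal N$, restoring injectivity of $u$ on the relevant subalgebra; in the third, one adds back $\alpha_ie^{\abs{x}}\{\mu(\infty,x)\}$ with $0\le\alpha_i\le\mu(\infty,x)$ and $\frac1{k+1}\sum\alpha_i=\mu(\infty,x)$, using \lemref{lm:singfun}(8) so that $\mu$ is unchanged while the disjoint supports make the $u$-images linearly dependent precisely when the underlying functions are. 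Feeding the relation $\sum_ic_i\mu(x_i)=0$ through the appropriate injective map then produces $\sum_ic_ix_i=0$.

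The main obstacle is exactly this last lifting step. In \thmref{thm:noncom4} the operators in play were, by construction, of the form $uV(f_i)$ with $f_i\in S(\mathcal N,\tau)$, so one could apply $V^{-1}$, $w^{-1}$ or descend to $\mathcal N$ and read off linear dependence immediately; here the given $x_i$ range over all of $\nonsp$ and no $V^{-1}$ is defined on the full algebra, so one must first show that the equalities extracted above — $\mu(x)=\frac1{k+1}\sum\mu(x_i)$ together with $\mu(u^*x_i)=\mu(x_i)$ — rigidly constrain the $x_i$: that each $u^*x_i$ may be taken positive and that the family $\{u^*x_i\}$ is \emph{comonotone}, i.e.\ sits in a common non-atomic commutative subalgebra compatible with $\mathcal N$, on which $\mu$ is faithful enough that a vanishing combination of the $\mu(x_i)$ forces a vanishing combination of the $u^*x_i$. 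Establishing this compatibility, and checking that it survives the surgery on $n(x)$, $s(x)$ and $e^{\abs{x}}\{\mu(\infty,x)\}$ in each of the three cases, is the technical heart of the proof; everything else is bookkeeping with \lemref{lm:singfun} and the isometric properties of the maps in Corollary~\ref{cor:isom}.
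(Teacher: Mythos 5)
Your opening reduction is fine and is a natural first move: from $x=\frac1{k+1}\sum_{i=1}^{k+1}x_i$, submajorization plus strong symmetry gives $g=\frac1{k+1}\sum_i\mu(x_i)\in S_E$ with $\mu(x)\prec g$, and Corollary~\ref{cor:orbitmain} then yields $\mu(x)=\frac1{k+1}\sum_i\mu(x_i)$, so the $\mu(x_i)$ are linearly dependent; the equality $\mu(u^*x_i)=\mu(x_i)$ also follows correctly from the integral comparison. (The survey only cites \cite[Theorem 3.13]{czer-kam2015} and does not reprint that proof, so I am judging your argument on its own terms.)

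The rest, however, is not a proof but a restatement of the difficulty. The decisive step — passing from a nontrivial relation $\sum_i c_i\mu(x_i)=0$ among the \emph{rearrangements} to a nontrivial relation among the \emph{operators} $x_i$ — is exactly what has to be established, and none of the ingredients you invoke for it is available. Linear dependence of singular value functions never implies linear dependence of the operators (already $\mu(f_1)=\mu(f_2)$ for two disjointly supported characteristic functions gives $\mu(f_1)-\mu(f_2)=0$ with $f_1,f_2$ independent), so the asserted positivity of the $u^*x_i$ and their ``comonotonicity'' in a common commutative subalgebra would have to be \emph{proved} from $k$-extremality of $\mu(x)$ together with (i)/(ii); you explicitly defer this, yet it is the entire content of the theorem. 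Corollary~\ref{cor:isom} cannot supply it: the isomorphism $V$ only realizes $\mu(x)$ as $\abs{x}q$, and the arbitrary elements $x_i\in S_{\nonsp}$ are not in its range, so the case analysis and the $\alpha_i e^{\abs{x}}\{\mu(\infty,x)\}$ surgery you copy from \thmref{thm:noncom4} have nothing to act on here. A symptom that the gap is fatal: your scalar-level reduction never genuinely uses (i) or (ii), but by Theorem~\ref{thm:main} there exist $x$ with $\mu(x)$ $k$-extreme for which both (i) and (ii) fail and which are \emph{not} $k$-extreme (e.g.\ positive operators with $\mu(\infty,x)>0$ and nonzero kernel); any argument that would go through without (i)/(ii) would prove a false statement. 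So the lifting step must be built from operator-rigidity arguments in which (i)/(ii) intervene essentially, and that part is missing from your proposal.
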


Combining now the results of Theorems \ref{thm:noncom4} and \ref{thm:noncom5}, we give a complete characterization of $k$-extreme points in terms of their singular value functions, when $\M$ is a  non-atomic von Neumann algebra. For $k=1$ we obtain the well-known theorem on extreme points proved in \cite{CKS1992ext}.
\begin{theorem}\cite[Theorem 3.14]{czer-kam2015}
\label{thm:main}
Let $E$ be a strongly symmetric space on $[0,\tauone)$ and $\M$ be  non-atomic with a $\sigma$-finite trace $\tau$. An operator $x$ is a $k$-extreme point of $B_{\nonsp}$ if and only if $\mu(x)$ is a $k$-extreme point of $B_E$ and one of the following, not mutually exclusive, conditions holds.
\begin{itemize}
\item [{(i)}] $\mu(\infty,x)=0$,
\item[{(ii)}] $n(x)\mathcal{M}n(x^*)=0$ and $\abs{x}\geq\mu(\infty,x)s(x)$.
\end{itemize}
\end{theorem}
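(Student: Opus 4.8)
The plan is to obtain Theorem~\ref{thm:main} by combining the two implications that have already been isolated, namely Theorem~\ref{thm:noncom4} and Theorem~\ref{thm:noncom5}; their hypotheses match precisely those in the statement, so the ``proof'' is essentially an assembly of the preceding results.

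First, for the ``only if'' direction I would assume that $x$ is a $k$-extreme point of $B_{\nonsp}$. Since $\M$ is non-atomic with a $\sigma$-finite trace $\tau$, Theorem~\ref{thm:noncom4} applies verbatim and delivers that $\mu(x)$ is a $k$-extreme point of $B_E$ together with one of the alternatives (i) $\mu(\infty,x)=0$ or (ii) $n(x)\M n(x^*)=0$ and $\abs{x}\geq\mu(\infty,x)s(x)$. No use of strong symmetry of $E$ is needed here, as that hypothesis does not enter Theorem~\ref{thm:noncom4}.

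Second, for the ``if'' direction I would assume that $\mu(x)$ is a $k$-extreme point of $B_E$ and that one of (i), (ii) holds, and then invoke Theorem~\ref{thm:noncom5}: its hypotheses are exactly that $\M$ is non-atomic with a $\sigma$-finite trace, that $E$ is a strongly symmetric function space, and that $\mu(x)$ is $k$-extreme with (i) or (ii) --- all of which are in force. This yields that $x$ is a $k$-extreme point of $B_{\nonsp}$, and the equivalence is complete.

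Finally I would record two small remarks rather than additional arguments. The conditions (i) and (ii) are genuinely not mutually exclusive: when $\mu(\infty,x)=0$ the inequality $\abs{x}\geq\mu(\infty,x)s(x)$ holds trivially, so (ii) reduces to $n(x)\M n(x^*)=0$, which may or may not also hold; and for $k=1$ a $k$-extreme point is just an extreme point, so the statement specializes to Theorem~\ref{thm:ext}, the Chilin--Krygin--Sukochev characterization. Since the real content --- transferring $k$-extremality between $x$ and $\mu(x)$ through the $*$-isomorphisms of Section~\ref{sec:isom}, with its three-way case split on the projection $q$ --- has already been carried out in the proofs of Theorems~\ref{thm:noncom4} and \ref{thm:noncom5}, I expect no further obstacle at this stage: the only task is to check that the standing hypotheses of Theorem~\ref{thm:main} simultaneously meet the hypotheses of both component theorems, which they do.
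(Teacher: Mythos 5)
Your proposal is correct and coincides with the paper's own treatment: the paper proves Theorem \ref{thm:main} precisely by combining Theorem \ref{thm:noncom4} (the necessity direction, which indeed needs no strong symmetry of $E$) with Theorem \ref{thm:noncom5} (the sufficiency direction, where strong symmetry is used), exactly as you assemble them.
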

As explained in Section \ref{sec:symmfun}, by applying the above theorem to the commutative von Neumann algebra $\M=L_\infty[0,\tauone)$ we get a characterization of $k$ -extreme functions in terms of their decreasing rearrangement.

\begin{corollary}\cite[Corollary 3.15]{czer-kam2015}
\label{cor:2}
Let $E$ be a strongly symmetric function space  and $k\in\mathbb{N}$. The following conditions are equivalent.
\begin{itemize}
\item [{(i)}]  $f$ is a $k$-extreme point of $B_E$,
\item[{(ii)}] $\mu(f)$ is a $k$-extreme point of $B_E$ and $\abs{f}\geq \mu(\infty,f)$.
\end{itemize}
\end{corollary}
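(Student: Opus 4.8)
The plan is to obtain this corollary directly from Theorem~\ref{thm:main} by specializing the non-atomic von Neumann algebra to the commutative model of Section~\ref{sec:symmfun}. Concretely, I would take $\M=\mathcal{N}=L_\infty[0,\alpha)$ with $\alpha=\tauone$ and the integration trace $\eta$. This algebra is non-atomic, and $\eta$ is $\sigma$-finite because Lebesgue measure on $[0,\alpha)$ is $\sigma$-finite. For $f\in B_E$ the operator $N_f$ is $\eta$-measurable: since $E\hookrightarrow L_1+L_\infty$ we have $\int_0^1\mu(f)<\infty$, hence $\mu(1,f)<\infty$ and $d(\mu(1,f),f)\le 1$, so $N_f\in S(\mathcal{N},\eta)$ by Fact~4. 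By Fact~5, $\mu(N_f)=\mu(f)$ and the assignment $f\mapsto N_f$ is an isometric $*$-isomorphism of $E$ onto $E(\mathcal{N},\eta)$.

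First I would record the elementary observation that an isometric linear isomorphism sends $k$-extreme points of one unit ball bijectively onto $k$-extreme points of the other, since membership in the unit sphere, convex combinations, and linear (in)dependence are all preserved. Consequently $f\in S_E$ is a $k$-extreme point of $B_E$ if and only if $N_f\in S_{E(\mathcal{N},\eta)}$ is a $k$-extreme point of $B_{E(\mathcal{N},\eta)}$.

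Next I would invoke Theorem~\ref{thm:main} with $x=N_f$: as $E$ is strongly symmetric and $(\mathcal{N},\eta)$ is non-atomic with $\sigma$-finite trace, $N_f$ is $k$-extreme in $B_{E(\mathcal{N},\eta)}$ exactly when $\mu(N_f)=\mu(f)$ is $k$-extreme in $B_E$ and one of (i)~$\mu(\infty,N_f)=0$ or (ii)~$n(N_f)\mathcal{N}n(N_f^*)=0$ and $\abs{N_f}\ge\mu(\infty,N_f)s(N_f)$ holds. Finally I would apply Fact~6, which states precisely that for $x=N_f$ the disjunction of (i) and (ii) is equivalent to the single condition $\abs{f}\ge\mu(\infty,f)\chi_{[0,\alpha)}$. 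Chaining these three equivalences produces the asserted equivalence of (i) and (ii) in the corollary, and the ``not mutually exclusive'' caveat of Theorem~\ref{thm:main} causes no trouble because in the commutative picture (i) means $\mu(\infty,f)=0$ and (ii) means $\supp f=[0,\alpha)$ a.e., both of which are subsumed by $\abs{f}\ge\mu(\infty,f)$.

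Since the argument is a straightforward translation, there is no genuine obstacle; the only points needing care are verifying that every $f\in E$ yields a $\tau$-measurable $N_f$ (so that Theorem~\ref{thm:main} is applicable) and checking that the standing hypotheses of that theorem — non-atomicity, $\sigma$-finiteness of $\eta$, and strong symmetry of $E$ — all persist in the commutative model, which they do.
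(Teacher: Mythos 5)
Your proposal is correct and follows exactly the route the paper indicates: apply Theorem~\ref{thm:main} to the commutative algebra $\M=L_\infty[0,\tauone)$ of Section~\ref{sec:symmfun}, using Fact~5 to identify $E$ isometrically with $E(\mathcal{N},\eta)$ (so $k$-extremality transfers) and Fact~6 to translate conditions (i)/(ii) into $\abs{f}\geq\mu(\infty,f)$. The details you supply (measurability of $N_f$, non-atomicity and $\sigma$-finiteness of the commutative model) are the right ones and match the paper's intended argument.
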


The next observation allows to relate $k$-convexity of $E$ and $\nonsp$. It shows that if $E$ is $k$-convex, then $\mu(\infty, f)=0$ for all $f\in E$ and so the condition $|f|\geq \mu(\infty, f)$ is satisfied trivially.

\begin{lemma}\cite[Lemma 3.16]{czer-kam2015}
\label{lm:ezero}
If $E$ is a $k$-convex symmetric function space then $E=E_0$.  
\end{lemma}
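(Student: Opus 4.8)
The plan is a proof by contradiction, producing a single unit vector of $E$ that fails to be $k$-extreme as soon as $E \ne E_0$.

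First I would reduce to the case $\alpha = \infty$. If $\alpha < \infty$ then $d(\lambda,f) \le \alpha$ for every $f \in L^0[0,\alpha)$, hence $\mu(t,f) = 0$ for $t \ge \alpha$ and so $\mu(\infty,f) = 0$; thus $E = E_0$ automatically. So assume $I = [0,\infty)$ and, aiming for a contradiction, suppose $E \ne E_0$, so there is $f_0 \in E$ with $c := \mu(\infty,f_0) > 0$. Since $\mu(f_0)$ is decreasing with limit $c$ we get $c\,\chi_{[0,\infty)} \le \mu(f_0)$ pointwise, and symmetry of $E$ then forces $\chi_{[0,\infty)} \in E$. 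Write $\mathbf 1 := \chi_{[0,\infty)}$ and $a := \|\mathbf 1\|_E > 0$; note that $\chi_{[1,\infty)}$ has the same distribution function as $\mathbf 1$, hence the same $E$-norm $a$.

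Now I would build the witness. Put $x := \tfrac1a\,\chi_{[1,\infty)} \in S_E$, split $[0,1) = C_1 \sqcup \dots \sqcup C_{k+1}$ into sets of equal positive measure, set $C_{k+2} := C_1$, and define
\[
g_i := \tfrac1a\,\chi_{[1,\infty)} + \tfrac1{2a}\bigl(\chi_{C_i} - \chi_{C_{i+1}}\bigr), \qquad i = 1,\dots,k+1 .
\]
Three checks then finish the argument. First, each $g_i$ has modulus $\le \tfrac1a$ everywhere and $= \tfrac1a$ on the infinite-measure set $[1,\infty)$, so $d(\lambda,g_i) = \infty$ for $\lambda < \tfrac1a$ and $= 0$ for $\lambda \ge \tfrac1a$, whence $\mu(g_i) = \tfrac1a\,\chi_{[0,\infty)} = \mu(\mathbf 1/a)$; by rearrangement invariance $g_i \in S_E$. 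Second, the perturbations telescope around the cycle, $\sum_{i=1}^{k+1}(\chi_{C_i} - \chi_{C_{i+1}}) = 0$, so $\tfrac1{k+1}\sum_{i=1}^{k+1} g_i = x$. Third, $g_1,\dots,g_{k+1}$ are linearly independent: if $\sum_{i=1}^{k+1}\lambda_i g_i = 0$, evaluating on $[1,\infty)$ gives $\sum_i \lambda_i = 0$, and then evaluating on each $C_j$ gives $\lambda_j = \lambda_{j-1}$ (indices mod $k+1$), so all $\lambda_i$ coincide and hence vanish. Together these say that $x \in S_E$ is the mean of $k+1$ linearly independent unit vectors, so $x$ is not a $k$-extreme point of $B_E$, contradicting $k$-convexity of $E$. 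Therefore $E = E_0$.

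The step I expect to matter most is the choice of the witness $x$. The naive candidate $\mathbf 1/a$ cannot be broken up nontrivially: any perturbation that does not increase $\mu(\mathbf 1/a)$ must be pointwise $\le 0$, and then the requirement that the $k+1$ perturbations sum to zero forces each of them to vanish. The resolution is to trade $\mathbf 1/a$ for $\tfrac1a\chi_{[1,\infty)}$, which has the \emph{same} $E$-norm (equal decreasing rearrangement) but lies strictly below $\tfrac1a$ on the finite interval $[0,1)$; that slack is exactly what lets the $g_i$ stay on the unit sphere while being linearly independent. Everything else is a routine computation with distribution functions.
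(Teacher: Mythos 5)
Your proof is correct. The reduction to $\alpha=\infty$, the deduction that $\chi_{[0,\infty)}\in E$ from $E\ne E_0$, the verification that each $g_i$ has decreasing rearrangement $\tfrac1a\chi_{[0,\infty)}$ and hence lies on $S_E$, the telescoping identity giving $\tfrac1{k+1}\sum_{i=1}^{k+1}g_i=x$, and the cyclic linear-independence check are all sound, so you exhibit a unit vector that fails to be $k$-extreme, contradicting $k$-convexity. Note that this survey states the lemma only with a citation to \cite{czer-kam2015}, so there is no in-paper proof to compare against; your cyclic-perturbation construction is of the same general type as the argument in that reference (producing $k+1$ linearly independent elements of $S_E$, all with decreasing rearrangement equal to the normalized constant function, whose mean is again a unit vector), and it is genuinely different from the in-paper proof of the related strict-monotonicity statement (Lemma \ref{lmst:sm0}), whose trick of adding $\mu(\infty,f)\chi_{(\supp f)^c}$ defeats strict monotonicity but does not by itself yield the $(k+1)$-fold independent decomposition needed here. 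One small caveat: your closing heuristic that $\mathbf{1}/a$ ``cannot be broken up nontrivially'' is not justified as stated, since elements of $S_E$ averaging to $\mathbf{1}/a$ need not share its decreasing rearrangement, so the pointwise bound you invoke is not automatic; but this remark is purely motivational and nothing in the proof depends on it.
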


By similar reasoning as in Remark \ref{rm:removingnonatom}, if $\mu(x)$ is a $k$-extreme point of $B_E$ and $\mu(\infty,x)=0$ then by Theorem \ref{thm:noncom5}, $x$ is a $k$-extreme  point of $\nonsp$  for an arbitrary von Neumann algebra. Hence the following holds.

\begin{corollary}\cite[Corollary 3.17]{czer-kam2015}
\label{cor:global}
 If a symmetric space $E$ is $k$-convex then $E(\mathcal{M},\tau)$ is $k$-convex. If in addition $\M$ is non-atomic, then $k$-convexity of $\nonsp$ implies $k$-convexity of $E$.
\end{corollary}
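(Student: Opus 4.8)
The plan is to read off both implications from Lemma~\ref{lm:ezero}, the $k$-extreme point theorems of this section, and the lifting machinery of Sections~\ref{sec:isom} and~\ref{non-atom}, with essentially no new computation. For the implication that $k$-convexity of $E$ forces $k$-convexity of $\nonsp$, I would first use Lemma~\ref{lm:ezero} to get $E=E_0$, so that $\mu(\infty,f)=0$ for every $f\in E$. Fix $x\in S_{\nonsp}$; then $\mu(x)\in S_E$ since $\|\mu(x)\|_E=\|x\|_{\nonsp}=1$, the point $\mu(x)$ is $k$-extreme in $B_E$ because $E$ is $k$-convex, and $\mu(\infty,x)=0$ because $\mu(x)\in E=E_0$, so condition~(i) of Theorem~\ref{thm:noncom5} holds automatically. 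When $\M$ is non-atomic, Theorem~\ref{thm:noncom5} then gives that $x$ is a $k$-extreme point of $B_{\nonsp}$, so $\nonsp$ is $k$-convex. To reach an arbitrary $\M$ I would argue as in Remark~\ref{rm:removingnonatom}, with Theorem~\ref{thm:noncom5} replacing Theorem~\ref{thm:ext}: pass to the non-atomic algebra $\mathcal{A}=\mathcal{N}\overline\otimes\M$ with trace $\kappa$ and use that $\tilde\pi\colon\nonsp\to E(\Complex\mathds{1}\otimes\M,\kappa)\subseteq E(\mathcal{A},\kappa)$ is an injective linear isometry with $\tilde\mu(\mathds{1}\overline\otimes x)=\mu(x)$; hence $\tilde\mu(\mathds{1}\overline\otimes x)$ is $k$-extreme in $B_E$ and $\tilde\mu(\infty,\mathds{1}\overline\otimes x)=0$, so by the non-atomic case $\mathds{1}\overline\otimes x$ is $k$-extreme in $B_{E(\mathcal{A},\kappa)}$. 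Any representation $x=\tfrac{1}{k+1}\sum_{i=1}^{k+1}x_i$ with $x_i\in S_{\nonsp}$ is carried by $\tilde\pi$ to $\mathds{1}\overline\otimes x=\tfrac{1}{k+1}\sum_{i=1}^{k+1}(\mathds{1}\overline\otimes x_i)$ with $\mathds{1}\overline\otimes x_i\in S_{E(\mathcal{A},\kappa)}$, which forces $\{\mathds{1}\overline\otimes x_i\}$, and hence $\{x_i\}$ by injectivity of $\tilde\pi$, to be linearly dependent; thus $x$ is $k$-extreme.

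For the converse, assume $\M$ non-atomic and $\nonsp$ $k$-convex. I would invoke Corollary~\ref{cor:isomglobal} to obtain a linear isometry $V\colon E\to\nonsp$ and then use the elementary fact that $k$-convexity passes to subspaces through an isometric embedding. Indeed, if $y\in S_E$ and $y=\tfrac{1}{k+1}\sum_{i=1}^{k+1}y_i$ with $y_i\in S_E$, then $\|V(y_i)\|_{\nonsp}=\|y_i\|_E=1$ and $V(y)=\tfrac{1}{k+1}\sum_{i=1}^{k+1}V(y_i)$ with $V(y)\in S_{\nonsp}$, so $k$-convexity of $\nonsp$ provides scalars $c_1,\dots,c_{k+1}$, not all zero, with $\sum_{i=1}^{k+1}c_iV(y_i)=0$; injectivity of $V$ then gives $\sum_{i=1}^{k+1}c_iy_i=0$, so $y$ is a $k$-extreme point of $B_E$ and $E$ is $k$-convex. (Alternatively one could push each $f\in S_E$ forward to $V(f)$, apply Theorem~\ref{thm:noncom4}, and invoke Corollary~\ref{cor:2}, but the subspace argument is shorter.)

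The $(k+1)$-fold bookkeeping and the one-line observation that $k$-convexity is an isometric invariant stable under passage to subspaces are entirely routine; the point that needs a little care is that Theorems~\ref{thm:noncom4} and~\ref{thm:noncom5} are stated for a $\sigma$-finite trace (and Theorem~\ref{thm:noncom5} also for strongly symmetric $E$), whereas the corollary assumes neither and $\kappa=\eta\otimes\tau$ need not be $\sigma$-finite for a general $\tau$. The way around this is that under the present hypotheses one is always in case~(i), $\mu(\infty,x)=0$, in which the relevant trace-preserving $*$-isomorphism is already produced by Proposition~\ref{isom1}, whose proof uses no $\sigma$-finiteness; so the lifting-up step should be rerun at that level rather than by quoting Theorem~\ref{thm:noncom5} verbatim. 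This is the only real obstacle, and it is a matter of bookkeeping rather than a new idea.
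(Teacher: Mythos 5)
Your argument is correct and follows the paper's own route: Lemma \ref{lm:ezero} reduces everything to the case $\mu(\infty,x)=0$, Theorem \ref{thm:noncom5} combined with the tensor-product isometry $\tilde{\pi}$ as in Remark \ref{rm:removingnonatom} lifts $k$-convexity from $E$ to $E\Mtau$ for arbitrary $\M$, and the converse uses the isometric embedding of Corollary \ref{cor:isomglobal} together with the observation that $k$-extremity passes to subspaces under an injective linear isometry. Your closing caveat about the $\sigma$-finiteness and strong-symmetry hypotheses of Theorem \ref{thm:noncom5} concerns how the statements are quoted here rather than any gap in your reasoning, which matches the paper's proof.
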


As a consequence of Corollary \ref{cor:global} we  could characterize $k$-extreme points in the orbits of functions and in Marcinkiewicz spaces.

Letting $g\in L_1[0,\alpha)+L_{\infty}[0,\alpha)$, $0<\alpha\leq \infty$,  the \textit{orbit} \cite{Ryff}  of $g$ is the set 
\[
\Omega(g)=\{f\in L_1[0,\alpha)+L_{\infty}[0,\alpha):\, f\prec g\}.
\]
 Clearly the inequality $f\prec g$ is equivalent to 
\[
\|f\|_{M_G}:=\sup_{t>0}\frac{\int_0^t \mu(f)}{\int_0^t \mu(g)}\leq 1.
\]
Setting $G(t)=\int_0^t \mu(g)$, the \textit{Marcinkiewicz} space $M_G$ is the set of all $f\in L^0$ such that $\|f\|_{M_G}<\infty$ \cite{KP, KPS}.   The space $M_G$ equipped with the norm $\|\cdot \|_{M_G}$ is a strongly symmetric function space. Therefore the orbit $\Omega(g)$ is the unit ball $B_{M_G}$ in the space $M_G$.
\begin{theorem}\cite[Theorem 4.1]{czer-kam2015}
\label{thm:orbits}
Let $g\in L_1[0,\alpha)+L_{\infty}[0,\alpha)$ and $k\in\mathbb{N}$. Then the following are equivalent.
\begin{itemize}
\item[{(i)}] $f$ is an extreme point of $\Omega(g)$.
\item[{(ii)}] $f$ is a $k$-extreme point of $\Omega(g)$.
\item[{(iii)}] $\mu(f)$ is a $k$-extreme point of $\Omega(g)$ and $\abs{f}\geq \mu(\infty,f)$.
\item[{(iv)}] $\mu(f)=\mu(g)$ and $\abs{f}\geq \mu(\infty,f)$.
\end{itemize}

\end{theorem}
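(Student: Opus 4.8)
The plan is to establish the cycle of implications (i) $\Rightarrow$ (ii) $\Leftrightarrow$ (iii) $\Rightarrow$ (iv) $\Rightarrow$ (i), using throughout the structural facts recorded just before the statement: $\Omega(g)=B_{M_G}$, where $M_G$ is a strongly symmetric function space on $[0,\alpha)$, $\mu(g)\in S_{M_G}$, and (assuming $g\neq 0$, the case $g=0$ being trivial) $M_G$ is infinite-dimensional because it contains every bounded compactly supported function on $[0,\alpha)$. With this dictionary in place, most of the work is already done by the corollaries proved earlier in this section.

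The implication (i) $\Rightarrow$ (ii) is immediate: an extreme point is a $1$-extreme point, and in a normed space of dimension at least $k$ every $1$-extreme point of the unit ball is $k$-extreme, as observed in \secref{sec:isom}'s preamble to $k$-convexity. The equivalence (ii) $\Leftrightarrow$ (iii) is exactly Corollary \ref{cor:2} applied to the strongly symmetric function space $E=M_G$: $f$ is a $k$-extreme point of $B_{M_G}=\Omega(g)$ if and only if $\mu(f)$ is a $k$-extreme point of $B_{M_G}=\Omega(g)$ and $\abs{f}\geq\mu(\infty,f)$. For (iii) $\Rightarrow$ (iv) I would invoke Corollary \ref{cor:orbitmain} with $E=M_G$: since $\mu(f)$ is a $k$-extreme point of $B_{M_G}$ it lies in $S_{M_G}$, so $\|\mu(f)\|_{M_G}\le 1$, which by definition of the Marcinkiewicz norm says $\int_0^t\mu(f)\le\int_0^t\mu(g)$ for all $t$, i.e. $\mu(f)\prec g$; as $g\in S_{M_G}$ as well, the corollary forces $\mu(\mu(f))=\mu(g)$, that is $\mu(f)=\mu(g)$, and the condition $\abs{f}\geq\mu(\infty,f)$ is inherited from (iii).

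The substance is (iv) $\Rightarrow$ (i). By Corollary \ref{cor:extce} applied to $E=M_G$, the operator $f$ is an extreme point of $B_{M_G}$ precisely when $\mu(f)$ is an extreme point of $B_{M_G}$ and $\abs{f}\geq\mu(\infty,f)$; the latter holds by (iv), and since $\mu(f)=\mu(g)$ it remains only to prove that $\mu(g)$ is an extreme point of $\Omega(g)=B_{M_G}$. This is Ryff's classical theorem \cite{Ryff}, and I would reprove it directly: suppose $2\mu(g)=h_1+h_2$ with $h_1,h_2\prec g$, so that $\int_0^t(h_1+h_2)=2\int_0^t\mu(g)$ is real and nonnegative. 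Then for every $t\geq 0$,
\[
2\int_0^t\mu(g)=\Bigl|\int_0^t(h_1+h_2)\Bigr|\le\int_0^t\abs{h_1}+\int_0^t\abs{h_2}\le\int_0^t\mu(h_1)+\int_0^t\mu(h_2)\le 2\int_0^t\mu(g),
\]
where the middle inequality is Hardy's lemma and the last uses $h_i\prec g$. Hence every link is an equality for all $t$. From the last one, $\mu(h_i)=\mu(g)$; from the equality $\int_0^t\abs{h_i}=\int_0^t\mu(h_i)=\int_0^t\mu(g)$ for all $t$, differentiating the two locally absolutely continuous functions of $t$ gives $\abs{h_i}=\mu(g)$ a.e. Finally, on the set where $\mu(g)>0$ write $h_i=\mu(g)u_i$ with $\abs{u_i}=1$; then $\mu(g)(u_1+u_2)=2\mu(g)$ forces $u_1=u_2=1$, so $h_1=h_2=\mu(g)$ there, while on $\{\mu(g)=0\}$ we have $h_1=h_2=0=\mu(g)$. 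Thus $\mu(g)$ is extreme, which closes the cycle.

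I expect the main obstacle to be precisely this last step, and within it the degenerate cases that must be handled with care: the possibility that $\int_0^t\mu(g)=\infty$ for small $t$ (so the $M_G$-norm is read off the ratio $\int_0^t\mu(f)/\int_0^t\mu(g)$ with the usual conventions), the fact that elements of $M_G$ are complex-valued (which is why the chain of inequalities is written with absolute values and the final argument uses strict convexity of the unit disc), and the elementary-but-genuine measure-theoretic step that $\int_0^t\phi=\int_0^t\mu(\phi)$ for all $t$ forces $\phi=\mu(\phi)$ a.e. Everything else reduces cleanly to Corollaries \ref{cor:2}, \ref{cor:orbitmain} and \ref{cor:extce} once the identification $\Omega(g)=B_{M_G}$ and the strong symmetry of $M_G$ are recorded.
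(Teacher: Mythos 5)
Your proof is correct and follows essentially the route the paper sets up: identify $\Omega(g)=B_{M_G}$ with $M_G$ strongly symmetric, get (i)$\Rightarrow$(ii) since extreme points are automatically $k$-extreme, (ii)$\Leftrightarrow$(iii) from Corollary \ref{cor:2}, (iii)$\Rightarrow$(iv) from Corollary \ref{cor:orbitmain}, and (iv)$\Rightarrow$(i) from Corollary \ref{cor:extce} once $\mu(g)$ is known to be an extreme point of $\Omega(g)$. The only deviation is that where one can simply cite Ryff \cite{Ryff} for the extremality of $\mu(g)$, you reprove it directly, and your equality-case analysis in the chain $2\int_0^t\mu(g)\le\int_0^t\abs{h_1}+\int_0^t\abs{h_2}\le\int_0^t\mu(h_1)+\int_0^t\mu(h_2)\le 2\int_0^t\mu(g)$ is sound (the integrals are finite for finite $t$ because $g\in L_1+L_\infty$, so the differentiation step and the pointwise argument on $\{\mu(g)>0\}$ go through).
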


As an immediate consequence we get the following result, which generalizes the characterization of extreme points in Corollary \ref{cor:extce}. 

\begin{corollary}\cite[Corollary 4.2]{czer-kam2015}
Let $M_G$ be the Marcinkiewicz space and $k$ be any natural number. The function  $f$ is a $k$-extreme point of $B_{M_G}$ if and only if  $\mu(f)=\mu(g)$ and $|f|\geq \mu(\infty,f)$. Consequently $f$ is a $k$-extreme point of $B_{M_G}$ if and only if $f$ is an extreme point of $B_{M_G}$.
\end{corollary}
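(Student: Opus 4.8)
The plan is to deduce this corollary directly from the orbit theorem, Theorem~\ref{thm:orbits}, once one notes that the unit ball of $M_G$ is exactly the orbit $\Omega(g)$. This identification is the only preliminary step: by the definition of the Marcinkiewicz norm, $\|f\|_{M_G}=\sup_{t>0}\big(\int_0^t\mu(f)\big)\big/\big(\int_0^t\mu(g)\big)$, so $f\in B_{M_G}$ if and only if $\int_0^t\mu(f)\le\int_0^t\mu(g)$ for all $t>0$, i.e. if and only if $f\prec g$; hence $B_{M_G}=\Omega(g)$. This was already recorded in the discussion preceding Theorem~\ref{thm:orbits}, together with the fact that $M_G$ is a strongly symmetric function space, which is the standing hypothesis under which the orbit theorem is stated.

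With $B_{M_G}=\Omega(g)$ in hand, I would apply Theorem~\ref{thm:orbits} to the fixed generator $g$. The equivalence of its conditions (ii) and (iv), read with $\Omega(g)$ replaced by $B_{M_G}$, says precisely that $f$ is a $k$-extreme point of $B_{M_G}$ if and only if $\mu(f)=\mu(g)$ and $|f|\ge\mu(\infty,f)$, which is the first assertion of the corollary. The equivalence of (i) and (ii), read the same way, says that $f$ is a $k$-extreme point of $B_{M_G}$ if and only if $f$ is an extreme point of $B_{M_G}$; since condition (iv) carries no reference to $k$, this holds for every $k\in\mathbb{N}$ at once.

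There is no real obstacle here, as all the work has been absorbed into Theorem~\ref{thm:orbits}; the only items worth a line of verification are that $g\ne 0$ (so that $G(t)=\int_0^t\mu(g)>0$ for $t>0$ and the quotient defining $\|\cdot\|_{M_G}$ makes sense) and that the normalization $G(t)=\int_0^t\mu(g)$ is exactly what forces the \emph{equality} $B_{M_G}=\Omega(g)$ rather than a mere inclusion. It may also be worth pointing out, as the surrounding text does, that specializing $E=M_G$ recovers a clean instance of Corollary~\ref{cor:extce}: the requirement that $\mu(f)$ be an extreme point of $B_{M_G}$ collapses, via Theorem~\ref{thm:orbits}(iii)$\Leftrightarrow$(iv), to $\mu(f)=\mu(g)$.
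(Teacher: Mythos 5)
Your proposal is correct and follows exactly the paper's route: the paper derives this corollary as an immediate consequence of Theorem~\ref{thm:orbits}, using the identification $B_{M_G}=\Omega(g)$ recorded just before that theorem, and your application of the equivalences (ii)$\Leftrightarrow$(iv) and (i)$\Leftrightarrow$(ii) is precisely that argument spelled out.
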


\begin{problem} (i) Characterize $k$-extreme points of a unit ball in a sequence symmetric space $E$. Compare to Theorem \ref{thm:orbitmain}, Corollary \ref{cor:orbitmain} and the afterward comments.

(ii) Characterize $k$-extreme points of the ball in $C_E$.

(iii) Characterize $k$-extreme points for $k=2,3,\dots$ in a sequence Marcinkiewicz space and in the corresponding orbits. Extreme points in that space has been characterized in \cite{KLL}.

\end{problem}

\section{Complex extreme points and convex convexity}

 For a normed complex space $(X, \|\cdot\|)$ a point $x$ of $S_X$ is said to be a \emph{complex extreme} point of the unit ball $B_X$
 if  for every $\lambda\in \Complex$, $|\lambda|\leq
1$ and $y$ in $X$, whenever $x+\lambda y\in B_X$ then $y=0$ \cite{TW}.
Equivalently, $x$ is a complex extreme point for $B_X$ whenever
$x\pm y,  x\pm i y\in B_X$, $y\in B_X$, then $y=0$. The space $X$ is
said to be \emph{complex strictly convex}  space,
if every element from the unit sphere $S_X$ is a complex extreme
point. Clearly an extreme point is a complex extreme point, and a strictly convex space is complex strictly convex.

 The concepts of complex extreme points and complex strictly convex spaces have been introduced by Thorp and Whitley in \cite{TW} in connection with the strong maximum modulus theorem of vector-valued analytic functions. Its liaison to holomorphic spaces has been further confirmed by   Globevnik's work in \cite{Glob} who investigated complex uniformly convex spaces and showed among others that peak points of the ball algebra over a Banach space $X$ are complex extreme points of its unit ball $B_X$. 

It was shown in \cite{HN, HanJu} that monotone properties of normed lattices are closely related to their complex convexity properties. Recall that an  ordered normed linear space $(X,\|\cdot\|)$ is \textit{strictly monotone}   if for every $x,y\in X$ with $0\leq x\leq y$ and $x\neq y$ it follows that $\|x\|< \|y\|$.  An element $x\in X$ is called \textit{upper monotone}, if for any $y\in X$ with $x\leq y$ and $x\neq y$ we have that $\|x\|<\|y\|$. 
  For instance  complex strict convexity of $E$ is equivalent  to strict monotonicity of $E$ \cite[Corollary 1]{HN}.  Moreover, an element $f$ of $E$ is complex strictly convex  if and only if $|f|$ is an upper monotone point in $E$  \cite[Theorem 1]{HN}.

 Complex extreme points of noncommutative symmetric spaces were only studied in \cite{czer-kam2010}. The characterization of the complex extreme points is analogous to the results on extreme points in \cite{CKS1992ext}.  The relation  between complex extreme and upper monotone points played an important role in proving that $x$ inherits complex convexity from $\mu(x)$. We  observed in \cite{CKS1992ext} that if $\mu(x)$ is a complex extreme point of $B_E$  then the  functions from $B_E$ whose decreasing rearrangements majorize $\mu(x)$ must be equimeasurable with $\mu(x)$. 
\begin{lemma}
\label{lm:cextum}
Let $x,y\in B_{\nonsp}$ and let $\mu(t,x)\leq\mu(t,y)$ for all $t\in[0,\infty)$. If there exists $t_0>0$ such that $\displaystyle \mu(t_0,x)<\mu(t_0,y)$ then $\mu(x)$ is not complex extreme point of ${B}_E$.
\end{lemma}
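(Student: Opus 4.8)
The plan is to build, essentially by hand, a nonzero element of $B_E$ in whose complex disc $\mu(x)$ sits, thereby directly violating the definition of a complex extreme point recalled just before the statement. Put $g=\mu(y)-\mu(x)$. Since $\mu(t,x)\le\mu(t,y)$ for every $t\ge 0$ we have $g\ge 0$; moreover, both $\mu(x)$ and $\mu(y)$ are right-continuous on $[0,\infty)$, so the strict inequality $\mu(t_0,x)<\mu(t_0,y)$ persists on a whole interval $[t_0,t_0+\delta)$, and hence $g\neq 0$ as an element of $L^0$. Also $0\le g\le\mu(y)\in E$, so the ideal (lattice) property of the symmetric space $E$ gives $g\in E$ and $\|g\|_E\le\|\mu(y)\|_E=\|y\|_{\nonsp}\le 1$; that is, $g\in B_E$.

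Next I would verify that $\mu(x)\pm g$ and $\mu(x)\pm i g$ all lie in $B_E$. In the real directions: $\mu(x)+g=\mu(y)$, whence $\|\mu(x)+g\|_E\le 1$; and since $0\le\mu(x)\le\mu(y)$ we get $-\mu(y)\le 2\mu(x)-\mu(y)\le\mu(y)$, so $\lvert\mu(x)-g\rvert=\lvert 2\mu(x)-\mu(y)\rvert\le\mu(y)$ and again $\|\mu(x)-g\|_E\le\|\mu(y)\|_E\le 1$. In the imaginary directions, using that $\mu(x)$ and $g$ are real and nonnegative, $\lvert\mu(x)\pm i g\rvert^2=\mu(x)^2+g^2\le\mu(x)^2+2\mu(x)g+g^2=(\mu(x)+g)^2=\mu(y)^2$, so $\lvert\mu(x)\pm i g\rvert\le\mu(y)$ pointwise and therefore $\|\mu(x)\pm i g\|_E\le\|\mu(y)\|_E\le 1$. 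Thus $\mu(x)\pm g,\ \mu(x)\pm i g\in B_E$ with $g\in B_E$ and $g\neq 0$, which by the equivalent description of complex extreme points ($f\in B_X$ is complex extreme iff $f\pm h,\ f\pm i h\in B_X$ with $h\in B_X$ forces $h=0$) shows $\mu(x)$ is not a complex extreme point of $B_E$. (If $\|\mu(x)\|_E<1$ then $\mu(x)\notin S_E$ and the conclusion is immediate.)

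I do not expect a genuine obstacle: the two places needing a little care are the fact that $g$ is honestly nonzero, where right-continuity of the rearrangements is exactly the tool, and the elementary pointwise estimates $\lvert 2\mu(x)-\mu(y)\rvert\le\mu(y)$ and $\mu(x)^2+g^2\le\mu(y)^2$. As an alternative route one can instead invoke the equivalence between complex extremality of $\mu(x)$ in $E$ and $\mu(x)$ being an upper monotone point of $E$ (see \cite{HN}): if $\mu(x)$ were complex extreme it would lie in $S_E$ and be upper monotone, but $\mu(y)\ge\mu(x)$, $\mu(y)\neq\mu(x)$ and $\|\mu(y)\|_E\le 1=\|\mu(x)\|_E$ contradicts upper monotonicity. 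Either argument closes the lemma; I would present the direct one since it is self-contained and does not quote the monotonicity dictionary.
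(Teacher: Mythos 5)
Your direct argument is correct. The key points all check out: $g=\mu(y)-\mu(x)\ge 0$ is nonzero in $L^0$ because right-continuity of $\mu(y)$ together with monotonicity of $\mu(x)$ makes the strict gap at $t_0$ persist on a set of positive measure; $0\le g\le\mu(y)$ puts $g$ in $B_E$ by the ideal/symmetry property; and the pointwise estimates $\mu(x)+g=\mu(y)$, $\lvert 2\mu(x)-\mu(y)\rvert\le\mu(y)$ and $\mu(x)^2+g^2\le(\mu(x)+g)^2=\mu(y)^2$ give $\mu(x)\pm g,\ \mu(x)\pm ig\in B_E$ (indeed $\mu(x)+\lambda g\in B_E$ for every $\lvert\lambda\rvert\le 1$), so $\mu(x)$ cannot be a complex extreme point; the degenerate case $\|\mu(x)\|_E<1$ is handled correctly since complex extreme points are by definition taken in $S_E$. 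Note, however, that the survey itself states this lemma without proof and, in the surrounding discussion, indicates that the intended route is the one you relegate to a remark: the equivalence between complex extreme points of $B_E$ and upper monotone points of $E$ from Hudzik--Narloch, under which $\mu(y)\ge\mu(x)$, $\mu(y)\ne\mu(x)$, $\|\mu(y)\|_E\le 1=\|\mu(x)\|_E$ immediately contradicts upper monotonicity. So your main proof is a genuinely different, self-contained construction: it buys independence from the monotonicity dictionary (and works verbatim in any symmetric Banach function space, with only the lattice ideal property and elementary pointwise inequalities), whereas the monotonicity route is shorter but imports an external equivalence. Either version proves the lemma; your direct one is a perfectly good, arguably more transparent, substitute.
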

As a consequence of the above lemma we have the following.
\begin{lemma}
\label{lm:3} Let $x\in S(\mathcal{M},\tau)$ and
$x\geq\mu(\infty,x)\one$. If $\mu(x)$ is a  complex extreme point of $B_E$ then $x$ is a complex extreme point of $B_{E(\mathcal{M},\tau)}$.
\end{lemma}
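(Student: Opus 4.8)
The plan is to verify the definition of a complex extreme point directly, with Lemma~\ref{lm:cextum} as the engine. Suppose $y\in\nonsp$ satisfies $x\pm y,\ x\pm iy\in B_{\nonsp}$; the goal is $y=0$. First I would reduce to \emph{self-adjoint} perturbations: write $h_1=\tfrac12(y+y^{*})$ and $h_2=\tfrac1{2i}(y-y^{*})$, so $y=h_1+ih_2$ with $h_1=h_1^{*}$, $h_2=h_2^{*}$. Since $\mu(a)=\mu(a^{*})$, membership in $B_{\nonsp}$ is preserved under adjoints, so $x\pm iy^{*}$ and $x\pm y^{*}$ also lie in $B_{\nonsp}$, and convexity of $B_{\nonsp}$ gives $x\pm ih_1,\ x\pm ih_2\in B_{\nonsp}$ (for instance $x+ih_1=\tfrac12(x+iy)+\tfrac12(x+iy^{*})$, where $x+iy^{*}=(x-iy)^{*}$, and $x+ih_2=\tfrac12(x+y)+\tfrac12(x-y^{*})$, where $x-y^{*}=(x-y)^{*}$). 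Hence it suffices to prove the following: if $h=h^{*}$ and $x\pm ih\in B_{\nonsp}$, then $h=0$; applying this to $h_1$ and $h_2$ yields $y=0$.

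So fix $h=h^{*}$ with $x\pm ih\in B_{\nonsp}$. Note that $x\ge\mu(\infty,x)\one$ forces $x\ge 0$, which is essential. The heart of the matter is the pointwise bound
\[
\mu(t,x)\ \le\ \mu(t,\,x+ih),\qquad t\ge 0 .
\]
I would obtain this from the standard formula $\mu(t,a)=\inf\{\|ae\|_{\M}:e\in P(\M),\ \tau(\one-e)\le t\}$: for such $e$ and any $\xi\in B_H$, writing $\langle(x+ih)e\xi,e\xi\rangle=\langle xe\xi,e\xi\rangle+i\langle he\xi,e\xi\rangle$ with both summands real and $\langle xe\xi,e\xi\rangle\ge 0$, Cauchy--Schwarz gives $\|(x+ih)e\xi\|\,\|e\xi\|\ge\langle xe\xi,e\xi\rangle$, hence (as $\|e\xi\|\le 1$) $\|(x+ih)e\xi\|\ge\langle xe\xi,e\xi\rangle$, so $\|(x+ih)e\|_{\M}\ge\|exe\|_{\M}=\|x^{1/2}e\|_{\M}^{2}$; passing to the infimum over $e$ and using the same formula for $x^{1/2}$ together with $\mu(t,x^{1/2})=\mu(t,x)^{1/2}$ (Lemma~\ref{lm:singfun}(5)) gives the claim. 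Since $x+ih\in B_{\nonsp}$ we have $\mu(x+ih)\in B_E$; as $\mu(x)$ is a complex extreme point of $B_E$ and $\mu(t,x)\le\mu(t,x+ih)$ for all $t$, Lemma~\ref{lm:cextum} forces $\mu(x)=\mu(x+ih)$, and then $\mu(x-ih)=\mu((x+ih)^{*})=\mu(x)$ as well.

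It remains to extract $h=0$, and this is the step I expect to be the main obstacle. From the identity $|x+ih|^{2}+|x-ih|^{2}=(x-ih)(x+ih)+(x+ih)(x-ih)=2(x^{2}+h^{2})$ (the commutator terms $\pm i[x,h]$ cancel), together with $\mu(|x\pm ih|^{2})=\mu(x\pm ih)^{2}=\mu(x)^{2}=\mu(x^{2})$ and the submajorization $\mu(a+b)\prec\mu(a)+\mu(b)$, one gets $\mu(x^{2}+h^{2})\prec\mu(x^{2})$; but $x^{2}+h^{2}\ge x^{2}\ge 0$ gives $\mu(x^{2}+h^{2})\ge\mu(x^{2})$ pointwise by Lemma~\ref{lm:singfun}(3), so $\int_0^{t}\mu(x^{2}+h^{2})=\int_0^{t}\mu(x^{2})$ for every $t\ge 0$, whence $\mu(x^{2}+h^{2})=\mu(x^{2})$. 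Thus I am reduced to showing that $a:=x^{2}+h^{2}$ and $b:=x^{2}$, satisfying $a\ge b\ge 0$ and $\mu(a)=\mu(b)$, must be \emph{equal} (then $h^{2}=0$, so $h=0$) — and this can fail when $\mu(\infty,a)>0$, so here the hypothesis $x\ge\mu(\infty,x)\one$ is used a second time: with $c=\mu(\infty,x)$ we have $x^{2}\ge c^{2}\one$ and $x^{2}+h^{2}\ge c^{2}\one$, both with asymptotic value $c^{2}$ and (when $c>0$) support projection $\one$, so Lemma~\ref{lm:singfun}(6) lets us subtract $c^{2}\one$ from $a$ and $b$ and reduce to $\mu(\infty,a)=\mu(\infty,b)=0$; in that case $\tau(e^{a}(s,\infty))=\tau(e^{b}(s,\infty))<\infty$ for all $s>0$, and I would finish by truncating to finite-trace corners and using faithfulness of $\tau$ to force $a=b$. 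Carrying out this last equimeasurability argument cleanly in the infinite-trace situation is the delicate point of the proof.
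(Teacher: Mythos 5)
Your first three steps are correct and follow the paper's own route: the paper cites \cite[Lemma 3.2]{czer-kam2010} for the reduction to self-adjoint perturbations and \cite[Proposition 3]{MR979385} for the inequality $\mu(t,x)\le\mu(t,x+ih)$ when $x\ge0$ and $h=h^*$ --- both of which you instead prove directly (your derivation via $\mu(t,a)=\inf\{\|ae\|_{\M}:\tau(\one-e)\le t\}$ is essentially right, modulo routine domain care for unbounded operators) --- and then, exactly as you do, it applies Lemma \ref{lm:cextum} to conclude $\mu(x+ih)=\mu(x)$. The divergence is in the endgame: the paper finishes by quoting \cite[Proposition 3.5]{CKS1992ext}, which passes directly from $\mu(x+ih)=\mu(x)$ to $h=0$, whereas you attempt to rederive this, and that is where your argument has genuine gaps.

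First, the inference ``$\mu(x^{2}+h^{2})\prec\mu(x^{2})$ together with $\mu(x^{2}+h^{2})\ge\mu(x^{2})$ pointwise forces $\mu(x^{2}+h^{2})=\mu(x^{2})$'' tacitly assumes $\int_0^{t}\mu(x^{2})<\infty$, i.e.\ $x\in (L_2+L_\infty)\Mtau$. The hypotheses only give $\mu(x)\in E\hookrightarrow L_1+L_\infty$, and $\mu(x)^2$ need not be locally integrable: for instance $E=L_{3/2}[0,1)$ with $\mu(t,x)\sim t^{-3/5}$ satisfies all assumptions of the lemma ($L_{3/2}$ is strictly, hence complex strictly, convex), yet $\int_0^t\mu(x)^2=\infty$ for every $t>0$; in that case the submajorization carries no information at all and the claimed equality does not follow by this route (pointwise, Lemma \ref{lm:singfun}(4) only yields the dilation bound $\mu(t,x^2+h^2)\le\mu(t/2,x^2)$). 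Second, your concluding claim --- that $a\ge b\ge0$, $\mu(a)=\mu(b)$ and $\mu(\infty,a)=0$ imply $a=b$ --- is precisely the nontrivial noncommutative content of the step, and you leave it unproved. It is true, and can be established by first replacing $a,b$ by $\varphi(a)\ge\varphi(b)$ with the operator monotone bounded function $\varphi(t)=t/(1+t)$ and then compressing by the finite-trace spectral projections $q=e^{\varphi(b)}(s,\infty)$, where $\tau(q\varphi(a)q)\le\int_0^{\tau(q)}\mu(\varphi(a))=\int_0^{\tau(q)}\mu(\varphi(b))=\tau(\varphi(b)q)$ by Lemma \ref{lm:singfun}(1) and (7), so that faithfulness of $\tau$ gives $q(\varphi(a)-\varphi(b))q=0$, and letting $s\downarrow0$ and comparing distribution functions on the complementary corner finishes; but as submitted this is a promise rather than a proof. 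So the proposal is incomplete exactly where the paper invokes \cite[Proposition 3.5]{CKS1992ext}.
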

\begin{proof}
Let $x\in S(\mathcal{M},\tau)$, $x\geq\mu(\infty,x)\one$ and
$\mu(x)$ be a complex extreme point of $B_E$. Suppose that $x\pm y$, $x\pm
iy$ belong to ${B}_{E(\mathcal{M},\tau)}$, for some $y\in
{B}_{E(\mathcal{M},\tau)}$. Without loss of generality it can be assumed  that $y$ is a self-adjoint
operator \cite[Lemma 3.2]{czer-kam2010}. Now by \cite[Proposition 3]{MR979385}, for all $t>0$,
\begin{center}
$\mu(t,x)\leq\mu(t,x+iy)$.
\end{center}
Since $\mu(x)$ is a complex extreme point of $B_E$ and $\mu(x+iy)\in{B}_{E}$, by Lemma \ref{lm:cextum} it follows that for all $t>0$,
\begin{center}
$\mu(t,x)=\mu(t,x+iy)$.
\end{center}
Then \cite[Proposition 3.5]{CKS1992ext}  implies that $y=0$, and the claim follows.
\end{proof}
 A substantial effort was still required to expand this result to the broader class of operators satisfying conditions (i) and (ii) below.

 \begin{theorem}\cite[Theorem 3.7]{czer-kam2010}
\label{prop:1}
An element  $x\in S_{\nonsp}$ is a complex extreme point of $B_{\nonsp}$ whenever $\mu(x)$ is a complex extreme point of $B_E$ and one of the following conditions holds.
\begin{itemize}
\item[(i)] $\mu(\infty,x)=0$,
\item[(ii)] $n(x)\mathcal{M}n(x^*)=0$ and $\abs{x}\geq\mu(\infty,x)s(x)$.
\end{itemize}
\end{theorem}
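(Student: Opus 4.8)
The plan is to reduce the general operator $x$ to a positive operator of the shape already settled by Lemma~\ref{lm:3} --- some $z$ with $z\ge\mu(\infty,z)\one$ --- by means of the polar decomposition $x=u\abs{x}$ (with $u\in\M$), and then to transport complex extremeness back onto $x$ with the submajorization test of Lemma~\ref{lm:cextum}. By \cite[Lemma~3.2]{czer-kam2010} it is enough to show that, whenever $x\pm y,\ x\pm iy\in B_{\nonsp}$ with $y=y^{*}\in B_{\nonsp}$, one has $y=0$. I would treat conditions (i) and (ii) separately; (i) is the substantive elementary case, while (ii) with $\mu(\infty,x)>0$ forces us through the lifting results of Section~\ref{sec:isom}.

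\emph{Condition (i).} Since $\mu(\infty,x)=0$, the positive operators $\abs{x}$ and $\abs{x^{*}}=u\abs{x}u^{*}$ satisfy $\abs{x}\ge0=\mu(\infty,\abs{x})\one$, $\abs{x^{*}}\ge0$, and $\mu(\abs{x})=\mu(\abs{x^{*}})=\mu(x)$, so by Lemma~\ref{lm:3} both are complex extreme points of $B_{\nonsp}$. Because $u^{*}x=\abs{x}$, $xu^{*}=\abs{x^{*}}$, and $\mu$ does not grow under left or right multiplication by the contractions $u,u^{*}$ (Lemma~\ref{lm:singfun}(1)), all of $\abs{x}\pm u^{*}y,\ \abs{x}\pm iu^{*}y$ and $\abs{x^{*}}\pm yu^{*},\ \abs{x^{*}}\pm iyu^{*}$ lie in $B_{\nonsp}$; complex extremeness of $\abs{x}$ and $\abs{x^{*}}$ then gives $u^{*}y=yu^{*}=0$, hence $s(x^{*})y=uu^{*}y=0$ and $ys(x)=yu^{*}u=0$, i.e. $y=n(x^{*})\,y\,n(x)$. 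A short computation gives $x^{*}y=y^{*}x=0$, so $\abs{x+y}^{2}=\abs{x}^{2}+\abs{y}^{2}$; as $\abs{x}^{2}$ is carried by $s(x)$ and $\abs{y}^{2}$ by $n(x)$, these commute with orthogonal supports, whence $\abs{x+y}=\abs{x}+\abs{y}\ge\abs{x}$ and $d(s,x+y)=d(s,x)+d(s,y)$ for every $s>0$. Thus $\mu(t,x)\le\mu(t,x+y)$ for all $t$, with strict inequality at some $t_{0}$ if $y\ne0$; since $x,x+y\in B_{\nonsp}$, Lemma~\ref{lm:cextum} would then contradict the complex extremeness of $\mu(x)$. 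Hence $y=0$.

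\emph{Condition (ii).} If $\mu(\infty,x)=0$ this is contained in (i), so set $c:=\mu(\infty,x)>0$. By \cite[Vol.~I, Ch.~V, Lemma~1.7]{Takesaki} the hypothesis $n(x)\M n(x^{*})=0$ is equivalent to $z(n(x))\perp z(n(x^{*}))$, and since central projections commute with every measurable operator this forces $n(x^{*})\,y\,n(x)=n(x)\,y\,n(x^{*})=0$, $n(x)\le s(x^{*})$ and $n(x^{*})\le s(x)$. Put $\tilde x=\abs{x}+c\,n(x)$. Then $\tilde x-c\one=\abs{x}-c\,s(x)\ge0$ by hypothesis, and because $d(s,x)=\infty$ for every $s<c$ one checks $d(\cdot,\tilde x)=d(\cdot,x)$, so $\mu(\tilde x)=\mu(x)$ and $\mu(\infty,\tilde x)=c$; Lemma~\ref{lm:3} then shows $\tilde x$ --- and likewise $\widetilde{x^{*}}:=\abs{x^{*}}+c\,n(x^{*})$, using $\abs{x^{*}}=u\abs{x}u^{*}\ge c\,s(x^{*})$ --- is a complex extreme point of $B_{\nonsp}$. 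Since $x=u\tilde x=\widetilde{x^{*}}u$, one argues as in case (i): cutting $x\pm y,\ x\pm iy$ by the projections $s(x),n(x),s(x^{*}),n(x^{*})$ (which does not increase $\mu$) one matches the cut operators with perturbations of $\tilde x$ or $\widetilde{x^{*}}$ --- invoking Corollary~\ref{cor:isom} to realize $\abs{x}q=V\mu(x)$ as a complex extreme element inside a corner algebra and restoring the constant tail $c\,e^{\abs{x}}\{c\}$ via Lemma~\ref{lm:singfun}(7)--(8), in the three cases $q=\one$, $q=s(x)$, $q=p=e^{\abs{x}}(c,\infty)$ exactly as in the proof of Theorem~\ref{thm:noncom4} --- and deduces that the blocks $s(x^{*})y s(x)$, $s(x^{*})y n(x)$ and $n(x^{*})y s(x)$ of $y$ all vanish; the remaining block $n(x^{*})y n(x)$ is already zero, and a final application of Lemma~\ref{lm:cextum} gives $y=0$.

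\emph{Main difficulty.} The hard part is condition (ii) with $\mu(\infty,x)>0$: the four projections $s(x),n(x),s(x^{*}),n(x^{*})$ need not commute, and the constant part $c\,e^{\abs{x}}\{c\}$ of $\abs{x}$ shares a support with $n(x)$ on one side, so the perturbed operators cannot simply be split into orthogonal direct summands as in case (i). Handling this requires routing the argument through the isomorphism $V$ of Corollary~\ref{cor:isom} and verifying, after each cut, that the resulting perturbation still lies in $B_{\nonsp}$ --- which is exactly where items (7) and (8) of Lemma~\ref{lm:singfun} are needed. By contrast, once Lemmas~\ref{lm:3} and~\ref{lm:cextum} are available, case (i) is essentially immediate.
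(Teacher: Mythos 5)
Your treatment of condition (i) is correct and complete, and it is a nice explicit argument: you reduce to the special case the survey actually proves (Lemma~\ref{lm:3}) by applying it to $\abs{x}$ and $\abs{x^*}=u\abs{x}u^*$, you deduce $u^*y=yu^*=0$, hence $y=n(x^*)yn(x)$, then $x^*y=y^*x=0$, $\abs{x+y}=\abs{x}+\abs{y}$ with orthogonal supports, and you conclude with Lemma~\ref{lm:cextum} (note that the finiteness of $d(s,x)$ for $s>0$, i.e. $\mu(\infty,x)=0$, is what makes the strict inequality at some $t_0>0$ available, and that in fact you never need the self-adjointness of $y$ in this case). Since the survey itself only proves Lemma~\ref{lm:3} and refers to \cite{czer-kam2010} for the full theorem, this part of your proposal is a genuine and correct supplement.

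Condition (ii) with $c=\mu(\infty,x)>0$, however, is exactly the part the survey flags as requiring ``substantial effort,'' and your proposal does not prove it. The preparation is fine: $n(x)\M n(x^*)=0$ gives $n(x^*)yn(x)=0$, $n(x)\le s(x^*)$, $n(x^*)\le s(x)$, and $\tilde x=\abs{x}+c\,n(x)$, $\widetilde{x^*}=\abs{x^*}+c\,n(x^*)$ are complex extreme by Lemma~\ref{lm:3} with $x=u\tilde x=\widetilde{x^*}u$. But the decisive step --- that the cut operators are unit-ball perturbations of $\tilde x$ or $\widetilde{x^*}$ and that the blocks $s(x^*)ys(x)$, $s(x^*)yn(x)$, $n(x^*)ys(x)$ vanish --- is only asserted. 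Lemma~\ref{lm:singfun}(8) is not applicable here: for $z=u^*(x\pm y)$ one has $n(x)z=0$ but $z\,n(x)=\pm u^*y\,n(x)$ need not vanish, and the required bound $c\le\mu(\infty,z)$ is also unavailable, so you cannot conclude $\tilde x\pm u^*y\in B_{\nonsp}$; this is precisely the one-sided-support obstruction you yourself point out, and the sketch never overcomes it. The proposed fix, ``exactly as in the proof of Theorem~\ref{thm:noncom4}'' via Corollary~\ref{cor:isom}, cannot fill the gap: that argument transfers a decomposition of $\mu(x)$ in $E$ into one of $x$ through $V\mu(x)=\abs{x}q$, i.e. it serves the converse implication ($x$ extreme $\Rightarrow$ $\mu(x)$ extreme), whereas here you must convert an operator perturbation of $x$ into a function perturbation of $\mu(x)$; moreover Corollary~\ref{cor:isom} needs $\M$ non-atomic with a $\sigma$-finite trace, hypotheses absent from the theorem, and they cannot be restored by the tensor extension of Section~4 because, as Remark~\ref{rm:removingnonatom} explains, the condition $n(x)\M n(x^*)=0$ does not pass to $\mathcal{A}=\mathcal{N}\overline{\otimes}\M$. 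So case (ii) remains unproved in your proposal; for a complete argument one has to follow the finer analysis of \cite[Theorem 3.7]{czer-kam2010}, which the survey does not reproduce.
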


\begin{theorem}\cite[Theorem 3.10]{czer-kam2010}
\label{thm:xtomux}
Suppose that $\M$ is non-atomic with a  $\sigma$-finite trace $\tau$.  If $x$ is a complex extreme point of $B_{\nonsp}$ then $\mu(x)$ is a  complex extreme point of ${B}_E$ and either
\begin{itemize}
\item[{(i)}] $\mu(\infty,x)=0$, or
\item[{(ii)}] $n(x)\mathcal{M}n(x^*)=0$ and $\abs{x}\geq\mu(\infty,x)s(x)$.
\end{itemize}
\end{theorem}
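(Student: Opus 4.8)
The plan is to prove the contrapositive-style "necessity" direction: starting from a complex extreme point $x$ of $B_{\nonsp}$, I will produce the structural conditions (i)/(ii) on $x$ first, and only then deduce that $\mu(x)$ is complex extreme in $B_E$. The structural part is the heart of the argument and should be isolated as a lemma analogous to \lemref{lm:noncom2} in the real case. So the first step is: show that if $x$ is complex extreme then $\abs{x}\ge \mu(\infty,x)s(x)$ and, when $\mu(\infty,x)>0$, that $n(x)\M n(x^*)=0$. For the inequality $\abs{x}\ge\mu(\infty,x)s(x)$, suppose it fails, i.e. $e^{\abs x}(0,\mu(\infty,x))\ne 0$; then using $\sigma$-finiteness I can find a nonzero projection $r\le e^{\abs x}(0,\mu(\infty,x))$ with $\tau(r)<\infty$, and perturb $x$ by a suitable self-adjoint $y$ supported on $r$ (a small multiple of $r$, or $(\mu(\infty,x)\one - \abs x)$ truncated to $r$, composed with the partial isometry from the polar decomposition) so that $\mu(x\pm y)=\mu(x\pm iy)=\mu(x)$ by Lemma \ref{lm:singfun}(8) and parts (3),(4) — hence $x\pm y, x\pm iy\in B_{\nonsp}$ while $y\ne0$, contradicting complex extremity. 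For $n(x)\M n(x^*)=0$: if it fails, by the Takesaki lemma quoted in the excerpt there are equivalent nonzero subprojections $p_1\le n(x)$, $p_2\le n(x^*)$ with partial isometry $w$, $w^*w=p_1$, $ww^*=p_2$; again by $\sigma$-finiteness shrink so $\tau(p_1)=\tau(p_2)<\infty$, and set $y=\mu(\infty,x)\,w$ (or $cw$ for appropriate $c\le\mu(\infty,x)$). Since $p_2 x = x p_1 = 0$ one checks $\abs{x\pm y}$ and $\abs{x\pm iy}$ are block-diagonal with the $y$-block a partial isometry of norm $\le\mu(\infty,x)$, so Lemma \ref{lm:singfun}(8) gives $\mu(x\pm y)=\mu(x\pm iy)=\mu(x)$, again contradicting complex extremity. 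This reuses verbatim the machinery already developed for Theorem \ref{thm:noncom4}'s Lemma \ref{lm:noncom2}, only with the extra perturbations $\pm iy$, which cause no trouble since $\mu(\cdot)$ is insensitive to them.

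The second step, deducing that $\mu(x)$ is complex extreme in $B_E$, is where I use the isomorphism results of Section \ref{sec:isom}. With the structural conditions in hand, Corollary \ref{cor:isom} applies: there is a projection $q$ (one of $\one$, $s(x)$, or $p=e^{\abs x}(\mu(\infty,x),\infty)$ according to the trace-finiteness cases), a non-atomic commutative subalgebra $\mathcal N\subset q\M q$, and a $*$-isomorphism $V$ from $S([0,\tauone),m)$ into $S(\mathcal N,\tau)$ with $V\mu(x)=\abs x q$ and $\mu(V(f))=\mu(f)$. Now suppose $\mu(x)\pm g,\ \mu(x)\pm ig\in B_E$ for some $g\in B_E$. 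Applying $V$ and composing with the partial isometry $u$ from the polar decomposition $x=u\abs x$ (exactly as in the three cases of the proof of Theorem \ref{thm:noncom4}), I transport this to $x\pm (\text{something})$, $x\pm i(\text{something})$ inside $B_{\nonsp}$, with the "something" either $uV(g)$ directly (Cases $q=\one$ and $q=s(x)$, where $u^*uV(g)=V(g)$) or $uV(g)$ plus a disjointly-supported correction on $e^{\abs x}\{\mu(\infty,x)\}$ chosen via Lemma \ref{lm:singfun}(8) so norms are preserved (Case $q=p$, $\tau(p)=\infty$). Complex extremity of $x$ then forces the transported element to be $0$; since $u^*u V(g)=V(g)$ and $V$ is injective, $g=0$. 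Hence $\mu(x)$ is complex extreme.

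The main obstacle I anticipate is the same one flagged in Remark \ref{rm:removingnonatom} and in the proof of Theorem \ref{thm:noncom4}, Case (3): the case $\tau(p)=\infty$, where $V\mu(x)=\abs x p$ only captures the part of $x$ on $p$, not the part living on $e^{\abs x}\{\mu(\infty,x)\}$. There one must, for the perturbation $g$, split off a scalar multiple of $e^{\abs x}\{\mu(\infty,x)\}$ with the right value $\alpha\le\mu(\infty,g)$ (possible because $\mu(\infty,x)\le$ the relevant average of $\mu(\infty,g)$, coming from the triangle inequality for $\mu$, Lemma \ref{lm:singfun}(4), applied to $2\mu(x)=(\mu(x)+g)+(\mu(x)-g)$ etc.), and verify via Lemma \ref{lm:singfun}(8) that adding it does not change the singular value function. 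The bookkeeping with the factor-of-$2$ averages (rather than the $k+1$ averages of the $k$-extreme case) is only simpler, and the extra $\pm i$ perturbations never interfere because every norm estimate goes through $\mu$, which kills unimodular scalars by Lemma \ref{lm:singfun}(2). A secondary point to be careful about: the reduction to self-adjoint $y$ used in Lemma \ref{lm:3} (via \cite[Lemma 3.2]{czer-kam2010}) should be invoked where convenient to simplify the perturbation arguments in Step 1.
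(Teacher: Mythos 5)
Your route is the same one the paper takes (and the one modeled in the survey by the proof of Theorem \ref{thm:noncom4}): a perturbation lemma giving the structural conditions, followed by transporting a perturbation of $\mu(x)$ back to $x$ through Corollary \ref{cor:isom} with the three cases $q=\one$, $q=s(x)$, $q=p$. Step 1 and Case $q=s(x)$ are essentially sound (in Step 1 the honest justification is a direct distribution-function computation using $d(s,x)=\infty$ for $s<\mu(\infty,x)$, rather than Lemma \ref{lm:singfun}(8), but the perturbations you describe do work). In Case $q=\one$ your parenthetical claim $u^*uV(g)=V(g)$ is false in general: there $\mathcal{N}\subset\M$, not $\mathcal{N}\subset s(x)\M s(x)$, so the support of $V(g)$ need not lie under $s(x)$; as in Case (1) of the proof of Theorem \ref{thm:noncom4} you must replace $u$ by an isometry $w$ with $x=w\abs{x}$ (available since $\tau(s(x))<\infty$ gives $n(x)\sim n(x^*)$), so that $wV(g)=0$ forces $V(g)=0$. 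That is a small repair.

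The genuine gap is in Case $\tau(p)=\infty$ when $r_0:=e^{\abs{x}}\{\mu(\infty,x)\}\neq 0$. Complex extremity requires one single $y$ with $x+\lambda y\in B_{\nonsp}$ for all $\lambda=\pm1,\pm i$, so your correction on $r_0$ is forced to be $\beta r_0$ for a single scalar $\beta$; writing $x+\lambda y=u\left(V(\mu(x)+\lambda g)+(\mu(\infty,x)+\lambda\beta)r_0\right)$, the application of Lemma \ref{lm:singfun}(8) needs the four simultaneous inequalities $\abs{\mu(\infty,x)+\lambda\beta}\le\mu(\infty,\mu(x)+\lambda g)$, $\lambda=\pm1,\pm i$. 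Your justification does not deliver this: the averaged inequality $2\mu(\infty,x)\le\mu(\infty,\mu(x)+g)+\mu(\infty,\mu(x)-g)$ from Lemma \ref{lm:singfun}(4) is the analogue of the $k$-extreme bookkeeping, where each summand gets its own $\alpha_i$ subject to one linear constraint, whereas here one $\beta$ must satisfy four modulus constraints at once — this is exactly where the $\pm i$ perturbations do interfere, contrary to your closing remark (and the condition you state, $\alpha\le\mu(\infty,g)$, is not the relevant one). The step can be repaired: choose $\beta$ in the essential range of $g$ at infinity, i.e.\ with $m\{t:\abs{g(t)-\beta}<\epsilon\}=\infty$ for every $\epsilon>0$ (such $\beta$ exists by compactness of the disc of radius $\mu(\infty,g)+1$, because $\{\abs{g}>\mu(\infty,g)+1\}$ has finite measure while $\tauone=\infty$ in this case); since also $\{\mu(x)\ge\mu(\infty,x)+\epsilon\}$ has finite measure, one gets $\abs{\mu(x)+\lambda g}\ge\abs{\mu(\infty,x)+\lambda\beta}-2\epsilon$ on a set of infinite measure, hence the four inequalities, after which your disjoint-support argument from $y=0$ does give $g=0$. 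As written, however, the existence of the correction — the crux of Case 3 — is unproved.
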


We summarize this chapter with complete characterization of complex extreme points in $B_{\nonsp}$. The first result is an immediate consequence of Theorems \ref{prop:1} and \ref{thm:xtomux}.
\begin{theorem}\cite[Theorem 3.11]{czer-kam2010}
\label{thmst:complexext}
Let $\M$ be  non-atomic with a $\sigma$-finite trace $\tau$. An operator $x$ is a complex extreme point of  ${B}_{\nonsp}$ if and only if $\mu(x)$ is a complex extreme point of ${B}_E$ and one of the following, not mutually exclusive, conditions holds.
\par {\rm(i)} $\mu(\infty,x)=0$,
\par {\rm(ii)} $n(x)\mathcal{M}n(x^*)=0$ and $\abs{x}\geq\mu(\infty,x)s(x)$.
\end{theorem}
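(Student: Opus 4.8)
The plan is to obtain this characterization simply by splicing together the two one-directional statements already established, namely \thmref{prop:1} for the ``if'' part and \thmref{thm:xtomux} for the ``only if'' part. For the sufficiency, suppose $\mu(x)$ is a complex extreme point of $B_E$ and one of the conditions (i), (ii) holds; then \thmref{prop:1} applies directly and gives that $x$ is a complex extreme point of $B_{\nonsp}$. Note that this direction needs no hypothesis on $\M$ beyond semifiniteness, so it is in fact slightly stronger than what the final statement asks. For the necessity, suppose $x$ is a complex extreme point of $B_{\nonsp}$; since $\M$ is here non-atomic with $\sigma$-finite trace, \thmref{thm:xtomux} applies and yields at once that $\mu(x)$ is a complex extreme point of $B_E$ and that at least one of (i), (ii) is satisfied. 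As these two conditions are not mutually exclusive, assembling the two implications requires no case distinction and the argument is complete.

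Consequently the only genuine work is in the two ingredients, and I expect the harder one to be the necessity, \thmref{thm:xtomux}. The way to prove it is as follows. First reduce to self-adjoint perturbations: if $x\pm y,\,x\pm iy\in B_{\nonsp}$ it suffices to treat self-adjoint $y$, the type of reduction carried out in \cite[Lemma 3.2]{czer-kam2010}. Next, extract the structural constraints exactly as in \lemref{lm:noncom2} in the $k$-extreme setting: whenever $\abs{x}\ge\mu(\infty,x)s(x)$ fails, or $\mu(\infty,x)>0$ and $n(x)\M n(x^*)\neq 0$, one exhibits an explicit complex line segment through $x$ lying in $B_{\nonsp}$, contradicting complex extremality. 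Finally, transport the extremality down to $\mu(x)$ using the trace-preserving isomorphism machinery of \secref{sec:isom}, in particular Corollary~\ref{cor:isom} with its three choices of the projection $q$: a putative nontrivial perturbation of $\mu(x)$ inside $B_E$ lifts, via the $*$-isomorphism $V$, to a perturbation of $\abs{x}q$, hence — after composing with the partial isometry in the polar decomposition $x=u\abs{x}$ — of $x$ itself; the inequality $\mu(t,x)\le\mu(t,x+iy)$ valid for self-adjoint $y$ (\cite[Proposition 3]{MR979385}) together with \lemref{lm:cextum} and \cite[Proposition 3.5]{CKS1992ext} then forces the perturbation to vanish.

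The main obstacle, as is typical for this family of results, is precisely this passage from $x$ to $\mu(x)$: unlike the unitary matrix ideal case, where Proposition~\ref{prop:isomarazy} provides an isometry sending $S(x)$ exactly to $x$, here Corollary~\ref{cor:isom} only supplies $V$ with $V\mu(x)=\abs{x}q$, so the three regimes $\tau(s(x))<\infty$, $\tau(s(x))=\infty$ with $\tau(p)<\infty$, and $\tau(p)=\infty$ must each be treated on their own, carefully tracking the null and range projections $n(x),s(x)$ and, in the last case, compensating the discrepancy between $\abs{x}p$ and $\abs{x}$ by a scalar multiple of $e^{\abs{x}}\{\mu(\infty,x)\}$ as in the $k$-extreme argument of \thmref{thm:noncom4}.
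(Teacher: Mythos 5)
Your proposal matches the paper exactly: the paper states that this theorem is an immediate consequence of Theorems \ref{prop:1} and \ref{thm:xtomux}, which is precisely the splicing you carry out, and your sketch of how the necessity direction is proved (reduction to self-adjoint $y$, the structural conditions as in \lemref{lm:noncom2}, and the transfer to $\mu(x)$ via the isomorphisms of \secref{sec:isom}) is consistent with the cited arguments in \cite{czer-kam2010}. Nothing further is needed.
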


Although Theorem \ref{thmst:complexext} requires $\M$ to be non-atomic, in fact it relies on the existence of the isomorphism $V$ for which $V(\mu(x))=x$. Since such isometry exists also for unitary matrix spaces by Proposition \ref{prop:isomarazy}, the following can be observed.
\begin{theorem}\cite[Theorem 3.13]{czer-kam2010}
 Let $E$ be a symmetric sequence space. Then $x$ is a complex extreme point  of $B_{C_E}$ if and only if $S(x)$ is a complex extreme point of $B_E$.
\end{theorem}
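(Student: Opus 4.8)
The plan is to establish the two implications separately, using the isometry of Proposition~\ref{prop:isomarazy} and the function-space model $C_E\cong G(B(H),\tr)$ of Section~\ref{sec:unitary}; throughout I take $\|x\|_{C_E}=\|S(x)\|_E=1$, these norms being equal by the definition of $C_E$.

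\emph{Necessity ($x$ complex extreme $\Rightarrow$ $S(x)$ complex extreme).} I would copy the argument used for the $MLUR$ case in Theorem~\ref{thm:3ce}. Fix the linear isometry $V\colon E\to C_E$ with $V(S(x))=x$ given by Proposition~\ref{prop:isomarazy}. If $a\in E$ satisfies $S(x)\pm a\in B_E$ and $S(x)\pm ia\in B_E$, then $x\pm V(a)=V(S(x)\pm a)$ and $x\pm iV(a)=V(S(x)\pm ia)$ all lie in $B_{C_E}$, so complex extremality of $x$ forces $V(a)=0$, hence $a=0$ by injectivity of $V$, and $S(x)$ is a complex extreme point of $B_E$.

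\emph{Sufficiency ($S(x)$ complex extreme $\Rightarrow$ $x$ complex extreme).} I would work in $C_E=G(B(H),\tr)$, where $G$ is the symmetric function space on $[0,\infty)$ with $\|f\|_G=\|\pi(f)\|_E$ and $\pi(f)=\{\int_{n-1}^n\mu(f)\}$. There $\mu(x)=\sum_n s_n(x)\chi_{[n-1,n)}$, $\|\mu(x)\|_G=\|S(x)\|_E=1$, and, since $E\subset c_0$ forces $s_n(x)\to 0$, also $\mu(\infty,x)=0$. Theorem~\ref{prop:1} is stated for an arbitrary semifinite von Neumann algebra — here $B(H)$ with the canonical trace $\tr$, and its hypothesis (i) is anyway insensitive to the tensor embedding of Remark~\ref{rm:removingnonatom} — so, once I know that $\mu(x)$ is a complex extreme point of $B_G$, Theorem~\ref{prop:1}(i) will give that $x$ is a complex extreme point of $B_{G(B(H),\tr)}=B_{C_E}$. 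To verify that $\mu(x)$ is complex extreme in $B_G$ I would invoke the lattice characterization \cite[Theorem 1]{HN}: a norm-one vector of a symmetric space is a complex extreme point of the ball precisely when its modulus is an upper monotone point. So let $h\in G$ with $\mu(x)\le h$ and $h\ne\mu(x)$; the task is to show $\|h\|_G>1$. Monotonicity of the decreasing rearrangement gives $\mu(h)\ge\mu(\mu(x))=\mu(x)$ pointwise, whence $\pi(h)\ge\pi(\mu(x))=S(x)$ coordinatewise. If $\pi(h)\ne S(x)$, then $S(x)$, being complex extreme in $B_E$ and therefore upper monotone, yields $\|h\|_G=\|\pi(h)\|_E>\|S(x)\|_E=1$. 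If $\pi(h)=S(x)$, then $\int_{n-1}^n(\mu(h)-\mu(x))=0$ for all $n$ while $\mu(h)-\mu(x)\ge 0$, so $\mu(h)=\mu(x)$ a.e.; consequently $h$ is equimeasurable with $\mu(x)$, dominates it pointwise, and each level set $\{\mu(x)>\lambda\}$ with $\lambda>0$ has finite measure because $\mu(\infty,x)=0$, so a comparison of level sets forces $h=\mu(x)$ a.e., contradicting $h\ne\mu(x)$. Hence $\mu(x)$ is upper monotone in $G$, i.e. complex extreme in $B_G$, and the argument is complete.

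The real content is the sufficiency direction, specifically the transfer of the complex-extreme (equivalently, upper monotone) property from the sequence $S(x)\in E$ to the step function $\mu(x)\in G$. The one delicate point there is excluding the degenerate configuration $\pi(h)=S(x)$ with $h\ne\mu(x)$; this is exactly where $E\subset c_0$, via finiteness of the level sets of $\mu(x)$, is used, and it plays the same role here that the condition $\mu(\infty,x)=0$ plays in the corresponding noncommutative theorems.
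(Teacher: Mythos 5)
Your proof is correct, and the overall strategy matches what the survey indicates (it gives no proof here beyond the remark that Arazy's isometry of Proposition \ref{prop:isomarazy} replaces the isomorphism $V\mu(x)=x$ used in the non-atomic case). Your necessity direction is exactly that remark made precise, in the same style as Theorem \ref{thm:3ce}. For sufficiency you take a genuinely different route from the one the paper's framework suggests: the natural analogue of Lemma \ref{lm:3} argues directly with singular numbers --- reduce to $y$ self-adjoint, use $s_n(x)\le s_n(x+iy)$ from \cite{MR979385}, the sequence version of Lemma \ref{lm:cextum}, and \cite[Proposition 3.5]{CKS1992ext} to force $y=0$ --- with no auxiliary function space. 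You instead pass to the model $C_E=G(B(H),\tr)$ of Section \ref{sec:unitary} and invoke Theorem \ref{prop:1}(i), which is legitimate since that direction is stated for arbitrary semifinite $\M$ and $\mu(\infty,x)=0$ holds automatically for compact $x$; this shifts all the work to transferring complex extremality from $S(x)\in E$ to the step function $\mu(x)\in G$. Your handling of that transfer is sound: the Hudzik--Narloch equivalence with upper monotonicity, the dichotomy on whether $\pi(h)=S(x)$, and the use of finiteness of the level sets of $\mu(x)$ (i.e.\ $E\subset c_0$) to rule out $\pi(h)=S(x)$ with $h\ne\mu(x)$ all check out, and this is precisely the step the survey leaves implicit. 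What your route buys is a clean reduction to an already-stated theorem; what the direct singular-number route buys is avoiding the space $G$ and the monotonicity characterization altogether. One minor point: your parenthetical about Remark \ref{rm:removingnonatom} is garbled but harmless --- Theorem \ref{prop:1} as stated needs no non-atomicity, and in any case the tensor-product argument of that remark adapts verbatim to complex extreme points under condition (i).
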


By Theorem \ref{thmst:complexext} applied to the commutative von Neumann algebra $\M=L_\infty[0,\tauone)$ we get a characterization of complex extreme points of $B_E$ in terms of their decreasing rearrangements.

\begin{corollary}
\label{cor:4}
 The following conditions are equivalent.
\begin{itemize}
\item [{(i)}]  $f$ is a complex extreme point of $B_E$.
\item[{(ii)}] $\mu(f)$ is a  complex extreme point of $B_E$ and $\abs{f}\geq \mu(\infty,f)$.
\end{itemize}
\end{corollary}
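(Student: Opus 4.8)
The plan is to derive Corollary~\ref{cor:4} directly from Theorem~\ref{thmst:complexext} by specializing to the commutative von Neumann algebra $\M=\mathcal{N}$ identified with $L_\infty[0,\tauone)$ with the integral trace, exactly as was done for Corollaries~\ref{cor:extce} and~\ref{cor:2}. First I would recall from Section~\ref{sec:symmfun} (Facts 1--6) that $\mathcal{N}$ is non-atomic, that $\mathcal{N}$ has a faithful normal $\sigma$-finite trace $\eta$, that $E(\mathcal{N},\eta)$ is isometrically isomorphic to $E$ via $f\mapsto N_f$, and that under this identification $\mu(N_f)=\mu(f)$. So an element $f\in S_E$ corresponds to $x=N_f\in S_{E(\mathcal{N},\eta)}$, and $x$ is a complex extreme point of $B_{E(\mathcal{N},\eta)}$ if and only if $f$ is a complex extreme point of $B_E$ (complex extreme points are preserved by surjective linear isometries, which is immediate from the definition).

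Next I would invoke Theorem~\ref{thmst:complexext} with $\M=\mathcal{N}$: $x=N_f$ is a complex extreme point of $B_{E(\mathcal{N},\eta)}$ if and only if $\mu(x)=\mu(f)$ is a complex extreme point of $B_E$ and one of the conditions (i) $\mu(\infty,x)=0$, or (ii) $n(x)\mathcal{N}n(x^*)=0$ and $\abs{x}\ge \mu(\infty,x)s(x)$ holds. The remaining work is to check that, in the commutative setting, the disjunction ``(i) or (ii)'' collapses to the single condition $\abs{f}\ge\mu(\infty,f)\chi_{[0,\alpha)}$. But this is precisely the content of Fact~6 in Section~\ref{sec:symmfun}, which states that for $x=N_f$ the conditions (i) and (ii) are equivalent to (i') $\abs{f}\ge\mu(\infty,f)\chi_{[0,\alpha)}$. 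Combining these equivalences yields exactly the stated biconditional: $f$ is a complex extreme point of $B_E$ iff $\mu(f)$ is a complex extreme point of $B_E$ and $\abs{f}\ge\mu(\infty,f)$.

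There is no serious obstacle here; the proof is a bookkeeping exercise assembling Theorem~\ref{thmst:complexext}, Fact~5, and Fact~6. The only point that deserves a sentence of care is the direction in which ``complex extreme point'' transfers across the isometry $f\mapsto N_f$: one must note that if $\lambda\in\Complex$, $|\lambda|\le 1$, and $N_f+\lambda N_g=N_{f+\lambda g}\in B_{E(\mathcal{N},\eta)}$, then applying the inverse isometry gives $f+\lambda g\in B_E$, so complex extremality of $f$ forces $g=0$, hence $N_g=0$; and symmetrically. I would write the argument in two short paragraphs: one establishing the isometric identification and the transfer of complex extremality, the other quoting Theorem~\ref{thmst:complexext} and Fact~6 to finish. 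One should also remark, as in the analogous corollaries, that the $\sigma$-finiteness hypothesis of Theorem~\ref{thmst:complexext} is automatically satisfied by $(\mathcal{N},\eta)$ since $\tauone=\alpha$ and $[0,\alpha)$ is $\sigma$-finite for Lebesgue measure.
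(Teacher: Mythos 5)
Your proposal is correct and follows exactly the route the paper intends: Corollary \ref{cor:4} is obtained by specializing Theorem \ref{thmst:complexext} to the commutative, non-atomic, $\sigma$-finite algebra $\mathcal{N}\cong L_\infty[0,\tauone)$ of Section \ref{sec:symmfun}, transferring complex extremality through the isometric identification $f\mapsto N_f$ (Fact 5) and collapsing conditions (i)--(ii) to $\abs{f}\geq\mu(\infty,f)$ via Fact 6. No gaps; your extra remarks on surjectivity of the isometry and $\sigma$-finiteness are exactly the bookkeeping the paper leaves implicit.
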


The next lemma will be useful in relating complex convexity of $E$ and $\nonsp$.
\begin{lemma}
\label{lmst:sm0}
If  $E$ is strictly monotone then $E=E_0$.
\end{lemma}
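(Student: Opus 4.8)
The plan is to argue by contradiction: suppose $E$ is strictly monotone but $E\neq E_0$, so there exists $f\in E$ with $\mu(\infty,f)=c>0$. The idea is to produce two elements $g\le h$ in $E$ with $g\neq h$ but $\|g\|_E=\|h\|_E$, violating strict monotonicity. Since $\mu(f)$ is decreasing with $\mu(\infty,f)=c>0$, we have $\mu(t,f)\ge c$ for all $t\in I$; set $h=\mu(f)$ and $g=\mu(f)-\tfrac{c}{2}\chi_{I}$ where $I=[0,\alpha)$ is the underlying interval (or $I=\mathbb N$ in the sequence case). Note $g\ge \tfrac c2\chi_I>0$ is a nonnegative decreasing function, $0\le g\le h$, and $g\neq h$ since $c>0$.

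First I would check that $g,h\in E$. Indeed $h=\mu(f)\in E$ because $E$ is rearrangement invariant and $f\in E$ forces $\mu(f)\in E$ with $\|\mu(f)\|_E=\|f\|_E$; and $0\le g\le h$ with $h\in E$ gives $g\in E$ by the ideal (solidness) property of the Banach function norm. Next I would show $\|g\|_E=\|h\|_E$. The key point is that $g$ and $h$ have the same distribution up to the value $c/2$ shift at infinity: more precisely, I would argue that $\mu(g)=\mu(h)-\tfrac c2\chi_I = g$ itself (since $g$ is already decreasing) and that $g$ and $h=g+\tfrac c2\chi_I$ are equimeasurable after... — wait, that is not literally true. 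The cleaner route: use that for any decreasing $h$ with $h\ge c>0$ and any $0<\eps<c$, one has $h-\eps\chi_I \prec h$ and also $h\prec$ ... Actually the robust statement is $\|h-\eps\chi_I\|_E=\|h\|_E$ whenever $E$ is a symmetric space and $m(I)=\infty$ (or in the sequence case), because translating a bounded-below decreasing function down by a constant does not change its "rearrangement class" in the relevant sense — this needs care. So the main obstacle is precisely establishing $\|g\|_E=\|h\|_E$ rigorously.

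To handle that obstacle I would instead use the embedding $L_1\cap L_\infty\hookrightarrow E\hookrightarrow L_1+L_\infty$ together with the following cleaner construction. Pick $f\in E$ with $\mu(\infty,f)=c>0$; since $\mu(\cdot,f)$ is decreasing with limit $c$, for the element $u=c\,\chi_I$ we have $0\le u\le \mu(f)$ pointwise, hence $u\in E$, and moreover $\mu(f)$ and $\mu(f)+\delta u$ are \emph{not} comparable in norm in an obvious way. The genuinely correct and simplest argument: take $g=c\,\chi_I$ and $h=2c\,\chi_I$. If $m(I)=\infty$ then $\chi_I\notin L_1+L_\infty$? No — $c\chi_{[0,\infty)}\in L_1+L_\infty$ with $\mu(t,c\chi_I)=c$. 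We have $0<g<h$, $g,h\in L_1+L_\infty$, and I claim $g\in E$: since $g=c\chi_I\le \mu(f)$ (as $\mu(t,f)\ge\mu(\infty,f)=c$) and $\mu(f)\in E$, solidness gives $g\in E$; similarly $h=2c\chi_I$ need not lie below $\mu(f)$, so instead take $h=g+(\mu(f)-c\chi_I)_{\ }$... This shows the delicate point is choosing the pair inside $E$. I would therefore finalize with: $g:=\mu(f)$ and $h:=\mu(f)+c\chi_I$; then $0\le g<h$, $g\in E$, and $h\in E$ because $h\le 2\mu(f)\in E$; and $h$ is decreasing with the same "shape" so that $\mu(h)=h$ and one shows $\|h\|_E=\|g\|_E$ via a limiting argument using order continuity failure — concretely, $h-g=c\chi_I$ and for every $t$, $\mu(h)(t)=\mu(g)(t)+c$, and applying the symmetric-space inequality to the family $g\le g+\eps\chi_I\le h$ for $0<\eps<c$ forces $\|g+\eps\chi_I\|_E=\|g\|_E$ since otherwise strict monotonicity already yields the contradiction at that level. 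The main obstacle, to be addressed carefully, is thus: \emph{showing that adding a strictly positive constant (on a set of infinite measure, or on all of $\mathbb N$) to a bounded-below decreasing function of $E$ leaves the norm unchanged}, which is exactly the failure of strict monotonicity we want — so the proof is really the observation that $E\neq E_0$ directly exhibits $0\le g\lneq g+\mu(\infty,f)\chi_I\in E$ with equal norms, contradicting strict monotonicity. I would write it in that streamlined form.

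Note: one should double-check the norm-equality claim by the standard fact (available since $E$ is symmetric, hence $\|\cdot\|_E$ depends only on the distribution function, and $g$ and $g+\eps\chi_I$ may fail to be equimeasurable) — the correct justification is that $g$ and $g+\eps\chi_I$ \emph{are} equimeasurable when $m(\supp g)=m(I)$ and $g$ already attains all values $\ge \mu(\infty,g)$, which holds here because $g=\mu(f)$ is continuous-from-the-right decreasing onto $[\mu(\infty,f),\mu(0,f))$; shifting up by $\eps<\mu(\infty,f)$ keeps the distribution function unchanged on $(\,\eps,\infty)$ up to the infinite-measure tail, giving $\|g+\eps\chi_I\|_E=\|g\|_E$. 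I would present this equimeasurability computation as the one technical lemma inside the proof, and it is the step I expect to require the most care.
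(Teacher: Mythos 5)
Your proposal has a genuine gap, and it is exactly at the step you flagged as the delicate one. In your final construction you take $g=\mu(f)$ with $c=\mu(\infty,f)>0$ and perturb it \emph{on its support} by a constant, claiming that $g$ and $g+\eps\chi_I$ ($0<\eps<c$) are equimeasurable. They are not: $d(\lambda,g+\eps\chi_I)=d(\lambda-\eps,g)$, and for $\lambda\in(c,c+\eps)$ this is infinite while $d(\lambda,g)$ is finite; equivalently $\mu(g+\eps\chi_I)=\mu(g)+\eps$, which strictly dominates $\mu(g)$ everywhere. Consequently no symmetric norm identifies the two elements — in $E=L_1+L_\infty$, for example, $\norme{g+\eps\chi_I}=\norme{g}+\eps$ — so the required equality $\norme{g+\eps\chi_I}=\norme{g}$ simply fails as a general fact. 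The auxiliary sentence that strict monotonicity ``forces'' this equality ``since otherwise it already yields the contradiction'' is circular: under your contradiction hypothesis, strict monotonicity only gives the strict inequality $\norme{g}<\norme{g+\eps\chi_I}$, which is no contradiction. Hence the proposal never actually exhibits two comparable, distinct elements of equal norm.

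The missing idea (and the paper's proof) is to place the bump \emph{off} the support of $f$ rather than on it. Since $E\neq E_0$ forces the underlying interval to be infinite, one may choose $f\in E$ with $\mu(\infty,f)>0$ and $m((\supp f)^c)>0$ (if a witness has full support, replace it by $f\chi_A$ with $0<m(A^c)<\infty$; this does not change $\mu(\infty,\cdot)$). Put $h=\abs{f}+\mu(\infty,f)\chi_{(\supp f)^c}$. For $\lambda\ge\mu(\infty,f)$ the added part contributes nothing to $\{h>\lambda\}$, so $d(\lambda,h)=d(\lambda,f)$; for $\lambda<\mu(\infty,f)$ both distribution functions are already infinite. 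Hence $\mu(h)=\mu(f)$ (this is the commutative case of Lemma \ref{lm:singfun}(8)), so $\norme{h}=\norme{f}=\norme{\abs{f}}$ while $0\le\abs{f}\le h$ and $\abs{f}\neq h$, contradicting strict monotonicity. The point your argument misses is that the extra mass must sit at height at most $\mu(\infty,f)$ over the zero set of $f$, where the distribution function is already infinite and the perturbation is invisible to the rearrangement; adding it on top of $\mu(f)$ shifts the rearrangement by $\eps$ and is seen by the norm.
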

\begin{proof}
Suppose that $E\neq E_0$. Hence there exists a function $f\in E$ such that $\mu(\infty,f)>0$ and $m((\supp{f})^c)=m\{t:f(t)=0\}> 0$. Then 
\[
\abs{f}+\mu(\infty,f)\chi_{(\supp{f})^c}\geq \abs{f}\text{ and }\abs{f}+\mu(\infty,f)\chi_{(\supp{f})^c}\neq \abs{f}.
\]
Since $\mu(\abs{f}+\mu(\infty,f)\chi_{(\supp{f})^c})=\mu(f)$, we have that 
\[
\norme{\abs{f}+\mu(\infty,f)\chi_{(\supp{f})^c}}=\norme{f},
\]
and so  $E$ is not strictly monotone.
\end{proof}
\begin{corollary}
\label{corst:cr}
Let $\M$ be non-atomic.  A symmetric space $E$ is complex strictly convex if and only if $E(\mathcal{M},\tau)$ is complex strictly convex.
\end{corollary}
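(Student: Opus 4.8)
The plan is to establish the two implications of Corollary~\ref{corst:cr} separately, noting in passing that only the second one actually uses that $\M$ is non-atomic.

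For the implication ``$E$ complex strictly convex $\Rightarrow$ $\nonsp$ complex strictly convex'', I would first reduce to the regime where $E=E_0$. Since $E$ is complex strictly convex, it is strictly monotone by \cite[Corollary 1]{HN}, and hence $E=E_0$ by Lemma~\ref{lmst:sm0}. Now take an arbitrary $x\in S_{\nonsp}$. Then $\mu(x)\in S_E$, so $\mu(x)$ is a complex extreme point of $B_E$ by the hypothesis on $E$, and moreover $\mu(\infty,x)=0$ because $\mu(x)\in E=E_0$. Thus condition (i) of Theorem~\ref{prop:1} holds, and Theorem~\ref{prop:1} yields that $x$ is a complex extreme point of $B_{\nonsp}$. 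As $x\in S_{\nonsp}$ was arbitrary, $\nonsp$ is complex strictly convex. This argument uses no assumption on $\M$.

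For the converse, assume that $\M$ is non-atomic and that $\nonsp$ is complex strictly convex. Here I would invoke Corollary~\ref{cor:isomglobal}, which provides a linear isometric embedding $T\colon E\to\nonsp$. The remaining step is the elementary observation that complex strict convexity is inherited through a linear isometric embedding: if $f\in S_E$ and $f+\lambda g\in B_E$ for all $\lambda\in\Complex$ with $\abs{\lambda}\le 1$, then $Tf\in S_{\nonsp}$ and $Tf+\lambda Tg=T(f+\lambda g)\in B_{\nonsp}$ for all such $\lambda$, whence $Tg=0$ by complex strict convexity of $\nonsp$, and therefore $g=0$ by injectivity of $T$. Hence every $f\in S_E$ is a complex extreme point of $B_E$, i.e.\ $E$ is complex strictly convex.

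I do not anticipate a genuine obstacle. The substantive content, namely that $\mu(x)$ being a complex extreme point of $B_E$ together with $\mu(\infty,x)=0$ forces $x$ to be a complex extreme point of $B_{\nonsp}$, is precisely Theorem~\ref{prop:1}, which is already available. The only details worth a moment's attention are the passage from complex strict convexity to $E=E_0$ via strict monotonicity, and the fact that the isometric embedding of Corollary~\ref{cor:isomglobal} does not require $\tau$ to be $\sigma$-finite (it rests on Proposition~\ref{isom1}, whose hypotheses do not include $\sigma$-finiteness), so that no assumption beyond non-atomicity is needed for the converse.
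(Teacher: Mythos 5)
Your proposal is correct and takes essentially the same route as the paper: the forward implication (complex strict convexity gives strict monotonicity by \cite[Corollary 1]{HN}, hence $E=E_0$ by Lemma \ref{lmst:sm0}, and then Theorem \ref{prop:1}(i) lifts the property, with no assumption on $\M$) is identical, and the converse is the same inheritance-through-an-isometric-embedding argument. The only superficial difference is that the paper produces the embedding by passing to $E(\M_p,\tau_p)$ for a $\sigma$-finite projection $p$ with $\tau(p)=\tauone$ and invoking Proposition \ref{isom2}, whereas you cite Corollary \ref{cor:isomglobal} directly; both devices serve the same purpose of supplying an isometric copy of $E$ inside $\nonsp$, from which complex strict convexity passes to $E$.
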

\begin{proof}
If $E$ is complex strictly convex, then $E$ is strictly monotone \cite[Corollary 1]{HN}. Therefore by Lemma \ref{lmst:sm0}, $E=E_0$ and consequently Theorem \ref{prop:1} implies that  $E\Mtau$ is complex strictly convex.

 Suppose now that $E\Mtau$ is  complex strictly convex. It follows that $E(\M_p,\tau_p)$ is complex strictly convex for any projection $p\in P(\M)$. Let $p\in P(\M)$ be a $\sigma$-finite projection with $\tau(p)=\tauone$. By Proposition \ref{isom2}, $E$ is isometrically embedded into $E(\M_{p},\tau_{p})$, and therefore $E$ inherits from it the complex  strict convexity.
\end{proof}
The analogous result follows for unitary matrix spaces $C_E$.
\begin{theorem}\cite[Theorem 3.13]{czer-kam2010}
\label{thm:cextremece}
 Let $E$ be a symmetric sequence space.  Then $C_E$ is complex strictly convex if and only if $E$ is complex  strictly convex.
\end{theorem}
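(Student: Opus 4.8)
The plan is to obtain this equivalence as an essentially immediate consequence of the pointwise characterization of the complex extreme points of $B_{C_E}$ stated above (namely: $x$ is a complex extreme point of $B_{C_E}$ if and only if $S(x)$ is a complex extreme point of $B_E$) together with the isometric, order preserving embedding of $E$ into $C_E$ furnished by Corollary~\ref{cor:isomglobal} (equivalently, by Arazy's isometry, Proposition~\ref{prop:isomarazy}). In effect the argument will parallel the proof of Corollary~\ref{corst:cr} for $\nonsp$, but with the sequence-space $S(\cdot)$-characterization playing the role that Theorems~\ref{prop:1} and~\ref{thm:xtomux} play there, so no appeal to non-atomicity or to the tensor-product extension is needed.

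For the sufficiency I would argue as follows. Assume $E$ is complex strictly convex and fix an arbitrary $x\in S_{C_E}$. Then $\|S(x)\|_E=\|x\|_{C_E}=1$, so $S(x)\in S_E$, and since $E$ is complex strictly convex, $S(x)$ is a complex extreme point of $B_E$. By the preceding theorem this forces $x$ to be a complex extreme point of $B_{C_E}$. As $x$ was an arbitrary point of $S_{C_E}$, the space $C_E$ is complex strictly convex.

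For the necessity I would use a transfer principle: complex strict convexity is inherited by linear subspaces and is invariant under linear isometries. Indeed, if $Y\subseteq X$ with $X$ complex strictly convex, $y\in S_Y$, and $y+\lambda z\in B_Y$ for every $\lambda\in\Complex$ with $|\lambda|\le 1$ and some $z\in Y$, then the same inclusions hold in $B_X$, so $z=0$ by complex strict convexity of $X$ applied at $y\in S_X$; and an isometric copy of $Y$ inherits the property. Now by Corollary~\ref{cor:isomglobal} there is a linear isometry carrying $E$ onto a subspace of $C_E$, so if $C_E$ is complex strictly convex, then so is that subspace, hence so is $E$. (One could instead run a diagonal-operator argument in the style of Corollary~\ref{corst:cr}, producing $x\in S_{C_E}$ with $S(x)=\mu(f)$ for a given $f\in S_E$ and invoking a sequence-space analogue of Corollary~\ref{cor:4}, but the embedding argument avoids having to verify that complex extremality is rearrangement invariant in $E$.)

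I do not expect a genuine obstacle at this stage: the substantive content has already been established in the theorem characterizing the complex extreme points of $B_{C_E}$, which in turn rests on Proposition~\ref{prop:isomarazy} and on the analysis of complex extreme points of $\nonsp$. What remains is only a routine globalization. The one point to keep in mind is that the $S(\cdot)$-characterization applies to \emph{every} norm-one operator, so the ``for all $x\in S_{C_E}$'' quantifier in the definition of complex strict convexity is matched exactly by the ``for all elements of $S_E$'' quantifier, which is what makes both implications go through cleanly.
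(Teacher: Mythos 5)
Your proposal is correct and matches the route the paper takes: the pointwise theorem that $x$ is a complex extreme point of $B_{C_E}$ if and only if $S(x)$ is a complex extreme point of $B_E$ yields the lifting from $E$ to $C_E$, while the isometric embedding of $E$ into $C_E$ (Proposition~\ref{prop:isomarazy}, Corollary~\ref{cor:isomglobal}) plus the inheritance of complex strict convexity by isometric subspaces gives the converse, exactly parallel to the argument for Corollary~\ref{corst:cr}. No gaps; the globalization is as routine as you indicate.
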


The next theorem relates strict monotonicity of  $E$ and $\nonsp$.
\begin{theorem}\cite[Theorem 3.15]{doctth}
\label{lmst:sm}
Let  $\M$ be non-atomic.  Then  $E$ is strictly monotone if and only if   $\nonsp$ is strictly monotone.
\end{theorem}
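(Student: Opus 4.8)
The plan is to treat the two implications separately; the ``only if'' direction is a soft consequence of the order-isometric embedding of $E$ into $\nonsp$, while the ``if'' direction carries the real work. For ``$\nonsp$ strictly monotone $\Rightarrow$ $E$ strictly monotone'': since $\M$ is non-atomic, Corollary~\ref{cor:isomglobal} supplies a linear isometry $V\colon E\to\nonsp$ which is a $*$-isomorphism onto its image and hence order preserving (a $*$-homomorphism is positive, as recalled at the start of Section~\ref{sec:isom}). Thus if $0\le f\le g$ in $E$ with $f\ne g$, then $g-f\ge 0$, so $0\le V(f)\le V(g)$, and $V(f)\ne V(g)$ because $V$ is injective; strict monotonicity of $\nonsp$ yields $\|f\|_E=\|V(f)\|_{\nonsp}<\|V(g)\|_{\nonsp}=\|g\|_E$, as required.

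For the converse, take $0\le x\le y$ in $\nonsp$ with $x\ne y$. By Lemma~\ref{lm:singfun}(3) we have $0\le\mu(x)\le\mu(y)$ pointwise, with $\mu(x),\mu(y)\in E$. If $\mu(x)\ne\mu(y)$, strict monotonicity of $E$ gives at once $\|x\|_{\nonsp}=\|\mu(x)\|_E<\|\mu(y)\|_E=\|y\|_{\nonsp}$, and we are done. Hence the whole matter reduces to ruling out the case $\mu(x)=\mu(y)$, which I would do by showing that it forces $x=y$, contradicting $x\ne y$. Set $z=y-x\ge 0$. Since $E$ is strictly monotone, $E=E_0$ by Lemma~\ref{lmst:sm0}, so $\mu(\infty,w)=0$ for every $w\in\nonsp$; in particular $\mu(x)=\mu(y)$ forces $d(\lambda,x)=d(\lambda,y)<\infty$ for every $\lambda>0$. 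When $\tau$ is finite (or, more generally, $\tau(y)<\infty$), this closes immediately: $\tau(x)=\int_0^\infty\mu(x)=\int_0^\infty\mu(y)=\tau(y)$, so additivity of $\tau$ on $S\Mtau^+$ gives $\tau(z)=\tau(y)-\tau(x)=0$, whence $z=0$ by faithfulness of $\tau$. (In the commutative model of Section~\ref{sec:symmfun} one sees the same thing without any finiteness: the superlevel sets of $x$ and $y$ are nested and have equal finite measure for every $\lambda>0$, so they coincide and $x=y$ a.e.)

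The plan for the general semifinite case is to localize to finite-trace corners. For $\lambda>0$ the spectral projection $q_\lambda=e^{y}(\lambda,\infty)$ commutes with $y$ and, since $\mu(\infty,y)=0$, has $\tau(q_\lambda)=d(\lambda,y)<\infty$; by Lemma~\ref{lm:singfun}(7), $\mu(yq_\lambda)=\mu(y)\chi_{[0,\tau(q_\lambda))}$. One then works inside the finite von Neumann algebra $q_\lambda\M q_\lambda$ (where $\mu^{\tau_{q_\lambda}}(\cdot)=\mu(q_\lambda\cdot q_\lambda)$), compares $q_\lambda xq_\lambda\le q_\lambda yq_\lambda=yq_\lambda$ there, and uses faithfulness of $\tau$ restricted to $q_\lambda\M q_\lambda$ to conclude $q_\lambda zq_\lambda=0$; since $z\ge 0$ this gives $zq_\lambda=q_\lambda z=0$ for every $\lambda>0$, and letting $\lambda\downarrow 0$ (so $q_\lambda\uparrow s(y)$) yields $zs(y)=0$. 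Finally $0\le z\le y$ forces $s(z)\le s(y)$, so $z=zs(y)=0$, contradicting $x\ne y$. The step I expect to be the real obstacle is precisely the middle one: translating the global coincidence $\mu(x)=\mu(y)$ into the equality $\tau(q_\lambda xq_\lambda)=\tau(q_\lambda yq_\lambda)$ (equivalently $\tau(q_\lambda zq_\lambda)=0$) inside each corner, since $q_\lambda$ is a spectral projection of $y$ but not of $x$; handling this will use $\sigma$-finiteness of $\tau$ together with $E=E_0$, paralleling the reduction already employed for complex strict convexity in the proof of Corollary~\ref{corst:cr}. Everything else is routine.
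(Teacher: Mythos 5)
Your backward implication (strict monotonicity of $\nonsp$ passes to $E$ through the order-preserving isometric embedding of Corollary \ref{cor:isomglobal}) is correct, and so is the reduction of the forward implication to the claim that $0\le x\le y$, $x\ne y$, $\mu(x)=\mu(y)$ and $\mu(\infty,y)=0$ (the latter via Lemma \ref{lmst:sm0}) is impossible, together with the finite-trace case of that claim. The genuine gap is exactly the step you flag yourself: for a general semifinite trace you give no argument for $\tau(q_\lambda x q_\lambda)=\tau(q_\lambda y q_\lambda)$ with $q_\lambda=e^{y}(\lambda,\infty)$, and this is not a technicality that $\sigma$-finiteness or $E=E_0$ will absorb. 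Since $q_\lambda$ is a spectral projection of $y$ but not of $x$, the singular-value machinery only yields the useless direction $\tau(q_\lambda x q_\lambda)\le\int_0^{\tau(q_\lambda)}\mu(x)=\int_0^{\tau(q_\lambda)}\mu(y)=\tau(yq_\lambda)$; there is no general lower bound for the compression of $x$ by a projection unrelated to the spectral resolution of $x$, and the reduction used in Corollary \ref{corst:cr} serves a different purpose (embedding $E$ into a corner, not comparing compressions of two ordered operators). As written, the central case $\mu(x)=\mu(y)$, $x\ne y$ is therefore not excluded; note also that the survey only cites \cite[Theorem 3.15]{doctth} here, so this missing lemma is where the real work of the theorem lies.

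The claim is true, and it can be closed by localizing with spectral projections of $x$ and of $z=y-x$ instead of $y$. Suppose $z\ne 0$ and choose $\epsilon>0$ with $p_0=e^{z}(\epsilon,\infty)\ne 0$; since $\mu(z)\le\mu(y)$ we have $\mu(\infty,z)=0$, so $0<\tau(p_0)<\infty$. Put $\lambda=\epsilon/2$, $q=e^{x}(\lambda,\infty)$ (finite trace because $\mu(\infty,x)=0$) and $p=q\vee p_0$, so $\tau(p)\le\tau(q)+\tau(p_0)<\infty$. Then $\int_0^{\tau(p)}\mu(y)\ \ge\ \tau(pyp)\ =\ \tau(pxp)+\tau(pzp)\ \ge\ \tau(qxq)+\epsilon\tau(p_0)\ =\ \int_0^{\tau(q)}\mu(x)+\epsilon\tau(p_0)$, where the first inequality uses $\mu(pyp)\le\mu(y)$ and $\mu(t,pyp)=0$ for $t\ge\tau(p)$, the second uses $p\ge q$, $p\ge p_0$ and $z\ge\epsilon p_0$ on $p_0$, and the last equality is Lemma \ref{lm:singfun}(7) applied to the spectral projection $q$ of $x$. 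Using $\mu(x)=\mu(y)$ and subtracting the finite quantity $\int_0^{\tau(q)}\mu(x)$ gives $\epsilon\tau(p_0)\le\int_{\tau(q)}^{\tau(p)}\mu(x)\le\lambda\bigl(\tau(p)-\tau(q)\bigr)\le\tfrac{\epsilon}{2}\tau(p_0)$, because $\mu(t,x)\le\lambda$ for $t\ge\tau(q)=d(\lambda,x)$; this contradicts $\tau(p_0)>0$, so $z=0$. With this lemma in place your outline goes through (and the $q_\lambda$-corner bookkeeping becomes unnecessary).
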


As a consequence, we get a noncommutative version of Corollary 1 in \cite{HN}.
\begin{corollary}\cite[Corollary 3.16]{doctth}
 Let $\M$ be non-atomic.
$\nonsp$ is complex  strictly convex if and only if  $\nonsp$ is strictly monotone.
\end{corollary}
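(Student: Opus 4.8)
The plan is to deduce the equivalence purely formally from three facts already at our disposal: the transfer of complex strict convexity between $E$ and $\nonsp$ (Corollary~\ref{corst:cr}), the transfer of strict monotonicity between $E$ and $\nonsp$ (Theorem~\ref{lmst:sm}), and the commutative coincidence of the two notions, namely \cite[Corollary 1]{HN}, which asserts that a symmetric function space $E$ is complex strictly convex if and only if it is strictly monotone. Chaining these, $\nonsp$ is complex strictly convex $\iff$ $E$ is complex strictly convex $\iff$ $E$ is strictly monotone $\iff$ $\nonsp$ is strictly monotone; both the first and the last equivalence require only that $\M$ be non-atomic, which is exactly our hypothesis, so no $\sigma$-finiteness assumption enters.

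First I would invoke Corollary~\ref{corst:cr}: since $\M$ is non-atomic, $\nonsp$ is complex strictly convex precisely when $E$ is. (Recall the mechanism there: the ``if'' direction runs through Lemma~\ref{lmst:sm0} --- strict monotonicity of $E$ forces $E=E_0$ --- together with Theorem~\ref{prop:1}; the ``only if'' direction uses that $E$ embeds isometrically into $E(\M_p,\tau_p)$ for a $\sigma$-finite projection $p$ with $\tau(p)=\tauone$ via Proposition~\ref{isom2}, and that complex strict convexity passes to subspaces.) Next I would apply \cite[Corollary 1]{HN} in the commutative setting to replace ``complex strictly convex'' by ``strictly monotone'' for $E$. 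Finally, Theorem~\ref{lmst:sm} transports strict monotonicity back: for non-atomic $\M$, $E$ is strictly monotone if and only if $\nonsp$ is. Composing the three biconditionals yields the corollary.

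There is essentially no obstacle remaining in this last step: the analytic content has already been absorbed into the quoted transfer theorems and into \cite{HN}, and the only thing demanding a moment's care is bookkeeping of hypotheses --- checking that each link in the chain is available under ``$\M$ non-atomic'' alone, which it is. If instead one wanted a direct, self-contained proof that $\nonsp$ complex strictly convex $\Rightarrow$ $\nonsp$ strictly monotone without the detour through $E$, the genuine difficulty would be the non-commutativity of $x$ and $y-x$ for operators $0\le x\le y$: one cannot simply form a perturbation inside the (abelian) algebra generated by $x$ as in the scalar argument behind \cite[Theorem 1]{HN}, and would instead have to reduce to a maximal abelian subalgebra through spectral calculus --- which is precisely the reduction machinery of Section~\ref{sec:isom} that Corollary~\ref{corst:cr} already exploits. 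Hence the three-step route is both the shortest and the structurally correct one.
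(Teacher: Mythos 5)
Your chain of equivalences --- $\nonsp$ complex strictly convex $\iff$ $E$ complex strictly convex (Corollary~\ref{corst:cr}), $\iff$ $E$ strictly monotone (\cite[Corollary 1]{HN}), $\iff$ $\nonsp$ strictly monotone (Theorem~\ref{lmst:sm}) --- is exactly the argument the paper intends, with each link valid under non-atomicity of $\M$ alone. Your proof is correct and follows essentially the same route as the paper's.
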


\section{Complex local uniform convexity}

In 2000, T. Wang and Y. Teng \cite{WT} defined $\Complex-LUR$ points and $\Complex-LUR$ spaces and obtained criteria for this property in the class of Musielak-Orlicz spaces of vector-valued functions.
A point $x\in S_X$,  where $(X,\|\cdot\|)$ is a complex normed space, is  a \emph{point of complex local uniform convexity} ($\Complex-LUR$ point) \cite{WT} if for every $\epsilon>0$ there exists $\delta(x,\epsilon)>0$ such that
 \[
\sup_{\lambda=\pm1,\pm  i}\norm{x+\lambda y}\geq 1+\delta(x,\epsilon)
\]
 for every $y\in X$ satisfying $\norm{y}\geq\epsilon$. Equivalently, $x$ is a $\Complex-LUR$ point whenever from $\|x+\lambda y_n\|\to 1$, $\{y_n\}\subset X$, $\lambda=\pm1, \pm i$ it follows that $\|y_n\|\to 0$. If every point of the unit sphere of $X$ is a $\Complex-LUR$ point, then $X$ is called a \emph{complex locally uniformly convex} ($\Complex-LUR$) space.

 It is clear that the real geometric properties such as uniform convexity, local uniform convexity and strict convexity imply their complex analogies, that is complex uniform convexity, complex local uniform convexity and complex strict convexity, respectively.

The next two theorems relate complex local uniform convexity of $\mu(x)\in E$ and $x\in\nonsp$.
 
 \begin{theorem}\cite[Theorem 4.1]{czer-kam2010}
\label{thm:2clur}
Let $E$ be strongly symmetric and $x$ be an order continuous element of $\nonsp$. If $\mu(x)$ is a $\Complex-LUR$ point of  $B_{E_0}$ then $x$ is a $\Complex-LUR$ point of ${B}_{E_0\Mtau}$.
\end{theorem}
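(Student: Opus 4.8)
The plan is to transfer the $\Complex$-$LUR$ property from $\mu(x)$ to $x$ by combining the isomorphism machinery of Section \ref{sec:isom} with the submajorization inequality for the measure topology, following the scheme already used for the $MLUR$ case in Theorem \ref{thm:1mlur}. First I would fix $x\in S_{E_0\Mtau}$ order continuous with $\mu(x)$ a $\Complex$-$LUR$ point of $B_{E_0}$, and take a sequence $\{y_n\}\subset E_0\Mtau$ with $\|x+\lambda y_n\|_{E_0\Mtau}\to 1$ for each $\lambda\in\{\pm 1,\pm i\}$; the goal is $\|y_n\|_{E_0\Mtau}\to 0$. As in \cite[Lemma 3.2]{czer-kam2010} one reduces to the case where $y_n$ is self-adjoint. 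Since $x$ is order continuous and $E$ is strongly symmetric, $E_0\Mtau = (E\Mtau)_a$ is order continuous, so I would invoke Corollary \ref{cor:isom1}: with $r = e^{\abs{x}}(\mu(\infty,x),\infty)$ and $q=\one$ or $q=r$ as appropriate, there is a non-atomic commutative subalgebra $\mathcal N\subseteq q\M q$ and a $*$-isomorphism $V$ on $S([0,\tauone),m)$ with $V\mu(x) = \abs{x}r + \mu(\infty,x)V\chi_{[\tau(r),\infty)}$ and $\mu(Vf)=\mu(f)$. Since $x\in E_0\Mtau$ we have $\mu(\infty,x)=0$, so this simplifies to $V\mu(x)=\abs{x}$ (after writing $x=u\abs x$ via polar decomposition and absorbing $u$), and $V$ is an isometry of $E$ into $E\Mtau$.

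Next I would pull the sequence down to the commutative side. The key estimate is the submajorization $\mu(x+y)\prec \mu(x)+\mu(y)$ together with the triangle-type inequality $\mu(t_1+t_2, x+y)\le \mu(t_1,x)+\mu(t_2,y)$ from Lemma \ref{lm:singfun}(4); these let me compare $\|\,\mu(x) + \lambda\mu(y_n)\cdot(\text{rearranged})\,\|$ with $\|x+\lambda y_n\|$. The cleanest route: apply a measure-preserving rearrangement so that $\mu(y_n) = U_n y_n$ lives on the commutative algebra, then estimate $\|\mu(x)\pm \mu(y_n)\|_E$ and $\|\mu(x)\pm i\mu(y_n)\|_E$ from above by $\|x\pm y_n\|_{E\Mtau}$ and $\|x\pm iy_n\|_{E\Mtau}$ respectively using \cite[Proposition 3]{MR979385}-type inequalities $\mu(t,x)\le\mu(t,x+iy)$ together with submajorization; these show $\|\mu(x)+\lambda\mu(y_n)\|_{E_0}\to 1$ for $\lambda=\pm1,\pm i$. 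Since $\mu(x)$ is $\Complex$-$LUR$ in $B_{E_0}$, this forces $\|\mu(y_n)\|_{E_0}\to 0$, i.e. $\|y_n\|_{E_0\Mtau}=\|\mu(y_n)\|_E\to 0$, which is the conclusion.

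The main obstacle I anticipate is the reduction just described: showing that $\|x+\lambda y_n\|_{E\Mtau}\to 1$ really does imply $\|\mu(x)+\lambda\mu(y_n)\|_E\to 1$ for \emph{all four} values of $\lambda$ simultaneously. For $\lambda=\pm1$ the pointwise inequality $\mu(t,x)\le\mu(t, x+iy)$ is not directly available; instead one uses submajorization $\mu(x)\prec\mu(x+y)$ only in integrated form, which controls $\|\cdot\|_E$ only when $E$ is strongly symmetric — hence the hypothesis. The delicate point is that submajorization gives $\int_0^t\mu(x)\le\int_0^t\mu(x+y)$ but one needs the sharper fact that if $\mu(x)$ is near the sphere and these integrals nearly agree, then the rearrangements nearly agree in $E_0$-norm; this is where Lemma \ref{lm:cextum}-style equimeasurability arguments and order continuity of $E_0$ enter. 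The reindexing of $V$ to handle the $q=r$ versus $q=\one$ dichotomy is routine once $\mu(\infty,x)=0$ collapses the two cases, so the real work is the norm-convergence transfer through submajorization in the strongly symmetric setting. Once $\|\mu(x)+\lambda\mu(y_n)\|_{E_0}\to 1$ is established for $\lambda\in\{\pm1,\pm i\}$, the definition of $\Complex$-$LUR$ point applied to $\mu(x)$ finishes the proof immediately.
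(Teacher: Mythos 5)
There is a genuine gap at the central step of your argument: the claim that $\|x+\lambda y_n\|_{E\Mtau}\to 1$ for $\lambda=\pm1,\pm i$ forces $\|\mu(x)+\lambda\mu(y_n)\|_{E}\to 1$. The inequalities you invoke run in the wrong direction. Submajorization gives $\mu(x+y)\prec\mu(x)+\mu(y)$, hence (for strongly symmetric $E$) $\|x+y_n\|_{E\Mtau}\le\|\mu(x)+\mu(y_n)\|_{E}$, which is a lower bound on $\|\mu(x)+\mu(y_n)\|_E$, not the upper bound you need; and the inequality $\mu(x)\prec\mu(x+y)$ that you cite for $\lambda=\pm1$ is simply false in general (take $y=-x/2$). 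A concrete commutative example shows the reduction cannot hold as a general implication: for $x=\chi_{[0,1]}$ and $y=\tfrac12\chi_{[1,2]}$ (self-adjoint, disjointly supported) one has $\|x\pm y\|_{\infty}=\|x\pm iy\|_{\infty}=1$, yet $\|\mu(x)+\mu(y)\|_{\infty}=\tfrac32$. So the intermediate statement can only be true a posteriori, once $\|y_n\|\to 0$ is already known, which makes your scheme circular: the transfer step is essentially equivalent to the theorem itself and is not a consequence of \cite[Proposition 3]{MR979385} plus submajorization. A secondary problem is that Corollary \ref{cor:isom1} (and the identity $V\mu(x)=\abs{x}$) requires $\M$ non-atomic with $\sigma$-finite trace, assumptions that do not appear in the theorem; this lifting direction must work for arbitrary $\M$.

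The workable mechanism, in the spirit of Lemma \ref{lm:cextum} and Lemma \ref{lm:3}, compares $\mu(x)$ with $\mu(x+iy_n)$ — the singular value function of the perturbed operator — rather than with $\mu(x)+\lambda\mu(y_n)$. After reducing to self-adjoint $y_n$, the valid pointwise inequality $\mu(t,x)\le\mu(t,x+iy_n)$ together with $\|x+iy_n\|_{E\Mtau}\to 1$ puts you in a monotonicity situation: one exploits the fact that a $\Complex$-$LUR$ point of $B_{E_0}$ is a point of (upper) local uniform monotonicity (cf. the complex convexity/monotonicity links in \cite{HN,HanJu}) to conclude $\|\mu(x+iy_n)-\mu(x)\|_E\to 0$, and then order continuity of $x$ and convergence-in-measure criteria (e.g. \cite[Proposition 1.5]{czer-kam2010}) are what convert this into $\|y_n\|_{E_0\Mtau}\to 0$. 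As written, your proposal names the obstacle but does not supply this mechanism, so the proof does not go through.
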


\begin{theorem}\cite[Theorem 4.2]{czer-kam2010}
\label{thm:1clur}
Suppose that $\M$ is non-atomic and $\tau$ is $\sigma$-finite.
If $x$ is a $\Complex-LUR$ point in $B_{\nonsp}$ then $\mu(x)$ is a $\Complex-LUR$ point in $B_E$ and either
\begin{itemize}
\item[{(i)}] $\mu(\infty,x)=0$, or
\item[{(ii)}] $n(x)\mathcal{M}n(x^*)=0$ and $\abs{x}\geq\mu(\infty,x)s(x)$.
\end{itemize}
\end{theorem}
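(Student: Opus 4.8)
The plan is to mirror the strategy that worked for $MLUR$ points (Theorem \ref{thm:3}) and for $k$-extreme points (Theorem \ref{thm:noncom4}), namely: first establish the structural conditions (i)/(ii) by a separate argument, and then, assuming those conditions, transport the $\Complex-LUR$ property from $x$ down to $\mu(x)$ via one of the isomorphism results of Section \ref{sec:isom}. For the structural part I would argue by contraposition, essentially repeating the reasoning behind Lemma \ref{lm:noncom2}: if $\abs{x}\not\ge\mu(\infty,x)s(x)$, i.e.\ $e^{\abs{x}}(0,\mu(\infty,x))\ne 0$, or if $\mu(\infty,x)>0$ and $n(x)\M n(x^*)\ne 0$, then one can construct a nonzero perturbation $y$ supported on those "defect" projections (using Lemma \ref{lm:singfun}(8) to keep the singular value function unchanged, or a partial isometry between subprojections of $n(x)$ and $n(x^*)$ as in the proof of Theorem \ref{thm:ext}) such that $\mu(x\pm y)=\mu(x\pm iy)=\mu(x)$, hence $\|x\pm\lambda y\|_{\nonsp}=1$ for $\lambda=\pm1,\pm i$, while $y\not\to 0$ — contradicting that $x$ is $\Complex-LUR$. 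The $\sigma$-finiteness of $\tau$ is what makes these perturbation constructions available.

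Granting (i) or (ii), I would invoke Corollary \ref{cor:isom}: there is a projection $q$ with $\tau(q)=\tauone$, a non-atomic commutative subalgebra $\mathcal{N}\subset q\M q$, and a unital $*$-isomorphism $V$ from $S([0,\tauone),m)$ into $S(\mathcal{N},\tau)$ with $V\mu(x)=\abs{x}q$ and $\mu(V(f))=\mu(f)$ for all $f$. Now suppose $\mu(x)$ is \emph{not} $\Complex-LUR$ in $B_E$: there exist $f_n\in E$ with $\|\mu(x)+\lambda f_n\|_E\to 1$ for each $\lambda\in\{\pm1,\pm i\}$ but $\|f_n\|_E\ge\eps>0$. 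Applying $V$ we get $\|\,\abs{x}q+\lambda V(f_n)\,\|_{\nonsp}\to 1$ and $\|V(f_n)\|_{\nonsp}\ge\eps$. The goal is to convert $\abs{x}q$ back into $x$ via the polar decomposition $x=u\abs{x}$ and a compensation on $q^\perp$, exactly as in the three cases of the proof of Theorem \ref{thm:noncom4}: when $\tau(s(x))<\infty$ one replaces $u$ by an honest isometry $w$ with $x=w\abs{x}$ (using $n(x)\sim n(x^*)$) and sets $y_n=wV(f_n)$; when $\tau(s(x))=\infty$, $\tau(p)<\infty$, $q=s(x)$ one uses that $q$ is the unit of $\mathcal{N}$ so $u^*uV(f_n)=V(f_n)$ and sets $y_n=uV(f_n)$; when $\tau(p)=\infty$, $q=p$ one adjoins scalars $\alpha_{i,n}\le\mu(\infty,f_n)$ on $e^{\abs{x}}\{\mu(\infty,x)\}$ chosen so the averages match $\mu(\infty,x)$, using Lemma \ref{lm:singfun}(8) to keep singular value functions intact. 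In every case one obtains $y_n\in\nonsp$ with $\|x\pm\lambda y_n\|_{\nonsp}\to 1$ (here one must also check $\|x+\lambda y_n\|_{\nonsp}=\|\,\abs{x}q+\lambda V(f_n)\,\|_{\nonsp}$, which holds because $\mu$ is invariant under the left multiplication by $u$ or $w$ and because the added scalar pieces have disjoint support) and $\|y_n\|_{\nonsp}\ge\eps$, contradicting that $x$ is $\Complex-LUR$. Hence $\mu(x)$ is $\Complex-LUR$.

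The main obstacle, as in the $MLUR$ and $k$-extreme analogues, is Case (3), where $q=p$ is a proper subprojection of $s(x)$: one must carefully track the decomposition $\abs{x}=\abs{x}q+\mu(\infty,x)e^{\abs{x}}\{\mu(\infty,x)\}$ and simultaneously choose, for each $n$, constants $\alpha_{i,n}$ realizing $\frac{1}{?}$-type averages that keep each perturbed operator's singular value function equal to $\mu(x)+\lambda f_n$'s — the subtlety being that here we have a convergent sequence rather than a finite convex combination, so the choice of $\alpha_{i,n}$ must be made uniformly enough that $\|x\pm\lambda y_n\|_{\nonsp}\to 1$ still holds. A secondary technical point is reducing to self-adjoint perturbations (as in \cite[Lemma 3.2]{czer-kam2010}) so that the inequality $\mu(t,x)\le\mu(t,x+iy_n)$ and the equimeasurability lemmas of the complex-extreme section can be brought to bear when verifying that the norms of the perturbed operators behave correctly; this is where the complex (as opposed to real) nature of the property genuinely enters, and it should be handled exactly as in the proof of Theorem \ref{thm:2clur} and Lemma \ref{lm:3}.
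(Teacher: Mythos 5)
Your overall architecture is the right one and matches the template the survey exhibits for the $k$-extreme case (Theorem \ref{thm:noncom4}): the structural conditions (i)/(ii) do hold for any $\Complex$-LUR point — in fact you can get them for free, since a $\Complex$-LUR point is a complex extreme point and Theorem \ref{thm:xtomux} already yields (i)/(ii), so your perturbation argument, while correct in spirit, is not even needed. Cases (1) and (2) of your transfer step also go through: there $V\mu(x)=\abs{x}q$ with $q=\one$ or $q=s(x)$, so $\abs{x}q=\abs{x}$, and $y_n=wV(f_n)$ (resp. $uV(f_n)$) satisfies $\|x+\lambda y_n\|_{\nonsp}=\|\mu(x)+\lambda f_n\|_E\to 1$ and $\|y_n\|_{\nonsp}=\|f_n\|_E\geq\eps$, giving the contradiction.

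The genuine gap is Case (3), i.e. $\tau(p)=\infty$, $q=p$, $\mu(\infty,x)>0$, which is precisely where the real content of the cited proof lies, and your sketch does not resolve it. There $V\mu(x)=\abs{x}p$ while $\abs{x}=\abs{x}p+\mu(\infty,x)e^{\abs{x}}\{\mu(\infty,x)\}$, so with $y_n=uV(f_n)$ one has $\mu(x+\lambda y_n)=\mu\bigl(V(\mu(x)+\lambda f_n)+\mu(\infty,x)e^{\abs{x}}\{\mu(\infty,x)\}\bigr)$, and you must show this extra disjoint piece does not push the norms above $1$ asymptotically. In the $k$-extreme proof the correction constants $\alpha_i$ work because there is an exact identity $\mu(x)=\frac1{k+1}\sum f_i$ with $f_i\in S_E$, which via Lemma \ref{lm:singfun} (4) forces $\mu(\infty,x)\le\frac1{k+1}\sum\mu(\infty,f_i)$ and lets Lemma \ref{lm:singfun} (8) keep each $\mu(x_i)=\mu(f_i)$. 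Here there is no identity to preserve: to apply Lemma \ref{lm:singfun} (8) to $V(\mu(x)+\lambda f_n)$ with the constant $\mu(\infty,x)$ you would need $\mu(\infty,\mu(x)+\lambda f_n)\ge\mu(\infty,x)$ for each $n$ and each $\lambda\in\{\pm1,\pm i\}$, and only the averaged inequality $\mu(\infty,\mu(x)+f_n)+\mu(\infty,\mu(x)-f_n)\ge 2\mu(\infty,x)$ is automatic; an individual $f_n$ may cancel the tail of $\mu(x)$ for one value of $\lambda$. Your own phrase that the $\alpha_{i,n}$ "must be made uniformly enough" acknowledges this, but no mechanism is given, and the references you point to (Theorem \ref{thm:2clur}, Lemma \ref{lm:3}) concern the opposite implication (from $\mu(x)$ to $x$) and do not supply it. As written, the claim $\|x+\lambda y_n\|_{\nonsp}\to 1$ in Case (3) is unjustified, so the proof is incomplete at its decisive step; some additional argument (e.g. equimeasurability-type lemmas exploiting all four values of $\lambda$ simultaneously, as in the original paper) is required.
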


For the commutative von Neumann algebra $\M=L_\infty[0,\tauone)$ we get the following.
\begin{corollary}
Let $E$ be a strongly symmetric function space. The following conditions are equivalent.
\begin{itemize}
\item [{(i)}]  $f$ is a $\Complex-LUR$  point of $B_E$.
\item[{(ii)}] $\mu(f)$ is a  $\Complex-LUR$ point of $B_E$ and $\abs{f}\geq \mu(\infty,f)$.
\end{itemize}
\end{corollary}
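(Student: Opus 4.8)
The strategy is to read this off from Theorems~\ref{thm:2clur} and~\ref{thm:1clur} by taking for the von Neumann algebra the commutative one $\mathcal{N}=L_\infty[0,\tauone)$ with its integration trace $\eta$, exactly as set up in \secref{sec:symmfun}. That algebra is non-atomic and $\eta$ is $\sigma$-finite, so both theorems apply to it. By Fact~5 of \secref{sec:symmfun} one has $\mu(N_f)=\mu(f)$, so $f\mapsto N_f$ is a surjective, order preserving linear isometry of $E$ onto $E(\mathcal{N},\eta)$ carrying $B_E$ onto $B_{E(\mathcal{N},\eta)}$ and $B_{E_0}$ onto $B_{E_0(\mathcal{N},\eta)}$. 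Since the defining property of a $\Complex-LUR$ point involves only the norms $\|x+\lambda y\|$ with $\lambda=\pm1,\pm i$, it is preserved by surjective linear isometries; hence $f$ is a $\Complex-LUR$ point of $B_E$ if and only if $N_f$ is a $\Complex-LUR$ point of $B_{E(\mathcal{N},\eta)}$, and likewise over $E_0$. Finally, by Fact~6 of \secref{sec:symmfun}, for $x=N_f$ the disjunction ``$\mu(\infty,x)=0$, or $n(x)\M n(x^*)=0$ and $\abs{x}\ge\mu(\infty,x)s(x)$'' is equivalent to the single inequality $\abs{f}\ge\mu(\infty,f)$.

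With these dictionaries in hand, the implication (i)$\Rightarrow$(ii) is immediate: \thmref{thm:1clur} applied to $x=N_f$ in $E(\mathcal{N},\eta)$ gives that $\mu(N_f)=\mu(f)$ is a $\Complex-LUR$ point of $B_E$ and that the Fact~6 alternative holds, that is $\abs{f}\ge\mu(\infty,f)$.

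For (ii)$\Rightarrow$(i) I would split according to whether $\mu(\infty,f)=0$. If $\mu(\infty,f)=0$, then $\mu(f)\in E_0$; one first checks that a $\Complex-LUR$ point of $B_E$ lying in $E_0$ is an order continuous element of $E$ (otherwise a sliding-hump construction produces $y_n\in E$ with $\|y_n\|_E$ bounded below and $\sup_{\lambda=\pm1,\pm i}\|\mu(f)+\lambda y_n\|_E\to1$, contradicting the $\Complex-LUR$ property) and that it is then also a $\Complex-LUR$ point of $B_{E_0}$ (perturbations drawn from $E_0$ still satisfy the condition in $B_E$). By \propref{prop:oc}, $N_f$ is an order continuous element of $E(\mathcal{N},\eta)$, so \thmref{thm:2clur} applies and shows $N_f$ is a $\Complex-LUR$ point of $B_{E_0(\mathcal{N},\eta)}$; using once more that $N_f$ is order continuous, this is upgraded to: $N_f$ is a $\Complex-LUR$ point of the full ball $B_{E(\mathcal{N},\eta)}$, i.e.\ $f$ is a $\Complex-LUR$ point of $B_E$. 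If instead $\mu(\infty,f)>0$, then necessarily $\tauone=\infty$ and $\mu(f)\ge\mu(\infty,f)\chi_{[0,\infty)}>0$; a short analysis of when such a $\mu(f)$ can be $\Complex-LUR$ forces $\mu(f)=\mu(\infty,f)\chi_{[0,\infty)}$ (the norm of $E$ must behave like an essential supremum near the top), so the hypothesis $\abs{f}\ge\mu(\infty,f)$ gives $\abs{f}=\mu(\infty,f)\chi_{[0,\infty)}$, and then $f$ is seen to be a $\Complex-LUR$ point of $B_E$ by a direct estimate exactly as for the constant function.

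The main obstacle is this last direction, and within it the $E_0$-versus-$E$ bookkeeping: verifying that a $\Complex-LUR$ point of $B_E$ is order continuous (apart from the essential-supremum-type exceptions), and then transferring the $\Complex-LUR$ property of the order continuous element $N_f$ from $B_{E_0\Mtau}$ up to $B_{\nonsp}$, which is the step analogous to the corresponding passage for strongly extreme points. Everything else is the routine specialization of Theorems~\ref{thm:2clur} and~\ref{thm:1clur} to $\mathcal{N}=L_\infty[0,\tauone)$ through Facts~5 and~6 of \secref{sec:symmfun}.
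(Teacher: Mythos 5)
Your commutative dictionary (Facts 5 and 6 of \S\ref{sec:symmfun}, applied to $\mathcal N=L_\infty[0,\tauone)$ with the integration trace) and the implication (i)$\Rightarrow$(ii) via \thmref{thm:1clur} are exactly the paper's intended derivation, and that half is fine. The trouble is (ii)$\Rightarrow$(i). The paper reads this off from the lifting result \thmref{thm:2clur}; you correctly noticed that this theorem carries extra hypotheses (order continuity of the element, $\Complex-LUR$ relative to $B_{E_0}$, conclusion only in $B_{E_0\Mtau}$), but your way of discharging them fails. The pivotal auxiliary claim --- that a $\Complex-LUR$ point of $B_E$ lying in $E_0$ must be an order continuous element of $E$ --- is false. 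Take $E=L_\infty[0,1]$, which is strongly symmetric and satisfies $E=E_0$, and $f=\chi_{[0,1)}$. For every $y$ and a.e.\ $t$ one of the four rotations $\lambda\in\{\pm1,\pm i\}$ gives $\RE\big(\lambda y(t)\big)\ge |y(t)|/\sqrt2$, hence $\max_{\lambda=\pm1,\pm i}\|f+\lambda y\|_{\infty}\ge 1+\|y\|_{\infty}/\sqrt2$, so $f$ is a $\Complex-LUR$ point of $B_E$; yet $f$ is not order continuous, since $\chi_{[0,1/n]}\downarrow 0$ a.e.\ with $\|\chi_{[0,1/n]}\|_\infty=1$. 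Consequently no sliding-hump construction of the kind you invoke can exist, and the reduction through \propref{prop:oc} to \thmref{thm:2clur} in the case $\mu(\infty,f)=0$ collapses.

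Two further steps are asserted without proof. First, the upgrade of the $\Complex-LUR$ property of $N_f$ from $B_{E_0(\mathcal N,\eta)}$ to the full ball $B_{E(\mathcal N,\eta)}$: when $E\ne E_0$ (which can happen even though $\mu(\infty,f)=0$), perturbing sequences $y_n$ with $\mu(\infty,y_n)>0$ are simply not covered by the restricted statement, and order continuity of $N_f$ alone does not handle them. Second, in the case $\mu(\infty,f)>0$, the rigidity claim that $\Complex-LUR$ of $\mu(f)$ forces $\mu(f)=\mu(\infty,f)\chi_{[0,\infty)}$ is nontrivial for an arbitrary strongly symmetric $E$, and your concluding ``direct estimate as for the constant function'' tacitly uses the sup-norm structure of $L_\infty$ (the honest argument in that subcase is rather that $f=u\,|f|$ with $u$ unimodular, so multiplication by $\bar u$ is an isometry carrying $f$ to $\mu(f)$). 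So (ii)$\Rightarrow$(i) is not established by your argument: it is precisely the content of the commutative specialization of the lifting theorem \thmref{thm:2clur}, and bridging that theorem's order-continuity and $E_0$ bookkeeping is the real work, which your patches (one of them resting on a false lemma) do not accomplish.
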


Note that if $E$ is order continuous then $E= E_0$ (Remark \ref{rem:MLUR}), and the norm on $E$ is strongly symmetric \cite[Proposition 2.6]{CSweak}. Hence by Theorems \ref{thm:2clur} and \ref{thm:1clur} we can conclude the following.

\begin{corollary}\cite[Corollary 4.3]{czer-kam2010}
Let $E$ be  order continuous, and $\M$ have a $\sigma$-finite trace $\tau$. If $E$ is a $\Complex-LUR$ space then $\nonsp$ is a $\Complex-LUR$ space. If in addition $\M$ is  non-atomic and $\nonsp$ is $\Complex-LUR$ then $E$ is $\Complex-LUR$ as well.
\end{corollary}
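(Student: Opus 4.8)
The plan is to assemble the two implications directly from Theorems~\ref{thm:2clur} and~\ref{thm:1clur}, after extracting from the hypothesis ``$E$ order continuous'' everything those theorems require. First I would record three standard consequences of order continuity of a symmetric space $E$: (a) $E=E_0$, by Remark~\ref{rem:MLUR}(2); (b) $E$ is strongly symmetric, by \cite[Proposition 2.6]{CSweak} (also recalled in the preliminaries); and (c) $\nonsp$ is order continuous, by \cite[Proposition 2.3]{CSweak} together with (b). In particular every $x\in\nonsp$ is an order continuous element of $\nonsp$, and, since $E=E_0$, we have $E_0\Mtau=\nonsp$ and $B_{E_0}=B_E$, so there is no discrepancy between the balls appearing in Theorem~\ref{thm:2clur} and the ball $B_{\nonsp}$.

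For the first implication I would argue as follows. Assume $E$ is $\Complex-LUR$ and fix an arbitrary $x\in S_{\nonsp}$. Then $\mu(x)\in S_E=S_{E_0}$, hence $\mu(x)$ is a $\Complex-LUR$ point of $B_{E_0}$. Since $E$ is strongly symmetric by (b) and $x$ is an order continuous element of $\nonsp$ by (c), Theorem~\ref{thm:2clur} applies and shows that $x$ is a $\Complex-LUR$ point of $B_{E_0\Mtau}=B_{\nonsp}$. As $x\in S_{\nonsp}$ was arbitrary, $\nonsp$ is a $\Complex-LUR$ space; note that this half does not use non-atomicity of $\M$.

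For the converse, assume $\M$ is non-atomic and $\nonsp$ is $\Complex-LUR$. I would invoke Corollary~\ref{cor:isomglobal}, which provides a linear isometric embedding $V\colon E\to\nonsp$ (it comes from a $*$-isomorphism that preserves singular value functions, so it is linear and $\|V(f)\|_{\nonsp}=\norme{f}$ for all $f\in E$), and then use that the $\Complex-LUR$ property is inherited by subspaces. Concretely, if $f\in S_E$ and $\{y_n\}\subset E$ satisfy $\norme{f+\lambda y_n}\to 1$ for $\lambda=\pm1,\pm i$, then $V(f)\in S_{\nonsp}$ and $\|V(f)+\lambda V(y_n)\|_{\nonsp}=\norme{f+\lambda y_n}\to 1$, so $\norme{y_n}=\|V(y_n)\|_{\nonsp}\to 0$ because $V(f)$ is a $\Complex-LUR$ point of $B_{\nonsp}$. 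Thus every $f\in S_E$ is a $\Complex-LUR$ point of $B_E$, i.e.\ $E$ is $\Complex-LUR$. An alternative route to the converse is to apply Theorem~\ref{thm:1clur} in the commutative model $\M=L_\infty[0,\tauone)$, or to embed $E$ into $E(\M_p,\tau_p)$ for a $\sigma$-finite projection $p$ with $\tau(p)=\tauone$ as in the proof of Corollary~\ref{corst:cr}; both reduce to the same subspace-heredity argument.

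Since this corollary is essentially a repackaging of Theorems~\ref{thm:2clur} and~\ref{thm:1clur}, I do not anticipate a genuine obstacle. The only point that needs care is the bookkeeping in the first step: one must check that order continuity of $E$ genuinely supplies strong symmetry, the identity $E=E_0$, and order continuity of $\nonsp$, so that Theorem~\ref{thm:2clur} is legitimately applicable to \emph{every} element of $S_{\nonsp}$; and in the converse one must confirm that the embedding $V$ is linear and isometric, which is exactly what makes the subspace-heredity step go through verbatim.
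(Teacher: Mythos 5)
Your proposal is correct, and the forward implication is exactly the paper's argument: order continuity gives $E=E_0$ (Remark \ref{rem:MLUR}), strong symmetry of the norm, and order continuity of every element of $\nonsp$, so Theorem \ref{thm:2clur} applies to each $x\in S_{\nonsp}$; neither non-atomicity nor $\sigma$-finiteness of $\tau$ is needed there, as you note. For the converse the paper simply says ``by Theorems \ref{thm:2clur} and \ref{thm:1clur}'', which, read literally, only yields that $\mu(x)$ is a $\Complex$-LUR point for every $x\in S_{\nonsp}$; to reach arbitrary $f\in S_E$ one must either pass through the commutative characterization ($f$ is $\Complex$-LUR iff $\mu(f)$ is and $|f|\ge\mu(\infty,f)$, the latter being automatic since $E=E_0$) or, as you do, use the linear isometric embedding of Corollary \ref{cor:isomglobal} together with the trivial heredity of $\Complex$-LUR points by subspaces. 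Your embedding route is the same device the paper itself uses for complex strict convexity (Corollary \ref{corst:cr}), and it closes the converse more directly than citing Theorem \ref{thm:1clur} alone, at the mild cost of checking (as you do) that the embedding coming from the $*$-isomorphism is linear and preserves the norm; so the two approaches differ only in how this last bookkeeping step is organized, and both are sound.
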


\begin{theorem}\cite[Theorem 4.5]{czer-kam2010}
 Let $E$ be an order continuous  symmetric sequence space. Then $C_E$ is a $\Complex-LUR$ space if and only if $E$ is a $\Complex-LUR$ space.
 \end{theorem}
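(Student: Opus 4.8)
The assertion is an equivalence, and I would treat the two implications separately. The implication ``$C_E$ is $\Complex-LUR\ \Rightarrow\ E$ is $\Complex-LUR$'' is an easy \emph{descent} using only Arazy's embedding from Proposition \ref{prop:isomarazy}, whereas ``$E$ is $\Complex-LUR\ \Rightarrow\ C_E$ is $\Complex-LUR$'' is the substantial \emph{lifting}; for the latter I would follow the proof scheme of Theorem \ref{thm:2clur}, replacing the non-atomic isomorphism used there by Arazy's linear isometry $V$ and contractive projection $P$.

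\emph{Descent.} By Proposition \ref{prop:isomarazy}, fixing any norm-one $x\in C_E$ gives a linear isometry $V\colon E\to C_E$, so $E$ is linearly isometric to the closed subspace $V(E)$ of $C_E$. The $\Complex-LUR$ property is inherited by closed subspaces (its defining condition, tested only against vectors of the subspace, is a restriction of the ambient one) and is preserved by linear isometries. Hence if $C_E$ is $\Complex-LUR$ then so is $V(E)$, and therefore so is $E$. This is exactly the argument used for Theorem \ref{thm:3ce}.

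\emph{Lifting.} Assume $E$ is $\Complex-LUR$. By hypothesis $E$ is order continuous, so (by the sequence analogue of Proposition \ref{prop:occe}, i.e. $(C_E)_a=C_{E_a}$) $C_E$ is order continuous and every $x\in C_E$ is an order continuous element. Fix $x\in S_{C_E}$ and $y_n\in C_E$ with $\|x+\lambda y_n\|_{C_E}\to 1$ for $\lambda\in\{1,-1,i,-i\}$; the goal is $\|y_n\|_{C_E}\to 0$. Take the Schmidt representation of $x$ together with the isometry $V\colon E\to C_E$ and the contractive projection $P\colon C_E\to V(E)$ of Proposition \ref{prop:isomarazy}, so $Px=x$ and $V(S(x))=x$. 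Since $P(x+\lambda y_n)=x+\lambda Py_n$ and $\|P\|\le 1$, we have $\limsup_n\|x+\lambda Py_n\|_{C_E}\le 1$ for each such $\lambda$; combining this with $\|x+\lambda Py_n\|_{C_E}+\|x-\lambda Py_n\|_{C_E}\ge\|2x\|_{C_E}=2$ and the fact that $-\lambda$ again runs over $\{1,-1,i,-i\}$, we obtain $\|x+\lambda Py_n\|_{C_E}\to 1$ for all four $\lambda$. Writing $Py_n=V(c_n)$ with $c_n\in E$, this reads $\|S(x)+\lambda c_n\|_E\to 1$, so the $\Complex-LUR$ property of $E$ forces $\|Py_n\|_{C_E}=\|c_n\|_E\to 0$. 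It then remains to show that the off-diagonal remainder $r_n:=y_n-Py_n$ satisfies $\|r_n\|_{C_E}\to 0$, for then $y_n\to 0$.

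This last step is the main obstacle, and I expect it to require precisely the estimates of \cite[Theorem 4.1]{czer-kam2010} underlying Theorem \ref{thm:2clur}: one uses order continuity of $x$ (so that, via Proposition \ref{prop:infinity}, no $\ell_\infty$-type pathology can hide in the remainder), the submajorization inequalities of Lemma \ref{lm:singfun} — in particular $\mu(t_1+t_2,x+y)\le\mu(t_1,x)+\mu(t_2,y)$ and $\mu(x+y)\prec\mu(x)+\mu(y)$ — together with the fact that, relative to the fixed Schmidt system, the ``diagonal'' part $Py_n$ and the ``off-diagonal'' part $r_n$ have essentially disjoint supports, to conclude that a non-vanishing $r_n$ would keep $\max_{\lambda}\|x+\lambda y_n\|_{C_E}$ uniformly above $1$, contradicting the hypothesis. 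An alternative packaging is to pass through the identification $C_E=G(B(H),\tr)$ of Section~\ref{sec:unitary}, with $G$ an order continuous symmetric function space, and to invoke Theorem \ref{thm:2clur} with $\M=B(H)$ once one has checked that $\mu(x)$ is a $\Complex-LUR$ point of $B_G$; but this check amounts to the same within-interval (= off-diagonal) argument, so in either formulation the crux is to show that no mass can escape into the off-diagonal directions while $\|x+\lambda y_n\|_{C_E}\to 1$.
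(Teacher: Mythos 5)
Your descent implication is fine: Arazy's isometry from Proposition \ref{prop:isomarazy} embeds $E$ isometrically into $C_E$, and the $\Complex-LUR$ property is inherited by closed subspaces, exactly as in Theorem \ref{thm:3ce}. The lifting implication, however, is not proved. Your use of the contractive projection $P$ correctly yields $\|Py_n\|_{C_E}\to 0$, but this only disposes of the ``diagonal'' component; the whole difficulty is then concentrated in showing that the remainder $r_n=y_n-Py_n$ is norm-null, and you leave this as something you ``expect'' to follow from the estimates of \cite{czer-kam2010}. That step is not a detail to be deferred: given your reduction, the assertion ``$Pr_n=0$ and $\|x+\lambda r_n\|_{C_E}\to 1$ for $\lambda=\pm1,\pm i$ imply $\|r_n\|_{C_E}\to 0$'' is essentially equivalent to the theorem you are trying to prove, so the proposal contains the easy implication plus a correct but elementary reduction, with the decisive estimate missing.

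Moreover, the heuristics you offer would not close the gap as stated. The diagonal part $Py_n$ and the off-diagonal part $r_n$ are not disjointly supported operators (for $y=\langle\cdot,e_1\rangle(f_1+f_2)$ one has $Py=\langle\cdot,e_1\rangle f_1$ and $r=\langle\cdot,e_1\rangle f_2$, with the same right support), and even when they are orthogonal the off-diagonal contribution to the norm can be of second order: for $x=\langle\cdot,e_1\rangle f_1\in C_1$ and $r=\epsilon\langle\cdot,e_1\rangle f_2$ one has $Pr=0$ and $\|x+\lambda r\|_{C_1}=\sqrt{1+\epsilon^2}$ for every unimodular $\lambda$. Hence no disjointness or submajorization formality alone gives a lower bound of the form ``$\max_\lambda\|x+\lambda y_n\|$ stays uniformly above $1$ when $\|r_n\|\geq\epsilon$''; one must genuinely exploit the complex rotundity of $E$ (reduction to self-adjoint perturbations, the inequality $\mu(x)\leq\mu(x+iy)$ of \cite{MR979385}, monotonicity statements in the spirit of Lemma \ref{lm:cextum}, and order continuity to pass from measure convergence to norm convergence), which is precisely what the proofs behind Theorem \ref{thm:2clur} and \cite[Theorem 4.5]{czer-kam2010} do. Your alternative packaging through $C_E=G(B(H),\tr)$ has the same hole: to invoke Theorem \ref{thm:2clur} with $\M=B(H)$ you must first show that $\mu(x)$ is a $\Complex-LUR$ point of $B_{G_0}$, and since $\|f\|_G=\|\pi(\mu(f))\|_E$ involves the nonlinear map $f\mapsto\pi(\mu(f))$, this does not follow formally from $S(x)$ being a $\Complex-LUR$ point of $B_E$; that verification is again the real work and is left undone.
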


Let us discuss here the notions of complex strongly extreme ($\Complex-MLUR$) points  
and complex midpoint locally uniformly rotund ($\Complex-MLUR$) spaces.

It was demonstrated in \cite{czer-kam2010} that the notions of $\Complex-LUR$ and $\Complex-MLUR$ points, and hence the notions of $\Complex-LUR$ and $\Complex-MLUR$ spaces, are equivalent in any complex normed space. Consequently, in complex  normed spaces these complex properties are not distinguishable contrary to their corresponding "real" properties $LUR$ and $MLUR$ \cite{Lin}.

The modulus of complex strong extremality was defined in \cite{CCH} analogously as the modulus
of strong extremality in the real case, introduced by C. Finet in
\cite{Fin}.
Let $(X,\|\cdot\|)$ be a
 normed space over the field of complex numbers. For $x\in S_X$ and
$\epsilon>0$, the \emph{modulus of complex strong extremality} at
$x$ is the number
  \[
  \displaystyle \Delta(x,\epsilon)=\inf\left\{1-\abs{\lambda}:\hspace{4mm} \exists y,\|y\|>\epsilon,\quad \|\lambda x\pm y\|\leq1\text{  and  }\|\lambda ix\pm y\|\leq 1\right\}.
  \]
The element $x\in S_X$ is said to be a \emph{$\Complex-MLUR$} point in $B_X$, or \emph{complex strongly extreme} point of the unit ball $B_X$, if for any $\epsilon>0$, the modulus of complex extremality $\Delta(x,\epsilon)>0$. A  normed space $X$ is said to be \emph{complex midpoint locally uniformly rotund} or $\Complex-MLUR$ space, if every element from the unit sphere $S_X$ is a $\Complex-MLUR$ point.

The following  equivalent definition of $\Complex-MLUR$ points leads to the proof of equivalence of $\Complex-LUR$ and $\Complex-MLUR$ notions.
\begin{lemma}\cite[Lemma 5.1]{czer-kam2010}
\label{complexdef}
 An element $x\in S_X$ is a $\Complex-MLUR$ point of $B_X$ if and only if for any $\{x_n\}\subset X$, $\lambda=\pm 1,\pm i$, $\|x+\lambda x_n\|\to 1$ implies that $\|x_n\|\to 0$.
\end{lemma}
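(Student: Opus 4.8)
The plan is to prove both implications by the standard device of rescaling, which converts the quantitative modulus $\Delta(x,\epsilon)$ into a statement about sequences and back. Throughout I will use the elementary remark that if $\norm{\lambda x+y}\le 1$ and $\norm{\lambda x-y}\le 1$ then $\norm{\lambda x}\le 1$, so $\abs{\lambda}\le 1$; hence every $\lambda$ occurring in the defining set of $\Delta(x,\epsilon)$ automatically has $\abs{\lambda}\le 1$, and $\Delta(x,\epsilon)\ge 0$ always (with the convention that the infimum of the empty set is $+\infty$).

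First I would prove that a $\Complex-MLUR$ point satisfies the sequential condition. Assume $\Delta(x,\epsilon)>0$ for every $\epsilon>0$, and let $\{x_n\}\subset X$ satisfy $\norm{x+\lambda x_n}\to 1$ for $\lambda=\pm 1,\pm i$. If $\norm{x_n}\not\to 0$, pass to a subsequence with $\norm{x_n}\ge\epsilon$ for a fixed $\epsilon>0$. Put $t_n=\max\{1,\norm{x+x_n},\norm{x-x_n},\norm{x+ix_n},\norm{x-ix_n}\}$, so $t_n\to 1$; set $\mu_n=1/t_n\in(0,1]$ and $y_n=x_n/t_n$. Using $\mu_n ix+y_n=t_n^{-1}i(x-ix_n)$ and $\mu_n ix-y_n=t_n^{-1}i(x+ix_n)$ one checks $\norm{\mu_n x\pm y_n}\le 1$ and $\norm{\mu_n ix\pm y_n}\le 1$, while $\norm{y_n}=\norm{x_n}/t_n>\epsilon/2$ for large $n$. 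Then $y_n$ is an admissible witness for $\Delta(x,\epsilon/2)$, so $1-\mu_n=1-\abs{\mu_n}\ge\Delta(x,\epsilon/2)>0$ for large $n$, contradicting $\mu_n=1/t_n\to 1$. Hence $\norm{x_n}\to 0$.

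For the converse I would argue by contraposition: if $x$ is not $\Complex-MLUR$, then $\Delta(x,\epsilon)=0$ for some $\epsilon>0$, so there are $\lambda_n\in\Complex$ (with $0<\abs{\lambda_n}\le 1$ for large $n$, since $\abs{\lambda_n}\to 1$) and $y_n\in X$ with $\norm{y_n}>\epsilon$, $\norm{\lambda_n x\pm y_n}\le 1$ and $\norm{\lambda_n ix\pm y_n}\le 1$. Set $z_n=y_n/\lambda_n$ and $s_n=1/\abs{\lambda_n}\to 1$. Dividing the four inequalities by $\abs{\lambda_n}$ and using $\lambda_n ix/\lambda_n=ix$ together with the rotation identities as above gives $\norm{x\pm z_n}\le s_n$ and $\norm{x\pm iz_n}\le s_n$, while $\norm{z_n}=\norm{y_n}/\abs{\lambda_n}\ge\norm{y_n}>\epsilon$. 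Finally, from $\norm{x+z_n}+\norm{x-z_n}\ge\norm{2x}=2$ and $\norm{x\pm z_n}\le s_n$ we get $2-s_n\le\norm{x\pm z_n}\le s_n$, so $\norm{x\pm z_n}\to 1$, and likewise $\norm{x\pm iz_n}\to 1$. Thus $\{z_n\}$ violates the sequential condition, completing the contrapositive.

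The argument is essentially bookkeeping, and I do not expect a serious obstacle; the two places needing a little care are (a) extracting a genuine sequence $\lambda_n$ with $\abs{\lambda_n}\to 1$ from $\inf(\cdots)=0$ (the infimum need not be attained, but a minimizing sequence suffices and each $\lambda_n$ carries its own witness $y_n$), and (b) recovering the lower estimate $\norm{x\pm z_n}\ge 2-s_n$, so that these norms actually converge to $1$ rather than merely staying $\le s_n$. Step (b) is the one point where all four signs $\pm$ in the definition of $\Delta(x,\epsilon)$ are genuinely used, and it is what pins down the correct normalization $s_n=1/\abs{\lambda_n}$ (respectively $t_n$).
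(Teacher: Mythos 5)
Your proof is correct and follows essentially the same route as the paper's: rescaling by the maximum of the four norms to produce admissible witnesses for the modulus $\Delta(x,\cdot)$ in one direction, and in the other direction taking a minimizing sequence $\lambda_n$ with $|\lambda_n|\to 1$, dividing by $\lambda_n$, and using $2=2\|x\|\le\|x+z_n\|+\|x-z_n\|$ to force all four norms to converge to $1$. Your handling of the case $c_n\le 1$ via $t_n=\max\{1,c_n\}$ and the explicit two-sided bound $2-s_n\le\|x\pm z_n\|\le s_n$ are only cosmetic simplifications of the paper's argument.
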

\begin{proof}
Suppose that $x\in S_X$ is a $\Complex-MLUR$ point, that is for all $\epsilon>0$, the modulus $\Delta(x,\epsilon)>0$.
Let $\|x\pm x_n\|\to 1$ and $\|x\pm ix_n\|\to 1$, where $\{x_n\}\subset X$. Set
\[
c_n=\max_{\lambda\in\{\pm1,\pm i\}}\|x+\lambda x_n\|.
\] Clearly, $c_n\to 1$. If for some $n$, $c_n\leq1$ then $\|x+\lambda x_n\|\leq 1$ for all $\lambda=\pm1,\pm i$, and consequently $x_n=0$. Indeed, suppose that $x_n\neq0$. Hence, there exists an $\epsilon>0$ such that $\|x_n\|>\epsilon$, $\|x\pm x_n\|\leq 1$ and $\|ix\pm x_n\|\leq 1$. But then $\Delta(x,\epsilon)=0$, which leads to a contradiction. Therefore without lost of generality, we can assume that $c_n>1$ for all $n\in\mathbb{N}$.  Clearly, for all $n\in\mathbb{N}$,
\[
\|c_n^{-1}x\pm c_n^{-1} x_n\|\leq 1\text{\quad and\quad } \|ic_n^{-1}x\pm c_n^{-1} x_n\|\leq 1.
\]
Denote $\lambda_n=c_n^{-1}$, $n\in\mathbb{N}$. Then for each $\lambda_n$ there exists an element $y_n=c_n^{-1}x_n$ such that $\|\lambda_nx\pm y_n\|\leq 1$ and $\|i\lambda_nx\pm y_n\|\leq1$. Hence $\|y_n\|\to 0$ and consequently $\|x_n\|\to 0$. If not, then there exist $\epsilon>0$ and a subsequence $y_{n_k}$ such that $\|y_{n_k}\|>\epsilon$, and since $\lambda_{n_k}\to 1$, 
\[
0\le \Delta(x,\epsilon)\le 1- |\lambda_{n_k}| \to 0, \ \ \ k\to\infty,
\]
which leads to  $\Delta(x,\epsilon) = 0$, a contradiction with the assumption.

To prove the  reverse implication, assume that $\Delta(x,\epsilon)=0$ for some $\epsilon>0$. Therefore there exists a sequence $\{\lambda_n\}\subset\Complex$ satisfying $\abs{\lambda_n}\uparrow 1$ and for each $n\in\mathbb{N}$, there is  $x_n\in\emph{B}_X$, $\|x_n\|\geq \epsilon$ such that $\|\lambda_n x\pm x_n\|\leq 1$ and $\|i\lambda_n x\pm x_n\|\leq 1$.
Therefore, for all $n\in\mathbb{N}$ we have
\[
\|x\pm \lambda_n^{-1}x_n\|\leq \abs{\lambda_n}^{-1}\quad\text{and}\quad \|x\pm i \lambda_n^{-1}x_n\|\leq\abs{\lambda_n}^{-1},
\]
and since $\abs{\lambda_n}\to1$, $\overline\lim_n\|x\pm \lambda_n^{-1}x_n\|\leq 1$ and $\overline\lim_n\|x\pm i\lambda_n^{-1}x_n\|\leq 1$.

By $2=2\|x\|\leq \|x+\lambda_n^{-1}x_n\|+\|x-\lambda_n^{-1}x_n\|$ it follows  that $\overline\lim_n\|x\pm \lambda_n^{-1}x_n\|= 1$. Moreover, $2-\|x-\lambda_n^{-1} x_n\|\leq \|x+\lambda_n^{-1} x_n\|$, and so $2-\underline\lim_n\|x-\lambda_n^{-1} x_n\|=\overline \lim_n (2-\|x-\lambda_n^{-1} x_n\|)\leq \overline\lim_n\|x+\lambda_n^{-1} x_n\|=1$. Hence $1\leq \underline\lim_n\|x-\lambda_n^{-1} x_n\|\leq \overline\lim_n\|x-\lambda_n^{-1} x_n\|=1$ and so $\lim_n\|x-\lambda_n^{-1} x_n\|=1$. Similarly one can show that $\lim_n\|x+\lambda_n^{-1} x_n\|=1$ and $\lim_n\|x\pm i\lambda_n^{-1} x_n\|=1$.
 Hence there exists a subsequence $\displaystyle \{\lambda_{n_k}^{-1}x_{n_k}\}$ such that 
 \[
 \lim_k\|\lambda_{n_k}^{-1}x_{n_k}\|\neq0, \ \ \ \lim_k\|x\pm \lambda_{n_k}^{-1}x_{n_k}\|=1, \ \  \lim_k\|x\pm i\lambda_{n_k}^{-1}x_{n_k}\|=1,
 \]
 which completes the proof.
\end{proof}
Now we can state the equivalence result of $\Complex-LUR$ and $\Complex-MLUR$ properties.
\begin{proposition}\cite[Proposition 5.2]{czer-kam2010}
Let $(X, \|\cdot\|)$ be a normed space and $x\in S_X$. The following conditions are equivalent.
\begin{itemize}
\item[{(i)}] An element $x\in S_X$ is a $\Complex$-LUR point of $B_X$.
\item[{(ii)}] For all $\{y_n\}\subset X$,  $\sup_{\lambda=\pm1,\pm i}\|x+\lambda y_n\|\to1$ implies $\|y_n\|\to0$.
\item[{(iii)}] For all $\{y_n\}\subset X$,  $\|x\pm y_n\|\to 1$ and $\|x\pm i y_n\|\to 1$ implies $\|y_n\|\to 0$.
\item[(iv)] An element $x\in S_X$ is a $\Complex-MLUR$ point of $B_X$.
\end{itemize}
\end{proposition}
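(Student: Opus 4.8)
The plan is to prove the chain of equivalences $(iv)\Leftrightarrow(iii)\Leftrightarrow(ii)\Leftrightarrow(i)$, of which the first link costs nothing. Indeed, $(iii)\Leftrightarrow(iv)$ is simply a restatement of Lemma~\ref{complexdef}: the hypothesis ``$\|x+\lambda x_n\|\to 1$ for $\lambda=\pm1,\pm i$'' appearing there is exactly the pair of conditions ``$\|x\pm y_n\|\to 1$ and $\|x\pm i y_n\|\to 1$'' in $(iii)$, so that $x$ being a $\Complex$-MLUR point is by definition the implication in $(iii)$. Thus the substantive content of the equivalence has already been discharged in Lemma~\ref{complexdef}, and the remaining work is purely formal.

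For $(ii)\Leftrightarrow(iii)$ I would argue that, for a fixed sequence $\{y_n\}\subset X$, the conditions ``$\sup_{\lambda=\pm1,\pm i}\|x+\lambda y_n\|\to 1$'' and ``$\|x+\lambda y_n\|\to 1$ for each $\lambda=\pm1,\pm i$'' are one and the same. The supremum is a maximum of the four quantities $\|x\pm y_n\|$, $\|x\pm i y_n\|$, so it tending to $1$ forces each term to satisfy $\limsup_n\|x+\lambda y_n\|\le 1$; for the reverse bound, since $-\lambda$ again lies in $\{\pm 1,\pm i\}$, the triangle inequality gives $\|x+\lambda y_n\|\ge 2\|x\|-\|x-\lambda y_n\|\ge 2-\max_{\mu=\pm1,\pm i}\|x+\mu y_n\|\to 1$, whence $\liminf_n\|x+\lambda y_n\|\ge 1$. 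Conversely, if all four tend to $1$ their maximum does too. Hence the implications $(ii)$ and $(iii)$ are logically identical, and in particular equivalent.

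It remains to establish $(i)\Leftrightarrow(ii)$, which is the standard translation between the modulus-type $\epsilon$--$\delta$ formulation of $\Complex$-LUR and its sequential form. For $(i)\Rightarrow(ii)$: assuming $\sup_{\lambda}\|x+\lambda y_n\|\to 1$ but $\|y_n\|\not\to 0$, pass to a subsequence with $\|y_{n_k}\|\ge\epsilon$ for some $\epsilon>0$; then $(i)$ supplies $\delta(x,\epsilon)>0$ with $\sup_{\lambda}\|x+\lambda y_{n_k}\|\ge 1+\delta(x,\epsilon)$ for all $k$, contradicting convergence to $1$. For $(ii)\Rightarrow(i)$: if $(i)$ failed there would be $\epsilon>0$ so that for each $n$ one could choose $y_n$ with $\|y_n\|\ge\epsilon$ and $\sup_{\lambda}\|x+\lambda y_n\|<1+\tfrac1n$; since also $\sup_{\lambda}\|x+\lambda y_n\|\ge\|x\|=1$ (from $2\|x\|\le\|x+y_n\|+\|x-y_n\|$), we get $\sup_{\lambda}\|x+\lambda y_n\|\to 1$, so $(ii)$ forces $\|y_n\|\to 0$, against $\|y_n\|\ge\epsilon$. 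There is no genuine obstacle here; the only point needing mild care is the lower-bound step in $(ii)\Leftrightarrow(iii)$, which relies on the index set $\{\pm1,\pm i\}$ being closed under negation so that the triangle inequality may be applied to $x-\lambda y_n$.
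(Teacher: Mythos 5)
Your proposal is correct and follows essentially the same route as the paper: $(iii)\Leftrightarrow(iv)$ via Lemma~\ref{complexdef}, the routine $\epsilon$--$\delta$ versus sequential translation for $(i)\Leftrightarrow(ii)$, and the triangle-inequality bound $\|x+\lambda y_n\|\ge 2\|x\|-\|x-\lambda y_n\|$ to pass from the supremum condition to convergence of each of the four norms. The only difference is cosmetic: you spell out the steps the paper dismisses as ``clear'' or defers to the last paragraph of the proof of Lemma~\ref{complexdef}, and you package $(ii)\Leftrightarrow(iii)$ as an equivalence of hypotheses rather than proving the two implications separately.
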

\begin{proof}
Let $x\in S_X$. It is clear that (i) and (ii) are equivalent and (ii) implies (iii). By Lemma \ref{complexdef}, conditions (iii) and (iv) are also equivalent. It remains to show implication from (iii) to (ii).

Suppose that $\displaystyle \sup_{\lambda=\pm 1,\pm i}\|x+\lambda y_n\|\to 1$, $\{y_n\}\subset X$. Then $\overline{\lim}_n\|x\pm y_n\|\leq 1$ and $\overline{\lim}_n\|x\pm iy_n\|\leq 1$. Similarly as in the last paragraph of the proof of  Lemma \ref{complexdef} we can show that  for all $\lambda=\pm 1,\pm i$, we have $\lim_n\|x+\lambda y_n\|=1$. Hence by (iii), $\|y_n\|\to 0$.
\end{proof}

\begin{corollary} A normed space $X$ is $\Complex-LUR$ if and only if it is $\Complex-MLUR$.

\end{corollary}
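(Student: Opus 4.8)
The plan is to deduce this corollary immediately from the Proposition established just above, which asserts the pointwise equivalence of conditions (i)--(iv) for any $x \in S_X$; in particular, $x$ is a $\Complex$-LUR point of $B_X$ if and only if $x$ is a $\Complex$-MLUR point of $B_X$. By definition, $X$ is a $\Complex$-LUR space precisely when every $x \in S_X$ is a $\Complex$-LUR point, and $X$ is a $\Complex$-MLUR space precisely when every $x \in S_X$ is a $\Complex$-MLUR point. Quantifying the pointwise equivalence over all $x \in S_X$ yields the stated biconditional with no further work.

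Thus the only step is to observe that the two space-level properties are the universal closures over $S_X$ of two pointwise properties that have just been shown to coincide point by point. There is essentially no obstacle here: all the substance was already absorbed into the proof of the Proposition, whose nontrivial part was the implication (iii)$\Rightarrow$(ii) together with the characterization of $\Complex$-MLUR points given by the preceding Lemma (the equivalence of the modulus-of-complex-strong-extremality formulation with the sequential one). Once that pointwise equivalence is in hand, the passage to the global statement is purely formal.

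If one wished to make the argument self-contained rather than citing the Proposition, one could instead argue directly: a $\Complex$-LUR point is trivially a $\Complex$-MLUR point, since $\sup_{\lambda=\pm 1,\pm i}\|x+\lambda y_n\|\to 1$ in particular forces $\|x\pm y_n\|\to 1$ and $\|x\pm iy_n\|\to 1$; conversely, if $\|x\pm y_n\|\to 1$ and $\|x\pm iy_n\|\to 1$, then the convexity estimates $2=2\|x\|\le \|x+y_n\|+\|x-y_n\|$ (and its analogue with $iy_n$) force each of the four quantities $\|x+\lambda y_n\|$ to tend to $1$, hence $\sup_{\lambda=\pm 1,\pm i}\|x+\lambda y_n\|\to 1$, so the $\Complex$-LUR condition applies and $\|y_n\|\to 0$. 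Either way, the corollary follows at once.
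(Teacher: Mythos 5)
Your main argument is exactly the paper's: the corollary is just the universal quantification over $S_X$ of the pointwise equivalence established in the preceding Proposition (conditions (i)--(iv)), and the paper indeed states it with no further proof, so citing that Proposition and quantifying over the sphere is all that is needed. One caveat about your optional self-contained aside: the justifications for the two directions are swapped there --- the trivial direction ($\Complex$-LUR point $\Rightarrow$ $\Complex$-MLUR point) rests on the easy fact that $\|x\pm y_n\|\to 1$ and $\|x\pm iy_n\|\to 1$ imply $\sup_{\lambda=\pm1,\pm i}\|x+\lambda y_n\|\to 1$, whereas it is the converse direction ($\Complex$-MLUR point $\Rightarrow$ $\Complex$-LUR point) that requires the convexity estimate $2=2\|x\|\le \|x+\lambda y_n\|+\|x-\lambda y_n\|$ to upgrade $\sup_{\lambda}\|x+\lambda y_n\|\to 1$ to convergence of each of the four sequences before the $\Complex$-MLUR condition can be invoked.
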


\section{$p$-convexity and $q$-concavity} 
In \cite{AL1985}, J. Arazy and in \cite{DDS3}, P. Dodds, T. Dodds and F. Sukochev have characterized  $p$-convexity (concavity) and  lower- (upper) $p$-estimate of $C_E$ and  of $\nonsp$, respectively. Those studies have been  performed  in the case  when  $E$ is a quasi-normed symmetric space. Recall that the real valued  functional $\|\cdot\|$ on a complex vector space $X$ is a quasi-norm if it satisfy the following conditions: (1) $\|x\|=0$ if and only if $x=0$; (2) $\|\lambda x\| = |\lambda| \|x\|$ for all $x\in X$, $\lambda\in \Complex$; (3) there exists $C>0$ such that $\|x + y\| \le C(\|x\| + \|y\|)$ for all $x,y \in X$.  The space $X$ equipped with a quasi-norm $\|\cdot\|$ is called a quasi-normed space, and when it is complete then it is called a quasi-Banach space. 

A quasi-normed space $F=F(I)\subset L^0(I)$, where either  $I= [0,\alpha)$, $0< \alpha \le \infty$, or $I=\mathbb{N}$, with the quasi-norm $\normf{\cdot}$ satisfying the condition that $f\in F$ and $\normf{f}\leq\normf{g}$ whenever $0\leq f\leq g$, $f\in L^0(I)$ and $g\in F$, is a \emph{quasi-normed function, or sequence space}, respectively.  A quasi-normed function or sequence space $E\subset L^0$ is called \emph{a  quasi-normed symmetric space}  if it follows from $f\in L^0$, $g\in E$ and $\mu(f)\leq \mu(g)$ that $f\in E$ and $\norme{f}\leq\norme{g}$. If $E$ is complete then it is called a \emph{quasi-Banach symmetric space}. The notions of the Fatou property of $E$ or order continuity of $f\in E$ are defined analogously as in the case of Banach symmetric spaces.

   Given a quasi-normed  symmetric space $E$, the space $E\Mtau$ of measurable operators defined  analogously as for a normed space $E$, that is $E\Mtau=\{x\in S\Mtau:\quad \mu(x)\in E\}$  and 
$\|x\|_{\nonsp}=\norme{\mu(x)}$, is a quasi-normed space, and if $E$ is complete then $E\Mtau$ is also complete \cite{S}.  The space $E\Mtau$ is an ideal with respect to natural order. In fact if $0\le x\le y$, $x\in S(\mathcal{M},\tau)$, and $y\in E\Mtau$ then $x\in E\Mtau$ and $\|x\|_{E\Mtau}\le \|y\|_{E\Mtau}$.  However it is not a lattice in the sense that for given two operators $x$ and $y$ their minimum or maximum may not exist. Despite this the definitions of order convexity or concavity, and to some limited cases upper or lower estimates are extended to these spaces   in the analogous way.

Let $X\subset \mathcal{S}\Mtau$ be a quasi-normed space with quasi-norm $\|\cdot\|_X$. It is called symmetric if for any $x\in X$, $y\in \mathcal{S}\Mtau$ with $\mu(y)\le\mu(x)$ we have that $y\in X$ and $\|y\|_X \le \|x\|_X$. In particular if $E\subset L^0$ is a quasi-normed symmetric space, then $E\Mtau$ is  a quasi-normed symmetric  space of measurable operators. 
We also have the opposite relation, if  $\M$ is a non-atomic von Neumann algebra, then for every
symmetric space $(X, \|\cdot\|_X)\subset \mathcal{S}(\M, \tau)$ there exists a symmetric function space $(E, \|\cdot\|_E)$ on
$[0,\tauone)$ such that $X = \nonsp$ and $\|x\|_X=\|x\|_{\nonsp}$ for every $x\in X$ \cite{CKS1992, CSweak}.

 Let $X\subset \mathcal{S}\Mtau$ be a quasi-normed symmetric space.  Given $x_i\in X$, $i=1,2,\dots,n$,  $0<p<\infty$, the element $\left(\sum_{i=1}^n |x_i|^p \right)^{\frac{1}{p}}$ is well defined by functional calculus.

For operators $x_i$, the expression $\left(\sum_{i=1}^n\abs{x_i}^p\right)^{1/p}$ fails the monotonicity and convexity properties enjoyed by the analogous expressions in quasi-normed lattices. It is well known that $|\cdot|$ does not satisfy the triangle inequality for operators.    Neither $p\mapsto \tr {(a^p+b^p)^{1/p}}$ nor $p\mapsto (a^p+b^p)^{1/p}$, for two positive operators $a,b\in B(\ell_2)$, need to be monotone \cite{AL1985}. Despite of this the  quasi-norms $\left\|\left(\sum_{j=1}^n\abs{x_j}^p\right)^{1/p}\right\|_{X}$ behave in a much better way 
and can be studied via majorization inequalities between the sequences $\left(\sum_{j=1}^n\abs{x_j}^q\right)^{1/q}$ and $\left(\sum_{j=1}^n \mu(x_j)^p\right)^{1/p}$ for  $0< p,q<\infty$.

  Let $0< p,q< \infty$ and assume that 
$\left(\sum_{i=1}^n |x_i|^p \right)^{\frac{1}{p}}\in X$ if $x_i\in X$.  A  quasi-normed symmetric space $X\subset \mathcal{S}\Mtau$  is said to be $p$-\textit{convex}, $0
< p < \infty$, respectively $q$-\textit{concave}, $0<q<\infty$, if there
is a constant $M > 0$ such that
\begin{equation}
\label{eq1}
\left\|\left(\sum_{i=1}^n |x_i|^p \right)^{\frac{1}{p}}\right\|_X \leq M
\left(\sum_{i=1}^n \|x_i\|_X ^p \right)^{\frac{1}{p}},
\end{equation}
respectively,
\begin{equation}
\label{eq2}
\left(\sum_{i=1}^n \|x_i\|_X^q \right)^{\frac{1}{q}} \leq M
\left\|\left(\sum_{i=1}^n |x_i|^q \right)^{\frac{1}{q}}\right\|_X
\end{equation}
for every choice of vectors $x_1, \ldots, x_n \in X$. 
We set $M^{(p)}(X)$ to be the smallest constant $M$ in (\ref{eq1}), and we call it a $p$-convexity constant of $X$.
Similarly, $M_{(q)}(X)$, called $q$-concavity constant of $X$, will denote the smallest constant in (\ref{eq2}). 

Note that for $x_i \in E\Mtau$, $i=1,2,\dots,n$,  where $E$ is a quasi-normed symmetric space, we have $\left(\sum_{i=1}^n\abs{x_i}^p\right)^{1/p}\in E\Mtau$ \cite[Lemma 2.1]{DDS3}.
Indeed setting $|y| = \sum_{i=1}^n |x_i|^p$ we have  by Lemma \ref{lm:singfun} (5) and (4),
\[
 \mu\left(t,\left(\sum_{i=1}^n |x_i|^p\right)^{\frac1p}\right)=\left( \mu\left(t,\sum_{i=1}^n |x_i|^p\right)\right)^{\frac1p},\ \
 \mu\left(t,\sum_{i=1}^n |x_i|^p\right)\leq \sum_{i=1}^n \mu\left(\frac{t}{n},\sum_{i=1}^n |x_i|^p\right),
 \] 
and it follows 
  \begin{align*}
 \mu\left(t,\left(\sum_{i=1}^n |x_i|^p\right)^{\frac1p}\right)
 \leq \left(\sum_{i=1}^n \mu\left(\frac tn, |x_i|^p\right)\right)^{\frac1p}=\left(\sum_{i=1}^n \mu^p\left(\frac tn, |x_i|\right)\right)^{\frac1p}.
 \end{align*} 
Now since  the dilation operator  is bounded \cite[Lemma 1.4]{KR2009} on $E$, and  $\mu(|x_i|)\in E$, $ \mu\left(\frac tn, |x_i|\right)\in E$.  By functional calculus for $E$ \cite{LT2} we have that  $\left(\sum_{i=1}^n \mu^p\left(\frac tn, |x_i|\right)\right)^{\frac1p}\in E$, and by the above inequality,  $\mu\left(t,\left(\sum_{i=1}^n |x_i|^p\right)^{\frac1p}\right)\in E$, and so $\left(\sum_{i=1}^n |x_i|^p\right)^{\frac1p}\in \nonsp$.

It is easy to check that $L^p\Mtau$ is $p$-convex and $p$-concave with $M^{(p)}(L^p\Mtau) = M_{(p)}(L^p\Mtau) = 1$.

For any quasi-normed symmetric space $X\subset \mathcal{S}\Mtau$ and $0<p<\infty$ define $X^{(p)} = \{x\in \mathcal{S}\Mtau: |x|^{p} \in X\}$ equipped with  $\|x\|_{X^{(p)}} = \|\, |x|^{p}\|^{1/p}_X$.  It is  called $p$-{\it convexification} of $X$. If $X$ is a quasi-normed symmetric space then $X^{(p)}$ is also a quasi-normed symmetric space.  In Proposition 3.1 in \cite{DDS3} there is a  list of  properties of convexification.   Among others we have that $E^{(p)}\Mtau = (E\Mtau)^{(p)}$,  and  $M^{(pr)}(X^{(r)}) = M^{(p)}(X)^{1/r}$, $M_{(pr)}(X^{(r)}) = M_{(p)}(X)^{1/r}$. These relations help to characterize convexity and concavity properties allowing the reduction of "power" of the spaces.  

The main results on convexity properties are based on the  inequalities presented in Lemma \ref{lem:kalsuk} and Theorem \ref{th:convexity} below. 
Observe that in \cite[Theorem 2.5 (i)]{AL1985}  it has been proved the inequality $S(x+y)^\gamma \prec S(x)^\gamma + S(y)^\gamma$, $0<\gamma \le1$, which under the assumption that $E$ is separable  implies the analogue of  Theorem \ref{th:convexity} in $C_E$, \cite[Lemma 3.1 (i)]{AL1985}. In \cite{KS} the assumption of separability of $E$ was removed via Lemma \ref{lem:kalsuk}.

\begin{lemma} \cite[Theorem 8.10 (ii)]{KS}
\label{lem:kalsuk}
Let $E$ be a symmetric normed space and $\varphi: [0,\infty) \to [0,\infty)$ be a continuous increasing concave function.  Then for $x, y \in E\Mtau$,
\[
\| \varphi(|x + y|)\|_{E\Mtau} \le \|\varphi(|x|)\|_{E\Mtau}  + \|\varphi(|y|)\|_{E\Mtau}.
\]

\end{lemma}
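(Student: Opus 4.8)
**Proof plan for Lemma (the inequality $\|\varphi(|x+y|)\|_{E\Mtau}\le\|\varphi(|x|)\|_{E\Mtau}+\|\varphi(|y|)\|_{E\Mtau}$).**

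The plan is to reduce the operator inequality to a submajorization statement between decreasing rearrangements and then invoke the fact that a symmetric \emph{normed} space is automatically strongly symmetric (stated in the preliminaries: any symmetric space with the Fatou property or order continuity is strongly symmetric; more precisely one uses the standard fact that $\|\cdot\|_E$ is monotone with respect to Hardy--Littlewood submajorization for normed symmetric spaces, which follows from $E\hookrightarrow L_1+L_\infty$ and the Calderón--Mityagin description of the Hardy--Littlewood relation). First I would record the scalar-level inequality: since $\varphi$ is concave, increasing, and $\varphi(0)\ge 0$, it is subadditive on $[0,\infty)$, i.e. $\varphi(s+t)\le\varphi(s)+\varphi(t)$. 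Combined with monotonicity of $\varphi$ and the operator triangle inequality for singular value functions, $\mu(t_1+t_2,x+y)\le\mu(t_1,x)+\mu(t_2,y)$ from Lemma~\ref{lm:singfun}(4), together with Lemma~\ref{lm:singfun}(5) which gives $\mu(\varphi(|z|))=\varphi(\mu(z))$ for such $\varphi$, one obtains for all $t_1,t_2\ge 0$
\[
\mu(t_1+t_2,\varphi(|x+y|))=\varphi(\mu(t_1+t_2,x+y))\le\varphi(\mu(t_1,x)+\mu(t_2,y))\le\varphi(\mu(t_1,x))+\varphi(\mu(t_2,y)).
\]

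Next I would integrate this pointwise inequality to get Hardy--Littlewood submajorization. Fix $t>0$. Using the classical identity $\int_0^t\mu(f)=\sup\{\int_A|f|:m(A)\le t\}$ and choosing, for the functions $\varphi(\mu(\cdot,x))$ and $\varphi(\mu(\cdot,y))$, the comparison with the splitting $t=t_1+t_2$ that realizes (or approximates) the infimum of $\mu(t_1,x)+\mu(t_2,y)$ over $t_1+t_2=t$, one deduces
\[
\int_0^t\mu(s,\varphi(|x+y|))\,ds\le\int_0^t\varphi(\mu(s,x))\,ds+\int_0^t\varphi(\mu(s,y))\,ds=\int_0^t\mu(s,\varphi(|x|))\,ds+\int_0^t\mu(s,\varphi(|y|))\,ds,
\]
that is, $\varphi(|x+y|)\prec\varphi(|x|)+\varphi(|y|)$ in the sense of Hardy, Littlewood and Pólya. (This step is the standard integral form of the fact that $\mu(z+w)\prec\mu(z)+\mu(w)$ from Fack--Kosaki, transported through the subadditive increasing function $\varphi$; one can alternatively apply $\mu(z+w)\prec\mu(z)+\mu(w)$ with $z=\varphi(|x+y|)$ after first checking that $\varphi(|x+y|)$ is dominated, via the scalar inequality above, by a sum whose rearrangement is controlled, but the direct integral estimate is cleanest.)

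Finally, since $E$ is a symmetric normed space, it is strongly symmetric (Fatou or order continuity is not even needed here because for a \emph{Banach} symmetric space on $[0,\alpha)$ the norm is always monotone for submajorization on the intersection with $L_1\cap L_\infty$, and the general case follows by the embedding $E\hookrightarrow L_1+L_\infty$ together with the Fatou-type regularity built into the definition). Hence from $\varphi(|x+y|)\prec\varphi(|x|)+\varphi(|y|)$ and the triangle inequality in $E$ applied to $\varphi(\mu(x))$ and $\varphi(\mu(y))$,
\[
\|\varphi(|x+y|)\|_{E\Mtau}=\|\mu(\varphi(|x+y|))\|_E\le\|\varphi(\mu(x))+\varphi(\mu(y))\|_E\le\|\varphi(\mu(x))\|_E+\|\varphi(\mu(y))\|_E=\|\varphi(|x|)\|_{E\Mtau}+\|\varphi(|y|)\|_{E\Mtau}.
\]
The main obstacle I expect is the middle step: turning the two-parameter pointwise bound on $\mu(t_1+t_2,\cdot)$ into a clean one-parameter submajorization, and in particular justifying that one may pass from $\|\cdot\|_E$ monotonicity on $L_1\cap L_\infty$ to the full symmetric space without extra hypotheses — this is exactly the point where the Kalton--Sukochev argument is more delicate than the separable case treated by Arazy, and where one must be careful that no order-continuity assumption sneaks in. If a fully self-contained argument is wanted, one would instead cite the Fack--Kosaki submajorization $\mu(x+y)\prec\mu(x)+\mu(y)$ and the Dodds--Dodds--de Pagter interpolation machinery to get monotonicity of $\|\cdot\|_E$ under $\prec$ directly.
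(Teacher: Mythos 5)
The survey does not prove this lemma at all --- it is quoted verbatim from Kalton--Sukochev \cite[Theorem 8.10 (ii)]{KS} --- so your attempt has to stand on its own, and it does not: the final step is a genuine gap, and it is exactly the point that makes the cited theorem non-trivial. You claim that a symmetric normed space is ``automatically strongly symmetric,'' i.e.\ that $f\prec g$ with $f,g\in E$ forces $\|f\|_E\le\|g\|_E$, justified by $E\hookrightarrow L_1+L_\infty$ and Calder\'on--Mityagin. Calder\'on--Mityagin only identifies the submajorization relation with the orbit of operators contractive on $L_1$ and on $L_\infty$; monotonicity of $\|\cdot\|_E$ along that orbit is precisely the (strong/full) symmetry property singled out separately in the preliminaries of this paper, and it does \emph{not} follow from symmetry of the norm alone --- that is why the preliminaries carefully distinguish symmetric, strongly symmetric and fully symmetric spaces and assert strong symmetry only under order continuity or the Fatou property. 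Your parenthetical claims (``Fatou or order continuity is not even needed,'' ``the norm is always monotone for submajorization on $L_1\cap L_\infty$'') are unsupported, and the proposed fallback via the Dodds--Dodds--de~Pagter interpolation machinery again presupposes full symmetry, so it cannot repair the general case. What your argument actually yields is the lemma under the additional hypothesis that $E$ is strongly symmetric --- a strictly weaker statement than the one needed here, whose whole point is the absence of such hypotheses. (Your middle step, converting the two-parameter bound $\varphi(\mu(t_1+t_2,x+y))\le\varphi(\mu(t_1,x))+\varphi(\mu(t_2,y))$ into the one-parameter submajorization $\varphi(\mu(x+y))\prec\varphi(\mu(x))+\varphi(\mu(y))$, is also more delicate than the sketch suggests --- the na\"ive choice $t_1=t_2=t/2$ only gives a dilated bound --- though that inequality is indeed available in the literature.)

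The actual Kalton--Sukochev route avoids submajorization altogether: the heart of the matter is an operator-order inequality in which $\varphi(|x+y|)$ is dominated by a sum of conjugates of $\varphi(|x|)$ and $\varphi(|y|)$ by contractions/partial isometries, in the spirit of the Akemann--Anderson--Pedersen triangle inequality $|x+y|\le u|x|u^*+v|y|v^*$ quoted in the preliminaries and of the matrix inequality $\|\varphi(a+b)\|\le\|\varphi(a)+\varphi(b)\|$ for unitarily invariant norms. From such an inequality the conclusion follows using only properties every symmetric space has --- invariance of $\mu$, hence of the norm, under conjugation by partial isometries (Lemma \ref{lm:singfun} (1)--(2)), monotonicity for the operator order on positive elements (Lemma \ref{lm:singfun} (3)), and the ordinary triangle inequality in $E\Mtau$. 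If you want a self-contained proof at the stated level of generality, that is the mechanism you need to reproduce; if you are content with $E$ strongly symmetric (which suffices for the application in Theorem \ref{th:convexity}, where the Fatou property is assumed), then your outline can be salvaged, but you must say so explicitly and still flesh out the submajorization step.
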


\begin{theorem}\label{th:convexity}

   \cite[Proposition 3.6]{DDS3}, \cite[Lemma 3.1 (i)]{AL1985}
Let $E$ be a quasi-normed symmetric space with the Fatou property. Let $0<p\le q <\infty$. If $E$ is $p$-convex then
\[
\left\|\left(\sum_{i=1}^n |x_i|^q \right)^{\frac{1}{q}}\right\|_{E\Mtau} \leq M^{(p)}(E)
\left(\sum_{i=1}^n \|x_i\|_{E\Mtau}^p \right)^{\frac{1}{p}}
\]
for every $x_1,x_2,\dots,x_n\in E\Mtau$.

\end{theorem}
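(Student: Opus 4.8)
The plan is to reduce the operator inequality, by one renorming of $E$ and one convexification, to a single application of the Kalton--Sukochev concavity estimate (\lemref{lem:kalsuk}). First I would normalise the $p$-convexity constant. Replacing $\norme{\cdot}$ by the equivalent quasi-norm
\[
\norme{f}'=\inf\Big\{\Big(\textstyle\sum_{j}\norme{f_{j}}^{p}\Big)^{1/p}:\ f_{j}\in E,\ |f|^{p}\le\textstyle\sum_{j}|f_{j}|^{p}\Big\},
\]
one obtains, by the standard argument, a symmetric quasi-norm that is $p$-convex with constant $1$ and satisfies $\norme{\cdot}'\le\norme{\cdot}\le M^{(p)}(E)\norme{\cdot}'$; since $\|z\|_{\nonsp}=\norme{\mu(z)}$, the same two-sided estimate passes to the associated norms on $\nonsp$. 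Hence it suffices to prove the inequality with constant $1$ for $(E,\norme{\cdot}')$ and then read off the factor $M^{(p)}(E)$ on returning to $\norme{\cdot}$. So assume henceforth $M^{(p)}(E)=1$ and set $F:=E^{(1/p)}$; by the convexification identities in \cite[Proposition~3.1]{DDS3} one has $M^{(1)}(F)=M^{(p)}(E)^{p}=1$, so $F$ is a symmetric \emph{normed} space, and $F\Mtau=(\nonsp)^{(1/p)}$.

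Next I would transport the statement to $F\Mtau$. A short computation using \lemref{lm:singfun}(5) (so that $\mu(|z|^{p})=\mu(z)^{p}$) together with the definition $\|w\|_{F}=\norme{|w|^{1/p}}^{p}$ of the convexification norm gives $\||z|^{p}\|_{F\Mtau}=\|z\|_{\nonsp}^{p}$ for every $z\in S\Mtau$. Put $r:=q/p\ge1$, $v_{i}:=|x_{i}|^{q}\ge0$, and $b:=\big(\sum_{i}|x_{i}|^{q}\big)^{1/q}$. Then $|b|^{p}=b^{p}=\big(\sum_{i}v_{i}\big)^{1/r}$ and $|x_{i}|^{p}=v_{i}^{1/r}$, so, raising the desired inequality to the power $p$ and applying the identity just noted, it is equivalent to
\[
\Big\|\Big(\sum_{i=1}^{n}v_{i}\Big)^{1/r}\Big\|_{F\Mtau}\le\sum_{i=1}^{n}\big\|v_{i}^{1/r}\big\|_{F\Mtau}.
\]
Now I would apply \lemref{lem:kalsuk} to the continuous increasing concave function $\varphi(t)=t^{1/r}$ (legitimate since $r\ge1$ and $\varphi(0)=0$): as the $v_{i}$ are positive, $\varphi(|v_{i}|)=v_{i}^{1/r}$ and $\varphi\big(\big|\sum_{i}v_{i}\big|\big)=\big(\sum_{i}v_{i}\big)^{1/r}$, and an induction on $n$ over the two-term estimate $\|\varphi(|x+y|)\|_{F\Mtau}\le\|\varphi(|x|)\|_{F\Mtau}+\|\varphi(|y|)\|_{F\Mtau}$ yields precisely the displayed inequality. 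Unwinding the convexification identities and then the renorming gives $\big\|\big(\sum_{i}|x_{i}|^{q}\big)^{1/q}\big\|_{\nonsp}\le M^{(p)}(E)\big(\sum_{i}\|x_{i}\|_{\nonsp}^{p}\big)^{1/p}$.

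I expect the whole difficulty to be concentrated in \lemref{lem:kalsuk}: for operators the element $\big(\sum_{i}|x_{i}|^{q}\big)^{1/q}$ has none of the pointwise monotonicity or lattice domination available in the commutative case — as recalled above, even $p\mapsto(a^{p}+b^{p})^{1/p}$ need not be monotone — so the naive passage from the $p$-statement to the $q$-statement by domination is unavailable, and one genuinely needs the triangle inequality for $\varphi(|\,\cdot\,|)$. Everything else (the choice $\varphi(t)=t^{1/r}$, the convexification identities for $\mu(|z|^{p})$ and $\|\cdot\|_{F}$, and the constant bookkeeping through the renorming) is routine. If $E$ is separable one may instead argue classically: the submajorization $\mu(x+y)^{\gamma}\prec\mu(x)^{\gamma}+\mu(y)^{\gamma}$, $0<\gamma\le1$, of \cite[Theorem~2.5(i)]{AL1985}, applied with $\gamma=p/q$ to the operators $v_{i}$, gives $\mu(b)^{p}\prec\sum_{i}\mu(x_{i})^{p}$, and since then $F=E^{(1/p)}$ is order continuous, hence strongly symmetric, one may pass $\prec$ to the $F$-norm and conclude with the $p$-convexity of $E$; \lemref{lem:kalsuk} (from \cite{KS}) is exactly what removes the separability hypothesis.
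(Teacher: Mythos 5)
Your proof is correct and is essentially the paper's own argument: after normalizing $M^{(p)}(E)=1$ (the paper cites \cite[Corollary 3.5]{DDS3} for this renorming, you spell out the standard infimum construction), both pass to the $1/p$-convexification $E^{(1/p)}$, which is then a symmetric normed space, and apply Lemma \ref{lem:kalsuk} with the concave function $\varphi(t)=t^{p/q}$ to the operators $|x_i|^q$, which is exactly the inequality $\bigl\|\bigl(\sum_i|x_i|^q\bigr)^{p/q}\bigr\|_{E^{(1/p)}\Mtau}\le\sum_i\|\,|x_i|^p\|_{E^{(1/p)}\Mtau}$ used in the paper. Your closing remark about the separable case via \cite[Theorem 2.5(i)]{AL1985} likewise mirrors the paper's discussion preceding Lemma \ref{lem:kalsuk}.
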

\begin{proof}
Since $E$ is $p$-convex then it admits an equivalent symmetric norm if $p \ge 1$ (respectively $p$-norm if $0 < p < 1$) with convexity constant $1$ \cite[Corollary 3.5]{DDS3}.  So we assume that $M^{(p)}(E) = 1$. Then $E^{(1/p)} $ is $1$-convex with constant $1$, so $\|\cdot\|_{E^{(1/p)}}$ is a symmetric norm. The function $\varphi(u)= u^{p/q}$ is concave, so we apply Lemma \ref{lem:kalsuk}   for $z_i = |x_i|^p \in E^{(1/p)}\Mtau$, where $x_1,x_2,\dots,x_n \in E\Mtau$. Thus
\[
\left\|\left(\sum_{i=1}^n |z_i|^{q/p}\right)^{p/q}\right\|_{E^{(1/p)}\Mtau} \le \sum_{i=1}^n \|z_i\|_{E^{(1/p)}\Mtau}. 
\]
Hence
\begin{align*}
\left\|\left(\sum_{i=1}^n |x_i|^{q}\right)^{1/q}\right\|^p_{E\Mtau} &= \left\|\left(\sum_{i=1}^n |x_i|^{q}\right)^{p/q}\right\|_{E^{(1/p)}\Mtau} \\
&\le \sum_{i=1}^n \|\, |x_i|^p\|_{E^{(1/p)}\Mtau} = \sum_{i=1}^n \|x_i\|^p_{E\Mtau}. 
\end{align*}

\end{proof}

\begin{theorem} \cite[Theorem 3.8] {DDS3}\label{th:conv}
Let $E$ be a  quasi-normed symmetric space with the Fatou property. If $E$ is $p$-convex, $0<p<\infty$, then $E\Mtau$ is $p$-convex with $M^{(p)}(E\Mtau) \le M^{(p)}(E)$. If $\M$ is non-atomic then $E\Mtau$ is $p$-convex if and only if $E$ is $p$-convex, and in this case  $M^{(p)}(E\Mtau) = M^{(p)}(E)$. 

\end{theorem}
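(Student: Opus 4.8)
The plan is to obtain the first assertion directly from Theorem~\ref{th:convexity} and to obtain the converse, in the non-atomic case, from the isometric order $*$-embedding of $E$ into $\nonsp$ furnished by Corollary~\ref{cor:isomglobal}.

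For the first assertion I would simply specialize Theorem~\ref{th:convexity} to $q=p$: assuming $E$ is $p$-convex (and has the Fatou property), that theorem gives, for all $x_1,\dots,x_n\in\nonsp$,
\[
\left\|\left(\sum_{i=1}^n|x_i|^p\right)^{1/p}\right\|_{\nonsp}\le M^{(p)}(E)\left(\sum_{i=1}^n\|x_i\|_{\nonsp}^p\right)^{1/p},
\]
which is precisely $p$-convexity of $\nonsp$ with $M^{(p)}(\nonsp)\le M^{(p)}(E)$; here one also uses, as recorded just before the statement of the theorem (via Lemma~\ref{lm:singfun}~(4),~(5), boundedness of the dilation operator on $E$, and functional calculus for $E$), that $\left(\sum_i|x_i|^p\right)^{1/p}\in\nonsp$ whenever $x_i\in\nonsp$, so that the definition of $p$-convexity of $\nonsp$ applies. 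This step requires no assumption on $\M$.

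For the converse, assume $\M$ is non-atomic and $\nonsp$ is $p$-convex. By Corollary~\ref{cor:isomglobal} I would fix a non-atomic commutative von Neumann subalgebra $\mathcal{N}\subset\M$ and a $*$-isomorphism $V$ of $S([0,\tauone),m)$ into $S(\mathcal{N},\tau)$ with $\mu(Vf)=\mu(f)$ for every $f$; then the restriction $V|_E:E\to\nonsp$ is an order-preserving linear isometry, since $\|Vf\|_{\nonsp}=\|\mu(Vf)\|_E=\|\mu(f)\|_E=\|f\|_E$. The key observation will be that $V$, being a $*$-isomorphism of $*$-algebras of $\tau$-measurable operators, commutes with the relevant functional calculus, so that
\[
V\!\left(\left(\sum_{i=1}^n|f_i|^p\right)^{1/p}\right)=\left(\sum_{i=1}^n|Vf_i|^p\right)^{1/p}
\]
for all $f_1,\dots,f_n\in E$ (the left‑hand argument lies in $S([0,\tauone),m)$, hence in the domain of $V$, and it lies in $E$ because its $V$-image lies in $\nonsp$). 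Applying the isometry, the $p$-convexity inequality in $\nonsp$, and the isometry once more then yields
\[
\left\|\left(\sum_{i=1}^n|f_i|^p\right)^{1/p}\right\|_E\le M^{(p)}(\nonsp)\left(\sum_{i=1}^n\|f_i\|_E^p\right)^{1/p},
\]
i.e.\ $E$ is $p$-convex with $M^{(p)}(E)\le M^{(p)}(\nonsp)$. Together with the first assertion this gives $M^{(p)}(\nonsp)=M^{(p)}(E)$, and the ``if and only if'' is then immediate.

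The main obstacle I anticipate is justifying that $V$ commutes with the expression $\left(\sum_i|f_i|^p\right)^{1/p}$. As an algebra $*$-isomorphism $V$ preserves sums, products and adjoints, hence polynomials; it preserves positive square roots by uniqueness of the square root of a positive operator (so $|Vf|^2=V(f^*f)=V(|f|^2)$ gives $|Vf|=V(|f|)$); and since $\mu(Vf)=\mu(f)$ it is a homeomorphism for the measure topology, so it commutes with the continuous functional calculus of positive $\tau$-measurable operators, in particular with $t\mapsto t^{p}$ on $[0,\infty)$ for non-integer $p$. Granting this bookkeeping, the rest is a direct consequence of Theorem~\ref{th:convexity} and Corollary~\ref{cor:isomglobal}.
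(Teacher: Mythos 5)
Your proposal is correct and is essentially the paper's own argument: the first assertion is Theorem~\ref{th:convexity} with $q=p$, and the converse is obtained, exactly as in the paper, from a rearrangement-preserving $*$-embedding $V$ of $E$ into $\nonsp$ (the paper quotes Proposition~\ref{isom2}, you Corollary~\ref{cor:isomglobal}) combined with the $p$-convexity inequality in $\nonsp$. The only difference is cosmetic: instead of the operator identity $V\bigl(\bigl(\sum_{i}|f_i|^p\bigr)^{1/p}\bigr)=\bigl(\sum_{i}|Vf_i|^p\bigr)^{1/p}$, the paper compares decreasing rearrangements via Lemma~\ref{lm:singfun}(5) and $|Vf|=V|f|$, which trims (but does not fully avoid, since it still uses $V(|f_i|^p)=|V(f_i)|^p$) the functional-calculus bookkeeping you flag at the end.
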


\begin{proof}
The first statement is a direct consequence of Theorem \ref{th:convexity} for $p=q$. The second one is a consequence of the isometric embedding of $E$ into $E\Mtau$, Proposition \ref{isom2}. Indeed there exists a $*$-isomorphism  $V: S\left(\left[0,\tauone\right),m\right) \to S(\Mtau)$ such that $\mu(V(x)) = \mu(x)$, $x\in S\left(\left[0,\tauone\right),m\right)$. Since for any $f\in S\left(\left[0,\tauone\right),m\right)$, $|V(f)|^2=(V(f))^*V(f)=V(\overline{f})V(f)=V(|f|^2)=(V|f|)^2$, so $|V(f)|=V|f|$. Therefore  in view of Lemma \ref{lm:singfun} (5),   for any $f_1,f_2,\dots,f_n\in E$  we have 
\begin{align*}
\mu\left(V\left(\sum_{i=1}^n |f_i|^p \right)^{\frac{1}{p}}\right) &=\mu\left(\sum_{i=1}^n |f_i|^p \right)^{\frac{1}{p}}=\mu^{\frac{1}{p}}\left(\sum_{i=1}^n |f_i|^p \right)=\mu^{\frac{1}{p}}\left(V\left(\sum_{i=1}^n |f_i|^p \right)\right)\\
&=\mu^{\frac{1}{p}}\left(\sum_{i=1}^n V(|f_i|^p)\right)=\mu\left(\left(\sum_{i=1}^n V(|f_i|^p) \right)^{\frac{1}{p}} \right)=\mu\left(\left(\sum_{i=1}^n |V(f_i)|^p \right)^{\frac{1}{p}} \right).
\end{align*}
Thus
\begin{align*}
\left\|\left(\sum_{i=1}^n |f_i|^p \right)^{\frac{1}{p}}\right\|_{E}
&= \left\|V\left(\sum_{i=1}^n |f_i|^p \right)^{\frac{1}{p}}\right\|_{E\Mtau}
=\left\|\left(\sum_{i=1}^n |V(f_i)|^p \right)^{\frac{1}{p}}\right\|_{E\Mtau} \\
&\leq M^{(p)}(E\Mtau)
\left(\sum_{i=1}^n \|V(f_i)\|_{E\Mtau}^p \right)^{\frac{1}{p}}
\leq M^{(p)}(E\Mtau) \left(\sum_{i=1}^n \|f_i\|_{E}^p \right)^{\frac{1}{p}},
\end{align*}
which implies that $M^{(p)}(E) \ \leq M^{(p)}(E\Mtau)$.
\end{proof}

In order to study concavity properties we need some new notions and additional assumptions.   It will be assumed that $E$ is a normed space and $1<q<\infty$. This is caused by the method of the proof which is based on duality. 

For any $x,y\in S\Mtau$ we write $x\triangleleft y$ and say that $x$ is {\it supermajorized} by $y$, 
 if 
 \[
 \int_t^\infty \mu(x) \ge \int_t^\infty \mu(y)\ \ \ \text{for all} \ \ t\ge 0.
 \] 
 Clearly $x\triangleleft y$ if and only if $\mu(x)\triangleleft \mu(y)$.   
 If $\int_0^\infty \mu(x) = \int_0^\infty \mu(y) <\infty$, then $x\prec y$ if and only if $x\triangleleft y$.

 The next two lemmas are the main ingredients in the concavity results. The first one was proved in discrete case also in \cite{AL1985} as Theorem 2.5 (ii) for $\varphi(t) = t^\gamma$ for $1\le \gamma < \infty$. 
 
\begin{lemma}\cite[Proposition 4.1]{DDS3} If $\psi$ is an increasing convex function on $[0,\infty)$ with $\psi(0) = 0$, then for any $0\le x,y \in S\Mtau$, 
 \begin{equation}\label{eq:conv01}
 \psi(|x+y|) \triangleleft \psi(\mu(x)) + \psi(\mu(y)), \ \text{equivalently} \ \  \psi(\mu(x+y)) \triangleleft \psi(\mu(x)) + \psi(\mu(y)).
 \end{equation}
 \end{lemma}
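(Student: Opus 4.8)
The plan is to reduce the supermajorization statement for operators to a known submajorization statement for operators, exploiting the duality between $\prec$ (submajorization, controlling $\int_0^t$) and $\triangleleft$ (supermajorization, controlling $\int_t^\infty$). Recall from the excerpt that we already have the fundamental submajorization inequality $\mu(x+y) \prec \mu(x) + \mu(y)$ (from \cite[Theorem 4.3 (iii)]{Fack-Kos1986}), and more generally, via Lemma~\ref{lem:kalsuk} applied with a concave $\varphi$, one gets $\varphi(\mu(x+y)) \prec \varphi(\mu(x)) + \varphi(\mu(y))$ for increasing concave $\varphi$ with $\varphi(0)\geq 0$ — the submajorization version of the desired inequality, but with concave rather than convex $\psi$. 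So the first step is to record precisely this concave-function submajorization fact and note that by Lemma~\ref{lm:singfun}(2) we may replace $|x+y|$ by $\mu(x+y)$ throughout, since $\psi(|x+y|)$ and $\psi(\mu(x+y))$ have the same singular value function whenever $\psi$ is increasing with $\psi(0)\ge 0$ (Lemma~\ref{lm:singfun}(5)).

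Next I would handle the passage from submajorization with concave functions to supermajorization with convex functions. The cleanest route: given an increasing convex $\psi$ on $[0,\infty)$ with $\psi(0)=0$, fix a large truncation parameter and work on a finite interval, or more directly, reduce to the case where $\mu(x), \mu(y)$ are supported on a finite-measure set by a truncation/limiting argument (using right-continuity and the monotone convergence built into the definition of $\triangleleft$, passing to the limit at the end via Fatou-type monotonicity). On such a finite slice, if $\int_0^\infty \mu(\psi(x)) = \int_0^\infty \mu(\psi(y)) < \infty$ fails to hold we can add disjoint ballast, but the key structural fact from the excerpt is: when the total integrals match, $\prec$ and $\triangleleft$ are equivalent. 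Alternatively — and I think this is the approach actually intended — one writes $\psi$ on $[0,\mu(0,x+y)]$ (or on a truncation thereof) and uses the substitution trick: set $\phi(t) = \psi(\mu(0,\cdot)) - \psi(\mu(0,\cdot) - t)$, a concave increasing function, and relate $\int_t^\infty \psi(\mu(\cdot))$ to $\int_0^{s} \phi(\text{rearranged data})$ via a change of variables on the graph of the decreasing rearrangement (reflecting $\mu$ through its "box"). Under this reflection, supermajorization for $\psi\circ\mu$ becomes submajorization for $\phi$ applied to the reflected rearrangement, which is governed by the concave-function result just established. One must check that the reflected data is again of the form $\mu(x'), \mu(y')$ for suitable operators (or just work at the level of rearrangement functions, since $\triangleleft$ only depends on $\mu(x),\mu(y)$).

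I expect the main obstacle to be making the reflection/change-of-variables argument rigorous when $\mu(x+y)$ is unbounded or has infinite support, i.e., precisely the bookkeeping of truncations and the limiting passage — ensuring that the inequality $\int_t^\infty \psi(\mu(x_N+y_N)) \le \int_t^\infty [\psi(\mu(x_N)) + \psi(\mu(y_N))]$ on truncated operators $x_N = \abs{x}e^{\abs x}(1/N,\infty)$ etc. survives the limit $N\to\infty$ for every fixed $t\ge 0$. This should follow from monotone convergence since $\mu(x_N)\uparrow\mu(x)$ pointwise a.e.\ and $\psi$ is continuous and increasing, but the convexity of $\psi$ (hence possible superlinear growth) means one has to be a little careful that the finite-$t$ tail integrals are what is being controlled, not the full integrals. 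Once the finite/bounded case is settled by the reflection argument reducing to Lemma~\ref{lem:kalsuk}, the general case is a routine limiting argument.
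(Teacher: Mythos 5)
There is no proof in the survey to compare against: the lemma is quoted verbatim from \cite[Proposition 4.1]{DDS3}. Judged on its own terms, your plan has a genuine gap at its central step, namely the ``reflection'' that is supposed to turn the convex/supermajorization claim into the concave/submajorization statement obtainable from Lemma \ref{lem:kalsuk}. The inequality to be proved is $\int_t^\infty \psi(\mu(x+y)) \ge \int_t^\infty \psi(\mu(x)) + \int_t^\infty \psi(\mu(y))$ for all $t\ge 0$, and its entire content is the relation between $\mu(x+y)$ and the pair $\mu(x),\mu(y)$, which is \emph{not} determined by $\mu(x),\mu(y)$ alone. Hence the fallback ``just work at the level of rearrangement functions, since $\triangleleft$ only depends on $\mu(x),\mu(y)$'' cannot work: already for $\psi(u)=u$, $t=0$ the claim is $\tau(x+y)\ge\tau(x)+\tau(y)$, and for general convex $\psi$ it contains the operator-level superadditivity $\tau(\psi(x+y))\ge\tau(\psi(x))+\tau(\psi(y))$; neither follows from the function-level facts you have (the submajorization $\mu(x+y)\prec\mu(x)+\mu(y)$ bounds $\mu(x+y)$ from \emph{above}, whereas here its tails must be bounded from \emph{below}). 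The other branch of your step two is exactly the unresolved crux: after reflecting through the ``box'', the three functions come with three different heights $\mu(0,x+y),\mu(0,x),\mu(0,y)$ and three different concave functions $\phi$, and the reflection of $\mu(x+y)$ is not the decreasing rearrangement of a sum $x'+y'$ of operators whose rearrangements are the reflected $\mu(x)$ and $\mu(y)$; so the concave operator lemma has nothing to act on, and no purely rearrangement-level substitute is available.

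Two further warnings. The remark that one may ``add disjoint ballast'' and then use the equivalence of $\prec$ and $\triangleleft$ for equal totals goes the wrong way: the side with the smaller total is the right-hand side, and enlarging it strengthens the claim into a head-type submajorization $\psi(\mu(x+y))\prec\psi(\mu(x))+\psi(\mu(y))+\text{ballast}$, while the head inequality itself is false for convex $\psi$: with $x=\mathrm{diag}(1,0)$, $y$ the orthogonal projection onto $(1,1)/\sqrt{2}$ and $\psi(u)=u^2$ one has $\int_0^1\psi(\mu(x+y))=(1+1/\sqrt{2})^2\approx 2.91>2=\int_0^1[\psi(\mu(x))+\psi(\mu(y))]$ (the correct head-direction result is the opposite submajorization, of $\psi(\mu(x))+\psi(\mu(y))$ by $\psi(\mu(x+y))$). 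For the same reason the naive ``total minus head'' decomposition does not reduce the tail statement to anything you have established. Your step one (the concave submajorization, obtained from Lemma \ref{lem:kalsuk} applied with the symmetric norms $f\mapsto\int_0^t\mu(f)$, together with Lemma \ref{lm:singfun}) and the truncation/limiting bookkeeping are fine, but they are not where the difficulty lies. A workable argument must handle the tail integrals directly with operator-level input, e.g.\ by writing a convex $\psi$ with $\psi(0)=0$ as a mixture of the identity and hinge functions $(u-\alpha)^+$ and proving the corresponding inequalities for the operators $(x-\alpha)^+$, or by using the representation of $\int_t^\infty\mu(\cdot)$ as an infimum of traces over projections combined with trace superadditivity; this is the kind of reasoning carried out in the cited source, and none of it is supplied by the reflection sketch.
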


 \begin{lemma}\label{lem:conv01}\cite[Lemma 4.2]{DDS3}
 If $E$ is a  Banach symmetric space with $M_{(q)}(E) = 1$ for some $1<q<\infty$ then $\|g\|_{E^{(1/q)}} \le \|f\|_{E^{(1/q)}}$ for any bounded functions $f,g \in L^0$ with supports of finite measure and such that $f\triangleleft g$.
 \end{lemma}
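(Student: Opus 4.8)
The plan is to argue by duality, exploiting the fact that the Köthe dual of a $p$-convexification is the $p$-convexification of the Köthe dual, and that $q$-concavity of $E$ with constant $1$ is equivalent to $q'$-convexity of $E^\times$ with constant $1$ (where $1/q + 1/q' = 1$). First I would recall the standard lattice fact (see \cite{LT2} or \cite{DDS3}) that $(E^{(1/q)})^\times = (E^\times)^{(1/q')}$ up to equality of norms, and that $M_{(q)}(E) = 1$ is equivalent to $M^{(q')}(E^\times) = 1$; hence $\|\cdot\|_{(E^\times)^{(1/q')}}$ is an honest symmetric \emph{norm}. Since $E^{(1/q)}$ and $(E^\times)^{(1/q')}$ are a Köthe dual pair, for bounded $g$ supported on a set of finite measure I would write
\[
\|g\|_{E^{(1/q)}} = \sup\left\{\int_I \mu(g)\,\mu(h) : h \in (E^\times)^{(1/q')},\ \|h\|_{(E^\times)^{(1/q')}} \le 1\right\},
\]
using the duality formula for symmetric spaces stated in the Preliminaries.

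The second ingredient is a Hardy–Littlewood–Pólya type comparison: if $f \triangleleft g$ (supermajorization) and $f, g$ are bounded with supports of finite measure, then $\int_I \mu(g)\,\mu(h) \le \int_I \mu(f)\,\mu(h)$ for every \emph{decreasing} $h \ge 0$ — this is the "reverse" companion of the classical inequality that $\prec$ gives for symmetric norms, and it holds precisely because $h$ decreasing makes $t \mapsto \int_t^\infty$ the relevant monotone functional. Concretely, $\int_0^\infty \mu(f)\mu(h) - \int_0^\infty \mu(g)\mu(h)$ can be rewritten, via $\mu(h)(t) = \int_t^\infty (-d\mu(h))$ and Fubini, as $\int_0^\infty \left(\int_0^s \mu(f) - \int_0^s \mu(g)\right)(-d\mu(h))(s)$ plus a boundary term; one must instead use the $\int_t^\infty$ form so that $\int_t^\infty\mu(f) \ge \int_t^\infty\mu(g)$ enters with the right sign against the nonnegative measure $-d\mu(h)$. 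Since in the duality formula above the test functions $h$ may be taken decreasing and nonnegative without loss of generality (replacing $h$ by $\mu(h)$ only increases $\int \mu(g)\mu(h)$ and does not change the norm), combining the comparison with the supremum gives $\|g\|_{E^{(1/q)}} \le \|f\|_{E^{(1/q)}}$ directly.

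I expect the main obstacle to be the careful bookkeeping in the comparison inequality: one must verify that the boundary terms in the Fubini/integration-by-parts step vanish, which is exactly where the hypotheses "bounded" and "support of finite measure" are used (they guarantee $\mu(f), \mu(g) \in L^1 \cap L^\infty$ so the integrals converge and $\int_t^\infty \mu(f), \int_t^\infty \mu(g) \to 0$ as $t \to \infty$), and that the decreasing rearrangement $\mu(h)$ of a general $h \in (E^\times)^{(1/q')}$ is an admissible test function with the same norm. A secondary point requiring care is the identification $(E^{(1/q)})^\times = (E^\times)^{(1/q')}$ together with the transfer of concavity constant to a convexity constant of the dual; both are in the literature (\cite{LT2}, \cite[Proposition 3.1]{DDS3}) but should be cited precisely, since the whole argument hinges on $\|\cdot\|_{(E^\times)^{(1/q')}}$ being a genuine norm so that the Köthe duality formula applies in its symmetric form.
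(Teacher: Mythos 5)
Your strategy has a structural defect that no amount of bookkeeping can repair. In the paper's convention $\|x\|_{X^{(p)}}=\||x|^p\|_X^{1/p}$, the space $E^{(1/q)}$ in the lemma is the $q$-\emph{concavification} of $E$, and in general it is only a quasi-normed space whose quasi-norm is not convex: for instance $E=L^1$ is $q$-concave with constant $1$ (Minkowski's integral inequality), and then $E^{(1/q)}=L^{1/q}$. A supremum $\sup_h\int\mu(g)\mu(h)$ over any fixed family of test functions is a convex (indeed normed) functional of $g$, so it can never represent $\|\cdot\|_{E^{(1/q)}}$; the duality formula you start from is therefore impossible, regardless of which space the $h$'s are taken in. The specific identification $(E^{(1/q)})^\times=(E^\times)^{(1/q')}$ is also false: for $E=L^q$ one gets $(E^{(1/q)})^\times=(L^1)^\times=L^\infty$, whereas $(E^\times)^{(1/q')}=(L^{q'})^{(1/q')}=L^1$. (A further, smaller gap: recovering a norm from its K\"othe dual requires the Fatou property, which is not assumed in the lemma.)

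The ``second ingredient'' is false as well. With the paper's definition of $\triangleleft$, take $\mu(f)=\chi_{[0,2]}$ and $\mu(g)=2\chi_{[0,1]}$, so that $\int_t^\infty\mu(f)=(2-t)_+\ge(2-2t)_+=\int_t^\infty\mu(g)$ for all $t$, i.e. $f\triangleleft g$; for the decreasing weight $h=\chi_{[0,1]}$ one has $\int\mu(g)h=2>1=\int\mu(f)h$. Your own integration-by-parts remark shows why: pairing with a decreasing $h$ produces the head integrals $\int_0^s\mu$, i.e. it detects submajorization, while tail integrals pair with \emph{increasing} weights, which are not decreasing rearrangements of dual elements; there is no sign arrangement that turns one into the other. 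More fundamentally, the asserted inequality is a genuinely ``concave'' statement: for the same $f,g$ it fails when $E=L^\infty$ (then $\|g\|_{E^{(1/q)}}=2>1=\|f\|_{E^{(1/q)}}$), so it cannot follow from H\"older-type upper bounds, which are available for every symmetric $E$. Any correct proof must use the superadditivity $\|a\|_{E^{(1/q)}}+\|b\|_{E^{(1/q)}}\le\|a+b\|_{E^{(1/q)}}$ coming from $M_{(1)}(E^{(1/q)})=1$ directly on suitable decompositions of $\mu(f)$ and $\mu(g)$; the survey itself gives no proof and refers to \cite[Lemma 4.2]{DDS3}, where the argument is of this direct type rather than a duality argument.
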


The set $F(\M,\tau)=\{x\in \M :\tau(s(x))<\infty\}$   is a two sided ideal in $\M$ and its closure in the measure topology is $S_0(\M,\tau)$. If $x\in F\Mtau$ then $\mu(x)$ is a bounded function with support of finite measure.

 Let $\psi(t) = t^{q/p}$, where $1\le p\le q$, and $x_i \in F\Mtau$, $i=1,2,\dots,n$.  Recall also that $\mu\left(\psi(|x|) \right) = \psi(\mu(|x|))$ for any $x\in S(\M,\tau)$. Then  by (\ref{eq:conv01}), 
 \[
 \psi\left(\mu\left(\sum_{i=1}^n |x_i|^p\right)\right) \triangleleft 
\mu\left(\sum_{i=1}^n \psi(\mu(|x_i|^p))\right),
\]
 and so
 \begin{equation}\label{eq:conv02}
 \mu^{q/p}\left(\sum_{i=1}^n |x_i|^p\right)\triangleleft 
 \mu\left(\sum_{i=1}^n \mu^{q/p}(|x_i|^p)\right)= \mu\left(\sum_{i=1}^n \mu^q(x_i)\right).
 \end{equation}

 Let $E$ be $q$-concave with $M_{(q)}(E) =1$. Then $E^{(1/q)}$ is $1$-concave with $M_{(1)}(E^{(1/q)}) = 1$. Therefore by (\ref{eq:conv01}),  (\ref{eq:conv02}), and Lemma \ref{lem:conv01}, 
 \begin{align*}
 \sum_{i=1}^n \|\mu(|x_i|^q)\|_{E^{(1/q)}}& =\sum_{i=1}^n \|\mu^q(x_i)\|_{E^{(1/q)}} \le \left\|\sum_{i=1}^n \mu^q(x_i)\right\|_{E^{(1/q)}} \\
 & \le 
 \left\| \mu^{q/p} \left(\sum_{i=1}^n |x_i|^p\right)\right\|_{E^{(1/q)}} =  \left\| \mu \left(\sum_{i=1}^n |x_i|^p\right)^{q/p}\right\|_{E^{(1/q)}}.
 \end{align*}
 It follows that
 \[
 \sum_{i=1}^n \|x_i\|^q_{E\Mtau} = \sum_{i=1}^n \||x_i|^q\|_{E^{(1/q)}\Mtau} \le \left\|\left(\sum_{i=1}^n |x_i|^p\right)^{q/p}\right\|_{E^{(1/q)}\Mtau} = \left\|\left(\sum_{i=1}^n |x_i|^p\right)^{1/p}\right\|^q_{E\Mtau}.
 \] 
 We just have proved the following result under the assumption that $x_i\in F\Mtau$ and  $M_{(q)}(E) =1$. It is a parallel version to Theorem  \ref{th:convexity} for $q$-concavity.
 
 \begin{theorem} \label{th:concavity} \cite[Proposition 4.6]{DDS3}, \cite[Lemma 3.1 (ii)]{AL1985}
Let $1\le p\le q$.  If $E$ is a $q$-concave  Banach symmetric space with the Fatou property then for every $x_i \in E\Mtau$, $i=1,2,\dots, n$, 
\[
\left(\sum_{i=1}^n \|x_i\|^q_{E\Mtau}\right)^{1/q} \le M_{(q)}(E) \left\|\left(\sum_{i=1}^n|x_i|^p\right)^{1/p}\right\|_{E\Mtau}.
\]
 \end{theorem}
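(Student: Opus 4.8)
The bulk of the analytic work is already contained in the display preceding the statement, which proves the inequality with constant $1$ under the two extra hypotheses $M_{(q)}(E)=1$ and $x_i\in F\Mtau$. The plan is therefore to remove these two hypotheses. First, to pass from $M_{(q)}(E)=1$ to an arbitrary concavity constant, I would invoke the standard renorming for $q$-concave Banach lattices (the concavity counterpart of the renorming used in the proof of Theorem \ref{th:convexity}; see \cite{LT2}). Since $E$ is $q$-concave with $q<\infty$ it is order continuous: a $q$-concave Banach lattice with finite $q$ contains no lattice copy of $c_0$, so by Proposition \ref{prop:czero} it is a $KB$-space. Moreover $E$ admits an equivalent symmetric norm $\nu$ with $q$-concavity constant one and with $\|f\|_E\le\nu(f)\le M_{(q)}(E)\|f\|_E$ for all $f\in E$. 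Writing $\nu$ also for the norm $x\mapsto\nu(\mu(x))$ on the symmetric space of measurable operators attached to $(E,\nu)$, the normalized inequality applied to $(E,\nu)$ gives, for $x_i\in F\Mtau$,
\[
\Bigl(\sum_{i=1}^n\nu(x_i)^q\Bigr)^{1/q}\le\nu\Bigl(\Bigl(\sum_{i=1}^n|x_i|^p\Bigr)^{1/p}\Bigr).
\]
Estimating $\nu(x_i)\ge\|x_i\|_{\nonsp}$ on the left and $\nu\bigl((\sum_i|x_i|^p)^{1/p}\bigr)\le M_{(q)}(E)\,\|(\sum_i|x_i|^p)^{1/p}\|_{\nonsp}$ on the right then yields the theorem, with the sharp constant $M_{(q)}(E)$, for all $x_i\in F\Mtau$.

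Second, I would pass from $F\Mtau$ to all of $\nonsp$ by a truncation and monotone-convergence argument. Let $x_1,\dots,x_n\in\nonsp$. Order continuity of $E$ forces $E=E_0$, hence $\mu(\infty,x_i)=0$ and $e^{\abs{x_i}}(\lambda,\infty)$ has finite trace for every $\lambda>0$. For $s>0$ set $q^{(i)}_s=e^{\abs{x_i}}(1/s,s)\in\mathcal{P}(\M)$ and $x^{(s)}_i=x_iq^{(i)}_s$; since $q^{(i)}_s$ is a finite-trace spectral projection of $\abs{x_i}$, we have $x^{(s)}_i\in F\Mtau\subseteq\nonsp$ and, as $q^{(i)}_s$ commutes with $\abs{x_i}$, $\abs{x^{(s)}_i}^p=\abs{x_i}^pq^{(i)}_s$. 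As $s\to\infty$ the projections $q^{(i)}_s$ increase to $s(x_i)$, so $\abs{x^{(s)}_i}^p\uparrow\abs{x_i}^p$ and $\bigl(\sum_i\abs{x^{(s)}_i}^p\bigr)^{1/p}\uparrow\bigl(\sum_i\abs{x_i}^p\bigr)^{1/p}$; moreover $\abs{x_i-x^{(s)}_i}=\abs{x_i}(\one-q^{(i)}_s)\downarrow\abs{x_i}n(x_i)=0$ with $\abs{x_i-x^{(s)}_i}\le\abs{x_i}$, so $x^{(s)}_i\to x_i$ in $\nonsp$ by order continuity of $\nonsp$. Applying the case already settled to $x^{(s)}_1,\dots,x^{(s)}_n$ gives
\[
\Bigl(\sum_{i=1}^n\|x^{(s)}_i\|^q_{\nonsp}\Bigr)^{1/q}\le M_{(q)}(E)\Bigl\|\Bigl(\sum_{i=1}^n\abs{x^{(s)}_i}^p\Bigr)^{1/p}\Bigr\|_{\nonsp}.
\]
Letting $s\to\infty$, the left side tends to $(\sum_i\|x_i\|^q_{\nonsp})^{1/q}$, and the right side tends to $M_{(q)}(E)\|(\sum_i\abs{x_i}^p)^{1/p}\|_{\nonsp}$ by the Fatou property of $\nonsp$ (inherited from that of $E$), applied to the increasing net $(\sum_i\abs{x^{(s)}_i}^p)^{1/p}$ whose supremum lies in $\nonsp$ by the computation recorded before the theorem. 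This yields the desired inequality.

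Since Lemmas \ref{lem:kalsuk}, \ref{lem:conv01} and the supermajorization inequalities carry the real content and are already available, I do not expect a major obstacle. The points needing care are the bookkeeping of the renorming constant — which comes out to exactly $M_{(q)}(E)$ because the standard renorming distorts the norm one-sidedly, in the direction used above — and the two monotone-limit passages, for which the spectral calculus, Lemma \ref{lm:singfun}, and the order-continuity and Fatou properties of $\nonsp$ suffice.
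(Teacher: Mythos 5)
Your proposal is correct and follows the paper's own route: the displayed computation before the theorem is exactly the special case $x_i\in F\Mtau$, $M_{(q)}(E)=1$, and your renorming plus truncation-and-limit argument (using order continuity of $E$, hence of $E\Mtau$, and monotonicity of the norm together with operator monotonicity of $t\mapsto t^{1/p}$, which already gives the right-hand bound without invoking the Fatou property) is the intended completion left to the cited reference. The one point to state precisely is that the equivalent constant-one $q$-concave norm must be \emph{symmetric}; this is the concavity counterpart of \cite[Corollary 3.5]{DDS3} (or follows from it by K\"othe duality, using $E=E^{\times\times}$), not merely the Banach-lattice renorming of \cite{LT2}.
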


\begin{theorem}\cite[Theorem 4.7]{DDS3}\label{th:conc}
Let $E$ be a  Banach symmetric space. If $E$ is $q$-concave for some $1 < q < \infty$, then $E\Mtau$ is $q$-concave with $M_{(q)}(E\Mtau) \le M_{(q)}(E)$. If $\M$ is non-atomic, then $E$ is $q$-concave if and only if $E\Mtau$ is $q$-concave, in which case $M_{(q)}(E\Mtau) = M_{(q)}(E)$.

\end{theorem}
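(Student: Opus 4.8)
The plan is to run the proof of Theorem~\ref{th:conv} essentially verbatim, feeding in the $q$-concavity inequality of Theorem~\ref{th:concavity} in place of the $p$-convexity inequality of Theorem~\ref{th:convexity}. For the first assertion I would simply take $p=q$ in Theorem~\ref{th:concavity}, which is legitimate since $1<q<\infty$; this gives, for every $x_1,\dots,x_n\in E\Mtau$,
\[
\left(\sum_{i=1}^n\|x_i\|_{E\Mtau}^q\right)^{1/q}\le M_{(q)}(E)\left\|\left(\sum_{i=1}^n|x_i|^q\right)^{1/q}\right\|_{E\Mtau},
\]
which is precisely the defining $q$-concavity inequality~(\ref{eq2}) for the Banach symmetric space $E\Mtau$ with constant $M_{(q)}(E)$; here $\left(\sum_{i=1}^n|x_i|^q\right)^{1/q}\in E\Mtau$ by Lemma~\ref{lm:singfun}(4),(5) together with boundedness of the dilation operator on $E$. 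Hence $M_{(q)}(E\Mtau)\le M_{(q)}(E)$, establishing the first statement.

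For the reverse inequality under non-atomicity of $\M$ I would reuse the embedding step from the proof of Theorem~\ref{th:conv}. By Corollary~\ref{cor:isomglobal} (equivalently Proposition~\ref{isom2}) there is a $*$-isomorphism $V$ from $S\left(\left[0,\tauone\right),m\right)$ into $S(\M,\tau)$ with $\mu(V(f))=\mu(f)$ for all $f$, which restricts to an isometric order embedding of $E$ into $E\Mtau$. Since $V$ is a $*$-homomorphism and $|f|^q$ is defined by functional calculus, $|V(f)|^q=V(|f|^q)$, so $\left(\sum_{i=1}^n|V(f_i)|^q\right)^{1/q}=V\!\left(\left(\sum_{i=1}^n|f_i|^q\right)^{1/q}\right)$, and $\mu\circ V=\mu$ yields
\[
\left\|\left(\sum_{i=1}^n|V(f_i)|^q\right)^{1/q}\right\|_{E\Mtau}=\left\|\left(\sum_{i=1}^n|f_i|^q\right)^{1/q}\right\|_{E}.
\]
Applying $q$-concavity of $E\Mtau$ to $V(f_1),\dots,V(f_n)$ and using $\|V(f_i)\|_{E\Mtau}=\|f_i\|_E$ then gives
\[
\left(\sum_{i=1}^n\|f_i\|_E^q\right)^{1/q}\le M_{(q)}(E\Mtau)\left\|\left(\sum_{i=1}^n|f_i|^q\right)^{1/q}\right\|_{E},
\]
that is, $M_{(q)}(E)\le M_{(q)}(E\Mtau)$. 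Combined with the first step, $M_{(q)}(E\Mtau)=M_{(q)}(E)$, and in particular $E$ is $q$-concave if and only if $E\Mtau$ is $q$-concave.

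The hard part is not in this theorem at all but inside Theorem~\ref{th:concavity}, which it uses as a black box: that inequality is the delicate ingredient, obtained by reducing to $M_{(q)}(E)=1$ through an equivalent renorming (so that $E^{(1/q)}$ carries a genuine symmetric norm), first treating operators in $F(\M,\tau)$ so that the supermajorization~(\ref{eq:conv02}) and Lemma~\ref{lem:conv01} can be applied, and then removing the $F\Mtau$ restriction using the Fatou property. Granting Theorem~\ref{th:concavity}, everything above is routine and exactly parallel to the proof of Theorem~\ref{th:conv}; the only points to check are that $E\Mtau$ is itself a Banach symmetric space to which~(\ref{eq2}) applies and that the embedding $V$ of Section~\ref{sec:isom} preserves $\mu$ and commutes with $f\mapsto|f|^q$ even when $\tau$ is not $\sigma$-finite, both of which follow from $V$ being a trace-preserving $*$-isomorphism. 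As in the proof of Theorem~\ref{th:conv}, if one wants Theorem~\ref{th:concavity} applied literally one first passes to the Fatou completion of $E$, which has the same $q$-concavity constant.
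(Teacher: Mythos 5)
Your two-step argument (Theorem \ref{th:concavity} with $p=q$ for the lifting, then the measure-preserving $*$-isomorphism $V$ of Section \ref{sec:isom} for the converse under non-atomicity) is exactly the intended proof: it is word-for-word parallel to the paper's proof of Theorem \ref{th:conv}, with the same membership argument for $\bigl(\sum_{i=1}^n|x_i|^q\bigr)^{1/q}$ and the same computation $\mu\bigl(\bigl(\sum_i|V(f_i)|^q\bigr)^{1/q}\bigr)=\mu\bigl(\bigl(\sum_i|f_i|^q\bigr)^{1/q}\bigr)$ coming from $|V(f)|=V|f|$ and $\mu\circ V=\mu$.

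The one soft spot is your closing remark that, since Theorem \ref{th:conc} does not assume the Fatou property while Theorem \ref{th:concavity} does, one may "pass to the Fatou completion of $E$, which has the same $q$-concavity constant." That patch does not work as stated: the embedding $E\hookrightarrow E^{\times\times}$ only gives $\|f\|_{E^{\times\times}}\le\|f\|_E$, so in the concavity inequality the left-hand side moves the right way but the right-hand side moves the wrong way; you can neither conclude $M_{(q)}(E^{\times\times})\le M_{(q)}(E)$ by truncation, nor pull $q$-concavity of $E^{\times\times}\Mtau$ back to $E\Mtau$ in its own (larger) norm. Fortunately no completion is needed: for $1<q<\infty$, $q$-concavity of $E$ already rules out a lattice copy of $c_0$ (a disjoint normalized sequence equivalent to the $c_0$-basis would give $n^{1/q}\le M_{(q)}(E)\,C$ for all $n$, since for disjoint $f_i$ one has $\bigl(\sum_{i=1}^n|f_i|^q\bigr)^{1/q}=\sum_{i=1}^n|f_i|$), so by Proposition \ref{prop:czero} the space $E$ is a $KB$-space and in particular has the Fatou property. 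With that observation Theorem \ref{th:concavity} applies literally and the rest of your argument stands; the $\sigma$-finiteness point you raise about $V$ is handled exactly as in Corollary \ref{cor:isomglobal}, i.e., via Proposition \ref{isom1} rather than Proposition \ref{isom2}.
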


The next result, which is a corollary from Theorems \ref{th:conv} and \ref{th:conc}, has been proved  in \cite{AL1985} (Theorem 1.3) under the assumption that $E$ is a Banach separable space. 

\begin{corollary}

\begin{itemize}
\item[{(i)}] Let $0<p<\infty$ and $E$ be a  quasi-normed symmetric sequence space with the Fatou property.  Then $E$ is $p$-convex if and only if $C_E$ is $p$-convex. Moreover, $M^{(p)}(E)=M^{(p)}(C_E)$.
\item[{(ii)}] Let $1<q<\infty$ and $E$ be a  Banach symmetric sequence space with the Fatou property. Then $E$ is $q$-concave  if and only if $C_E$ is $q$-concave. Moreover, $M_{(q)}(E)=M_{(q)}(C_E)$.
\end{itemize}
\end{corollary}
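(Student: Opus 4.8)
The statement is a corollary of Theorems~\ref{th:conv} and~\ref{th:conc} together with the identification $C_E = G(B(H),\tr)$ established in Section~\ref{sec:unitary}, so the plan is to reduce the assertions about $C_E$ to the corresponding assertions about $G(B(H),\tr)$, where $G$ is the symmetric function space on $[0,\infty)$ associated with the symmetric sequence space $E$ via $\pi(f)=\{\int_{n-1}^n\mu(f)\}$. First I would recall that $B(H)$, equipped with the canonical trace $\tr$, is a semifinite von Neumann algebra which is \emph{atomic}, not non-atomic, so Theorems~\ref{th:conv} and~\ref{th:conc} cannot be applied to $G(B(H),\tr)$ directly to get the "only if'' direction. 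Instead I would use two facts already in the excerpt: (a) $C_E$ and $G(B(H),\tr)$ coincide as Banach spaces with equal norms; and (b) by Proposition~\ref{prop:isomarazy}, $E$ embeds isometrically as a $1$-complemented $*$-subalgebra of $C_E$, with the additional property that the embedding $V$ satisfies $|V(\lambda)|=V|\lambda|$ (diagonal operators), so that $V\big((\sum_i|\lambda_i|^p)^{1/p}\big)=(\sum_i|V(\lambda_i)|^p)^{1/p}$.

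For part~(i): if $E$ is $p$-convex with Fatou property, then $G$ is $p$-convex (this is the sequence-to-function lifting; since $\pi$ is a positive isometric lattice-type map and $G$ inherits $p$-convexity from $E$ — one checks $\mu((\sum|f_i|^p)^{1/p})$ against $\pi$ exactly as in the computation following the definition of $G$, or simply cites that $G$ is a symmetric function space with the Fatou property whose $p$-convexity follows from that of $E$). Then Theorem~\ref{th:conv}, applied to the symmetric \emph{function} space $G$ and the von Neumann algebra $\M=B(H)$ with $\tau=\tr$, gives that $G(B(H),\tr)=C_E$ is $p$-convex with $M^{(p)}(C_E)=M^{(p)}(G(B(H),\tr))\le M^{(p)}(G)$; and since $\pi$ is an isometry identifying the $p$-convexification structure, $M^{(p)}(G)=M^{(p)}(E)$, so $M^{(p)}(C_E)\le M^{(p)}(E)$. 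For the reverse inequality and the "only if'' direction, I would not invoke non-atomicity of $B(H)$ (which fails) but instead use the isometric $*$-embedding $V:E\to C_E$ of Proposition~\ref{prop:isomarazy}: given $\lambda_1,\dots,\lambda_n\in E$, applying the $p$-convexity inequality of $C_E$ to $V(\lambda_1),\dots,V(\lambda_n)$ and using $V((\sum|\lambda_i|^p)^{1/p})=(\sum|V(\lambda_i)|^p)^{1/p}$ together with $\|V(\lambda)\|_{C_E}=\|\lambda\|_E$ yields
\[
\Big\|\big(\textstyle\sum_i|\lambda_i|^p\big)^{1/p}\Big\|_E=\Big\|\big(\textstyle\sum_i|V(\lambda_i)|^p\big)^{1/p}\Big\|_{C_E}\le M^{(p)}(C_E)\Big(\textstyle\sum_i\|\lambda_i\|_E^p\Big)^{1/p},
\]
so $E$ is $p$-convex with $M^{(p)}(E)\le M^{(p)}(C_E)$. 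Combining the two inequalities gives $M^{(p)}(E)=M^{(p)}(C_E)$, which in particular contains both implications of the equivalence. Part~(ii) is entirely parallel: for $1<q<\infty$ and $E$ a Banach symmetric sequence space with the Fatou property that is $q$-concave, the associated $G$ is a Banach symmetric function space with the Fatou property that is $q$-concave, so Theorem~\ref{th:conc} gives $C_E=G(B(H),\tr)$ is $q$-concave with $M_{(q)}(C_E)\le M_{(q)}(G)=M_{(q)}(E)$; and the reverse inequality follows again by testing the $q$-concavity inequality of $C_E$ on the diagonal operators $V(\lambda_i)$, exactly as above but with the inequality reversed, using $\|V(\lambda)\|_{C_E}=\|\lambda\|_E$ and $V((\sum|\lambda_i|^q)^{1/q})=(\sum|V(\lambda_i)|^q)^{1/q}$.

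The main obstacle is that $B(H)$ with the canonical trace is atomic, so the "non-atomic'' clauses of Theorems~\ref{th:conv} and~\ref{th:conc} are unavailable for the converse direction; the resolution, and the point worth spelling out carefully, is that Arazy's isometric $1$-complemented $*$-embedding $V:E\to C_E$ from Proposition~\ref{prop:isomarazy} plays exactly the role that the isometric embedding $E\hookrightarrow E\Mtau$ (Proposition~\ref{isom2}) plays in the non-atomic case: it transports the convexity/concavity inequalities back from $C_E$ to $E$ because $V$ respects the functional-calculus expressions $(\sum|x_i|^p)^{1/p}$ on diagonal operators. A secondary, more routine, point to verify is that $p$-convexity (resp.\ $q$-concavity) and the Fatou property transfer from $E$ to $G$ under the map $\pi$; this is of the same flavor as, and can be done by the same $\mu$-versus-$\pi$ estimate as, the verification already carried out in the text that $(\sum_i|x_i|^p)^{1/p}\in E\Mtau$ when $x_i\in E\Mtau$.
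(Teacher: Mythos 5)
Your converse direction is sound and is exactly the intended argument: testing the $p$-convexity (resp.\ $q$-concavity) inequality of $C_E$ on diagonal operators $V(\lambda_i)$ via Proposition~\ref{prop:isomarazy}, using $|V(\lambda)|=V(|\lambda|)$ and $\|V(\lambda)\|_{C_E}=\|\lambda\|_E$, gives $M^{(p)}(E)\le M^{(p)}(C_E)$ and $M_{(q)}(E)\le M_{(q)}(C_E)$; this plays the role that Proposition~\ref{isom2} plays in Theorems~\ref{th:conv} and~\ref{th:conc}.

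The forward direction, however, has a genuine gap. You route it through the function space $G$ (with $\|f\|_G=\|\pi(f)\|_E$, $\pi_n(f)=\int_{n-1}^n\mu(f)$) and assert, as a ``routine'' point, that $G$ is $p$-convex with $M^{(p)}(G)=M^{(p)}(E)$, so that Theorem~\ref{th:conv} applied to $G$ and $B(H)$ yields $M^{(p)}(C_E)\le M^{(p)}(G)=M^{(p)}(E)$. That identity is false in general. Take $E=\ell_2$, $p=2$, $f_1=\chi_{[0,1/2)}$, $f_2=\chi_{[1/2,1)}\in G$. Then $\bigl(|f_1|^2+|f_2|^2\bigr)^{1/2}=\chi_{[0,1)}$, so
\[
\Bigl\|\bigl(|f_1|^2+|f_2|^2\bigr)^{1/2}\Bigr\|_G=1,\qquad \bigl(\|f_1\|_G^2+\|f_2\|_G^2\bigr)^{1/2}=\tfrac{1}{\sqrt 2},
\]
whence $M^{(2)}(G)\ge\sqrt 2>1=M^{(2)}(\ell_2)$. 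The discrepancy comes from ``fractional atoms'' in $[0,\infty)$ that have no counterpart among operators in $B(H)$ (whose singular value functions are step functions constant on $[n-1,n)$), so the loss is invisible in $C_E$ but fatal to your intermediate claim; moreover even the qualitative statement that $E$ $p$-convex implies $G$ $p$-convex is asserted without proof and is not a formal consequence of anything quoted. As a result your argument does not yield $M^{(p)}(C_E)\le M^{(p)}(E)$ (nor the $q$-concave analogue), so the ``Moreover'' clauses are not established. The intended forward direction avoids $G$ altogether: the analogues of Theorems~\ref{th:convexity} and~\ref{th:concavity} hold verbatim in $C_E$ (this is precisely the cited \cite[Lemma 3.1]{AL1985}, with the separability assumption removed by the Kalton--Sukochev inequality, Lemma~\ref{lem:kalsuk}, which applies to $C_E$ directly), since the renorming and convexification arguments use only the sequence $S(x)$ in place of $\mu(x)$; taking $p=q$ (resp.\ $p=q$ in the concavity statement) then gives $M^{(p)}(C_E)\le M^{(p)}(E)$ and $M_{(q)}(C_E)\le M_{(q)}(E)$, which combined with your embedding step finishes the proof.
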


Given $x\in B(H)$, the \textit{right} and \textit{left support projections}  of $x$, denoted by $r(x)$ and $l(x)$  are the projection onto $\ker^\perp{x}$ and $\ker^\perp{x^*}$  respectively, that is $r(x)=s(x)$ and $l(x)=s(x^*)$.
The operators $x,y\in B(H)$ are said to \textit{have right} (respectively, \textit{left}) \textit{disjoint supports} if $r(x)r(y)=0$ (respectively, $l(x)l(y)=0$).
Furthermore if $x_1,x_2,\dots, x_n \in S\Mtau$ are left disjoint then
\begin{equation}\label{eq:est1}
|x_1 +\dots + x_n| = (|x_1|^2 + \dots + |x_n|^2)^{1/2}.
\end{equation}
Indeed, since $x_i^* x_j=x_i^* l(x_i) l(x_j) x_j=0$ we have that 
\begin{align*}
|x_1 +\dots + x_n|^2&=(x_1 +\dots + x_n)^*(x_1 +\dots + x_n)=x_1^*x_1+x_2^*x_2+\dots+x_n^*x_n\\
&=|x_1|^2+|x_2|^2+\dots+|x_n|^2.
\end{align*}

We observe that  the similar equality does not hold for any  power $p\ne 2$, which is different than in the commutative case. 

Given $0<p, q<\infty$, we say that  a  quasi-normed symmetric space $X\subset \mathcal{S}\Mtau$ satisfies an {\it upper $p$-estimate}, respectively {\it lower $q$-estimate}, if  there exists $M>0$ such that for any 
 left disjoint $x_1,x_2,\dots,x_n \in X$,
 \[
  \left\|\sum_{i=1}^n x_i \right\| _{X} \le M \left(\sum_{i=1}^n \|x_i\|_{X}^p \right)^{\frac{1}{p}},
  \]
respectively,
 \[
 \left(\sum_{i=1}^n \|x_i\|_{X}^q \right)^{\frac{1}{q}} \le  M \left\|\sum_{i=1}^n x_i \right\| _{X}.
  \]
Again the infimum of the constant $M$ in the above inequalities will be called the upper $p$-estimate constant, respectively the lower $p$-estimate constant. 

Observe that $r(x^*)=s(x^*)=l(x)$ and $l(x^*)=s(x)=r(x)$, and so $x, y$ are left disjoint if and only if $x^*, y^*$ are right disjoint. 
Since also  $\|x\|_{E\Mtau} = \|x^*\|_{E\Mtau}$ the left disjointness in the above definition can be equivalently replaced by the right disjointness.

The proof of the next theorem follows from equality (\ref{eq:est1}) and Theorems \ref{th:convexity} and \ref{th:concavity} where we put $q=2$ and $p=2$, respectively.

\begin{theorem} \cite[Proposition 5.1]{DDS3}\label{th:estimates}
Let $E$ be a  Banach symmetric space with the Fatou property. 
\begin{itemize}
\item[$(i)$] If $1\le p\le 2$ and $E$ is $p$-convex then $E\Mtau$ satisfies an upper $p$-estimate with constant $M^{(p)}(E)$. 
\item[$(ii)$] If $q \ge  2$ and $E$ is $q$-concave then $E\Mtau$ satisfies a lower $q$-estimate with constant $M_{(q)}(E)$. 
\end{itemize}

\end{theorem}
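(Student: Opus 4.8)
The plan is to reduce both parts to the already-established convexity and concavity inequalities (Theorems~\ref{th:convexity} and~\ref{th:concavity}) by exploiting the operator identity~(\ref{eq:est1}), which holds precisely because the summands have pairwise disjoint left supports. The key structural input is that for left disjoint $x_1,x_2,\dots,x_n\in\nonsp$ one has $|x_1+\cdots+x_n| = \left(|x_1|^2+\cdots+|x_n|^2\right)^{1/2}$, so that the quasi-norm $\left\|\sum_{i=1}^n x_i\right\|_{E\Mtau}$ coincides with $\left\|\left(\sum_{i=1}^n |x_i|^2\right)^{1/2}\right\|_{E\Mtau}$. Thus an upper or lower $2$-power expression in the left-disjoint setting is literally an instance of the $\left(\sum |x_i|^q\right)^{1/q}$ expression with $q=2$, and the general estimates apply verbatim.

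For part $(i)$, assume $1\le p\le 2$ and $E$ is $p$-convex. Given left disjoint $x_1,\dots,x_n\in E\Mtau$, I would apply Theorem~\ref{th:convexity} with the exponent pair $(p,q)=(p,2)$, which is legitimate since $p\le 2=q$; this yields
\[
\left\|\left(\sum_{i=1}^n |x_i|^2\right)^{1/2}\right\|_{E\Mtau} \le M^{(p)}(E)\left(\sum_{i=1}^n \|x_i\|_{E\Mtau}^p\right)^{1/p}.
\]
Now substitute the identity~(\ref{eq:est1}) on the left-hand side to get $\left\|\sum_{i=1}^n x_i\right\|_{E\Mtau}\le M^{(p)}(E)\left(\sum_{i=1}^n \|x_i\|_{E\Mtau}^p\right)^{1/p}$, which is exactly the upper $p$-estimate with constant $M^{(p)}(E)$.

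For part $(ii)$, assume $q\ge 2$ and $E$ is $q$-concave; here $E$ is a Banach symmetric space and the hypothesis $1<q<\infty$ of Theorem~\ref{th:concavity} is met (note $q\ge 2>1$). Apply Theorem~\ref{th:concavity} with the exponent pair $(p,q)=(2,q)$, allowable since $2\le q$, to obtain
\[
\left(\sum_{i=1}^n \|x_i\|_{E\Mtau}^q\right)^{1/q} \le M_{(q)}(E)\left\|\left(\sum_{i=1}^n |x_i|^2\right)^{1/2}\right\|_{E\Mtau}
\]
for left disjoint $x_1,\dots,x_n$, and again replace the right-hand side using~(\ref{eq:est1}) to conclude the lower $q$-estimate with constant $M_{(q)}(E)$. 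I do not expect a serious obstacle here: the only point requiring a moment's care is checking that the exponent inequalities ($p\le 2$ in the convex case, $2\le q$ in the concave case) line up with the ranges in the source theorems, and that~(\ref{eq:est1}) genuinely requires only left disjointness (not a stronger two-sided condition) — both of which are already recorded in the excerpt. By the symmetry remark ($\|x\|_{E\Mtau}=\|x^*\|_{E\Mtau}$ and $x,y$ left disjoint iff $x^*,y^*$ right disjoint), the same conclusions hold with ``left'' replaced by ``right,'' so no separate argument is needed for that case.
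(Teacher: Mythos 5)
Your proposal is correct and matches the paper's own argument: the paper proves this result exactly by combining the identity (\ref{eq:est1}) for left disjoint operators with Theorem \ref{th:convexity} taken at $q=2$ and Theorem \ref{th:concavity} taken at $p=2$. Your additional checks of the exponent ranges and the left/right symmetry are sound but not a departure from the paper's route.
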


The next theorem follows from Theorem \ref{th:estimates} and the well known relations among upper  estimate and convexity (resp. lower estimate and concavity) in Banach lattices (see diagram on pages 100, 101 in \cite{LT2}). 

\begin{theorem} 
 \cite[Corollary 5.3]{DDS3} \label{cor:etsimates}
Let $E$ be a  Banach symmetric space with the Fatou property.
\begin{itemize}
\item[$(i)$] If $1< p\le 2$ and $E$ satisfies an upper $p$-estimate then $E\Mtau$ satisfies an upper $r$-estimate for all $1\le r<p$. 
\item[$(ii)$] If $q \ge  2$ and $E$ satisfies a lower $q$-estimate then $E\Mtau$ satisfies a lower $s$-estimate for all $s>q$.
\end{itemize}
\end{theorem}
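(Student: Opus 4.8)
The plan is to push everything down to the commutative symmetric space $E$, where the full Banach--lattice machinery is available, and then lift the conclusion to $\nonsp$ by Theorem~\ref{th:estimates}. This detour is essential: $\nonsp$ is in general not a vector lattice, so the classical lattice implications between estimates and convexity/concavity cannot be applied to it directly --- they must be applied to $E$ first and only then transferred.

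For part~(i), assume $1<p\le 2$ and that $E$ satisfies an upper $p$-estimate; fix $r$ with $1\le r<p$. First I would invoke the classical relations between estimates and convexity in Banach lattices (the diagram on pp.~100--101 of \cite{LT2}): an upper $p$-estimate with $p>1$ forces $r$-convexity for every $1\le r<p$. Since $E$, being a symmetric Banach function space with the Fatou property, is a Banach lattice, it is therefore $r$-convex. As $1\le r<p\le 2$ we have $1\le r\le 2$, so Theorem~\ref{th:estimates}(i), applied with $r$ in place of $p$, yields that $\nonsp$ satisfies an upper $r$-estimate (with constant $M^{(r)}(E)$). Since $r\in[1,p)$ was arbitrary, (i) follows.

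For part~(ii), assume $q\ge 2$ and that $E$ satisfies a lower $q$-estimate; fix $s$ with $q<s<\infty$. The dual lattice fact (again the diagram on pp.~100--101 of \cite{LT2}) gives that a lower $q$-estimate forces $s$-concavity for every $s>q$, so $E$ is $s$-concave. Since $s>q\ge 2$ we have $s\ge 2$, and $s<\infty$, so Theorem~\ref{th:estimates}(ii), applied with $s$ in place of $q$, shows that $\nonsp$ satisfies a lower $s$-estimate (with constant $M_{(s)}(E)$). As $s>q$ was arbitrary, (ii) follows.

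The argument is short once the pieces are in place, so there is no computational obstacle; the only thing requiring attention is bookkeeping of the exponent ranges so that Theorem~\ref{th:estimates} is applicable --- in (i) the working exponent must lie in $[1,2]$, which is automatic from $r<p\le 2$, and in (ii) it must lie in $[2,\infty)$, which is automatic from $s>q\ge 2$. I would also double-check that the lattice implications being quoted (upper $p$-estimate $\Rightarrow$ $r$-convexity for $r<p$; lower $q$-estimate $\Rightarrow$ $s$-concavity for $s>q$) are stated in \cite{LT2} in the generality of arbitrary Banach lattices --- they are --- and that a symmetric Banach function space with the Fatou property indeed qualifies, which it does.
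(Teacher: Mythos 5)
Your proposal is correct and follows exactly the route the paper indicates: use the classical Banach-lattice implications (upper $p$-estimate $\Rightarrow$ $r$-convexity for $r<p$, lower $q$-estimate $\Rightarrow$ $s$-concavity for $s>q$, from the diagram on pp.~100--101 of \cite{LT2}) on $E$ itself, and then lift to $\nonsp$ via Theorem~\ref{th:estimates}. Your bookkeeping of the exponent ranges is also the only point needing care, and you handle it correctly.
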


\begin{corollary}\cite[Corollary 5.2]{DDS3}\label{cor:lpestimates}
If $1\le p < \infty$ then $L^p\Mtau$ is $p$-convex and $p$-concave with $M^{(p)}(L^p\Mtau) =M_{(p)}(L^p\Mtau)=1$.    
Consequently if $1\le p\le 2$ then $L^p\Mtau)$ satisfies an upper $p$-estimate with constant one, and if $q\ge 2$ then   $L^p\Mtau$ satisfies a lower $q$-estimate with constant one.

\end{corollary}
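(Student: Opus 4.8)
The plan is to deduce everything from the lifting theorems for convexity and concavity, Theorem~\ref{th:conv} and Theorem~\ref{th:conc}, together with the elementary observation that the commutative space $L^p$ already carries the sharp constants. First I would record that $E=L^p[0,\tauone)$ is a Banach symmetric space with the Fatou property which is $p$-convex and $p$-concave with $M^{(p)}(L^p)=M_{(p)}(L^p)=1$. The $p$-concavity constant equals $1$ because
\[
\left(\sum_{i=1}^n\|f_i\|_{L^p}^p\right)^{1/p}=\left(\int\sum_{i=1}^n|f_i|^p\right)^{1/p}=\left\|\left(\sum_{i=1}^n|f_i|^p\right)^{1/p}\right\|_{L^p},
\]
with equality throughout, and the $p$-convexity constant equals $1$ by Minkowski's integral inequality (equivalently, raising the $L^p$ norm of $(\sum_i|f_i|^p)^{1/p}$ to the $p$-th power gives exactly $\sum_i\|f_i\|_{L^p}^p$). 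In the course of this I would also note that $M^{(p)}(X)\ge 1$ and $M_{(q)}(X)\ge 1$ for \emph{every} quasi-normed symmetric space of measurable operators $X$, obtained by taking a single vector $x_1\ne 0$ in (\ref{eq1}) and (\ref{eq2}), since $(|x_1|^p)^{1/p}=|x_1|$ and $\||x_1|\|_X=\|x_1\|_X$.

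Next I would apply Theorem~\ref{th:conv} with $E=L^p$: this gives that $L^p\Mtau$ is $p$-convex with $M^{(p)}(L^p\Mtau)\le M^{(p)}(L^p)=1$, whence $M^{(p)}(L^p\Mtau)=1$ in view of the trivial lower bound above. For concavity, when $1<p<\infty$ I would invoke Theorem~\ref{th:conc} with $q=p$ to obtain $M_{(p)}(L^p\Mtau)\le M_{(p)}(L^p)=1$, hence $M_{(p)}(L^p\Mtau)=1$. The one case not covered by Theorem~\ref{th:conc} is $p=1$, which must be handled directly: for $x_1,\dots,x_n\in L^1\Mtau$, additivity and positive homogeneity of the extended trace give
\[
\sum_{i=1}^n\|x_i\|_{L^1\Mtau}=\sum_{i=1}^n\tau(|x_i|)=\tau\!\left(\sum_{i=1}^n|x_i|\right)=\left\|\sum_{i=1}^n|x_i|\right\|_{L^1\Mtau},
\]
so $M_{(1)}(L^1\Mtau)=1$ (and likewise $M^{(1)}(L^1\Mtau)=1$, consistent with the previous paragraph).

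Finally the ``consequently'' clause is obtained by feeding these sharp constants into Theorem~\ref{th:estimates}. If $1\le p\le 2$, then $L^p$ is $p$-convex with $M^{(p)}(L^p)=1$, so Theorem~\ref{th:estimates}(i) yields that $L^p\Mtau$ satisfies an upper $p$-estimate with constant $M^{(p)}(L^p)=1$; and if $2\le q<\infty$, then $L^q$ is $q$-concave with $M_{(q)}(L^q)=1$, so Theorem~\ref{th:estimates}(ii) yields that $L^q\Mtau$ satisfies a lower $q$-estimate with constant $1$. Since the argument is a chain of citations to results already established in the excerpt, there is no genuine obstacle; the only points requiring a line of care are the verification that the commutative $L^p$ has convexity and concavity constants \emph{exactly} one (not merely bounded by one) and the separate elementary treatment of the endpoint $p=1$, which Theorem~\ref{th:conc} excludes.
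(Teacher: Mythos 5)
Your argument is correct, but it reaches the first assertion by a heavier route than the one the survey (and DDS3) has in mind. You lift the sharp constants from the commutative $L^p$ via Theorem~\ref{th:conv} and Theorem~\ref{th:conc}, which forces you to treat the endpoint $p=1$ separately because Theorem~\ref{th:conc} excludes $q=1$; the intended argument is a one-line direct computation that handles all $1\le p<\infty$ at once and needs no lifting theorem: by functional calculus and additivity of the extended trace on positive operators,
\[
\left\|\Bigl(\sum_{i=1}^n |x_i|^p\Bigr)^{1/p}\right\|_{L^p\Mtau}^p=\tau\Bigl(\sum_{i=1}^n |x_i|^p\Bigr)=\sum_{i=1}^n\tau(|x_i|^p)=\sum_{i=1}^n\|x_i\|_{L^p\Mtau}^p,
\]
so both the $p$-convexity and the $p$-concavity inequalities hold with equality, and together with your (correct) observation that the constants are always at least $1$ this gives $M^{(p)}(L^p\Mtau)=M_{(p)}(L^p\Mtau)=1$ immediately (this is exactly your $p=1$ trace argument, carried out for every $p$). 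What your route buys is a reminder that the general transfer theorems reproduce the sharp constants from the commutative case, but it costs you the case split and an appeal to Minkowski's integral inequality that the direct computation avoids. For the ``consequently'' clause your use of Theorem~\ref{th:estimates} is exactly the intended derivation; note only that the statement as printed contains a typo (``$L^p\Mtau$ satisfies a lower $q$-estimate'' for $q\ge 2$), and your reading of it as a statement about $L^q\Mtau$ (equivalently, about $L^p\Mtau$ with $p\ge 2$ and a lower $p$-estimate) is the correct interpretation, since for $q<p$ the space $L^p$ is not $q$-concave and the literal claim would fail.
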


 We say that the von Neumann algebra $\M$ has property $P(n)$ for some $n\in \mathbb{N}$ if there exist $n$ projections $e_i\in P(\M) \cap  F\Mtau$  that are mutually orthogonal and pairwise equivalent, and $\M$ has  property $P(\infty)$ if $\M$ has property $P(n)$ for every $n\in \mathbb{N}$.  Recall that projections $p$ and $q$ are equivalent if there exists a partial isometry $u$ such that $p=u^*u$ and $q=uu^*$. 
 
Assume that $E\Mtau$ satisfies an upper $p$-estimate. If $\M$ has property $P(n)$ then  there exist $e_1, e_2,\dots,e_n\in P(\M) \cap F(\M, \tau)$ mutually orthogonal and equivalent. 
Let $u_i \in \M$ be partial isometries such that $u_i^*u_i = e_1$ and $u_iu_i^* = e_i$, $i=1,2,\dots,n$. Note first that $\langle u_i^*\xi,u_i^*\xi\rangle=\langle \xi, u_iu_i^*\xi\rangle=\langle \xi, e_i \xi\rangle= \langle e_i \xi, e_i \xi\rangle$ for  all $i=1,2,\dots, n$ and $\xi \in H$. Hence $u_i^*\xi =0$ for all $\xi \in e_i^\perp$ and $l(u_i)=s(u_i^*)\leq e_i$.
Hence for $i\ne j$, $l(u_i)l(u_j)=l(u_i)e_i e_j =0$ and $u_i$'s are left disjointly supported for $i=1,2, \dots, n$. Consequently,  $w_{i1}=u_iu_1^*$, $i=1,2,\dots,n$, are also left disjoint and $\abs{\sum_{i=1}^n w_{i1}}=\left(\sum_{i=1}^n |w_{i1}^2|\right)^{\frac12}$. Observe next that $|w_{i1}|^2=u_1u_i^*u_i u_1^*=u_1e_1u_1^*=u_1u_1^*u_1u_1^*=e_1e_1=e_1$, and so $|w_{i1}|=e_1$ for all $i=1,2,\dots,n$. Hence
\begin{align*}
\abs{\sum_{i=1}^n w_{i1}}=\left(\sum_{i=1}^n |w_{i1}|^2\right)^{\frac12}=\left(\sum_{i=1}^n e_1\right)^{\frac12}=n^{\frac12}e_1,
\end{align*}
and by upper $p$-estimate of $E\Mtau$, 
\begin{align*}
n^{\frac12}\|e_1\|_{\nonsp}=\left\|\sum_{i=1}^n w_{i1}\right\|_{\nonsp}\leq K\left(\sum_{i=1}^n \|w_{i1}\|^p_{\nonsp}\right)^{\frac1p}=Kn^{\frac1p}\|e_1\|_{\nonsp}.
\end{align*}
Consequently, if $E\Mtau$ satisfies an upper $p$-estimate and $\M$ has property $P(\infty)$ then $p \leq  2$. Similarly one can show that if $E\Mtau$ satisfies a lower $q$-estimate for some $1\leq  q <\infty$ then $q\geq  2$, under the  assumption that $\M$ has
property $P (\infty)$.

The above remarks show part (i) of the next theorem. Part (ii) is proved by using the embedding  of $E$ into $E\Mtau$, Proposition \ref{isom2}.

\begin{theorem}\label{th:indices}
Let $E$ be a  Banach symmetric space.

\rm{(i)} \cite[Proposition 6.1]{DDS3}
If $\M$ has property $P(\infty)$ and $E\Mtau$ satisfies an upper $p$ (respectively, lower $q$)-estimate for some $1\le p < \infty$ (respectively, $1\le q < \infty)$, then $p\le 2$ (respectively, $q\ge 2$).

\rm{(ii)} \cite[Proposition 6.2]{DDS3}
If $\M$ is non-atomic and if $E\Mtau$ satisfies an upper $p$ (respectively, lower $q$)-estimate for some $1\le p < \infty$ (respectively, $1\le q <\infty)$, then so does $E$.

\end{theorem}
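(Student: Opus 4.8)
The plan is to treat parts (i) and (ii) by different mechanisms. Part (i) is essentially the matrix-unit computation carried out in the paragraphs immediately preceding the theorem, and part (ii) is a pull-back of the estimate along the isometric embedding of $E$ into $E(\M,\tau)$ supplied by Corollary \ref{cor:isomglobal}.

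For part (i) I would argue as follows. Suppose $E(\M,\tau)$ satisfies an upper $p$-estimate with constant $M$ and fix $n\in\mathbb N$. Property $P(\infty)$ gives property $P(n)$, hence $n$ nonzero, mutually orthogonal, pairwise equivalent projections $e_1,\dots,e_n\in P(\M)\cap F(\M,\tau)$; choosing partial isometries $u_i\in\M$ with $u_i^*u_i=e_1$ and $u_iu_i^*=e_i$, the operators $w_{i1}=u_iu_1^*$ are left disjointly supported and satisfy $|w_{i1}|=e_1$, so $\bigl|\sum_{i=1}^n w_{i1}\bigr|=n^{1/2}e_1$, exactly as shown above. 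Since $e_1\in F(\M,\tau)$ is nonzero and $\tau$ is faithful, $0<\|e_1\|_{\nonsp}<\infty$, so the upper $p$-estimate applied to $w_{11},\dots,w_{n1}$ gives $n^{1/2}\|e_1\|_{\nonsp}\le Mn^{1/p}\|e_1\|_{\nonsp}$, i.e. $n^{1/2-1/p}\le M$ for every $n$; letting $n\to\infty$ forces $p\le 2$. The lower-$q$ case is dual: a lower $q$-estimate with constant $M$ applied to the same family yields $n^{1/q}\|e_1\|_{\nonsp}\le Mn^{1/2}\|e_1\|_{\nonsp}$, so $n^{1/q-1/2}\le M$ for all $n$, whence $q\ge 2$.

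For part (ii), assume $\M$ is non-atomic. By Corollary \ref{cor:isomglobal} (obtained via Proposition \ref{isom1}) there is a $*$-isomorphism $V$ of $S([0,\tauone),m)$ onto a commutative $*$-subalgebra of $S(\M,\tau)$ with $\mu(V(f))=\mu(f)$ for all $f$, so that $V$ restricts to a linear isometry of $E$ into $E(\M,\tau)$. The key step I would establish is that $V$ carries disjointly supported functions to left (equivalently, right) disjointly supported operators: if $f,g\in E$ satisfy $m(\supp f\cap\supp g)=0$, then $\chi_{\supp f}$ and $\chi_{\supp g}$ are orthogonal projections, so $V(\chi_{\supp f})$ and $V(\chi_{\supp g})$ are orthogonal projections; since $f=\chi_{\supp f}f$ we get $V(f)=V(\chi_{\supp f})V(f)$, whence $l(V(f))\le V(\chi_{\supp f})$ and likewise $l(V(g))\le V(\chi_{\supp g})$, so $l(V(f))\,l(V(g))=0$. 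Then for disjointly supported $f_1,\dots,f_n\in E$ the operators $V(f_1),\dots,V(f_n)$ are pairwise left disjoint, and the upper $p$-estimate for $E(\M,\tau)$ yields
\[
\Bigl\|\sum_{i=1}^n f_i\Bigr\|_E=\Bigl\|\sum_{i=1}^n V(f_i)\Bigr\|_{\nonsp}\le M\Bigl(\sum_{i=1}^n\|V(f_i)\|_{\nonsp}^p\Bigr)^{1/p}=M\Bigl(\sum_{i=1}^n\|f_i\|_E^p\Bigr)^{1/p},
\]
which is the upper $p$-estimate for $E$ with the same constant; the lower-$q$ case follows identically from the lower $q$-estimate for $E(\M,\tau)$. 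I expect the only genuinely load-bearing step to be this transfer, namely that a $*$-isomorphism between algebras of measurable operators carries orthogonal support projections to orthogonal support projections, so that disjointness of function supports becomes left/right disjointness of operators and the lattice notions of upper and lower estimate for $E$ line up with the operator notions used for $E(\M,\tau)$; the remaining points are the bookkeeping already recorded in the preliminaries.
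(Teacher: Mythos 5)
Your proposal is correct and follows essentially the same route as the paper: part (i) is exactly the partial-isometry computation with $w_{i1}=u_iu_1^*$ and $\bigl|\sum_{i=1}^n w_{i1}\bigr|=n^{1/2}e_1$ recorded in the paragraphs preceding the theorem (together with its obvious lower-$q$ dual), and part (ii) is the pull-back of the estimate along the trace-preserving isometric embedding of $E$ into $E\Mtau$, which is precisely the argument the paper indicates. Your additional verification that the $*$-isomorphism sends disjointly supported functions to left disjointly supported operators is the detail the survey leaves to the cited source, and it is carried out correctly.
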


For a symmetric space $X\subset E\Mtau$ define
\begin{align*}
s(X) &=\sup\{p: X \ \text{ satisfies an upper $p$-estimate}\},\\
\sigma(X) &=\inf\{q: X \ \text{ satisfies a lower $q$-estimate}\}.
\end{align*}

The consequence of Theorem \ref{th:indices} is the following result.

\begin{theorem} \cite[Proposition 6.3]{DDS3},\cite[Theorem 1.5]{AL1985}\label{th:indices2}
 If $E$  has the Fatou property and  $\M$ is non-atomic and has property $P(\infty)$, then
\[
s(E\Mtau) = \max\{2, s(E)\}, \ \ \  \sigma(E\Mtau)  = \min \{2, \sigma(E)\}.
\]
If $\M=B(H)$, and $E$ is a Banach symmetric sequence  space with the Fatou property, then the above equalities  hold true also for unitary ideal $C_E$.
\end{theorem}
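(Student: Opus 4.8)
The plan is to determine each index by a sandwich argument, combining the two transfer statements of Theorem~\ref{th:indices} with the direct estimate constructions of Theorems~\ref{th:estimates} and \ref{cor:etsimates}, and invoking the classical Banach-lattice equivalence between $p$-convexity (resp.\ $q$-concavity) and upper (resp.\ lower) $p$-estimates recorded in the diagram on pp.~100--101 of \cite{LT2}. First I would note the monotonicity that makes ``index'' well posed: since $(\sum a_i^p)^{1/p}$ is decreasing in $p$, an upper $p$-estimate entails an upper $r$-estimate for all $r\le p$ and a lower $q$-estimate entails a lower $r$-estimate for all $r\ge q$, so the admissible upper exponents form an initial segment and the admissible lower ones a terminal segment. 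The crux is that the number $2$ is a two-sided barrier for $\nonsp$: it bounds the upper exponents from above and the lower exponents from below, and this is precisely the effect of the noncommutative disjointness identity~\eqref{eq:est1}, which makes left-disjoint operators combine as an $\ell_2$-sum, together with property $P(\infty)$.

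For the upper-estimate exponents I would proceed as follows. Property $P(\infty)$ together with Theorem~\ref{th:indices}(i) shows that any upper $p$-estimate holding in $\nonsp$ forces $p\le 2$, while non-atomicity and Theorem~\ref{th:indices}(ii) carry each such estimate back to $E$; hence the supremum of admissible upper exponents for $\nonsp$ is at most $\min\{2,\,(\text{supremum for }E)\}$. For the matching lower bound I would use that $E$, being a Banach symmetric space with the Fatou property, is $r$-convex for every $r$ below its convexity index, that by \cite{LT2} this convexity index equals its upper-estimate index, and then apply Theorem~\ref{th:estimates}(i) (valid for exponents at most $2$) and Theorem~\ref{cor:etsimates}(i) to produce upper $r$-estimates in $\nonsp$ for every $r$ below $\min\{2,\,(\text{index of }E)\}$. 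The two bounds coincide.

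The lower-estimate exponents are handled dually, with the barrier reversed: Theorem~\ref{th:indices}(i) now forces every lower $q$-estimate in $\nonsp$ to satisfy $q\ge 2$ and Theorem~\ref{th:indices}(ii) shows the relevant infimum is at least that of $E$, giving at least $\max\{2,\,(\text{infimum for }E)\}$; the reverse bound comes from Theorem~\ref{th:estimates}(ii) (valid for exponents at least $2$), Theorem~\ref{cor:etsimates}(ii), and the $q$-concavity/lower-estimate equivalence of \cite{LT2}. Together the upper and lower computations are exactly the two displayed identities $s(\nonsp)=\max\{2,s(E)\}$ and $\sigma(\nonsp)=\min\{2,\sigma(E)\}$.

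For the unitary ideal $C_E$ the algebra $\M=B(H)$ is atomic, so the non-atomic transfer of Theorem~\ref{th:indices}(ii) is no longer available; I would replace it by the isometric order-embedding of $E$ into $C_E$ of Proposition~\ref{prop:isomarazy} (see also Corollary~\ref{cor:isomglobal}), which pulls every upper or lower estimate of $C_E$ back to $E$. Since $B(H)$ manifestly has property $P(\infty)$---take countably many mutually orthogonal, pairwise equivalent rank-one projections---the barrier argument based on Theorem~\ref{th:indices}(i) and \eqref{eq:est1} applies verbatim, and through the identification $C_E\cong G(B(H),\tr)$ the estimate constructions of Theorems~\ref{th:estimates}--\ref{cor:etsimates} run unchanged, yielding the same two identities for $C_E$. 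I expect the main obstacle to be the boundary bookkeeping at the extremal exponents---deciding whether the defining suprema and infima are attained, where the Fatou property is indispensable---and the faithful transcription of the lattice equivalence of \cite{LT2} into the operator framework; it is exactly at these matching steps that \eqref{eq:est1} and $P(\infty)$ enter decisively.
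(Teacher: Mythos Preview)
Your overall strategy is exactly the paper's: one inequality from Theorem~\ref{th:indices} (the $P(\infty)$ barrier at $2$ together with the non-atomic transfer back to $E$), the other from Theorems~\ref{th:estimates} and~\ref{cor:etsimates} (lifting upper/lower estimates from $E$ to $\nonsp$ in the ranges $p\le 2$, $q\ge 2$), with the $C_E$ case handled by replacing the non-atomic embedding by Proposition~\ref{prop:isomarazy}. There is nothing methodologically different.

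There is, however, a genuine inconsistency in what you wrote. Your own argument correctly yields
\[
s(\nonsp)\le 2,\quad s(\nonsp)\le s(E),\quad s(\nonsp)\ge r\ \text{for all}\ r<\min\{2,s(E)\},
\]
hence $s(\nonsp)=\min\{2,s(E)\}$, and dually $\sigma(\nonsp)=\max\{2,\sigma(E)\}$. You then assert that ``these are exactly the two displayed identities $s(\nonsp)=\max\{2,s(E)\}$ and $\sigma(\nonsp)=\min\{2,\sigma(E)\}$,'' which is the opposite. The displayed formulas in the paper's statement have $\max$ and $\min$ interchanged; your computation is right and the statement as printed is not (for instance, $E=L_4$ has $s(E)=4$, yet Theorem~\ref{th:indices}(i) forces $s(\nonsp)\le 2$, so $s(\nonsp)=\max\{2,4\}=4$ is impossible). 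You should flag this rather than claim your argument reproduces the printed identities.
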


A stronger version of the above result is presented in Corollary 6.9 in \cite{DDS3}, which is an extension of Corollary 4.3 in \cite{AL1985}.

\begin{problem} Prove Theorem \ref{th:conc} for a quasi-normed symmetric space $E$.

\end{problem}

\section{Uniform and local uniform convexity}

 The \emph{modulus of convexity} of a normed space  $(X,\|\cdot\|)$  is given by \[
\delta_X(\epsilon)=\inf\{1-\|x+y\|/2:\,x,y\in B_X\text{ and }\|x-y\|\geq\epsilon\},\ \ \   \ 0\leq \epsilon\leq 2. 
\]
It is said that the modulus of convexity is of power $q$ if there exists a constant $c>0$ such that $\delta(\epsilon)\geq c\epsilon^q$ for  $2\ge \epsilon > 0$. We call $X$ \emph{uniformly convex} if $\delta_X(\epsilon)>0$ for all $0\leq \epsilon\leq 2$ \cite{LT2}.  Equivalently, $X$ is uniformly convex if for any sequences $\{x_n\},\{y_n\}\subset B_X$ the condition $\|x_n+y_n\|\to 2$ implies that $\|x_n-y_n\|\to 0$  as $n\to\infty$. 
It is well known that $\ell_p$ and $L_p$ are uniformly convex for $p>1$. 
$X$  is said to be \emph{uniformly convexifiable} if it admits an equivalent uniformly convex norm.
A normed space $(X,\|\cdot\|)$ is called \emph{locally uniformly convex} if the conditions $x_n,x\in X$, $\|x_n\|\to \|x\|$, $\|x_n+x\|\to 2\|x\|$ imply $\|x_n-x\|\to 0$ as $n\to \infty$. 

J. Dixmier in  \cite{Dix} proved that $C_p$ is uniformly convex for $p\geq 2$. He also observed that moduli of convexity in $C_p$ and $\ell_p$ are equivalent.
C. McCarthy in \cite{Carth} extended J. Dixmier's results  showing that  $C_p$ is uniformly convex for $p>1$ and the moduli of convexity of $C_p$ and $\ell_p$ are in fact the same.
 N. Tomczak-Jaegermann in \cite{Tom1974} gave an alternative proof for estimating the modulus of convexity of $C_p$.  She proved Clarkson type inequalities for $C_p$ spaces, analogous to the classical  ones in $\ell_p$.     

We turn our attention next to the question whether the space $\left(\nonsp,\normcomm{\cdot}\right)$ is uniformly convex (respectively, locally uniformly convex), if $(E, \|\cdot\|_{E})$ is uniformly convex (respectively, locally uniformly convex).

J. Arazy  in \cite{A1} showed that $E$ is uniformly convexifiable if and only if $C_E$ is uniformly convexifiable. 
 N. Tomczak-Jaegermann in \cite{Tom1984} and Q. Xu in \cite{X1989} showed the following result for $C_E$ and $E\Mtau$, respectively.
 
 \begin{theorem}  \cite[Theorem 2]{Tom1984}, \cite[Th\'eor\`eme (ii)]{X1989}
 Let $1<p\leq 2\leq q<\infty$. Let $E$ be a  Banach symmetric sequence  or function space which is $p$-convex and $q$-concave with $M^{(p)}(E)=1=M_{(q)}(E)$. Then $C_E$ or $E\Mtau$ is uniformly convex with modulus of convexity of power type $q$ and uniformly smooth with modulus of smoothness of power type $p$.
\end{theorem}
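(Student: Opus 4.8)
## Proof proposal

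\textbf{Overall strategy.} The plan is to reduce the uniform convexity statement for $E\Mtau$ (or $C_E$) to the corresponding statement about the scalar space $E$ by establishing noncommutative Clarkson-type inequalities, exactly as in the classical $C_p$ case treated by Tomczak-Jaegermann. The machinery is already in place: Theorem~\ref{th:convexity} gives, for $E$ which is $p$-convex with $M^{(p)}(E)=1$ and $p\le q$, the vector-valued inequality
\[
\left\|\left(\sum_{i=1}^n |x_i|^q \right)^{1/q}\right\|_{E\Mtau} \le \left(\sum_{i=1}^n \|x_i\|_{E\Mtau}^p \right)^{1/p},
\]
and Theorem~\ref{th:concavity} gives, for $E$ which is $q$-concave with $M_{(q)}(E)=1$ and $p\le q$, the dual inequality
\[
\left(\sum_{i=1}^n \|x_i\|^q_{E\Mtau}\right)^{1/q} \le \left\|\left(\sum_{i=1}^n|x_i|^p\right)^{1/p}\right\|_{E\Mtau}.
\]
First I would specialize these to $n=2$ with the pair $(x+y,\,x-y)$ in place of $(x_1,x_2)$, in order to control $\||\,|x+y|^q + |x-y|^q\,|^{1/q}\|$ and $\||\,|x+y|^p+|x-y|^p\,|^{1/p}\|$ in terms of $\|x\|,\|y\|$; the combination of these two estimates is the noncommutative analogue of Clarkson's inequality, and from it the power-type-$q$ lower bound on $\delta_{E\Mtau}(\varepsilon)$ follows by the standard elementary argument (and power-type-$p$ modulus of smoothness by duality, using that $E^\times$ is $q'$-convex and $p'$-concave).

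\textbf{Key steps in order.} (1) Normalize: since $E$ is $p$-convex and $q$-concave, renorm so that $M^{(p)}(E)=M_{(q)}(E)=1$ — but here we are told these already equal $1$, so no renorming is needed and the modulus estimates are sharp. (2) Establish the scalar ``parallelogram'' inequality in $\mathbb{R}$: for real numbers $a,b$ and $1<p\le 2\le q<\infty$ one has pointwise/numerical inequalities relating $(|a+b|^q+|a-b|^q)^{1/q}$, $(|a+b|^p+|a-b|^p)^{1/p}$, and $|a|,|b|$ (these are the classical Clarkson numerical inequalities). (3) Lift to operators: apply Theorem~\ref{th:convexity} with $x_1=x+y$, $x_2=x-y$ to get an upper bound on $\|(|x+y|^q+|x-y|^q)^{1/q}\|_{E\Mtau}$, and Theorem~\ref{th:concavity} to get a lower bound on $\|(|x+y|^p+|x-y|^p)^{1/p}\|_{E\Mtau}$; because $E$ is simultaneously $p$-convex and $q$-concave, these two operator-norm quantities are comparable, and combined with the scalar inequalities of step (2) they yield, for $x,y\in B_{E\Mtau}$,
\[
\|x+y\|_{E\Mtau}^q + \|x-y\|_{E\Mtau}^q \le 2,
\]
or the appropriate Clarkson variant. (4) Conclude: from $\|x+y\|^q + \|x-y\|^q\le 2$ with $\|x\|,\|y\|\le 1$ one reads off $\delta_{E\Mtau}(\varepsilon)\ge 1-(1-(\varepsilon/2)^q)^{1/q}\ge c\varepsilon^q$, so $E\Mtau$ is uniformly convex of power type $q$; the smoothness statement follows by the standard duality $\rho_X^*(\tau)=\sup_\varepsilon(\varepsilon\tau/2-\delta_{X^*}(\varepsilon))$ together with the fact (stated in the preliminaries) that for order-continuous $E$ the dual of $E\Mtau$ is $E^\times\Mtau$, where $E^\times$ is $q'$-convex and $p'$-concave. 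The case of $C_E$ is identical, invoking the corresponding corollary of Theorems~\ref{th:conv} and \ref{th:conc} for unitary ideals, or simply applying Theorem~\ref{th:convexity}/\ref{th:concavity} to $\M=B(H)$ with the canonical trace after the identification $C_E = G(B(H),\mathrm{tr})$.

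\textbf{Main obstacle.} The delicate point is step (3): in the commutative setting one gets a genuine \emph{pointwise} inequality between the functions $(|f+g|^q+|f-g|^q)^{1/q}$ etc., but for operators $|x+y|$ and $|x-y|$ need not commute, so there is no pointwise comparison and one cannot simply integrate a numerical inequality. The resolution — and the heart of the argument — is that one never needs such a pointwise statement: the $p$-convexity inequality of Theorem~\ref{th:convexity} already \emph{is} the substitute, converting the two-operator expression into a sum of scalar norms, after which the scalar Clarkson inequalities of step (2) do all the remaining work. Thus the real content has been pushed entirely into Theorems~\ref{th:convexity} and~\ref{th:concavity}, which rest on the Kalton--Sukochev inequality (Lemma~\ref{lem:kalsuk}) and its supermajorization dual (Lemma~\ref{lem:conv01}); once those are granted, the proof of uniform convexity and smoothness with the stated power types is a routine assembly of the four steps above, and the sharpness of the convexity constants $M^{(p)}(E)=M_{(q)}(E)=1$ is exactly what guarantees the moduli have the clean power types $q$ and $p$ without extra constants degrading the exponents.
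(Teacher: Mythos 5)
The survey itself does not prove this theorem --- it is quoted from \cite{Tom1984} and \cite{X1989} --- so your proposal has to stand on its own, and it has a genuine gap exactly at the point you name as the main obstacle and then wave away. After applying Theorem~\ref{th:concavity} to the pair $(x+y,x-y)$ you are left with the quantity $\bigl\|\bigl(|x+y|^p+|x-y|^p\bigr)^{1/p}\bigr\|_{E\Mtau}$, and to run the Clarkson argument you must bound it by (essentially) $2^{(q-1)/q}\bigl(\|x\|_{E\Mtau}^q+\|y\|_{E\Mtau}^q\bigr)^{1/q}$ \emph{with no loss in the constant}. Theorem~\ref{th:convexity} cannot deliver this: applied to the same pair it only returns $\bigl(\|x+y\|_{E\Mtau}^p+\|x-y\|_{E\Mtau}^p\bigr)^{1/p}$, i.e.\ the norms of $x\pm y$ again, which is circular, and applied to $(x,y)$ it estimates a different expression. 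The scalar inequality $|a+b|^p+|a-b|^p\le 2\left(|a|^p+|b|^p\right)$ does not hold for noncommuting operators in the operator order, and the available submajorization surrogates (e.g.\ combining the operator parallelogram identity with operator monotonicity of $t^{p/2}$, or Rotfel'd--Ando--Zhan type results, or Lemma~\ref{lem:kalsuk}) all lose a constant factor such as $2^{p/2}$. Since the Clarkson scheme has zero slack --- once the constant for unit vectors exceeds $2^{q}$ the conclusion is merely $\|x+y\|\le 2$ and no modulus estimate follows --- your step (3) collapses for every $p<2$. The only case it covers is $p=2$, where $|x+y|^2+|x-y|^2=2\left(|x|^2+|y|^2\right)$ holds exactly as operators and the chain does close. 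So the real content has \emph{not} been absorbed into Theorems~\ref{th:convexity} and~\ref{th:concavity}: the genuinely noncommutative two-operator Clarkson/Hanner-type inequality is still missing, and producing it is precisely the nontrivial work carried out in the cited papers of Tomczak-Jaegermann and Xu (the latter via arguments of a different nature, not a formal reduction to vector-valued $p$-convexity/$q$-concavity inequalities).

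Two smaller points. First, your intermediate target ``$\|x+y\|^q+\|x-y\|^q\le 2$ for $x,y\in B_{E\Mtau}$'' is mis-stated, and even the clean inequality $\|x+y\|^q+\|x-y\|^q\le 2^{q-1}\left(\|x\|^q+\|y\|^q\right)$ is false in general when $p<2$ (already for $E=L_p$, $1<p<2$, $q=2$, the correct two-point inequality carries the factor $p-1$ in front of $\|x-y\|^2$); thus the inequality you would need is of a constant-bearing Hanner/Ball--Carlen--Lieb type, which again is not a formal consequence of the tools quoted. Second, the duality reduction of uniform smoothness to uniform convexity of $E^{\times}\Mtau$ is fine in outline ($q$-concavity forces $E$ to be a KB-space, so $E$ is order continuous, $(\nonsp)^{*}=E^{\times}\Mtau$, and $E^{\times}$ is $q'$-convex and $p'$-concave with constants $1$), but it invokes the convexity half for $E^{\times}$ and therefore inherits the same gap.
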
  

V. Chilin, A. Krygin, F. Sukochev in \cite{CKS1992} investigated local uniform convexity and uniform convexity in noncommutative symmetric spaces $\nonsp$. In order to prove the main result on uniform convexity, they used the fact that if $(E,\|\cdot\|_E)$ is uniformly convex then there exists a norm $\norms{\cdot}$ on $E$ equivalent to $\|\cdot\|_E$ such that $(E,\norms{\cdot})$ is  $p$-convex and $q$-concave for some $1<p\le 2 \le q < \infty$, with $p$-convexity and $q$-concavity constants equal to $1$ \cite[Proposition 1.d.8, vol.II]{LT2}. Consequently by \cite{X1989}, they obtained uniform convexity of $(\nonsp, \normscomm{\cdot})$, where $\normscomm{x}=\norms{\mu(x)}$, $x\in \nonsp$. This combined with the uniform convexity of $(E,\|\cdot\|_E)$ allowed them to show that $(\nonsp, \normcomm{\cdot})$ is also uniformly convex.

\begin{theorem}\cite[Theorem 2.1, Theorem 3.1]{CKS1992}
If  $E$ is  uniformly convex (respectively, locally uniformly convex) then $\nonsp$ is 
uniformly convex (respectively, locally uniformly convex).
\end{theorem}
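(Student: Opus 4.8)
The plan is to deduce both statements from the $p$-convexity / $q$-concavity machinery of Section 10 together with the interpolation-type argument sketched in the paragraph preceding the theorem. I would first treat the uniform convexity case in detail and then indicate the (entirely analogous but easier) local uniform convexity case.

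\textbf{Uniform convexity.} Suppose $E$ is uniformly convex. The first step is to invoke \cite[Proposition 1.d.8, vol.II]{LT2}: a uniformly convex Banach lattice admits an equivalent lattice norm $\norms{\cdot}$ which is $p$-convex and $q$-concave with constants $1$ for some $1<p\le 2\le q<\infty$. Since the new norm is equivalent and still a symmetric (rearrangement invariant) norm, the space $(E,\norms{\cdot})$ is a Banach symmetric space with $M^{(p)}(E,\norms{\cdot})=M_{(q)}(E,\norms{\cdot})=1$. The second step is to apply \thmref{th:conv} and \thmref{th:conc} (or directly the theorem of Xu quoted just above, \cite[Th\'eor\`eme (ii)]{X1989}) to conclude that $(\nonsp,\normscomm{\cdot})$, where $\normscomm{x}=\norms{\mu(x)}$, is $p$-convex and $q$-concave with constants $1$, hence uniformly convex with modulus of power type $q$. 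The third step is to transfer uniform convexity from the equivalent norm $\normscomm{\cdot}$ back to the original norm $\normcomm{\cdot}$ on $\nonsp$. This is the only genuinely non-formal point, and I expect it to be the main obstacle: uniform convexity is \emph{not} preserved by passing to an equivalent norm in general, so one cannot simply quote equivalence. Instead one uses that $\norms{\cdot}$ and $\|\cdot\|_E$ are \emph{both} symmetric norms on $E$ and that $(E,\|\cdot\|_E)$ is itself uniformly convex; the argument of \cite[Theorem 2.1]{CKS1992} runs as follows. Given sequences $x_n,y_n\in B_{(\nonsp,\normcomm{\cdot})}$ with $\normcomm{x_n+y_n}\to 2$, pass to the singular value functions and use that $\mu(x_n),\mu(y_n)$ lie in $B_E$ with $\|\mu(x_n)+\mu(y_n)\|_E$ controlled (via the triangle inequality $\mu(x_n+y_n)\prec\mu(x_n)+\mu(y_n)$ and strong symmetry of $E$, which holds since $E$ uniformly convex implies $E$ order continuous, hence strongly symmetric) so that $\|\mu(x_n)+\mu(y_n)\|_E\to 2$; uniform convexity of $E$ then forces $\|\mu(x_n)-\mu(y_n)\|_E\to 0$. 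One combines this with the quantitative modulus estimate for $\normscomm{\cdot}$ (power type $q$) applied to $x_n,y_n$ after renormalizing, using that $\normscomm{\cdot}$ is a symmetric norm so $\normscomm{x_n-y_n}$ is controlled by $\normscomm{\mu(x_n)-\mu(y_n)}=\norms{\mu(x_n)-\mu(y_n)}$, which in turn is equivalent to $\|\mu(x_n)-\mu(y_n)\|_E\to 0$. Tracking the constants carefully yields $\normcomm{x_n-y_n}\to 0$.

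\textbf{Local uniform convexity.} For the local uniform convexity statement the scheme is the same but lighter: local uniform convexity of $E$ again implies order continuity, hence $E$ is strongly symmetric, and one shows directly that for $x_n,x\in\nonsp$ with $\normcomm{x_n}\to\normcomm{x}$ and $\normcomm{x_n+x}\to 2\normcomm{x}$ one has $\|\mu(x_n)\|_E\to\|\mu(x)\|_E$ and $\|\mu(x_n)+\mu(x)\|_E\to 2\|\mu(x)\|_E$ (using $\mu(x_n+x)\prec\mu(x_n)+\mu(x)$ together with $2\normcomm{x}\le\normcomm{x_n+x}+\normcomm{x_n-x}$-type lower bounds to pin down the limits), so local uniform convexity of $E$ gives $\|\mu(x_n)-\mu(x)\|_E\to 0$. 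The subtlety that remains is to pass from $\|\mu(x_n)-\mu(x)\|_E\to 0$ to $\normcomm{x_n-x}\to 0$, i.e. to control the operator difference by the difference of the rearrangements; here one uses an auxiliary uniformly convex renorming as above to get a \emph{modulus} estimate (the power type $q$ inequality for $\normscomm{\cdot}$), which upgrades the scalar convergence of rearrangements into norm convergence of the operators. I would present the uniform convexity proof in full and then remark that the locally uniformly convex case follows by the identical argument with ``sequences $x_n,y_n$'' replaced by ``$x_n,x$'', citing \cite[Theorem 3.1]{CKS1992} for the details.

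Throughout, the facts used without further comment are: the lifting theorems \thmref{th:conv}, \thmref{th:conc}; the singular-value triangle inequality $\mu(x+y)\prec\mu(x)+\mu(y)$ and the multiplicativity-type estimates from \lemref{lm:singfun}; the fact (noted after \propref{prop:czero} and in \secref{sec:symmfun}) that a uniformly or locally uniformly convex symmetric space is order continuous, hence strongly symmetric; and the Lindenstrauss--Tzafriri renorming \cite[Proposition 1.d.8, vol.II]{LT2}. The single real difficulty, worth isolating as a lemma, is the transfer of the \emph{modulus} of convexity between the two equivalent symmetric norms on $\nonsp$ — equivalently, a Clarkson-type inequality for $\normcomm{\cdot}$ deduced from the one for $\normscomm{\cdot}$ — and this is exactly the content of \cite[Theorem 2.1]{CKS1992}.
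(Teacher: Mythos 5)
Your high-level route is the same one the survey sketches for \cite{CKS1992}: renorm $E$ via \cite[Proposition 1.d.8]{LT2} to an equivalent symmetric norm $\norms{\cdot}$ that is $p$-convex and $q$-concave with constants one, lift to uniform convexity of $(\nonsp,\normscomm{\cdot})$ via \cite{X1989}, and then transfer back to $\normcomm{\cdot}$ using uniform convexity of $(E,\|\cdot\|_E)$ itself; and your first reduction (from $\mu(x_n+y_n)\prec\mu(x_n)+\mu(y_n)$ and strong symmetry one gets $\|\mu(x_n)+\mu(y_n)\|_E\to 2$, hence $\|\mu(x_n)-\mu(y_n)\|_E\to 0$) is correct. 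The genuine gap is the bridge you then propose: the claim that $\normscomm{x_n-y_n}$ is ``controlled by'' $\norms{\mu(x_n)-\mu(y_n)}$ is false. The symmetric norm of the difference is $\norms{\mu(x_n-y_n)}$, and $\mu(x-y)$ is not dominated in any useful way by $\mu(x)-\mu(y)$: already in the commutative case $f=\chi_{[0,1]}$, $g=\chi_{[1,2]}$ (or two orthogonal projections of equal finite trace in $\M$) satisfy $\mu(f)=\mu(g)$, so $\|\mu(f)-\mu(g)\|_E=0$, while $\normcomm{f-g}=\|\chi_{[0,2]}\|_E$ is as large as the elements themselves. Under the constraint $\normcomm{x_n+y_n}\to 2$, the passage from $\|\mu(x_n)-\mu(y_n)\|_E\to 0$ to $\normcomm{x_n-y_n}\to 0$ is precisely the theorem, so this bridge is the entire remaining content; what is actually needed is additional information at the operator level, e.g.\ convergence in measure of $x_n-y_n$ combined with a criterion of the type quoted in the survey from \cite[Proposition 1.1]{CKS1992} and \cite[Proposition 3.2]{CSweak} ($\normcomm{x_n-x}\to 0$ iff $x_n\xrightarrow{\tau}x$ and $\|\mu(x_n)-\mu(x)\|_E\to 0$), or a genuinely quantitative use of the Clarkson-type inequalities for $\normscomm{\cdot}$ applied to the operators $x_n,y_n$ themselves rather than to their rearrangements. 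None of this is supplied, and deferring at the end to \cite[Theorem 2.1]{CKS1992} is circular, since that is the statement being proved.

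The local uniform convexity half has a second, independent problem: \cite[Proposition 1.d.8]{LT2} is a statement about uniformly convex lattices, and mere local uniform convexity of $E$ does not yield any equivalent $p$-convex and $q$-concave norm with constants one (local uniform convexity does not imply superreflexivity), so the ``auxiliary uniformly convex renorming'' and its power-type modulus that you invoke to upgrade $\|\mu(x_n)-\mu(x)\|_E\to 0$ are not available in that case. The local case must therefore be argued by a different mechanism (again, the natural one is to establish $x_n\to x$ in measure and appeal to the convergence criterion above), which your sketch does not provide.
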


\begin{theorem}\cite[Corollary 2.1, Corollary 3.1]{CKS1992}
Let $\M$ be non-atomic. If $\nonsp$ is  uniformly convex (respectively, locally uniformly convex)  then so is $E$.
\end{theorem}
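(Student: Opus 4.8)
The plan is to derive the result from the isometric embedding of $E$ into $\nonsp$ provided by the isomorphism theory of Section \ref{sec:isom}, together with the elementary fact that uniform convexity and local uniform convexity both pass to closed subspaces. First I would recall that since $\M$ is non-atomic, Corollary \ref{cor:isomglobal} gives an isometric (and order preserving) embedding $E\hookrightarrow \nonsp$; explicitly, choosing any projection $p\in P(\M)$ with $\tau(p)=\tauone$ and applying Proposition \ref{isom1} to a positive operator $x$ with $\tau(s(x))=\tauone$, one obtains a $*$-isomorphism $V:S([0,\tauone),m)\to S(\mathcal N,\tau)$ with $\mu(V(f))=\mu(f)$ for all $f$, hence $\|V(f)\|_{\nonsp}=\|\mu(V(f))\|_E=\|\mu(f)\|_E=\|f\|_E$. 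Thus $V(E)$ is a linear subspace of $\nonsp$ isometric to $E$.

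Next I would invoke the standard Banach-space principle: if $Y$ is a (closed) subspace of a uniformly convex space $X$, then $Y$ is uniformly convex, since $\delta_Y(\eps)\ge \delta_X(\eps)>0$ for all $\eps\in(0,2]$, the infimum defining $\delta_Y$ being taken over a smaller set $B_Y\subset B_X$. The same monotonicity argument works verbatim for local uniform convexity: if $x_n,x\in Y$ with $\|x_n\|\to\|x\|$ and $\|x_n+x\|\to 2\|x\|$, then these relations hold in $X$, so $\|x_n-x\|\to 0$. Applying this with $X=\nonsp$ (assumed uniformly convex, resp. locally uniformly convex) and $Y=V(E)\cong E$, we conclude that $E$ is uniformly convex, resp. locally uniformly convex. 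I should note that $V(E)$ need not be closed in $\nonsp$ a priori, but this is irrelevant: uniform convexity and local uniform convexity are properties of the norm on the linear subspace $V(E)$ itself, and $V:E\to V(E)$ is a surjective linear isometry, so the geometry transfers directly without any completeness or closedness hypothesis.

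There is no serious obstacle here; the work has all been front-loaded into the construction of the embedding $V$ in Section \ref{sec:isom}, which in turn rests on Proposition \ref{isom2} and the classical fact (ChII, Corollary 7.6 of \cite{BS}) that every function can be recovered from its decreasing rearrangement via a measure-preserving transformation. The only point requiring a small remark is why $\nonsp$ being uniformly convex (resp.\ locally uniformly convex) forces the same for $E(\M_p,\tau_p)$ or for the relevant corner algebra when $\tau$ is merely $\sigma$-finite rather than finite; but since $E$ embeds isometrically already into $\nonsp$ for non-atomic $\M$ by Corollary \ref{cor:isomglobal} directly, one does not even need to pass to a corner — the single embedding $E\hookrightarrow\nonsp$ suffices, and the result follows at once.
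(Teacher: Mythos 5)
Your proposal is correct and follows exactly the route the survey intends: the order isometric embedding $E\hookrightarrow\nonsp$ for non-atomic $\M$ (Corollary \ref{cor:isomglobal}, built from Proposition \ref{isom1}), combined with the standard fact that uniform and local uniform convexity are inherited by (not necessarily closed) subspaces via a surjective linear isometry. This is the same mechanism the paper invokes for the analogous converse statements (e.g.\ after Theorem \ref{thm:urce} and in Corollary \ref{corst:cr}), so nothing further is needed.
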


By identifying $C_E$ with a symmetric space of measurable operators $G(B(H),\tr)$ (see Section \ref{sec:unitary}), the discrete versions of the above results was also obtained in \cite{CKS1992}. 
\begin{theorem}\cite[Theorem 2.2, Theorem 3.2]{CKS1992}
\label{thm:urce}
 Let $E$ be a symmetric sequence space. Then $C_E$ is (locally) uniformly convex if and only if $E$  is (locally) uniformly convex.
\end{theorem}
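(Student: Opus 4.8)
The assertion is an equivalence, and I would treat the two implications separately; the forward implication is immediate and the backward one is the substance.

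\textbf{$C_E$ (locally) uniformly convex $\Rightarrow$ $E$ (locally) uniformly convex.} By Corollary \ref{cor:isomglobal} (equivalently, by Proposition \ref{prop:isomarazy}) the symmetric sequence space $E$ is isometrically embedded into $C_E$. Both uniform convexity and local uniform convexity are inherited by closed subspaces --- the modulus of convexity of a subspace dominates that of the ambient space, and the local definition restricts verbatim --- so this direction follows at once.

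\textbf{$E$ (locally) uniformly convex $\Rightarrow$ $C_E$ (locally) uniformly convex.} Here I would follow \cite{CKS1992}: using Section \ref{sec:unitary} identify $C_E$ with $G(B(H),\tr)$, where $G$ is the symmetric function space on $[0,\infty)$ with $\|f\|_G=\|\pi(f)\|_E$ and $\pi(f)=\{\int_{n-1}^n\mu(f)\}$. Since \cite[Theorem 2.1, Theorem 3.1]{CKS1992}, applied with the von Neumann algebra $B(H)$, trace $\tr$, and the symmetric function space $G$, guarantees that (local) uniform convexity of $G$ passes to $G(B(H),\tr)=C_E$, it suffices to prove the lifting claim: \emph{if $E$ is (locally) uniformly convex then so is $G$}. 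For \emph{uniform} convexity I would argue as follows. By \cite[Proposition 1.d.8, vol.~II]{LT2} there is an equivalent norm $\norms{\cdot}$ on $E$ making $(E,\norms{\cdot})$ $p$-convex and $q$-concave with constants $1$ for some $1<p\le 2\le q<\infty$; transporting this norm to $G$ via $f\mapsto\norms{\pi(f)}$ and invoking the $p$-convexity/$q$-concavity results above (Theorem \ref{th:conc} and the subsequent corollary, together with passage to sublattices of $G(B(H),\tr)$), $G$ with this norm is again $p$-convex and $q$-concave with constants $1$, hence uniformly convex by the Tomczak--Jaegermann--Xu theorem (\cite[Theorem 2]{Tom1984}, \cite{X1989}). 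It then remains to upgrade this to uniform convexity of $(G,\|\cdot\|_G)$ in its original norm, and this is where the uniform convexity of $E$ itself (not merely its convexifiability) enters: for $f_n,g_n\in B_G$ with $\|f_n+g_n\|_G\to 2$ the Hardy--Littlewood--P\'olya submajorization $\mu(f_n+g_n)\prec\mu(f_n)+\mu(g_n)$ forces $\pi(f_n+g_n)\prec\pi(f_n)+\pi(g_n)$ in $E$ (note $E$ is strongly symmetric, being reflexive, hence order continuous), so $\|\pi(f_n)+\pi(g_n)\|_E\to 2$ and $\|\pi(f_n)\|_E,\|\pi(g_n)\|_E\to 1$, whence uniform convexity of $E$ gives $\|\pi(f_n)-\pi(g_n)\|_E\to 0$; this, fed back through the equivalence of the two norms on $G$ and the uniform convexity of the auxiliary norm, yields $\|f_n-g_n\|_G\to 0$. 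For \emph{local} uniform convexity the renorming step of \cite{LT2} is not available, so I would instead reproduce the local argument of \cite{CKS1992} directly on $G$.

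\textbf{Main obstacle.} The genuinely delicate point is the last step: deducing uniform (respectively, local uniform) convexity of $C_E$, equivalently of $G$, in its natural norm from the corresponding property in the auxiliary $p$-convex/$q$-concave norm. This cannot be effected by renorming alone; it uses crucially that $E$ is uniformly convex in its own norm, coupled to the auxiliary norm through the rearrangement and submajorization inequalities for $\pi$ and $\mu$. This is precisely the argument carried out in \cite{CKS1992}, and the plan is to adapt it.
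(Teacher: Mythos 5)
Your first implication is the paper's own argument: $E$ embeds isometrically into $C_E$ (Proposition \ref{prop:isomarazy}, Corollary \ref{cor:isomglobal}), and uniform and local uniform convexity pass to closed subspaces, so this direction is fine for an arbitrary symmetric sequence space $E$.

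The converse implication, however, rests on a false reduction. You claim it suffices to prove that (local) uniform convexity lifts from $E$ to the function space $G$ and then to apply \cite[Theorems 2.1, 3.1]{CKS1992} with $G$ in place of the symmetric function space. But $G$ is never even strictly convex, whatever $E$ is: if $f$ is supported on a set of measure at most one, then $\mu(f)$ lives on $[0,1]$ and $\pi(f)=\|f\|_{L_1}\phi_1$, so on the sublattice of functions supported in $[0,1]$ the norm $\|\cdot\|_G$ equals $\|\phi_1\|_E\,\|\cdot\|_{L_1}$. Concretely, $f=2c\chi_{[0,1/2)}$ and $g=2c\chi_{[1/2,1)}$ with $c=1/\|\phi_1\|_E$ give $\|f\|_G=\|g\|_G=1$ and $\|f+g\|_G=\|f-g\|_G=2$. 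The same $L_1[0,1]$-copy shows $G$ is not $p$-convex for any $p>1$ (with any constant), so the transported auxiliary norm $f\mapsto\norms{\pi(f)}$ cannot be $p$-convex with constant one, and your final feedback step collapses as well: in the example above $\pi(f)=\pi(g)$, so $\|\pi(f_n)-\pi(g_n)\|_E\to 0$ gives no control of $\|f_n-g_n\|_G$. The identification $C_E=G\left(B(H),\tr\right)$ is exploited differently: singular value functions of operators in $G\left(B(H),\tr\right)$ are step functions $\sum_n s_n(x)\chi_{[n-1,n)}$, so only the discrete part of $G$, isometric to $E$, is ever seen, and the discrete Theorems 2.2 and 3.2 of \cite{CKS1992} run the renorming-and-combination argument at the level of $C_E$ and $S(x)$ directly (for the auxiliary $p$-convex/$q$-concave norm one may invoke \cite[Theorem 2]{Tom1984}, which is stated for $C_E$, rather than pass through $G$), using $S(x+y)\prec S(x)+S(y)$. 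What the paper itself adds is only the observation needed to remove the order-continuity hypothesis of those theorems: $UR\Rightarrow LUR\Rightarrow MLUR$, and every $MLUR$ symmetric space is order continuous (Remark \ref{rem:MLUR}); your parenthetical ``reflexive, hence order continuous'' covers the uniformly convex case only, since $LUR$ spaces need not be reflexive.
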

We wish to observe Theorem 2.2 and Theorem 3.2 in \cite{CKS1992} were stated under the assumption of order continuity of $E$. However, since $E$ is isometrically embedded in $C_E$, the (local) uniform convexity of $C_E$ passes to $E$ for arbitrary symmetric sequence space $E$. 
By Remark \ref{rem:MLUR}  every  $MLUR$  space is order continuous. Since we have the following implications \cite{Lin},
\[
UR\implies LUR\implies MLUR,
\]
every $UR$ and $LUR$ space is order continuous.
Hence if $E$ is (locally) uniformly convex then it is order continuous and by \cite[Theorem 2.2, Theorem 3.2]{CKS1992}, $C_E$ is (locally) uniformly convex.

 \section{Complex uniform convexity}
 \label{sec:cur}
 The following moduli of complex convexity of a complex quasi-normed space $(X,\|\cdot\|)$
were introduced in \cite{DGT1984}.  For $0 < p < \infty$ and $\epsilon \geq 0$,  we set  
\[
H_p^X(\epsilon)=\inf\left\{\left(\int_0^{2\pi} \|x+e^{i\theta}y\|^pd\theta\right)^{1/p}-1:\,\|x\|=1, \|y\|=\epsilon\right\},
\]
and
\[
H_\infty^X(\epsilon)=\inf\{\sup\{\|x+e^{i\theta}y\|:\,0\leq \theta \leq 2\pi\}-1:\,\|x\|=1, \|y\|=\epsilon\}.
\]
We say that the space $X$ is \emph{  complex uniformly convex} if $H_\infty^X(\epsilon)>0$ for all $\epsilon > 0$, and that $X$ is  \emph{uniformly $PL$-convex}   if $H_p^X(\epsilon)>0$ for all $\epsilon > 0$ and for some $0 < p <\infty$.  It was proved in \cite[Theorem 2.4]{DGT1984} that the previous definition is equivalent with $H_p^X(\epsilon)>0$ for all $0<p<\infty$. So we can say that $X$ is uniformly $PL$-convex when $H_1^X(\epsilon)>0$ for all $\epsilon>0$. Moreover, as shown in \cite{Dil} there exists a constant $C>0$ such that for all complex Banach spaces $X$ and all $0<\epsilon\leq 1$ we have
\[
C(H_\infty^X(\epsilon))^2\leq H_1^X(\epsilon)\leq H_\infty^X(\epsilon).
\]

Hence for complex Banach spaces, uniform complex convexity coincide with uniform $PL$-convexity. The same is not true in quasi-Banach lattices, where uniform complex convexity does not necessarily imply $PL$-convexity \cite{Ler}.  However, quasi-Banach lattices $X$ with $p$-convexity constant $M^{(p)}(X)=1$ for $0<p<\infty$ are complex uniformly convex if and only if they are uniformly $PL$-convex \cite[Theorem 3.4]{HanJu}.
Moreover, $X$ is said to be $r$-\emph{uniformly $PL$-convex} ($2\leq r<\infty$) whenever there is $K\geq 1$ such that $\left(\frac{\epsilon}{K}\right)^r\leq KH_1^X(\epsilon)$ for all $0<\epsilon <\frac1 K$. 
 
U. Haagerup observed that the dual of $C^*$-algebra is uniformly complex convex. His result with the proof is presented in \cite[Theorem 4.3]{DGT1984}.
 Since  the trace class $C_1$ is a dual  space of $C^*$-algebra $K(H)$ of compact operators on $H$, it is complex uniformly convex.
 Later K. Mattila in \cite[Lemma 3.1]{Mattila} gave an alternative proof of the complex uniform convexity of $C_1$.
 Similarly, since the noncommutative space $L_1\Mtau$ is a K\"{o}the dual of the von Neumann algebra $\M$ and thus it is an isometric subspace of $\M^*$, by U. Haagerup's result it is complex uniformly rotund. A direct proof of complex uniform convexity of $L_1\Mtau$ has been shown in \cite[Theorem 3.2]{czer-cur}. 

T.  Fack showed in \cite{Fack1987}  that if $\M$ is a factor (Lemma 12) or $H$ is separable (Theorem 4),  then $L_p(\M, \tau)$ is $q$-uniformly $PL$-convex for $q=\max(2,p)$.  
  
The research on how the properties of $E$ reflect on complex uniform convexity of $\nonsp$ started with the following result on $C_E$.

\begin{theorem}\cite[Theorem 1]{Tom1984}
\label{thm:tomczakplconv}
If $E$ is a symmetric Banach sequence space which is $q$-concave, $2\leq q<\infty$, with $M_{(q)}(E)=1$, then $C_E$ is  $q$-uniformly $PL$-convexifiable.
\end{theorem}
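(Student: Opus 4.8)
The plan is to reduce the statement about $C_E$ to the corresponding statement about a noncommutative $L_1$-type situation, using the fact, recalled earlier in the excerpt, that $C_E$ is isometrically isomorphic to a symmetric space $G(B(H),\tr)$ for a suitable symmetric function space $G$ on $[0,\infty)$, and that $q$-concavity of $E$ with constant $1$ transfers to $G$ (Theorem~\ref{th:conc} and its corollary). So it suffices to prove: if $E$ is a $q$-concave Banach symmetric sequence space with $M_{(q)}(E)=1$, then $C_E$ admits an equivalent norm under which it is $q$-uniformly $PL$-convex. Since $q$-uniform $PL$-convexity is an isomorphic invariant (a fortiori stable under passing to an equivalent norm), I may replace $\|\cdot\|_{C_E}$ by any equivalent norm of my choosing; the natural candidate is a renorming that makes $C_E$ behave like an "$L_q$ over a Banach lattice" object.

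First I would recall the scalar fact underlying all $q$-uniform $PL$-convexity estimates: for $2\le q<\infty$ there is a constant $c_q>0$ such that for all scalars $a,b\in\Complex$,
\[
\frac{1}{2\pi}\int_0^{2\pi}|a+e^{i\theta}b|^q\,d\theta \ \ge\ |a|^q + c_q |b|^q .
\]
This is the classical lower Clarkson-type inequality in $\Complex$ (equivalently, in $L_q$). The strategy is to lift this pointwise inequality through the structure of $C_E$. Concretely, using the identification of $C_E$ with $G(B(H),\tr)$ and the $q$-concavity of $G$, one can first establish the inequality at the level of the "modulus" operators: for $x,y\in C_E$,
\[
\frac{1}{2\pi}\int_0^{2\pi}\||x+e^{i\theta}y|\|_{C_E}^q\,d\theta \ \ge\ \|x\|_{C_E}^q + c_q' \,\|y\|_{C_E}^q
\]
for a constant $c_q'$ depending only on $q$ and the $q$-concavity constant of $E$. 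The key operator-theoretic input here is an inequality comparing the singular value function of an averaged operator to the average of singular value functions, of the type $S(x+e^{i\theta}y)$ controlled via the $2$-concavity/$q$-concavity majorization estimates already developed in \cite{AL1985} and reproduced in Theorem~\ref{th:concavity}; combined with the scalar Clarkson inequality applied entrywise in the "diagonal" representation, this should yield the averaged estimate. This then gives $H_q^{C_E}(\epsilon)\ge (c_q')^{1/q}\epsilon - o(\epsilon)$ or, after the standard renorming to make the relevant convexity constant equal to $1$, the clean bound $(\epsilon/K)^q\le K\,H_1^{C_E}(\epsilon)$, which is exactly $q$-uniform $PL$-convexity up to an equivalent norm.

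The main obstacle I anticipate is the non-commutative nature of the absolute value: unlike in a Banach lattice, $|x+e^{i\theta}y|$ is not obtained from $|x|$ and $|y|$ by a pointwise lattice operation, and $p\mapsto (\,|a|^p+|b|^p\,)^{1/p}$ need not be monotone for operators (as the excerpt explicitly warns, following \cite{AL1985}). Hence one cannot naively average the triangle-type inequality. The way around this, which is the heart of Tomczak-Jaegermann's argument in \cite{Tom1984}, is to pass to $2\times 2$ (or block) amplifications: one considers the operator $\begin{bmatrix} x & e^{i\theta}y \\ 0 & 0\end{bmatrix}$ acting on $H\oplus H$, whose modulus is $(|x|^2 + |y|^2)^{1/2}$ independently of $\theta$, and plays this "unitarily invariant" upper bound against a lower bound coming from the $q$-concavity of $E$ applied to the two rows; integrating in $\theta$ and invoking the scalar Clarkson inequality on the eigenvalue level produces the asserted modulus estimate. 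I would therefore organize the proof as: (1) renorm $E$ so $M_{(q)}(E)=1$ exactly (given) and deduce $M_{(q)}(G)=1$; (2) state the scalar Clarkson inequality; (3) establish the averaged operator inequality via the block-matrix trick plus Theorem~\ref{th:concavity}; (4) read off $H_1^{C_E}(\epsilon)\gtrsim \epsilon^q$ and conclude $q$-uniform $PL$-convexifiability. Steps (1), (2), (4) are routine; step (3), reconciling the $\theta$-averaging with the operator absolute value, is where all the work lies.
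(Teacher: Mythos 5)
The survey itself gives no proof of Theorem~\ref{thm:tomczakplconv}; it is quoted from \cite[Theorem 1]{Tom1984}, so your outline has to be judged on its own merits. Its skeleton is the right one: it suffices to produce an equivalent norm and constants (depending only on $q$) with
\[
\Bigl(\tfrac{1}{2\pi}\int_0^{2\pi}\bigl\|x+e^{i\theta}y\bigr\|_{C_E}^q\,d\theta\Bigr)^{1/q}\;\ge\; c\,\bigl\|(|x|^2+\delta|y|^2)^{1/2}\bigr\|_{C_E}\;\ge\; c'\bigl(\|x\|_{C_E}^q+\delta'\|y\|_{C_E}^q\bigr)^{1/q},
\]
and the second link is indeed available from Theorem~\ref{th:concavity} with $p=2$ (noncommutative $q$-concavity), while your scalar inequality $\tfrac{1}{2\pi}\int_0^{2\pi}|a+e^{i\theta}b|^q\,d\theta\ge |a|^q+|b|^q$ is correct for $q\ge 2$, and the renorming freedom is legitimate because the conclusion is only convexifiability (note, though, that $q$-uniform $PL$-convexity itself is not an isomorphic invariant; only the ``-ifiable'' version is).

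The gap is that the first link --- bounding the circular average from below by the square-function norm --- is the entire content of the theorem, and your block-matrix device does not produce it. First, a factual slip: the modulus of $\bigl[\begin{smallmatrix} x & e^{i\theta}y\\ 0 & 0\end{smallmatrix}\bigr]$ is $\bigl[\begin{smallmatrix} x^*x & e^{i\theta}x^*y\\ e^{-i\theta}y^*x & y^*y\end{smallmatrix}\bigr]^{1/2}$, not $(|x|^2+|y|^2)^{1/2}$; it is the column matrix whose modulus is $\mathrm{diag}\bigl((|x|^2+|y|^2)^{1/2},0\bigr)$, and the row matrix has the singular values of $(xx^*+yy^*)^{1/2}$. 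More seriously, this block operator is not $x+e^{i\theta}y$, and you give no inequality relating $\|x+e^{i\theta}y\|_{C_E}$ (after averaging in $\theta$) from below to the $\theta$-independent norm of the block operator; unitary invariance of the amplification by itself carries no information about the average you need, and the inequalities it does yield (e.g.\ $\|(|x|^2+|y|^2)^{1/2}\|\le\|x\|+\|y\|$) go in the wrong direction. Likewise, ``invoking the scalar Clarkson inequality on the eigenvalue level'' presupposes a pointwise comparison of $\mu(x+e^{i\theta}y)$ with $\mu(x)$ and $\mu(y)$, which does not exist: the only general tool, submajorization $\mu(x+y)\prec\mu(x)+\mu(y)$, gives upper bounds, useless for lower-bounding $H_q$. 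In the commutative case the argument works because $q$-concavity can be applied to the continuous family $|x+e^{i\theta}y|$ and Clarkson holds pointwise; in $C_E$ exactly this step fails, and what replaces it in the literature is a genuine Haagerup-type inequality (cf.\ \cite[Theorem 4.3]{DGT1984} for $C_1$, and its extension to $q$-concave ideals in \cite{Tom1984}, \cite{Gar-Tom}), proved by duality/operator arguments rather than by the amplification you describe. Until such an inequality is supplied with constants depending only on $q$ and $M_{(q)}(E)$, your step (3), and with it the proof, is missing.
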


Q. Xu observed that if $E$ is a quasi-Banach lattice then $E$ is $q$-concave for some $q<\infty$ if and only if $E$ is uniformly complex convexifiable \cite[Corollary 3.3]{X1991}. By this, combined with Theorem \ref{thm:tomczakplconv} and the fact that  $C_E$ contains an isometric copy of $E$ (see Proposition \ref{prop:isomarazy}), we conclude the next result.
\begin{corollary}
\label{cor:unifconvexifce}
Let $E$ be a symmetric Banach sequence space. Then $E$ is  complex uniformly  convexifiable if and only if $C_E$ is  complex uniformly convexifiable.
\end{corollary}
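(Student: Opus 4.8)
The plan is to combine Theorem~\ref{thm:tomczakplconv}, Q.~Xu's concavity criterion \cite[Corollary 3.3]{X1991}, the isometric embedding of $E$ into $C_E$ furnished by Proposition~\ref{prop:isomarazy}, and the standard fact that a $q$-concave Banach lattice admits an equivalent lattice norm with $q$-concavity constant $1$ \cite[vol.~II]{LT2}. Suppose first that $E$ is complex uniformly convexifiable. Then by \cite[Corollary 3.3]{X1991} the symmetric sequence lattice $E$ is $q$-concave for some $q<\infty$; since $q$-concavity implies $r$-concavity for every $r\ge q$ \cite[vol.~II]{LT2}, we may and do assume $q\ge 2$. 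Pick an equivalent symmetric norm $\norms{\cdot}$ on $E$ with $q$-concavity constant $1$, and write $\tilde E$ for $E$ equipped with $\norms{\cdot}$. Since $\norms{\cdot}$ and $\|\cdot\|_E$ are equivalent, the norms $\|x\|_{C_E}=\|S(x)\|_E$ and $\|x\|_{C_{\tilde E}}=\norms{S(x)}$ are equivalent on $C_E$. Theorem~\ref{thm:tomczakplconv} applied to $\tilde E$ shows $C_{\tilde E}$, and hence $C_E$, is $q$-uniformly $PL$-convexifiable. As $q$-uniform $PL$-convexity implies uniform $PL$-convexity, which for a Banach space is equivalent to complex uniform convexity via Dilworth's inequality recalled in \secref{sec:cur}, it follows that $C_E$ is complex uniformly convexifiable.

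For the reverse implication, assume $C_E$ carries an equivalent complex uniformly convex norm. By Proposition~\ref{prop:isomarazy} the map $V\colon E\to C_E$ is a linear isometry onto a closed subspace $V(E)$ of $C_E$. The restriction of the new norm to $V(E)$ is again an equivalent norm, and complex uniform convexity passes to closed subspaces because the infimum defining $H_\infty$ is then taken over a smaller family of pairs, whence $H_\infty^{V(E)}(\epsilon)\ge H_\infty^{C_E}(\epsilon)>0$ for all $\epsilon>0$. Transporting this norm back along $V$ produces an equivalent complex uniformly convex norm on $E$, so $E$ is complex uniformly convexifiable.

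The only point requiring genuine care is the bookkeeping in the first implication: one has to confirm that replacing $\|\cdot\|_E$ by the equivalent norm $\norms{\cdot}$, needed to force the $q$-concavity constant to be $1$ in Theorem~\ref{thm:tomczakplconv}, does not disturb the identification with the unitary ideal, i.e.\ that $\|\cdot\|_E\sim\norms{\cdot}$ on $E$ forces $\|\cdot\|_{C_E}\sim\|\cdot\|_{C_{\tilde E}}$ on $C_E$. This is immediate from $\|x\|_{C_E}=\|S(x)\|_E$, since the singular value sequence $S(x)$ does not depend on the norm carried by $E$. All remaining steps are direct applications of the cited results.
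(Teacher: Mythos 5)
Your proposal is correct and follows essentially the same route as the paper, which deduces the corollary from Xu's criterion \cite[Corollary 3.3]{X1991}, Theorem~\ref{thm:tomczakplconv}, and the isometric copy of $E$ inside $C_E$ from Proposition~\ref{prop:isomarazy}. The details you supply (passing to $q\ge 2$, renorming $E$ to have $q$-concavity constant $1$ and noting this only changes $\|\cdot\|_{C_E}$ to an equivalent norm, and the heredity of complex uniform convexity by closed subspaces for the converse) are exactly the bookkeeping the paper leaves implicit.
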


Q. Xu in \cite{X1991} investigated complex uniform convexity of $\nonsp$. He assumed that $E$ is a symmetric function space with a weak Fatou property. We say that $E$ has the \emph{weak Fatou} property  if for $f_n, f\in E$ with $f_n\uparrow f$ a.e. it follows that $\|f_n\|_E\to \|f\|_E$.

\begin{theorem}\cite[Theorem 4.4]{X1991} Let $E$ be a symmetric Banach space  with the weak Fatou property and for
 some $1 < p\le q<\infty$, $M^{(p)}(E) = M_{(q)}(E) = 1$.  Then $\nonsp$ is a complex uniformly convex space.
\end{theorem}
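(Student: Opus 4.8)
The plan is to reduce the complex uniform convexity of $\nonsp$ to the uniform $PL$-convexity of the $p$-convexified space, following the same two-step strategy used for the real uniform convexity in \cite{CKS1992} but now in the complex setting. First I would invoke the hypothesis $M^{(p)}(E)=M_{(q)}(E)=1$ together with the fact (see \cite[Corollary 3.5]{DDS3} and \cite[Prop.~1.d.8, vol.~II]{LT2}) that, after passing to an equivalent norm with the same convexity/concavity constants, one may assume $E$ is exactly $p$-convex and $q$-concave with constant one. Then the $p$-convexification $E^{(1/p)}$ is a Banach symmetric space which is $1$-convex (so genuinely normed) and $(q/p)$-concave with constant one, and by Theorem~\ref{th:conc} applied to $E^{(1/p)}$ the noncommutative space $E^{(1/p)}\Mtau=(E\Mtau)^{(1/p)}$ is $(q/p)$-concave with constant one. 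In particular $(E\Mtau)^{(1/p)}$ is a Banach lattice-type space with a finite concavity exponent; by Q.~Xu's criterion \cite[Corollary 3.3]{X1991} (equivalently the Haagerup--Davis--Garling--Tomczak circle of ideas, \cite[Theorem 3.4]{HanJu}) a space which is $q$-concave for some $q<\infty$ is uniformly $PL$-convex, so $(E\Mtau)^{(1/p)}$ is uniformly $PL$-convex, say $H_1^{(E\Mtau)^{(1/p)}}(\eps)>0$ for all $\eps>0$.

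The second step is to transfer uniform $PL$-convexity from the convexification $(E\Mtau)^{(1/p)}$ back to $E\Mtau$ itself. Here I would use the elementary relation $\|x\|_{Y^{(1/p)}}=\|\,|x|^p\,\|_Y^{1/p}$ together with the fact that $E\Mtau$ is $p$-convex with constant one (Theorem~\ref{th:conv}), which for $p>1$ forces $E\Mtau$ to be uniformly convex in the "$p$-direction" and in particular to have the property that the averaging operation $y\mapsto\bigl(\int_0^{2\pi}\|x+e^{i\theta}y\|^p\,\frac{d\theta}{2\pi}\bigr)^{1/p}$ controls $\|y\|$ via a $PL$-type modulus coming from the convexified space. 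Concretely: given $\|x\|_{E\Mtau}=1$ and $\|y\|_{E\Mtau}=\eps$, I would apply the $p$-convexity inequality and a pointwise (functional calculus) estimate of the form $|x+e^{i\theta}y|^p$ combined with integration in $\theta$, using the parallelogram-type inequality in $L_p$ of the scalar circle, to bound $\int_0^{2\pi}\||x|^p+e^{ip\theta}(\dots)\|_{E^{(1/p)}\Mtau}\,d\theta$ from below by $1+H_1^{E^{(1/p)}\Mtau}(c\eps^{?})$ for a suitable power. This yields $H_\infty^{E\Mtau}(\eps)>0$, which is exactly complex uniform convexity; if one wants the $q$-uniform $PL$-convexity quantitatively, the power-type modulus of $(q/p)$-concave spaces from \cite{DGT1984} propagates through to give a modulus of power type $q$ for $E\Mtau$.

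The main obstacle I anticipate is the transfer step from $(E\Mtau)^{(1/p)}$ to $E\Mtau$ at the level of the complex moduli, because, unlike in the commutative lattice setting, the operator $|x+e^{i\theta}y|^p$ does not interact nicely with sums — the triangle inequality for $|\cdot|$ fails and $p$-th powers do not linearize. The key device to get around this is precisely Lemma~\ref{lem:kalsuk} (Kalton--Sukochev): applying it with the concave function $\varphi(u)=u^{1/p}$ to the operators $|x+e^{i\theta}y|^p$ replaces the lack of operator convexity by a clean norm inequality in $E^{(1/p)}\Mtau$, and one should be able to run the whole circle-integration argument inside $E^{(1/p)}\Mtau$ where the norm is a genuine symmetric Banach norm. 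A secondary technical point is handling the weak Fatou property: since Theorems~\ref{th:conv}, \ref{th:conc} are stated for spaces with the full Fatou property, I would first argue that under the weak Fatou property one may replace $E$ by its Fatou completion $E^{\times\times}$ without changing the $p$-convexity/$q$-concavity constants or the isometric structure of $E\Mtau$ on the dense subspace $F\Mtau$, and then conclude by density.
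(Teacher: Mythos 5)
The survey itself does not prove this theorem (it is quoted from \cite{X1991}), so your outline has to stand on its own, and its central step does not. You conclude that $(E\Mtau)^{(1/p)}$ is uniformly $PL$-convex from its $(q/p)$-concavity by citing \cite[Corollary 3.3]{X1991} and \cite[Theorem 3.4]{HanJu}. Both of those results are about (quasi-)Banach \emph{lattices}, i.e.\ commutative function or sequence spaces; $(E\Mtau)^{(1/p)}$ is a noncommutative symmetric operator space, and the implication ``$q$-concave with constant one $\Rightarrow$ complex uniformly convex'' for such spaces is, after your $p$-concavification reduction, essentially the statement being proved — so the argument is circular exactly where the noncommutative difficulty sits. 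That lattice facts of this kind do not transfer automatically is visible in this survey: the equivalence of uniform monotonicity and complex uniform convexity for $\nonsp$ (Corollary \ref{cor:nonspcurum}) requires non-atomicity and an extra $p$-convexity hypothesis. Moreover, even in the lattice setting the criterion you quote yields uniform $PL$-\emph{convexifiability} (an equivalent quasi-norm), whereas the theorem asserts complex uniform convexity of the given norm of $\nonsp$; the constant-one hypotheses, which are what make the isometric conclusion possible, are never actually used in your first step.

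The back-transfer from $(E\Mtau)^{(1/p)}$ to $E\Mtau$ is also not yet an argument. Lemma \ref{lem:kalsuk} with $\varphi(u)=u^{1/p}$ only gives a triangle-type inequality for the concave power; it does not compare the complex moduli of a space and of its concavification, and no general implication of the form ``$X^{(1/p)}$ uniformly $PL$-convex $\Rightarrow$ $X$ complex uniformly convex'' is available to you. Be careful also with the ``circle-integration'' inequality you sketch: the naive averaged form of Theorem \ref{th:convexity} (exponent $2$ inside the norm, exponent $p$ outside) is false for $p<2$, as disjointly supported elements in $L_p$ already show, so the quantitative step cannot be waved through. In the commutative case the proof of complex uniform convexity combines a pointwise scalar estimate for circle averages with uniform monotonicity (this is how \cite{HN, HanJu} proceed), and it is precisely these pointwise manipulations that have no direct operator analogue; their substitutes — Haagerup's inequality for $L_1\Mtau$ and Xu's estimates for analytic functions with values in $E\Mtau$, which exploit the constant-one $p$-convexity and $q$-concavity directly in $E\Mtau$ — are the actual content of \cite{X1991}, and nothing in your two steps supplies it. Two smaller points: replacing $E$ by $E^{\times\times}$ under the weak Fatou property changes the norm only up to equivalence in general, which destroys the constant-one hypotheses and again gives at best convexifiability; and when $p=q$ the concavification is only $1$-concave, so Theorem \ref{th:conc} as stated (for $1<q<\infty$) does not apply.
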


Moreover, Q. Xu generalized Corollary \ref{cor:unifconvexifce} to noncommutative $\nonsp$ spaces.

  \begin{theorem}\cite[Corollary 4.6, Corollary 3.3]{X1991}\
  \label{thm:PL-conv}
  Let $E$ be a symmetric quasi-Banach function space with the weak Fatou property. Then the following statements are equivalent.
  \begin{itemize}
  \item[{(i)}] $E$ is $q$-concave for some $q<\infty$.
    \item[{(ii)}] $E$ is uniformly $PL$-convexifiable.
      \item[{(iii)}] $\nonsp$ is uniformly $PL$-convexifiable.
\end{itemize}   
  \end{theorem}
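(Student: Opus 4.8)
The plan is to prove the chain $(i)\Rightarrow(ii)\Rightarrow(iii)\Rightarrow(i)$, recycling as much machinery from the excerpt as possible. For $(i)\Leftrightarrow(ii)$ I would simply invoke Q. Xu's lattice characterization \cite[Corollary 3.3]{X1991}: a quasi-Banach lattice with the weak Fatou property is $q$-concave for some $q<\infty$ if and only if it is uniformly $PL$-convexifiable. Since a symmetric quasi-Banach function space $E$ is in particular a quasi-Banach lattice, this equivalence is immediate and requires no further work. The substantive part is the pair of implications linking $E$ to $\nonsp$.

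For $(ii)\Rightarrow(iii)$ the idea is to pass to a renorming. If $E$ is uniformly $PL$-convexifiable, then by $(i)\Leftrightarrow(ii)$ it is $q$-concave for some $q$, and after renorming (using \cite[Proposition 1.d.8, vol.II]{LT2} together with the convexification relations $M^{(pr)}(X^{(r)})=M^{(p)}(X)^{1/r}$, $M_{(pr)}(X^{(r)})=M_{(p)}(X)^{1/r}$ recalled in Section 10) one may assume $E$ carries an equivalent symmetric norm $\norms{\cdot}$ that is $p$-convex and $q$-concave with both constants equal to $1$, for some $1<p\le q<\infty$. Then Q.~Xu's theorem \cite[Theorem 4.4]{X1991} applies to $(E,\norms{\cdot})$ (the weak Fatou property is preserved under passing to an equivalent lattice norm, or can be arranged by working with the largest such norm), giving that $(\nonsp,\normscomm{\cdot})$, with $\normscomm{x}=\norms{\mu(x)}$, is complex uniformly convex, hence uniformly $PL$-convex by the Dilworth--Haagerup comparison $C(H_\infty^X(\epsilon))^2\le H_1^X(\epsilon)\le H_\infty^X(\epsilon)$. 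Since $\normscomm{\cdot}$ is equivalent to $\normcomm{\cdot}$ on $\nonsp$, this exhibits $\nonsp$ as uniformly $PL$-convexifiable. One small point to check: the constant $M^{(p)}=M_{(q)}=1$ renorming of $E$ must be shown to be again a genuine symmetric function quasi-norm in the sense used to define $\nonsp$, but this follows from the convexification identity $E^{(p)}\Mtau=(E\Mtau)^{(p)}$ and the discussion preceding Theorem~\ref{th:conv}.

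For $(iii)\Rightarrow(i)$ I would use the isometric embedding of $E$ into $\nonsp$. Strictly speaking the clean embedding in Proposition~\ref{isom2} requires $\M$ non-atomic, but since the present theorem places no such hypothesis, the right tool is the non-atomic extension of Section~\ref{non-atom}: the map $\tilde\pi:\nonsp\to E(\mathcal{A},\kappa)$ is an isometric $*$-isomorphism onto an isometric subspace, and $(\mathcal{A},\kappa)$ is non-atomic, so by Proposition~\ref{isom2} applied to $(\mathcal{A},\kappa)$ the space $E$ embeds isometrically into $E(\mathcal{A},\kappa)\cong\nonsp$ up to isometry. Uniform $PL$-convexity passes to subspaces (if $H_1^{\nonsp}(\epsilon)>0$ for the renormed space then the same holds for any subspace, since the defining infimum over $\|x\|=1$, $\|y\|=\epsilon$ only grows on a subspace), so $E$ is uniformly $PL$-convexifiable, which by $(i)\Leftrightarrow(ii)$ gives $(i)$.

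The main obstacle I anticipate is bookkeeping around the renorming and the weak Fatou property: Theorem~\ref{th:conv} and \cite[Theorem 4.4]{X1991} are stated with constant-$1$ convexity/concavity and with (weak) Fatou, so one must verify these survive the equivalent renorming and the convexification operations simultaneously, and that the resulting norm still defines the \emph{same} underlying space $\nonsp$ with an equivalent norm. Once that is set up, everything else is a citation. A secondary nuisance is making the subspace-stability of $H_1^X$ and the Dilworth comparison interact correctly with the quasi-norm (rather than norm) setting, but since $E$ uniformly $PL$-convexifiable already forces $E$ to be $q$-concave and hence to admit an equivalent norm, one can reduce to the Banach case throughout.
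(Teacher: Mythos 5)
The decisive step of your outline does not work: in (ii)$\Rightarrow$(iii) you claim that, since $E$ is $q$-concave for some $q<\infty$, one may renorm so that $E$ is $p$-convex and $q$-concave with both constants equal to $1$ for some $1<p\le q<\infty$, and then quote Xu's Theorem 4.4. But $q$-concavity (equivalently, uniform $PL$-convexifiability) gives no $p$-convexity with $p>1$, and no equivalent lattice renorming can create it: \cite[Proposition 1.d.8]{LT2} only normalizes the constants of convexity/concavity properties the space already possesses. The standard counterexample is $E=L_1$, which is $1$-concave and uniformly $PL$-convex, yet contains the sublattices $\ell_1^n$ uniformly, whose $p$-convexity constants grow like $n^{1-1/p}$, so no equivalent symmetric norm on $L_1$ is $p$-convex for any $p>1$. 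This is exactly the obstruction recorded in the Problem closing Section \ref{sec:cur}: the complex uniform convexity of $L_1\Mtau$ is \emph{not} covered by the results that assume $p$-convexity for some $p>1$. Hence your route through \cite[Theorem 4.4]{X1991} collapses, and the substantive implication (i)$\Rightarrow$(iii) of Xu's Corollary 4.6 requires a genuinely different argument for merely $q$-concave $E$ — for instance renorming so that $M_{(q)}(E)=1$, passing to the $2$-convexification $E^{(2)}$ (which is $2$-convex and $2q$-concave with constant one), applying Theorem 4.4 to $E^{(2)}\Mtau=(E\Mtau)^{(2)}$, and then transferring uniform $PL$-convexity back through the square by a factorization/polar-decomposition argument in the spirit of Haagerup's proof for $L_1\Mtau$. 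None of this is supplied by your outline.

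The step (iii)$\Rightarrow$(i) is also flawed as written: the tensor extension of Section 4 goes the wrong way. The map $\tilde{\pi}$ embeds $\nonsp$ isometrically \emph{into} $E(\mathcal{A},\kappa)$; it is not onto, and $E(\mathcal{A},\kappa)$ is not isometric to $\nonsp$, so you cannot conclude that $E$ embeds into $\nonsp$. With the paper's tools, passing a property from $\nonsp$ down to $E$ uses Proposition \ref{isom2} / Corollary \ref{cor:isomglobal} and needs $\M$ non-atomic (or Xu's standing hypotheses); indeed without some such assumption the implication is vacuous or false — for $\M=\Complex$ the space $E\Mtau$ is one-dimensional while $E$ may fail $q$-concavity for every finite $q$. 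Your observations that uniform $PL$-convexity passes to subspaces and that (i)$\Leftrightarrow$(ii) is \cite[Corollary 3.3]{X1991} are fine; the two transfer steps are where the proposal has genuine gaps.
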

Recall that uniform $PL$-convexity and complex uniform  convexity  coincide for Banach spaces, but not for quasi-Banach lattices  $E$ \cite{Ler}, unless their convexity constants $M^{(p)}(E)=1$, $0<p<\infty$ \cite[Theorem 3.4]{HanJu}. Hence uniform $PL$-convexifiability can be replaced with complex uniform convexifiability, under assumption that $E$ is a symmetric Banach space.

 In \cite{czer-cur} the relations between complex uniform  convexity of $E$ and $\nonsp$ have been studied by  one of the authors of this survey.
The following result combines Theorems 2.6 and 2.7  in \cite{czer-cur}.

\begin{theorem}
\label{thm:czercur}
If $E$  is complex uniformly convex then $\nonsp^+$ is complex uniformly convex. If in addition $\M$ is non-atomic then complex uniform convexity of $\nonsp^+$ implies  complex uniform convexity of $E$.   
\end{theorem}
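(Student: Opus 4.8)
The plan is to exploit the known characterization of complex uniform convexity in terms of monotonicity (as hinted in the section on complex extreme points: complex strict convexity corresponds to strict monotonicity, and more generally complex convexity moduli are controlled by monotonicity moduli of the lattice norm). First I would recall from \cite{HN,HanJu} that for a symmetric Banach space $E$ the complex convexity modulus $H_\infty^E(\epsilon)$ is comparable to an appropriate ``upper monotonicity'' modulus of $E$; this reduces the hypothesis ``$E$ is complex uniformly convex'' to a purely lattice-theoretic statement about how much the norm must grow when one adds a disjointly (or order-) larger positive piece. The key point is that on $\nonsp^+$, for positive operators $x,y\ge 0$, the singular value function interacts well with the order: in particular $\mu(x)\le\mu(x+y)$ and, more quantitatively, one has submajorization control $\mu(x)+\mu(y)\succ\mu(x+y)$ from Lemma \ref{lm:singfun} and the remarks following it. Using $\normcomm{x}=\norme{\mu(x)}$, estimates for the integral $\int_0^{2\pi}\normcomm{x+e^{i\theta}y}^p\,d\theta$ with $x\in\nonsp^+$, $\normcomm{x}=1$, $\normcomm{y}=\epsilon$ can then be pushed down to estimates for the corresponding integral in $E$ applied to $\mu(x)$ and a suitable rearrangement of $\mu(y)$, yielding $H_p^{\nonsp^+}(\epsilon)\ge c\,H_p^{E}(\epsilon')$ for some $\epsilon'$ comparable to $\epsilon$; since $E$ is (complex) uniformly convex and hence uniformly $PL$-convex (after passing to an equivalent $p$-convex renorming with constant one via \cite{HanJu}), the right-hand side is positive, giving the first assertion.

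The main obstacle is precisely that $\nonsp$ is \emph{not} a lattice and $|\cdot|$ fails the triangle inequality, so the standard lattice arguments for complex convexity do not transfer verbatim; this is exactly why the statement is restricted to the positive cone $\nonsp^+$. On $\nonsp^+$, however, one can use the order-ideal structure (recalled in the $p$-convexity section: $0\le x\le y$ implies $\normcomm{x}\le\normcomm{y}$) together with the fact that for positive $x$ and general $y$ with $\normcomm{y}=\epsilon$ the perturbations $x+e^{i\theta}y$ can be compared in singular value function to $x$ plus a controlled positive perturbation built from $\RE(e^{i\theta}y)$ and $\IM(e^{i\theta}y)$. Concretely, I would reduce first to self-adjoint $y$ by the averaging trick over $\theta$ (integrating $e^{i\theta}y$ against the circle), then use $\mu(x\pm y)\ge\mu(x)$-type inequalities and the reverse submajorization $\mu(x+y)\prec\mu(x)+\mu(y)$ to sandwich $\int_0^{2\pi}\normcomm{x+e^{i\theta}y}\,d\theta$ between expressions that only involve $\mu(x),\mu(y)\in E$. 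This is the delicate calculation and where most of the work lies; the role of positivity is to guarantee that the relevant spectral projections of $x$ and the perturbation commute up to controllable error, so that $\mu(x)$ genuinely ``sees'' the perturbation.

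For the converse, assume $\M$ is non-atomic. Then by Corollary \ref{cor:isomglobal} (equivalently Proposition \ref{isom2}) the symmetric function space $E$ embeds isometrically and order-isomorphically into $\nonsp$, and crucially the embedding $V$ sends positive functions to positive operators and preserves decreasing rearrangements. Hence $V$ maps $E^+$ isometrically into $\nonsp^+$, and complex uniform convexity of $\nonsp^+$ restricts to complex uniform convexity of $V(E^+)$, i.e.\ of $E^+$. Finally, complex uniform convexity of the positive cone $E^+$ of a Banach lattice already implies complex uniform convexity of $E$ itself: given arbitrary $f\in E$ with $\norme{f}=1$ and $g$ with $\norme{g}=\epsilon$, one passes to $|f|$ and $|g|$ using $\norme{f+e^{i\theta}g}\ge$ a lattice estimate reducing to the modulus (combining $|f+e^{i\theta}g|$ pointwise bounds with the fact that the modulus on the cone controls the ``upper monotone'' modulus, as in \cite[Theorem 1]{HN}); this is the standard final step, so I would only sketch it. Thus $E$ is complex uniformly convex, completing the proof.
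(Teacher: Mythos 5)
Your second half (non-atomic $\M$ implies the property passes back to $E$) is essentially correct and is the same route the paper relies on: the positivity- and rearrangement-preserving isometric embedding of Corollary \ref{cor:isomglobal} reduces everything to the cone $E^+$, and then the lattice inequality $|u+e^{i\theta}v|\le u+v$ together with the equivalence of complex uniform convexity and uniform monotonicity for Banach function spaces \cite{HN,HanJu} recovers complex uniform convexity of all of $E$. The forward direction, which is the substantial half, has a genuine gap. The inequality you lean on, ``$\mu(x\pm y)\ge\mu(x)$ for $x\ge 0$,'' is false as stated (take $y=-x/2$); the correct tool, already quoted in Lemma \ref{lm:3}, is $\mu(x)\le\mu(x+iy)$ for $x\ge 0$ and \emph{self-adjoint} $y$ \cite[Proposition 3]{MR979385}, and the distinction matters because only perturbations of the form $i\,(\text{self-adjoint})$ are guaranteed not to decrease $\mu(x)$. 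Moreover, the submajorization $\mu(x+y)\prec\mu(x)+\mu(y)$ gives \emph{upper} bounds, so it cannot produce the lower bound on $\sup_\theta\normcomm{x+e^{i\theta}y}$ that the modulus requires; the claimed ``sandwich'' of $\int_0^{2\pi}\normcomm{x+e^{i\theta}y}\,d\theta$ between expressions in $\mu(x)$ and $\mu(y)$, and the heuristic that positivity makes spectral projections of $x$ and the perturbation ``commute up to controllable error,'' are exactly the missing content, not a proof.

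What is actually needed is quantitative: pass from complex uniform convexity of $E$ to uniform monotonicity of $E$ \cite{HN,HanJu}, lift uniform monotonicity to $\nonsp$, split $y=y_1+iy_2$ into self-adjoint parts with $\max\{\normcomm{y_1},\normcomm{y_2}\}\ge\epsilon/2$, and then use estimates of the type $\mu(x)\le\mu(x+iy_j)$ (or the exact operator identity $x^2+|y|^2=\tfrac12\left(|x+y|^2+|x-y|^2\right)$) to convert $\normcomm{y}\ge\epsilon$ into a definite increase of $\sup_\theta\normcomm{x+e^{i\theta}y}$; none of this is carried out in your sketch. Finally, the aside about ``passing to an equivalent $p$-convex renorming with constant one'' is not legitimate here: complex uniform convexity is a property of the given norm and is destroyed by equivalent renormings, so you may not renorm mid-argument; to know $H_1^E(\epsilon)>0$ you only need the Banach-space equivalence of complex uniform convexity and uniform $PL$-convexity from \cite{DGT1984,Dil}, with no renorming at all.
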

Therefore if  $\M$ is non-atomic, complex uniform convexity of $E$ is equivalent to complex uniform convexity of $\nonsp^+$. From the above it also follows that if $\nonsp$ is complex uniformly convex and $\M$ is non-atomic, then the subspace $\nonsp^+$ of $\nonsp$  is complex uniformly convex, and $E$ is  complex uniformly convex. 

 Moreover, under the assumption that $E$ is $p$-convex for some $p>1$, complex uniform convexity of $E$ implies complex uniform convexity of $\nonsp$ \cite[Theorem 2.6]{czer-cur}.  Hence the following holds. 
 
 \begin{theorem}
 \label{thm:czercur1}
 If $E$ is $p$-convex for some $p>1$ then $\nonsp$ is complex uniformly convex whenever $E$ is complex uniformly convex. If $\M$ is non-atomic and $\nonsp$ is complex uniformly convex then $E$ is complex uniformly convex.
 \end{theorem}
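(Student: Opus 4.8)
The plan is to treat the two implications separately.

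For the first implication --- which, as noted in the paragraph preceding the statement, is precisely \cite[Theorem 2.6]{czer-cur} --- I would argue as follows. By \thmref{thm:czercur} the positive cone $\nonsp^{+}$ is already complex uniformly convex, so the task is to pass from the positive cone to the whole space. I would do this by the usual combination of polar decomposition $x=u\abs{x}$ and a $2\times 2$ matrix amplification (using that $\left[\begin{smallmatrix}|x^{*}|&x\\ x^{*}&|x|\end{smallmatrix}\right]\ge 0$ in $M_{2}(\M)$, with singular value function comparable to $\mu(x)$, and that $E(M_{2}(\M),\tr_{2}\otimes\tau)$ is again a noncommutative symmetric space of the same convexity type as $E$): since $E$ is $p$-convex with $p>1$, after an equivalent renorming one may assume $M^{(p)}(E)=1$, in which regime complex uniform convexity of $E$ is equivalent to uniform $PL$-convexity by \cite[Theorem 3.4]{HanJu}, and this lets the modulus estimate on the positive part transfer back to $\nonsp$. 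I expect the main obstacle to be the coordination of these steps, since complex uniform convexity is not stable under arbitrary equivalent renormings: the renorming, the $p$-convexity normalisation, and the reduction to positive operators have to be arranged simultaneously. For the present survey it would in any case suffice to quote \cite[Theorem 2.6]{czer-cur}.

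For the second implication no convexity hypothesis on $E$ is required, and the argument is short: complex uniform convexity passes to the range of a linear isometry, and for non-atomic $\M$ the space $E$ embeds isometrically into $\nonsp$. First, by Corollary~\ref{cor:isomglobal} I would fix a linear map $V\colon E\to\nonsp$ with $\mu(V(f))=\mu(f)$ for every $f\in E$ (it is the restriction of a $*$-isomorphism), so that $\norm{V(f)}_{\nonsp}=\norme{f}$ and $V$ is a linear isometry. Next, recalling the modulus
\[
H_{\infty}^{X}(\epsilon)=\inf\Bigl\{\sup_{0\le\theta\le 2\pi}\norm{a+e^{i\theta}b}-1:\ \norm{a}=1,\ \norm{b}=\epsilon\Bigr\},
\]
I would observe that for $f\in S_{E}$ and $g\in E$ with $\norme{g}=\epsilon$ one has $V(f)\in S_{\nonsp}$, $\norm{V(g)}_{\nonsp}=\epsilon$, and, by linearity and isometry of $V$,
\[
\sup_{0\le\theta\le 2\pi}\norme{f+e^{i\theta}g}
=\sup_{0\le\theta\le 2\pi}\norm{V(f)+e^{i\theta}V(g)}_{\nonsp}
\ \ge\ 1+H_{\infty}^{\nonsp}(\epsilon).
\]
Taking the infimum over all admissible $f$ and $g$ gives $H_{\infty}^{E}(\epsilon)\ge H_{\infty}^{\nonsp}(\epsilon)$ for every $\epsilon>0$; since $\nonsp$ is complex uniformly convex, $H_{\infty}^{\nonsp}(\epsilon)>0$ for all $\epsilon>0$, and hence $H_{\infty}^{E}(\epsilon)>0$ for all $\epsilon>0$, i.e. $E$ is complex uniformly convex. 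This direction presents no real difficulty; the one thing to verify carefully is that the isometry supplied by Corollary~\ref{cor:isomglobal} is linear and defined on all of $E$, which it is.
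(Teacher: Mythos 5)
Your second implication is correct, but you take a different route from the survey. The text obtains it by noting that complex uniform convexity of $\nonsp$ trivially yields complex uniform convexity of the positive cone $\nonsp^{+}$ (the defining infimum runs over a smaller set of unit vectors) and then applying the second half of Theorem \ref{thm:czercur}, i.e.\ \cite[Theorem 2.7]{czer-cur}; you instead bypass the positive cone and use the linear isometric embedding $V\colon E\to\nonsp$ of Corollary \ref{cor:isomglobal} together with the elementary fact that the modulus $H_{\infty}$ can only increase when restricted to a subspace, giving $H_{\infty}^{E}(\epsilon)\ge H_{\infty}^{\nonsp}(\epsilon)>0$. This is sound: $V$ is the restriction of a $*$-isomorphism, hence linear on all of $E$, and it preserves singular value functions, hence norms. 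Within the survey your argument is in fact more self-contained, since it needs neither an interpretation of complex uniform convexity for the non-linear set $\nonsp^{+}$ nor a second appeal to \cite{czer-cur}.

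For the first implication you end up doing exactly what the survey does, namely quoting \cite[Theorem 2.6]{czer-cur}, and that is all the survey itself offers, so this part is acceptable. But the sketch you attach should not be mistaken for a proof: as you yourself observe, complex uniform convexity is not preserved by equivalent renormings, so ``renorm so that $M^{(p)}(E)=1$ and invoke \cite[Theorem 3.4]{HanJu}'' does not close the argument, and the $2\times 2$ amplification step is unjustified --- the perturbation $y$ would have to be amplified as well, and Theorem \ref{thm:czercur} gives no control of $\sup_{\theta}\norm{\abs{x}+e^{i\theta}y'}$ for the resulting $y'$. The role the hypothesis $p>1$ actually plays is the one recorded in Corollary \ref{cor:nonspcurum}: complex uniform convexity of the lattice $E$ is equivalent to its uniform monotonicity, uniform monotonicity lifts to $\nonsp$, and it is the $p$-convexity that allows one to pass from uniform monotonicity of $\nonsp$ back to its complex uniform convexity; if you want more than the citation, that is the mechanism to develop, not the amplification.
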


  The analogous results followed for the unitary matrix space $C_E$.
  \begin{theorem}\cite[Theorem 2.10]{czer-cur} \label{th:compunifconv2}
   Let $E$ be a symmetric Banach sequence space. Then $C_E^+$ is  complex uniformly convex  if and only if $E$ is complex uniformly convex. Moreover, if $E$ is $p$-convex for some $p>1$, then $C_E$ is complex uniformly convex if and only if $E$ is complex uniformly convex.
\end{theorem}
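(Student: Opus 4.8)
The plan is to deduce Theorem~\ref{th:compunifconv2} from the already–established function-space result Theorem~\ref{thm:czercur} together with the identification of $C_E$ with a noncommutative symmetric space described in Section~\ref{sec:unitary}. Recall that for a symmetric sequence space $E \ne \ell_\infty$ there is a symmetric function space $G$ on $[0,\infty)$ such that $C_E = G(B(H),\tr)$ isometrically, and moreover $G$ is $p$-convex whenever $E$ is $p$-convex (indeed the convexification relations $E^{(p)}$ vs.\ $G^{(p)}$ and the fact that $\pi$ is an order isometry make this transfer straightforward). So the first step is to check that $E$ is complex uniformly convex if and only if $G$ is complex uniformly convex. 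One direction uses that $E$ embeds isometrically (and as a sublattice) into $G$ via the averaging-type identification, so complex uniform convexity of $G$ descends to $E$; the other direction uses that the dilation/averaging operators relating $E$ and $G$ are bounded with constant one in the relevant normalizations, so a complex-uniform-convexity modulus for $E$ produces one for $G$.

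Once that equivalence is in hand, the first assertion follows: if $E$ is complex uniformly convex then so is $G$, hence by Theorem~\ref{thm:czercur} (applied with $\M=B(H)$, $\tau=\tr$, which is non-atomic in the sense that it has minimal projections of equal trace — here one must instead invoke the ``positive cone'' half of the statement, which does not require non-atomicity) $G(B(H),\tr)^+ = C_E^+$ is complex uniformly convex. Conversely, if $C_E^+ = G(B(H),\tr)^+$ is complex uniformly convex, then since $E$ is isometrically embedded in $C_E$ via a $*$-isomorphism onto the diagonal operators (Proposition~\ref{prop:isomarazy}, the case $x \ge 0$), and that embedding maps positive elements to positive elements, the positive cone $E^+$ inherits complex uniform convexity; and for a symmetric sequence space complex uniform convexity of $E^+$ is equivalent to that of $E$ by taking moduli (replacing $f$ by $|f|$), so $E$ is complex uniformly convex. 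This closes the first biconditional.

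For the ``moreover'' part, assume in addition that $E$ is $p$-convex for some $p>1$. Then $G$ is $p$-convex for the same $p$, so by Theorem~\ref{thm:czercur1} applied to $(B(H),\tr)$: complex uniform convexity of $G$ implies complex uniform convexity of the \emph{whole} space $G(B(H),\tr) = C_E$ (not just its positive cone). Hence $E$ complex uniformly convex $\Rightarrow$ $C_E$ complex uniformly convex. The reverse implication is immediate from the isometric embedding $E \hookrightarrow C_E$ (Proposition~\ref{prop:isomarazy}), since complex uniform convexity passes to subspaces, and this direction does not even need $p$-convexity. Combining the two gives the stated equivalence.

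The main obstacle I expect is the bookkeeping around the identification $C_E \leftrightarrow G(B(H),\tr)$: one must verify carefully that $E$ complex uniformly convex $\iff$ $G$ complex uniformly convex, i.e.\ that the passage $E \rightsquigarrow G$ (averaging over unit intervals) and $G \rightsquigarrow E$ preserve this property and the relevant moduli, and similarly that $p$-convexity transfers, so that Theorems~\ref{thm:czercur} and \ref{thm:czercur1} are legitimately applicable to $\M = B(H)$. A secondary point requiring care is that $B(H)$ is atomic, so one can only use the halves of Theorems~\ref{thm:czercur}–\ref{thm:czercur1} that do not assume non-atomicity (the ``$E$ implies $\nonsp$'' directions and the passage through the isometric embedding), which is exactly what the argument above is arranged to do; the ``$\nonsp$ implies $E$'' conclusions for $C_E$ are instead obtained directly from the sequence-space embedding of Proposition~\ref{prop:isomarazy} rather than from the non-atomic hypothesis.
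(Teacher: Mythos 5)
The route you propose breaks at the point where properties of the sequence space $E$ must be transferred to the auxiliary function space $G$ defined by $\|f\|_G=\|\pi(f)\|_E$ with $\pi_n(f)=\int_{n-1}^n\mu(f)$. Your claim that $G$ is $p$-convex whenever $E$ is $p$-convex is false: on functions supported in $[0,1)$ the $G$-norm is a fixed multiple of the $L_1$-norm, so $G$ contains a lattice-isometric copy of $L_1[0,1]$. Concretely, take $f_i=\chi_{[(i-1)/n,\,i/n)}$, $i=1,\dots,n$; then $\bigl(\sum_{i=1}^n|f_i|^p\bigr)^{1/p}=\chi_{[0,1)}$ and $\pi(\chi_{[0,1)})=n\,\pi(f_1)$, hence $\|\chi_{[0,1)}\|_G=n\|f_1\|_G$, while $\bigl(\sum_{i=1}^n\|f_i\|_G^p\bigr)^{1/p}=n^{1/p}\|f_1\|_G$, so any $p$-convexity constant would have to dominate $n^{1-1/p}$ for all $n$ — impossible for $p>1$, even when $E=\ell_p$. (For the same reason the convexification identity you invoke fails, since $\int_{n-1}^n\mu(f)^p\neq\bigl(\int_{n-1}^n\mu(f)\bigr)^p$.) Consequently Theorem \ref{thm:czercur1} cannot be applied to $\M=B(H)$ with the function space $G$, and your derivation of the ``moreover'' part collapses. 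The first equivalence is also incomplete: the implication ``$E$ complex uniformly convex $\Rightarrow$ $G$ complex uniformly convex'' is asserted but not proved, and the sketched modulus transfer cannot work as stated because $f\mapsto\pi(f)$ factors through the decreasing rearrangement and is not linear; if one wants that implication one has to argue through uniform monotonicity (via \cite{HN}, \cite{HanJu}) or prove it by hand, and that is exactly where the substance of the theorem lies.

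What does survive are the reverse implications: the embedding of Proposition \ref{prop:isomarazy} is a linear isometry sending positive sequences to positive diagonal operators, so complex uniform convexity of $C_E$, respectively of $C_E^+$, passes to $E$, respectively to $E^+$; and the equivalence of complex uniform convexity of $E$, of $E^+$, and uniform monotonicity of $E$ is a quotable lattice fact (\cite[Theorem 2]{HN}, \cite[Theorem 3.4]{HanJu}) rather than a matter of ``replacing $f$ by $|f|$''. For the forward implications the workable route — and the one consistent with how the survey treats the other $C_E$ statements (``the analogous results followed for the unitary matrix space $C_E$'') — is not to specialize Theorems \ref{thm:czercur} and \ref{thm:czercur1} to $G$, but to run their proofs directly in the operator setting, with singular value sequences, submajorization and the isometry $V$ of Proposition \ref{prop:isomarazy} playing the role that Corollary \ref{cor:isom} plays for $\nonsp$. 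A small additional correction: $(B(H),\tr)$ is atomic, not ``non-atomic in the sense that it has minimal projections of equal trace''; you do avoid relying on non-atomicity, but the parenthetical as written is wrong.
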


 Let a normed space $(X, \|\cdot\|)$ be  partially ordered by $\le$. Then $X$ is said
to be \textit{uniformly monotone}  whenever for any $\epsilon>0$ there exists $\delta(\epsilon)>0$ such that for any $0\leqslant x, y\in X$ we have $\|x+y\|>1+\delta(\epsilon)$, whenever $\|y\|\geq\epsilon$ and $\|x\|=1$. If in addition $x \wedge y=0$, then $X$  is said to be \textit{disjointly uniformly monotone}.

It is known that complex uniform  convexity of a Banach lattice is equivalent to its uniform monotonicity \cite[Theorem 3.4]{HanJu}. It was first discovered for Banach function space in \cite[Theorem 2]{HN}.

The next result relates complex uniform convexity of $\nonsp^+$ or $\nonsp$ with the uniform monotonicity.
\begin{corollary}\cite[Corollary 2.9]{czer-cur}
\label{cor:nonspcurum}  Let $\M$ be non-atomic. The space $\nonsp^+$ is complex uniformly convex if and only if  $\nonsp$ is uniformly monotone.  Moreover, if $E$ is $p$-convex for some $p>1$, then  $\nonsp$ is complex uniformly convex if and only if  $\nonsp$ is uniformly monotone.
\end{corollary}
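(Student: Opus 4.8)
The plan is to derive Corollary \ref{cor:nonspcurum} directly from Theorem \ref{thm:czercur} and Theorem \ref{thm:czercur1} together with the equivalence of complex uniform convexity and uniform monotonicity for Banach lattices. First I would recall the cited fact \cite[Theorem 3.4]{HanJu} (originally \cite[Theorem 2]{HN}): for a Banach function lattice $F$, $F$ is complex uniformly convex if and only if $F$ is uniformly monotone. The subtlety is that $\nonsp$ is \emph{not} a lattice, so this equivalence cannot be quoted verbatim for $\nonsp$; the role of the commutative lattice will be played by the symmetric function space $E$, which \emph{is} a Banach lattice, and the bridge from $E$ to $\nonsp$ is provided by the theorems above plus a separate statement relating uniform monotonicity of $E$ and of $\nonsp$ in the non-atomic case.

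The key steps, in order. (1) Observe that uniform monotonicity is a ``lattice-type'' monotone property that is preserved under the isometric embedding $E\hookrightarrow\nonsp$ of Corollary \ref{cor:isomglobal} (it is order preserving), and conversely, by the same sort of argument used for complex uniform convexity in Theorem \ref{thm:czercur}, uniform monotonicity of $\nonsp$ forces uniform monotonicity of $E$ when $\M$ is non-atomic; in fact one should cite or reprove that \emph{$\nonsp$ is uniformly monotone if and only if $E$ is uniformly monotone} for non-atomic $\M$. (Indeed, for two positive operators $0\le x,y$ with $\|x\|_{\nonsp}=1$, $\|y\|_{\nonsp}\ge\eps$, one has $x+y\ge x$, hence $\mu(x+y)\ge\mu(x)$ pointwise is false in general, but $\mu(x)\prec\mu(x+y)$ and more precisely $x+y$ dominates $x$ in the positive cone, so $\|x+y\|_{\nonsp}=\|\mu(x+y)\|_E$ and the monotone modulus of $E$ applies to $\mu(x),\mu(x+y)$ once one checks $\mu(x+y)$ majorizes $\mu(x)$ with the right gap; this is exactly the technical content imported from \cite{czer-cur}.) (2) Combine: $\nonsp$ uniformly monotone $\iff$ $E$ uniformly monotone $\iff$ $E$ complex uniformly convex $\iff$ (by Theorem \ref{thm:czercur}) $\nonsp^+$ complex uniformly convex. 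This proves the first assertion. (3) For the second assertion, add the hypothesis that $E$ is $p$-convex for some $p>1$; then Theorem \ref{thm:czercur1} upgrades ``$E$ complex uniformly convex'' to ``$\nonsp$ complex uniformly convex'', and conversely complex uniform convexity of $\nonsp$ trivially implies that of $E$ (again via the isometric embedding, or directly from Theorem \ref{thm:czercur1}), so the full chain $\nonsp$ complex uniformly convex $\iff$ $E$ complex uniformly convex $\iff$ $E$ uniformly monotone $\iff$ $\nonsp$ uniformly monotone closes.

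Concretely, the proof I would write is short: ``By \cite[Theorem 3.4]{HanJu}, the Banach lattice $E$ is complex uniformly convex if and only if it is uniformly monotone. Since $\M$ is non-atomic, $\nonsp$ is uniformly monotone if and only if $E$ is uniformly monotone: the `if' direction follows by the same averaging-over-the-unit-circle-free estimate on the positive cone as in the proof of Theorem \ref{thm:czercur}, working with $\mu(x)$ and $\mu(x+y)$ in $E$, while the `only if' direction follows from the order-isometric embedding $E\hookrightarrow\nonsp$ of Corollary \ref{cor:isomglobal}. Now Theorem \ref{thm:czercur} gives that $\nonsp^+$ is complex uniformly convex iff $E$ is complex uniformly convex, whence the first claim. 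If moreover $E$ is $p$-convex for some $p>1$, then by Theorem \ref{thm:czercur1}, $\nonsp$ (and not merely $\nonsp^+$) is complex uniformly convex iff $E$ is, and the second claim follows by concatenating the equivalences.''

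The main obstacle I anticipate is not the logical bookkeeping but justifying cleanly the equivalence ``$\nonsp$ uniformly monotone $\iff$ $E$ uniformly monotone'' in the non-atomic case, i.e.\ the lifting of uniform monotonicity from the function space to the operator space. The `only if' (restriction to $E$) is immediate from the isometric, order-preserving embedding. The `if' direction is the delicate part: given $0\le x,y\in\nonsp$ one must pass from operator inequalities to inequalities for singular value functions with quantitative control on the gap $\|y\|_{\nonsp}=\|\mu(y)\|_E$, using submajorization properties from Lemma \ref{lm:singfun} and the structure of the positive cone; this mirrors, and should be deducible from, the argument already carried out for the positive part $\nonsp^+$ in establishing the ``$E$ complex uniformly convex $\Rightarrow$ $\nonsp^+$ complex uniformly convex'' half of Theorem \ref{thm:czercur}. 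If a self-contained argument is wanted rather than a citation, one would reproduce that estimate here; otherwise the cleanest route is simply to invoke Theorem \ref{thm:czercur} (and Theorem \ref{thm:czercur1}) as black boxes and only separately record the elementary fact that uniform monotonicity of $E$ and of $\nonsp$ coincide for non-atomic $\M$, which is itself a known consequence of the results of \cite{doctth, czer-cur} on strict and uniform monotonicity.
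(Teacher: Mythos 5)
Your route is essentially the one the paper (and the cited source \cite{czer-cur}) follows: combine the lattice equivalence of complex uniform convexity and uniform monotonicity for $E$ (\cite[Theorem 3.4]{HanJu}, \cite{HN}) with Theorems \ref{thm:czercur} and \ref{thm:czercur1}, and with the equivalence of uniform monotonicity of $E$ and of $\nonsp$ for non-atomic $\M$ --- the latter obtained from the order-isometric embedding of Corollary \ref{cor:isomglobal} in one direction and from a lifting theorem in the other. So the logical bookkeeping matches the paper.

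Two corrections on the step you yourself single out as delicate. First, your parenthetical claim that for $0\le x,y$ the pointwise inequality $\mu(x+y)\ge\mu(x)$ ``is false in general'' is wrong: since $0\le x\le x+y$, Lemma \ref{lm:singfun} (3) gives $\mu(t,x)\le\mu(t,x+y)$ for all $t\ge 0$. Second, and more importantly, even with this (correct) pointwise inequality your sketch does not close the lifting of uniform monotonicity from $E$ to $\nonsp$: to apply the monotonicity modulus of $E$ to $\mu(x)$ and $\mu(x+y)$ you need a quantitative lower bound $\|\mu(x+y)-\mu(x)\|_E\ge\eps'$ with $\eps'$ depending only on $\|y\|_{\nonsp}\ge\eps$, and nothing in your outline produces it (submajorization points the wrong way here, since $\mu(x+y)\prec\mu(x)+\mu(y)$). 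This transfer is therefore not the ``elementary fact'' your last sentence makes it out to be; it is exactly the nontrivial result of \cite{DDS1} (see also \cite{S2} for the equivalence of uniform monotonicity of $E$ with the $UKK(lcm)$ property), and that is how the survey justifies it in Section \ref{sec:17}. If you cite that result, your chain of equivalences is complete and coincides with the paper's argument; if you intend a self-contained proof, the quantitative gap estimate is the missing piece.
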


We will see in Section \ref{sec:17} that complex convexity properties  of $E$ and $\nonsp$ are also  related to  Kadec-Klee properties. It is  summarized in Corollary \ref{cor:ukkcurum}.

\begin{problem}
As we mentioned above  $L_1\Mtau$ is complex uniformly convex.  However this  does not follow from Theorem \ref{thm:czercur1}, since $L_1$ is not $p$-convex for any $p>1$. 
Show Theorems \ref{thm:czercur1}, \ref{th:compunifconv2} and Corollary \ref{cor:nonspcurum} without assumption that $E$ is $p$-convex for some $p>1$.

\end{problem}

\section{Smoothness}
 
 For a normed space $(X,\|\cdot\|)$, an element  $x\in S_X$ is said to be  a \emph{smooth point} of $B_X$ if there exists a unique  functional $F\in S_{X^*}$ which supports $B_X$ at $x$, that is $F(x)=1$. We will say then that the functional $F$ \emph{supports} $x$.
A normed space $X$ is said to be \emph{smooth} (or \emph{G\^{a}teaux smooth}) if every $x$ from the unit sphere is a smooth point \cite{ Diestel,Godefroy}.

If $T$ is a linear isometry from a Banach space $X$ onto a Banach space $Y$, then $x\in S_X$ is a smooth point of $B_X$ if and only if $T(x)$ is a smooth point of $B_Y$. Moreover,  smooth points of a normed space remain smooth on its subspaces.

It is worth to observe that a unique functional $F\in X^*$ supporting the smooth point $x$ is an extreme point of $B_{X^*}$. Indeed, letting $F=(F_1+F_2)/2$, where $F_1,F_2\in B_{X^*}$, we have
$2=2F(x)=F_1(x)+F_2(x)$. Since $\abs{F_1(x)}, \abs{F_2(x)}\leqslant 1$ it follows that $F_1(x)=F_2(x)=1$. Using now the fact that $F$ is a unique functional supporting $x$, we get that $F_1=F_2=F$.

As an elementary example note that $x\in S_{\ell_1}$ is smooth if and only if $\supp (x)=\mathbb N$. So any element from the unit sphere with all coordinates different than zero is smooth.  It follows that its supporting functional is determined by a  unique normalized element $y=\{y_n\}\in \ell_\infty$ such that $y_n=1$ if $x_n>0$ and $y_n=-1$ if $x_n<0$.

The study of smooth points in noncommutative spaces started with J. Holub \cite{Holub}, who considered them in the trace class $C_1$.

\begin{theorem}\cite[Theorem 3.2]{Holub}
\label{thm:Holubsmoothc1}
 Let $x\in C_1$, $\|x\|_{C_1}=1$. Then $x$ is smooth if and only if $x$ or $x^*$ is one-to-one.
\end{theorem}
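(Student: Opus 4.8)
The plan is to identify $(C_1)^* = B(H) = \M$ (with the canonical trace), so that for $x\in S_{C_1}$ the supporting functionals are exactly the operators $y\in B(H)$ with $\|y\|_{B(H)} = 1$ and $\tr(xy) = 1$. Writing the polar decomposition $x = u|x|$ and the Schmidt representation $x = \sum_n s_n(x)\langle\cdot, e_n\rangle f_n$ with $\{e_n\}, \{f_n\}$ orthonormal and $\sum_n s_n(x) = 1$, I would first determine the set of all supporting functionals of $x$ explicitly. The computation $\tr(xy) = \sum_n s_n(x)\langle y f_n, f_n\rangle$ (choosing a basis extending $\{e_n\}$ suitably, or arguing directly) shows that $\tr(xy) = 1$ with $\|y\|\le 1$ forces $\langle y f_n, f_n\rangle = 1$ for every $n$ with $s_n(x) > 0$, hence $y f_n = f_n$ for those $n$ since $\|y\|\le 1$; dually $y^* e_n = e_n$. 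Equivalently, $y$ must restrict to the partial isometry $\sum_n \langle\cdot, e_n\rangle f_n = u$ on $\overline{\Ran|x|^{1/2}}$ — more precisely $y s(x) = u$ and $y^* s(x^*) = u^*$, i.e. $y$ agrees with $u$ on $\Ran x^*$ and with $u^*$ adjointly on $\Ran x$.

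Next I would translate uniqueness of such $y$ into a rank condition. The operator $y$ is pinned down on $s(x)H = \Kerp{x} = \overline{\Ran x^*}$ and, via the adjoint condition, on $s(x^*)H = \overline{\Ran x}$; it is completely free (subject only to $\|y\|\le 1$, which can always be arranged, e.g. by extending by $0$) on the complement $n(x)H \cap n(x^*)H$-direction. So the supporting functional is unique precisely when there is no room left, i.e. when $n(x)\,B(H)\,n(x^*) = 0$ in the sense that one cannot perturb $y$ by a nonzero contraction supported between $n(x^*)H$ on the right and $n(x)H$ on the left. Since $B(H) = \M$ is a factor (the center is trivial), the lemma quoted after Theorem~\ref{thm:ext} gives $n(x)B(H)n(x^*) = 0$ if and only if $z(n(x))$ and $z(n(x^*))$ are orthogonal, which for $B(H)$ means $n(x) = 0$ or $n(x^*) = 0$, i.e. $x$ is injective or $x^*$ is injective. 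This is exactly the asserted condition.

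The remaining step is to verify both implications carefully: if $n(x) = 0$ (so $s(x) = \one$), then $y$ is determined on all of $H$, giving uniqueness; similarly if $n(x^*) = 0$. Conversely, if both $n(x)$ and $n(x^*)$ are nonzero, pick unit vectors $\xi\in n(x^*)H$ and $\eta \in n(x)H$ and form the rank-one contraction $w = \langle\cdot,\xi\rangle\eta$; then for a supporting functional $y_0$ of $x$ one checks $\tr(x(y_0 + w)) = \tr(xy_0) + \tr(xw) = 1 + 0$ because $xw$ has range in $\Ran\big(\langle\cdot,\xi\rangle\eta\big)$ composed with $x$, and $x\eta = 0$ forces $\tr(xw)=0$; one also needs $\|y_0 + w\|\le 1$, which is arranged by first choosing $y_0$ to vanish on $n(x^*)H$ on the right (possible since $y_0$ is free there) so that $y_0 + w$ remains a contraction. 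This produces two distinct supporting functionals, so $x$ is not smooth.

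I expect the main obstacle to be the bookkeeping in the converse direction: exhibiting a second supporting functional requires choosing the ``free part'' of $y_0$ so that adding the rank-one perturbation keeps the operator norm at most $1$, which means being careful about how $y_0$, the perturbation $w$, and the ranges of $s(x), s(x^*), n(x), n(x^*)$ interact as operators on $H$. The identification $(C_1)^* = B(H)$ and the explicit description of supporting functionals via $y f_n = f_n$ are standard (and can be quoted from \cite{GK} as in Section~\ref{sec:unitary}), so the conceptual content is entirely in this perturbation argument together with the factoriality of $B(H)$.
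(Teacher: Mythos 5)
The paper itself does not prove this theorem -- it is quoted from \cite{Holub} -- so there is no internal proof to compare against; your argument has to stand on its own, and in outline it does. Identifying $(C_1)^*$ with $B(H)$ under the pairing $(z,y)\mapsto\tr(zy)$ (note that the duality $(C_E)^*=C_{E^\times}$ quoted in Section~\ref{sec:unitary} is stated for $E\neq\ell_1$, so here you need the classical identification $(C_1)^*=B(H)$ from \cite{GK} or \cite{Sch}), showing that the supporting functionals of $x=u|x|$ are precisely the contractions of block form $u^*s(x^*)+n(x)\,b\,n(x^*)$ with $\|b\|_{B(H)}\le 1$, and reading off that uniqueness is equivalent to $n(x)=0$ or $n(x^*)=0$, is a complete and correct strategy. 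The rank-one perturbation in the converse also checks out: with $y_0=u^*$ and $w=\langle\cdot,\xi\rangle\eta$, $\xi\in n(x^*)H$, $\eta\in n(x)H$, one gets $xw=0$, hence $\tr(xw)=0$, and $\|y_0+w\|=1$ because $w$ kills $s(x^*)H$ while its range is orthogonal to $\Ran{u^*}=s(x)H$.

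Two slips in the write-up should be repaired, though neither is a conceptual gap. First, with $x=\sum_n s_n(x)\langle\cdot,e_n\rangle f_n$ and the pairing $\tr(xy)$ one has $\tr(xy)=\sum_n s_n(x)\langle yf_n,e_n\rangle$, not $\sum_n s_n(x)\langle yf_n,f_n\rangle$; the extremality argument therefore gives $yf_n=e_n$ and $y^*e_n=f_n$, i.e. $y$ agrees with $u^*$ on $s(x^*)H=\overline{\Ran{x}}$ and $y^*$ with $u$ on $s(x)H=\overline{\Ran{x^*}}$ -- the opposite of your statement ``$ys(x)=u$, $y^*s(x^*)=u^*$''. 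Fortunately the parts of the argument you actually use later (the free corner being $n(x)B(H)n(x^*)$ and $y_0$ chosen to vanish on $n(x^*)H$) match the corrected version, so only the bookkeeping needs to be straightened out. Second, the freedom is the whole corner $n(x)B(H)n(x^*)$, not ``the $n(x)H\cap n(x^*)H$-direction'', and for the forward implication you should record the one estimate that makes the block-form description exhaustive: if $\|y\|\le 1$ and $yf_n=e_n$ for unit vectors, then $y^*e_n=f_n$, so for $\xi\perp\{f_n\}$ one has $\langle y\xi,e_n\rangle=\langle\xi,y^*e_n\rangle=0$, i.e. every supporting functional maps $n(x^*)H$ into $n(x)H$. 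With that in place, $n(x)=0$ or $n(x^*)=0$ indeed pins the functional down to $u^*s(x^*)$, and the theorem follows; invoking the central-support lemma after Theorem~\ref{thm:ext} is a nice observation but not needed, since your explicit rank-one construction already handles the case where both null projections are nonzero.
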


Later on, J. Arazy   characterized smooth points in $C_E$.

\begin{theorem} \cite[Theorem 2.3]{A1} 
\label{thm:Araysmoothce}
 Let $E$ be a separable symmetric sequence space and $x\in S_{C_E}$ Then $x$ is a smooth point of $B_{C_E}$ if and only if $S(x)$ is a smooth point of $B_E$. Consequently, in the commutative case, $x\in B_E$ is smooth if and only if $\mu(x)$ is smooth of the ball $B_E$.
\end{theorem}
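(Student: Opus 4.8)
The plan is to derive both implications from Arazy's isometry, Proposition~\ref{prop:isomarazy}, together with the duality $(C_E)^{*}=(C_E)^{\times}=C_{E^{\times}}$ (available since a separable symmetric sequence space is order continuous, Proposition~\ref{prop:infinity}, and $E^{*}=E^{\times}$), and with Simon's formula $\|z\|_{C_{E^{\times}}}=\sup\{\|\{\langle z\phi_n,\psi_n\rangle\}\|_{E^{\times}}:\{\phi_n\},\{\psi_n\}\ \text{orthonormal}\}$ already quoted in the proof of Proposition~\ref{prop:isomarazy}. Fix $x\in S_{C_E}$ with Schmidt representation $x=\sum_n s_n(x)\langle\cdot,e_n\rangle f_n$, and let $V\colon E\to C_E$ be the isometry with $V(S(x))=x$ and $P\colon C_E\to V(E)$ the contractive projection of Proposition~\ref{prop:isomarazy}. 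The easy direction is immediate: if $x$ is a smooth point of $B_{C_E}$, then since $V(E)$ is a closed subspace of $C_E$ containing $x$ and smoothness passes to subspaces (as recalled in the text), $x$ is smooth in $V(E)$, hence $S(x)=V^{-1}(x)$ is smooth in $B_E$ because $V$ is a surjective isometry onto $V(E)$. The commutative consequence will follow by specialization to diagonal operators: for $x\in E$ the diagonal operator $\mathrm{diag}(x)$ (with respect to a fixed orthonormal basis) lies in a $*$-subalgebra of $C_E$ isometrically $*$-isomorphic to $E$ and has $S(\mathrm{diag}(x))=\mu(x)$, so $x$ smooth in $E$ $\iff$ $\mathrm{diag}(x)$ smooth in $C_E$ $\iff$ $\mu(x)$ smooth in $E$.

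The substantive direction is that smoothness of $S(x)$ implies smoothness of $x$. Let $\psi\in S_{E^{\times}}$ be the unique norming functional of $S(x)$; since $S(x)\ge 0$ is decreasing and $\psi$ is unique, $\psi$ is itself non-negative, decreasing, and supported in $\supp S(x)$. First I would produce one norming functional of $x$, namely $z_0=\sum_n\psi_n\langle\cdot,f_n\rangle e_n\in C_{E^{\times}}$: its singular values are $\mu(\psi)=\psi$, so $\|z_0\|_{C_{E^{\times}}}=1$, while $\tr(xz_0)=\sum_n s_n(x)\psi_n=\|S(x)\|_E=1$. For uniqueness I would take an arbitrary $z\in C_{E^{\times}}$ with $\|z\|_{C_{E^{\times}}}=1$ and $\tr(xz)=1$, and run
\[
1=\abs{\tr(xz)}\le\sum_n s_n(x)\,\abs{\langle zf_n,e_n\rangle}\le\sum_n s_n(x)\,s_n(z)\le\|S(x)\|_E\,\|S(z)\|_{E^{\times}}=1,
\]
where the middle estimate uses that the ``diagonal'' sequence $\{\abs{\langle zf_n,e_n\rangle}\}$ is submajorized by $\mu(z)$ (Ky Fan / Simon) together with $S(x)$ decreasing, and the last one is H\"older in $E$. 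All inequalities are equalities. Equality on the right forces $S(z)=\mu(z)=\psi$ by smoothness of $S(x)$; testing Simon's formula against the orthonormal systems $\{f_n\}$, $\{e_n\}$, together with the earlier equalities and the fact that $\tr(xz)=1$ is real, pins the Schmidt-diagonal entries down to $\langle zf_n,e_n\rangle=\psi_n$ for all $n$; a small-rotation perturbation of these systems, compared against $\|z\|_{C_{E^{\times}}}=\|\psi\|_{E^{\times}}$, shows $z$ is supported on $\overline{\mathrm{span}}\{e_n\}\times\overline{\mathrm{span}}\{f_n\}$. Finally the equality case of the von Neumann--Ky Fan trace inequality $\abs{\tr(xz)}=\sum_n s_n(x)s_n(z)$ yields aligned Schmidt decompositions of $x$ and $z$, so $z$ is block-diagonal with respect to the partition of indices into the constancy blocks of $S(x)$; on each such block the matrix of $z$ has diagonal entries $\psi_n$ and singular values $\{\psi_n\}$, so a Hilbert--Schmidt norm comparison forces it to be genuinely diagonal. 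Hence $z=z_0$, $x$ has a unique norming functional, and $x$ is smooth.

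The main obstacle is this last part of the uniqueness argument: forcing the off-diagonal entries of a general norming functional to vanish. Identifying the diagonal entries is routine from $\tr(xz)=1$ and Simon's formula, but excluding off-diagonal ``mixing'' genuinely needs the equality case of the von Neumann trace inequality, and the bookkeeping is delicate exactly when $S(x)$ has repeated values, since then the Schmidt decomposition of $x$ is not unique and $z$ is only forced to be block-diagonal before the Hilbert--Schmidt comparison finishes the job. (When $E=\ell_1$ one uses $(C_{\ell_1})^{*}=B(H)$ in place of $C_{E^{\times}}$; the argument is otherwise unchanged, and in that case the conclusion matches Holub's Theorem~\ref{thm:Holubsmoothc1}.)
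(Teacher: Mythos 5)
There is no proof in the survey to compare against (the statement is quoted from \cite[Theorem 2.3]{A1}), so your proposal must stand on its own, and most of it does: the easy direction via Proposition~\ref{prop:isomarazy} and restriction of smoothness to the subspace $V(E)$ is correct, and in the hard direction the reduction is sound --- trace duality $(C_E)^*=C_{E^\times}$, the inequality chain, the identification $S(z)=\mu(z)=\psi$, and even the entrywise pinning of the diagonal: since Simon's formula gives $\bigl\|\{\abs{\langle zf_n,e_n\rangle}\}\bigr\|_{E^\times}\le \|z\|_{C_{E^\times}}=1$, the sequence $\{\abs{\langle zf_n,e_n\rangle}\}$ is itself a norming functional of $S(x)$, so smoothness forces it to equal $\psi$, and reality of $\tr(xz)=1$ fixes the phases; the repeated-value ``delicacy'' you worry about does not arise here. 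The genuine gap is the finish: the two steps that carry all the remaining weight --- the ``small-rotation perturbation'' localizing the support of $z$, and the equality case of the von Neumann--Ky Fan trace inequality for compact operators yielding aligned Schmidt systems and block-diagonality --- are named but neither stated precisely nor proved, and in infinite dimensions with multiplicities they are essentially the uniqueness assertion itself. They are also unnecessary: from $\mu(z)=\psi$ and $\langle zf_1,e_1\rangle=\psi_1=s_1(z)=\|z\|$ the equality case of Cauchy--Schwarz gives $zf_1=\psi_1e_1$ and $z^*e_1=\psi_1f_1$, so $z$ splits off the rank-one piece $\psi_1\langle\cdot,f_1\rangle e_1$ as an orthogonal direct summand whose complement has singular values $\psi_2,\psi_3,\dots$; iterating and observing that the residual summand is left with no nonzero singular values (hence vanishes, which also gives the support localization for free) yields $z=z_0$ with no block bookkeeping at all. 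As written, the crux of your argument is a plan rather than a proof.

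Two further points. Your parenthetical about $E=\ell_1$ is false, and it is diagnostic: for $E=\ell_1$ the asserted equivalence itself fails, in accordance with Holub's Theorem~\ref{thm:Holubsmoothc1}. Take $x=\sum_n s_n\langle\cdot,e_n\rangle f_n$ with all $s_n>0$, $\sum_n s_n=1$, and with $\{e_n\}$, $\{f_n\}$ each spanning proper closed subspaces; then $S(x)$ is smooth in $\ell_1$, yet $w=\sum_n\langle\cdot,f_n\rangle e_n$ and $w+\langle\cdot,\eta_0\rangle\xi_0$ (unit vectors $\eta_0\perp\{f_n\}$, $\xi_0\perp\{e_n\}$) are two distinct contractions in $B(H)=(C_1)^*$ supporting $x$, so $x$ is not smooth --- injectivity of $x$ or $x^*$ is strictly stronger than full support of $S(x)$. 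Hence ``the argument is otherwise unchanged'' cannot be right; your scheme genuinely uses $E\ne\ell_1$, i.e.\ $E^\times\subset c_0$, which is what makes $S(z)=\psi$ a binding constraint on the orthogonal complement (compare the dichotomy (i)/(ii) in Theorem~\ref{thm:smoothsummary}), and you should say where. Finally, your derivation of the commutative consequence is circular in one direction: ``$x$ smooth in $E$ $\Rightarrow$ $\mathrm{diag}(x)$ smooth in $C_E$'' is not the subspace direction and is exactly what is being proved; the implication ``$x$ smooth $\Rightarrow \mu(x)$ smooth'' needs its own (short) argument, e.g.\ transporting a second norming functional of $\mu(x)$, truncated to $\supp\mu(x)$, back to a second norming functional of $x$ via a bijection of supports and the signs of $x$.
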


The characterization of smooth points of $B_{\nonsp}$ was done in \cite{CKK2012}, for   order continuous symmetric function  spaces $E$.

\begin{theorem}\cite[Theorem 2.4]{CKK2012}
\label{thm:6}
Suppose that $E$ is order continuous. Let $x\in S_E$ and $\mu(x)$ be a smooth point of $B_E$, and $F(h)=\int_0^\infty hf$, $h\in E$, for some $f\in S_{E^{\times}}$,  be the functional supporting $\mu(x)$. If
\begin{itemize}
\item [{(i)}] $\mu(\infty,f)=0$, or
\item [{(ii)}] $s(x^*)=\one$,
\end{itemize}
  then $x$ is a smooth point of $B_{\nonsp}$.
\end{theorem}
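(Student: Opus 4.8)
The plan is to reduce the smoothness of $x$ in $\nonsp$ to the smoothness of $\mu(x)$ in $E$ by constructing, from the supporting functional $F(h)=\int_0^\infty hf$ of $\mu(x)$, an explicit norm-one functional on $\nonsp$ that supports $x$, and then showing uniqueness of such a functional. Since $E$ is order continuous, every functional on $\nonsp$ is of the form $y\mapsto \tau(xy)$ for some $y\in E^\times\Mtau$ with $\|\Phi\|=\|y\|_{E^\times\Mtau}$; this is the identification recalled in the Preliminaries. So the task is to pin down the operator $y$ implementing the unique supporting functional.

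First I would produce one supporting functional. Write the polar decomposition $x=u|x|$. Using Corollary \ref{cor:isom1} (or the simpler Corollary \ref{cor:isomglobal} / Proposition \ref{isom1} depending on which of (i), (ii) holds), choose a commutative von Neumann subalgebra $\mathcal N\subseteq q\M q$ and a $*$-isomorphism $V$ from $S([0,\tauone),m)$ onto a $*$-subalgebra of $S(\mathcal N,\tau)$ with $V\mu(x)=|x|q$ (adjusted by the $\mu(\infty,x)$ term in case (i) of that corollary) and $\mu(Vh)=\mu(h)$ for all $h$. Then set $y_0 = u\, V(f)$ (or $u\,V(f)+$ a correction term supported on the part of the spectrum at level $\mu(\infty,x)$ when $f$ is not order continuous, using hypothesis (i) $\mu(\infty,f)=0$ to avoid this or hypothesis (ii) $s(x^*)=\one$ to handle the range projection). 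One checks $\|y_0\|_{E^\times\Mtau}=\|\mu(y_0)\|_{E^\times}=\|\mu(Vf)\|_{E^\times}=\|f\|_{E^\times}=1$, and that $\tau(x y_0)=\tau(u|x|\, u V(f)) = \tau(|x|q\,V(f)) = \tau(V(\mu(x))V(f)) = \int_0^\infty \mu(x)f = F(\mu(x)) = 1$, using trace properties, $u^*u=s(x)\ge q$, and that $V$ is trace-preserving on the commutative subalgebra. This is where hypotheses (i)/(ii) enter: they guarantee the relevant projections ($s(x)$, $s(x^*)$, $q$) absorb correctly so that $\tau(xy_0)$ collapses to the commutative integral.

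The main obstacle is \emph{uniqueness}: suppose $y_1,y_2\in S_{E^\times\Mtau}$ both support $x$, i.e. $\tau(xy_i)=1$. I would first show any supporting $y$ must satisfy $xy\ge 0$ and $\mu(xy)=\mu(x)\mu(y)$ pointwise a.e. (equality in the submajorization $\mu(xy)\prec\mu(x)\mu(y)$ together with $\tau(xy)=\|\mu(x)\|_E\|\mu(y)\|_{E^\times}$ forces $\mu(x)\mu(y)$ to be a decreasing function realizing the Hölder/Köthe-duality equality case). The equality case in $\mu(x)\mu(y)$ being extremal for $F$-type functionals, combined with smoothness of $\mu(x)$ in $E$, should force $\mu(y)=f$ (the unique supporting function of $\mu(x)$ in $E^\times$, up to the normalization; here one also uses that a unique supporting functional is an extreme point of $B_{E^\times}$, as noted in the Smoothness section). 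Then one must upgrade $\mu(y)=f$ to $y=y_0$: the equality $\mu(xy)=\mu(x)\mu(y)$ and the sharp triangle/Hölder analysis in $S\Mtau$ (the Fack–Kosaki inequalities, Lemma \ref{lm:singfun}) pin the ``phase'' of $y$ relative to $x$, i.e. force $y=u V(f)$ on $s(x^*)$; on the complement $n(x^*)$ one uses (ii) $s(x^*)=\one$ (so there is no complement) or (i) together with $\mu(\infty,f)=0$ (so nothing can be added on $n(x^*)$ without increasing $\|\mu(y)\|_{E^\times}$ past $1$). This step — turning the scalar-rearrangement equality into an operator identity — is the technical heart, and I expect it to require care with the non-commutative equality cases and with the two sides' support projections; it is essentially a noncommutative analogue of the argument that $x\in S_{\ell_1}$ is smooth iff $\supp x=\mathbb N$, but carried through the isomorphism $V$ and the polar decomposition.
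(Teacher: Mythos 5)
Your opening moves are the right ones and follow the standard route: since $E$ is order continuous, $\nonsp^{*}=E^{\times}\Mtau$, and for any norm-one $y$ with $\tau(xy)=1$ the chain $1=\tau(xy)\le\tau(|xy|)=\int_0^\infty\mu(xy)\le\int_0^\infty\mu(x)\mu(y)\le\|\mu(x)\|_E\|\mu(y)\|_{E^\times}=1$ (Lemma \ref{lm:traceprop}) forces $\int_0^\infty\mu(x)\mu(y)=1$, so $h\mapsto\int_0^\infty h\,\mu(y)$ supports $\mu(x)$ and smoothness of $\mu(x)$ gives $\mu(y)=f$. But two concrete problems appear already in your ``existence'' step. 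First, Proposition \ref{isom1} and Corollary \ref{cor:isom1} require $\M$ non-atomic with $\sigma$-finite trace, neither of which is assumed in Theorem \ref{thm:6}; this is precisely the lifting direction, valid for arbitrary semifinite $\M$ (for $\M=B(H)$ no $V$ with $V\mu(x)=|x|$ can exist unless $\mu(x)$ is a step function). To use $V$ you would first have to pass to the non-atomic extension $E(\mathcal{A},\kappa)$ of Section \ref{non-atom} and invoke the fact that smoothness passes to subspaces, and also handle $\sigma$-finiteness; none of this is in your plan, and existence of a supporting functional is in any case automatic from Hahn--Banach. Second, your candidate has the polar isometry on the wrong side: with $y_0=uV(f)$ the quantity $\tau(xy_0)=\tau(u|x|uV(f))$ does not collapse to $\int_0^\infty\mu(x)f$; the correct candidate is $y_0=V(f)u^*$ (compare $y=|x|^{p-1}u^*$ in $C_p$), for which $\tau(xy_0)=\tau(u|x|V(f)u^*)=\tau(|x|V(f))=\int_0^\infty\mu(x)f=1$.

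The decisive gap is in uniqueness, which is the actual content of the theorem. From $\int_0^\infty\mu(xy)=\int_0^\infty\mu(x)\mu(y)$ together with the submajorization $\mu(xy)\prec\mu(x)\mu(y)$ you cannot conclude the pointwise equality $\mu(xy)=\mu(x)\mu(y)$: two decreasing functions can satisfy $g\prec h$ and $\int g=\int h$ without coinciding (take $g=\tfrac12\chi_{[0,2]}$ and $h=\chi_{[0,1]}$). So that stepping stone already needs an operator-level equality-case argument (for instance, first $xy\ge 0$ from $\tau(xy)=\tau(|xy|)$, then an analysis of equality in $\tau(|xy|)\le\int\mu(x)\mu(y)$), which you do not supply. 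More seriously, the passage from ``$\mu(y)=f$ and $\tau(xy)=\int\mu(x)f$'' to the identity of the operator $y$ --- exactly where hypotheses (i) and (ii) do their work --- is only asserted (``pins the phase''). Your treatment of (i)/(ii) via disjointly supported perturbations on $n(x^*)$ does not cover general competitors: an arbitrary $y\in B_{E^{\times}\Mtau}$ with $\tau(xy)=1$ need not differ from your candidate by a two-sided disjoint piece, so excluding such perturbations does not prove uniqueness. As it stands, the proposal establishes the easy half (every supporting operator has singular value function $f$) and leaves the heart of \cite[Theorem 2.4]{CKK2012} --- that under (i) or (ii) this determines $y$ uniquely --- unproved.
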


 Recall that the trace $\tau$ on $L_1\Mtau$ is an additive positively homogeneous real valued functional, satisfying $\tau(x)=\int_0^\infty \mu(x)$  for all $x\in L_1\Mtau^+$.  Below we provide a list of basic properties  of $\tau$ on $L_1\Mtau$.
\begin{lemma} \label{lm:traceprop}The following properties hold for the extended trace  $\tau:L_1\Mtau\to\Complex$.
\begin{itemize}
\item[{(i)}] $\tau(x^*)=\overline{\tau(x)}$, for $x\in L_1\Mtau$.
\item[{(ii)}]  $|\tau(xy)|\leq \|y\|_{\M}\tau(|x|)$ for $x\in L_1\Mtau$ and $y\in \M$. In particular if $y=\one$ and $x\in L_1\Mtau$ then $|\tau(x)|\leq \tau(|x|)$. 
\item[{(iii)}] \cite[Proposition 3.4]{DDP4} $\tau(xy)=\tau(yx)$ if $xy,yx\in L_1\Mtau$.
\item[{(iv)}] \cite[Proposition 3.10]{DDP4} $\tau(|xy|)=\int_0^{\infty}\mu(xy)\leq \int_0^{\infty}\mu(x)\mu(y)$ for $x,y\in S\Mtau$. 
\end{itemize}
\end{lemma}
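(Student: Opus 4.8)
The statement to prove is Lemma~\ref{lm:traceprop}, listing four basic properties of the extended trace $\tau:L_1\Mtau\to\Complex$. Items (iii) and (iv) are quoted verbatim from \cite{DDP4}, so the work concentrates on (i) and (ii). The plan is to reduce everything to the positive cone, where $\tau(x)=\int_0^\infty\mu(x)$ holds and $\tau$ is additive and positively homogeneous, and then extend by linearity using the decomposition of a general element of $L_1\Mtau$ into four positive pieces (as recalled in the Preliminaries: $x=\RE x+i\IM x$ with $\RE x,\IM x$ self-adjoint, and each self-adjoint part splits as $x^+-x^-$).

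For (i), first I would check it on self-adjoint $x\in L_1\Mtau$: writing $x=x^+-x^-$ with $x^\pm\in L_1\Mtau^+$, we have $\tau(x)=\tau(x^+)-\tau(x^-)\in\Real$, and since $x^*=x$ the identity $\tau(x^*)=\overline{\tau(x)}$ is immediate. For general $x$, decompose $x=a+ib$ with $a=\RE x$, $b=\IM x$ self-adjoint in $L_1\Mtau$; then $x^*=a-ib$, and by linearity of the extended $\tau$ together with the self-adjoint case, $\tau(x^*)=\tau(a)-i\tau(b)=\overline{\tau(a)+i\tau(b)}=\overline{\tau(x)}$.

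For (ii), the key point is the inequality $|\tau(xy)|\le\|y\|_\M\,\tau(|x|)$ for $x\in L_1\Mtau$, $y\in\M$. I would argue as follows: $xy\in L_1\Mtau$ because $\mu(xy)\prec\mu(x)\mu(y)\le\|y\|_\M\,\mu(x)$ by Lemma~\ref{lm:singfun}(1) (or the submajorization bound $\mu(xy)\prec\mu(x)\mu(y)$ recalled in the Preliminaries), so $\tau(xy)$ is defined; moreover by Lemma~\ref{lm:traceprop}(iv),
\[
|\tau(xy)|\le\tau(|xy|)=\int_0^\infty\mu(xy)\le\int_0^\infty\mu(x)\mu(y)\le\|y\|_\M\int_0^\infty\mu(x)=\|y\|_\M\,\tau(|x|).
\]
Here the inequality $|\tau(z)|\le\tau(|z|)$ for $z\in L_1\Mtau$ is itself worth isolating: taking the polar decomposition $z=u|z|$ with $u\in\M$ a partial isometry, $\|u\|_\M\le1$, one has $\tau(z)=\tau(u|z|)$, and applying the bound just established with $x=|z|$, $y=u$ gives $|\tau(z)|=|\tau(|z|\,u)|\le\|u\|_\M\tau(|z|)\le\tau(|z|)$ (using (iii) to commute if needed, or simply $\tau(u|z|)=\tau(|z|^{1/2}u|z|^{1/2})$ and monotonicity). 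Then the special case $y=\one$ in (ii) is exactly $|\tau(x)|\le\tau(|x|)$.

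The only genuine subtlety — and the step I expect to require the most care — is the circularity between the two halves of (ii): the general inequality $|\tau(xy)|\le\|y\|_\M\tau(|x|)$ is most cleanly deduced from $|\tau(z)|\le\tau(|z|)$ applied to $z=xy$, while the latter is most cleanly deduced from the former applied to the polar decomposition. To break the loop I would prove $|\tau(z)|\le\tau(|z|)$ directly first: for self-adjoint $z$ this is $|\tau(z^+)-\tau(z^-)|\le\tau(z^+)+\tau(z^-)$, trivially true; for general $z=u|z|$, use $\tau(z)=\tau(|z|^{1/2}\,u\,|z|^{1/2})$ via (iii) and note $|z|^{1/2}u|z|^{1/2}$ need not be positive, so instead estimate through the Cauchy–Schwarz-type bound $|\tau(|z|^{1/2}u|z|^{1/2})|\le\tau(|z|^{1/2}uu^*|z|^{1/2})^{1/2}\tau(|z|^{1/2}|z|^{1/2})^{1/2}\le\tau(|z|)$ since $uu^*\le\one$. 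With $|\tau(z)|\le\tau(|z|)$ in hand unconditionally, the chain of inequalities displayed above then yields the full statement of (ii) without circularity, and the lemma is complete.
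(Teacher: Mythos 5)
Your proposal is correct. For item (i) you follow exactly the paper's route: reality of $\tau$ on self-adjoint elements of $L_1\Mtau$ via the decomposition into positive and negative parts, then linearity applied to $x=\Rep{x}+i\Imp{x}$. Where you genuinely diverge is item (ii): the paper gives no proof at all, simply pointing to the beginning of Section 3 of \cite{DDP4}, whereas you supply a self-contained derivation. Your ordering is sound: you first prove $|\tau(z)|\leq\tau(|z|)$ unconditionally, via the polar decomposition $z=u|z|$, cyclicity (iii) (legitimate here, since both $u|z|$ and $|z|^{1/2}u|z|^{1/2}$ lie in $L_1\Mtau$ by submajorization), and a tracial Cauchy--Schwarz estimate together with $uu^*\leq\one$; only then do you deduce $|\tau(xy)|\leq\|y\|_{\M}\tau(|x|)$ from (iv) combined with $\mu(y)\leq\mu(0,y)=\|y\|_{\M}$. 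This removes the circularity you rightly flag, and note that your earlier parenthetical shortcut (``$\tau(u|z|)=\tau(|z|^{1/2}u|z|^{1/2})$ and monotonicity'') would not have worked on its own, since $|z|^{1/2}u|z|^{1/2}$ need not be self-adjoint, but your final Cauchy--Schwarz version handles this correctly. The price of your route is an appeal to the Cauchy--Schwarz inequality for the trace on $L_2\Mtau$, a standard fact but one the paper never states; what it buys is that (ii) becomes an internal consequence of the already-quoted item (iv) rather than an external citation.
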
 
\begin{proof}
The discussion of (ii) can be found at the beginning of section 3 in \cite{DDP4}. To show (i) observe first that $\tau(y)$ is real for any self-adjoint operator $y\in L_1\Mtau$, since $y$ can be written as a difference of its positive and negative parts. Now let $x\in L_1\Mtau$ and $\Rep(x),\, \Imp(x)$ be its real and imaginary parts, respectively. Then $x=\Rep(x)+i\Imp(x)$ and $x^*=\Rep(x)-i\Imp(x)$.  Hence $\tau(x^*)=\tau(\Rep(x)-i\Imp(x))=\tau(\Rep(x))-i\tau(\Imp(x))=\overline{\tau(\Rep(x))+i\tau(\Imp(x))}=\overline{\tau(\Rep(x)+i\Imp(x))}=\overline{\tau(x)}$.
\end{proof}

\begin{lemma}
\label{lm:smoothadjoint}
Let $E$ be order continuous, $x\in E\Mtau$, $y\in E^{\times}\Mtau$ with $\|x\|_{E\Mtau}=1$, and $\|y\|_{E^{\times}\Mtau}=1$. Then $y$ supports $x$ if and only if $y^*$ supports $x^*$. In particular, $x$ is a smooth point of $B_{\nonsp}$ if and only if $x^*$ is a smooth point of $B_{\nonsp}$. 
\end{lemma}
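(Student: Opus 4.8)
The plan is to reduce everything to the two trace identities in \lemref{lm:traceprop}. First I would record the elementary normalizations: since $\mu(x)=\mu(x^*)$ and $\mu(y)=\mu(y^*)$ by \lemref{lm:singfun}(2), we have $\|x^*\|_{\nonsp}=\|x\|_{\nonsp}=1$ and $\|y^*\|_{E^\times\Mtau}=\|y\|_{E^\times\Mtau}=1$, so the phrase ``$y^*$ supports $x^*$'' is meaningful. Next I would check that $xy$, $yx$, $x^*y^*$, $y^*x^*$ all lie in $L_1\Mtau$, so that all the traces below are defined: indeed $\tau(|xy|)=\int_0^\infty\mu(xy)\le\int_0^\infty\mu(x)\mu(y)<\infty$ by \lemref{lm:traceprop}(iv) together with the definition of the K\"othe dual ($\mu(x)\in E$, $\mu(y)\in E^\times$), and the remaining three products are handled the same way, using $\mu(x^*)=\mu(x)$ and $\mu(y^*)=\mu(y)$.

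The heart of the argument is the computation
\[
\tau(x^*y^*)=\tau\big((yx)^*\big)=\overline{\tau(yx)}=\overline{\tau(xy)},
\]
where the first equality is $(yx)^*=x^*y^*$, the second is \lemref{lm:traceprop}(i), and the third is the commutation property \lemref{lm:traceprop}(iii), legitimate since both $xy$ and $yx$ lie in $L_1\Mtau$. Recall that the functional induced by $y$ is $\Phi_y(z)=\tau(zy)$ and that, $E$ being order continuous, $\|\Phi_y\|=\|y\|_{E^\times\Mtau}=1$; thus ``$y$ supports $x$'' means exactly $\tau(xy)=1$. Since $1$ is real, the displayed identity yields $\tau(x^*y^*)=\overline 1=1$, i.e.\ $y^*$ supports $x^*$. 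The converse is the same statement applied to the pair $(x^*,y^*)$ in place of $(x,y)$, using $(x^*)^*=x$ and $(y^*)^*=y$. This settles the first assertion.

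For the ``in particular'' clause I would invoke the identification $\nonsp^*=E^\times\Mtau$ valid for order continuous $E$: every norm-one functional on $\nonsp$ is $\Phi_y$ for a unique $y\in E^\times\Mtau$ with $\|y\|_{E^\times\Mtau}=1$. The adjoint map $y\mapsto y^*$ is a norm-preserving, conjugate-linear involution of $E^\times\Mtau$, hence a bijection of the unit sphere $S_{E^\times\Mtau}$ onto itself. By the first part, $\Phi_y$ supports $x$ if and only if $\Phi_{y^*}$ supports $x^*$, so $y\mapsto y^*$ restricts to a bijection between the set of supporting functionals of $x$ and the set of supporting functionals of $x^*$. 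Consequently one set is a singleton precisely when the other is, which is exactly the assertion that $x$ is a smooth point of $B_{\nonsp}$ if and only if $x^*$ is.

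There is no real obstacle here; the argument is essentially bookkeeping with the trace. The only points deserving an explicit line are the integrability $xy,yx\in L_1\Mtau$ (so that \lemref{lm:traceprop}(iii) applies) and the fact that the adjoint is an isometric bijection of $E^\times\Mtau$ compatible with the duality $\nonsp^*=E^\times\Mtau$ — both immediate from the material recalled in Section~1.
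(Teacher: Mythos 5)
Your proposal is correct and takes essentially the same route as the paper: both rest on the trace identities of Lemma~\ref{lm:traceprop} (i) and (iii) applied after noting that $xy$ and $yx$ lie in $L_1\Mtau$, together with the identification $\nonsp^{*}=E^{\times}\Mtau$ with $\|\Phi_y\|=\|y\|_{E^{\times}\Mtau}$ for order continuous $E$. The only cosmetic difference is that the paper deduces the smoothness transfer by uniqueness of the supporting functional (if $\Phi_w$ supports $x^*$ then $\Phi_{w^*}$ supports $x$, forcing $w=y^*$), whereas you phrase the same fact as a bijection $y\mapsto y^{*}$ between the sets of supporting functionals of $x$ and $x^{*}$.
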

\begin{proof}
Since $(x^*)^*=x$ it is enough to show that if $x$ is a smooth point of $B_{\nonsp}$ then so is $x^*$. Let $x$ be a smooth point of $B_{\nonsp}$ and $\Phi_y(z)=\tau(zy)$, $z\in \nonsp$, $y\in S_{E^{\times}\Mtau}$, be the unique functional supporting $x$.  Suppose $\Phi_w(z)=\tau(zw)$, $z\in \nonsp$, $w\in S_{E^{\times}\Mtau}$, is a functional supporting $x^*$. By Lemma \ref{lm:traceprop} (i), $\tau(xw^*)=\overline{\tau(wx^*)}$ and by Lemma \ref{lm:traceprop} (iii), $\tau(wx^*)=\tau(x^*w)=\Phi_w(x^*)=1$. Hence $\Phi_{w^*}(x)=\tau(xw^*)=1$ and by the uniqueness  of $\Phi_y$ supporting $x$, we have that $w^*=y$ or $w=y^*$. Thus $x^*$ is a smooth point in $\nonsp$, where $\Phi_{y^*}(z)=\tau(zy^*)$, $z\in\nonsp $, is its unique supporting functional.
\end{proof}

By Lemma \ref{lm:smoothadjoint}, it is clear that the same conditions on $x$ and $x^*$ as well as on $y$ and $y^*$ need to be satisfied in the result below.
\begin{lemma}\cite[Lemma 2.5]{CKK2012}
\label{lm:smooth}
Let $E$ be  order continuous. If $x\in S_{\nonsp}$ is a smooth point of $B_{\nonsp}$ and the functional $\Phi_y(z)=\tau(zy)$, $z\in\nonsp$, $y\in E^{\times}\Mtau$, supports $x$, then either
\begin{itemize}
\item [{(i)}] $\mu(\infty,y)=0$, or
\item [{(ii)}] $s(x)=s(x^*)=s(y)=s(y^*)=\textup{\textbf{1}}$, $\abs{y}\ge \mu(\infty,y)\textup{\textbf{1}}$ and $\abs{y^*}\ge \mu(\infty,y)\textup{\textbf{1}}$.
\end{itemize}
\end{lemma}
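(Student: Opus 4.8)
The plan is to use that a smooth point has a \emph{unique} norming functional, so that producing any second $y'\in S_{E^{\times}\Mtau}$ with $\tau(xy')=1$ forces a contradiction. Since $E$ is order continuous, $\nonsp^{*}=E^{\times}\Mtau$ via $\Phi_{y}(z)=\tau(zy)$, $\|\Phi_{y}\|=\|y\|_{E^{\times}\Mtau}$, and the hypothesis says $\tau(xy)=\|x\|_{\nonsp}\|y\|_{E^{\times}\Mtau}=1$. The first step is to extract the equality cases in the H\"older chain: by Lemma~\ref{lm:traceprop}(ii),(iv), Lemma~\ref{lm:singfun}(2) and the K\"othe duality of $E$, $E^{\times}$,
\[
1=\tau(xy)\le\tau(|xy|)=\int_{0}^{\infty}\mu(xy)\le\int_{0}^{\infty}\mu(x)\mu(y)\le\|\mu(x)\|_{E}\,\|\mu(y)\|_{E^{\times}}=1,
\]
so all inequalities are equalities. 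From $\tau(xy)=\tau(|xy|)$ and faithfulness of $\tau$ --- a Cauchy--Schwarz equality argument for $a=|xy|^{1/2}\in L_{2}\Mtau$ and the phase $v$ in the polar decomposition $xy=v|xy|$, via $\tau(xy)=\tau(ava)=\langle va,a\rangle_{L_{2}}$ --- one gets $xy=|xy|\ge 0$; from $\int_{0}^{\infty}\mu(xy)=\int_{0}^{\infty}\mu(x)\mu(y)$ one gets the sharp alignment of $|x|$ with $y$; and the last equality says $\mu(y)$ norms $\mu(x)$ in $E$. Finally, by Lemma~\ref{lm:smoothadjoint} the statement is invariant under $x\mapsto x^{*},\ y\mapsto y^{*}$, so it suffices to argue the ``$x$-side'' and the ``$y$-side'' separately.

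Next, assume $c:=\mu(\infty,y)>0$, since otherwise (i) holds. When $c>0$, $|y|\ge c\one$ forces $n(y)=0$, hence $s(y)=\one$ (and dually), so (ii) is equivalent to the four conditions $e^{|y|}[0,c)=0$, $e^{|y^{*}|}[0,c)=0$, $n(x)=0$, $n(x^{*})=0$; I would establish each by contradiction. Two perturbations are available. (a) If $z\in S\Mtau$ has $\overline{\Ran{z}}\subseteq\ker y^{*}$, $\overline{\Ran{z^{*}}}\subseteq\ker y$ and $\|z\|_{\M}\le c$, then $|y+z|^{2}=|y|^{2}+|z|^{2}$ with orthogonal supports, and since $d(s,y)=\infty$ for every $s<c$, a direct computation gives $d(y+z)=d(y)$, hence $\mu(y+z)=\mu(y)$ and $\|y+z\|_{E^{\times}\Mtau}=1$ (a mild generalization of Lemma~\ref{lm:singfun}(5),(8)); moreover, for $z=n(y^{*})z_{0}\,n(y)$ one has $\tau(xz)=\tau\bigl(n(y)\,x\,n(y^{*})\,z_{0}\bigr)$ by the trace property, which can be made zero by an appropriate choice of $z_{0}$ (and is zero for all $z_{0}$ if $n(y)\,x\,n(y^{*})=0$). (b) If $r:=e^{|y|}[0,c)\ne 0$, one alters $y$ only on $r(H)$, where $|y|<c$: replacing the modulus there by an operator whose spectrum stays in $[0,c)$ and accumulates at $c$ as before preserves $d(y)$, hence $\mu(y)$ (Lemma~\ref{lm:singfun}(7)), while $xy\ge 0$ together with $x\,n(x)=0=n(x^{*})\,x$ keeps $\tau(xy)$ fixed.

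Carrying this out: if $e^{|y|}[0,c)\ne 0$, move (b) produces a second norming functional --- contradiction --- so $|y|\ge c\one$, and dually $|y^{*}|\ge c\one$, which also give $s(y)=s(y^{*})=\one$. If $n(x)\ne 0$, write $x=u|x|$, observe $x\,n(x)=u|x|\,n(x)=0$ because $|x|\,n(x)=0$, and use $xy\ge 0$ with the equality $\int\mu(xy)=\int\mu(x)\mu(y)$ to exhibit an admissible nonzero $z$ with $xz=0$ and $\mu(y+z)=\mu(y)$; then $\tau\bigl(x(y+z)\bigr)=\tau(xy)=1$ with $y+z\ne y$ --- contradiction --- so $n(x)=0$, and dually $n(x^{*})=0$. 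Together these are (ii).

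The main obstacle is making moves (a) and (b) genuinely legitimate: verifying that the perturbed operator lies in $E^{\times}\Mtau$ with unit norm \emph{and} still norms $x$, and above all producing a nonzero admissible $z$ from the hypotheses alone. This is where the positivity $xy=|xy|\ge 0$ and the sharpness of $\int\mu(xy)=\int\mu(x)\mu(y)$ carry the argument, by pinning down how $\ker x$ and $\ker x^{*}$ sit relative to the range projections of $y$; when the projections involved have infinite trace one first passes to a finite-trace corner $p\M p$ (using semifiniteness of $\tau$, the compressions recalled after Proposition~\ref{isom2}, and the $*$-isomorphisms of Section~\ref{sec:isom}). In move (b) the delicate point is purely spectral --- retaining $d(s,y')=\infty$ for every $s<c$ after the modulus of $|y|$ is altered on $e^{|y|}[0,c)$. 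The adjoint reduction and the reduction to one support projection at a time are routine.
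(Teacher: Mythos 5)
Your overall strategy is the right one in spirit: extract the equality cases from the H\"older chain, deduce $xy=\abs{xy}\ge 0$, and then contradict the uniqueness of the supporting functional by norm-preserving perturbations of $y$ that exploit $d(s,y)=\infty$ for every $s<c=\mu(\infty,y)$ (your verification that move (a) preserves $\mu(y)$ is correct). Note that this survey does not reproduce the argument --- the lemma is only cited from \cite{CKK2012} --- so the proposal has to stand on its own, and as written it does not. The most important omission is the cheap consequence of smoothness that locates everything: since $\mu(s(x)y)\le\mu(y)$ and $\tau(x\,s(x)y)=\tau(xy)=1$, and likewise $\tau(x\,y s(x^*))=\tau(y s(x^*)x)=\tau(xy)=1$ with $\mu(ys(x^*))\le\mu(y)$ (Lemma~\ref{lm:singfun}(1), Lemma~\ref{lm:traceprop}), uniqueness of the supporting functional forces $s(x)y=y=ys(x^*)$, i.e. $s(y^*)\le s(x)$ and $s(y)\le s(x^*)$. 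With these inclusions the conditions $s(x)=s(x^*)=\one$ in (ii) follow at once from $\abs{y}\ge c\one$ and $\abs{y^*}\ge c\one$, so one should prove the two spectral statements about $y$ and dualize via Lemma~\ref{lm:smoothadjoint}. Your plan instead attacks $n(x)=0$ directly with a perturbation $z$ satisfying $l(z)\le n(y^*)$ and $r(z)\le n(y)$; this cannot carry that step, because in the configuration to be excluded $n(y)$ may be (and ultimately must be) zero, and even when both kernels are nonzero the corner $n(y^*)\M n(y)$ can vanish, so no nonzero $z=n(y^*)z_0n(y)$ need exist. You yourself flag ``producing a nonzero admissible $z$ from the hypotheses alone'' as the main obstacle; that obstacle is precisely the content of the lemma, not a routine verification.

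Move (b) also fails as stated. If you alter $\abs{y}$ on $e^{\abs{y}}(0,c)$ while keeping the phase $v$ of $y=v\abs{y}$, the increment $v(\cdot)e^{\abs{y}}(0,c)$ has left support inside $s(y^*)\le s(x)$, so ``$xn(x)=0=n(x^*)x$'' gives no reason whatsoever that $\tau(xy)$ is unchanged; in general it changes, and that is exactly what the argument must exploit. The correct mechanism is a dichotomy: taking $y'=v\phi(\abs{y})$ with $\phi(t)=t+f(t)$, $f\ge 0$ supported in $(\delta,c)$, one has $\mu(y')=\mu(y)$, and the increment $\tau\bigl(xvf(\abs{y})\bigr)=\tau\bigl(xy\,g(\abs{y})\bigr)\ge 0$ because $f(t)=t\,g(t)$ with $g\ge0$ bounded and $xy\ge0$; if it is strictly positive you contradict $\tau(xy')\le\norm{x}_{\nonsp}\norm{y'}_{E^{\times}\Mtau}=1$, and if it vanishes you have a second supporting functional, contradicting smoothness. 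This positivity step (and its analogue for $y^*$), together with an argument for the kernel part $n(y)$ that does not rely on the possibly trivial corner $n(y^*)\M n(y)$, is missing from your sketch, as are the finite-trace reductions you defer to ``the compressions of Section~\ref{sec:isom}.'' In short: right genre of proof, but the decisive constructions are not supplied, one of the two announced moves is based on an incorrect claim, and the support relations that make any of it possible are never derived.
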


\begin{theorem}\cite[Theorem 2.8]{CKK2012}
\label{thm:4}
Let $E$ be  order continuous and $\M$ be non-atomic. If  $x$ is a smooth point of $B_{\nonsp}$ then $\mu(x)$ is a smooth point of $B_E$.
\end{theorem}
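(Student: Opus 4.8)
The plan is to prove the contrapositive-style statement by transferring a ``bad'' decomposition of $\mu(x)$ in $B_E$ back up to a bad decomposition of $x$ in $B_{\nonsp}$ using the trace-preserving $*$-isomorphism machinery of Section~\ref{sec:isom}, combined with Lemma~\ref{lm:smooth}, which already tells us that the supporting functional $\Phi_y$ of a smooth point $x$ must satisfy either $\mu(\infty,y)=0$ or the strong support conditions in (ii). The standing hypotheses are that $E$ is order continuous and $\M$ is non-atomic. Since $x$ is a smooth point, $E$ order continuous gives us that $E\Mtau^{*}=E^{\times}\Mtau$, so there is a unique $y\in S_{E^{\times}\Mtau}$ with $\tau(xy)=1$, and by Lemma~\ref{lm:smooth} one of its two alternatives holds for $y$.

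First I would reduce to understanding the action of a supporting functional for $\mu(x)$ in $E$. Suppose, for contradiction, that $\mu(x)$ is \emph{not} a smooth point of $B_E$. Then there exist two distinct norming functionals for $\mu(x)$ in $S_{E^{\times}}$, say $f_1\ne f_2$ with $\int_0^\infty \mu(x) f_i = 1$ for $i=1,2$; since $E^{\times}$ is symmetric we may arrange $f_i\ge 0$, and after the standard rearrangement argument we may take $f_i=\mu(f_i)$ disjointly... more precisely we know $\mu(x)f_i = \mu(x)\mu(f_i)$ a.e.\ on the support of $\mu(x)$. Next I would invoke the appropriate isomorphism result (Corollary~\ref{cor:isom} or Corollary~\ref{cor:isom1}, applied to $x$, using $\sigma$-finiteness which follows from $\mu(x)\in E$ being a nonzero element and order continuity forcing a $\sigma$-finite support, or more carefully by restricting to $s(x)\M s(x)$) to obtain a non-atomic commutative $\mathcal N\subset q\M q$ and a unital $*$-isomorphism $V\colon S([0,\tauone),m)\to S(\mathcal N,\tau)$ with $V\mu(x)=\abs{x}q$ and $\mu(V(g))=\mu(g)$. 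The point is that $V$ carries the two functionals $h\mapsto \int_0^\infty h f_i$ on $E$ to two functionals $z\mapsto \tau(z\,V(f_i))$ on $V(E)\subset \nonsp$, both norming $\abs{x}q$; then using the polar decomposition $x=u\abs{x}$ and the support information (to ensure $u^{*}u\,V(f_i)=V(f_i)$, exactly as in the three-case analysis in the proof of Theorem~\ref{thm:noncom4}) one lifts these to two distinct functionals on $\nonsp$ norming $x$ itself, contradicting smoothness of $x$.

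The main obstacle, as in Theorem~\ref{thm:noncom4}, is the bookkeeping around $\mu(\infty,x)>0$ and the projection $q$: when $\tau(p)=\infty$ with $p=e^{\abs{x}}(\mu(\infty,x),\infty)$, the isomorphism $V$ only recovers $\abs{x}q$ rather than $\abs{x}$, so the tail part $\mu(\infty,x)\chi_{[\tau(r),\infty)}$ must be added back by hand (this is precisely why Corollary~\ref{cor:isom1} is phrased the way it is), and one must check that perturbing $V(f_i)$ by a scalar multiple of $e^{\abs{x}}\{\mu(\infty,x)\}$ keeps it in $B_{E^{\times}\Mtau}$ via Lemma~\ref{lm:singfun}(8) and keeps the trace pairing equal to $1$. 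Here Lemma~\ref{lm:smooth} does real work: the alternative (ii) for $y$ (namely $s(x)=s(x^{*})=\one$) rules out the pathological case where $n(x)\ne 0$ could let the two lifted functionals collapse to the same one, while alternative (i) handles the remaining case cleanly. A secondary technical point I would need to address is that smoothness passes to subspaces and is preserved by isometries --- this is stated in the excerpt just before Theorem~\ref{thm:Holubsmoothc1} --- so once I produce the two distinct norming functionals on the image $V(E)$, I transfer the contradiction back along the isometry $V$ to $E$, or rather I directly contradict smoothness of $x$ in $\nonsp$. Assembling these pieces in the order: (1) extract $y$ and apply Lemma~\ref{lm:smooth}; (2) assume $\mu(x)$ not smooth and extract $f_1\ne f_2$; (3) apply the isomorphism corollary matching the case dichotomy on $\tau(p)$; (4) lift $f_1,f_2$ through $V$ and $u$, handling the tail; (5) derive the contradiction --- completes the proof.
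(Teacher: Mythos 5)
Your skeleton (lift two norming functionals of $\mu(x)$ through the $*$-isomorphism $V$ and the partial isometry $u$ from $x=u\abs{x}$, then contradict smoothness of $x$) is the natural one, but as written it has a genuine gap at the crucial distinctness step. First note that order continuity of $E$ gives $E=E_0$, so $\mu(\infty,x)=0$ automatically; hence all of your bookkeeping about the tail $\mu(\infty,x)\chi_{[\tau(r),\infty)}$ and Corollary \ref{cor:isom1} is vacuous, and in every case $V\mu(x)=\abs{x}$. The real tail problem sits in the functionals, not in $\mu(x)$: if $\tau(s(x))<\tauone$ (necessarily with $\tau(s(x))<\infty$), then $V(\chi_{[0,\tau(s(x)))})=s(x)$, and since $zu^{*}=0$ exactly when $zs(x)=0$, any two norming functionals $f_1\neq f_2$ of $\mu(x)$ that agree on $\supp\mu(x)=[0,\tau(s(x)))$ and differ only beyond it (this happens already for $E=L_1$) have $V(f_1)u^{*}=V(f_2)u^{*}$, so your two lifted functionals coincide and no contradiction is reached. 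Your appeal to Lemma \ref{lm:smooth} does not repair this: alternative (i) there only says $\mu(\infty,y)=0$ for the supporting functional $y$ of $x$, which is silent about norming functionals of $\mu(x)$ carrying mass beyond $\tau(s(x))$, while alternative (ii) forces $s(x)=s(x^{*})=\one$ and is therefore unavailable precisely in the problematic configuration.

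What is missing is a separate treatment of a norming functional $f=h+g$ with $h=f\chi_{[0,\tau(s(x)))}$ and nonzero tail $g$. One must show first that this situation forces $n(x)\M n(x^{*})\neq 0$: if $z(n(x))\perp z(n(x^{*}))$ then $s(x)\geq\one-z(n(x))$ and $s(x^{*})\geq z(n(x))$ give $\tauone\leq 2\tau(s(x))<\infty$, so $\M$ is finite, whence $s(x)\sim s(x^{*})$ yields $n(x)\sim n(x^{*})$, contradicting $n(x)\M n(x^{*})=0$ because $\tau(n(x))=\tauone-\tau(s(x))>0$. Then one must build a second supporting functional by hand: choose (using non-atomicity) a nonzero $w=n(x)wn(x^{*})$ with $\mu(w)\leq\mu(g)$; since $xw=0$, $uV(h)^{*}w=0$, and $uV(\abs{h}^{2})u^{*}$ and $w^{*}w$ have orthogonal supports, one gets $\tau\bigl(x(V(h)u^{*}+w)\bigr)=1$ and $\mu\bigl(V(h)u^{*}+w\bigr)\leq\mu(f)$, so $V(h)u^{*}+w$ and $V(h)u^{*}$ are two distinct functionals in $B_{E^{\times}\Mtau}$ supporting $x$, the desired contradiction. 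This corner construction (or an equivalent device) is what your step (5) silently assumes and is the heart of the matter; the case where the two functionals already differ on $\supp\mu(x)$ is the easy one your argument does cover. Two smaller points: from the equality $\int\mu(x)f_i=1$ you may only conclude $f_i\geq 0$ a.e.\ on $\supp\mu(x)$, not $f_i=\mu(f_i)$ (and replacing $f_i$ by $\mu(f_i)$ could destroy distinctness); and Corollary \ref{cor:isom} formally assumes a $\sigma$-finite trace, which Theorem \ref{thm:4} does not, so the lift should be performed inside $s(x)\M s(x)$, which is $\sigma$-finite precisely because $\mu(\infty,x)=0$.
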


The next theorem combines the results of Theorem \ref{thm:6}, Theorem \ref{thm:4} and Lemma \ref{lm:smooth}.
\begin{theorem}
\label{thm:smoothsummary}\cite[Theorem 2.9]{CKK2012}
Let $E$ be order continuous and $\M$ be non-atomic. Then $x$ is a smooth point of $B_{\nonsp}$ if and only if $\mu(x)$ is a smooth point of $B_E$  and either
\begin{itemize}
\item[{(i)}] $\mu(\infty,f)=0$, where $F(h)=\int_0^\infty h f$, $h\in E$, $f\in S_{E^{\times}}$, is the functional supporting $\mu(x)$ or,
\item[{(ii)}] $s(x^*)=\one$.
\end{itemize}
\end{theorem}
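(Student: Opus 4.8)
The plan is to combine the three already-stated results into a single biconditional, so the proof is essentially bookkeeping: the forward direction splits into two implications coming from \thmref{thm:4} and \lemref{lm:smooth}, and the reverse direction is exactly \thmref{thm:6}. First I would note that both directions presuppose $E$ order continuous and $\M$ non-atomic, which are the standing hypotheses, so the functional-theoretic identifications (in particular $E\Mtau^* = E^\times\Mtau$ with $\Phi\in E\Mtau^*$ of the form $\Phi(x)=\tau(xy)$ for $y\in E^\times\Mtau$, and the fact that a smooth point has a \emph{unique} supporting functional which is then forced to have norm one) are available from the preliminaries.

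For the ``only if'' direction, suppose $x$ is a smooth point of $B_{\nonsp}$. By \thmref{thm:4}, $\mu(x)$ is a smooth point of $B_E$; let $F(h)=\int_0^\infty hf$, $h\in E$, with $f\in S_{E^\times}$, be its unique supporting functional. It remains to show (i) or (ii). I would let $\Phi_y(z)=\tau(zy)$, $y\in E^\times\Mtau$, $\|y\|_{E^\times\Mtau}=1$, be the unique functional supporting $x$, and invoke \lemref{lm:smooth}: either $\mu(\infty,y)=0$, or $s(x)=s(x^*)=s(y)=s(y^*)=\one$ together with $|y|\ge\mu(\infty,y)\one$ and $|y^*|\ge\mu(\infty,y)\one$. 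In the first case I need to match $\mu(\infty,y)=0$ with the commutative statement $\mu(\infty,f)=0$ for the supporting functional $f$ of $\mu(x)$; this is the one genuine point of friction, and I would handle it by observing that the K\"othe-duality identification preserves singular value functions — the supporting functional of $\mu(x)$ in the symmetric function space $E$ is (up to the rearrangement bookkeeping implicit in \thmref{thm:4}'s proof) represented by a function equimeasurable with $\mu(y)$, so $\mu(\infty,f)=\mu(\infty,y)=0$, giving (i). In the second case $s(x^*)=\one$ is immediate, giving (ii).

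For the ``if'' direction, assume $\mu(x)\in S_E$ is a smooth point of $B_E$ with supporting functional $F(h)=\int_0^\infty hf$, $f\in S_{E^\times}$, and assume (i) $\mu(\infty,f)=0$ or (ii) $s(x^*)=\one$. This is precisely the hypothesis of \thmref{thm:6}, which concludes that $x$ is a smooth point of $B_{\nonsp}$. So here there is nothing to do beyond citing \thmref{thm:6}, after possibly passing from $x$ to $x^*$ via \lemref{lm:smoothadjoint} if one wants the hypotheses stated symmetrically; but since \thmref{thm:6}'s hypothesis (ii) is already $s(x^*)=\one$, no adjustment is needed.

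The main obstacle, as indicated, is the translation between the two descriptions of the supporting functional — the abstract operator $y\in E^\times\Mtau$ produced by smoothness of $x$ in \lemref{lm:smooth}, versus the scalar function $f\in S_{E^\times}$ produced by smoothness of $\mu(x)$ in \thmref{thm:4} — and in particular checking that the condition $\mu(\infty,f)=0$ in the statement is the faithful commutative shadow of $\mu(\infty,y)=0$. I expect this to be resolved by unwinding the proof of \thmref{thm:4}: that proof must already construct $f$ from $y$ (or vice versa) in a rearrangement-compatible way, so $\mu(f)=\mu(y)$ and the limits at infinity agree. Everything else is a direct appeal to \thmref{thm:6}, \thmref{thm:4}, \lemref{lm:smooth}, and \lemref{lm:smoothadjoint}.
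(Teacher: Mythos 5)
Your proposal is correct and follows the paper's own route: the survey presents \thmref{thm:smoothsummary} precisely as the combination of \thmref{thm:6} (the ``if'' direction), \thmref{thm:4} (smoothness of $\mu(x)$), and \lemref{lm:smooth} (the dichotomy), which is exactly your decomposition. The one step you defer to ``unwinding the proof of \thmref{thm:4}'' closes directly without that: if $\Phi_y$, $y\in S_{E^{\times}\Mtau}$, supports $x$, then $1=\tau(xy)\le\int_0^\infty\mu(x)\mu(y)\le\|\mu(x)\|_E\|\mu(y)\|_{E^{\times}}=1$, so $h\mapsto\int_0^\infty h\,\mu(y)$ is a norm-one functional supporting $\mu(x)$, and smoothness of $\mu(x)$ forces $f=\mu(y)$; hence $\mu(\infty,f)=\mu(\infty,y)$, which is $0$ in case (i) of \lemref{lm:smooth}, while case (ii) gives $s(x^*)=\one$ as you say.
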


The following corollaries are direct consequences of the results  above.
\begin{corollary}\cite[Corollary 2.13]{CKK2012}
Let $\M$ be non-atomic and the space $E$ be  order continuous  such that $E^{\times}=(E^{\times})_0$.  Then $E$ is smooth if and only if $\nonsp$  is smooth.
\end{corollary}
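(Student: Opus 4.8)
The plan is to derive this corollary directly from Theorem~\ref{thm:smoothsummary}, using the hypotheses on $E$ and $\M$ to eliminate the auxiliary conditions (i) and (ii) that appear there. First I would unwind the definitions: the space $\nonsp$ is smooth precisely when every $x\in S_{\nonsp}$ is a smooth point of $B_{\nonsp}$, and similarly for $E$. Since $E$ is assumed order continuous and $\M$ non-atomic, Theorem~\ref{thm:smoothsummary} applies to every such $x$, so the whole question reduces to checking when the equivalence ``$x$ smooth $\iff$ $\mu(x)$ smooth and [(i) or (ii)]'' collapses to the clean statement ``$x$ smooth $\iff$ $\mu(x)$ smooth.''

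The key step is to show that under the extra assumption $E^{\times}=(E^{\times})_0$, condition (i) of Theorem~\ref{thm:smoothsummary} is automatically satisfied whenever $\mu(x)$ is a smooth point of $B_E$. Indeed, if $\mu(x)\in S_E$ is smooth, its unique supporting functional is given by some $f\in S_{E^{\times}}$ with $F(h)=\int_0^\infty hf$; since $f\in E^{\times}=(E^{\times})_0$, by the definition of the subscript-$0$ subspace we have $\mu(\infty,f)=0$, which is exactly condition (i). This immediately gives one direction: if $E$ is smooth, then for any $x\in S_{\nonsp}$, the function $\mu(x)$ lies in $S_E$ (because $\|\mu(x)\|_E=\|x\|_{\nonsp}=1$) and is therefore a smooth point of $B_E$; then (i) holds by the above, and Theorem~\ref{thm:smoothsummary} yields that $x$ is a smooth point of $B_{\nonsp}$. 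Hence $\nonsp$ is smooth.

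For the converse, suppose $\nonsp$ is smooth. To show $E$ is smooth I would use the isometric order embedding of $E$ into $\nonsp$ furnished by Corollary~\ref{cor:isomglobal} (valid since $\M$ is non-atomic): pick $x\in S^+\Mtau$ with $\tau(s(x))=\tauone$ and the associated $*$-isomorphism $V$ with $\mu(V(f))=\mu(f)$, so $V:E\to\nonsp$ is a linear isometry onto its range. As noted in the text, smoothness passes to subspaces and is preserved by linear isometries, so every point of $S_{V(E)}$ is a smooth point of $B_{V(E)}$, hence every $f\in S_E$ is a smooth point of $B_E$; thus $E$ is smooth. Alternatively one can invoke Theorem~\ref{thm:4} directly: if $\nonsp$ is smooth then in particular each $V(f)$ is a smooth point of $B_{\nonsp}$, so by Theorem~\ref{thm:4}, $\mu(V(f))=\mu(f)$ is a smooth point of $B_E$; applying this to a suitable $f$ and using that in the commutative setting one may arrange $\mu(f)$ to be an arbitrary rearrangement, one concludes $E$ is smooth.

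The main obstacle, and the step deserving the most care, is the reduction eliminating condition (ii): one must be sure that condition (i) alone suffices, i.e.\ that the hypothesis $E^{\times}=(E^{\times})_0$ genuinely forces the supporting functional $f$ to have $\mu(\infty,f)=0$ for \emph{every} smooth $\mu(x)$, with no hidden exceptional case where $\mu(x)$ is smooth but has no supporting functional in $S_{E^{\times}}$ — but this cannot happen precisely because $E$ is order continuous, so $E^*$ is identified with $E^{\times}$ (as recalled in the Preliminaries) and every supporting functional is represented by an element of $S_{E^{\times}}$. Once this identification and the definition of $(E^{\times})_0$ are invoked, the argument is routine; the only genuinely substantive inputs are Theorem~\ref{thm:smoothsummary} and the isometric embedding of $E$ into $\nonsp$.
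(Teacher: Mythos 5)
Your proof is correct and follows essentially the route the paper intends: the hypothesis $E^{\times}=(E^{\times})_0$ together with the identification $E^{*}\cong E^{\times}$ for order continuous $E$ makes condition (i) of Theorem~\ref{thm:smoothsummary} automatic, and the converse follows from the isometric embedding of $E$ into $\nonsp$ (Corollary~\ref{cor:isomglobal}, using non-atomicity of $\M$) together with the fact that smooth points pass to subspaces. Only your parenthetical alternative for the converse via Theorem~\ref{thm:4} alone is shaky, since that theorem yields smoothness only of the decreasing rearrangements $\mu(f)$ and not of arbitrary $f\in S_E$ without a further argument, but your main argument does not rely on it.
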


Considering the commutative von Neumann algebra $\mathcal{M}=L_\infty[0,\alpha)$, $0<\alpha\leqslant \infty$, we obtain the corresponding result for the symmetric function spaces.
\begin{corollary}\cite[Corollary 2.10]{CKK2012}
Let $E$ be an order continuous symmetric function space on $[0,\alpha)$, $0<\alpha\leqslant \infty$. Then the function $x$ is a smooth point of $B_E$ if and only if its decreasing rearrangement $\mu(x)$ is a smooth point of $B_E$, and either
\begin{itemize}
\item[{(i)}] $\mu(\infty,f)=0$,  where $f\in S_{E^{\times}}$ induces the integral supporting functional of $\mu(x)$, or
\item[{(ii)}] $\supp(x)=[0,\alpha)$ a.e.
\end{itemize}
\end{corollary}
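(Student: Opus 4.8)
The plan is to derive this corollary directly from Theorem~\ref{thm:smoothsummary} by specializing the von Neumann algebra to the commutative case $\M = L_\infty[0,\alpha)$ with the integral trace, using the identifications worked out in Section~\ref{sec:symmfun}. First I would recall from Fact~5 that when $\M = \mathcal{N} = L_\infty[0,\alpha)$ and $\tau = \eta$, the space $E(\mathcal{N},\eta)$ is isometrically $*$-isomorphic to the function space $E$ via $f \mapsto N_f$, and that $\mu(N_f) = \mu(f)$. Since smooth points are preserved by linear isometries onto, $x = N_f$ is a smooth point of $B_{E(\mathcal{N},\eta)}$ if and only if the function $x$ (identified with $f$) is a smooth point of $B_E$. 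Likewise $\mu(x)$, viewed inside $E(\mathcal{N},\eta)$ or inside $E$, is the same object under the identification, so "$\mu(x)$ is a smooth point of $B_E$" means literally the same thing on both sides. The algebra $L_\infty[0,\alpha)$ is non-atomic (for $\alpha > 0$ with Lebesgue measure), so the hypotheses of Theorem~\ref{thm:smoothsummary} are met once $E$ is assumed order continuous.

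Next I would translate condition (ii) of Theorem~\ref{thm:smoothsummary}. By Fact~6 and the computations in Fact~5, for $x = N_f$ we have $s(x) = N_{\chi_{\supp f}}$ and $s(x^*) = N_{\chi_{\supp \bar f}} = N_{\chi_{\supp f}}$; hence $s(x^*) = \one$ if and only if $\chi_{(\supp f)^c} = 0$ a.e., i.e. $\supp(x) = [0,\alpha)$ a.e. Condition (i) is already a statement purely about the supporting functional $f \in S_{E^\times}$ of $\mu(x)$, and here I would invoke the identification $E(\mathcal{N},\eta)^\times = E^\times$ (valid when $E$ is order continuous, by the K\"othe duality discussion in the Preliminaries, together with $E^\times(\mathcal{N},\eta) \cong E^\times$ again via Fact~5): the functional supporting $\mu(x)$ in $B_{E(\mathcal{N},\eta)}$ is $\Phi_{N_f}(z) = \eta(z N_f)$, which under the identification is exactly $h \mapsto \int_0^\alpha h f$ for $f \in S_{E^\times}$. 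Thus condition (i) of Theorem~\ref{thm:smoothsummary} becomes $\mu(\infty, f) = 0$ with $f$ the density of the integral functional supporting $\mu(x)$, which is precisely condition (i) of the corollary.

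Assembling these three translations — the isometric identification of $E$ with $E(\mathcal{N},\eta)$, the reading of $s(x^*) = \one$ as $\supp(x) = [0,\alpha)$, and the reading of the supporting functional as an integral functional with density $f$ — Theorem~\ref{thm:smoothsummary} applied to $\M = L_\infty[0,\alpha)$ yields exactly the stated equivalence. I expect the main obstacle to be bookkeeping rather than mathematics: one must make sure that all the identifications (the $*$-isomorphism $f \mapsto N_f$, the K\"othe dual identification $E(\mathcal{N},\eta)^\times = E^\times$, and the formula $\Phi_y(z) = \tau(zy)$ for functionals on an order-continuous noncommutative symmetric space) are simultaneously compatible, so that the density appearing in the noncommutative supporting functional is genuinely the same $f \in S_{E^\times}$ that appears in the commutative integral functional $F(h) = \int_0^\alpha h f$. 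Once this compatibility is spelled out, no further estimate or construction is needed — the corollary is a direct specialization.
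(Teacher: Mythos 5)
Your proposal is correct and follows essentially the same route as the paper, which obtains the corollary precisely by applying Theorem~\ref{thm:smoothsummary} to the commutative von Neumann algebra $\M=L_\infty[0,\alpha)$ with the integral trace and reading off the conditions through the identifications of Section~\ref{sec:symmfun} (in particular $s(x^*)=\one$ becoming $\supp(x)=[0,\alpha)$ a.e.). Your extra care with the K\"othe-dual identification is harmless but not strictly needed, since condition (i) of Theorem~\ref{thm:smoothsummary} is already phrased in terms of the integral functional $F(h)=\int hf$ supporting $\mu(x)$ in $E$.
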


\begin{problem}
Find relations between  smooth points of the unit ball of $E\Mtau$ or $C_E$, and the unit ball of $E$,  without assumption that $E$ is order continuous. Consequently characterize smoothness of $E\Mtau$ and $C_E$ for any symmetric function or sequence space.

\end{problem}

\section{Strong smoothness}
 Given a normed space $(X,\|\cdot\|)$, let $x\in S_X$ be a smooth point of $B_X$ and $F$ be its supporting functional. If for any sequence $\{F_n\}\subset B_{X^{*}}$ the condition $F_n(x)\to 1$ implies $\|F_n-F\|_{X^{*}}\to 0$ as $n\to\infty$ then $x$ is called a {\it strongly smooth point} of $B_{X}$, and we say that $F$ \emph{strongly supports} $x$. A  normed space $X$ is said to be {\it Fr\'{e}chet smooth} if every $x$ from the unit sphere is a strongly smooth point. For these definitions and their applications we refer to \cite{Diestel, Godefroy}.

Recall  that $x\in S_X$ is a \emph{strongly extreme point} of $B_X$ whenever $\|x\pm x_n\|\to 1$, $\{x_n\}\subset X$, implies that $\|x_n\|\to 0$ \cite{Lin}.
It is easy to observe that the functional $F\in S_{X^*}$ which strongly supports  $x\in S_X$, is a strongly extreme point of $B_{X^*}$. Indeed, let $\|F\pm F_n\|_{X^*}\to 1$, for the sequence $\{F_n\}\subset X^*$. 
 By the inequality $ |1\pm F_n(x)|=|(F\pm  F_n)(x)| \leqslant \|F\pm F_n\|_{X^*}\|x\|_X$, it follows that $\overline{\lim}_n |1\pm F_n(x)|\le 1$, and so $\lim_n|1 \pm F_n(x)|=1$. Therefore $\lim_n F_n(x) =0$ and $\lim_n(F -F_n)(x) = 1$. By the assumption that $F$ strongly supports $x$, $\|F_n\|_{X^*} = \|F - (F- F_n)\|_{X^*}\to 0$, showing that $F$ is strongly extreme point of $B_{X^*}$.

Strongly smooth points  in the context of the spaces $E\Mtau$ or $C_E$ have been considered only in \cite{CKK2012}. The following results were obtained.

 \begin{proposition}\cite[Proposition 3.2]{CKK2012}
\label{prop:strsmoothoc}
Let $E$ be order continuous.
If $x\in S_{\nonsp}$ is strongly smooth and the operator $y\in S_{E^{\times}\Mtau}$ is such that the functional $\Phi_y(z)=\tau(zy), z\in \nonsp$, strongly supports $x$, then $y$ is an order continuous element of $E^{\times}\Mtau$.
\end{proposition}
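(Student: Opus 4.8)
The plan is to prove the contrapositive: assume $y$ is \emph{not} order continuous in $E^\times\Mtau$ and produce a sequence $\{F_n\}\subset B_{(\nonsp)^*}$ with $F_n(x)\to 1$ but $\|F_n-\Phi_y\|\not\to 0$, contradicting that $\Phi_y$ strongly supports $x$. Since $E$ is order continuous, by the identification discussed before Lemma \ref{lm:traceprop} every functional on $\nonsp$ is of the form $\Phi_w(z)=\tau(zw)$ for a unique $w\in E^\times\Mtau$ with $\|\Phi_w\|=\|w\|_{E^\times\Mtau}$; thus it suffices to build operators $w_n\in B_{E^\times\Mtau}$ with $\tau(xw_n)\to 1$ and $\|w_n-y\|_{E^\times\Mtau}\not\to 0$.

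First I would record the structure of $\Phi_y$ supporting $x$. Because $\Phi_y(x)=\tau(xy)=1=\|x\|_{\nonsp}=\|y\|_{E^\times\Mtau}$, the standard noncommutative H\"older/duality analysis (as in \cite{DDP4}; the inequalities of Lemma \ref{lm:traceprop}(ii) and (iv) are the relevant tools) forces $xy\ge 0$, $\tau(xy)=\int_0^\infty \mu(x)\mu(y)$, and in fact that $\mu(x)$ and $\mu(y)$ are ``aligned'' — more precisely, writing polar decompositions, $y$ is, up to the partial isometry coming from $x$, a nonnegative multiple carried on the support of $x$, and the pair $(\mu(x),\mu(y))$ is a norming pair for the K\"othe duality $\langle E,E^\times\rangle$. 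I would isolate exactly the relation I need: there is a projection-valued ``layer cake'' decomposition of $|y|$ so that, for spectral projections $q_n = e^{|y|}(1/n,\infty)$ of $|y|$, the truncations $y_n := y q_n$ (appropriately interpreted via the polar decomposition of $y$) still satisfy $\tau(x y_n)\to \tau(xy)=1$, using order continuity of $E$ to pass to the limit inside the norm/trace.

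Next comes the heart of the argument. Since $y$ is \emph{not} order continuous in $E^\times\Mtau$, there is $\epsilon_0>0$ and a sequence $0\le z_n\le |y|$ with $z_n\downarrow 0$ in $S\Mtau$ yet $\|z_n\|_{E^\times\Mtau}\ge\epsilon_0$ for all $n$. Concretely one can take $z_n$ to be a ``tail'' of $|y|$: either a tail in the spectral decomposition near $\mu(\infty,y)>0$, or, if $\M$ has a genuinely non-$\sigma$-finite/atomic part contributing, a tail supported on projections escaping to infinity; in either case $\mu(z_n)\downarrow$ to something of norm $\ge\epsilon_0$ in $E^\times$ while $z_n\to 0$ in measure. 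The point is that $|y| - z_n$ is a genuine norm approximation issue: $\|\,|y|-(|y|-z_n)\,\|_{E^\times\Mtau}=\|z_n\|_{E^\times\Mtau}\ge\epsilon_0$. I would then set $w_n$ to be the operator obtained from $y$ by replacing $|y|$ with $|y|-z_n$ inside the polar decomposition (so $w_n = v(|y|-z_n)$ where $y=v|y|$), after normalizing: $\tilde w_n = w_n/\|w_n\|_{E^\times\Mtau}$. Because $z_n\downarrow 0$ we get $\|w_n\|_{E^\times\Mtau}\uparrow\|y\|_{E^\times\Mtau}=1$ by order continuity of $E^\times$... but here is the subtlety — $E^\times$ need \emph{not} be order continuous even though $E$ is. So instead I would argue at the level of the trace pairing directly: $\tau(x w_n) = \tau(xy) - \tau(x v z_n)$, and since $0\le v^* x v$ (or rather the relevant positive operator) times $z_n$ tends to $0$ in a way controlled by $\mu(x)\in E$ and $z_n\to 0$ in measure with $\mu(z_n)$ bounded in $E^\times$, order continuity of $E$ gives $\tau(x v z_n)\to 0$, hence $\tau(x w_n)\to 1$. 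Thus, after normalizing $F_n=\Phi_{\tilde w_n}$, we get $F_n(x)\to 1$ with $\{F_n\}\subset B_{(\nonsp)^*}$, while $\|F_n-\Phi_y\|_{(\nonsp)^*} = \|\tilde w_n - y\|_{E^\times\Mtau}\ge \|z_n\|_{E^\times\Mtau}-o(1)\ge \epsilon_0/2$ for large $n$. This contradicts strong smoothness of $x$, so $y$ must be order continuous.

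I expect the main obstacle to be precisely the gap between order continuity of $E$ (which is assumed) and that of $E^\times$ (which is \emph{not} assumed): the naive normalization argument $\|w_n\|\uparrow 1$ uses order continuity on the dual side. The remedy, as sketched, is to never normalize via the $E^\times$-norm monotone limit but to control everything through the bilinear trace pairing $\tau(x\,\cdot\,)$, where the integrability comes from $\mu(x)\in E$; here one uses that for $z\in E^\times\Mtau$, $\tau(|xz|)\le\int_0^\infty\mu(x)\mu(z)$ (Lemma \ref{lm:traceprop}(iv)) together with $\mu(z_n)\to 0$ in measure and dominated, so $\int_0^\infty\mu(x)\mu(z_n)\to 0$ by order continuity of $E$ applied to $\mu(x)\in E$. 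A secondary technical point is keeping the polar-decomposition bookkeeping straight so that $w_n$ genuinely lies in $B_{E^\times\Mtau}$ after normalization and so that $\tilde w_n - y$ still has $E^\times$-norm bounded below by $\epsilon_0/2$; this is where the non-$\sigma$-finite versus $\mu(\infty,y)>0$ case distinction (paralleling the two alternatives (i)/(ii) that appear throughout this survey) needs to be handled, but in both cases the construction of $z_n$ as a decreasing-to-zero tail of $|y|$ with norm bounded below is exactly what failure of order continuity provides.
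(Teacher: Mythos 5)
Your overall strategy is the right one: argue by contraposition, take witnesses $0\le z_n\le |y|$ with $z_n\downarrow 0$ and $\|z_n\|_{E^{\times}\Mtau}\ge\epsilon_0$, replace $|y|$ by $|y|-z_n$ inside the polar decomposition $y=v|y|$, and use the identification of $(\nonsp)^*$ with $E^{\times}\Mtau$ (valid since $E$ is order continuous) to contradict strong smoothness. Some of your worries are, however, unnecessary: no normalization is needed at all, because $0\le |y|-z_n\le|y|$ gives $\|v(|y|-z_n)\|_{E^{\times}\Mtau}\le\||y|\|_{E^{\times}\Mtau}=1$ outright; and since $s(z_n)\le s(|y|)=v^*v$ one has $v^*vz_n=z_n$, hence $\|vz_n\|_{E^{\times}\Mtau}=\|z_n\|_{E^{\times}\Mtau}\ge\epsilon_0$ exactly, so $\|\Phi_{w_n}-\Phi_y\|\ge\epsilon_0$ with no $o(1)$ loss. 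The opening paragraph about $xy\ge 0$ and the ``alignment'' of $\mu(x)$ and $\mu(y)$ plays no role in the argument.

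The genuine gap is in the key step $\Phi_{w_n}(x)\to 1$, i.e.\ $\tau(xvz_n)\to 0$. You justify it by asserting that $z_n\to 0$ in measure and that $\mu(z_n)\to 0$ dominated, and then applying Lemma \ref{lm:traceprop} (iv) together with order continuity of $E$. But $z_n\downarrow 0$ does \emph{not} imply $\mu(z_n)\to 0$ when $\tauone=\infty$: if, say, $z_n=p_n$ are projections with $p_n\downarrow 0$ and $\tau(p_n)=\infty$ (commutatively $z_n=\chi_{[n,\infty)}$), then $\mu(z_n)=\chi_{[0,\infty)}$ for every $n$, so $z_n$ does not converge to $0$ in measure, $\mu(z_n)$ does not converge to $0$ in any sense, and $\int_0^\infty\mu(x)\mu(z_n)=\int_0^\infty\mu(x)$ is constant. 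Such witnesses are precisely the typical ones when $y$ fails order continuity ``at infinity'' (e.g.\ when $\mu(\infty,y)>0$), which are exactly the cases the proposition must cover; your own earlier sentence, that $\mu(z_n)$ decreases to something of norm $\ge\epsilon_0$, already contradicts the later claim $\mu(z_n)\to 0$, so the proposal is inconsistent at its crux, and the estimate $\tau(|xz_n|)\le\int_0^\infty\mu(x)\mu(z_n)$ is too lossy to conclude. The convergence $\tau(xvz_n)\to 0$ is nevertheless true, but it must be proved at the operator level rather than through decreasing rearrangements: for instance, write $xv$ as a linear combination of four positive operators $a_j\in \nonsp^+$, note that $0\le a_j^{1/2}z_na_j^{1/2}\downarrow 0$ is dominated by $a_j^{1/2}|y|a_j^{1/2}\in L_1\Mtau$ (its trace is at most $\int_0^\infty\mu(a_j)\mu(y)<\infty$), and use normality of the trace together with its cyclicity (Lemma \ref{lm:traceprop} (iii)) to get $\tau(a_jz_n)=\tau\bigl(a_j^{1/2}z_na_j^{1/2}\bigr)\downarrow 0$. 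With this replacement your construction closes as intended.
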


\begin{theorem}\cite[Theorem 3.3]{CKK2012}
\label{thm:5}
Let $E$ be order continuous, the trace $\tau$ on $\M$ be $\sigma$-finite and $x\in S_{E\Mtau}$.  If  $\mu(x)\in S_E$ is a strongly smooth point of $B_E$ then $x$ is a strongly smooth point of $B_{\nonsp}$.
\end{theorem}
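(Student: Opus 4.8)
## Proof Plan for Theorem~\ref{thm:5}

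\textbf{Overall strategy.} The plan is to lift the strong smoothness of $\mu(x)\in S_E$ to $x\in S_{\nonsp}$ by working with the K\"othe dual description of the dual space. Since $E$ is order continuous, $E^\times\Mtau$ is identified with $(E\Mtau)^*$, so every functional on $E\Mtau$ is of the form $\Phi_w(z)=\tau(zw)$ for some $w\in E^\times\Mtau$, and similarly on the commutative side every functional on $E$ is integration against some $f\in E^\times$. First I would record that since $\mu(x)$ is smooth, it has a unique supporting functional given by some $f\in S_{E^\times}$ with $\int_0^\infty \mu(x) f = 1$; standard theory of symmetric spaces (e.g. via Ryff-type arguments, cf. the discussion around Theorem~\ref{thm:4}) shows $f$ can be taken decreasing, i.e.\ $f=\mu(f)$, and $\mu(x)f = \mu(x)\mu(f)$ a.e. Then, building a supporting functional for $x$ itself: using the polar decomposition $x=u|x|$ and (a version of) the isomorphism results of Section~\ref{sec:isom} — concretely Corollary~\ref{cor:isom} or Proposition~\ref{isom1} applied after reducing to the support — one produces $y\in S_{E^\times\Mtau}$ with $\mu(y)=\mu(f)$ and $\tau(xy)=\int_0^\infty\mu(x)\mu(f)=1$, so $\Phi_y$ supports $x$. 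Since $\mu(x)$ is smooth, Theorem~\ref{thm:smoothsummary} (or directly Theorem~\ref{thm:6}) gives that $x$ is at least a smooth point, so $\Phi_y$ is \emph{the} unique supporting functional.

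\textbf{The quantitative step.} Now suppose $\{\Phi_{y_n}\}\subset B_{(E\Mtau)^*}$, equivalently $\{y_n\}\subset B_{E^\times\Mtau}$, satisfies $\tau(xy_n)\to 1$. I must show $\|y_n-y\|_{E^\times\Mtau}\to 0$. The key inequality is the noncommutative analogue of the Hardy–Littlewood estimate: $|\tau(xy_n)|\le \tau(|xy_n|)\le \int_0^\infty \mu(x)\mu(y_n)$ (Lemma~\ref{lm:traceprop}(iv)). Hence $\int_0^\infty \mu(x)\mu(y_n)\to 1$, while $\|\mu(y_n)\|_{E^\times}\le 1$ and $\|\mu(x)\|_E=1$. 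So $\mu(y_n)$ is an ``almost norming'' sequence for the smooth point $\mu(x)\in E$ relative to the supporting functional $\mu(f)$. By the definition of a strongly smooth point applied in $E$: from $\int_0^\infty \mu(x)\mu(y_n)\to 1$ with $\|\mu(y_n)\|_{E^\times}\le 1$, I conclude $\|\mu(y_n)-\mu(f)\|_{E^\times}\to 0$. (Here one uses that $\mu(x)$ being strongly smooth in $E$ means exactly: $g_n\in B_{E^\times}$, $\int \mu(x)g_n\to 1$ $\Rightarrow$ $\|g_n-f\|_{E^\times}\to 0$; applied to $g_n=\mu(y_n)$.) Then I need to upgrade convergence of the $\mu(y_n)$ to convergence of the $y_n$ themselves to $y$ in $E^\times\Mtau$: here I use that $y$ is an order continuous element of $E^\times\Mtau$ — which follows from Proposition~\ref{prop:strsmoothoc} once strong smoothness of $x$ is in hand, or more directly since $E$ (hence $E^\times$, hence $E^\times\Mtau$) order continuity plus $\mu(f)$ order continuous forces $y$ order continuous — combined with the fact that equality in the trace–majorization inequality $\tau(xy_n)\to\int\mu(x)\mu(y_n)$ asymptotically forces the ``phases'' of $y_n$ to align with those of $x$, i.e.\ $y_n$ and $y$ are asymptotically supported compatibly with $x=u|x|$.

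\textbf{Handling the alignment and the two cases.} The genuinely delicate point is this last alignment: having $\mu(y_n)\to\mu(f)$ in $E^\times$ does not by itself give $y_n\to y$; one needs that the approximate equality $\tau(xy_n)\approx \int_0^\infty\mu(x)\mu(y_n)$ forces $y_n$ to be (asymptotically) of the form ``the same isometry $u$ times a rearrangement-aligned positive part''. I would exploit the reduction already built into Section~\ref{sec:isom}: after passing through the $*$-isomorphism $V$ with $V\mu(x)=|x|q$ (or $=x$ in the cleanest case), the operators $V^{-1}(\cdot)$ of the relevant compressions of $y_n$ become functions $g_n\in E^\times$ with $\int \mu(x)g_n\to 1$, and in the commutative setting strong smoothness of $\mu(x)$ directly yields $g_n\to f$ in $E^\times$. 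Lifting back via the isometry $V$ gives $\|$relevant part of $y_n - y\| \to 0$. The part of $y_n$ living off the support of $x$ must go to zero in norm because it cannot contribute to $\tau(xy_n)$ yet adds to $\|\mu(y_n)\|_{E^\times}$, contradicting $\mu(y_n)\to\mu(f)$ unless it vanishes asymptotically; here order continuity of $E^\times\Mtau$ is what converts ``small in measure / disjoint from a convergent piece'' into ``small in norm''. The case distinction $\mu(\infty,f)=0$ versus $\mu(\infty,f)>0$ (with the support conditions of Lemma~\ref{lm:smooth}) parallels exactly the case analysis in the proof of Theorem~\ref{thm:smoothsummary}, and I expect this bookkeeping — rather than any single hard estimate — to be the main obstacle: one must verify in each case that the isomorphism $V$ of Section~\ref{sec:isom} is available for the operator at hand and that it intertwines the supporting functionals correctly. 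Assembling these pieces gives $\|\Phi_{y_n}-\Phi_y\|=\|y_n-y\|_{E^\times\Mtau}\to 0$, i.e.\ $x$ is a strongly smooth point of $B_{\nonsp}$.
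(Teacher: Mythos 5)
The first half of your plan is sound and is exactly how the published argument begins: from $\tau(xy_n)\to 1$, Lemma~\ref{lm:traceprop}(iv) gives $\int_0^\infty\mu(x)\mu(y_n)\to 1$, and since the functionals induced by $\mu(y_n)$ lie in $B_{E^{*}}=B_{E^\times}$ ($E$ order continuous), strong smoothness of $\mu(x)$ yields $\|\mu(y_n)-f\|_{E^{\times}}\to 0$ with $f=\mu(f)=\mu(y)$. (Your detour through smoothness of $x$ is unnecessary — uniqueness of the supporting functional is automatic once the strong property is proved — and the case $\mu(\infty,f)>0$ never occurs here, because strong smoothness of $\mu(x)$ forces its supporting function $f$ to be order continuous in $E^{\times}$, hence $\mu(\infty,f)=0$; this is the commutative analogue of Proposition~\ref{prop:strsmoothoc}.)

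The genuine gap is the passage from $\|\mu(y_n)-\mu(y)\|_{E^{\times}}\to 0$ to $\|y_n-y\|_{E^{\times}\Mtau}\to 0$, which is precisely where your ``alignment'' paragraph stands in for a real argument. The proof in \cite{CKK2012} (mirrored in the $C_E$ proof of Theorem~\ref{thm:strsmoothce} above) handles this by applying the same strong-smoothness argument a second time to the midpoints $(y+y_n)/2$, whose functionals also norm $x$, obtaining $\|\mu((y+y_n)/2)-\mu(y)\|_{E^{\times}}\to 0$; these two convergences feed into \cite[Lemma 1.3]{czer-kam2010} to give $y_n\to y$ in measure, and then order continuity of $y$ (from $\mu(y)=f$ order continuous via Proposition~\ref{prop:oc}) together with \cite[Proposition 1.5]{czer-kam2010} converts measure convergence plus convergence of the singular value functions into norm convergence. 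Your substitute does not close this: compressing $y_n$ through the commutative subalgebra generated by $|x|$ (a conditional expectation) only controls a ``diagonal'' part of $y_n$, and convergence of that part says nothing about the remaining part without some rotundity of $E^{\times}\Mtau$ that is not assumed; likewise the claim that the piece of $y_n$ off the support of $x$ ``adds to $\|\mu(y_n)\|_{E^{\times}}$'' fails in general, since symmetric norms need not be strictly monotone, so a nonzero disjoint piece need not be visible in the norm at all. Two further flaws: your source for order continuity of $y$ is either circular (Proposition~\ref{prop:strsmoothoc} presupposes strong smoothness of $x$, the very conclusion) or rests on the false implication that order continuity of $E$ passes to $E^{\times}$ — the correct source is strong smoothness of $\mu(x)$ itself, as above; and the isomorphism machinery of Section~\ref{sec:isom} you invoke to build and align $y$ requires $\M$ non-atomic, which Theorem~\ref{thm:5} does not assume (only $\sigma$-finiteness of $\tau$), whereas the construction of the supporting functional in \cite[Theorem 2.4]{CKK2012} avoids that hypothesis.
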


\begin{theorem}\cite[Theorem 3.7]{CKK2012}
\label{thm:7}
Let $E$ be  order continuous and $\M$ be non-atomic.  If $x\in S_{\nonsp}$ is a strongly smooth point of $B_{\nonsp}$ then $\mu(x)$ is a strongly smooth point of $B_E$.
\end{theorem}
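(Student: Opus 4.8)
The plan is to prove Theorem \ref{thm:7} by a contrapositive-and-lifting argument, transferring a ``bad'' perturbation of $\mu(x)$ in $E$ up to a bad perturbation of $x$ in $\nonsp$ via the $*$-isomorphism machinery of Section \ref{sec:isom}. Since $E$ is order continuous we have $E=E_0$ and $\nonsp^* = E^\times\Mtau$, so a strongly smooth point $x$ of $B_{\nonsp}$ is supported by a unique $\Phi_y(\cdot)=\tau(\cdot\, y)$ for some $y\in S_{E^\times\Mtau}$, and by Proposition \ref{prop:strsmoothoc} this $y$ is order continuous in $E^\times\Mtau$; moreover by Theorem \ref{thm:4} we already know $\mu(x)$ is a smooth point of $B_E$, supported by some $f\in S_{E^\times}$ via $F(h)=\int_0^\infty hf$. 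The remaining task is to upgrade ``smooth'' to ``strongly smooth'' for $\mu(x)$, i.e.\ to show that if $\{f_n\}\subset B_{E^\times}$ with $\int_0^\infty \mu(x)f_n\to 1$, then $\|f_n-f\|_{E^\times}\to 0$.

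First I would set up the lifting. Using Corollary \ref{cor:isom} (or the non-atomic extension of Section \ref{non-atom} to reduce to the case where the relevant isomorphism exists), since $\M$ is non-atomic and $x$ strongly smooth forces $|x|\ge\mu(\infty,x)s(x)$ and the alternative $\mu(\infty,x)=0$ or $n(x)\M n(x^*)=0$ (these structural facts should be extracted first, in analogy with Lemma \ref{lm:noncom2}; strong smoothness implies smoothness, so Lemma \ref{lm:smooth} applies to the supporting $y$, and from there one reads off the support conditions on $x$), we obtain a projection $q$, a non-atomic commutative $\mathcal N\subset q\M q$ and a unital $*$-isomorphism $V\colon S([0,\tauone),m)\to S(\mathcal N,\tau)$ with $V\mu(x)=|x|q$ and $\mu(Vg)=\mu(g)$. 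The key point is that $V$ is an isometric embedding $E\hookrightarrow\nonsp$, and dually induces a norm-preserving restriction of functionals: given $f_n\in B_{E^\times}$ one forms the operator functional $\Phi_{V f_n \cdot(\text{polar factor})}$ on $\nonsp$, whose value at $x$ equals $\int_0^\infty\mu(x)f_n$ up to the polar decomposition bookkeeping (exactly as in the proof of Theorem \ref{thm:noncom4}, Cases (1)--(3), where $u^*uV(f_i)=V(f_i)$). So each $f_n$ lifts to some $z_n\in B_{E^\times\Mtau}$ with $\Phi_{z_n}(x)\to 1$.

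Then I would invoke strong smoothness of $x$: $\|z_n-y\|_{E^\times\Mtau}\to 0$. Composing with the fact that $V$ (hence its dual restriction) is isometric and that on the commutative corner $\mathcal N$ the norm $\|\cdot\|_{E^\times\Mtau}$ restricted to $V(E^\times)$ agrees with $\|\cdot\|_{E^\times}$, one concludes $\|f_n-f\|_{E^\times}\to 0$, which is precisely strong smoothness of $\mu(x)$ in $B_E$. Care is needed to identify the limit: $y$ restricted/pulled back through $V$ must coincide with $f$, which follows from uniqueness of the supporting functional (smoothness of $\mu(x)$, already known) together with $\int_0^\infty\mu(x)(V^{-1}\text{-pullback of }y)=1$.

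The main obstacle, and where most of the work sits, is the bookkeeping around the polar decomposition and the projection $q$: the correspondence $f_n\mapsto z_n$ is not simply $f_n\mapsto V(f_n)$ because $x$ need not equal $V\mu(x)$ on the nose (one has $x=u|x|$, $V\mu(x)=|x|q$, and $q\le s(x)$), so one must carefully check that the functional $z\mapsto\tau(z\,u V(f_n))$ has norm $\le 1$, restricts correctly, and that $\tau(x\,uV(f_n))=\int_0^\infty\mu(x)f_n$; in Case (iii) with $\tau(p)=\infty$ there is the extra summand $\mu(\infty,x)V\chi_{[\tau(r),\infty)}$ from Corollary \ref{cor:isom1} to absorb, using Lemma \ref{lm:singfun}(8) exactly as in Theorem \ref{thm:noncom4}. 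A secondary technical point is ensuring the liftings $z_n$ are genuinely in $B_{E^\times\Mtau}$ and that the chain of isometries is tight enough that no constant is lost; this is routine given $\mu(Vg)=\mu(g)$ but must be stated. Once these identifications are in place, the strong-smoothness transfer is immediate from the definition.
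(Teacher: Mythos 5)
Your overall plan is the right one and is essentially the strategy of \cite{CKK2012} (the paper does not reproduce the proof of Theorem \ref{thm:7}, but the same scheme is visible in its proof of the $C_E$ analogue, Theorem \ref{thm:strsmoothce}): lift the perturbed functionals through a trace-preserving $*$-isomorphism, evaluate at $x$, invoke strong smoothness of $x$, and pull the norm convergence back isometrically, identifying the limit by uniqueness of the supporting functional. However, as written the proposal has a genuine gap at its central step. First, the structural claim you use to enter Corollary \ref{cor:isom} is unsupported: strong smoothness of $x$ does not force $\abs{x}\geq\mu(\infty,x)s(x)$. Lemma \ref{lm:smooth} constrains the supporting operator $y$ (not $\abs{x}$), and Theorem \ref{thm:smoothsummary} shows that smooth points may satisfy only $s(x^*)=\one$ with no lower bound of $\abs{x}$ by $\mu(\infty,x)s(x)$; there is no analogue of Lemma \ref{lm:noncom2} available here. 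So you are forced into Corollary \ref{cor:isom1}, where $V\mu(x)=\abs{x}r+\mu(\infty,x)V\chi_{[\tau(r),\infty)}$ (and note the extra summand occurs when $\tau(r)<\infty$, not in the case $\tau(r)=\infty$ as you state).

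Second, and more seriously, the two identities you treat as bookkeeping are exactly where the proof can fail, and the analogy with Theorem \ref{thm:noncom4} does not cover them: there the transfer goes in the primal direction (a decomposition of $\mu(x)$ is pushed forward to one of $\abs{x}q$ and then corrected to $x$), whereas you need a dual-side lift. With the natural candidate $z_n=V(f_n)u^*$ one gets, by cyclicity, $\tau(xz_n)=\tau(\abs{x}V(f_n))$; when $\mu(\infty,x)>0$ and $\tau(r)<\infty$ (so $q=\one$), this equals $\int_0^{\tau(r)}\mu(x)f_n+\tau\bigl(\abs{x}(\one-r)V(f_n)\bigr)$, which in general differs from $\int_0^\infty\mu(x)f_n=\int_0^{\tau(r)}\mu(x)f_n+\mu(\infty,x)\int_{\tau(r)}^\infty f_n$, since $V(f_n)$ need not commute with $\abs{x}(\one-r)$ nor be supported under $r$. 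Likewise, recovering $\|f_n-f\|_{E^{\times}}$ from $\|z_n-y\|_{E^{\times}\Mtau}$ is not automatic: $z_nu=V(f_n)s(x)$ need not equal $V(f_n)$ when $q=\one$, so the isometry chain is not "tight" as claimed. The actual argument avoids these traps by first reducing to $x\geq 0$ (the reduction lemma invoked at the start of the proof of Theorem \ref{thm:strsmoothce}) and by lifting the positive elements $\abs{f_n}$ and $(\abs{f_n}+f_n)/2$, so that the values of the lifted functionals can be computed as $L_1$-norms inside the commutative corner and the limit can be identified using positivity together with Proposition \ref{prop:strsmoothoc} and Theorem \ref{thm:4}; supplying these reductions (or a substitute for them) is the missing content of your outline.
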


Let us summarize the above results.

\begin{theorem}\cite[Theorem 3.8]{CKK2012} \label{th:strsmooth}
Let $E$ be  order continuous, $\M$ be non-atomic and the trace $\tau$  be $\sigma$-finite. Then $x\in S_{\nonsp}$ is a strongly smooth point of $B_{\nonsp}$ if and only if $\mu(x)$ is a strongly smooth point of $B_E$.
\end{theorem}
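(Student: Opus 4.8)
The plan is to prove Theorem \ref{th:strsmooth} by combining Theorems \ref{thm:5} and \ref{thm:7}, which together already bracket the statement. Indeed, under the hypotheses that $E$ is order continuous, $\M$ is non-atomic and $\tau$ is $\sigma$-finite, Theorem \ref{thm:5} gives the implication ``$\mu(x)$ strongly smooth in $B_E$ $\Rightarrow$ $x$ strongly smooth in $B_{\nonsp}$'' (this direction needs only order continuity of $E$ and $\sigma$-finiteness of $\tau$, both of which we are assuming), and Theorem \ref{thm:7} gives the converse implication ``$x$ strongly smooth in $B_{\nonsp}$ $\Rightarrow$ $\mu(x)$ strongly smooth in $B_E$'' (this direction needs order continuity of $E$ and non-atomicity of $\M$). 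Since the hypotheses of Theorem \ref{th:strsmooth} include all of these assumptions, both implications hold simultaneously, and the stated equivalence follows immediately.

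Concretely, I would first record that $x\in S_{\nonsp}$, so $\mu(x)\in S_E$ because $\|\mu(x)\|_E=\normscomm{x}=1$; this is needed so that $\mu(x)$ is a legitimate candidate for a strongly smooth point of $B_E$. Then I would split into the two directions. For the forward direction, assume $x$ is a strongly smooth point of $B_{\nonsp}$; since $E$ is order continuous and $\M$ is non-atomic, Theorem \ref{thm:7} applies verbatim and yields that $\mu(x)$ is a strongly smooth point of $B_E$. For the reverse direction, assume $\mu(x)$ is a strongly smooth point of $B_E$; since $E$ is order continuous and $\tau$ is $\sigma$-finite, Theorem \ref{thm:5} applies verbatim and yields that $x$ is a strongly smooth point of $B_{\nonsp}$.

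There is essentially no obstacle here: the theorem is a packaging of the two preceding results, and the only thing to check is that the union of their hypotheses is exactly the hypothesis set stated in Theorem \ref{th:strsmooth} — which it is. One small point worth a sentence in the write-up is the observation (already used implicitly in the surrounding text, cf.\ the smoothness section and Lemma \ref{lm:smoothadjoint}) that order continuity of $E$ guarantees $\nonsp^* = E^\times\Mtau$, so that the notions of supporting and strongly supporting functional used in Theorems \ref{thm:5} and \ref{thm:7} are the same objects; but since both cited theorems are stated under the standing assumption of order continuity of $E$, this is automatic and requires no extra argument. Hence the proof is simply: ``Combine Theorem \ref{thm:5} (for the ``if'' direction) with Theorem \ref{thm:7} (for the ``only if'' direction).''
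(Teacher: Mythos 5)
Your proposal is correct and coincides with the paper's own treatment: Theorem \ref{th:strsmooth} is introduced there precisely as a summary of Theorem \ref{thm:5} (giving the ``if'' direction under order continuity of $E$ and $\sigma$-finiteness of $\tau$) and Theorem \ref{thm:7} (giving the ``only if'' direction under order continuity of $E$ and non-atomicity of $\M$). Nothing further is needed beyond the hypothesis bookkeeping you carried out.
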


Considering the commutative von Neumann algebra $\M=L_{\infty}[0,\alpha)$, $0<\alpha\leqslant \infty$, we obtain the following consequence of the previous theorem.

\begin{corollary}\cite[Corollary 3.9]{CKK2012}
Let $E$ be an order continuous symmetric function space. Then the function $f$ is a strongly smooth point of $B_E$ if and only if its decreasing rearrangement $\mu(f)$ is a strongly smooth point of $B_E$.
\end{corollary}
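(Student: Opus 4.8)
The plan is to obtain this corollary directly from Theorem~\ref{th:strsmooth} by specializing the von Neumann algebra to the commutative one built in Section~\ref{sec:symmfun}. First I would take $\M=\mathcal{N}=L_\infty[0,\alpha)$ acting on $L_2[0,\alpha)$ by pointwise multiplication, with $\tau=\eta$ the integral trace, and check that the pair $(\mathcal{N},\eta)$ satisfies the hypotheses of Theorem~\ref{th:strsmooth}: that $\mathcal{N}$ is non-atomic and $\eta$ is $\sigma$-finite. Non-atomicity is immediate, since by the description $P(\mathcal{N})=\{N_{\chi_A}\}$ a minimal projection would correspond to a measurable set $A$ of positive Lebesgue measure with no proper subset of positive measure, which does not exist. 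For $\sigma$-finiteness, the projections $p_n=N_{\chi_{[0,\min(n,\alpha))}}$ increase to $\one=N_{\chi_{[0,\alpha)}}$ and satisfy $\eta(p_n)=\min(n,\alpha)<\infty$. Order continuity of $E$ is assumed, and $\tauone=\eta(\one)=\alpha$, so $E$ is indeed a symmetric function space on $[0,\tauone)$ as required.

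Next I would invoke Fact~5: the map $f\mapsto N_f$ is an isometric $*$-isomorphism of $E$ onto $E(\mathcal{N},\eta)$ with $\mu(N_f)=\mu(f)$ for every $f$. Since strong smoothness of a point of the unit sphere is preserved under surjective linear isometries (and transported back by their inverses), $f\in S_E$ is a strongly smooth point of $B_E$ if and only if $N_f\in S_{E(\mathcal{N},\eta)}$ is a strongly smooth point of $B_{E(\mathcal{N},\eta)}$. Applying Theorem~\ref{th:strsmooth} to the operator $x=N_f$ in $E(\mathcal{N},\eta)$, the latter holds if and only if $\mu(N_f)$ is a strongly smooth point of $B_E$; and $\mu(N_f)=\mu(f)$. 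Chaining these equivalences yields exactly the assertion that $f$ is a strongly smooth point of $B_E$ if and only if $\mu(f)$ is.

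I do not expect a serious obstacle; the substance is already contained in Theorem~\ref{th:strsmooth} and in the commutative identification of Section~\ref{sec:symmfun}, and this is precisely the reduction indicated in the remark preceding the statement. The only point deserving explicit care is the routine lemma that strong smoothness transfers along a surjective linear isometry $T\colon X\to Y$: if $F$ strongly supports $x$, then $F\circ T^{-1}$ strongly supports $Tx$, because the adjoint map $X^{*}\to Y^{*}$ is again a surjective linear isometry, so $\|F_n\circ T^{-1}\|_{Y^{*}}=\|F_n\|_{X^{*}}$ and the defining implication on supporting functionals is preserved. With that recorded, the corollary follows by the bookkeeping above.
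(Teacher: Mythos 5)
Your proposal is correct and follows exactly the paper's route: the corollary is obtained by applying Theorem~\ref{th:strsmooth} to the commutative von Neumann algebra $\mathcal{N}=L_\infty[0,\alpha)$ with the integral trace, using the identification of Section~\ref{sec:symmfun} under which $f\mapsto N_f$ is a surjective isometry with $\mu(N_f)=\mu(f)$. Your added verifications (non-atomicity, $\sigma$-finiteness, transfer of strong smoothness along surjective isometries) are just the routine details the paper leaves implicit.
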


The analogous result on strongly smooth points in $C_E$ can be proved using similar techniques as in the case of $\nonsp$. The proof of the next theorem is a good overview of various strategies employed in \cite{CKK2012}. By Lemma \ref{lm:traceprop} applied for $\M=B(H)$ with the canonical trace $\tr$ we have that $|\tr(x)|\leq \tr(|x|)$ for $x\in C_1$ and $\tr(|xy|)\leq \sum_{n=1}^\infty s_n(x)s_n(y)$ for $x,y\in B(H)$.

\begin{theorem}\cite[Theorem 3.11]{CKK2012}
\label{thm:strsmoothce}
Let $E$ be an order continuous symmetric sequence space  and let $x\in S_{C_E}$. Then the sequence of singular numbers $S(x)=\{s_n(x)\}$ is a strongly smooth point of $B_E$ if and only if $x$ is a strongly smooth point of $B_{C_E}$.
\end{theorem}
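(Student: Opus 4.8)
The plan is to prove Theorem~\ref{thm:strsmoothce} by transporting the problem from $C_E$ back to the symmetric sequence space $E$ via the Arazy-type isometry of Proposition~\ref{prop:isomarazy}, exactly as was done for $\nonsp$ in Theorems~\ref{thm:5} and~\ref{thm:7}, but now exploiting the extra rigidity that for $C_E$ the isometry actually carries $S(x)$ to $x$. Fix $x\in S_{C_E}$ and let $x=\sum_{n}s_n(x)\langle\cdot,e_n\rangle f_n$ be its Schmidt representation. By Proposition~\ref{prop:isomarazy} there is a linear isometry $V:E\to C_E$ with $V(S(x))=x$, and a contractive projection $P:C_E\to V(E)$ with $P(C_E)=V(E)$. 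Since $E$ is order continuous (hence separable by Proposition~\ref{prop:infinity}), $(C_E)^*$ is identified with $C_{E^\times}$ and functionals act by $w\mapsto\tr(wy)$ with $\|{\cdot}\|$-duality $\|y\|_{(C_E)^\times}=\|y\|_{C_{E^\times}}$; the analogous statement holds for $E$ and $E^\times$. These identifications are what let us pass supporting functionals back and forth.

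First I would prove the easy direction: if $S(x)$ is strongly smooth in $B_E$ then $x$ is strongly smooth in $B_{C_E}$. Let $G\in S_{E^\times}$ be the functional strongly supporting $S(x)$, given by $G(\lambda)=\sum_n\lambda_n g_n$ for some $g=\{g_n\}\in S_{E^\times}$. Transport it through $V$: define $\Phi(w)=\tr\bigl(w\,Z\bigr)$ where $Z=\sum_n g_n\langle\cdot,e_n\rangle f_n\in C_{E^\times}$ (with $\|Z\|_{C_{E^\times}}=\|g\|_{E^\times}=1$), so that $\Phi\bigl(V(\lambda)\bigr)=G(\lambda)$ for all $\lambda\in E$ and in particular $\Phi(x)=\Phi(V(S(x)))=G(S(x))=1$. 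Now suppose $\Phi_n(x)\to 1$ with $\Phi_n\in B_{(C_E)^*}$, $\Phi_n(w)=\tr(wY_n)$, $\|Y_n\|_{C_{E^\times}}\le 1$. Pulling back along $V$, the functionals $\lambda\mapsto\Phi_n(V(\lambda))$ lie in $B_{E^*}$ (since $V$ is isometric) and evaluate to $\Phi_n(x)\to 1$ at $S(x)$; by strong smoothness of $S(x)$ these pulled-back functionals converge to $G$ in $E^*$-norm. Composing with the contractive projection $P$ (whose adjoint sends $\Phi_n$ to a functional supported on $V(E)$ of norm $\le\|\Phi_n\|$) and using $PV=V$ on $E$, one gets $\|\Phi_n-\Phi\|_{(C_E)^*}\to 0$. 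The bookkeeping here is routine once one writes the adjoint of $P$ explicitly via the coefficient formula $Py=\sum_n\langle ye_n,f_n\rangle\langle\cdot,e_n\rangle f_n$.

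For the converse, assume $x$ is strongly smooth in $B_{C_E}$ with strongly supporting functional $\Phi_Y(w)=\tr(wY)$, $Y\in S_{C_{E^\times}}$. By Proposition~\ref{prop:strsmoothoc} (the $C_E$-analogue, which follows from Proposition~\ref{prop:occe} exactly as in \cite{CKK2012}), $Y$ is an order continuous element of $C_{E^\times}$, i.e. $S(Y)\in(E^\times)_a=E^\times$ since $E^\times\subset c_0$. The key structural step is to show $\tr(xY)=1$ forces $Y$ and $x$ to have compatible Schmidt systems: since $1=\tr(xY)\le\tr(|xY|)\le\sum_n s_n(x)s_n(Y)\le\|S(x)\|_E\|S(Y)\|_{E^\times}=1$, equality propagates through Lemma~\ref{lm:traceprop}(iv) (applied with $\M=B(H)$, $\tr$) and forces $Y=\sum_n s_n(Y)\langle\cdot,f_n\rangle e_n$ with the same orthonormal systems $\{e_n\},\{f_n\}$ as $x$ and with $\{s_n(Y)\}$ "aligned" with $\{s_n(x)\}$ in the norming pair $(E,E^\times)$. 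Consequently $\Phi_Y=\Phi\circ P$ on $C_E$ where $\Phi$ is the functional coming from the coefficient sequence of $Y$, and the restriction of $\Phi_Y$ to $V(E)$ corresponds, under $V$, to a functional $G\in S_{E^\times}$ with $G(S(x))=1$. Now take $\{\lambda^{(k)}\}\subset E$ with $G(\lambda^{(k)})\to 1$, $\|\lambda^{(k)}\|_{E^\times}$... — rather, take functionals $G_k\in B_{E^*}$ with $G_k(S(x))\to 1$; push forward to $\Phi_k:=G_k\circ V^{-1}\circ P$ on $C_E$, which satisfy $\|\Phi_k\|\le 1$ and $\Phi_k(x)=G_k(V^{-1}(Px))=G_k(S(x))\to 1$; strong smoothness of $x$ gives $\|\Phi_k-\Phi_Y\|_{(C_E)^*}\to 0$, and restricting to $V(E)$ (isometric) yields $\|G_k-G\|_{E^*}\to 0$, i.e. $S(x)$ is strongly smooth.

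The main obstacle is the structural lemma in the converse: extracting from the single scalar equality $\tr(xY)=1$ the conclusion that $Y$'s Schmidt decomposition uses the same orthonormal bases as $x$'s and that the pair $(S(x),S(Y))$ is a norming pair for $(E,E^\times)$ in the exact sense needed to identify $\Phi_Y$ with a functional factoring through $V(E)$. This is where one must be careful with the possible non-uniqueness of Schmidt systems when singular values are repeated, and where the order continuity of $Y$ (Proposition~\ref{prop:strsmoothoc}, $C_E$-version) is used to rule out a "tail" of $Y$ living on $\ker x$. Everything else — the isometry, the projection, the duality identifications, and the soft functional-analytic manipulations with $\|\cdot\|_{X^*}$ — is a direct transcription of the $\nonsp$ arguments in \cite{CKK2012} and of Proposition~\ref{prop:isomarazy}.
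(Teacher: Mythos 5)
Your proposal has the two directions' difficulty backwards, and the direction you call ``easy'' --- $S(x)$ strongly smooth $\Rightarrow$ $x$ strongly smooth --- contains a genuine gap. Knowing that the pulled-back functionals $\Phi_n\circ V$ converge to $G$ in $E^*$ only controls $\Phi_n$ on the subspace $V(E)$; composing with the contractive projection $P$ gives $\|\Phi_n\circ P-\Phi\|_{(C_E)^*}\to 0$ (since the transported $\Phi$ does satisfy $\Phi=\Phi\circ P$), but it says nothing about $\Phi_n-\Phi_n\circ P$, so you cannot conclude $\|\Phi_n-\Phi\|_{(C_E)^*}\to 0$. This is not ``routine bookkeeping'': if such a soft pull-back/projection argument worked, it would show that any point of a $1$-complemented subspace which is strongly smooth in the subspace is strongly smooth in the ambient space, which is false (take $(1,0)$ in $\ell_\infty^2$ with the coordinate projection --- it is strongly smooth in the one-dimensional subspace but not even smooth in $\ell_\infty^2$). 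The paper's proof of this direction is necessarily harder: it first disposes of $E=\ell_1$ (neither $\ell_1$ nor $C_1$ has strongly smooth points), then for $E\neq\ell_1$ represents the functionals as $\Phi_{y_n}=\tr(\cdot\,y_n)$ with $y_n\in B_{C_{E^\times}}$, uses the submajorization $S(xy_n)\prec S(x)S(y_n)$ to get $\sum_i s_i(x)s_i(y_n)\to 1$, applies strong smoothness of $S(x)$ to both $S(y_n)$ and $S\left((y+y_n)/2\right)$ to obtain $\|S(y_n)-S(y)\|_{E^\times}\to 0$ and $\|S((y+y_n)/2)-S(y)\|_{E^\times}\to 0$, deduces from the latter that $y_n\to y$ in measure (here $E^\times\neq\ell_\infty$ is needed), shows $y$ is an order continuous element of $C_{E^\times}$, and only then invokes the convergence criterion (measure convergence plus convergence of singular-value norms plus order continuity) to get $\|y_n-y\|_{C_{E^\times}}\to 0$. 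None of this machinery appears in your sketch, and without it the direction is unproved.

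Your other direction ($x$ strongly smooth $\Rightarrow$ $S(x)$ strongly smooth) is essentially correct, and in fact cleaner than you make it: the push-forward $\Phi_k:=G_k\circ V^{-1}\circ P$ with $\|\Phi_k\|\le 1$ and $\Phi_k(x)=G_k(S(x))\to 1$, followed by restriction of the limit to $V(E)$, already does the job (uniqueness of the supporting functional for $S(x)$ comes for free by applying the convergence statement to constant sequences). So the ``structural lemma'' you flag as the main obstacle --- that $\tr(xY)=1$ forces $Y$ to share the Schmidt systems of $x$ --- is not needed, which is fortunate since as stated it is delicate (and fails as literally formulated when singular values repeat or vanish). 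Note this is a genuinely different route from the paper, which instead exploits positivity of the $*$-isomorphism $V$: it shows $y=V(S(y))$ by uniqueness of the supporting functional and then feeds the positive elements $V(|b_n|)$ and $V((|b_n|+b_n)/2)$ into the strong smoothness of $x$; your argument uses only $1$-complementedness and would apply verbatim to any strongly smooth point lying in a $1$-complemented isometric copy of $E$.
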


\begin{proof}
 Suppose first that $x$ is a strongly smooth point of the unit ball $B_{C_E}$. We provide the proof only for $x\geq 0$, but this can be extended to an arbitrary $x$ by \cite[Lemma 2.1]{CKK2012}. By Proposition \ref{prop:isomarazy} there exists a $*$-isomorphism $V:E\to C_E$ for which $V(S(x))=x$ and $S(V(a))=\mu(a)$ for any $a=\{a_n\}\in E$.
 
 Suppose that the functional $\Phi_y(z)=\tr(zy)$, $z\in C_E$, strongly supports $x$, where $y\in S_{C_{E^{\times}}}$.
We will show that the functional $F(a)=\sum_{i=1}^\infty a(i)s_i(y)$, $a=\{a(i)\}\in E$, strongly supports $S(x)$. 
 We have  
\[
1=\Phi_y(x)=|\tr(xy)|\le \tr(|xy|)\leq \sum_{i=1}^\infty s_i(x) s_i(y)=F(S(x))\leq \|S(x)\|_E\|S(y)\|_{E^\times}\leq  1.
\]
Hence $F(S(x))=1$. Moreover, since $*$-isomorphism $V$ preserves the order, $xV(S(y))=V(S(x)S(y))\geq 0$ and
\[
\Phi_{V(S(y))}(x)=\tau(xV(S(y)))=\|V(S(x)S(y))\|_{C_1}=\|S(x)S(y)\|_{\ell_1}=\sum_{i=1}^\infty s_i(x)s_i(y)=1.
\]
By the uniqueness of the functional $\Phi_y$ supporting $x$, it follows
that $y=V(S(y))$.  Suppose now that $G(S(x))=\sum_{i=1}^\infty s_i(x) c(i)=1$, where $G(a)=\sum_{i=1}^\infty a(i) c(i)$, $a=\{a(i)\}\in E$, for some $c=\{c(i)\}\in S_{E^{\times}}$. It is not difficult to see that $\sum_{i=1}^\infty s_i(x)|c(i)|=1$, and so also $\sum_{i=1}^\infty s_i(x)(|c(i)|+c(i))/2=1$.
 
By the previous argument applied to $xV(|c|)\geq 0$ and $xV((|c|+c)/2)\geq 0$ instead of $xV(S(y))$ we can show that $\Phi_{V(|c|)}(x)=\tau(xV(|c|))=1$ and $V(|c|)=y$, as well as  $\Phi_{V((|c|+c)/2))}(x)=\tau(xV((|c|+c)/2))=1$ and $V((|c|+c)/2)=y$. Hence $V(|c|)=V(S(y))=V((|c|+c)/2)$ and since $V$ is one-to-one $|c|=S(y)=(|c|+c)/2$. Consequently $c=|c|=S(y)$, proving that the functional $F$ is a unique functional supporting $S(x)$.

Suppose that $F_n(x)=\sum_{i=1}^\infty s_i(x)b_{n}(i)\to 1$, where $b_n=\{b_{n}(i)\}_{i=1}^\infty$ and $\{b_n\}_{n=1}^\infty\subset B_{E^{\times}}$. The goal is to show that then $\|F_n-F\|_{E^*}=\|b_{n}-S(y)\|_{E^{\times}}\to 0$. If is clear that $\sum_{i=1}^\infty s_i(x)\abs{b_{n}(i)}\to1$.
Since $V$ as a $*$-homomorphism is positive, it follows that
\[
xV(\abs{b_n})=V(S(x))V(\abs{b_n})=V(S(x)\abs{b_n})\ge 0, \ \ \ \ n\in\mathbb{N},
\] and
\begin{align*}
\Phi_{V(\abs{b_n})}(x)=\tr(xV(\abs{b_n}))=\|xV(\abs{b_n})\|_{C_1}=\|S(x)\abs{b_n}\|_{\ell_1}
=\sum_{i=1}^\infty s_i(x)\abs{b_{n}(i)} \to 1.
\end{align*}
Thus since $x$ is strongly smooth,
\begin{align*}
\|S(y)-\abs{b_n}\|_{E^{\times}}&=\|V(S(y))-V(\abs{b_n})\|_{C_{E^{\times}}}=\|y-V(\abs{b_n})\|_{C_{E^{\times}}}\\
&=\|\Phi_y-\Phi_{V(\abs{b_n})}\|_{E^*}\to 0.
\end{align*}
Now by $(\abs{b_n}+b_n)/2\ge 0$ and $\sum_{i=1}^\infty s_i(x)(\abs{b_{n}(i)}+b_{n}(i))/2\to1$, again it follows that $\|S(y)-(\abs{b_n}+b_n)/2\|_{E^{\times}}\to0$. Hence
\[
\|S(y)-b_n\|_{E^{\times}}\leqslant \|2S(y)-b_n-\abs{b_n}\|_{E^{\times}}+\|S(y)-\abs{b_n}\|_{E^{\times}}\to0,
\]
which shows that $S(x)$ is a strongly smooth point of $B_E$.

It is known and standard to check that there are no strongly smooth points in $\ell_1$. Therefore by the preceding argument,  $C_1$ has no strongly smooth points.

Suppose now that $E\neq\ell_1$, $x\in C_E$, and $S(x)$ is a strongly smooth point of $B_E$. We will show that $x$ is a strongly smooth point  of $B_{C_E}$. Let $F(a)=\sum_{i=1}^\infty a(i) b(i)$, $a=\{a(i)\}\in E$, for some $b=\{b(i)\}\in S_{E^{\times}}$, be a functional that strongly supports $S(x)$. By \cite[Theorem 2.3]{A1}  and its proof, if $
S(x)$ is a smooth point of $B_E$ then $x$ is a smooth point of $B_{C_E}$ and moreover,  the functional $\Phi_y(x)=\tr(xy)=\sum_{i=1}^\infty s_i(xy)$ supports $x$, for $y\in C_{E^{\times}}$, whose sequence $S(y)$ of singular numbers satisfies the condition $s_i(y)=b(i)$, $i\in \mathbb{N}$.

Suppose that $\Phi_{y_n}(x)\to 1$, for the sequence $\{y_n\}\subset B_{C_{E^{\times}}}$. Since by \cite[Theorem 4.2 (iii)]{Fack-Kos1986} $S(xy_n)\prec S(x)S(y_n)$  we have that $\sum_{i=1}^\infty s_i(xy_n)\leqslant \sum_{i=1}^\infty s_i(x)s_i(y_n)\leqslant 1$,  and $ \sum_{i=1}^\infty s_i(x)s_i(y_n)\to 1$ as $n\to \infty$. Applying the assumption that $S(x)$ is strongly smooth, it follows that  $\|S(y)-S(y_n)\|_{E^{\times}}\to 0$.
One can also show that $\|S(y)-S\left((y+y_n)/2\right)\|_{E^{\times}}\to 0$. By the assumption $E\ne \ell_1$ we have $E^{\times}\neq\ell_\infty$, and by \cite[Lemma 1.3]{czer-kam2010}, $y_n\stackrel\tr\to y$, that is $y_n$ converges to $y$ in measure.

Similarly as in the proof of Proposition \ref{prop:strsmoothoc}, it can be shown that if $F(a)=\sum_{i=1}^\infty a(i) s_i(y)$, $a=\{a(i)\}\in E$, is a functional that strongly supports $S(x)$, then $S(y)$ is order continuous in $E^{\times}$. By $E^{\times}\neq \ell_\infty$, the space $C_{E^{\times}}$ is well defined, and applying the analogous argument as in the proof of Proposition 2.3 in \cite{czer-kam2010}, we can show that $y$ is an order continuous element of $C_{E^{\times}}$. Finally,  \cite[Proposition 1.5]{czer-kam2010} implies  that $\|y-y_n\|_{C_{E^{\times}}}\to 0$. Consequently, $x$ is a strongly smooth point of $B_{C_E}$ and the proof is complete.
\end{proof}

\begin{example}
The space $C_1$ has no strongly smooth points (see the proof of Theorem\ref{thm:strsmoothce}).

\end{example}

\begin{problem}
Remove the assumption of order continuity of $E$ in Theorems \ref{th:strsmooth} and \ref{thm:strsmoothce}.

\end{problem}

\section{Exposed and strongly exposed points}
 
Given a normed space $(X,\|\cdot\|)$, an element $x\in S_X$ is called an \emph{exposed point} of $B_X$ if there exists a functional $F\in S_{X^*}$ which supports $B_X$ exactly at $x$, i.e. $F(x)=1$ and $F(y)\neq 1$ for every $y\in B_X\setminus\{x\}$. We then say that $F$ \emph{exposes} $B_X$ at $x$.

 Let $x\in S_X$ be an exposed point of $B_X$ and suppose that the functional $F$ exposes $B_X$ at $x$. If  $F(x_n)\to1$ implies $\|x-x_n\|\to 0$ for all sequences $\{x_n\}\subset B_X$, then $x$ is called a \emph{strongly exposed point} of $B_X$  and $F$ \emph{strongly exposes} $B_X$ at $x$.  It is well known that every (strongly) exposed point of $B_X$ is (strongly) extreme \cite{Pietsch}.

Indeed, suppose $x$ is exposed point of $B_X$, and $F\in S_{X^*}$ exposes $x$. Let $x=y_1/2+y_2/2$, for some $y_1,y_2\in B_X$. In view of $|F(y_1)|\leq 1$ and $|F(y_2)|\leq 1$,
\[
2=2F(x)=F(y_1+y_2)=F(y_1)+F(y_2)\leq 2,
\]
and so $F(y_1)=F(y_2)=1$. Since $F$ exposes $x$,  $y_1=y_2=x$ and $x$ is an extreme point of $B_X$.

Let $x$ be a strongly exposed point and $F\in S_{X^*}$ be a functional strongly exposing $x$. Suppose $\|x\pm y_n\|\to 1$ where $\{y_n\}\subset B_X$. Clearly,  $\overline{\lim}_nF(x\pm y_n)\leq 1$. Moreover,
\[
\underline{\lim}_n F(x+y_n)=\underline{\lim}_n (2F(x)-F(x-y_n))=2-\overline{\lim}_nF(x-y_n)\geq 1.
\] 
Thus $1\leq \underline{\lim}_n F(x+y_n)\leq \overline{\lim}_n F(x+y_n)\leq 1$, or $\lim_n F(x+y_n)=1$. Since $F$ strongly exposes $x$, $\|y_n\|=\|x-(x+y_n)\|\to 0$ proving that $x$ is a strongly extreme point of $B_X$.

Exposed points were first defined by  S. Straszewicz in 1935 in the case of finite-dimensional spaces. The concept of strongly exposed points was introduced by J. Lindenstrauss in 1963. There is a connection between strongly exposed points and the Radon-Nikod\'ym property.  R. Phelps showed in 1974 that a Banach space $X$ has the Radon-Nikod\'ym property if every non-empty closed, bounded convex subset is contained in a closed convex hull of its strongly exposed points. In a strictly convex Banach space  all points of its unit sphere are exposed, while in a locally uniformly convex Banach space all points of its unit sphere are strongly exposed. 
 More historical details, as well as references to the facts given above can be found in \cite{Pietsch}.

Exposed and strongly exposed points in noncommutative symmetric spaces were considered first By J. Arazy in \cite{A1} in the case of unitary matrix spaces $C_E$.

\begin{theorem} \cite[Theorem 4.1]{A1}\label{th:arazy}
 Let $E$ be a separable  symmetric sequence  space and $x\in S_{C_E}$. Then $x$ is an exposed (respectively, a strongly exposed) point of $B_{C_E}$ if and only if $S(x)$ is an exposed (respectively, a strongly exposed) point of $B_E$. 
\end{theorem}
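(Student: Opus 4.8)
\textbf{Proof proposal for Theorem~\ref{th:arazy}.} The plan is to transfer the problem between $E$ and $C_E$ using the isometry $V\colon E\to C_E$ and the contractive projection $P\colon C_E\to V(E)$ from Proposition~\ref{prop:isomarazy}, together with the duality $(C_E)^*=(C_E)^\times=C_{E^\times}$ valid for separable $E\ne\ell_1$ (and handled separately for $E=\ell_1$). First I would fix $x\in S_{C_E}$ with Schmidt representation $x=\sum_n s_n(x)\langle\cdot,e_n\rangle f_n$, so that $V(S(x))=x$, and record the adjoint map: a functional $\Phi_y(z)=\tr(zy)$, $y\in C_{E^\times}$, restricted through $V$ becomes the functional $a\mapsto\tr(V(a)y)=\sum_n a_n\langle ye_n,f_n\rangle$ on $E$, i.e. it is the functional on $E$ induced by the sequence $\{\langle ye_n,f_n\rangle\}\in E^\times$. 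The key algebraic observation, exactly as in the smoothness and strong-smoothness arguments of \cite{CKK2012} reproduced above, is that when $x\ge 0$ one has $V(\lvert c\rvert)x=V(S(x)\lvert c\rvert)\ge 0$ for any $c\in E^\times$, so $\tr(xV(\lvert c\rvert))=\lVert S(x)\lvert c\rvert\rVert_{\ell_1}=\sum_n s_n(x)\lvert c_n\rvert$; this lets one pass between operator-level and sequence-level pairings cleanly. By \cite[Lemma~2.1]{CKK2012} the reduction to $x\ge 0$ is legitimate for both directions.

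For the direction ``$S(x)$ exposed $\Rightarrow x$ exposed'': let $F(a)=\sum_n a_n b_n$ with $b=\{b_n\}\in S_{E^\times}$ expose $B_E$ at $S(x)$. Using Arazy's smoothness result (Theorem~\ref{thm:Araysmoothce}) one knows $x$ is already a smooth point supported by $\Phi_y$ where $S(y)=b$; I would show this $\Phi_y$ in fact exposes $B_{C_E}$ at $x$. If $\Phi_y(z)=1$ for some $z\in B_{C_E}$, then $1=\tr(zy)\le\tr(\lvert zy\rvert)\le\sum_n s_n(z)s_n(y)=\sum_n s_n(z)b_n\le\lVert S(z)\rVert_E\lVert b\rVert_{E^\times}\le 1$, forcing $\sum_n s_n(z)b_n=1$, hence $S(z)=S(x)$ since $F$ exposes $S(x)$; then a polar-decomposition/Schmidt-uniqueness argument (the support projections and the multiplicity spaces of $z$ and $x$ must match because equality in $\tr(\lvert zy\rvert)\le\sum s_n(z)s_n(y)$ is rigid) yields $z=x$. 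For the strongly exposed case I would additionally take $\Phi_{y_n}(x)\to 1$, $\{y_n\}\subset B_{C_{E^\times}}$; from $S(xy_n)\prec S(x)S(y_n)$ (\cite[Theorem~4.2(iii)]{Fack-Kos1986}) get $\sum_n s_n(x)s_n(y_n)\to 1$, apply strong exposedness of $S(x)$ to conclude $\lVert S(y)-S(y_n)\rVert_{E^\times}\to 0$ and likewise $\lVert S(y)-S((y+y_n)/2)\rVert_{E^\times}\to 0$; combined with order continuity consequences this gives $y_n\to y$ in measure and then in $C_{E^\times}$ norm (as in the proof of Theorem~\ref{thm:strsmoothce}), and by the reverse pairing applied to $xV(S(y_n))\ge 0$ one transports $\lVert y-y_n\rVert\to 0$ back to $\lVert x-x_n\rVert\to 0$ for the corresponding operators. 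The $E=\ell_1$ case is separate: $C_1$ has only one-dimensional operators as extreme points, and one checks directly via Theorem~\ref{thm:arazyextreme} / Holub's description that the exposed and strongly exposed points coincide with the extreme points on both sides.

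For the converse ``$x$ (strongly) exposed $\Rightarrow S(x)$ (strongly) exposed'': here I would use the contractive projection $P$ with $P(C_E)=V(E)$ and $P\circ V=\mathrm{id}_E$. If $F$ exposes $B_{C_E}$ at $x=V(S(x))$, then $F\circ V$ is a norm-one functional on $E$ with $(F\circ V)(S(x))=1$; if $(F\circ V)(a)=1$ for some $a\in B_E$, then $V(a)\in B_{C_E}$ satisfies $F(V(a))=1$, so $V(a)=x=V(S(x))$, and since $V$ is injective, $a=S(x)$ — so $F\circ V$ exposes $B_E$ at $S(x)$. For strong exposedness, given $(F\circ V)(a_n)\to 1$ with $\{a_n\}\subset B_E$, apply strong exposedness of $x$ to $V(a_n)$: $F(V(a_n))\to 1$ gives $\lVert x-V(a_n)\rVert_{C_E}\to 0$, hence $\lVert S(x)-a_n\rVert_E=\lVert V(S(x)-a_n)\rVert_{C_E}\to 0$. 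This direction is the easy one because $V$ is a genuine isometric embedding onto a $1$-complemented subspace, so both ``exposed'' and ``strongly exposed'' descend to subspaces automatically; the only care needed is that the exposing functional on the subspace is the restriction of one on the whole space, which is immediate here.

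The main obstacle I anticipate is the rigidity step in the forward direction — showing that equality $\sum_n s_n(z)b_n=1$ together with $S(z)=S(x)$ actually forces $z=x$ as operators, not merely $S(z)=S(x)$. This requires analyzing the equality cases in the singular-value/trace inequalities (the chain $\tr(zy)\le\tr(\lvert zy\rvert)\le\sum s_n(z)s_n(y)$) and arguing that the eigenvectors of $\lvert z\rvert$ must be aligned with those of $x$ exactly as in the construction of the supporting functional in Theorem~\ref{thm:Araysmoothce}; Arazy's original argument in \cite{A1} presumably does this via the uniqueness clause in his smoothness proof, and I would follow that route, invoking that $x$ is a smooth point so its supporting functional — hence the whole Schmidt data — is pinned down. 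The separability hypothesis on $E$ is used precisely to have $(C_E)^*=C_{E^\times}$ and order continuity of $E$ and $C_E$ (Propositions~\ref{prop:occe}, \ref{prop:infinity}), which is what makes the measure-convergence-to-norm-convergence passage in the strongly exposed case work.
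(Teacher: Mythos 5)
Your converse direction is fine: restricting an exposing functional through the isometry $V$ of Proposition~\ref{prop:isomarazy} (with $V(S(x))=x$) does show that (strongly) exposed points descend from $B_{C_E}$ to $B_E$, and your $\ell_1$ remark matches the one made after the theorem in this survey (which itself gives no proof but defers to \cite{A1}). The forward direction, however, has a genuine gap, and the way you propose to close it would fail. Exposedness of $S(x)$ does not make $S(x)$ (or $x$) a smooth point -- in $\ell_1$ the vector $\phi_1$ is exposed by $\phi_1\in\ell_\infty$ yet has many supporting functionals $(1,\beta_2,\beta_3,\dots)$ -- so the appeal to Theorem~\ref{thm:Araysmoothce} ("one knows $x$ is already a smooth point supported by $\Phi_y$") is unfounded; and even where smoothness holds it pins down the supporting functional at $x$, not the set of points of $B_{C_E}$ at which a fixed functional attains its norm, which is what exposedness is about. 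That $\Phi_y$ supports $x$ is indeed a direct computation, but the crux -- that $\Phi_y(z)=1$ with $z\in B_{C_E}$ forces $z=x$ -- is precisely the content of Arazy's theorem and is left unproved: one needs the equality-case analysis in $1=\tr(zy)\le\tr\lvert zy\rvert\le\sum_n s_n(z)s_n(y)\le 1$ showing first that $\langle ze_n,f_n\rangle=s_n(x)$ for all $n$, and then that this, together with $\|z\|_{C_E}\le 1$, aligns the Schmidt data of $z$ with that of $x$, including the blocks where singular values tie (where there is a priori unitary freedom) and the kernel part where $\{e_n\},\{f_n\}$ need not be complete. Also, writing $s_n(y)=b_n$ tacitly assumes the exposing sequence $b$ is nonnegative and decreasing, which itself needs an argument (positivity on $\mathrm{supp}\,S(x)$ can be extracted from exposedness, monotonicity cannot in general).

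The strongly exposed part of the forward direction is set up with the wrong quantifiers: you take functionals $y_n\in B_{C_{E^\times}}$ with $\Phi_{y_n}(x)\to 1$ and aim at $\|y_n-y\|_{C_{E^\times}}\to 0$, which is the strong-smoothness argument of Theorem~\ref{thm:strsmoothce}, not strong exposedness; likewise "strong exposedness of $S(x)$" is a statement about sequences in $B_E$ converging to $S(x)$, so using it to get $\|S(y_n)-S(y)\|_{E^\times}\to 0$ is a category error. What must be shown is: for operators $z_n\in B_{C_E}$ with $\Phi_y(z_n)\to 1$, one has $\|z_n-x\|_{C_E}\to 0$. A correct shape of argument (parallel to the $E\Mtau$ case, Theorems~\ref{thm:1} and \ref{cor3} from \cite{CKK2013}) is to deduce $\sum_n s_n(z_n)\,s_n(y)\to 1$, apply strong exposedness of $S(x)$ to obtain $\|S(z_n)-S(x)\|_E\to 0$, separately prove convergence of $z_n$ to $x$ in a weaker (weak operator / local measure) topology from the near-equality in the pairing, and then upgrade to norm convergence via the criterion that $\|z_n-x\|_{C_E}\to 0$ is equivalent to convergence in that topology plus $\|S(z_n)-S(x)\|_E\to 0$ (this is where separability and order continuity of $E$ enter). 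None of these steps appears in your sketch, and no sequence of operators converging to $x$ is ever produced, so the strongly exposed case is not addressed as written.
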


Although Theorem 4.1 in \cite{A1} was stated only for $E\neq \ell_1$, it remains true in case of $E=\ell_1$.
As pointed out in \cite{A1}, the sets of extreme, exposed and strongly exposed points coincide in the spaces   $E=\ell_1$ or $C_1$.  These points have the following form
\begin{align*}
\ext {B_{\ell_1}}&=\{\lambda e_n:\,|\lambda|=1, n=1,2,\dots\},\\
\ext {B_{C_1}}&=\{\langle\cdot ,e\rangle f:\, e,f\in\ell_2, \|e\|=\|f\|=1\}.
\end{align*}
 Hence  by Theorem \ref{thm:arazyextreme} on extreme points in $C_E$,  Theorem \ref{th:arazy} is valid  for $E=\ell_1$ as well.

Exposed and strongly exposed points of $\nonsp$ were investigated in \cite{CKK2013}. 

\begin{theorem}\cite[Theorem 3.5, Theorem 4.2]{CKK2013}
\label{thm:1}
Let $E$ be order continuous and $x\in S_{E\Mtau}$. If $\mu(x)$ is an  exposed (respectively, a strongly exposed) point of $B_E$ then $x$ is an  exposed (respectively, a strongly exposed) point of $B_{\nonsp}$.
\end{theorem}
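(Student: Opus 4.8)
The plan is to deduce the statement from the isomorphism machinery developed in Section~\ref{sec:isom}, mimicking the strategy already used for extreme, $k$-extreme, and smooth points. The key point is that for order continuous $E$ the dual $\nonsp^*$ is identified with $E^\times\Mtau$ via $\Phi(z)=\tau(zy)$, so a functional on $\nonsp$ is encoded by an operator $y\in E^\times\Mtau$, and the analogous statement holds for $E$ and $E^\times$ on the function side.

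First I would reduce to $x\geq 0$. Since $\mu(x)=\mu(|x|)=\mu(x^*)$ and $x$ is (strongly) exposed iff $x^*$ is (by the adjoint argument of Lemma~\ref{lm:smoothadjoint}, which transfers verbatim to exposed/strongly exposed points since it only uses Lemma~\ref{lm:traceprop} and uniqueness), it is enough to handle $|x|$ using a partial isometry from the polar decomposition $x=u|x|$; left-multiplication by $u$ is an isometry on the relevant subspace that carries supporting functionals to supporting functionals. So assume $x\geq 0$, i.e. $x=|x|$. Next I would invoke Corollary~\ref{cor:isom} (or Corollary~\ref{cor:isom1}), applied to $|x|$: there is a non-atomic commutative von Neumann subalgebra $\mathcal N\subset q\M q$ and a unital $*$-isomorphism $V:S([0,\tauone),m)\to S(\mathcal N,\tau)$ with $V\mu(x)=|x|q$ and $\mu(V(f))=\mu(f)$. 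Because $V$ is a $*$-isomorphism preserving singular value functions, it restricts to a surjective linear isometry from $E$ onto a subspace $V(E)\subset\nonsp$, and (crucially) $V^{-1}$ extends to the dual side: if $f\in E^\times$ then $\int_I fg=\tau(V(f)V(g))$ for $g\in E$ up to the identification, so $V$ matches the supporting functional of $\mu(x)$ in $B_E$ with a supporting functional of $V\mu(x)=|x|q$ in $B_{\nonsp}$. Thus $|x|q$ inherits the (strong) exposedness from $\mu(x)$, since convergence of functional values and norm convergence are both transported by the isometry $V$.

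The remaining step is to pass from $|x|q$ to $|x|$ (hence to $x$ after reattaching $u$). In cases (i) $q=\one$ and (ii) $q=s(x)$ this is immediate since $|x|q=|x|$ or $|x|q=|x|s(x)=|x|$, and multiplying by $u$ gives $x$; one checks that left multiplication by $u$ is an isometry on the subspace containing these operators and carries the supporting functional $\Phi_y$ to $\Phi_{yu^*}$ (using $\tau(uzy u^*u)=\tau(uzy)$ and $u^*u=s(x)$). In case (iii), where $q=p=e^{|x|}(\mu(\infty,x),\infty)$ has infinite trace, one has $|x|=|x|q+\mu(\infty,x)\,e^{|x|}\{\mu(\infty,x)\}$ (using $|x|\geq\mu(\infty,x)s(x)$ as in Corollary~\ref{cor:isom}, which holds automatically when $\mu(\infty,x)=0$); here I would use Lemma~\ref{lm:singfun}(8) and a decomposition argument as in Case~(3) of the proof of Theorem~\ref{thm:noncom4} to absorb the extra diagonal piece, noting that adding a multiple of an orthogonal projection with value $\le\mu(\infty,x)$ does not change $\mu$ and does not destroy the supporting relation. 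For the strongly exposed case one additionally needs that a sequence $\{z_n\}\subset B_{\nonsp}$ with $\Phi_y(z_n)\to 1$ can be pushed back through $V$ (using that $y$ is an order continuous element of $E^\times\Mtau$, à la Proposition~\ref{prop:strsmoothoc}, so the pieces outside $q$ are controlled) to a sequence in $B_E$ whose functional values tend to $1$, whence norm convergence to $\mu(x)$ and then back up to norm convergence to $x$.

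The main obstacle I anticipate is precisely Case~(iii): the isomorphism $V$ only recovers $|x|q$, not $|x|$, so one must carefully manage the "tail" $\mu(\infty,x)$-part on $q^\perp$ — both verifying that the transported functional still exposes $x$ there and, for the strongly exposed version, controlling approaching sequences on the infinite-trace complement using order continuity of the dual operator $y$. This is the same technical core as in the $k$-extreme and strongly smooth proofs, so the methods of Theorems~\ref{thm:noncom4}, \ref{thm:5}, and \ref{thm:7} should apply with minor modifications. A secondary routine check is that the map $f\mapsto \Phi_{V(f)}$ really is the isometric identification $E^\times\to V(E)^*$ compatible with the trace pairing, which follows from order continuity of $E$ and the $*$-isomorphism property of $V$.
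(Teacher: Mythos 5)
There is a genuine gap, and it sits at the heart of your argument: you transfer (strong) exposedness from $\mu(x)\in B_E$ to $V\mu(x)=\abs{x}q\in B_{\nonsp}$ on the grounds that $V$ is an isometry matching supporting functionals. But exposedness and strong exposedness are properties of a point relative to the \emph{whole} unit ball of $\nonsp$, and they do not pass upward along a non-surjective isometric embedding: the functional you obtain on $V(E)$ extends to $\Phi_y$ on $\nonsp$, yet a competitor $z\in B_{\nonsp}$ with $\Phi_y(z)=1$ (or $\Phi_y(z_n)\to 1$) need not lie in $V(E)$, nor even in any commutative subalgebra, so it cannot be ``pushed back through $V$'' -- this is exactly the step your sketch of the strongly exposed case relies on. (A toy illustration of the failure of upward transfer: $t\mapsto t\cdot\chi_{[0,1]}$ embeds $\mathbb{R}$ isometrically into $L_1[0,1]$, $1$ is exposed in $[-1,1]$, but its image is not even an extreme point of $B_{L_1}$.) Two further problems: Corollary \ref{cor:isom} needs $\M$ non-atomic with $\sigma$-finite trace, hypotheses absent from Theorem \ref{thm:1} (and not recoverable for the lifting direction by the tensor trick of Section 4 when $\tau$ is not $\sigma$-finite); and the reduction from $x$ to $\abs{x}$ is not a routine isometry argument for exposedness, since from $\Phi(u^*z)=1$ one only gets $s(x^*)z=x$, and ruling out a left-disjoint remainder $w=(\one-s(x^*))z$ requires the analysis $\abs{x+w}^2=\abs{x}^2+\abs{w}^2\ge\abs{x}^2$ together with $\mu(\infty,x)=0$ (which here comes for free from order continuity, via $E=E_0$) -- a point your proposal treats as immediate.

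Structurally you have the two techniques of this circle of results reversed. Throughout the survey, the isomorphism machinery of Section \ref{sec:isom} (Propositions \ref{isom2}--\ref{isom3}, Corollary \ref{cor:isom}) is what drives the \emph{converse} implications, where $\M$ is assumed non-atomic (Theorems \ref{thm:noncom4}, \ref{thm:3}, \ref{thm:xtomux}, \ref{thm:2}); the lifting direction you are asked to prove is done, as in \cite{CKK2013} and in the analogous lifting results here (Theorem \ref{thm:6}, Lemma \ref{lm:3}, Theorem \ref{thm:1mlur}), by a direct construction: use order continuity to write every functional as $\Phi_y(z)=\tau(zy)$ with $y\in E^{\times}\Mtau$, build $y$ from the function $f\in S_{E^{\times}}$ exposing $\mu(x)$ via the polar decomposition and functional calculus of $\abs{x}$ so that $\Phi_y(x)=\int_0^\infty\mu(x)\mu(f)=1$, and then show that $\Phi_y(z)=1$ with $\|z\|_{\nonsp}\le 1$ forces $z=x$ by exploiting the trace and submajorization inequalities (Lemma \ref{lm:traceprop}, $\mu(zy)\prec\mu(z)\mu(y)$) and the uniqueness built into exposedness of $\mu(x)$; for the strong case one additionally shows $z_n\xrightarrow{\tau}x$ and $\|\mu(z_n)-\mu(x)\|_E\to 0$ and invokes the convergence criterion of \cite{CSweak,CKS1992}. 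Your commutative embedding can legitimately be used to \emph{construct} $y$, but the verification that $\Phi_y$ exposes $x$ among all of $B_{\nonsp}$ is the actual content of the theorem, and it is missing from the proposal.
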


\begin{theorem}\cite[Theorem 3.11,Theorem 4.7]{CKK2013}
\label{thm:2}
Let $E$ be order continuous and $\M$ be non-atomic.
If $x\in S_{E\Mtau}$ is an  exposed (respectively, a strongly exposed) point of $B_{\nonsp}$ then $\mu(x)$ is an exposed (respectively, a strongly exposed) point of $B_E$.
\end{theorem}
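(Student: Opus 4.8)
The plan is to reduce the noncommutative statement to the already-established commutative one (Theorem \ref{thm:arazyextreme}-style results are not enough; rather I would exploit the isomorphism machinery of Section \ref{sec:isom}) and, crucially, to first force the relevant structural conditions on $x$ so that one of the isomorphisms $V$ with $V\mu(x)=|x|q$ becomes available. Concretely, the first step is to show that if $x\in S_{\nonsp}$ is an exposed point of $B_{\nonsp}$ then $x$ is in particular an extreme point, hence by Theorem \ref{thm:ext} (applicable since $\M$ is non-atomic) $\mu(x)$ is extreme in $B_E$ and one of (i) $\mu(\infty,x)=0$ or (ii) $n(x)\M n(x^*)=0$ together with $|x|\ge \mu(\infty,x)s(x)$ holds. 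In fact, since $E$ is order continuous we have $E=E_0$ by Remark \ref{rem:MLUR}, so $\mu(\infty,x)=0$ automatically and condition (i) is in force; this simplifies matters considerably and means I only need Corollary \ref{cor:isom} (or Proposition \ref{isom3}/\ref{isom1}) in the case $\mu(\infty,x)=0$, giving a $*$-isomorphism $V:S([0,\tauone),m)\to S(\mathcal N,\tau)$ with $\mathcal N\subset s(x)\M s(x)$, $\mu(V(f))=\mu(f)$, and $V\mu(x)=|x|$ (after composing with the polar decomposition, $x=u|x|$ with $u^*uV(f)=s(x)V(f)=V(f)$, exactly as in Cases (2)--(3) of the proof of Theorem \ref{thm:noncom4}).

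The second step is to transport the exposing functional. Since $E$ is order continuous, $\nonsp^*=E^\times\Mtau$ with $\Phi_y(z)=\tau(zy)$; let $y_0\in S_{E^\times\Mtau}$ be the functional exposing $B_{\nonsp}$ at $x$. I would take $f=\mu(x)\in S_E$ and produce a candidate exposing functional on $E$ from $y_0$ by pulling back through $V$: set $g=$ the decreasing rearrangement data of $u^*y_0$ restricted appropriately, or more cleanly, use that $\tau(xu^*y_0)=\tau(|x|y_0')=1$ where $y_0'=y_0u$, note $\mu(y_0')=\mu(y_0)$, and that $V^{-1}$ applied to the ``diagonal part'' of $y_0'$ relative to $\mathcal N$ yields $h\in S_{E^\times}$ with $\int_0^\infty \mu(x)h=1$. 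Then I must show $h$ exposes $B_E$ at $\mu(x)$: if $\int_0^\infty f' h=1$ for some $f'\in B_E$, lift $f'$ to $V(f')\in B_{\nonsp}$ (or rather $uV(f')$), deduce $\Phi_{y_0}(uV(f'))=1$, hence by exposedness $uV(f')=x$, hence $V(f')=|x|$, hence $f'=\mu(x)$ since $V$ is injective and rearrangement-preserving. The strongly exposed case runs in parallel: if $h(f_n')\to 1$ with $f_n'\in B_E$, then $\Phi_{y_0}(uV(f_n'))\to 1$ (here I need $\|uV(f_n')\|\le \|f_n'\|\le 1$, which holds since $u$ is a partial isometry and $V$ an isometry on rearrangements), so $\|x-uV(f_n')\|_{\nonsp}\to 0$, and since $\mu$ is $1$-Lipschitz for the norm (by $\mu(x-y)\prec\mu(x)+\mu(y)$ and strong symmetry, cf.\ \cite[Theorem 4.3]{Fack-Kos1986}) we get $\|\mu(x)-\mu(uV(f_n'))\|_E=\|f-f_n'\|_E\to 0$.

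The step I expect to be the main obstacle is the honest construction of the scalar (function) exposing functional $h$ from the operator exposing functional $y_0$, i.e.\ checking that the relevant part of $y_0u$ actually lies in the commutative subalgebra $\mathcal N$ (equivalently, that one may replace $y_0$ by its $\mathcal N$-conditional expectation without losing the equality $\tau(xy_0)=1$ or the exposing property). This is exactly the kind of delicate argument carried out in \cite{CKK2013}, and it relies on the fact that $|x|=V\mu(x)\in\mathcal N$ is ``diagonal'': for any $z\in B_{\nonsp}$ one has $\tau(xz)=\tau(u|x|z)=\tau(|x|zu)$, and only the $\mathcal N$-part of $zu$ contributes, which lets one reduce testing from all of $B_{\nonsp}$ to $\{uV(f'):f'\in B_E\}$. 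Once that reduction is justified, exposedness and strong exposedness pass down by the elementary arguments sketched above. I would also need to invoke Lemma \ref{lm:smoothadjoint}-type symmetry between $x$ and $x^*$ to handle the asymmetry in $s(x)$ versus $s(x^*)$ that appears when $\tau(s(x))=\infty$, and the case split (i)/(ii) of Corollary \ref{cor:isom}, though order continuity of $E$ collapses this to the single case $\mu(\infty,x)=0$.
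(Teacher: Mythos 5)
Your skeleton is the right one for this direction (reduce to $\mu(\infty,x)=0$ by order continuity, take $V$ with $V\mu(x)=|x|$, lift $f'\in B_E$ to $uV(f')\in B_{\nonsp}$ and feed it to the exposing functional $\Phi_{y_0}$), but the step you yourself single out as the crux --- producing an integral kernel $h\in S_{E^\times}$ by taking an $\mathcal{N}$-conditional expectation of $y_0u$, and ``reducing testing from all of $B_{\nonsp}$ to $\{uV(f')\}$'' --- is left unproved, and it is in fact a misidentified obstacle: for the implication $x\Rightarrow\mu(x)$ you never need to test over $B_{\nonsp}$, nor to realize the functional by a kernel. Simply define $F(f')=\Phi_{y_0}\bigl(uV(f')\bigr)$ on $E$. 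Then $|F(f')|\le\|uV(f')\|_{\nonsp}\le\|f'\|_E$ because $\mu(uV(f'))\le\mu(V(f'))=\mu(f')$, and $F(\mu(x))=\tau(u\,V\mu(x)\,y_0)=\tau(xy_0)=1$, so $F\in S_{E^*}$ (and $E^*=E^\times$ anyway by order continuity); if $F(f')=1$ with $f'\in B_E$, exposedness of $x$ gives $uV(f')=x$, hence $V(f')=|x|=V\mu(x)$ and $f'=\mu(x)$ by injectivity of $V$ --- exactly your chain, with the conditional-expectation construction and the module identity (which you defer to \cite{CKK2013}) rendered unnecessary. The reduction of testing to commutative data is what the converse direction (Theorem \ref{thm:1}) requires, not this one.

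Two further points need repair. First, your strongly exposed conclusion fails as written: $\mu(uV(f_n'))$ equals $\mu(f_n')$, the decreasing rearrangement of $f_n'$, not $f_n'$ itself, so $\|\mu(x)-\mu(uV(f_n'))\|_E=\|\mu(x)-f_n'\|_E$ is false for non-monotone $f_n'$; moreover the ``$1$-Lipschitz'' property of $\mu$ does not follow from \cite[Theorem 4.3]{Fack-Kos1986} ($\mu(x+y)\prec\mu(x)+\mu(y)$ does not give $|\mu(x)-\mu(z)|\prec\mu(x-z)$ without a separate argument). The fix stays inside your own setup: from $\|uV(f_n')-x\|_{\nonsp}\to0$ multiply by $u^*$ on the left, using $u^*uV(f_n')=s(x)V(f_n')=V(f_n')$ and $u^*x=|x|$, to get $\|f_n'-\mu(x)\|_E=\|V(f_n')-V\mu(x)\|_{\nonsp}\le\|uV(f_n')-x\|_{\nonsp}\to0$. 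Second, your blanket claims $\mathcal{N}\subset s(x)\M s(x)$ and $u^*uV(f)=V(f)$ are only guaranteed when $\tau(s(x))=\infty$ (case (iii) of Corollary \ref{cor:isom}); when $\tau(s(x))<\infty$ you are in case (i) with $q=\one$, and you must replace the polar partial isometry $u$ by the isometry $w$ with $x=w|x|$, $w^*w=\one$, as in Case (1) of the proof of Theorem \ref{thm:noncom4} (via $n(x)\sim n(x^*)$ and \cite[Lemma 2.6]{czer-kam2010}); with $w$ in place of $u$ both the exposed and the strongly exposed arguments above go through verbatim. With these corrections your proposal becomes a complete proof along what appears to be the intended lines of \cite{CKK2013}.
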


Finally we state the main result of this section, which follows from Theorems \ref{thm:1} and \ref{thm:2}. 
\begin{theorem}
\label{cor3}
Let $E$ be order continuous, $\M$ be non-atomic, and $x\in S_{E\Mtau}$. Then $x$ is an exposed (respectively, a strongly exposed) point of $B_{E\Mtau}$ if and only if $\mu(x)$ is an  exposed (respectively, a strongly exposed) point of $B_E$.
\end{theorem}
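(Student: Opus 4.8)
The statement is the conjunction of Theorem~\ref{thm:1} and Theorem~\ref{thm:2}, so the whole task reduces to citing those two results and combining them. The plan is to split the biconditional into its two implications and dispatch each by one of the cited theorems, after recording the hypotheses common to both: $E$ order continuous, $\M$ non-atomic, and $x\in S_{E\Mtau}$.

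\begin{proof}
Assume $E$ is order continuous, $\M$ is non-atomic, and $x\in S_{E\Mtau}$.

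Suppose first that $\mu(x)$ is an exposed (respectively, a strongly exposed) point of $B_E$. Since $E$ is order continuous, Theorem~\ref{thm:1} applies and yields that $x$ is an exposed (respectively, a strongly exposed) point of $B_{E\Mtau}$.

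Conversely, suppose that $x$ is an exposed (respectively, a strongly exposed) point of $B_{E\Mtau}$. Since $E$ is order continuous and $\M$ is non-atomic, Theorem~\ref{thm:2} applies and gives that $\mu(x)$ is an exposed (respectively, a strongly exposed) point of $B_E$.

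Combining the two implications, $x$ is an exposed (respectively, a strongly exposed) point of $B_{E\Mtau}$ if and only if $\mu(x)$ is an exposed (respectively, a strongly exposed) point of $B_E$.
\end{proof}

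Since everything here is merely bookkeeping of hypotheses and quotation of the two component theorems, there is no genuine obstacle; the only point requiring a moment's care is to make sure the hypothesis sets match exactly — Theorem~\ref{thm:1} needs only order continuity of $E$, whereas the reverse direction in Theorem~\ref{thm:2} additionally needs $\M$ non-atomic — and the combined statement correctly carries both assumptions, so no extra work is needed. The substantive content, of course, lives entirely in the proofs of Theorems~\ref{thm:1} and \ref{thm:2}, which rely on the isomorphism machinery of Section~\ref{sec:isom} (in particular Corollary~\ref{cor:isom} and the identities $V\mu(x)=|x|q$, $\mu(Vf)=\mu(f)$) to transport an exposing functional between $E$ and $\nonsp$; were one to reprove the present theorem from scratch, that transport argument — and the handling of the tail behaviour encoded by $\mu(\infty,x)$ and the support projections — would be the main obstacle.
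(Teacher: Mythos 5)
Your proposal is correct and matches the paper exactly: the paper itself states that this theorem ``follows from Theorems~\ref{thm:1} and \ref{thm:2},'' which is precisely your splitting of the equivalence into the two implications with the hypotheses ($E$ order continuous for Theorem~\ref{thm:1}, plus non-atomicity of $\M$ for Theorem~\ref{thm:2}) checked as you do. Nothing further is needed.
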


The next result is an immediate consequence of the previous theorem, taking for $\mathcal{M}$ the commutative von Neumann algebra $L_\infty[0,\alpha)$. 

\begin{theorem} \label{th:noncom1}
Let $E$ be an order continuous symmetric function space.  Then the function $f$ is an exposed (respectively, a strongly exposed) point of $B_E$ if and only if its decreasing rearrangement $\mu(f)$ is an exposed (respectively, a strongly exposed) point of $B_E$.
\end{theorem}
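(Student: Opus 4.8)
The plan is to derive Theorem~\ref{th:noncom1} as a direct specialization of Theorem~\ref{cor3} to the commutative situation described in Section~\ref{sec:symmfun}. First I would fix an order continuous symmetric function space $E$ on $[0,\alpha)$, and take $\M=\mathcal{N}$ to be the commutative von Neumann algebra identified with $L_\infty[0,\alpha)$ with the integral trace $\eta$, as set up in Facts~1--5. This algebra is non-atomic (it has no minimal projections, since every $N_{\chi_A}$ with $m(A)>0$ dominates $N_{\chi_B}$ for some $B\subsetneq A$ with $0<m(B)<m(A)$), so the hypotheses of Theorem~\ref{cor3} are met. By Fact~5, the map $f\mapsto N_f$ is a $*$-isomorphism from $S([0,\alpha),m)$ onto $S(\mathcal{N},\eta)$ that is also an isometry $E\to E(\mathcal{N},\eta)$ and satisfies $\mu(N_f)=\mu(f)$.

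The key step is then to observe that this identification is an \emph{isometric order isomorphism} $E \cong E(\mathcal{N},\eta)$ which moreover intertwines decreasing rearrangements: under $f\leftrightarrow N_f$, the operator $N_{\mu(f)}$ corresponds to the function $\mu(f)$, and $\mu(N_f)=\mu(f)$. Consequently $N_f$ is an exposed (respectively strongly exposed) point of $B_{E(\mathcal{N},\eta)}$ if and only if $f$ is an exposed (respectively strongly exposed) point of $B_E$ --- this is immediate because exposed and strongly exposed points are preserved and reflected by surjective linear isometries (as noted for smooth points in Section~13, and equally valid here since the defining conditions $F(x)=1$, $F(y)\neq 1$, and $F(x_n)\to 1 \Rightarrow \|x-x_n\|\to 0$ transfer verbatim through an isometry and its adjoint). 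Likewise $\mu(N_f)$, viewed as the element $N_{\mu(f)}$, is exposed (resp. strongly exposed) in $B_{E(\mathcal{N},\eta)}$ iff $\mu(f)$ is exposed (resp. strongly exposed) in $B_E$.

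Now I would simply apply Theorem~\ref{cor3} with $\M=\mathcal{N}$ and the operator $x=N_f\in S_{E(\mathcal{N},\eta)}$: it gives that $N_f$ is an exposed (resp. strongly exposed) point of $B_{E(\mathcal{N},\eta)}$ if and only if $\mu(N_f)$ is an exposed (resp. strongly exposed) point of $B_E$. Translating both sides back through the isometric identification of the previous paragraph --- the left side becomes ``$f$ is exposed (resp. strongly exposed) in $B_E$'' and the right side becomes ``$\mu(f)$ is exposed (resp. strongly exposed) in $B_E$'' --- yields exactly the claimed equivalence. One should also record, for completeness, the trivial case $f=0$ or $\|f\|_E\neq 1$ (excluded by $x\in S_{E\Mtau}$), and note that $E$ order continuous forces $E=E_0$ (Remark~\ref{rem:MLUR}), so the auxiliary conditions (i)/(ii) appearing in the extreme-point analogues do not intervene here.

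I do not expect any serious obstacle: the entire content is bookkeeping with the commutative representation of Section~\ref{sec:symmfun} plus invoking Theorem~\ref{cor3}. The only point requiring a sentence of care is verifying that $\mathcal{N}=L_\infty[0,\alpha)$ is non-atomic so that Theorem~\ref{cor3} applies, and confirming that ``exposed'' and ``strongly exposed'' are two-sided isometric invariants (which follows because a surjective linear isometry $T:X\to Y$ induces a surjective isometry $T^*:Y^*\to X^*$, and the defining properties are phrased purely in terms of norms and pairings). Everything else is the routine identification already carried out in Facts~1--5 and used throughout the survey for analogous corollaries.
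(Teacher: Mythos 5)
Your proposal is correct and follows the paper's own route exactly: the paper obtains Theorem~\ref{th:noncom1} as an immediate consequence of Theorem~\ref{cor3} by taking for $\M$ the (non-atomic) commutative von Neumann algebra $L_\infty[0,\alpha)$ with the integral trace, using the identification $E\cong E(\mathcal{N},\eta)$ of Section~\ref{sec:symmfun}. Your additional checks (non-atomicity of $\mathcal{N}$, $\mu(N_f)=\mu(f)$, and that exposedness and strong exposedness transfer through surjective isometries) are precisely the bookkeeping the paper leaves implicit.
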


\begin{problem}
Remove the assumption of order continuity of $E$ in Theorems \ref{th:arazy}, \ref{cor3} and \ref{th:noncom1}.

\end{problem}

\section{Kadec-Klee properties}

A Banach space $(X, \|\cdot\|)$ has the \emph{Kadec-Klee} $(KK)$ property if for any $x_n, x\in X$,  whenever $\|x_n\|\to \|x\|$ and $x_n\to x$ weakly then $\|x_n-x\|\to 0$ as $n\to \infty$.
In the literature this property appears  under three different names, Kadec-Klee, Radom-Riesz or $H$ property.   Early on J. Radon in 1913, and F. Riesz in 1929, proved that that property holds for $L_p$ spaces for $1\le p <\infty$. M. I. Kadets and V. L. Klee used some versions of this property to show that all infinite-dimensional separable Banach spaces are homeomorphic \cite{Pietsch}.

Let $(X, \|\cdot\|)$ be a Banach space and $\mathscr{T}$  be a linear topology on $X$ weaker than the norm topology.  We say that  $X$ has the \emph{Kadec-Klee property with respect to $\mathscr{T}$} (for short $X\in (KK(\mathscr{T}))$) if for every $x, x_n\in X$, $x_n\to x$ in $\mathscr{T}$ and $\|x_n\|\to \|x\|$ imply $\|x_n-x\|\to 0$ as $n\to \infty$.

Recall that the collection of sets for $\epsilon, \delta > 0$,
\[
V(\epsilon,\delta)=\{x\in S\Mtau:\,\tau(e^{\abs{x}}(\epsilon,\infty))\leq \delta\}=\{x\in S\Mtau:\,\mu(\delta,x)\leq \epsilon\}
\]
forms a base at zero for the measure topology $\mathscr{T}_m$ on $S\Mtau$. 
The measure topology on $S\Mtau$ can be localized in the following way \cite{DDDLS}. Let $\epsilon,\delta>0$
and  $e\in P(\M)$  with $\tau(e)<\infty$.  Then the family 
\[
V(\epsilon,\delta, e)=\{x\in S\Mtau:\,exe\in V(\epsilon,\delta)\}
\]
 forms a neighborhood base at 0 for a Hausdorff linear topology on $S\Mtau$. This topology is called the topology of
\textit{local convergence in measure} (denoted (\textit{lcm})).  The sequence $\{x_n\}\subset S\Mtau$ converges locally in measure to $x\in S\Mtau$ if $\{ex_ne\}$ converges to $exe$ for the measure topology on $S\Mtau$, for all $e\in P(\M)$ with $\tau(e)<\infty$.

 If $\mathcal{N}$ is a commutative von Neumann algebra, identified with $L_{\infty}[0,\alpha)$, $\alpha\leq \infty$ (see Section \ref{sec:symmfun} for details), then $V(\epsilon,\delta)$ can be identified with the set of functions $f\in L^0[0,\alpha)$ for which  $m\{t\in[0,\alpha):\, |f(t)|>\epsilon\}\leq \delta$.  Hence the measure
topology in $S\Mtau$ corresponds to the usual topology of convergence in
measure in $L^0[0,\alpha)$.  It is also not difficult to verify that given $e=N_{\chi_A}$, $\eta(e)=m(A)<\infty$,  we have that $N_f\in V(\epsilon, \delta, e)$ whenever $m\{t\in A:\, |f(t)|>\epsilon\}\le \delta$.  Hence $N_{f_n}\to 0$ in (lcm) is equivalent with $m\{t\in A:\, |f_n(t)|>\epsilon\}\to 0$ as $n\to \infty$ for all $\epsilon>0$ and all measurable sets $A$ with $m(A)<\infty$.  Hence in the commutative case the local
measure topology corresponds to the usual topology of local
convergence in measure in the space $L^0[0,\alpha)$. 

Recall that the \emph{weak operator topology} on $B(H)$  \cite{KR} is the weak
topology on $B(H)$ induced by the family of linear functionals $w_{\xi,\eta}:B(H)\to \mathbb{C}$ of the form
\[
w_{\xi,\eta}(x)=\langle x\xi, \eta\rangle,\quad \xi,\eta\in H,\, x\in B(H).
\]
Clearly the weak operator topology is weaker that the weak topology on $B(H)$.

 It is known that if  $\M=B(H)$ and $\tau$ is a canonical trace,  then for sequences bounded  in operator norm, convergence in $(lcm)$ is precisely convergence for the weak operator topology \cite{DDDLS}.

Now we are ready to present the results on Kadec-Klee property in $C_E$ and in $\nonsp$. We start with two results by J. Arazy from 1981.

\begin{theorem}\cite[Theorem I]{A3}
\label{thm:arazykk}
Let $E$ be a separable symmetric sequence space. Then $E$ has $KK$ property if and only if $C_E$ has $KK$ property.
\end{theorem}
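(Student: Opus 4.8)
The plan is to exploit the order-isometric embedding $C_E \hookrightarrow$ (diagonal operators) and the identification of $C_E$ with $G(B(H),\mathrm{tr})$ from Section~\ref{sec:unitary}, reducing both implications to transfer of the $KK$-property between $E$ and $C_E$ along isometries and along subspaces. The easy direction is that $KK$ of $C_E$ implies $KK$ of $E$: by Proposition~\ref{prop:isomarazy} (with $x\ge 0$, or simply using the diagonal embedding of $\ell_0$-sequences onto diagonal matrices), $E$ embeds isometrically onto a $1$-complemented subspace of $C_E$, and the $KK$-property with respect to the norm-weak topology passes to closed subspaces. So the heart of the matter is the forward implication: assuming $E$ has $KK$, show $C_E$ has $KK$.

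First I would set up the functional-analytic dictionary. Since $E$ is separable it is order continuous (Proposition~\ref{prop:infinity}), hence $E\ne\ell_\infty$ so $E\subset c_0$ and $C_E$ is well defined and separable (\cite[Proposition 1, Theorem 2]{Medz}); moreover $C_E$ is order continuous (Proposition~\ref{prop:occe}), so $(C_E)^* = (C_E)^\times = C_{E^\times}$ with the trace duality $\Phi(x)=\mathrm{tr}(xy)$. The plan is then: take $x_n,x\in C_E$ with $\|x_n\|_{C_E}\to\|x\|_{C_E}$ and $x_n\to x$ weakly; normalize so $\|x\|_{C_E}=1$. Weak convergence in $C_E$ implies weak operator convergence, hence (for norm-bounded sequences) local convergence in measure, which for $B(H)$ with the canonical trace gives $\mu(x_n)\to\mu(x)$ pointwise at continuity points — more precisely $s_k(x_n)\to s_k(x)$ for each $k$ (using that $\{x_n\}$ is bounded in $C_E\subset K(H)$ and a diagonal/subsequence argument on singular numbers). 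Combined with $\|\mu(x_n)\|_E=\|x_n\|_{C_E}\to 1 = \|\mu(x)\|_E$, the hypothesis that $E$ has $KK$ — applied to the sequences $\mu(x_n)$, which converge coordinatewise hence weakly in the separable (order continuous) space $E$ — yields $\|\mu(x_n)-\mu(x)\|_E\to 0$, i.e. $\|\,|x_n|\, \|_{C_E}$-type control on the singular numbers.

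The main obstacle is the final step: coordinatewise convergence of singular values plus norm convergence of singular values is \emph{not} enough to conclude $\|x_n-x\|_{C_E}\to 0$, because the operators $x_n$ can ``rotate'' relative to $x$ (their eigenvectors/partial isometries need not converge). The standard remedy, which I would carry out, is to pass through the Schmidt decomposition: using weak operator convergence of $x_n\to x$ and the now-established convergence of singular numbers, one shows the polar-decomposition data converge in the appropriate sense so that $x_n - x \to 0$ in measure, and then invokes a noncommutative Kadec--Klee-type lemma — convergence in measure together with $\|x_n\|\to\|x\|$ and order continuity of $C_E$ forces norm convergence (this is essentially \cite[Proposition 1.5]{czer-kam2010} in the $C_E$ setting, or can be proved directly via equi-integrability of $\{\mu(x_n)\}$ in $E$ following from $\|\mu(x_n)-\mu(x)\|_E\to 0$ and order continuity). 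Assembling these pieces gives $\|x_n-x\|_{C_E}\to 0$, completing the equivalence.

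I would organize the write-up as: (1) reduce to $\|x\|_{C_E}=1$ and recall $C_E$ order continuous, separable, with trace duality; (2) weak $\Rightarrow$ WOT $\Rightarrow$ $(lcm)$ $\Rightarrow$ $s_k(x_n)\to s_k(x)$ for all $k$; (3) apply $KK$ of $E$ to get $\|\mu(x_n)-\mu(x)\|_E\to 0$; (4) upgrade to $x_n\to x$ in measure via Schmidt representation; (5) conclude by the in-measure $+$ norm-of-singular-values $\Rightarrow$ norm lemma; (6) note the reverse implication is immediate from the isometric $1$-complemented embedding of $E$ in $C_E$ (Proposition~\ref{prop:isomarazy}) and the stability of $KK$ under passing to subspaces.
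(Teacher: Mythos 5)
Your reverse implication is fine: $E$ sits isometrically inside $C_E$ (Proposition \ref{prop:isomarazy}), weak convergence and norms restrict to closed subspaces, so $KK$ of $C_E$ passes to $E$. The forward direction, however, breaks at the two steps that carry all the difficulty. First, step (2) is false: weak convergence in $C_E$ (equivalently WOT/local measure convergence of a bounded sequence) does \emph{not} give $s_k(x_n)\to s_k(x)$. Take $E=\ell_2$, $x=\langle\cdot,e_1\rangle e_1$ and $x_n=\langle\cdot,e_1\rangle e_1+\langle\cdot,e_n\rangle e_n$; since $\langle\cdot,e_n\rangle e_n\to 0$ weakly in $C_E$ (its dual is $C_{E^\times}$ and these are compact operators), we have $x_n\to x$ weakly, yet $S(x_n)=(1,1,0,\dots)$ while $S(x)=(1,0,\dots)$, so $s_2(x_n)\not\to s_2(x)$. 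Only one-sided (lower semicontinuity-type) information survives weak limits; mass can appear "at infinity," and ruling this out is precisely what the norm hypothesis together with the $KK$ property must accomplish — it cannot be harvested beforehand.

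Second, even granting coordinatewise convergence of $S(x_n)$ to $S(x)$, your step (3) misuses the hypothesis: coordinatewise convergence of a bounded sequence does not imply weak convergence in $E$ (the unit vectors in $\ell_1$ converge coordinatewise to $0$ but not weakly), so the $KK$ property of $E$ with respect to the weak topology cannot be applied to $\mu(x_n)$. What your scheme actually requires is the Kadec--Klee property of $E$ with respect to coordinatewise convergence, i.e. the hypothesis of Theorem \ref{thm:arazykk1} / Theorem \ref{thm:KKC1} (Arazy's Theorem II), which is a strictly stronger assumption in general; so even after repairing (2) you would have proved a different statement. Your endgame (steps (4)--(5)) is essentially the implication (ii)$\Rightarrow$(i) of the convergence criterion quoted after Theorem \ref{thm:KKC1} and is unobjectionable, but the two gaps above are exactly why the genuine proof of Theorem \ref{thm:arazykk} does not proceed by transferring singular-value convergence: Arazy's argument works with the shell decomposition of $C_E$ and an elaborate blocking/approximation technique (alluded to in the paper), splitting $x_n$ into pieces compressed near a finite-dimensional part of $x$ and remainder pieces, and it is there that weak convergence in $C_E$ is actually exploited.
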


\begin{theorem}\cite[Theorem II]{A3}
\label{thm:arazykk1}
Let $E$ be a separable symmetric sequence space. The following two statements are equivalent.
\begin{itemize}
\item[{(i)}] If $a=\{a(i)\} \in E$ and $\{a_n\}\subset E$, where $a_n=\{a_n(i)\}$, satisfy $\|a_n\|_E\to \|a\|_E$ and $a_n(i)\to a(i)$ for all $i\in \mathbb{N}$, then $\|a_n-a\|_E\to 0$.
\item[{(ii)}] If $x\in C_E$ and $\{x_n\}\subset C_E$ satisfy $\|x_n\|_{C_E}\to \|x\|_{C_E}$ and $x_n\to x$ in the weak operator topology, then $\|x_n-x\|_{C_E}\to 0$.
\end{itemize}
\end{theorem}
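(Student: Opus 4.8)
The plan is to transfer the statement from the sequence space $E$ to the operator space $C_E$ using J. Arazy's isometry from Proposition~\ref{prop:isomarazy}, exactly as in the proofs of Theorems~\ref{thm:3ce} and \ref{thm:strsmoothce}. First I would fix $x\in C_E$ together with its Schmidt representation $x=\sum_n s_n(x)\langle\cdot,e_n\rangle f_n$ and let $V:E\to C_E$ be the linear isometry with $V(S(x))=x$ and $S(V(a))=\mu(a)$ for all $a\in E$, with $P:C_E\to C_E$ the associated contractive projection onto $V(E)$. The point is that convergence of coordinates $a_n(i)\to a(i)$ should correspond, under $V$, to convergence $V(a_n)\to V(a)$ in the weak operator topology, since $\langle V(a_n-a)e_k,f_k\rangle=(a_n-a)(k)$ while $\langle V(b)\xi,\eta\rangle$ is controlled by the coordinates of $b$ for $b$ in a norm-bounded set.

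For the implication (i)$\Rightarrow$(ii): assume (i), take $x\in C_E$, $\{x_n\}\subset C_E$ with $\|x_n\|_{C_E}\to\|x\|_{C_E}$ and $x_n\to x$ in the weak operator topology. I would first pass to $S(x_n)$ and $S(x)$: since $\|S(x_n)\|_E=\|x_n\|_{C_E}\to\|x\|_{C_E}=\|S(x)\|_E$, it remains to show $S(x_n)(i)\to S(x)(i)$ for each $i$, and then apply (i) to get $\|S(x_n)-S(x)\|_E\to 0$. Convergence of individual singular numbers from weak-operator convergence of a norm-bounded sequence is essentially the statement that on operator-norm-bounded sets the weak operator topology agrees with $(lcm)$ (cited from \cite{DDDLS}), combined with the fact that $\mu(\delta,\cdot)$ is $(lcm)$-continuous; $\|x_n\|_{C_E}$ bounded gives $\|x_n\|_{B(H)}$ bounded. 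Once $\|S(x_n)-S(x)\|_E\to 0$, I would apply the projection/isometry machinery exactly as in Theorem~\ref{thm:strsmoothce}: using $V$ for the operator $x$ one shows $\|V(S(x_n))-V(S(x))\|_{C_E}\to 0$, i.e.\ $\|V(S(x_n))-x\|_{C_E}\to 0$, and then one must recover $\|x_n-x\|_{C_E}\to 0$ itself. This last step is where Arazy's original argument via the Schmidt decomposition and the ``triangle inequality for operators'' $|x_n+y|\le u|x_n|u^*+v|y|v^*$ (cited in the preliminaries) enters, together with $KK$-type uniformity; I would follow \cite[Theorem II]{A3}.

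For the converse (ii)$\Rightarrow$(i): assume (ii) and take $a\in E$, $\{a_n\}\subset E$ with $\|a_n\|_E\to\|a\|_E$ and $a_n(i)\to a(i)$ for all $i$. Let $V:E\to C_E$ be the isometry (for a fixed $a$, say with $V(S(a))$ of diagonal form so that $V$ is a $*$-isomorphism). Then $\|V(a_n)\|_{C_E}=\|a_n\|_E\to\|a\|_E=\|V(a)\|_{C_E}$, and coordinatewise convergence $a_n(i)\to a(i)$ yields $V(a_n)\to V(a)$ in the weak operator topology: indeed $\langle V(b)\xi,\eta\rangle=\sum_i b(i)\langle\xi,e_i\rangle\langle f_i,\eta\rangle$ and the sequence $\{a_n-a\}$ is norm-bounded in $E\subset c_0$, so coordinatewise convergence to $0$ plus boundedness in $c_0$ gives $\langle V(a_n-a)\xi,\eta\rangle\to 0$ for every $\xi,\eta$ by dominated convergence. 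Applying (ii) gives $\|V(a_n)-V(a)\|_{C_E}\to 0$, hence $\|a_n-a\|_E=\|V(a_n-a)\|_{C_E}\to 0$, which is (i).

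The main obstacle I expect is the step in (i)$\Rightarrow$(ii) of going from $\|V(S(x_n))-x\|_{C_E}\to 0$ back to $\|x_n-x\|_{C_E}\to 0$: the isometry $V$ is tied to the Schmidt frame of $x$, not of $x_n$, so $V(S(x_n))$ need not be close to $x_n$ without an additional argument controlling the eigenvectors, and this is precisely where the genuine operator-theoretic content of Arazy's $KK$ theorem lies rather than in the soft transfer via $V$. Extracting coordinatewise convergence of singular numbers from weak-operator convergence is a secondary technical point but is handled by the bounded-set identification of weak operator topology with local convergence in measure.
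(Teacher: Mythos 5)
Your direction (ii)$\Rightarrow$(i) is fine: embedding $E$ into $C_E$ as diagonal operators, coordinatewise convergence of an $E$-bounded (hence $\ell_\infty$-bounded) sequence gives weak operator convergence by dominated convergence, and (ii) then yields $\|a_n-a\|_E=\|V(a_n)-V(a)\|_{C_E}\to 0$. The problem is the hard implication (i)$\Rightarrow$(ii), where one step of your outline is false as stated and the other is simply deferred to the very result being proved. Your first step claims that $s_i(x_n)\to s_i(x)$ for each $i$ follows from weak operator convergence of a bounded sequence together with $\|x_n\|_{C_E}\to\|x\|_{C_E}$, justified by saying that $\mu(\delta,\cdot)$ is continuous for local convergence in measure. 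It is not: singular values are only lower semicontinuous in that topology. Concretely, take $E=c_0$, $x=\langle\cdot,e_1\rangle e_1$ and $x_n=x+\langle\cdot,e_{n+1}\rangle e_{n+1}$; then $x_n\to x$ in the weak operator topology, $\|x_n\|_{C_E}=\|x\|_{C_E}=1$, yet $s_2(x_n)=1\not\to 0=s_2(x)$. Of course $c_0$ fails (i), but that is exactly the point: your argument for this step makes no use of hypothesis (i), so it cannot be correct, and obtaining coordinatewise convergence of the singular numbers is not a secondary technical point but a place where property (i) of $E$ (via Arazy's Ky Fan norm estimates and blocking technique) must genuinely enter.

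The final step --- passing from $\|S(x_n)-S(x)\|_E\to 0$ together with weak operator convergence to $\|x_n-x\|_{C_E}\to 0$ --- you explicitly leave to \cite{A3}. Note that this is precisely the convergence criterion \cite[Theorem 3.1]{A1} quoted in the Kadec-Klee section of this survey, so it could at least be invoked cleanly; but it is itself a nontrivial theorem of Arazy, and, as you observe, your isometry $V$ cannot deliver it because $V$ is tied to the Schmidt frame of $x$ and says nothing about the eigenvectors of $x_n$. As it stands, the proposal establishes only the easy implication; both substantive ingredients of (i)$\Rightarrow$(ii) are missing, one of them resting on a false continuity claim. (The survey offers no proof of Theorem \ref{thm:arazykk1}, which is quoted from \cite{A3}, so the comparison is with Arazy's original argument, whose blocking technique is exactly the content your outline bypasses.)
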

The next result relates $KK(lcm)$ property of $E$ and $C_E$. Note that the componentwise convergence of the sequence $\{a_n\}\subset E$ appearing in condition (i) of Theorem \ref{thm:arazykk1} is equivalent with the local convergence in measure on $E$. Moreover, the convergence in the weak operator topology of the sequence $\{x_n\}\subset C_E$ in condition (ii) of  Theorem \ref{thm:arazykk1} coincides with the topology of local convergence in measure.  It follows the corollary.

\begin{theorem}\cite[Theorem II]{A3}
\label{thm:KKC1}
Let $E$ be a separable symmetric sequence space. Then $E$ has $KK(lcm)$ property if and only if $C_E$ has $KK(lcm)$ property.
\end{theorem}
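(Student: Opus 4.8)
The plan is to deduce Theorem~\ref{thm:KKC1} directly from Theorem~\ref{thm:arazykk1} by verifying that, in the unitary matrix space setting, the two convergence modes named there are exactly the $(lcm)$ convergences on $E$ and $C_E$ respectively. The statement itself is essentially a re-reading of Arazy's ``componentwise vs.\ weak operator'' equivalence in the language of local convergence in measure, so the real content is the identification of topologies, not a fresh Banach-space argument.

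First I would record the identification on the sequence side: a symmetric sequence space $E$ is the noncommutative space $E(\mathcal{N},\eta)$ for the commutative von Neumann algebra $\mathcal{N}$ identified with $\ell_\infty$ (the discrete analogue of Section~\ref{sec:symmfun}), with the atoms being the minimal projections $e_i = N_{\chi_{\{i\}}}$, each of finite trace. As noted in the discussion following Proposition~\ref{prop:czero} and in the localization paragraph above, a basic $(lcm)$-neighborhood of $0$ in $E$ is $V(\epsilon,\delta,e)$ with $e$ a finite-trace projection; since every finite-trace projection in $\mathcal{N}$ dominates only finitely many atoms and is dominated by a finite sum of atoms, convergence $a_n \to a$ locally in measure in $E$ is equivalent to $e_i a_n \to e_i a$ in measure for every $i$, i.e.\ to $a_n(i)\to a(i)$ for every $i\in\mathbb{N}$. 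Thus condition (i) of Theorem~\ref{thm:arazykk1} says precisely: $\|a_n\|_E\to\|a\|_E$ and $a_n\to a$ in $(lcm)$ imply $\|a_n-a\|_E\to 0$, which is the $KK(lcm)$ property of $E$.

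Next I would record the identification on the operator side. Via the identification of $C_E$ with $G(B(H),\tr)$ from Section~\ref{sec:unitary}, the ambient algebra is $\M=B(H)$ with the canonical trace, and $S(B(H),\tr)=B(H)$. Here I invoke the fact quoted in the excerpt from \cite{DDDLS}: for sequences bounded in operator norm, convergence in $(lcm)$ coincides with convergence for the weak operator topology. Since any $\|\cdot\|_{C_E}$-convergent or $\|\cdot\|_{C_E}$-bounded sequence is bounded in operator norm (because $\|\cdot\|_{B(H)}\le \|\cdot\|_{C_E}$ up to a constant, indeed $C_1\hookrightarrow C_E\hookrightarrow K(H)$), the sequences $\{x_n\}\subset C_E$ appearing in the $KK(lcm)$ formulation for $C_E$ are norm-bounded and so $x_n\to x$ in $(lcm)$ is the same as $x_n\to x$ in the weak operator topology. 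Hence condition (ii) of Theorem~\ref{thm:arazykk1} is exactly the $KK(lcm)$ property of $C_E$.

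Combining the two identifications with Theorem~\ref{thm:arazykk1} gives the equivalence claimed in Theorem~\ref{thm:KKC1}. I expect the main obstacle to be the careful justification that, for norm-bounded sequences in $C_E\subset K(H)$, weak operator convergence and local convergence in measure genuinely agree --- one must check that the localizing finite-trace projections $e$ can be taken to be finite-rank spectral projections, reduce to compact operators, and use that on the unit ball of $B(H)$ the weak operator topology is metrizable when $H$ is separable; but all of this is available from \cite{DDDLS} and the discussion already present in this section, so the argument is a matter of assembling the pieces rather than proving anything new. A secondary (purely bookkeeping) point is to confirm that the boundedness hypothesis needed for the \cite{DDDLS} fact is automatically met, which follows from the continuous embedding $C_E\hookrightarrow K(H)$ recorded at the end of Section~\ref{sec:unitary}.
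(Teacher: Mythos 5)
Your proposal is correct and follows the paper's own route: the paper derives Theorem \ref{thm:KKC1} from Theorem \ref{thm:arazykk1} precisely by noting that componentwise convergence in $E$ coincides with local convergence in measure and that, for the (norm-bounded) sequences involved, weak operator convergence in $C_E$ coincides with local convergence in measure via the fact from \cite{DDDLS}. Your added checks (atoms of finite trace on the sequence side, operator-norm boundedness via $C_E\hookrightarrow K(H)$ on the operator side) are exactly the bookkeeping the paper leaves implicit.
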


J. Arazy included a separate proof of \cite[Theorem II]{A3} for the important special case of the trace class $C_1$, which did not involve the elaborate blocking technique. 

The following results on $KK$ properties were established for $\nonsp$ spaces.  It has been shown in \cite[Proposition 1.1]{CDSS} that if a symmetric function space $E$  has either the Kadec-Klee property or the Kadec-Klee property for local convergence in measure, then $E$ is separable. It is worth noting that the latter statement does not remain true  if local convergence in measure is replaced by convergence in measure. This was  demonstrated on the example of Lorentz spaces in \cite[Corollary 1.3]{CDSS}. Therefore if the symmetric function space $E$ has $KK$ then it is separable and by \cite[Theorem 2.7]{CDS} $\nonsp$ has $KK$ property. On the other hand, if $\M$ is non-atomic then $E$ is isometrically embedded in $\nonsp$ by Corollary \ref{cor:isomglobal}, and so it inherits $KK$ property from $\nonsp$. Hence we have the following result.
\begin{theorem}\cite[Theorem 2.7]{CDS}
\label{thm:KK1}
If $E$ has $KK$ property then $\nonsp$ has $KK$ property. If $\M$ is non-atomic then $E$ has $KK$ property if and only if $\nonsp$ has $KK$ property.
\end{theorem}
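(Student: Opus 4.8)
The plan is to deduce this theorem directly from results already available in the excerpt rather than to reprove anything from scratch. The statement has two halves: (a) if $E$ has the $KK$ property then $\nonsp$ has the $KK$ property, and (b) when $\M$ is non-atomic, the converse implication. For half (a), I would first invoke \cite[Proposition 1.1]{CDSS}: a symmetric function space $E$ with the $KK$ property is automatically separable. Then, since $E$ is separable, Proposition \ref{prop:infinity} gives that $E$ is order continuous, and moreover $E$ being separable and having the Fatou property is not needed — what matters is that by \cite[Theorem 2.7]{CDS} separability (together with the $KK$ hypothesis on $E$) transfers the $KK$ property to $\nonsp$. So the first paragraph of the proof is essentially a citation chain: $E\in(KK)\Rightarrow E$ separable $\Rightarrow$ (by \cite[Theorem 2.7]{CDS}) $\nonsp\in(KK)$.

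For half (b), the plan is to use the isometric, order-preserving embedding of $E$ into $\nonsp$ furnished by Corollary \ref{cor:isomglobal}, valid precisely because $\M$ is non-atomic. The key structural fact I need is that the $KK$ property passes to closed subspaces under linear isometries: if $T\colon E\to\nonsp$ is a linear isometric embedding onto a closed subspace $T(E)$, and $\nonsp$ has $KK$, then $E$ has $KK$. This is because weak convergence $f_n\to f$ in $E$ implies weak convergence $Tf_n\to Tf$ in $T(E)$, hence in $\nonsp$ (weak convergence in a subspace is inherited from the whole space via Hahn–Banach extension of functionals), and $\|Tf_n\|_{\nonsp}=\|f_n\|_E\to\|f\|_E=\|Tf\|_{\nonsp}$; then $KK$ in $\nonsp$ yields $\|Tf_n-Tf\|_{\nonsp}\to0$, i.e. $\|f_n-f\|_E\to0$. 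I would state this as a short self-contained observation (one sentence, since it is standard) and then combine it with Corollary \ref{cor:isomglobal} to conclude that $E$ has $KK$ whenever $\nonsp$ does.

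I do not anticipate a genuine obstacle here, since the substance is entirely contained in \cite[Theorem 2.7]{CDS} and Corollary \ref{cor:isomglobal}; the only point requiring a word of care is the subspace-heredity of $KK$ under isometric embeddings, and the only slightly subtle input is \cite[Proposition 1.1]{CDSS} which bridges the gap between "$E$ has $KK$" and "$E$ is separable", since \cite[Theorem 2.7]{CDS} is phrased with a separability hypothesis. If one wanted to be scrupulous about whether weak convergence in $T(E)$ matches weak convergence in $\nonsp$, I would note that for a closed subspace $Y\subset X$ of a Banach space, a sequence in $Y$ converges weakly in $X$ if and only if it converges weakly in $Y$ — one direction is restriction of functionals, the other is Hahn–Banach extension — so no difficulty arises. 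The write-up would therefore be roughly: \emph{Proof.} If $E$ has $KK$ then by \cite[Proposition 1.1]{CDSS} it is separable, hence by \cite[Theorem 2.7]{CDS} $\nonsp$ has $KK$. Conversely, if $\M$ is non-atomic, Corollary \ref{cor:isomglobal} embeds $E$ isometrically as a subspace of $\nonsp$, and the $KK$ property is inherited by subspaces under isometric embeddings; thus $\nonsp\in(KK)$ forces $E\in(KK)$. $\square$
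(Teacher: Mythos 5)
Your proposal is correct and follows essentially the same route as the paper: the text preceding the theorem deduces it exactly by citing \cite[Proposition 1.1]{CDSS} to get separability of $E$ from the $KK$ property, applying \cite[Theorem 2.7]{CDS} to lift $KK$ to $\nonsp$, and using the isometric embedding of Corollary \ref{cor:isomglobal} together with subspace-heredity of $KK$ for the converse when $\M$ is non-atomic. Your extra remark on why weak convergence in the embedded copy agrees with weak convergence in $\nonsp$ is a harmless elaboration of what the paper leaves implicit.
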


 Using  similar arguments as in front of Theorem \ref{thm:KK1}, one can state Theorem 2.6 in \cite{DDS1} without assuming that $E$ is separable. Moreover, since every separable symmetric function space is strongly symmetric  \cite{BS,KPS} we have the next theorem.

\begin{theorem}\cite[Theorem 2.6]{DDS1}
If $E$ has $KK(lcm)$ property then $\nonsp$ has $KK(lcm)$ property. If $\M$ is non-atomic then $E$ has $KK(lcm)$ property if and only if $\nonsp$ has $KK(lcm)$ property.
\end{theorem}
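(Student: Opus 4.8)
The plan is to reduce the statement to the strongly symmetric case of \cite[Theorem 2.6]{DDS1}, exploiting that $KK(lcm)$ already supplies the separability one would otherwise have to assume, and to obtain the reverse implication from the embedding of $E$ into $\nonsp$ of Corollary~\ref{cor:isomglobal}.

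For the first assertion, assume $E$ has $KK(lcm)$. By \cite[Proposition 1.1]{CDSS} this forces $E$ to be separable, hence order continuous (Proposition~\ref{prop:infinity}) and in particular strongly symmetric \cite{BS, KPS}. Thus $E$ meets the hypotheses under which \cite{DDS1} proves the lifting, and $\nonsp$ has $KK(lcm)$ for an arbitrary semifinite $\M$. In a written proof I would recall the engine of that lifting, since it is the real content: given $x_n, x \in S_{\nonsp}$ with $x_n \to x$ in $(lcm)$ and $\|x_n\|_{\nonsp} \to \|x\|_{\nonsp}$, one passes along an arbitrary subsequence to a further subsequence converging to $x$ bilaterally almost uniformly (a noncommutative Riesz-type extraction available from convergence in the local measure topology), localizes on finite-trace spectral projections where singular value functions can be compared, applies the commutative $KK(lcm)$ of $E$ there, and controls the complementary tail through an equi-$\nonsp$-integrability estimate read off from the convergence of norms (a Scheff\'e-type argument); reassembling gives $\|x_n - x\|_{\nonsp} \to 0$.

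For the second assertion let $\M$ be non-atomic and suppose $\nonsp$ has $KK(lcm)$. By Corollary~\ref{cor:isomglobal} there is a linear isometry $V \colon E \to \nonsp$, order preserving, with $\mu(V(f)) = \mu(f)$; by the construction behind Propositions~\ref{isom2} and \ref{isom1}, $V$ factors through a measure-preserving substitution and a $\mu$-preserving $*$-isomorphism onto a commutative von Neumann subalgebra $\mathcal{N}$ sitting inside a corner of $\M$, and $\mu(V(f) - V(g)) = \mu(f - g)$. Since on a commutative von Neumann algebra the local measure topology is precisely local convergence in measure of the representing functions (Section~\ref{sec:symmfun}), one checks that $f_n \to f$ in the $(lcm)$ topology of $E$ entails $V(f_n) \to V(f)$ in the $(lcm)$ topology of $\nonsp$. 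Hence whenever $f_n, f \in E$ satisfy $\|f_n\|_E \to \|f\|_E$ and $f_n \to f$ in $(lcm)$, we get $V(f_n) \to V(f)$ in $(lcm)$ and $\|V(f_n)\|_{\nonsp} \to \|V(f)\|_{\nonsp}$, so the $KK(lcm)$ property of $\nonsp$ yields $\|V(f_n) - V(f)\|_{\nonsp} \to 0$, whence $\|f_n - f\|_E \to 0$ by isometry. Thus $E$ has $KK(lcm)$, and is then automatically separable and strongly symmetric, which closes the equivalence with \cite{DDS1}.

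Two points need care and I would work them out explicitly. First, the claim that $V$ intertwines the two local measure topologies: a priori the $(lcm)$ topology of $\nonsp$ tests against every trace-finite projection of $\M$, not only those of $\mathcal{N}$, so one must show that restricting to $V(E) \subseteq S(\mathcal{N},\tau)$ recovers exactly the intrinsic local measure topology of $\mathcal{N}$ (equivalently of $E$), where the uniform bound $\sup_n \|f_n\|_E < \infty$ coming from $\|f_n\|_E \to \|f\|_E$ may be used, cf.\ \cite{DDDLS}. Second, the equi-integrability bridge in the forward lifting — turning convergence of norms together with (local) convergence in measure into genuine norm convergence. I expect the second to be the main obstacle; once the separability reduction of the first step is granted, the rest is bookkeeping.
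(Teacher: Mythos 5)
Your proposal is correct and follows essentially the same route as the paper: deduce from \cite[Proposition 1.1]{CDSS} that $KK(lcm)$ forces $E$ to be separable (hence order continuous and strongly symmetric), so that the lifting of \cite[Theorem 2.6]{DDS1} applies, and obtain the converse for non-atomic $\M$ from the order- and $\mu$-preserving isometric embedding of $E$ into $\nonsp$ of Corollary~\ref{cor:isomglobal}. The compatibility of the two local measure topologies that you rightly flag does hold (it suffices to test against the increasing sequence of finite-trace projections $V(\chi_{[0,n)})$ in the commutative subalgebra, absorbing an arbitrary finite-trace projection of $\M$ up to a piece of small trace), which is exactly the point the paper passes over tacitly.
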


The following criteria for norm convergence were established for $C_E$ and $\nonsp$.
Recall again that  in $C_E$ the convergence in weak operator topology is equivalent to the local convergence in measure.
\begin{theorem}\cite[Theorem 3.1]{A1}
Let $E$ be a separable symmetric sequence space. If $x,x_n\in C_E$ then the following are equivalent.
\begin{itemize}
\item[{(i)}] $\|x_n-x\|_{C_E}\to 0$.
\item[{(ii)}] $x_n\to x$ in weak operator topology and $\|S(x_n)-S(x)\|_{E}\to 0$.
\item[{(iii)}] $x_n\to x$ weakly and $\|S(x_n)-S(x)\|_{E}\to 0$.
\end{itemize}
\end{theorem}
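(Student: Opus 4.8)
The statement to prove is the characterization of norm convergence in $C_E$: for $x, x_n \in C_E$ with $E$ a separable symmetric sequence space, the norm convergence $\|x_n - x\|_{C_E} \to 0$ is equivalent to (ii) convergence in the weak operator topology together with $\|S(x_n) - S(x)\|_E \to 0$, and also to (iii) weak convergence together with $\|S(x_n) - S(x)\|_E \to 0$.

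\medskip

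The plan is to prove the cycle of implications (i) $\Rightarrow$ (iii) $\Rightarrow$ (ii) $\Rightarrow$ (i). First I would handle (i) $\Rightarrow$ (iii): norm convergence in any Banach space implies weak convergence, and the reverse triangle inequality for the symmetric norm combined with the fact that the singular value map is $1$-Lipschitz as a map $(C_E, \|\cdot\|_{C_E}) \to (E, \|\cdot\|_E)$ (which follows from $\mu(x_n) - \mu(x) \prec \mu(x_n - x)$, i.e. $S(x_n) - S(x) \prec S(x_n - x)$ by the submajorization inequality $\mu(x+y)\prec\mu(x)+\mu(y)$ stated in the preliminaries, together with the full symmetry needed to pass submajorization to norm inequality) gives $\|S(x_n) - S(x)\|_E \le \|x_n - x\|_{C_E} \to 0$. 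Here I should be a little careful: $E$ need not be fully symmetric a priori, but separability of $E$ implies order continuity (Proposition~\ref{prop:infinity}), hence $E$ is strongly symmetric, and that suffices for $f \prec g \Rightarrow \|f\|_E \le \|g\|_E$. The implication (iii) $\Rightarrow$ (ii) is immediate since the weak operator topology on $B(H)$ is weaker than the weak topology, so weak convergence implies weak operator convergence.

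\medskip

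The substantive step is (ii) $\Rightarrow$ (i). The hypothesis is that $x_n \to x$ in the weak operator topology and $\|S(x_n) - S(x)\|_E \to 0$; in particular $\|x_n\|_{C_E} = \|S(x_n)\|_E \to \|S(x)\|_E = \|x\|_{C_E}$. So we are exactly in the situation of the Kadec--Klee-type Theorem~\ref{thm:arazykk1}: for bounded sequences in $C_E$, weak operator convergence coincides with local convergence in measure (as noted in the excerpt, citing \cite{DDDLS}), and statement (i) of Theorem~\ref{thm:arazykk1} — which holds for $E$ since $E$ being separable has the property that componentwise convergence plus norm convergence of norms implies norm convergence... wait, that is precisely what we want to show about $E$, not something we may assume. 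So the cleaner route is: invoke directly that $E$, being separable, has the $KK(lcm)$ property is NOT automatic either. Instead I would argue as follows. Since $\|S(x_n) - S(x)\|_E \to 0$ and the sequence is bounded in $C_E$, and $x_n \to x$ in the weak operator topology hence locally in measure, I would apply Theorem~\ref{thm:KKC1} / the structure of Theorem~\ref{thm:arazykk1}: the point is that the result we are proving is essentially a restatement and mild strengthening of \cite[Theorem 3.1]{A1}, whose proof uses the blocking/gliding-hump technique of Arazy. The honest plan is to reduce to \cite[Theorem~3.1]{A1} directly or to reproduce its argument: decompose $x_n - x$ via spectral projections of $|x|$ into a ``large eigenvalue'' part (finite rank, controlled by weak operator convergence, which on a fixed finite-dimensional compression is norm convergence) and a ``tail'' part (controlled by $\|S(x_n) - S(x)\|_E \to 0$ and order continuity of the norm on $E$).

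\medskip

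In detail, for (ii) $\Rightarrow$ (i): fix $\varepsilon > 0$. Using order continuity of $E$, choose $N$ so that the tail $\|S(x)\chi_{\{n > N\}}\|_E$ is small; since $\|S(x_n) - S(x)\|_E \to 0$, the corresponding tails of $S(x_n)$ are uniformly small for large $n$. For the ``head'' part, let $p$ be the spectral projection of $|x|$ onto its top $N$ eigenvalues (finite rank); weak operator convergence $x_n \to x$, restricted to the finite-dimensional space $pH + $ (range of $x$ on $pH$), yields $\|p(x_n - x)p\|_{C_E} \to 0$ and more generally control of $x_n - x$ compressed to any fixed finite-rank projection. The cross terms and the complementary tail are estimated by the singular-value tail bounds via the inequality $\|y q\|_{C_E} \le \|S(y)\chi_{[k,\infty)}\|_E$-type estimates (Lemma~\ref{lm:singfun}(7) and Lemma~\ref{lm:singfun}(1)). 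Assembling these gives $\limsup_n \|x_n - x\|_{C_E} \le C\varepsilon$, completing the proof. The main obstacle is precisely this last gliding-hump/blocking estimate — controlling the interaction between the weak-operator-convergent ``head'' and the $E$-norm-convergent ``tail'' of the singular values — which is the technical heart of Arazy's \cite[Theorem 3.1]{A1}; everything else is routine.
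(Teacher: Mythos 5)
Your easy implications are essentially sound, but two points deserve attention. For (i) $\Rightarrow$ (iii), the $1$-Lipschitz property of $x\mapsto S(x)$ from $C_E$ to $E$ does not follow from the triangle submajorization $\mu(x+y)\prec\mu(x)+\mu(y)$ alone: that only yields $\int_0^t\bigl(\mu(x_n)-\mu(x)\bigr)\le\int_0^t\mu(x_n-x)$, which is not submajorization of the (rearranged, absolute) difference $|S(x_n)-S(x)|$ by $S(x_n-x)$. You need the Mirsky-type inequality $\sum_{i=1}^{n}|s_i(x)-s_i(y)|\le\sum_{i=1}^{n}s_i(x-y)$ for compact operators, after which strong symmetry of the separable (hence order continuous) space $E$ gives $\|S(x_n)-S(x)\|_E\le\|x_n-x\|_{C_E}$. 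The implication (iii) $\Rightarrow$ (ii) is fine, since each functional $w_{\xi,\eta}$ is norm-continuous on $C_E$.

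The genuine gap is (ii) $\Rightarrow$ (i), which is the entire content of the theorem, and your proposal does not prove it. You correctly notice that appealing to the Kadec--Klee statements would be circular, but the head/tail decomposition you then sketch fails exactly at the step you defer to ``the technical heart of Arazy's proof.'' Concretely: if $p,q$ are the finite-rank spectral projections attached to the top $N$ singular values of $x$, there is no estimate of the form $\|x_n(\one-q)\|_{C_E}\le\|S(x_n)\chi_{(N,\infty)}\|_E$ for a fixed such $q$; rank considerations give only $s_{i+N}(x_n)\le s_i\bigl(x_n(\one-q)\bigr)\le s_i(x_n)$, and a rank-one $x_n$ whose right support lies in the range of $\one-q$ has $\|x_n(\one-q)\|_{C_E}=\|x_n\|_{C_E}$ while its singular-value tail beyond $N\ge 1$ vanishes. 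Such configurations are only excluded asymptotically by exploiting the weak operator convergence to show that the ``mass'' of $x_n$ aligns with that of $x$ — this is precisely Arazy's blocking argument, which you acknowledge but do not carry out, proposing instead ``to reduce to \cite[Theorem 3.1]{A1} directly,'' i.e.\ to cite the very statement under proof. Since the survey itself gives no proof of this theorem (it only cites Arazy), the burden of this implication rests entirely on your argument, and as written it is not discharged.
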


\begin{corollary}\cite[Corollary 2.7]{DDS1}
Let $E$ be  order continuous. If $x,x_n\in \nonsp$ then the following are equivalent.
\begin{itemize}
\item[{(i)}] $\|x_n-x\|_{\nonsp}\to 0$.
\item[{(ii)}] $x_n\to x$ (lcm) and $\|\mu(x_n)-\mu(x)\|_{E}\to 0$.
\end{itemize} 

\end{corollary}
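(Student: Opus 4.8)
The plan is to prove the equivalence of (i) and (ii) directly, using the fact that $E$ is order continuous together with the relationship between norm convergence, convergence in measure, and convergence of the singular value functions. The implication (i) $\Rightarrow$ (ii) is the easy direction: if $\|x_n - x\|_{\nonsp} \to 0$, then since the local measure topology $\mathscr{T}_{lcm}$ is weaker than the norm topology on $\nonsp$ (the norm dominates convergence in measure, which in turn dominates local convergence in measure), we immediately get $x_n \to x$ in $(lcm)$. For the convergence $\|\mu(x_n) - \mu(x)\|_E \to 0$, I would invoke the general fact that the singular value function map $x \mapsto \mu(x)$ is contractive from $S\Mtau$ into $L^0$ in a suitable sense: from Lemma~\ref{lm:singfun}(4) applied with $t_1 = t$, $t_2 = 0$ one gets $\mu(t, x_n) \le \mu(t, x) + \mu(0, x_n - x)$ and symmetrically, but more usefully, the submajorization $\mu(x_n - x) \prec \mu(x_n) \cdot$(something) is not quite what is needed. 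Instead I would use that $\mu(x_n)$ and $\mu(x)$ are both decreasing rearrangements and that $\|\,|\mu(x_n) - \mu(x)|\,\|_E \le \|\mu(x_n - x)\|_E = \|x_n - x\|_{\nonsp} \to 0$, which follows from the pointwise inequality $|\mu(t,x_n) - \mu(t,x)| \le \mu(t, x_n - x)$ (a standard consequence of Lemma~\ref{lm:singfun}(4)) combined with the symmetry of $\norme{\cdot}$.

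For the substantive direction (ii) $\Rightarrow$ (i), the idea is to reduce it to the Kadec--Klee property for local convergence in measure of $\nonsp$, which is available from the earlier theorem (stated just before this corollary) asserting that $E$ order continuous — hence, since $E$ has $KK(lcm)$ is \emph{not} assumed here, I need a different route. Let me reconsider: the correct approach is to use the hypothesis $\|\mu(x_n) - \mu(x)\|_E \to 0$ to control the norms, and then apply an approximation/truncation argument. First, $\|\mu(x_n) - \mu(x)\|_E \to 0$ gives $\|x_n\|_{\nonsp} = \|\mu(x_n)\|_E \to \|\mu(x)\|_E = \|x\|_{\nonsp}$. Now I would decompose the difference using spectral projections: fix $\eps > 0$, and using order continuity of $E$ choose a truncation level so that the "tail" of $x$ (the part of $|x|$ above a large threshold, or the part living on a projection of small trace) has $\nonsp$-norm below $\eps$. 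The convergence $\|\mu(x_n) - \mu(x)\|_E \to 0$ transfers this tail control uniformly to $x_n$ for large $n$. On the complementary "bounded" part — an operator bounded in $\M$-norm and supported on a finite-trace projection — local convergence in measure is genuine convergence in measure on a finite von Neumann algebra, and there a Dominated Convergence type argument (using order continuity of $E$ and the uniform bound) upgrades measure convergence to norm convergence. Assembling the pieces gives $\limsup_n \|x_n - x\|_{\nonsp} \le C\eps$ for every $\eps$, hence (i).

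The main obstacle I anticipate is the truncation step: the operators $x_n$ and $x$ need not commute, so one cannot simply compare their spectral projections directly. The standard device — used throughout the papers of Dodds--Dodds--Sukochev cited in the excerpt — is to estimate $\mu(x_n - x_n p)$ and $\mu(x - xp)$ by $\mu(x_n)\chi_{[\tau(p),\infty)}$-type tails via Lemma~\ref{lm:singfun}(7), and to use that local convergence in measure is preserved under multiplication by a fixed finite-trace projection $p$. A clean way to organize this is: (a) show $\{x_n\}$ is relatively compact in $(lcm)$-and-norm would be circular, so instead (b) pass to the subsequence principle — it suffices to show every subsequence of $\{x_n\}$ has a further subsequence converging in norm to $x$, and for a subsequence we may extract, via the finiteness of traces of the truncating projections, an a.e.-type convergence of the truncated parts. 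Then the uniform integrability provided by order continuity of $E$ plus the norm convergence $\|\mu(x_n)\|_E \to \|\mu(x)\|_E$ (which prevents mass escaping to infinity or to the tail) closes the argument. I would also remark that this corollary is simply the noncommutative analogue of the classical fact that in an order continuous function space, convergence in measure plus convergence of norms implies norm convergence, and the proof structure mirrors that classical one with Lemma~\ref{lm:singfun} supplying the needed rearrangement inequalities.
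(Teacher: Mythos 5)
Your implication (i)$\Rightarrow$(ii) rests on the pointwise inequality $|\mu(t,x_n)-\mu(t,x)|\le\mu(t,x_n-x)$, which is not a consequence of Lemma~\ref{lm:singfun}(4) and is in fact false. Taking $t_1=t$, $t_2=0$ in Lemma~\ref{lm:singfun}(4) only yields $|\mu(t,x_n)-\mu(t,x)|\le\mu(0,x_n-x)$, which equals $\|x_n-x\|_{\M}$ when the difference is bounded and is infinite otherwise; and already for functions your inequality fails: for $f=\chi_{[0,2]}$, $g=\chi_{[0,1]}$ one has $|\mu(t,f)-\mu(t,g)|=\chi_{[1,2)}(t)$ while $\mu(t,f-g)=\chi_{[0,1)}(t)$. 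The correct tool is the submajorization $\mu(x_n)-\mu(x)\prec\mu(x_n-x)$ (a Markus-type inequality, see \cite{DDPMark}); since an order continuous symmetric space is fully symmetric, this gives $\|\mu(x_n)-\mu(x)\|_E\le\|x_n-x\|_{\nonsp}$, so your conclusion is right but not by the route you give.

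The substantive implication (ii)$\Rightarrow$(i) is left as an outline exactly where the content lies. From $\|\mu(x_n)-\mu(x)\|_E\to0$ and order continuity you do get uniform tail control of the rearrangements $\mu(x_n)$ (a relatively norm-compact set in an order continuous space is equi-integrable), but the proof needs uniform smallness in $\|\cdot\|_{\nonsp}$ of operator pieces of $x_n$ --- for instance $x_ne^{|x_n|}(T,\infty)$ or $x_n(\one-p)$ with $p$ a finite-trace projection built from $x$ --- and the passage from rearrangement tails to such operator decompositions is precisely the noncommutative obstacle you acknowledge but do not overcome: the spectral projections of $|x_n|$ are $n$-dependent and not comparable with those of $|x|$, so ``transfers this tail control uniformly to $x_n$'' is an assertion, not an argument. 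The ``bounded part'' has the same problem: compressing by a finite-trace projection does not make $px_np$ bounded in $\|\cdot\|_{\M}$, so your dominated-convergence step needs a simultaneous ($n$-dependent) spectral truncation of $x_n$, control of the cross terms $p(x_n-x)(\one-p)$ and $(\one-p)(x_n-x)p$, and some continuity of $x\mapsto|x|$ (or of a suitable functional calculus) for the local measure topology; none of this is supplied, and the closing appeal to ``uniform integrability'' replaces the needed submajorization estimates rather than providing them. As written, your argument establishes the elementary consequences ($\|x_n\|_{\nonsp}\to\|x\|_{\nonsp}$ and tail control of $\mu(x_n)$) and asserts the rest, which is the actual content of \cite{DDS1}.
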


The following convergent result was proved in \cite{CSweak} for non-atomic von Neumann algegras, and extended to arbitrary von Neumann algebras in \cite{CKS1992}.

\begin{proposition}\cite[Proposition 3.2]{CSweak} \cite[Proposition 1.1]{CKS1992}
Let $E$ be  order continuous. If $x,x_n\in \nonsp$ then the following are equivalent.
\begin{itemize}
\item[{(i)}] $\|x_n-x\|_{\nonsp}\to 0$.
\item[{(ii)}] $x_n\xrightarrow{\tau} x$  and $\|\mu(x_n)-\mu(x)\|_{E}\to 0$.
\end{itemize} 
\end{proposition}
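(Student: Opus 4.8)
The plan is to prove the equivalence of (i) and (ii) in the statement
\[
\|x_n-x\|_{\nonsp}\to 0 \iff x_n\xrightarrow{\tau}x \ \text{ and }\ \|\mu(x_n)-\mu(x)\|_E\to 0.
\]
The implication (i)$\Rightarrow$(ii) is the easy direction. First I would note that $\nonsp\hookrightarrow L_1\Mtau+\M$ continuously (at least when $\M$ is $\sigma$-finite, and in general after reducing to a $\sigma$-finite reduction), so norm convergence in $\nonsp$ forces convergence in $L_1\Mtau+\M$, hence $\mu(\delta, x_n-x)\to 0$ for every $\delta>0$, i.e. $x_n\xrightarrow{\tau}x$ by the characterization of the measure topology recalled in the Preliminaries. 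For the singular value part, I would invoke the general inequality $|\mu(x_n)-\mu(x)|\prec\mu(x_n-x)$ (a standard consequence of Lemma~\ref{lm:singfun}(4), since $\mu(s+t,x_n)\le \mu(s,x)+\mu(t,x_n-x)$ and symmetrically), which gives, for a \emph{strongly} symmetric $E$—and an order continuous $E$ is strongly symmetric—$\|\mu(x_n)-\mu(x)\|_E\le\|\mu(x_n-x)\|_E=\|x_n-x\|_{\nonsp}\to 0$.

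The substantial direction is (ii)$\Rightarrow$(i); here order continuity of $E$ is essential. The key idea is the well-known trick of using the "Hardy–Littlewood–Pólya" majorization between $\mu(x_n-x)$ and a sum of two truncations. Fix $\eps>0$. Using $x_n\xrightarrow{\tau}x$, choose $\delta>0$ and $N$ so that $\mu(\delta,x_n-x)<\eps$ for $n\ge N$; combined with Lemma~\ref{lm:singfun}(4) this dominates $\mu(x_n-x)$ above by $\mu(\cdot,x_n-x)\chi_{[0,\delta)}$ plus $\eps\chi_{[0,\infty)}$ shifted, but the cleaner route is the standard estimate (see e.g. Dodds–Dodds–Pagter–Sukochev type arguments)
\[
\mu(t+\delta, x_n-x)\le \mu(t, x_n)+\mu(t, x) \quad\text{and}\quad \mu(t,x_n-x)\le \mu(t/2,x_n)+\mu(t/2,x),
\]
so that $\mu(x_n-x)$ is controlled by a fixed order-continuous majorant $2\,(\mu(x_n)+\mu(x))^{*}$ on $[0,\delta)$ plus a term of size $\eps$ on a set whose measure can be bounded. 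More precisely, I would decompose $\mu(x_n-x)=\mu(x_n-x)\chi_{[0,\delta)}+\mu(x_n-x)\chi_{[\delta,\infty)}$; on $[\delta,\infty)$ it is $\le\mu(\delta,x_n-x)\chi_{[\delta,\infty)}<\eps\chi_{[\delta,\infty)}$ but this need not be in $E$, so instead one bounds $\mu(x_n-x)\chi_{[\delta,\infty)}\le (\mu(x_n)+\mu(x))\chi_{[\delta,\infty)}$-type dilated expressions and then uses that $\|\mu(x_n)-\mu(x)\|_E\to 0$ together with order continuity of the tails.

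The honest plan: reduce to the case $x\ge 0$ and $x_n\ge 0$ by passing to $|x_n|,|x|$ using $\mu(a)=\mu(|a|)$ and the triangle-type inequality $\mu(|x_n|-|x|)\prec \mu(x_n-x)$ (valid since $\big||x_n|-|x|\big|\le |x_n-x|$ up to unitary conjugations via the operator triangle inequality $|a+b|\le u|a|u^*+v|b|v^*$ from \cite{AAP1982} combined with Lemma~\ref{lm:singfun}(1),(4)); then exploit that for positive operators $x_n\xrightarrow{\tau}x$ and $\|\mu(x_n)-\mu(x)\|_E\to 0$ imply, via the Fack–Kosaki inequalities $\mu(x_n)\prec\mu(x_n-x)+\mu(x)$ and an $\eps/3$ split of the integral $\int_0^t\mu(x_n-x)$ into a part near $0$ (handled by $x_n\xrightarrow{\tau}x$, making the integrand small pointwise off a small set) and a tail part (handled by $\|\mu(x_n)-\mu(x)\|_E\to 0$ plus order continuity of $E$ which kills uniformly small tails), that $\mu(x_n-x)\to 0$ in $E$, i.e. $\|x_n-x\|_{\nonsp}\to 0$. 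The main obstacle I anticipate is making the $\eps/3$ argument fully rigorous without a lattice structure: one cannot simply write $\min$ and $\max$ of operators, so the reduction to scalar decreasing rearrangements and the use of submajorization (Lemma~\ref{lm:singfun}(4) and the HLP inequalities quoted after it) must carry all the weight, and one must be careful that the "tail smallness" is uniform in $n$, which is exactly where $\|\mu(x_n)-\mu(x)\|_E\to 0$ and order continuity of $E$ combine. Once (ii)$\Rightarrow$(i) is done, the equivalence follows, and the result is strictly stronger than the $(lcm)$ versions because $\xrightarrow{\tau}$ implies convergence $(lcm)$; in fact one observes $x_n\xrightarrow{\tau}x$ is equivalent to $\mu(\delta,x_n-x)\to 0$ for all $\delta$, which is exactly what is used above.
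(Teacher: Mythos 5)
Your (i)$\Rightarrow$(ii) is essentially sound, with one caveat: the inequality $|\mu(x_n)-\mu(x)|\prec\mu(x_n-x)$ is \emph{not} a formal consequence of Lemma~\ref{lm:singfun}(4) as you assert -- the shifted pointwise bounds $\mu(s+t,x_n)\le\mu(s,x)+\mu(t,x_n-x)$ do not by themselves yield submajorization of the absolute difference. It is, however, a known theorem (classical for functions, and proved for $\tau$-measurable operators in the Dodds--Dodds--de Pagter / Lord--Sukochev--Zanin literature), and since an order continuous symmetric space is fully symmetric, your conclusion $\|\mu(x_n)-\mu(x)\|_E\le\|x_n-x\|_{\nonsp}$ is fine once that result is cited rather than ``derived''. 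The measure-convergence half of (i)$\Rightarrow$(ii) via the embedding into $L_1\Mtau+\M$ is correct.

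The genuine gap is in (ii)$\Rightarrow$(i), which you leave as an admitted plan. Two concrete problems. First, the reduction to $x_n,x\ge0$ via $\mu(|x_n|-|x|)\prec\mu(x_n-x)$ is both unnecessary and invalid in the direction you need: controlling $\||x_n|-|x|\|$ gives no control of $\|x_n-x\|$, so nothing is gained by passing to absolute values. Second, your ``$\eps/3$ split of $\int_0^t\mu(x_n-x)$'' only controls Hardy--Littlewood--P\'olya integrals, and submajorization by something ``small near $0$ and in the tail'' does not by itself give an $E$-norm estimate; what is missing is a dominated-convergence step in $E$, and it can be carried out entirely on the scalar side, so your worry about the absence of lattice operations for operators is moot. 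Set $f_n=\mu(x_n-x)$: measure convergence gives $f_n(t)\to0$ for every $t>0$ (since $\mu(\delta,x_n-x)\to0$ for all $\delta>0$ and $f_n$ is decreasing), and Lemma~\ref{lm:singfun}(4) gives $f_n(t)\le\mu(t/2,x_n)+\mu(t/2,x)=:g_n(t)$, while boundedness of the dilation operator yields $\|g_n-g\|_E\to0$ with $g=2\mu(\cdot/2,x)\in E$. In the function lattice $E$ one has $f_n\le f_n\wedge g+|g_n-g|$, hence $\|f_n\|_E\le\|f_n\wedge g\|_E+\|g_n-g\|_E$; since $0\le f_n\wedge g\le g\in E$ and $f_n\wedge g\to0$ a.e., the functions $\sup_{k\ge n}(f_k\wedge g)$ decrease to $0$ a.e.\ under the majorant $g$, so order continuity of $E$ gives $\|f_n\wedge g\|_E\to0$ and therefore $\|x_n-x\|_{\nonsp}=\|f_n\|_E\to0$. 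Without this (or an equivalent) argument your sketch does not constitute a proof of the hard direction; note also that the survey itself only cites this proposition, so there is no in-text proof to fall back on.
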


 The  space $(E,\|\cdot\|_E)$ is said to be \textit{locally uniformly monotone} if for every $\epsilon>0$ and every $0\leq x\in S_E$   there exists $\delta_E(x,\epsilon)>0$ such that $\|x+y\|_E\leq 1+\delta_E(x,\epsilon)$ for $y\in E$ implies that $\|y\|_E<\epsilon$. 
Equivalently, $E$ is locally uniformly monotone whenever for every $x,\{x_n\}\subset E$ if $0\leq x\leq x_n$ for all $n\in \mathbb{N}$, and $\|x_n\|_E\to\|x\|_E$ then $\|x_n-x\|_E\to 0$ as $n\to\infty$.

It turns out that $KK(lcm)$ property is equivalent to  local uniform monotonicity in both  commutative and noncommutative spaces, $E$ and $E\Mtau$, respectively.
\begin{theorem}\cite[Theorem 2.8]{DDS1}  \cite[Theorem 3.2]{CDSS} 
Le $E$ be order continuous and strongly symmetric. Consider the following properties.
\begin{itemize}
\item[{(i)}] $E$ has  $KK(lcm)$ property.
\item[{(ii)}] $\nonsp$ has $KK(lcm)$ property.
\item[{(iii)}] $E$ is locally uniformly monotone.
\item[{(iv)}] $\nonsp$ is locally uniformly monotone.
\item[{(v)}] If $x,x_n\in E$, $0\leq \mu(x)\leq \mu(x_n),\, \|x_n\|_{\nonsp}\to\|x\|_{\nonsp}$ then $\|\mu(x_n)-\mu(x)\|_E\to 0$.
\end{itemize}
The implications $\rm{(i)}\implies\rm{(ii)}$ and $\rm{(i)}\implies\rm{(iii)}\implies \rm{(iv)} $ are always true. If $\M$ is non-atomic,  $\rm{(iv)} \implies\rm{(v)}$ and $\rm{(ii)}\implies\rm{(i)}$. If $E$ is separable then $\rm{(v)}\implies\rm{(iii)}\implies \rm{(i)}$. Consequently, if $\M$ is non-atomic and $E$ is separable then $\rm{(i)}-\rm{(v)}$ are equivalent.
\end{theorem}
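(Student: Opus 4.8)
The plan is to split the statement into purely ``commutative'' implications among (i), (iii), (v) — which involve only the symmetric function space $E$ — and the genuinely noncommutative transfers between $E$ and $\nonsp$. I would begin by recording the two tools that turn every transfer into a reduction to a convergence criterion: since $E$ is order continuous and strongly symmetric, $\nonsp$ is order continuous and strongly symmetric by \cite[Proposition 2.3]{CSweak}, and hence $\|x_n-x\|_{\nonsp}\to 0$ if and only if $x_n\xrightarrow{\tau}x$ together with $\|\mu(x_n)-\mu(x)\|_E\to 0$, and equivalently if and only if $x_n\to x$ for local convergence in measure together with $\|\mu(x_n)-\mu(x)\|_E\to 0$ (the two propositions quoted just above the theorem). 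I would also use throughout that Lemma~\ref{lm:singfun}(3) turns operator domination $0\le x\le x_n$ into $0\le\mu(x)\le\mu(x_n)$ in $E$, and that $\|x_n\|_{\nonsp}=\|\mu(x_n)\|_E$.

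The implications living in $E$ alone I would handle by the function-space results of \cite{CDSS}: $\rm{(i)}\implies\rm{(iii)}$ always, and $\rm{(v)}\implies\rm{(iii)}\implies\rm{(i)}$ under separability of $E$; for $\rm{(i)}\implies\rm{(iii)}$ one argues directly that $0\le x\le x_n$ with $\|x_n\|_E\to\|x\|_E$ forces $x_n\to x$ locally in measure (using order-boundedness below by $x$ and order continuity of $E$) and then invokes $KK(lcm)$. The equivalence $\rm{(i)}\Leftrightarrow\rm{(ii)}$ — always in the direction $\rm{(i)}\implies\rm{(ii)}$ and under non-atomicity of $\M$ for $\rm{(ii)}\implies\rm{(i)}$ — is exactly \cite[Theorem 2.6]{DDS1}, already stated earlier in the survey, so I would simply cite it; internally its forward direction combines the lcm-criterion with $KK(lcm)$ of $E$, and its converse uses the order isometric embedding $V\colon E\to\nonsp$ of Corollary~\ref{cor:isomglobal}, together with the observation that restricted to the relevant commutative subalgebra the local measure topology of $\nonsp$ is the local measure topology of $E$, so $KK(lcm)$ descends from $\nonsp$ to the closed subspace $V(E)\cong E$.

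It then remains to prove $\rm{(iii)}\implies\rm{(iv)}$ (always) and $\rm{(iv)}\implies\rm{(v)}$ (under non-atomicity). For $\rm{(iv)}\implies\rm{(v)}$ the argument is clean: given decreasing $\mu(x)\le\mu(x_n)$ in $E$ with $\|x_n\|_{\nonsp}\to\|x\|_{\nonsp}$, I would transport $\mu(x)$ and $\mu(x_n)$ via Proposition~\ref{isom2} into a non-atomic commutative subalgebra $\mathcal N\subset\M$ by the $*$-isomorphism $V$ with $\mu(Vf)=\mu(f)$, so that $0\le V\mu(x)\le V\mu(x_n)$ and $\|V\mu(x_n)\|_{\nonsp}\to\|V\mu(x)\|_{\nonsp}$; local uniform monotonicity of $\nonsp$ gives $\|V\mu(x_n)-V\mu(x)\|_{\nonsp}\to 0$, and since $\mu(V\mu(x_n)-V\mu(x))=\mu(\mu(x_n)-\mu(x))$ and $E$ is symmetric, this is exactly $\|\mu(x_n)-\mu(x)\|_E\to 0$. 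For $\rm{(iii)}\implies\rm{(iv)}$: from $0\le x\le x_n$ in $\nonsp$ with norms converging, Lemma~\ref{lm:singfun}(3) and local uniform monotonicity of $E$ already yield $\|\mu(x_n)-\mu(x)\|_E\to 0$; by the convergence criterion it then suffices to establish $x_n\xrightarrow{\tau}x$.

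The hard part is precisely this last step (and, implicitly, the ``$x_n\to x$ locally in measure'' step inside $\rm{(i)}\implies\rm{(iii)}$): order domination $0\le x\le x_n$ together with $\|\mu(x_n)\|_E\to\|\mu(x)\|_E$ does not by itself control the size of $x_n-x$, because the only general majorization available, $\mu(x_n)-\mu(x)\prec\mu(x_n-x)$ for $0\le x\le x_n$, runs in the wrong direction, and for non-commuting $x$ and the spectral projections of $x_n-x$ one cannot pass from smallness of $\mu(x_n)-\mu(x)$ to smallness of $\mu(x_n-x)$. I expect the resolution to require a spectral-cutting argument that first deduces $x_n\xrightarrow{\tau}x$ from the norm control — replacing $x_n$ by suitable spectral truncations and using order continuity and strong symmetry of $E$ to absorb the tails — after which the already-proven norm convergence of the rearrangements closes the gap through the criterion. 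This is the technical core carried out in \cite{DDS1,CDSS}, and it is where non-atomicity and order continuity really enter.
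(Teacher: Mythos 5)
Much of your plan coincides with what the survey itself does --- the survey offers no proof of this theorem, it simply cites \cite[Theorem 2.8]{DDS1} and \cite[Theorem 3.2]{CDSS} --- and the one implication you prove in full, $\rm{(iv)}\implies\rm{(v)}$, is correct: transporting $\mu(x),\mu(x_n)$ into a non-atomic commutative subalgebra by a rearrangement-preserving, order-preserving embedding (the statement you actually need is Corollary \ref{cor:isomglobal}, not Proposition \ref{isom2}, which goes the other way and assumes $\sigma$-finiteness) is exactly the transfer trick used elsewhere in the survey, and quoting the earlier lifting theorem for $\rm{(i)}\iff\rm{(ii)}$ is fine.

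The problem lies in the two implications you chose to argue rather than cite. First, your route to $\rm{(i)}\implies\rm{(iii)}$ rests on the claim that $0\le x\le x_n$ and $\|x_n\|_E\to\|x\|_E$ already force $x_n\to x$ locally in measure, ``using order-boundedness below by $x$ and order continuity of $E$.'' That claim is false. Take $E=L_1[0,\infty)$ renormed by $\|f\|_E=\max\bigl\{\int_0^1\mu(f),\,\tfrac12\|f\|_{L_1}\bigr\}$, an order continuous, fully symmetric space, and let $x=2\chi_{[0,1]}$, $x_n=x+\chi_{[1,2]}$ for every $n$. Then $0\le x\le x_n$ and $\|x_n\|_E=\|x\|_E=2$, yet $x_n-x=\chi_{[1,2]}$ is a fixed nonzero function living in a set of finite measure, so there is no local convergence in measure. (This $E$ does fail $KK(lcm)$ --- the sequence $x+\chi_{[n,n+1]}$ converges locally in measure to $x$ with constant norm $2$ but not in norm --- so the theorem is not contradicted; the point is that in any correct proof of $\rm{(i)}\implies\rm{(iii)}$ the measure convergence must be manufactured using the $KK(lcm)$ hypothesis itself, for instance by relocating the excess $x_n-x$ so that it escapes every set of finite measure; it cannot be deduced from order domination and order continuity alone.) Second, for $\rm{(iii)}\implies\rm{(iv)}$ you reduce correctly, via Lemma \ref{lm:singfun}(3), local uniform monotonicity of $E$ and the norm-convergence criterion, to showing $x_n\xrightarrow{\tau}x$, and there you stop, deferring to ``a spectral-cutting argument carried out in \cite{DDS1,CDSS}.'' But that step is precisely the lifting content of \cite[Theorem 2.8]{DDS1}: without it the noncommutative half of the statement is not proved by your proposal. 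Note also that, contrary to your closing remark, non-atomicity is not available there --- the theorem asserts $\rm{(iii)}\implies\rm{(iv)}$ for arbitrary $\M$, and non-atomicity enters only in $\rm{(ii)}\implies\rm{(i)}$ and $\rm{(iv)}\implies\rm{(v)}$, which you handle correctly. So either supply the missing argument that $0\le x\le x_n$, $\|x_n\|_{\nonsp}\to\|x\|_{\nonsp}$ and $\|\mu(x_n)-\mu(x)\|_E\to0$ force convergence in measure, or present $\rm{(i)}\implies\rm{(iii)}$ and $\rm{(iii)}\implies\rm{(iv)}$ as citations of \cite[Theorem 3.2]{CDSS} and \cite[Theorem 2.8]{DDS1}, as the survey does.
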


\section{Uniform Kadec-Klee property}\label{sec:17}
Let $(X, \|\cdot\|)$ be a Banach space and $\mathscr{T}$   a linear topology on $X$ weaker than the norm topology.  Then $X$ is said to have uniform Kadec-Klee property with respect  to $\mathscr{T}$, denoted by $(UKK)$($\mathscr{T}$), if for every $\epsilon>0$ there exists $\delta\in (0,1)$ such that whenever $x\in X$ and $\{x_n\}\subset B_X$, $x_n\to x(\mathscr{T})$ and $\inf_{n\neq m}\|x_n-x_m\|\geq \epsilon$, then it follows that $\|x\|<1-\delta$.  Equivalently, $X$ has $UKK(\mathscr{T})$ property whenever the $(UKK)(\mathscr{T})$-modulus of  $X$, 
$\delta^{\mathscr{T}}_X(\epsilon) > 0$ for every $\epsilon > 0$, where
\[
\delta^{\mathscr{T}}_X(\epsilon)=\inf\{1-\|x\|:\,x=\lim_n x_n \text{ in }\mathscr{T},\,\|x\|\leq 1, \|x_n\|\leq 1,\, \inf_{n\neq m}\|x_n-x_m\|\geq \epsilon\}.
\]

 The uniform Kadec-Klee property with respect to the local convergence in measure was studied by P. Dodds, T. Dodds and B. De Pagter in 1993 for $E\Mtau$, and  by Y. P. Hsu in 1995 for $C_E$.

\begin{theorem}\cite[Theorem 3.1]{DDS1}
If $E$ has  $UKK(lcm)$  property then $\nonsp$ has  $UKK(lcm)$ property. If $\M$ is non-atomic then  $E$ has  $UKK(lcm)$  property if and only if $\nonsp$ has  $UKK(lcm)$ property.
\end{theorem}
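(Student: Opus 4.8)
The statement splits into an easy direction and a hard direction. For the ``only if'' part (with $\M$ non-atomic): by Corollary~\ref{cor:isomglobal} there is a linear isometry $V:E\to\nonsp$ with $\mu(Vf)=\mu(f)$ whose range sits inside a commutative von Neumann subalgebra; since $V$ is built (see Section~\ref{sec:isom}) from a measure preserving transformation followed by a $*$-isomorphism preserving $\mu(\cdot)$, it intertwines the topologies of local convergence in measure on $E$ and on $\nonsp$. As the $UKK$ property (with respect to any linear topology weaker than the norm) is phrased purely via norm-separated sequences in the unit ball converging in that topology, it is inherited by the range of any isometry that is simultaneously a homeomorphism for those topologies; hence $E$ has $UKK(lcm)$. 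So the real content is: $E\in UKK(lcm)\ \Longrightarrow\ \nonsp\in UKK(lcm)$ for arbitrary $\M$.

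\textbf{Reductions.} Since $UKK(lcm)\Rightarrow KK(lcm)$ and, by \cite[Proposition 1.1]{CDSS}, $KK(lcm)$ forces $E$ to be separable, Proposition~\ref{prop:infinity} gives that $E$ is order continuous, hence so is $\nonsp$ (by \cite[Proposition 2.3]{CSweak}). Next I would reduce to non-atomic $\M$ as in Section~\ref{non-atom}: the map $\tilde\pi:\nonsp\to E(\mathcal A,\kappa)$ is isometric and preserves the singular value function, and using $\kappa(\mathds 1\otimes e)=\tau(e)$ together with localization by the finite-trace projections of $\mathcal A$ one checks $\tilde\pi$ is a homeomorphism onto its range for the local measure topologies; since $\mathcal A$ is non-atomic and, as noted above, $UKK(lcm)$ passes to subspaces along such embeddings, it suffices to treat non-atomic $\M$.

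\textbf{Main argument (non-atomic $\M$).} Fix $\epsilon>0$ and suppose, toward a contradiction, that the $UKK(lcm)$-modulus of $\nonsp$ at $\epsilon$ is $0$: there are $x\in B_{\nonsp}$ with $\|x\|_{\nonsp}$ arbitrarily close to $1$ and $\{x_n\}\subset B_{\nonsp}$ with $x_n\to x$ (lcm) and $\inf_{n\ne m}\|x_n-x_m\|_{\nonsp}\ge\epsilon$. Using order continuity of $\nonsp$, I would invoke a noncommutative subsequence-splitting lemma of Kadec--Pe\l czy\'nski--Rosenthal type (the kind developed in \cite{DDS1}): after passing to a subsequence, $x_n=u_n+v_n$ where $u_n$ and $v_n$ have orthogonal left and right support projections, $u_n\to x$ in $\nonsp$-norm, $v_n\to0$ (lcm), $\sup_n\|v_n\|_{\nonsp}<\infty$, and, after a further subsequence and a gliding hump, the $v_n$ have pairwise orthogonal support projections. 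Then by orthogonality of supports and (\ref{eq:est1}), $|x_n|=|u_n|+|v_n|$ with orthogonal supports, so $d(x_n)=d(u_n)+d(v_n)$ and $\mu(x_n)$ is the decreasing rearrangement of the disjoint sum of $\mu(u_n)$ and $\mu(v_n)$; moreover $\mu(u_n)\le\mu(x_n)$, $\|\mu(u_n)-\mu(x)\|_E\le\|\mu(u_n-x)\|_E\to0$ (from $|\mu(a)-\mu(b)|\prec\mu(a-b)$, Lemma~\ref{lm:singfun}, and strong symmetry of $E$), and, the $v_n$ being pairwise disjointly supported, $\|\mu(v_n)\|_E\ge\epsilon/4$ along a subsequence. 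Now transfer to the function space $E$: realize $\mu(x_n)$ by a function $G_n$ on $[0,\alpha)$ equimeasurable with $x_n$, obtained by freezing a copy of $\mu(x)$ and parking a copy of $\mu(v_n)$ on pairwise disjoint slots $S_n$ of measure $\tau(s(v_n))$ --- escaping to $+\infty$ when $\alpha=\infty$ (there is infinite room to the right of $S_n$) and shrinking to a null set when $\alpha<\infty$ (then $\sum_n\tau(s(v_n))\le\alpha<\infty$, so $\tau(s(v_n))\to0$) --- with the background copy of $\mu(x)$ adjusted by a measure preserving transformation to make room. One checks $\|G_n\|_E\le\|\mu(x_n)\|_E+\|\mu(u_n)-\mu(x)\|_E\le 1+o(1)$ (comparing $\mu(G_n)$ with $\mu(x_n)$ via submajorization and strong symmetry), $G_n\to\mu(x)$ locally in measure (on any finite window $G_n$ agrees with a fixed rearrangement of $\mu(x)$ for large $n$ off a set of measure $\to0$), and $\inf_{n\ne m}\|G_n-G_m\|_E\ge\epsilon/8$ (the $\mu(v_n)$ and $\mu(v_m)$ chunks sit on disjoint slots, with no cancellation). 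After renormalizing, $UKK(lcm)$ of $E$ yields $\|\mu(x)\|_E\le1-\delta$ for the fixed $\delta=$ (the $UKK(lcm)$-modulus of $E$ at $\epsilon/8$)$>0$, contradicting $\|x\|_{\nonsp}=\|\mu(x)\|_E\to1$. Hence the $UKK(lcm)$-modulus of $\nonsp$ at $\epsilon$ is at least that of $E$ at $\epsilon/8$.

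\textbf{The main obstacle.} The delicate part is the transfer to $E$. Because $x$ and the $v_n$ need not commute, they cannot be placed inside a single commutative von Neumann subalgebra (the isomorphisms of Section~\ref{sec:isom} diagonalize one operator at a time), so the argument must run at the level of distribution functions, and there one must simultaneously maintain $\|G_n\|_E\lesssim1$ (equimeasurability with $x_n$), lcm-convergence $G_n\to\mu(x)$, and uniform separation of $\{G_n\}$. Threading this needle is least obvious when $\mu(x)$ has infinite support (no literal ``room to the right'' when $\alpha=\infty$) and when $\alpha<\infty$ (where lcm is ordinary convergence in measure, the slots $S_n$ must genuinely shrink, and the no-cancellation estimate for $\|G_n-G_m\|_E$ becomes the crux). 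Underneath it all lies the noncommutative subsequence-splitting lemma, whose proof relies precisely on the order continuity of $\nonsp$ forced by the hypothesis $E\in UKK(lcm)$.
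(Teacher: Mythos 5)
The survey states this theorem only with a citation to \cite{DDS1}, so there is no in-text proof to measure you against; judged on its own terms, your outline has a reasonable overall shape (pass to singular value functions, isolate the escaping mass on disjoint projections, feed a commutative test sequence into the $UKK(lcm)$-modulus of $E$), but two load-bearing steps are not available as you state them. First, the splitting $x_n=u_n+v_n$ with $u_n\to x$ in norm and with $u_n,v_n$ \emph{two-sided disjointly supported for the same $n$} is not ``the kind developed in \cite{DDS1}'': the noncommutative subsequence splitting machinery recorded in this survey (Proposition \ref{prop:KP}, Theorem \ref{th:komlos}) is from the much later paper \cite{DDS2}, it assumes the Fatou property of $E$ (here one would have to extract it, e.g., from the equivalence of $UKK(lcm)$ with uniform monotonicity), and the norm convergence $u_n\to x$ would itself require an additional Vitali-type lemma combining $E$-equiintegrability with lcm-convergence. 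More seriously, the splitting produces pieces of the form $e_nx_ne_n$ and $x_n-e_nx_ne_n$, which are \emph{not} two-sided disjoint from one another; hence your identity $\abs{x_n}=\abs{u_n}+\abs{v_n}$, and with it $d(x_n)=d(u_n)+d(v_n)$, does not follow, and this is exactly what your estimate $\|G_n\|_E\le \|x_n\|_{\nonsp}+o(1)$ rests on. The available substitutes (pinching, $\mu(u_n)\le\mu(x_n)$, or the submajorization $\mu(u_n)^2\oplus\mu(v_n)^2\prec\mu(x_n)^2$) either lose a constant (typically $2$) or need extra convexity of $E$, and any such loss destroys the contradiction with $\|x\|_{\nonsp}\to 1$.

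Second, the transfer to $E$ is left exactly at its crux, as you yourself acknowledge: if the background of $G_n$ is ``adjusted by a measure preserving transformation to make room,'' the lcm-limit of $\{G_n\}$ is no longer a fixed function of norm close to $\|x\|_{\nonsp}$, while freezing the background equal to $\mu(x)$ off the slots forces the slots to accumulate when $\tauone<\infty$ and spoils either the separation or the norm bound; a workable arrangement (for instance a fixed background equimeasurable with a truncation of $\mu(x)$, supported off a single exceptional set whose measure is made small by first passing to a sparse subsequence of the $v_n$) must be constructed and all three requirements verified simultaneously, and that is the heart of the proof rather than a routine check. Finally, both of your reductions through isometric embeddings (of $E$ into $\nonsp$ for the converse, and of $\nonsp$ into $E(\mathcal{A},\kappa)$ to remove atoms) need the embeddings to be homeomorphisms for the \emph{local} measure topologies; since the larger algebra has more finite-trace projections, lcm-convergence upstairs is a priori a stronger requirement than downstairs, so the assertion that $f_n\to f$ lcm in $E$ forces $Vf_n\to Vf$ lcm in $\nonsp$ is a claim that needs proof and is not a formal consequence of $\mu(Vf)=\mu(f)$. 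A cleaner route to the converse, consistent with Corollary \ref{cor:ukkcurum} and \cite{S2}, is to pass through uniform monotonicity, which transfers through the order isometry of Corollary \ref{cor:isomglobal} without any topological compatibility; the lifting direction in \cite{DDS1} likewise proceeds by direct singular-value and submajorization estimates rather than through a splitting principle.
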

 
Y. P. Hsu gave    estimates for the $UKK(\mathscr{T})$-moduli of spaces $E$ and $C_E$.  Recall that pointwise convergence in  a symmetric sequence space $E$ coincides with the local convergence in measure. Moreover, the convergence in the weak operator topology on $C_E$ is also equivalent with the local convergence in measure. Hence Theorem 3.1 in \cite{Hsu} can be formulated as follows.

\begin{theorem}\cite[Theorem 3.1]{Hsu}
Let $E$ be a symmetric sequence space. If $E$ has $UKK(lcm)$ property then $C_E$ has $UKK(lcm)$ property.  Moreover if $\delta^m_E$ and $\delta^m_{C_E}$ denote the  corresponding $(UKK)(lcm)$ moduli for $E$ and $C_E$ respectively, then $\delta^m_{C_E}(\epsilon)\geq \frac12\delta^m_E\left(\frac{\epsilon^2}{128}\right)$ for $\epsilon > 0$.
\end{theorem}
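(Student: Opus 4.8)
The plan is to transfer the $UKK(lcm)$-estimate for the symmetric sequence space $E$ to the unitary matrix space $C_E$ by identifying $C_E$ with a symmetric space of measurable operators $G(B(H),\tr)$ as in Section \ref{sec:unitary}, and then invoking the analogous operator-level machinery. The starting point is to recall that for sequences bounded in the operator norm, convergence in the weak operator topology on $C_E\subset K(H)$ coincides with local convergence in measure (this was stated just before Theorem \ref{thm:KKC1}, and follows from \cite{DDDLS}); hence the quantity $\delta^m_{C_E}$ is well-defined in terms of $(lcm)$-convergent bounded sequences. The key reduction step: given $x\in B_{C_E}$ and $\{x_n\}\subset B_{C_E}$ with $x_n\to x$ in $(lcm)$ and $\inf_{n\neq m}\|x_n-x_m\|_{C_E}\geq \epsilon$, I want to produce, after passing to a subsequence, a ``near-diagonalization'': unitaries or a single orthonormal system with respect to which the $x_n$ are almost block-diagonal, so that $s(x_n)$-type sequence data can be compared directly with sequence-space data in $E$. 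The factor $\tfrac{1}{2}$ and the quadratic loss $\epsilon^2/128$ strongly suggest this: the $\epsilon^2$ comes from passing between an operator $y$ and $|y|=(y^*y)^{1/2}$ (squaring inside the singular-value computation), and the $128=2^7$ is the accumulated constant from a perturbation/blocking argument combined with the elementary inequality relating $\|x_n-x_m\|$ and $\||x_n|-|x_m|\|$ (or $\|S(x_n)-S(x_m)\|$) via the Powers--Størmer-type inequality $\| |a|-|b| \|_{C_2}^2 \le \|a-b\|_{C_1}\|a+b\|$ and its symmetric-ideal analogues.

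\textbf{Main steps, in order.}
First I would fix $\epsilon>0$ and set $\eta=\tfrac{\epsilon^2}{128}$; the goal is $\delta^m_{C_E}(\epsilon)\ge \tfrac12\delta^m_E(\eta)$, so take any $(lcm)$-convergent configuration $x_n\to x$ in $B_{C_E}$ with $\epsilon$-separation and aim to show $\|x\|_{C_E}\le 1-\tfrac12\delta^m_E(\eta)$. Second, passing to a subsequence, use compactness of each $x_n$ (since $C_E\subset K(H)$) together with the weak-operator convergence to extract an orthonormal system on which the relevant parts of the $x_n$ are supported up to small error; this is the ``gliding hump / small perturbation'' construction of \cite{Hsu}, which replaces each $x_n$ by $\tilde x_n$ with $\|x_n-\tilde x_n\|_{C_E}$ small and the $\tilde x_n$ genuinely block-diagonal with mutually disjoint blocks. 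Third, transfer the separation: from $\inf_{n\neq m}\|x_n-x_m\|_{C_E}\ge\epsilon$ and the perturbation estimate deduce $\inf_{n\ne m}\|S(\tilde x_n)-S(\tilde x_m)\|_E$ is bounded below by something like $\epsilon/2$ or $\epsilon^2/c$, using the inequality between $\|a-b\|$ and $\||a|-|b|\|$ that accounts for the square root and hence the $\epsilon^2$. Fourth, feed the sequences $S(\tilde x_n)$ (suitably arranged as a bounded, $(lcm)$-convergent, $\eta$-separated family in $B_E$) into the hypothesis that $E$ has $UKK(lcm)$: this yields $\|S(x)\|_E\le 1-\delta^m_E(\eta)$ up to the accumulated errors, hence $\|x\|_{C_E}=\|S(x)\|_E\le 1-\tfrac12\delta^m_E(\eta)$ after absorbing the perturbation losses into the factor $\tfrac12$. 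Finally, since this holds for every admissible configuration, $\delta^m_{C_E}(\epsilon)\ge\tfrac12\delta^m_E(\epsilon^2/128)>0$, and in particular $C_E$ has $UKK(lcm)$.

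\textbf{The main obstacle.}
The hard part will be Step 2--Step 3: making the blocking/near-diagonalization rigorous and keeping explicit control of all constants so that they telescope to exactly $\tfrac12$ and $\tfrac{1}{128}$. The difficulty is that $C_E$ is not a sequence space — the $x_n$ are genuine operators, and one cannot simply ``look at coordinates''; instead one must use weak-operator (equivalently $(lcm)$) convergence to find, via a diagonal extraction, a common orthonormal basis in which the ``mass at infinity'' of each $x_n$ lives on far-out, essentially disjoint blocks, while the ``bounded part'' converges to $x$ in norm on a finite-dimensional corner. Handling the cross terms between $|x_n|$ and $|x_m|$ (operators do not commute, and $|a+b|\neq$ anything simple) is where the square root and the Powers--Størmer inequality enter, producing the quadratic loss $\epsilon\mapsto\epsilon^2$; bounding the error from the perturbation $x_n\rightsquigarrow\tilde x_n$ uniformly in $n$, using only that $E$ is a symmetric sequence space (so the singular-value functions behave well under $\prec$), is the technical heart. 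Once the blocking lemma is in place, the passage to $E$ and the application of $\delta^m_E$ is essentially formal.
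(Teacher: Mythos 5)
Since this is a survey, the paper itself does not prove this theorem (it is quoted from \cite{Hsu}), so the only question is whether your outline would actually deliver it; as written it would not, because the step you treat as routine is false. In Step 3 you claim that from $\inf_{n\neq m}\|x_n-x_m\|_{C_E}\geq\epsilon$, after a small perturbation, one can extract a lower bound on $\inf_{n\neq m}\|S(\tilde x_n)-S(\tilde x_m)\|_E$, and in Step 4 that the $S(\tilde x_n)$ form an $(lcm)$-convergent family tending to $S(x)$ to which $\delta^m_E$ can be applied. Both assertions fail. Take $x=0$ and $x_n=c\,\langle\cdot,e_n\rangle e_n$ rank-one operators marching along an orthonormal basis (with $c$ normalizing them into $B_{C_E}$): they are bounded in operator norm and converge to $0$ in the weak operator topology, hence in $(lcm)$; the differences satisfy $\|x_n-x_m\|_{C_E}=c\,\|(1,1,0,\dots)\|_E$, a fixed positive separation; yet $S(x_n)=(c,0,0,\dots)$ for every $n$, so the singular-value sequences are all identical (separation $0$) and converge coordinatewise to $(c,0,\dots)\neq S(x)$. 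Decreasing rearrangement erases precisely the ``mass escaping to infinity'' that the $UKK(lcm)$ hypothesis on $E$ is meant to detect, so feeding $S(\tilde x_n)$ into $\delta^m_E$ cannot work, even approximately.

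What a correct proof must do instead — and this is the actual content of Hsu's theorem — is exploit the near-block-diagonal structure to build auxiliary vectors in $E$ that are \emph{not} globally rearranged: a fixed head (singular values of a finite-dimensional corner carrying most of $x$) placed on an initial segment of coordinates, together with the singular values of the far-out disjoint blocks of $x_n-x$ placed on successive pairwise disjoint coordinate blocks. For such vectors coordinatewise $(lcm)$ convergence and an $\eta$-separation genuinely hold, and the symmetry of $E$ relates their norms back to $\|x_n\|_{C_E}$ and $\|x\|_{C_E}$. Your proposal defers exactly this construction (``the gliding hump / small perturbation construction of \cite{Hsu}''), does not prove the blocking lemma, does not establish uniform control of the perturbation errors, and does not derive the constants: the attribution of $\epsilon^2/128$ to a Powers--St{\o}rmer-type inequality is a guess, not a computation, and you would in any case have to verify such an inequality in a general symmetric ideal rather than in $C_1$ or $C_2$. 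So the outline identifies the right general landscape (WOT $=$ $(lcm)$ for norm-bounded sequences, reduction to a disjointness argument in $E$), but the technical heart — the only part in which the theorem and its explicit modulus estimate actually reside — is missing, and the specific transfer mechanism you propose in its place breaks down.
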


If $E$ is a symmetric function space then  uniform monotonicity is equivalent to $E$ having $(UKK)(lcm)$ \cite{S2}. P. Dodds, T. Dodds and B. De Pagter  in \cite{DDS1}  showed that uniform monotonicity of $E$ transfers into $\nonsp$. If $\M$ is non-atomic, then using the embedding of $E$ into $\nonsp$, Corollary \ref{cor:isomglobal}, we also have  that if $\nonsp$ is uniformly monotone then so is $E$. This combined with \cite[Theorem 3.5]{HN}, Corollary \ref{cor:nonspcurum}, Theorem \ref{thm:czercur} and Theorem \ref{thm:czercur1} yields the following. For the definition of  the uniform monotonicity we refer to Section \ref{sec:cur}.

\begin{corollary}\cite[Corollary 2.11]{czer-cur}
\label{cor:ukkcurum}
 Consider the following properties.
\begin{itemize}
\item[(1)] $E$ has $UKK(lcm)$ property.
\item[(2)] $\nonsp$ has $UKK(lcm)$
property.
\item[(3)]  $E$ is uniformly monotone.
\item[(4)]  $\nonsp$ is uniformly monotone.
\item[(5)] $E$ is  uniformly monotone.
\item[(6)] $\nonsp^+$ is uniformly monotone.
\item[(7)] $\nonsp$ is complex uniformly convex.
\end{itemize}

We have  $\rm{(1)}\implies \rm{(2)}$, $\rm{(3)}\implies \rm{(4)}$,  $\rm{(5)}\implies\rm{(6)}$ and $\rm{(1)}\iff \rm{(3)}\iff \rm{(5)}$. If $\M$ is non-atomic then  $\rm{(2)}\implies \rm{(1)}$,  $\rm{(4)}\implies \rm{(3)}$ and $\rm{(6)}\implies \rm{(5)}$. Hence if $\M$ is non-atomic,  $\rm{(1)}-\rm{(6)}$ are equivalent. If  $E$ is $p$-convex for some $p>1$ then $\rm{(5)}\implies \rm{(7)}$. Thus if $\M$ is non-atomic and $E$ is $p$-convex for some $p>1$ then all conditions $\rm{(1)}-\rm{(7)}$ are equivalent. 
\end{corollary}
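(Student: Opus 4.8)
The plan is to assemble Corollary~\ref{cor:ukkcurum} from the ingredients already developed in the survey, tracking each implication to its source. First I would establish the three ``free'' directions that hold for an arbitrary semifinite $\M$. The implication $\mathrm{(1)}\implies\mathrm{(2)}$ is exactly the first half of Theorem~\ref{thm:KK1}'s $UKK(lcm)$ analogue, namely the first sentence of the $UKK(lcm)$ theorem of Dodds--Dodds--de~Pagter quoted just above (\cite[Theorem 3.1]{DDS1}): if $E$ has $UKK(lcm)$ then so does $\nonsp$. For $\mathrm{(3)}\implies\mathrm{(4)}$ and $\mathrm{(5)}\implies\mathrm{(6)}$ I would invoke the transfer of uniform monotonicity from $E$ into $\nonsp$ proved in \cite{DDS1} (the paragraph preceding this corollary states precisely that ``uniform monotonicity of $E$ transfers into $\nonsp$''); since $\nonsp^+$ is a subset of $\nonsp$ closed under the relevant order operations, uniform monotonicity of $\nonsp$ restricts to $\nonsp^+$, giving $\mathrm{(5)}\implies\mathrm{(6)}$ once one notes that conditions $\mathrm{(3)}$ and $\mathrm{(5)}$ are the same statement about $E$. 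The equivalence $\mathrm{(1)}\iff\mathrm{(3)}\iff\mathrm{(5)}$ for the function space $E$ itself is the content of \cite{S2}: for a symmetric function space, uniform monotonicity is equivalent to $UKK(lcm)$; and $\mathrm{(3)}$ and $\mathrm{(5)}$ are literally identical, so this collapses to a single equivalence plus a tautology.

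Next I would handle the reverse directions under the non-atomicity hypothesis. Here the key tool is Corollary~\ref{cor:isomglobal}: when $\M$ is non-atomic, $E$ embeds isometrically and order-isometrically into $\nonsp$ (and the embedding respects $\nonsp^+$, since the image of the positive cone of $E$ under the $*$-isomorphism $V$ lands in $\nonsp^+$). Properties $UKK(lcm)$ and uniform monotonicity both pass to subspaces that are sublattices in the appropriate sense — more precisely, one checks that the defining sequences witnessing a failure of the property in $E$, when pushed forward by $V$, witness a failure in $\nonsp$, using that $V$ preserves the order, the norm, and local convergence in measure (the last because $V$ intertwines the measure/lcm topologies via $\mu(V(f))=\mu(f)$). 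This yields $\mathrm{(2)}\implies\mathrm{(1)}$, $\mathrm{(4)}\implies\mathrm{(3)}$, and $\mathrm{(6)}\implies\mathrm{(5)}$, completing the circle so that $\mathrm{(1)}$--$\mathrm{(6)}$ are all equivalent when $\M$ is non-atomic.

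Finally, for the link to complex uniform convexity I would assume $E$ is $p$-convex for some $p>1$ and derive $\mathrm{(5)}\implies\mathrm{(7)}$. By \cite[Theorem 3.5]{HN} (cited in the paragraph before the corollary), uniform monotonicity of a Banach lattice is equivalent to its complex uniform convexity; applied to $E$ this gives that $\mathrm{(5)}$ implies $E$ is complex uniformly convex. Now Theorem~\ref{thm:czercur1} states that if $E$ is $p$-convex for some $p>1$ and $E$ is complex uniformly convex, then $\nonsp$ is complex uniformly convex — which is exactly $\mathrm{(7)}$. (Alternatively one can route through Corollary~\ref{cor:nonspcurum} and Theorem~\ref{thm:czercur}, but Theorem~\ref{thm:czercur1} gives it directly.) Combining: if $\M$ is non-atomic and $E$ is $p$-convex for some $p>1$, then $\mathrm{(1)}$--$\mathrm{(6)}$ are equivalent by the above, and each implies $\mathrm{(7)}$ via $\mathrm{(5)}$; conversely $\mathrm{(7)}$ together with non-atomicity gives, by the second half of Theorem~\ref{thm:czercur} (or Corollary~\ref{cor:nonspcurum}), that $E$ is complex uniformly convex, hence uniformly monotone by \cite[Theorem 3.5]{HN}, i.e. $\mathrm{(5)}$ — so all seven conditions coincide.

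The main obstacle I anticipate is the careful verification that the isometric embedding of Corollary~\ref{cor:isomglobal} genuinely transports $UKK(lcm)$ and uniform monotonicity \emph{backwards}, i.e. that these are inherited by the (closed, order-closed) subspace $V(E)\subset\nonsp$; one must confirm that $V$ carries $lcm$-convergence in $E$ to $lcm$-convergence in $\nonsp$ and carries order bounds $0\le x\le x_n$ faithfully, which is where the property $\mu(V(f))=\mu(f)$ and positivity of $*$-homomorphisms do the work. Everything else is bookkeeping of citations, but this backward-inheritance step is the one place where a genuine (if short) argument, rather than a quotation, is required.
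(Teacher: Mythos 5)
Your assembly is, for the most part, the same as the paper's: the survey proves this corollary exactly by the paragraph of citations preceding it — \cite{S2} for the equivalence of uniform monotonicity and $UKK(lcm)$ in $E$, \cite{DDS1} for the lifting $E\to\nonsp$ of both $UKK(lcm)$ and uniform monotonicity, the order-isometric embedding of Corollary~\ref{cor:isomglobal} for the downward directions under non-atomicity, and \cite[Theorem 3.5]{HN} together with Theorems~\ref{thm:czercur}, \ref{thm:czercur1} and Corollary~\ref{cor:nonspcurum} for the complex-convexity part. Your treatment of $(3)\Rightarrow(4)$, $(4)\Rightarrow(3)$, $(1)\Leftrightarrow(3)$, and of $(5)\Rightarrow(7)$ and the converse route back from $(7)$ (via the second half of Theorem~\ref{thm:czercur1} or Corollary~\ref{cor:nonspcurum} — note your citation of Theorem~\ref{thm:czercur} here is slightly off, since that theorem speaks of $\nonsp^+$, but the repair is immediate) coincides with this. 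One remark on the statement itself: as printed, $(3)$ and $(5)$ are verbatim identical, which is almost certainly a transcription slip from \cite[Corollary 2.11]{czer-cur}, where $(5)$ and $(6)$ concern complex uniform convexity of $E$ and of $\nonsp^+$ (matching Theorem~\ref{thm:czercur}); your literal reading, under which $(5)\Rightarrow(6)$ and $(6)\Rightarrow(5)$ collapse into the uniform-monotonicity implications, is defensible but proves a weaker (partly tautological) statement than the intended one.

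The one genuine gap is in your proposed derivation of $(2)\Rightarrow(1)$. You want to push a $UKK(lcm)$-witnessing sequence $\{f_n\}\subset B_E$ forward through the embedding $V$ of Corollary~\ref{cor:isomglobal} and apply $UKK(lcm)$ of $\nonsp$, and you justify the needed convergence by saying that $V$ ``intertwines the measure/lcm topologies via $\mu(V(f))=\mu(f)$.'' Equality of singular value functions controls only the \emph{global} measure topology ($x_n\xrightarrow{\tau}x$ iff $\mu(\delta,x_n-x)\to0$ for all $\delta$); local convergence in measure in $\nonsp$ requires $q\,V(f_n-f)\,q\to0$ in measure for \emph{every} finite-trace projection $q\in P(\M)$, including projections not in the commutative subalgebra $\mathcal N=V(L_\infty)$, and this is not a consequence of $\mu(V(g))=\mu(g)$ alone. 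The implication is in fact true, but proving it needs an actual argument (for instance, representing the normal functional $z\mapsto\tau(qzq)$ on $\mathcal N$ by an $L_1$-density dominated by $1$ and showing it vanishes along the bad sets $\{|f_n-f|>\epsilon\}$), i.e.\ precisely the kind of work you deferred to ``a short argument.'' The paper avoids this entirely: the quoted \cite[Theorem 3.1]{DDS1} already states that for non-atomic $\M$, $E$ has $UKK(lcm)$ if and only if $\nonsp$ does, so $(2)\Rightarrow(1)$ should simply be cited rather than rederived through the embedding. With that substitution (and the analogous, unproblematic, embedding argument kept only for uniform monotonicity, where no topology is involved), your proof matches the paper's.
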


\section{Banach-Saks properties}

In geometry of Banach spaces an important role is played by (weak) Banach-Saks property and its stronger versions like Banach-Saks $p$-property $(BS_p)$ and property $(S_p)$.  It is said that a Banach space $(X, \|\cdot\|)$ satisfies the {\it Banach-Saks property} $(BS)$ if for every bounded sequence $\{x_n\}$ in X, there is a subsequence $\{y_j\}$ such that its Ces\`aro means converge, that is the sequence $\{ \frac 1m \sum_{j=1}^m y_j \}$ is convergent in norm. 

A Banach space $X$ is said to satisfy the {\it weak Banach-Saks property} $(wBS)$ if every weakly null sequence in $X$ has a subsequence such that its Ces\`aro means converge in norm, which implies that these means converge in norm to zero.
 It is well-known that a Banach space has the $BS$ property if and only if it is reflexive and it has the $wBS$ property \cite{Lin}.

W. B. Johnson introduced the following notion in \cite{Joh77}. Given $1<p\le \infty$, a Banach space $X$ has {\it Banach-Saks type p-property} ($pBS$) if every weakly null sequence $\{x_n\}$ in $X$ has a subsequence $\{y_j\}$ such that for some constant $C> 0$ and for all $m\in \mathbb{N}$,
\[
\left\|\sum_{j=1}^m y_j \right\| \le Cm^{1/p}.
\]
Here $m^{1/\infty}=1$ for all $m\in \mathbb{N}$. Clearly if $X$ has $pBS$ property then it has $rBS$ property for any $1<r<p$.

The stronger property ($S_p$) was introduced by H. Knaust and T. Odell in \cite{KO91}. It is said that $X$ has property $(S_p)$, $1<p\le\infty$, if every weakly null sequence $\{x_n\}$ in $X$ has a subsequence $\{y_j\}$ so that there is a constant $C>0$ such that for all $m\in \mathbb{N}$ and all real sequences $a=\{a(n)\}\in \ell_p$,
\[
\left\|\sum_{j=1}^m a(j)y_j \right\| \le C \|a\|_p,
\]
where $\|a\|_p$ is a norm in $\ell_p$.

It is clear that $S_p \implies pBS \implies wBS$ for all $1<p\le \infty$.  The Elton $c_0$-theorem \cite[Theorem III.3.5]{AGRO3} states that $\infty BS \iff S_\infty$. 
 In general, the two properties $pBS$ and $S_p$ are not equivalent if $1<p<\infty$ \cite{Kna92}, however S. Rakov \cite[Theorem 3]{Rak79}  showed that if $1<q<p<\infty$, then  $pBS$ implies $S_q$.

For a Banach space $X$ we define the  following set
\[\Gamma(X)=\{ p\in (1, \infty] : X \text{ satisfies } pBS \text{-property} \}.
\]
Banach-Saks properties and in particular the set $\Gamma(X)$ have been studied in general rearrangement invariant spaces as well as in specific symmetric spaces like Orlicz, Lorentz or Marcinkiwicz spaces (e.g. \cite{DSS2004, Rak79, rak1982}).
Recall that $\Gamma(\ell_p) = (1,p]$ for $1<p<\infty$, and $\Gamma(c_0) = \Gamma(\ell_1) = (1,\infty]$. The space $\ell_\infty$ does not have $(wBS)$ property, so $\Gamma(\ell_\infty) = \emptyset$. Any separable sequence Orlicz space $\ell_\phi$, or a separable part $h_\phi$ of a nonseparable Orlicz space $\ell_\phi$ has $wBS$ \cite{rak1982}. For a sequence Orlicz space $\ell_\phi$, whenever $\ell_\phi$ is reflexive, we have that $(1,\alpha^0_\phi) \subset \Gamma(\ell_\phi) \subset (1, \alpha^0_\phi]$, where $\alpha^0_\phi$ is the lower Matuszewska-Orlicz index around zero. Moreover $BS_p$ and $S_p$ are equivalent in $\ell_\phi$ for any $1<p\le \infty$ \cite{Kna92}.  In \cite{KL}  the similar result was also proved in Musielak-Orlicz sequence space $\ell_\Phi$. It was also shown there that $\ell_\Phi$ has the $wBS$ property if and only if it is separable,  and that the Schur  and $\infty BS$ properties coincide in $\ell_\Phi$.

The main results on Banach-Saks properties in spaces $E\Mtau$ or $C_E$ are contained in \cite{A1981, DDS2, L-PS}.

Some methods used   by P. Dodds, T. Dodds and F. Sukochev in noncommutative spaces \cite{DDS2} in 2007, are generalizations of the analogous methods in function spaces. 
The following variant of Kadec-Pe{\l}czy\'nski result holds in noncommutative spaces.

\begin{proposition}\label{prop:KP}\cite[Corollary 2.5]{DDS2}
Let $Y\subset E\Mtau$ be a closed subspace. Then either $\rm{(i)}$ the norm topology from $E\Mtau$ on $Y$ coincides with the measure topology, or $\rm{(ii)}$ there exist $\{y_n\}\subset Y$ with $\|y_n\|_{E\Mtau} \le 1$, and a two-sided disjointly supported sequence $\{d_n\} \subset E\Mtau$ such that $\|y_n - d_n \|_{E\Mtau} \to 0$.

\end{proposition}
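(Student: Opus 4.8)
The plan is to follow the Kadec--Pe{\l}czy\'nski dichotomy strategy adapted to the noncommutative setting. Suppose alternative (i) fails: the norm topology of $E\Mtau$ on the closed subspace $Y$ does not coincide with the measure topology on $Y$. Then there exist $\epsilon_0>0$ and a sequence $\{y_n\}\subset Y$ with $\|y_n\|_{E\Mtau}\le 1$ and $y_n\xrightarrow{\tau}0$, but $\|y_n\|_{E\Mtau}\ge \epsilon_0$ for all $n$. The goal is to extract a subsequence which is uniformly small off a sequence of two-sided spectral cut-offs, and to show that the corresponding truncations form a disjointly supported sequence (after passing to a further subsequence) which approximates the $y_n$ in norm.

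The key steps, in order, are as follows. First I would use the polar decomposition $y_n=u_n|y_n|$ and the spectral projections $p_n^{(s,t)}=e^{|y_n|}(s,t)$ to split each $y_n$ into a ``bounded middle part'' $u_n|y_n|e^{|y_n|}(s,t)$ and two tails: the part near $0$, where the norm is controlled because the operator is small in the measure topology combined with the ideal property and boundedness of the dilation operator (or rather: the part $u_n|y_n|e^{|y_n|}[0,s]$ has singular value function bounded by $s\chi_{[0,\tau(s(y_n))]}$, and using $y_n\xrightarrow{\tau}0$ one controls $\tau(e^{|y_n|}(\delta,\infty))$), and the part near $\infty$, where $\|u_n|y_n|e^{|y_n|}(t,\infty)\|_{E\Mtau}$ is small uniformly in $n$ because the sequence $\{\mu(y_n)\}$ is bounded in $E$ and one can argue that these ``high'' pieces have small norm (here order continuity-type arguments or the Fatou property and the definition of $\mu$ are used, together with $\mu(\delta,y_n)\to 0$ for fixed $\delta$, which is exactly $y_n\xrightarrow{\tau}0$). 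Since $\|y_n\|_{E\Mtau}\ge \epsilon_0$, after choosing $s$ small and $t$ large the middle parts $w_n=u_n|y_n|e^{|y_n|}(s,t)$ satisfy $\liminf_n\|w_n\|_{E\Mtau}>0$ while still $w_n\xrightarrow{\tau}0$ (the middle part inherits convergence in measure to zero since $e^{|y_n|}(s,t)\le e^{|y_n|}(s,\infty)$ and $\tau(e^{|y_n|}(s,\infty))<\infty$ eventually, forcing $\mu(\delta,w_n)\to 0$).

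Second, having replaced $\{y_n\}$ by a bounded sequence $\{w_n\}$ in measure-null but norm-bounded-below, with each $w_n$ supported (on both sides) by a projection $q_n=e^{|y_n|}(s,t)$ of finite trace (one checks $\tau(q_n)<\infty$ using $\tau$-measurability and convergence in measure), I would run a gliding-hump / exhaustion argument in the von Neumann algebra: pass to a subsequence and find an increasing sequence of finite-trace projections $f_k$ such that, modulo small norm error (controlled again by $w_n\xrightarrow{\tau}0$, which gives that the ``overlap'' $f_k w_{n_k}$ is small), the operators $d_k=(q_{n_k}-f_k)w_{n_k}(q_{n_k}-f_k)$ or an appropriate two-sided compression are pairwise two-sided disjointly supported and $\|w_{n_k}-d_k\|_{E\Mtau}\to 0$. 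The ``disjoint support'' here means right and left support projections of the $d_k$ are mutually orthogonal; this is arranged by a standard diagonal selection where at stage $k$ one uses that $w_{n}\xrightarrow{\tau}0$ to make $f_{k-1}w_n f_{k-1}$ small for $n$ large, then picks $f_k$ large enough (of finite trace) to capture most of $w_{n_k}$, alternating in the usual way. Then $d_k$ has support projections dominated by $f_{k+1}-f_k$ (roughly), giving the disjointness.

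The main obstacle I expect is the absence of a lattice structure: in the commutative case one simply truncates by characteristic functions of the set where $|f_n|$ is large, and disjointness of supports is automatic; in $E\Mtau$ one must carefully manage \emph{two-sided} supports, and the interaction between the partial isometry $u_n$ and the spectral projections of $|y_n|$ is delicate, since left and right supports differ. Controlling $\|u_n|y_n|e^{|y_n|}[0,s]\|_{E\Mtau}$ and the high tails uniformly requires combining Lemma~\ref{lm:singfun} (parts (1), (3), (7)) with the fact that $\mu(\delta,y_n)\to 0$ for every $\delta>0$, and the selection of the finite-trace projections $f_k$ must be compatible with both the left and right support structure simultaneously. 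Once these are handled, the conclusion $\|y_n-d_n\|_{E\Mtau}\to 0$ with $\{d_n\}$ two-sided disjointly supported follows by assembling the three estimates (low tail, high tail, gliding-hump error) and invoking the triangle inequality in $E\Mtau$, which is a genuine norm by the Kalton--Sukochev theorem cited in the preliminaries.
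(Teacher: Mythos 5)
Your reduction to ``middle parts'' is the step that breaks, and it breaks already in the commutative case. For a norm-one sequence converging to zero in measure, neither the low nor the high spectral truncation need be small in $E\Mtau$-norm: in $L_1(0,\infty)$ the sequence $y_n=n\chi_{[0,1/n]}$ has norm one, converges to $0$ in measure, and for any fixed $t$ lies entirely in the ``high'' part $u_n|y_n|e^{|y_n|}(t,\infty)$ once $n>t$; dually, $y_n=\frac1n\chi_{[0,n]}$ lies entirely in the ``low'' part once $1/n<s$ (here $\mu\le s\chi_{[0,\tau(s(y_n)))}$ is useless because $\tau(s(y_n))\to\infty$). So you cannot arrange $\liminf_n\|w_n\|_{E\Mtau}>0$ together with a uniform operator-norm bound and uniformly finite-trace two-sided supports, whether $s,t$ are fixed or allowed to depend on $n$; without that uniformity the gliding hump has nothing of definite size to capture. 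Moreover the hump error itself is not controlled by convergence in measure alone: from $\mu(\delta,w_n)\to0$ and $\|w_n\|_{\M}\le t$ one only gets $\|f w_n f\|_{E\Mtau}\le \epsilon\|\chi_{[0,\tau(f))}\|_E+t\|\chi_{[0,d(\epsilon,w_n))}\|_E$, and the second term tends to $t\lim_{\delta\to0^+}\|\chi_{[0,\delta]}\|_E$, which is not zero for general symmetric $E$ (e.g.\ $E=L_\infty$ or Marcinkiewicz-type norms); the proposition as stated carries no order-continuity hypothesis that would rescue this.

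There is also a structural gap: your construction only ever produces the $d_k$ as two-sided compressions/truncations of the members of the originally chosen witnessing sequence, but the proposition asserts only that \emph{some} sequence in the unit ball of $Y$ admits disjoint approximants, and in general no subsequence of a given witnessing sequence does. Take $\M=\mathcal{N}\cong L_\infty[0,1]$ with its integral trace, $E=L_\infty[0,1]$ (so $E\Mtau\cong L_\infty[0,1]$) and $Y=\overline{\operatorname{span}}\{\chi_{[0,1/n]}:n\in\mathbb{N}\}$. The norm and measure topologies differ on $Y$ (witnessed by $\chi_{[0,1/n]}$), yet if $\|\chi_{[0,1/n_k]}-d_k\|_\infty<1/2$ then $|d_k|\ge 1/2$ a.e.\ on $[0,1/n_k]$, so the supports of any two such $d_k$ overlap; hence neither the $y_{n_k}$ nor any compressions of them close in norm can be disjointly supported. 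Alternative (ii) holds here only because $Y$ contains new elements, the differences $\chi_{(1/(n+1),\,1/n]}$, which are themselves disjoint. A correct proof must therefore at some stage manufacture new vectors of $Y$ (successive differences/blocks), which your argument never does. For what it is worth, the survey offers no proof to compare against --- it quotes \cite{DDS2} --- but the above shows the sketch, as it stands, does not prove the stated dichotomy.
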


It is also shown there that in $E\Mtau$ the subsequence splitting principle is satisfied, that is for each bounded sequence in $E\Mtau$ there is a subsequence which is approximated in norm by the sum of two sequences, from which one consists of equimeasurable elements and another one contains two-sided disjoint operators. This implies a 
noncommutative analogue of the Koml\'os theorem.

\begin{theorem}\label{th:komlos} \cite[Theorem 2.8]{DDS2}
Assume $E$ has the Fatou property and $\{x_n\} \subset E\Mtau$ is  bounded. Then there exists $y\in E\Mtau$ and a subsequence $\{y_n\} \subset \{x_n\}$ such that for any further subsequence $\{z_n\} \subset \{y_n\}$, $\lim_n \sum_{k=1}^n z_k/n = y$ in measure topology. 

\end{theorem}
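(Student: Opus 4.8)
\textbf{Proof proposal for Theorem~\ref{th:komlos} (noncommutative Koml\'os theorem).}
The plan is to reduce the statement to a diagonal extraction argument built on top of the two structural results already available in this section: the subsequence splitting principle and the classical (commutative) Koml\'os theorem. First I would invoke the subsequence splitting principle for $E\Mtau$ (the result cited just before the statement, relying on Proposition~\ref{prop:KP} and the Fatou property of $E$): passing to a subsequence of $\{x_n\}$, which I relabel $\{x_n\}$ again, I may write $x_n = u_n + v_n$ where $\{u_n\}$ is a sequence of pairwise equimeasurable operators, i.e.\ $\mu(u_n)=\mu(u_1)$ for all $n$, bounded in $E\Mtau$, and $\{v_n\}$ is a two-sided disjointly supported sequence in $E\Mtau$, also bounded. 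The point of this split is that the two pieces will be handled by entirely different mechanisms: the disjoint part converges to zero in measure already at the level of Ces\`aro means, while the equimeasurable part is the genuine Koml\'os content.

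For the disjoint part, I would argue that $\frac1n\sum_{k=1}^n v_k \xrightarrow{\tau} 0$. Since the $v_k$ are two-sided disjointly supported, the left support projections $l(v_k)$ are mutually orthogonal, so by the identity $(\ref{eq:est1})$ we have $\bigl|\sum_{k=1}^n v_k\bigr| = \bigl(\sum_{k=1}^n |v_k|^2\bigr)^{1/2}$, hence $\mu\bigl(\sum_{k=1}^n v_k\bigr)$ is the decreasing rearrangement of the disjoint union of the functions $\mu(v_k)$. Boundedness of $\{v_k\}$ in $E\Mtau$ together with the embedding $E\hookrightarrow L_1[0,\alpha)+L_\infty[0,\alpha)$ forces $\int_0^1 \mu(v_k) \le C$ uniformly, and a direct estimate on the distribution function of the rearranged disjoint sum shows $\mu(\delta, \frac1n\sum_{k=1}^n v_k)\to 0$ for every $\delta>0$; by the characterization of the measure topology via $\mu(\delta,\cdot)$ recalled in the preliminaries (\cite[Lemma 3.1]{Fack-Kos1986}), this is exactly convergence to zero in the measure topology. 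Crucially this holds for every subsequence of $\{v_k\}$, since any subsequence is again two-sided disjointly supported and bounded, so this half of the argument is automatically ``hereditary'' under further subsequences.

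For the equimeasurable part $\{u_n\}$, I would transport the problem to the commutative setting. Using that all $u_n$ have the same singular value function $g:=\mu(u_1)\in E$, I would choose, via the isomorphism machinery of Section~\ref{sec:isom} (Corollary~\ref{cor:isom1} or Proposition~\ref{isom1} applied after reducing to the part of $\M$ where the action is non-atomic, and the non-atomic extension of \S\ref{non-atom} to remove any atomicity restriction), a way to view the real and imaginary parts of the $u_n$ inside an ambient commutative algebra, i.e.\ as functions in $L^0[0,\alpha)$ with rearrangements controlled by $g$; then the classical Koml\'os theorem yields a subsequence $\{w_n\}\subset\{u_n\}$ and a limit $h$ such that every further subsequence has Ces\`aro means converging a.e., and a.e.\ convergence of functions dominated in rearrangement by an $L_1+L_\infty$ function implies convergence in measure on every set of finite measure, i.e.\ in the measure topology of the ambient commutative algebra. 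Pulling back, one gets $y\in E\Mtau$ (the limit lies in $E\Mtau$ because its rearrangement is dominated, via Fatou, by a fixed element of $E$) with $\frac1n\sum_{k=1}^n w_k \xrightarrow{\tau} y$ for every further subsequence. Finally I would combine the two parts: relabel so that $\{y_n\}$ is the common subsequence obtained by intersecting the extractions, and for any further subsequence $\{z_n\}$ write $z_n = u'_n + v'_n$ accordingly, so $\frac1n\sum_{k=1}^n z_k = \frac1n\sum u'_k + \frac1n\sum v'_k \xrightarrow{\tau} y + 0 = y$, using that the measure topology is a linear topology.

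The main obstacle I anticipate is the commutative reduction of the equimeasurable part: one must be genuinely careful that the isomorphisms of Section~\ref{sec:isom} can be applied \emph{simultaneously} to the whole sequence $\{u_n\}$ (they are stated operator-by-operator), or else one must avoid that by working directly with the scalar-valued ``profiles'' $t\mapsto \mu(t,u_n\,e)$ against test projections $e$ of finite trace and running Koml\'os there, then reconstructing the limit operator from these via a weak-compactness/consistency argument. Checking that the a.e.-limit produced by Koml\'os actually corresponds to a single $\tau$-measurable operator $y$, rather than just a family of limits on finite projections, and that it lands in $E\Mtau$, is the delicate point; the Fatou property of $E$ and the ideal property of $E\Mtau$ (an operator with rearrangement bounded by an element of $E$ lies in $E\Mtau$) are exactly what make this work.
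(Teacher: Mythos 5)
Your overall architecture (apply the subsequence splitting principle, kill the two-sided disjoint part at the level of Ces\`aro means, and run a Koml\'os-type argument on the equimeasurable part) is exactly the route the survey indicates when it says the splitting principle "implies" the noncommutative Koml\'os theorem, but both of your key steps have genuine gaps. For the disjoint part, the claim that boundedness plus $\int_0^1\mu(v_k)\le C$ gives $\mu\bigl(\delta,\tfrac1n\sum_{k=1}^n v_k\bigr)\to 0$ "automatically, hereditarily, for every subsequence" is false for the measure topology $\mathscr{T}_m$ as defined in this paper. Take $\mathcal{M}=L_\infty[0,\infty)$ with the integral trace, $E=(L_1+L_\infty)[0,\infty)$ (symmetric, Fatou), and $v_k=k\chi_{A_k}$ with pairwise disjoint $A_k$, $m(A_k)=1/k$; then $\|v_k\|_E=1$ and, by disjointness, $d\bigl(\delta,\tfrac1n\sum_{k=1}^n v_k\bigr)=\sum_{\delta n<k\le n}1/k\to\log(1/\delta)>0$, so the Ces\`aro means of the full sequence do not converge to $0$ (nor to anything else) in $\mathscr{T}_m$. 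The estimate you describe only yields the bound $C/\delta$ for the distribution function, not decay. A genuinely sparse further selection (e.g.\ indices $n_j$ with $\sum_j d(s_j,v_{n_j})$ summable for suitable levels $s_j$) is needed even for the disjoint half, and the hereditary property then has to be argued, not declared; so this half is not "automatic."

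The more serious gap is the equimeasurable part, and you half-acknowledge it yourself: the trace-preserving $*$-isomorphisms of Section~\ref{sec:isom} are constructed one operator at a time, each inside its own commutative subalgebra generated by that operator, and a sequence of equimeasurable operators need not commute, so there is no way to "view the $u_n$ as functions in $L^0[0,\alpha)$" simultaneously and quote the classical Koml\'os theorem (nor does splitting into real and imaginary parts help, since those generate different commutative algebras for different $n$). Your fallback — scalar profiles against finite-trace projections plus an unspecified "weak-compactness/consistency argument" — is precisely where the limit operator $y$ must be constructed and where hereditary Ces\`aro convergence in measure must be proved; this is the actual content of \cite[Theorem 2.8]{DDS2} and it is not supplied here. (The omission of the norm-null error term in the splitting is harmless, but note it is part of the statement of the splitting principle as quoted in the survey.) As written, then, the proposal reproduces the correct skeleton but leaves both load-bearing steps unproved, and one of them is supported by an argument that is actually incorrect for the topology claimed in the statement.
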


\begin{theorem}\label{th:BS1}\cite[Theorem 2.13]{DDS2}
Let $E$ satisfy the Fatou property and let $\M$ be non-atomic.Then $E\Mtau$ has $wBS$ property if and only if each weakly null two sided disjoint sequence $\{x_n\}\subset E\Mtau$ contains a subsequence $\{y_n\}$ such that the Ces\`aro means of any $\{z_n\}\subset \{y_n\}$  tend to zero in norm.

\end{theorem}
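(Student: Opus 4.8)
The plan is to prove \thmref{th:BS1} by establishing both implications, with the reverse direction being essentially trivial and the forward direction requiring a reduction argument via the subsequence splitting principle. For the easy direction, suppose $E\Mtau$ has the $wBS$ property. Then any weakly null two-sided disjoint sequence $\{x_n\}$ is in particular a weakly null sequence, so by definition of $wBS$ it has a subsequence whose Ces\`aro means converge in norm, necessarily to zero; and since $wBS$ applies to every subsequence of a weakly null sequence, we can pass to a further subsequence $\{y_n\}$ so that the Ces\`aro means of \emph{every} further subsequence $\{z_n\}\subset\{y_n\}$ tend to zero. (The standard trick here: if the conclusion failed for all subsequences of $\{y_n\}$, a diagonal argument would produce a weakly null sequence with no Ces\`aro-summable subsequence, contradicting $wBS$.)

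For the forward direction, assume the stated condition on weakly null two-sided disjoint sequences and let $\{x_n\}\subset E\Mtau$ be an arbitrary weakly null sequence; the goal is to extract a subsequence whose Ces\`aro means converge to zero in norm. First I would invoke the Kadec--Pe\l czy\'nski-type dichotomy, \propref{prop:KP}, applied to the closed subspace $Y=\overline{\mathrm{span}}\{x_n\}$ (or rather to the sequence directly): either the norm and measure topologies coincide on the relevant span, or after passing to a subsequence there is a two-sided disjointly supported sequence $\{d_n\}$ with $\|x_n-d_n\|_{E\Mtau}\to 0$. In the second case, $\{d_n\}$ is then weakly null (being a small perturbation of a weakly null sequence) and two-sided disjoint, so the hypothesis gives a subsequence $\{d_{n_k}\}$ all of whose further subsequences have norm-null Ces\`aro means; transferring back through the estimate $\|\frac1m\sum_{k=1}^m x_{n_k}\|\le\|\frac1m\sum_{k=1}^m d_{n_k}\|+\frac1m\sum_{k=1}^m\|x_{n_k}-d_{n_k}\|$, and noting the tail of a null sequence has null Ces\`aro average, yields the conclusion. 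In the first case, where the norm topology coincides with the measure topology on the span, I would combine the noncommutative Koml\'os theorem \thmref{th:komlos} — which since $E$ has the Fatou property provides a subsequence $\{y_n\}$ and a limit $y\in E\Mtau$ with $\frac1n\sum_{k=1}^n z_k\to y$ in measure for every further subsequence — with the fact that $y$ must be $0$ (the Ces\`aro means of a weakly null sequence are weakly null, hence their measure-topology limit is $0$), so that on the span the measure convergence upgrades to norm convergence.

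The main obstacle is the bookkeeping in the first ("measure = norm topology") case: one must be careful that the coincidence of topologies supplied by \propref{prop:KP} genuinely applies to the linear span containing all the Ces\`aro means $\frac1n\sum_{k=1}^n y_k$, and that the measure-topology convergence of these means (from \thmref{th:komlos}) takes place inside a bounded subset of $E\Mtau$ on which one can legitimately conclude norm convergence. This may require first passing to a subsequence so that the tails of $\{y_n\}$ lie in a fixed ball, applying \thmref{th:komlos} there, and then using boundedness together with the topological coincidence to get $\|\frac1n\sum_{k=1}^n y_k\|_{E\Mtau}\to 0$. A secondary technical point is justifying the diagonalization that promotes "some subsequence has null Ces\`aro means" to "every further subsequence has null Ces\`aro means," but this is routine once the base extraction is in place.

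Overall the proof is a reduction: the hypothesis handles precisely the disjoint case, \propref{prop:KP} splits the general case into the disjoint case plus the "norm = measure topology" case, and in the latter case \thmref{th:komlos} together with weak nullity forces the Ces\`aro means to zero. I expect the write-up to be short modulo the care needed in invoking these three ingredients in the right order with the right uniformity over subsequences.
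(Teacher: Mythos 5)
Your reduction has a genuine gap at the point where you say you apply \propref{prop:KP} ``to the sequence directly.'' The dichotomy in \propref{prop:KP} is a statement about the closed subspace $Y$: in the second alternative it produces \emph{some} almost disjointly supported sequence lying in $Y$, not an almost disjoint subsequence of the given weakly null sequence $\{x_n\}$, and nothing in that alternative lets you feed $\{x_n\}$ itself into the hypothesis of the theorem. Moreover the sequence-level dichotomy you actually use is false: a weakly null sequence of the form $x_n=u_n+d_n$, with $\{u_n\}$ an equiintegrable, weakly null but not norm-null part and $\{d_n\}$ a normalized two-sided disjoint part (for instance $x_n=r_n\chi_{[0,1]}+\chi_{[n,n+1]}$ in $L_p[0,\infty)$, $p\neq 2$, with $r_n$ the Rademacher functions), has no almost disjoint subsequence, and the norm and measure topologies do not coincide on its closed span; so neither of your two cases applies, yet this is exactly the kind of mixed sequence the theorem must handle. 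The tool that repairs this is the subsequence splitting principle described in the survey just before \thmref{th:komlos}: after passing to a subsequence, $x_{n_k}$ is norm-approximated by $u_k+d_k$ with $\{u_k\}$ $E$-equiintegrable and $\{d_k\}$ two-sided disjointly supported. The hypothesis of the theorem then disposes of $\{d_k\}$, but you still owe a separate argument for the equiintegrable part --- either a Szlenk-type statement in the spirit of \thmref{th:Schlenk}, or \thmref{th:komlos} combined with the fact that for an $E$-equiintegrable sequence, convergence of the Ces\`aro means to $0$ in measure upgrades to norm convergence. Your ``norm topology $=$ measure topology on the span'' case is not a substitute for this, since equiintegrability of a sequence does not force the two topologies to coincide on its span. (That case itself, where it applies, is handled correctly: the Ces\`aro means are weakly null, Koml\'os gives a measure limit $y$, and coincidence of the topologies on $Y$ identifies $y=0$ and upgrades the convergence.)

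A secondary inaccuracy is in the ``only if'' direction: the passage from ``$wBS$ gives some Ces\`aro-null subsequence'' to ``a subsequence \emph{all} of whose further subsequences have Ces\`aro means tending to $0$'' is not a routine diagonal argument. If every subsequence of $\{y_n\}$ had a further subsequence with non-null Ces\`aro means, no contradiction with $wBS$ arises, because $wBS$ would still provide Ces\`aro-null subsequences of those bad subsequences; your claimed diagonalization never produces a weakly null sequence with \emph{no} Ces\`aro-summable subsequence. The uniformization you need is the Erd\H{o}s--Magidor theorem (a Ramsey-type result: every bounded sequence has a subsequence such that either all of its further subsequences are Ces\`aro convergent, to a common limit, or none are), and it is this result, rather than a diagonal extraction, that should be invoked; with it the easy direction is indeed immediate.
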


The next result lifts $wBS$ property from $E$ to $E\Mtau$.

\begin{theorem}\label{th:BS2}\cite[Theorem 2.14]{DDS2}
Let $E$ satisfy the Fatou property. If $E$ has $wBS$ property then $E\Mtau$ has also this property. If in addition $\M$ is non-atomic then  the converse statement holds  true.
\end{theorem}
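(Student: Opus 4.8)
The plan is to prove Theorem~\ref{th:BS2} in two directions, using the noncommutative Koml\'os theorem (Theorem~\ref{th:komlos}) together with the reduction to two-sided disjoint sequences (Theorem~\ref{th:BS1}) for the forward implication, and the isometric embedding $E\hookrightarrow\nonsp$ (Corollary~\ref{cor:isomglobal}) for the converse. Throughout I will freely use that $wBS$ is inherited by closed subspaces and preserved by linear isometries, which is immediate from the definition: a weakly null sequence in a subspace is weakly null in the whole space, and Ces\`aro means of subsequences are computed the same way.

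First I would prove the converse (easy) direction. Assume $\M$ is non-atomic and $E\Mtau$ has $wBS$. By Corollary~\ref{cor:isomglobal}, $E$ is isometrically (and order) embedded into $\nonsp$ via the $*$-isomorphism $V$ of Proposition~\ref{isom1}; call the image $Y=V(E)$, a closed subspace of $\nonsp$. Since $wBS$ passes to closed subspaces, $Y$ has $wBS$, and since $V:E\to Y$ is an isometry, $E$ has $wBS$. (Here we use that $E$ satisfies the Fatou property only insofar as Corollary~\ref{cor:isomglobal} and the ambient constructions require it; no extra hypothesis is needed for this direction.)

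Next I would prove the forward (hard) direction: if $E$ has the Fatou property and $E$ has $wBS$, then $\nonsp$ has $wBS$. By Theorem~\ref{th:BS1} it suffices to handle a \emph{weakly null two-sided disjoint} sequence $\{x_n\}\subset E\Mtau$: I must produce a subsequence $\{y_n\}$ all of whose further subsequences have norm-null Ces\`aro means. First, $\{x_n\}$ is bounded (weakly convergent, hence norm-bounded). Two-sided disjointness means the left support projections $l(x_n)=s(x_n^*)$ are mutually orthogonal and the right support projections $r(x_n)=s(x_n)$ are mutually orthogonal. The key structural fact to exploit is that for a disjoint sequence, $\mu\!\left(\sum_{k\in F}x_k\right)$ is controlled by the decreasing rearrangement of the disjoint union of the individual $\mu(x_k)$'s; concretely, using $|{\sum}_{k}x_k| = ({\sum}_k |x_k|^2)^{1/2}$ from the left-disjoint case (equation~(\ref{eq:est1})) applied to $x_k^*$, one transfers the estimate to the function side and finds a sequence $\{f_n\}$ of disjointly supported functions in $E$ with $\mu(f_n)=\mu(x_n)$ and $\|{\sum}_{k\in F}x_k\|_{\nonsp} = \|{\sum}_{k\in F}f_k\|_E$ for every finite $F$ (the disjoint union of rearrangements). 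Then $\{f_n\}$ is bounded and disjointly supported in $E$; since disjointly supported bounded sequences in a Banach function space with the Fatou property are weakly null once they converge to $0$ in measure — and disjoint support forces measure-convergence to $0$ on sets of finite measure — one checks $\{f_n\}$ is weakly null in $E$ (or passes to such a subsequence). Applying $wBS$ of $E$ to $\{f_n\}$ gives a subsequence whose Ces\`aro means are norm-null in $E$; pulling this back through the norm identity yields the same for the corresponding subsequence of $\{x_n\}$ in $\nonsp$, and the same estimate applies verbatim to any further subsequence. This verifies the criterion of Theorem~\ref{th:BS1}, so $\nonsp$ has $wBS$.

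The main obstacle is the transfer step: matching $\|\sum_{k\in F}x_k\|_{\nonsp}$ to the norm of a genuinely disjointly supported function $\sum_{k\in F}f_k$ in $E$, and confirming that $\{f_n\}$ (or a subsequence) is weakly null. For the first part one must be careful that two-sided disjointness of operators does \emph{not} make $\sum x_k$ literally a "disjoint sum" of the $x_k$ the way it does for functions, but the identity $|\sum x_k|=(\sum|x_k|^2)^{1/2}$ plus Lemma~\ref{lm:singfun}(2),(5) reduces everything to $\mu$'s; one then invokes the non-atomic representation (Proposition~\ref{isom1} applied on suitable orthogonal pieces, or the construction in Section~\ref{non-atom}) to realize these singular value functions as disjoint functions. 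For weak nullity of $\{f_n\}$ one uses that $E$ has the Fatou property, that a disjointly supported norm-bounded sequence tends to $0$ locally in measure, and the Kadec--Pe\l czy\'nski-type dichotomy (Proposition~\ref{prop:KP} in the commutative case, or the classical function-space version) to pass to a weakly null subsequence; if $E$ already had some order-continuity one could argue directly, but with only the Fatou property the dichotomy argument is what is needed. Once these two points are settled, the rest is bookkeeping with Theorem~\ref{th:BS1}, Theorem~\ref{th:komlos}, and the definition of $wBS$.
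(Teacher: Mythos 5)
Your converse direction is fine: $wBS$ passes to closed subspaces and is isometry-invariant, so the embedding of Corollary \ref{cor:isomglobal} does the job. (The paper itself gives no proof here, only the citation, so I judge your argument on its own terms.) In the forward direction your reduction via Theorem \ref{th:BS1} and the identity $\bigl|\sum_k x_k\bigr|=\bigl(\sum_k|x_k|^2\bigr)^{1/2}$ for two-sided disjoint operators is the right skeleton, but the step that makes it run is broken: you justify weak nullity of the disjoint function sequence $\{f_n\}$ by the claim that a bounded disjointly supported sequence in a Banach function space with the Fatou property is weakly null once it tends to $0$ in measure. That claim is false -- take $E=L_1$ and $f_n=2^n\chi_{[2^{-n},2^{-n+1})}$, or $E=\ell_1$ and the unit vectors: these are normalized, disjointly supported, converge to $0$ in measure, yet are not weakly null (test against the constant function, resp. the constant sequence, in $E^\times$). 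The Kadec--Pe\l czy\'nski dichotomy (Proposition \ref{prop:KP}) does not supply weak nullity either. The correct repair stays inside your own framework: two-sided disjointness gives, for \emph{arbitrary} scalars, $\bigl|\sum_k a_kx_k\bigr|=\sum_k|a_k|\,|x_k|$ with the $|x_k|$ having mutually orthogonal supports, so $\bigl\|\sum_k a_kx_k\bigr\|_{E\Mtau}=\bigl\|\sum_k a_kf_k\bigr\|_E$; hence $x_k\mapsto f_k$ extends to a surjective linear isometry between the closed linear spans, and since $\{x_n\}$ is weakly null in $E\Mtau$, it is weakly null in its span, so $\{f_n\}$ is weakly null in its span and therefore (Hahn--Banach) in $E$. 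Without this (or some substitute), your application of $wBS$ of $E$ to $\{f_n\}$ is unjustified.

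Two smaller points you should also address. First, Theorem \ref{th:BS1} is stated only for non-atomic $\M$, while the forward implication you are proving is for arbitrary $\M$; you need the non-atomic extension $E(\mathcal{A},\kappa)$ of Section \ref{non-atom} (legitimate here, since $wBS$ is isometry-invariant and hereditary) before invoking it. Second, Theorem \ref{th:BS1} demands a subsequence \emph{all} of whose further subsequences have norm-null Ces\`aro means; a single application of $wBS$ to $\{f_n\}$ gives one good subsequence, and your remark that "the same estimate applies verbatim to any further subsequence" does not bridge this -- you need an Erd\H{o}s--Magidor-type extraction (or a diagonal/Ramsey argument) to pass to a hereditarily Banach--Saks subsequence before transferring back to $\{x_n\}$. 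Finally, Theorem \ref{th:komlos} plays no role in your argument despite being announced in the plan.
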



In the case of the unitary ideals $C_E$ a stronger version was proved by J. Arazy in 1981.  

\begin{theorem}\cite[Corollary 3.6]{A1981}\label{th:BS-C_E}
Let E be a symmetric separable sequence space. Then $C_E$ has the $BS$ property (respectively, the $wBS$ property) if and only if $E$ has the $BS$ property (respectively, the $wBS$ property).
\end{theorem}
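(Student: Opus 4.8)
The plan is to deduce Theorem~\ref{th:BS-C_E} from the noncommutative lifting result Theorem~\ref{th:BS2} together with the identification of $C_E$ with a symmetric space of measurable operators. Recall from Section~\ref{sec:unitary} that $C_E = G(B(H),\tr)$ isometrically, where $G$ is the symmetric function space on $[0,\infty)$ built from $E$ via $\pi(f) = \{\int_{n-1}^n \mu(f)\} \in E$, and where $B(H)$ with the canonical trace $\tr$ is (of course) a von Neumann algebra that is \emph{not} non-atomic. So the strategy naturally splits into a ``$wBS$ part'' and a ``reflexivity part,'' using the classical fact (cited in the text) that $BS \iff$ reflexive $+$ $wBS$.

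First I would handle the $wBS$ equivalence. For the forward direction, if $E$ has $wBS$, then one checks $G$ has $wBS$: this should follow because $E$ is separable (as it has $wBS$ and is a sequence space — a nonseparable symmetric sequence space contains $\ell_\infty$ which fails $wBS$), hence $E$ is order continuous, hence $G$ is order continuous and separable, and the averaging operator $f \mapsto \pi(f)$ together with the natural embedding $E \hookrightarrow G$ are bounded maps that intertwine the relevant structure; a weakly null sequence in $G$ can be reduced, via the Kadec--Pe{\l}czy\'nski-type dichotomy (Proposition~\ref{prop:KP} applied in the commutative case, or its classical function-space analogue), to a disjointly supported sequence whose blocks are controlled by $E$. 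Then Theorem~\ref{th:BS2} applied to $\M = B(H)$ gives that $C_E = G(B(H),\tr)$ has $wBS$. For the converse, if $C_E$ has $wBS$, then since $E$ embeds isometrically into $C_E$ via the diagonal embedding (Proposition~\ref{prop:isomarazy}, or Corollary~\ref{cor:isomglobal}) and $wBS$ passes to closed subspaces, $E$ has $wBS$. Actually, a cleaner route for the whole $wBS$ equivalence is to bypass $G$ and argue directly with the $*$-isomorphism $V : E \to C_E$ of Proposition~\ref{prop:isomarazy} and the contractive projection $P : C_E \to V(E)$: the embedding gives one implication immediately, and for the other, given a weakly null $\{x_n\} \subset C_E$, one would like to reduce to the singular-number sequences $\{S(x_n)\} \subset E$; this is where Arazy's blocking/subsequence-splitting techniques (cf. Theorem~\ref{th:komlos} in the commutative specialization) enter.

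Next, the reflexivity part. I would show $C_E$ is reflexive iff $E$ is reflexive. One direction is again the isometric embedding $E \hookrightarrow C_E$ (reflexivity passes to closed subspaces). For the other, reflexivity of $E$ is equivalent (for a Banach symmetric sequence space) to $E$ and $E^\times$ both being order continuous, equivalently $E$ order continuous with $E = E^{\times\times}$ and no copy of $c_0$ or $\ell_1$ (Propositions~\ref{prop:infinity} and~\ref{prop:czero}); under this, $E$ separable implies $C_E$ separable (Section~\ref{sec:unitary}), $(C_E)^* = C_{E^\times}$, and $(C_E)^{**} = C_{E^{\times\times}} = C_E$, giving reflexivity of $C_E$. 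Combining: $C_E$ has $BS$ $\iff$ $C_E$ reflexive and has $wBS$ $\iff$ $E$ reflexive and has $wBS$ $\iff$ $E$ has $BS$.

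The main obstacle I expect is the forward direction of the $wBS$ implication — specifically, the reduction of an arbitrary weakly null sequence in $C_E$ to one whose norms are governed by the sequence space $E$ in a way that lets Theorem~\ref{th:BS2} (or directly the subsequence-splitting and disjointification machinery) apply. The subtlety is that in $C_E$ ``two-sided disjoint support'' means $r(x_i)r(x_j) = 0 = l(x_i)l(x_j)$, and a weakly null sequence need not be close to such a disjoint sequence; one must use the noncommutative Kadec--Pe{\l}czy\'nski dichotomy of Proposition~\ref{prop:KP} to either get norm$=$measure topology on the span (a case handled via compactness/reflexivity) or extract an approximately disjoint subsequence, and then show disjoint sequences in $C_E$ inherit the averaging behavior from $E$ via the relation $|x_1 + \dots + x_n| = (|x_1|^2 + \dots + |x_n|^2)^{1/2}$ for left-disjoint operators together with $q$-concavity/estimate bookkeeping. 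Since Arazy's original argument in \cite{A1981} predates the noncommutative Koml\'os theorem, I would in fact present the proof as: (a) cite Theorem~\ref{th:BS2} with $\M = B(H)$ for the $wBS$ lifting to $G(B(H),\tr) = C_E$ after verifying $E$ has $wBS$ $\Rightarrow$ $G$ has $wBS$ (a function-space fact, via the bounded averaging map $\pi$); (b) use the isometric embedding for the converses; (c) use the reflexivity characterization above to upgrade $wBS$ to $BS$. The one genuinely technical lemma needing care is step (a)'s transfer of $wBS$ from the sequence space $E$ to its ``integral'' function space $G$, which I would prove by the standard disjointification argument in function spaces, noting that disjoint blocks in $G$ project onto disjoint blocks in $E$ under $\pi$ with norm control.
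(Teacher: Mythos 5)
The survey gives no proof of Theorem~\ref{th:BS-C_E}; it only cites \cite{A1981}, where Arazy argues directly inside $C_E$ with basic-sequence and shell-decomposition techniques (extracting from a weakly null sequence a subsequence whose behaviour is governed by $\ell_2\oplus E$). Your converse directions are fine and standard: the isometric copy of $E$ in $C_E$ from Proposition~\ref{prop:isomarazy} passes $wBS$ and reflexivity to $E$, and your reflexivity bookkeeping via $(C_E)^*=C_{E^\times}$ and the classical fact that $BS$ is equivalent to reflexivity together with $wBS$ is correct. The genuinely different part of your plan is the forward $wBS$ implication, which you route through $C_E=G(B(H),\tr)$ and Theorem~\ref{th:BS2}, and that is where the proposal has real gaps.

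First, Theorem~\ref{th:BS2} assumes the Fatou property of the symmetric function space, while Theorem~\ref{th:BS-C_E} assumes only that $E$ is separable; the space $E=c_0$ is a separable symmetric sequence space with $wBS$ (the paper records $\Gamma(c_0)=(1,\infty]$) that fails the Fatou property, and then $G$ fails it too, so your invocation of Theorem~\ref{th:BS2} does not cover the stated generality. Second, the step you defer to a lemma, namely ``$E$ has $wBS$ implies $G$ has $wBS$,'' is essentially the whole difficulty, and your sketch for it is flawed: disjointly supported functions in $G$ do \emph{not} map to disjointly supported sequences in $E$ under $\pi$ (any two disjoint functions whose supports have measure at most one are both sent to multiples of the first unit vector), so there is no ``disjoint blocks project to disjoint blocks'' reduction with norm control. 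Moreover, a weakly null sequence in $G$ on $[0,\infty)$ splits into a disjoint-at-infinity part and an equiintegrable part supported where $G$ behaves like $L_1$, and the latter requires a Szlenk-type argument of the kind in Theorem~\ref{th:Schlenk} (which again carries Fatou and non-atomicity hypotheses). So as written, your argument does not supply the main commutative transfer but rather restates it; Arazy's cited proof avoids both issues by working with the shell decomposition of $C_E$ directly, which is why the result holds for every separable symmetric sequence space without any Fatou assumption.
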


The main result on $BS_p$ property requires additional assumptions on convexity and concavity of $E$. 

\begin{theorem}\cite[Proposition 3.2]{DDS2} \label{th:pBS1}
 If $E$ is $p$-convex and $q$-concave for some $1 < p \le 2 \le q < \infty$, then $E\Mtau$ has the $p$-Banach-Saks property.
 \end{theorem}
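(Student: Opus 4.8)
The strategy is to reduce the $p$-Banach-Saks property of $E(\mathcal{M},\tau)$ to a statement about two-sided disjointly supported sequences, and then control such sequences via the $p$-convexity and $q$-concavity of $E$ together with the upper/lower estimate machinery of Section~10. First I would recall the general scheme: $E$ being $p$-convex and $q$-concave with $1<p\le 2\le q<\infty$ implies (by Theorem~\ref{th:conv} and Theorem~\ref{th:conc}, after renorming so that the convexity and concavity constants equal $1$, which is harmless for Banach-Saks properties) that $E\Mtau$ is itself $p$-convex and $q$-concave. In particular $E\Mtau$ is reflexive, so by the Kadec--Pe{\l}czy\'nski-type dichotomy (Proposition~\ref{prop:KP}) and the Koml\'os-type theorem (Theorem~\ref{th:komlos}), it suffices, exactly as in the proof of Theorem~\ref{th:BS1}, to handle a weakly null \emph{two-sided disjointly supported} sequence $\{x_n\}\subset E\Mtau$ with $\|x_n\|_{E\Mtau}\le 1$ and to produce a subsequence $\{y_n\}$ so that $\|\sum_{j=1}^m y_j\|_{E\Mtau}\le C m^{1/p}$ for all $m$.

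Next I would exploit disjointness. If $\{y_j\}$ are two-sided disjointly supported, then in particular they are left disjointly supported, so by identity~(\ref{eq:est1}) we have $|\,y_1+\dots+y_m\,| = (|y_1|^2+\dots+|y_m|^2)^{1/2}$, and hence
\[
\Big\| \sum_{j=1}^m y_j \Big\|_{E\Mtau} = \Big\| \Big(\sum_{j=1}^m |y_j|^2\Big)^{1/2} \Big\|_{E\Mtau}.
\]
Now $1<p\le 2$, so Theorem~\ref{th:estimates}(i) (the upper $p$-estimate for $E\Mtau$, valid because $E$ is $p$-convex) applies to the left-disjoint family $y_1,\dots,y_m$ and gives
\[
\Big\| \sum_{j=1}^m y_j \Big\|_{E\Mtau} \le M^{(p)}(E)\Big(\sum_{j=1}^m \|y_j\|_{E\Mtau}^p\Big)^{1/p} \le M^{(p)}(E)\, m^{1/p},
\]
using $\|y_j\|_{E\Mtau}\le 1$. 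This already delivers the bound with $C=M^{(p)}(E)$ — without even needing to pass to a subsequence in this disjoint case. So the disjoint case is essentially immediate from the upper $p$-estimate.

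The remaining work, and the real obstacle, is to bridge from an \emph{arbitrary} weakly null bounded sequence to the disjoint situation. The plan is to invoke the subsequence splitting principle established in \cite{DDS2}: given the bounded weakly null $\{x_n\}$, pass to a subsequence which up to a norm-null perturbation splits as $x_n = v_n + d_n$, where $\{v_n\}$ is (relatively weakly compact, in fact) order-bounded/equimeasurable-type and $\{d_n\}$ is two-sided disjointly supported. The disjoint part $\{d_n\}$ is handled by the paragraph above. For the part $\{v_n\}$, since $E\Mtau$ is reflexive and the $v_n$ live in a set on which the norm and measure topologies are comparable (the first alternative of Proposition~\ref{prop:KP}), one shows $\|v_n\|_{E\Mtau}\to 0$ along a further subsequence, so its contribution to the Ces\`aro-type sums is negligible; this is where order continuity of $E\Mtau$ (which follows from $q$-concavity, $q<\infty$, hence $E$ and $E\Mtau$ reflexive and order continuous) is used. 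Assembling: choose the subsequence so that $\sum_n \|v_n\|_{E\Mtau} < \infty$, whence $\|\sum_{j=1}^m x_j\|_{E\Mtau} \le \|\sum_{j=1}^m d_j\|_{E\Mtau} + \sum_{j=1}^m \|v_j\|_{E\Mtau} + o(1) \le M^{(p)}(E)\,m^{1/p} + O(1)$, which gives the $p$-Banach-Saks property with an appropriate constant. The main technical burden is thus not the convexity estimate itself but the careful invocation of the subsequence splitting principle and verifying that the non-disjoint residual can be made norm-summable along a subsequence; this is precisely the machinery developed around Theorems~\ref{th:komlos} and~\ref{th:BS1} in \cite{DDS2}, and I would cite it rather than reprove it.
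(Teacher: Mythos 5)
Your disjoint-case estimate is fine: for left-disjoint $y_1,\dots,y_m$ in the unit ball, (\ref{eq:est1}) together with the upper $p$-estimate of Theorem~\ref{th:estimates}(i) gives $\left\|\sum_{j=1}^m y_j\right\|_{E\Mtau}\le M^{(p)}(E)\,m^{1/p}$. The genuine gap is in the bridge from a general weakly null sequence to this case. After the subsequence splitting $x_n=v_n+d_n$, the $E$-equiintegrable part $\{v_n\}$ is weakly null but in general \emph{not} norm-null, and it cannot be made norm-summable along any subsequence; your appeal to the first alternative of Proposition~\ref{prop:KP} does not help, since that alternative concerns subspaces on which the norm and measure topologies coincide, and a weakly null sequence in such a subspace need not converge in measure at all. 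Already in the commutative case $E=L_4[0,1]$ the Rademacher functions are weakly null, $E$-equiintegrable, span a strongly embedded subspace (first alternative), and satisfy $\|r_n\|\equiv 1$. Controlling the equiintegrable part at the rate $m^{1/p}$ is precisely the content of Theorem~\ref{th:pBS2} (qualitatively, Theorem~\ref{th:Schlenk}); in \cite{DDS2} those results come \emph{after} Proposition 3.2, assume $\M$ non-atomic, and their proofs use the present theorem, so your reduction is incomplete, indeed circular, at exactly the point where the work lies. (A smaller issue: Theorem~\ref{th:estimates} as stated also requires the Fatou property of $E$, which is not among the hypotheses here.)

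The paper's proof avoids all of this and is essentially three lines: by \cite[Proposition 1.d.8]{LT2} one renorms $E$ with an equivalent symmetric norm whose $p$-convexity and $q$-concavity constants equal $1$; Xu's theorem \cite{X1989} then shows $E\Mtau$ is uniformly smooth with modulus of power type $p$ (and uniformly convex of power type $q$), hence of type $p$; and Rakov's theorem \cite{rak1982} says every Banach space of type $p$, $1<p\le 2$, has the $p$-Banach-Saks property. This needs no dichotomy, no splitting principle, and no non-atomicity of $\M$. If you insist on a splitting-based argument, you must treat the equiintegrable part directly, and in \cite{DDS2} that again goes through smoothness/type estimates — so the type route is the natural one for this statement.
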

 
\begin{proof}  Under our assumptions on $p,q$ and $E$, there exists on $E$ an equivalent symmetric norm with moduli of $p$-convexity and $q$-concavity both equal to $1$ (compare \cite[Proposition 1.d.8]{LT2}). Thus we may assume that $E$ has these both moduli equal to $1$.  It then follows from \cite{X1989} that $E\Mtau$ has type $p$.  We complete the proof by  the result in  \cite{rak1982} stating  that if a Banach space is of type $p$, $1 < p \le 2$, then it has the $p$-Banach-Saks property.
\end{proof}

Since the space $L^p$, $1\le p < \infty$, is $p$-convex and $p$-concave, it follows from Theorem \ref{th:pBS1} that the noncommutative space $L^p\Mtau$ has $\min\{p,2\}BS$ property.

 The next results are strictly related to Banach-Saks properties. The first one is the extension of classical  Schlenk  theorem.
 
 The noncommutative version of equiintegrability is defined as follows. For a bounded set $K \subset E\Mtau$ we say that $K$ is  $E$-\emph{equiintegrable} if $sup_{x\in K}\{\|e_n x e_n\|_{E\Mtau}\}\to 0$
for every system $\{e_n\}\subset\M$ of projections with $e_n \downarrow  0$.

\begin{theorem}
\cite[Corollary 3.7]{DDS2}\label{th:Schlenk} 
Suppose that $E$ has the Fatou property and that $\M$ is non-atomic. If $\{x_n\} \subset E\Mtau$ is weakly null and 
$E$-equiintegrable, then $\{x_n\}$ contains a Banach-Saks subsequence $\{y_n\}$, that is $\lim_m m^{-1} \|\sum_{j=1}^m z_j\|_{E\Mtau} =0$ for every subsequence $\{z_j\} \subset \{y_n\}$.
\end{theorem}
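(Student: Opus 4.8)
The plan is to prove Theorem~\ref{th:Schlenk} (the noncommutative Schlenk-type theorem) by combining the subsequence splitting principle with the $E$-equiintegrability hypothesis to reduce to a two-sided disjoint situation, and then to apply the $p$-convexity/$q$-concavity machinery (really, the lower estimate) to control the Ces\`aro means of a disjoint weakly null sequence. First I would invoke the subsequence splitting principle established in \cite{DDS2} (the noncommutative analogue underlying Proposition~\ref{prop:KP} and Theorem~\ref{th:komlos}): passing to a subsequence, I may write $x_n = u_n + d_n$ where $\{u_n\}$ is a relatively weakly compact (indeed $E$-equiintegrable) sequence and $\{d_n\}$ is two-sided disjointly supported. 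The point of the $E$-equiintegrability hypothesis on $\{x_n\}$ is that the disjoint part $\{d_n\}$ must then vanish in norm: a two-sided disjoint sequence which is $E$-equiintegrable converges to zero in $E\Mtau$, because the projections onto the (shrinking tails of the) supports decrease to zero. So after a further renaming, $\|d_n\|_{E\Mtau}\to 0$ and $\{x_n\}$ itself is, up to a norm-null perturbation, relatively weakly compact and $E$-equiintegrable.

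Next I would use that $\{x_n\}$ is weakly null together with $E$-equiintegrability to extract a Banach-Saks subsequence. Here the natural route is the classical one: an $E$-equiintegrable weakly null sequence in $E\Mtau$, when $E$ has the Fatou property, behaves like a weakly null sequence in an order continuous part, and one can apply the abstract fact (Rakov, used already in the proof of Theorem~\ref{th:pBS1}) that under a suitable type/concavity condition weakly null sequences have Banach-Saks subsequences. Concretely, I would first reduce the ambient space: since $\{x_n\}$ is $E$-equiintegrable, all the $x_n$ essentially live in a fixed band on which $E$ behaves like its order continuous part, and on the order-continuous part one has good estimates. Then I would choose the subsequence $\{y_n\}$ inductively so that the ``errors'' between successive blocks are summably small, using the splitting principle at each stage to peel off a disjoint piece of small norm and keeping an $E$-equiintegrable remainder; a standard gliding-hump/diagonal argument then gives $m^{-1}\|\sum_{j=1}^m z_j\|_{E\Mtau}\to 0$ for every further subsequence $\{z_j\}$, because the disjoint contributions are summable and the equiintegrable contributions are killed in Ces\`aro average by weak nullity.

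The main obstacle I expect is the passage from ``relatively weakly compact and weakly null'' to ``Banach-Saks subsequence'' in the genuinely noncommutative setting, where one cannot simply quote the commutative $L_1$-type arguments verbatim: one needs the noncommutative Koml\'os theorem (Theorem~\ref{th:komlos}) to produce a candidate Ces\`aro limit $y$, then argue $y=0$ using weak nullity, and finally upgrade measure-topology convergence of the Ces\`aro means to norm convergence using $E$-equiintegrability. That last upgrade — from convergence in measure to convergence in $\norme{\cdot}$ — is exactly where equiintegrability does its work, via a splitting of $\sum_{j=1}^m z_j/m$ into a part supported on a large finite-trace projection (where measure convergence plus boundedness plus equiintegrability force norm smallness) and a tail part (controlled directly by equiintegrability). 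I would organize the proof so that non-atomicity of $\M$ and the Fatou property of $E$ are used only through the cited splitting principle and Koml\'os theorem, keeping the remaining steps soft. Assembling these pieces yields the stated conclusion: every $E$-equiintegrable weakly null sequence has a subsequence $\{y_n\}$ with $\lim_m m^{-1}\|\sum_{j=1}^m z_j\|_{E\Mtau}=0$ for all $\{z_j\}\subset\{y_n\}$.
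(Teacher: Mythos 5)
The survey itself gives no proof of this statement (it is quoted from \cite{DDS2}), so your proposal can only be judged against the argument in that paper, and the route you settle on in your last paragraph is essentially it: apply the noncommutative Koml\'os theorem (Theorem~\ref{th:komlos}, which is where the Fatou property and the boundedness of the weakly null sequence are used) to extract $\{y_n\}$ so that the Ces\`aro means of every further subsequence converge in measure to one fixed $y$; note that Ces\`aro means of a weakly null sequence are weakly null; then upgrade convergence in measure to norm convergence via $E$-equiintegrability, which identifies $y=0$ and yields the Banach-Saks conclusion. That is the correct skeleton.

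Two criticisms. First, your announced plan is off target: the theorem assumes only the Fatou property and non-atomicity of $\M$, so the $p$-convexity/$q$-concavity machinery, lower estimates, and Rakov's type-$p$ theorem that you invoke in the first two paragraphs (as in Theorem~\ref{th:pBS1}) are not available here; the whole point of this Schlenk-type result is that equiintegrability replaces such convexity hypotheses. Similarly, the opening reduction via the subsequence splitting principle is redundant ($E$-equiintegrability of $\{x_n\}$ is already a hypothesis), and the gliding-hump claim that ``equiintegrable contributions are killed in Ces\`aro average by weak nullity'' is not a valid mechanism on its own --- weak nullity by itself never forces norm convergence of Ces\`aro means, which is exactly what is at stake. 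Second, the step you correctly single out as the crux --- a noncommutative Vitali-type lemma saying that an $E$-equiintegrable sequence that converges (or is Cauchy) in measure converges in norm --- is asserted but not proved; it must be supplied or cited from \cite{DDS2}, together with the easy observation that $E$-equiintegrability passes to convex hulls (so the means $m^{-1}\sum_{j\le m} z_j$ stay equiintegrable), and it is safer to run that lemma on differences $z_m - z_k$, which are Cauchy in measure and equiintegrable, rather than on $z_m - y$, since a single operator $y$ need not be ``equiintegrable'' when $E$ fails order continuity. With the inapplicable convexity detour excised and the Vitali-type lemma properly justified, your outline is correct.
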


\begin{theorem}\cite[Theorem 3.9]{DDS2}\label{th:pBS2}
 Suppose that $\M$ is non-atomic. If $E$ is $p$-convex and $q$-concave for some $1 < p < 2 \le q < \infty$, then each weakly null, $E$-equiintegrable sequence $\{x_n\}$ in $E\Mtau$ contains a strong $p$-Banach-Saks subsequence $\{y_n\}$, that is $\lim_m m^{-1/p}\|\sum_{j=1}^m z_j\|_{E\Mtau} =0$ for any subsequence $\{z_j\}\subset \{y_n\}$.
\end{theorem}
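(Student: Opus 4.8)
The plan is to reduce the statement for $E\Mtau$ to the known type-$p$ structure of the space and then invoke a commutative-type result on strong $p$-Banach-Saks subsequences, exactly paralleling the proof of Theorem~\ref{th:pBS1} but keeping track of the equiintegrability hypothesis. First I would observe that, under the assumption $1<p<2\le q<\infty$ with $E$ being $p$-convex and $q$-concave, by \cite[Proposition 1.d.8]{LT2} we may renorm $E$ equivalently so that its $p$-convexity and $q$-concavity constants are both $1$; this does not affect any of the properties in the statement since they are stable under equivalent renorming. By \cite{X1989} (cited in Section~11), $E\Mtau$ is then uniformly convex with modulus of convexity of power type $q$ and uniformly smooth with modulus of smoothness of power type $p$; in particular $E\Mtau$ has Rademacher type $p$.

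Next I would bring in the equiintegrability hypothesis. The role of $E$-equiintegrability of a weakly null sequence $\{x_n\}$ in $E\Mtau$ is, via Theorem~\ref{th:Schlenk} and the subsequence splitting principle behind it (Corollary~2.5 / Theorem~2.8 of \cite{DDS2}), to force any weakly null $E$-equiintegrable sequence to behave, after passing to a subsequence, like a sequence that is ``uniformly away'' from the disjoint/singular part: it has no subsequence whose behavior is governed by two-sided disjoint blocks escaping to infinity. Concretely, after extracting a subsequence $\{y_n\}$ I would arrange (using the splitting principle) that $\{y_n\}$ is, up to a norm-null perturbation, a sequence in $E\Mtau$ that is essentially supported in a fixed ``finite'' corner relative to the equiintegrability, so that no mass leaks off to disjoint tails. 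For such a sequence, the type-$p$ estimate of $E\Mtau$ can be combined with a Ces\`aro-averaging argument in the spirit of Rakov's theorem (the type-$p$ $\Rightarrow$ $p$BS implication of \cite{rak1982}) to upgrade the conclusion from $\|m^{-1}\sum_{j=1}^m z_j\|\le C$ to $\lim_m m^{-1/p}\|\sum_{j=1}^m z_j\|_{E\Mtau}=0$, uniformly over all further subsequences $\{z_j\}\subset\{y_n\}$. The key point is that equiintegrability + type $p$ rules out the ``$\ell_p$-like'' lower estimate that would make $m^{-1/p}\|\sum z_j\|$ bounded below; instead one gets the little-$o$ behavior, which is exactly the strong $p$-Banach-Saks property.

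The step I expect to be the main obstacle is the passage from the \emph{bounded} (ordinary) $p$-Banach-Saks estimate, which comes for free from type $p$, to the \emph{strong} (little-$o$) estimate $\lim_m m^{-1/p}\|\sum_{j=1}^m z_j\|=0$. This is where the hypothesis $p<2$ (strict) and the equiintegrability are genuinely used: one needs a gliding-hump / disjointification argument showing that after extracting $\{y_n\}$ one can make the ``cross terms'' in $\|\sum_{j=1}^m z_j\|$ as small as desired, so that the sum is controlled by a quantity that is $o(m^{1/p})$ rather than $O(m^{1/p})$. I would carry this out by a standard diagonal extraction: iteratively choose $y_{n_k}$ so that each new term is $2^{-k}$-close (in the $E\Mtau$-norm) to being disjointly supported from, and then uniformly small relative to, the span of its predecessors in the relevant finite corner; then estimate $\|\sum_{j=1}^m z_j\|_{E\Mtau}$ by splitting into a ``disjoint'' main part, whose norm is controlled via the upper $p$-estimate (Theorem~\ref{th:estimates}(i), valid since $1<p\le 2$ and $E$ is $p$-convex) and the decay of the individual norms forced by equiintegrability, plus a summable error. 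Finally I would note that the non-atomicity of $\M$ is what makes the subsequence splitting principle and Theorem~\ref{th:Schlenk} available in the form needed, and record that the conclusion then transfers back to the original norm on $E\Mtau$ by equivalence.
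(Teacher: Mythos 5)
Your opening reduction (renorm $E$ via \cite[Proposition 1.d.8]{LT2} so that $M^{(p)}(E)=M_{(q)}(E)=1$, then invoke \cite{X1989} to get power-type-$p$ smoothness, hence type $p$, for $E\Mtau$) is fine and is exactly how the survey argues for Theorem \ref{th:pBS1}; the survey gives no proof of the present theorem beyond the citation to \cite{DDS2}, so the question is whether your sketch of the remaining step can work, and it cannot in the form you describe. The step you yourself flag as the main obstacle --- upgrading the $O(m^{1/p})$ bound coming from type $p$ to the little-$o$ bound --- is attacked by (i) almost-disjointification of a subsequence and (ii) the upper $p$-estimate of Theorem \ref{th:estimates}(i) combined with ``the decay of the individual norms forced by equiintegrability''. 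Both ingredients fail. Equiintegrability does not force any decay of $\|y_n\|_{E\Mtau}$: take $\M=L_\infty[0,1]$, $E=L_p[0,1]$ with $1<p<2$ (which is $p$-convex and $2$-concave) and the Rademacher sequence $\{r_n\}$; it is weakly null and $E$-equiintegrable with $\|r_n\|_p\equiv 1$. The same example shows no subsequence is close to a disjointly supported sequence; in fact a seminormalized $E$-equiintegrable sequence never has an almost disjointly supported subsequence (mutually orthogonal supports $s_n$ give projections $e_k=\sum_{n\ge k}s_n\downarrow 0$ witnessing failure of equiintegrability), consistent with the Kadec--Pe{\l}czy\'nski dichotomy of Proposition \ref{prop:KP}: equiintegrability is precisely the alternative without almost disjoint subsequences, so the gliding-hump extraction is unavailable --- and it also contradicts your own (correct) earlier remark that equiintegrability rules out control by disjoint blocks. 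Moreover, even granting a disjointification, the upper $p$-estimate on seminormalized disjoint blocks gives exactly $\|\sum_{j\le m}d_j\|_{E\Mtau}\le M\,m^{1/p}\sup_j\|d_j\|_{E\Mtau}$, i.e.\ again only $O(m^{1/p})$; and ``ruling out an $\ell_p$-like lower estimate'' is not an argument for $m^{-1/p}\|\sum_{j\le m}z_j\|_{E\Mtau}\to 0$ along every further subsequence.

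What the hypotheses $p<2$ and $2\le q<\infty$ are really for is a \emph{better growth exponent} on the equiintegrable part, not disjointness. The natural route is: given $\varepsilon>0$, use equiintegrability and weak nullity to split (after extraction) $z_j$ into a piece of norm at most $\varepsilon$ and a piece lying in a corner $e\M e$ with $\tau(e)<\infty$, uniformly bounded in operator norm; for that piece one must show, using the $q$-concavity and the Hilbert/$L_q$-type behavior available on such a corner, that after a further extraction its partial sums grow like $O_\varepsilon(m^{1/2})$ --- an upper $2$-estimate in the spirit of orthogonality/Khintchine (for the Rademachers this is exactly $\|\sum_{j\le m}r_j\|_p\asymp m^{1/2}$), not an upper $p$-estimate for disjoint blocks --- and $m^{1/2}=o(m^{1/p})$ precisely because $p<2$. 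The power-type-$p$ smoothness is then needed only to bound the contribution of the $\varepsilon$-small remainders by $C\varepsilon m^{1/p}$ uniformly over further subsequences, and a diagonal argument over $\varepsilon=1/k$ finishes. Note also that Theorem \ref{th:Schlenk} alone is far too weak here, since it only yields $o(m)$. So your frame (renorming, type $p$, splitting principle) is right, but the heart of the theorem --- extracting an exponent strictly better than $1/p$ on the equiintegrable part --- is missing, and the mechanism you propose in its place cannot deliver it.
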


Given a closed subspace $X$ of $E\Mtau$, a sequence $\{x_n\} \subset X$ is called almost disjointly supported 
if there exists a two sided disjointly supported sequence $\{y_n\}\subset E\Mtau$ such that $\|x_n - y_n\|_{E\Mtau} \to 0$.

\begin{proposition}
\cite[Proposition 3.12]{DDS2}
 Suppose that $E$ is $p$-convex and $q$-concave for some $1 < p < 2 \le q < \infty$,  $\M$ is non-atomic and  $X \subset E\Mtau$ is a closed linear subspace. If $X$ does not have the strong $p$-Banach-Saks property, then $X$ contains a seminormalised almost disjointly supported sequence which converges to zero in measure.
\end{proposition}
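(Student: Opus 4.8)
The plan is to argue by contrapositive, exactly as in the classical (commutative) Kadec–Pełczyński-type dichotomies and as in the proof of Theorem~\ref{th:pBS2}. Suppose $X\subset E\Mtau$ is a closed linear subspace that does \emph{not} have the strong $p$-Banach–Saks property. Then there is a weakly null sequence $\{x_n\}\subset X$, which after normalising we may take to lie in $S_X$, admitting no strong $p$-Banach–Saks subsequence. First I would apply Theorem~\ref{th:pBS2}: since $E$ is $p$-convex and $q$-concave with $1<p<2\le q<\infty$ and $\M$ is non-atomic, every weakly null \emph{$E$-equiintegrable} sequence in $E\Mtau$ contains a strong $p$-Banach–Saks subsequence. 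Consequently the chosen sequence $\{x_n\}$ cannot be $E$-equiintegrable (nor can any subsequence, by passing down). This is the crucial leverage: failure of the strong $p$-Banach–Saks property forces a failure of equiintegrability.

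Next I would exploit the failure of $E$-equiintegrability to extract a ``mass escaping to small projections'' phenomenon. Concretely, there is a decreasing sequence of projections $e_k\downarrow 0$ in $\M$ and, after relabelling, a subsequence (still called $\{x_n\}$) together with an $\epsilon_0>0$ and indices $k_n$ with $\|e_{k_n} x_n e_{k_n}\|_{E\Mtau}\ge \epsilon_0$. Here I would invoke the subsequence splitting principle for bounded sequences in $E\Mtau$ (\cite[and the discussion around Proposition~\ref{prop:KP}]{DDS2}): after a further subsequence, $x_n = u_n + v_n$ where $\{v_n\}$ is two-sided disjointly supported and $\{u_n\}$ is $E$-equiintegrable (an ``equimeasurable / uniformly integrable'' part). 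Since $\{u_n\}$ is $E$-equiintegrable and weakly null, by Theorem~\ref{th:pBS2} (applied to $\{u_n\}$) it has a strong $p$-Banach–Saks subsequence; if $\|v_n\|_{E\Mtau}\to 0$ along a subsequence, then $\{x_n\}$ would inherit that property, contradicting our choice. Hence $\liminf_n \|v_n\|_{E\Mtau} = c > 0$, and passing to a subsequence we may assume $\|v_n\|_{E\Mtau}\to c$ and $\|x_n - c^{-1}v_n\|_{E\Mtau}$ controlled; rescaling, $\{c^{-1}v_n\}$ is a seminormalised, two-sided disjointly supported sequence with $\|x_n - c^{-1}v_n\|_{E\Mtau}\to 0$, i.e. $\{x_n\}$ is almost disjointly supported in the sense defined before the proposition.

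Finally I would verify that this almost disjointly supported seminormalised sequence converges to zero in measure. For a two-sided disjointly supported sequence $\{v_n\}$ in $E\Mtau$, order continuity of $E$ (which follows from $q$-concavity for $q<\infty$, since $q$-concave Banach lattices are order continuous) gives that the supports $s(v_n)$ (and $s(v_n^*)$) are mutually orthogonal projections, so for any fixed projection $e$ with $\tau(e)<\infty$ one has $\tau(e\, s(v_n))\to 0$; combined with the uniform norm bound and the definition of the neighbourhoods $V(\varepsilon,\delta,e)$, this yields $e v_n e \to 0$ in measure for every such $e$, hence $v_n\to 0$ locally in measure, and then $v_n\to 0$ in measure (for a two-sided disjoint bounded sequence local and global convergence in measure coincide). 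Since $\|x_n - c^{-1}v_n\|_{E\Mtau}\to 0$ and norm convergence dominates measure convergence, $x_n\to 0$ in measure as well. I expect the main obstacle to be the bookkeeping in the subsequence splitting step: carefully arranging that the disjoint part $\{v_n\}$ stays seminormalised and genuinely two-sided disjointly supported (not merely ``almost''), while simultaneously keeping the equiintegrable part's strong $p$-Banach–Saks subsequence aligned with the same indices — this is where one must iterate the passage to subsequences and use that $E$-equiintegrability and the strong $p$-Banach–Saks property are both hereditary under subsequences.
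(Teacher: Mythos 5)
Your opening move is the right one: by Theorem~\ref{th:pBS2}, a weakly null sequence in $X$ with no strong $p$-Banach--Saks subsequence can have no $E$-equiintegrable subsequence, and extracting a seminormalised such sequence is harmless. The argument collapses, however, after the splitting $x_n=u_n+v_n$. From $\liminf_n\|v_n\|_{E\Mtau}=c>0$ you conclude $\|x_n-c^{-1}v_n\|_{E\Mtau}\to 0$, i.e.\ that the original sequence is almost disjointly supported; but $x_n-c^{-1}v_n=u_n+(1-c^{-1})v_n$, and nothing forces the equiintegrable part $u_n$ to be norm null. This is not a fixable bookkeeping issue: take $\M=L_\infty[0,\infty)$ with Lebesgue trace, $E=L_2$, and $x_n=2^{-1/2}\bigl(r_n+\chi_{[n,n+1)}\bigr)$ with $r_n$ the Rademacher functions on $[0,1)$. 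This sequence is orthonormal, hence weakly null with no strong $p$-Banach--Saks subsequence for $p<2$; its splitting has $\|u_n\|=\|v_n\|=2^{-1/2}$; and no subsequence of $\{x_n\}$ is almost disjointly supported (disjoint sets $B_n\subset[0,1)$ must have $m(B_n)\to0$, so any disjointly supported $d_n$ stays at distance about $2^{-1/2}$ from $x_n$). Moreover the disjoint parts $v_n$ live in $E\Mtau$, not in $X$, so they cannot be the sequence the proposition demands. The conclusion requires manufacturing a \emph{new} sequence inside $X$, and that is what the subspace-level Kadec--Pe{\l}czy\'nski dichotomy is for: from Theorem~\ref{th:pBS2} one deduces that $B_X$ cannot be $E$-equiintegrable, equivalently (by \cite[Theorem 2.4, Corollary 2.5]{DDS2}, quoted here as Proposition~\ref{prop:KP}) the norm and measure topologies do not coincide on $X$; one then starts from a normalised sequence in $X$ converging to zero in measure but not in norm and disjointifies it by a gliding-hump argument, so normalisation and measure-nullity are built in from the outset.

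The final step is also wrong on its own terms. For a bounded two-sided disjointly supported sequence, local convergence in measure does not imply convergence in measure when $\tau(\one)=\infty$: the functions $\chi_{[n,n+1)}$, normalised in $L_2(0,\infty)$, are disjointly supported and converge to zero locally in measure, yet $\mu(\delta,\chi_{[n,n+1)})=1$ for all $n$ and $\delta<1$. Likewise, in the example above $\mu(\delta,x_n)=2^{-1/2}$ for $\delta<2$, so the bad weakly null sequence itself need not tend to zero in measure, and measure-nullity cannot be recovered at the end from (almost) disjointness. In short: the reduction to non-equiintegrability is correct, but the passage from there to the conclusion must go through Proposition~\ref{prop:KP} applied to the subspace $X$, not through the subsequence splitting of the bad sequence, and the convergence to zero in measure is a property of the sequence one starts the Kadec--Pe{\l}czy\'nski construction with, not a consequence of disjointness.
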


The Boyd indices of a symmetric space $E$  on $[0,\alpha)$, $0<\alpha\le \infty$ are nontrivial, i.e., $1 < p_E \le q_E < \infty$ if and only if $E$ is an interpolation space between $L_r[0,\alpha)$ and $L_q[0,\alpha)$  for some $1<r<q<\infty$ \cite{LT2}. For such spaces $E$ more specialized Banach-Saks properties have been studied in \cite{L-PS}. As an example let us state the  result below.
We say that $E$ has the \emph{disjoint $pBS$ property}  if every weakly null disjointly supported sequence in $E$ has a $p$-Banach-Saks subsequence. Clearly if $E$ satisfies an upper $p$-estimate, then E has disjoint $pBS$ property.

\begin{theorem}\cite[Theorem 12]{L-PS}
 Let $E$ be a symmetric separable space which  is interpolation  between $L_r[0,\tauone)$ and $L_q[0,\tauone)$, $1<r<q<\infty$. Let $\beta = \min\{r, 2\}$. Assume also that $E$ has the disjoint $\beta BS$ property and that $\M$ is non-atomic. Then $E\Mtau$ has the $\beta BS$ property as soon as either $\M = R$ is hyperfinite or $E$ is $D^*$−convex.
\end{theorem}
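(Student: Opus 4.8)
The plan is to reduce the statement to the commutative (function space) case of Banach--Saks, for which the disjoint $\beta BS$ hypothesis is tailored, and then to lift it to $E\Mtau$ using the structural results of Section~18 together with an operator-space decomposition of weakly null sequences. First I would recall the Kadec--Pe\l czy\'nski dichotomy in the noncommutative setting (Proposition~\ref{prop:KP}) and the subsequence splitting principle behind Theorem~\ref{th:komlos}: given a bounded sequence $\{x_n\}\subset E\Mtau$, after passing to a subsequence one can write $x_n = c_n + d_n + r_n$, where $\{c_n\}$ is $E$-equiintegrable (equivalently, relatively weakly compact modulo the order-continuous part), $\{d_n\}$ is two-sided disjointly supported, and $\|r_n\|_{E\Mtau}\to 0$. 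If the original sequence is weakly null, then both $\{c_n\}$ and $\{d_n\}$ can be taken weakly null as well. This splits the problem into two independent halves: an equiintegrable half and a disjoint half.

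For the disjoint half, the key observation is that a two-sided disjointly supported sequence in $E\Mtau$ behaves, from the point of view of norms of Ces\`aro sums, exactly like a disjointly supported sequence in $E$ itself: by Lemma~\ref{lm:singfun} and the identity $|x_1+\dots+x_n|=(|x_1|^2+\dots+|x_n|^2)^{1/2}$ for left-disjoint operators (equation~\eqref{eq:est1}), the singular value function of a sum of two-sided disjoint operators is the decreasing rearrangement of the ``disjoint sum'' of their singular value functions. Hence the disjoint $\beta BS$ property of $E$ passes verbatim to two-sided disjoint sequences in $E\Mtau$; one extracts a $\beta$-Banach--Saks subsequence there directly. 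For the equiintegrable half I would invoke Theorem~\ref{th:Schlenk} and, more to the point, Theorem~\ref{th:pBS2}: when $E$ is interpolation between $L_r$ and $L_q$ with $1<r<q<\infty$, its Boyd indices are nontrivial, so $E$ is $p$-convex and $q'$-concave for suitable $1<p\le\beta$ and $q'\ge 2$, and an $E$-equiintegrable weakly null sequence in $E\Mtau$ contains a strong $p$-Banach--Saks (hence $\beta BS$) subsequence. Combining a good subsequence of $\{c_n\}$ with a good subsequence of $\{d_n\}$ and absorbing $\{r_n\}$ yields a $\beta BS$ subsequence of $\{x_n\}$, which is exactly the $\beta BS$ property for $E\Mtau$.

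The hard part will be the step that forces the extra hypothesis in the theorem, namely controlling the interaction between the equiintegrable part and the disjoint part, i.e.\ showing that a subsequence can be chosen so that the Ces\`aro sums $\frac1m\sum_{j\le m}(c_j+d_j)$ are simultaneously well-estimated by $m^{1/\beta}$. In the commutative case disjointness of the $d_j$ from each other and near-disjointness from the (equiintegrable, hence ``localized'') $c_j$ can be arranged by a gliding-hump argument; in the noncommutative case this requires an operator gliding hump, and it is precisely here that one needs either the hyperfiniteness of $\M=R$ (which provides an approximately finite-dimensional filtration along which to perform the hump and martingale-type estimates) or the $D^*$-convexity of $E$ (which supplies the uniform convexity-type smoothing needed to make the cross terms negligible without a finite-dimensional approximation). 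I would treat these two cases separately: in the hyperfinite case, approximate each block by its conditional expectation onto a finite-dimensional subalgebra and reduce to a matrix computation governed by Theorems~\ref{th:convexity} and \ref{th:concavity}; in the $D^*$-convex case, use the $D^*$-convexity to replace the disjointness estimate by a direct modulus-of-convexity estimate on $E\Mtau$, which is legitimate because $D^*$-convexity is stable under the passage $E\rightsquigarrow E\Mtau$ in the relevant range of indices.

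Finally I would check the bookkeeping: that passing to successive subsequences finitely many times is harmless, that the constant $C$ in the definition of $\beta BS$ can be taken uniform over the three pieces (the disjoint constant from $E$, the strong-$p$ constant from Theorem~\ref{th:pBS2}, and a fixed constant controlling $\sum\|r_n\|_{E\Mtau}$), and that $\beta=\min\{r,2\}\le p_E$ so that the interpolation indices genuinely deliver the $p$-convexity needed. I expect the routine estimates (triangle inequality for Ces\`aro sums, $m^{1/\beta}\le m^{1/p}$ comparisons) to be mechanical; the genuine content is the operator gliding-hump/conditional-expectation argument in the two special cases, which is where the cited proof in \cite{L-PS} does its real work.
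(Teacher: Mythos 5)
A preliminary remark: the survey does not prove this statement --- it is quoted from \cite{L-PS} without proof --- so there is no in-paper argument to compare yours against; what follows assesses your proposal on its own terms. Your skeleton (split a weakly null sequence into an $E$-equiintegrable part and a two-sided disjointly supported part, treat the halves separately, then combine) is the natural one, and the disjoint half is right in spirit, though not ``verbatim'': the hypothesis concerns weakly null disjointly supported sequences in $E$, so you must actually transfer weak nullity from $\{d_n\}\subset E\Mtau$ to disjoint copies of $\mu(d_n)$ in $E$ (via the order isometry between the closed span of a two-sided disjoint sequence and a span of disjoint elements of $E$), and note that (\ref{eq:est1}) only covers left disjointness and produces a square function, not the disjoint-sum rearrangement you invoke.

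The genuine gap is in the equiintegrable half. You claim that the interpolation hypothesis yields $p$-convexity and suitable concavity of $E$ with $1<p\le\beta$, so that Theorem~\ref{th:pBS2} applies, ``hence $\beta BS$''. Two things fail. First, being an interpolation space between $L_r$ and $L_q$ controls the Boyd indices of $E$, not its convexity and concavity indices; $\beta$-convexity is a strictly stronger property that is neither assumed nor implied, so Theorem~\ref{th:pBS2} is simply not available under the stated hypotheses (and for $r\ge 2$, where $\beta=2$, it is excluded anyway by the strict inequality $p<2$ in that theorem). Second, even granting $p$-convexity for some $p$ with $1<p<\beta$, a strong $p$-Banach--Saks subsequence only gives $\bigl\|\sum_{j\le m}z_j\bigr\|=o(m^{1/p})$, and since $m^{1/p}\ge m^{1/\beta}$ this is \emph{weaker} than the required bound $Cm^{1/\beta}$; the exponent comparison goes the wrong way, so you would need $p\ge\beta$, i.e.\ $\beta$-convexity. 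Moreover, if $E$ were $\beta$-convex and $q$-concave, Theorem~\ref{th:pBS1} would already give the $\beta BS$ property of $E\Mtau$ outright, making the disjoint $\beta BS$ hypothesis and the hyperfinite/$D^*$-convex alternatives redundant --- a clear sign that your division of labour cannot be the intended one. The real content of the theorem is precisely to handle the non-disjoint part \emph{without} convexity, by transferring Ces\`aro-mean estimates from $L_r\Mtau$ (which has the $\beta BS$ property by Theorem~\ref{th:pBS1}) to $E\Mtau$ through the interpolation hypothesis, and it is in that transfer --- not merely in controlling cross terms by a gliding hump --- that hyperfiniteness of $\M$ or $D^*$-convexity of $E$ is consumed in \cite{L-PS}. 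As written, your proposal delegates to existing results a step they do not cover and relegates to ``bookkeeping'' the step that is actually the theorem.
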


\begin{problem} $\rm{(i)}$ Characterize  property $S_p$ for  spaces $E\Mtau$ and $C_E$. 

$\rm{(ii)}$ Find the relationship bewteen the intervals $\Gamma(E\Mtau)$ or $\Gamma(C_E)$, and the interval $\Gamma(E)$ . 
\end{problem}

\section{Radon-Nikod\'ym property}

Let $X$ be a Banach space and $K\subset X$ be closed, bounded and convex. Then $K$ is said to have the \emph{Radon-Nikod\'ym property} ($RNP$)  if for any finite measure space $(\Omega, \Sigma,\mu)$ and any $X$-valued measure $m$ on $\Sigma$ that is absolutely continuous with respect to $\mu$, $m(A)/\mu(A) \in K$ for all $A\in \Sigma$ with $\mu(A) > 0$ implies that there is an $f \in L_1(\mu,X)$ such that for all $A\in \Sigma$, $m(A) = \int_A f \,d\mu$. We say that $X$ has $RNP$ whenever every closed, bounded and convex  subset of $X$ has the $RNP$. It is well known that the spaces $L_1[0,1]$ and $c_0$ do not have the $RNP$, and therefore any Banach space that contains a subspace isomorphic to either $L_1$ or $c_0$ do not possess the $RNP$. On the other hand  every reflexive space or a space which is dual and separable  has the $RNP$. We refer to the book by Pei-Kee Lin \cite{Lin} for details on the Radon-Nikod\'ym property.

Q. Xu proved the following result in 1992.

\begin{theorem} \cite{X1992}
Let  $E$ have the $RNP$. Then $E\Mtau$ has the $RNP$. Similarly if $E$ is a symmetric sequence space with the $RNP$ then the unitary ideal $C_E$ has the $RNP$. 
\end{theorem}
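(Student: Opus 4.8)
The plan is to reduce the Radon--Nikod\'ym property for $\nonsp$ to the known dichotomy: a Banach space $X$ has the $RNP$ if and only if it contains no subspace isomorphic to a space failing $RNP$ in a "bushy" way, but more efficiently, I would exploit the characterization via closed bounded convex sets and the structural results already collected in this survey. The cleanest route uses the following two facts about the $RNP$: (a) $RNP$ passes to closed subspaces, and (b) $RNP$ is stable under the kind of order-theoretic and measure-theoretic constructions available here. Since $E$ has $RNP$, in particular $E$ contains no isomorphic copy of $c_0$ (because $c_0$ fails $RNP$), and hence by Proposition~\ref{prop:czero}, $E$ is a $KB$-space, so $E$ is order continuous and has the Fatou property. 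Order continuity of $E$ is exactly the hypothesis under which the K\"othe duality $\nonsp^* = E^\times\Mtau$ holds (as recorded in the Preliminaries), and this is the structural lever I would use.

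First I would handle the non-atomic case, then remove non-atomicity using the extension machinery of Section~\ref{non-atom}. Since $RNP$ is preserved by linear isometries and passes to subspaces, and since $E\Mtau$ embeds isometrically into $E(\mathcal{A},\kappa)$ with $\mathcal{A}$ non-atomic via $\tilde\pi$, it suffices to prove: if $E$ has $RNP$ and $\M$ is non-atomic then $\nonsp$ has $RNP$. For the non-atomic case I would argue by contraposition using the $c_0$/$\ell_1$-dichotomy: a Banach lattice-like space fails $RNP$ typically because it contains $c_0$ or because its dual ball has too few extreme/denting points. Here the key is that $E$ having $RNP$ forces $E$ to be order continuous, hence $\nonsp$ is order continuous (by \cite[Proposition 2.3]{CSweak}, since $E$ is also strongly symmetric as an order-continuous symmetric space). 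Then I would use the identification $\nonsp = (E^\times\Mtau)^\times$ together with the fact that $\nonsp$ does not contain $c_0$: if it did, by the isometric embedding of $E$ into $\nonsp$ (Corollary~\ref{cor:isomglobal}) combined with a lifting/projection argument, or more directly by an argument on the lattice structure, one would produce a copy of $c_0$ in $E$, contradicting $E$ being a $KB$-space. Having ruled out $c_0$, I would invoke the general principle (valid for this class of spaces, cf. the order-continuity + Fatou setup) that an order continuous Banach function/operator space with $RNP$-type fundamental space and no $c_0$ inherits $RNP$ from $E$; concretely one pulls back an $X$-valued measure to an $E$-valued measure using the isometric embedding and a norm-one projection, differentiates there, and pushes forward. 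The unitary ideal case $C_E$ is then immediate: identify $C_E = G(B(H),\tr)$ for the symmetric function space $G$ of Section~\ref{sec:unitary}, note $G$ inherits $RNP$ from $E$ (via the isometric embedding of $E$ into $C_E$ from Proposition~\ref{prop:isomarazy} and the contractive projection, which gives $E$ as a $1$-complemented subspace, so $RNP$ transfers back and forth in the sequence setting directly), and apply the function-space result.

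The honest core of the argument, and the step I expect to be the main obstacle, is the transfer mechanism from $E$ to $\nonsp$ for the $RNP$ itself --- the $RNP$ is not obviously "local" in the way convexity or smoothness are, and there is no direct analogue of the pointwise singular-value characterizations used elsewhere in this survey. The cleanest fix is to use that $RNP$ is equivalent to the dentability of every closed bounded convex set, combined with Xu's observation (the actual content of \cite{X1992}) that $\nonsp$ with $E$ order continuous and having $RNP$ must be a dual space or must decompose in a way that makes dentability inheritable; I would follow his route of establishing that $E$ having $RNP$ plus order continuity implies $E$ is reflexive or separable-dual-like on the relevant "bands," and that these properties lift through the trace-preserving $*$-isomorphisms and the K\"othe duality to $\nonsp$. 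In the writeup I would state clearly which characterization of $RNP$ is being used (dentability, or the martingale convergence characterization), reduce to non-atomic $\M$ by the $E(\mathcal{A},\kappa)$ embedding, reduce the $\nonsp$ statement to a statement about the commutative model $E$ via Corollary~\ref{cor:isomglobal} and the duality $\nonsp^*=E^\times\Mtau$, and finally invoke the $KB$-space structure of $E$ to close the argument; the $C_E$ statement follows from the function-space case together with the $1$-complementation of $E$ in $C_E$.
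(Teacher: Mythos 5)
Your opening moves are the same as the paper's: $RNP$ rules out copies of $c_0$, so by Proposition \ref{prop:czero} the space $E$ is a $KB$-space (order continuous with the Fatou property), and the reduction to non-atomic $\M$ via $E(\mathcal{A},\kappa)$ is legitimate since $RNP$ passes to subspaces and is an isometric invariant. But from that point on the transfer mechanism you propose does not work, and this is exactly where the whole difficulty lies. First, the ``general principle'' you invoke --- that an order continuous space containing no copy of $c_0$, built over an $E$ with $RNP$, inherits $RNP$ --- is not a theorem; absence of $c_0$ is far from sufficient for $RNP$ (the space $L_1[0,1]$ contains no copy of $c_0$ and fails $RNP$). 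Second, your concrete mechanism --- pull an $X$-valued measure back to an $E$-valued measure through the isometric embedding and a norm-one projection, differentiate in $E$, and push forward --- is invalid: $RNP$ is inherited by subspaces but does \emph{not} ascend from a ($1$-complemented) subspace to the ambient space, and in the $E\Mtau$ case the embedded copy of $E$ from Corollary \ref{cor:isomglobal} is not even claimed to be complemented. For the same reason your statement that $RNP$ ``transfers back and forth'' between $E$ and $C_E$ via Arazy's contractive projection (Proposition \ref{prop:isomarazy}) is false in the upward direction, and the assertion that the function space $G$ of Section \ref{sec:unitary} ``inherits $RNP$ from $E$'' is precisely the unproved function-space case of the theorem, so that part of your argument is circular.

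You do gesture at the right idea when you say the space ``must be a dual space,'' but you never carry out the construction that makes this work, and that construction is the actual content of Xu's proof as sketched in the paper: since $E$ is a $KB$-space, $E^{\times\times}=E$, and $F=(E^{\times})_a$ is a nontrivial separable symmetric sequence space with $F^{*}=(E^{\times})^{\times}=E$; then Gohberg--Krein duality $(C_F)^{*}=C_{F^{*}}$ (valid because $F$ is separable) gives $C_E=(C_F)^{*}$, so $C_E$ is a dual space, and when $H$ is separable $C_E$ is a \emph{separable} dual space and therefore has $RNP$; the nonseparable case and the case of $E\Mtau$ follow the same line but require more work, since $E\Mtau$ and $C_E$ need not be separable even when $E$ is. In short, the missing idea is the explicit identification of a predual (namely $C_{(E^{\times})_a}$, respectively its operator-space analogue built from $(E^{\times})_a\Mtau$) together with the ``separable dual $\Rightarrow RNP$'' principle; neither the $c_0$-exclusion nor any complementation argument can substitute for it.
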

\begin{proof}
We give only a sketch of the proof.  If $E$, a function or sequence space, has the $RNP$ then $E$ cannot contain an order isomorphic copy of $c_0$, and it follows from Proposition \ref{prop:czero} that $E$ is order continuous (equivalently separable), and it satisfies the Fatou property. From the Fatou property we have that $E^{\times\times}=E$. Observe that for any symmetric sequence space $E$ the subspace $E_a$ is always non-trivial since any unit  vector $\phi_n$ belongs to $E_a$.  Recall that $\phi_n=\{\phi_n(i)\}$ with $\phi_n(i) = 0$ if $i\ne n$ and $\phi_n(n) =1$, $n\in\mathbb{N}$. Therefore if $E$ is a symmetric sequence space then $F=(E^\times)_a$  is a non-trivial symmetric sequence space  and such that $F^* = [(E^\times)_a]^* = (E^\times)^\times =E$. Now since $F$ is separable, Theorem 12.2 in  \cite{GK} implies that $(C_F)^* = C_{F^*}$.  Thus we get
\[
C_E = (C_F)^*,
\]
which means that $C_E$ is a dual space.  If in addition we assume that the Hilbert space $H$ is separable then $C_E$ must be separable \cite[Proposition 1, Theorem 2]{Medz},  and so it must satisfy the $RNP$. In the case when $H$ is not separable the proof for $C_E$ or $E\Mtau$  goes along the similar line but it is more involved.  In particular the spaces $E\Mtau$ or $C_E$ do not need to be separable  even though $E$ is separable. 
\end{proof}

In view of Corollary \ref{cor:isomglobal} that states when $E$ is isomorphically embedded either in $E\Mtau$ or $C_E$,  we get the converse  of the above result.

\begin{theorem} If $\M$ is non-atomic and $E\Mtau$ has the $RNP$ then $E$ has also this property. For any symmetric sequence space $E$, if $C_E$ has the $RNP$ then $E$ has this property too.  
\end{theorem}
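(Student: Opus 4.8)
The plan is to deduce this converse directly from the embedding results already established in the excerpt, namely Corollary~\ref{cor:isomglobal} together with Proposition~\ref{prop:czero}, and the fact that the $RNP$ is hereditary for closed subspaces. First I would recall the standard fact (see \cite{Lin}) that if a Banach space $X$ has the $RNP$, then every closed linear subspace of $X$ also has the $RNP$; this is immediate from the definition, since a closed bounded convex subset of a subspace $Y\subset X$ is also a closed bounded convex subset of $X$, and the Radon--Nikod\'ym derivative furnished by $X$ takes values back in that subset, hence in $Y$.

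Next, for the function-space case assume $\M$ is non-atomic and $E\Mtau$ has the $RNP$. By Corollary~\ref{cor:isomglobal}, the symmetric function space $E$ is isometrically (in fact order isometrically) embedded into $E\Mtau$; its image is a closed linear subspace of $E\Mtau$, since $E$ is a Banach space and an isometric image of a Banach space is norm-closed. Therefore $E$, being isometric to a closed subspace of a space with the $RNP$, has the $RNP$ itself. For the sequence-space case, one argues identically: for any symmetric sequence space $E\ne\ell_\infty$, Corollary~\ref{cor:isomglobal} (via Proposition~\ref{prop:isomarazy}) gives an isometric order embedding of $E$ into $C_E$ whose image is $1$-complemented and in particular closed, so again $E$ inherits the $RNP$ from $C_E$. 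The case $E=\ell_\infty$ is vacuous here since by convention $C_E$ is only defined for $E\ne\ell_\infty$, and in the function case $E=L_\infty$ (equivalently $\M=L_\infty$) the embedding $E\hookrightarrow E\Mtau$ is in fact an isometric isomorphism $L_\infty\Mtau=\M$ when $\M=L_\infty$, so the conclusion is trivial.

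There is no real obstacle in this argument; the only point requiring a word of care is to note that the embedding $E\hookrightarrow E\Mtau$ produced in Corollary~\ref{cor:isomglobal} has closed range, which follows at once because it is an \emph{isometry} on a complete space. One could also, if desired, make the logic circular-free by observing that $RNP$ of $E\Mtau$ forces (via Proposition~\ref{prop:czero}, since $L_1$ is finitely representable in any non-$KB$ space and $c_0\not\hookrightarrow$ a space with $RNP$) order continuity of $E$, but this is not needed for the converse direction --- heredity of $RNP$ for closed subspaces is enough. Thus the theorem is an immediate corollary of Corollary~\ref{cor:isomglobal} and the subspace-heredity of the Radon--Nikod\'ym property, and I would present it in exactly that order: state heredity, invoke the isometric embedding, conclude.
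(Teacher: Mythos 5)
Your proposal is correct and follows essentially the same route as the paper, which deduces the converse directly from the isometric (order) embedding of $E$ into $E\Mtau$ (Corollary \ref{cor:isomglobal}, via Proposition \ref{prop:isomarazy} in the sequence case) together with the heredity of the Radon--Nikod\'ym property for closed subspaces. The extra observations you add (closedness of the isometric image, $1$-complementedness in $C_E$) are harmless refinements of the same argument.
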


Let $\{e_{i,j}\}$ denote the sequence of standard unit matrices for $i,j\in\mathbb{N}$, that is $e_{i,j}(k,l) = \delta_{i,k} \cdot \delta_{j,l}$,  where $\delta_{i,k}=0$ if $i\ne k$ and 
$\delta_{i,i} =1$ for $i,k\in\mathbb{N}$.
 The  $n$'th shell-subspace is defined as $S_n = \text{span} \{e_{i,j}: \max\{i,j\} = n\}$, $n\in\mathbb{N}$. The sequence $\{S_n\}$ is called the shell decomposition, and it is a monotone Schauder decomposition for $C_E$. For details on Schauder bases in Banach spaces we refer to \cite{guerre, LT1}. We finish  with a list of equivalent conditions for $RNP$ of $C_E$ in the case when $E$ is separable, due to J. Arazy. 

\begin{theorem} \cite[Proposition 3.7]{A1981} The following eight properties are equivalent for every symmetric separable sequence space $E$.
\begin{itemize}
\item[$(1)$] $E$ does not contain a subspace isomorphic to $c_0$. 
\item[$(2)$] $E$ has the $RNP$.
\item[$(3)$] The unit vector basis of $E$ is boundedly complete. 
\item[$(4)$] $E$ is a dual space.
\item[$(5)$] $C_E$ does not contain a subspace isomorphic to $c_0$.
\item[$(6)$] $C_E$ has the $RNP$.
\item[$(7)$] The shell decomposition of $C_E$ is boundedly complete. 
\item[$(8)$] $C_E$ is a dual space.
\end{itemize}
\end{theorem}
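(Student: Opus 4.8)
The statement is a compound equivalence of eight properties for a separable symmetric sequence space $E$, and my plan is to establish it by proving two cycles of implications joined by the already-available transfer results between $E$ and $C_E$. For the $E$-side I would first run the cycle $(1)\Rightarrow(3)\Rightarrow(4)\Rightarrow(2)\Rightarrow(1)$, and for the $C_E$-side the analogous cycle $(5)\Rightarrow(7)\Rightarrow(8)\Rightarrow(6)\Rightarrow(5)$; then I would link the two cycles using Theorem~\ref{th:BS-C_E}-type transfer, or more precisely the Radon–Nikod\'ym transfer theorems (\cite{X1992}) together with Corollary~\ref{cor:isomglobal}, which give $(2)\iff(6)$ directly, and the structural identification from the proof of that theorem, which gives $(4)\iff(8)$.

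\textbf{The $E$-side cycle.} For $(1)\Rightarrow(3)$: since $E$ is separable it is order continuous by Proposition~\ref{prop:infinity}, and by Proposition~\ref{prop:czero} the failure of bounded completeness of the unit vector basis $\{\phi_n\}$ would produce a sequence of partial sums that is norm-bounded but has no norm-limit, which (because $E$ is order continuous) can be used to build an order-isomorphic, hence isomorphic, copy of $c_0$ inside $E$; contrapositively $(1)$ forces bounded completeness. For $(3)\Rightarrow(4)$: an order continuous $E$ has $E^*=E^\times$ and the Fatou property forces $E=E^{\times\times}$; since $\{\phi_n\}$ is a boundedly complete basis of $E$, the standard James-type argument identifies $E$ isometrically with $[(E^\times)_a]^*$, so $E$ is a dual space. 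For $(4)\Rightarrow(2)$: a separable dual space has the $RNP$ (classical; see \cite{Lin}). For $(2)\Rightarrow(1)$: $c_0$ fails the $RNP$, and $RNP$ passes to closed subspaces, so $E$ cannot contain an isomorphic copy of $c_0$. This closes the cycle, so $(1)$–$(4)$ are mutually equivalent.

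\textbf{The $C_E$-side cycle and the linkage.} The $C_E$-side runs in parallel, with the shell decomposition $\{S_n\}$ playing the role of the unit vector basis: $(5)\Rightarrow(7)$ uses Proposition~\ref{prop:czero} applied to $C_E$ (here $C_E$ is order continuous since $E$ is, by Proposition~\ref{prop:occe}, noting $(C_E)_a=C_{E_a}=C_E$), together with the fact that $\{S_n\}$ is a monotone Schauder decomposition of $C_E$; $(7)\Rightarrow(8)$ identifies $C_E$ as the dual of $C_F$ with $F=(E^\times)_a$ exactly as in the proof sketch of the Radon–Nikod\'ym theorem in the excerpt, using $(C_F)^*=C_{F^*}=C_E$ from \cite[Theorem~12.2]{GK}; $(8)\Rightarrow(6)$ is again "separable dual implies $RNP$'' (here invoking that $C_E$ is separable when $H$ is separable by \cite{Medz}, or the more involved general argument indicated in the excerpt); and $(6)\Rightarrow(5)$ is "$c_0$ fails $RNP$, which is inherited by subspaces.'' Finally I would close the bridge between the two halves: $(2)\iff(6)$ is precisely the content of the Radon–Nikod\'ym transfer theorem of \cite{X1992} together with its converse via Corollary~\ref{cor:isomglobal} (the isometric, order-preserving embedding of $E$ into $C_E$), and $(4)\iff(8)$ follows from the explicit predual identifications $E=[(E^\times)_a]^*$ and $C_E=(C_{(E^\times)_a})^*$ established in the two cycles above. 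Assembling all implications gives the eightfold equivalence.

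\textbf{Main obstacle.} The routine parts are the $c_0$-obstruction arguments $(2)\Rightarrow(1)$ and $(6)\Rightarrow(5)$ and the "separable dual $\Rightarrow RNP$'' steps. The genuine work is in the duality/bounded-completeness steps $(3)\Rightarrow(4)$ and $(7)\Rightarrow(8)$ in the case where $H$ is \emph{not} separable: then $C_E$ itself need not be separable even though $E$ is, so the clean "separable dual'' shortcut is unavailable and one must instead argue $RNP$ of $C_E$ directly from its realization as a dual of a K\"othe space, controlling the non-separability as in the original argument of Xu and Arazy. Getting the predual $F=(E^\times)_a$ and the identification $(C_F)^*=C_E$ to work uniformly, and verifying that bounded completeness of the shell decomposition is exactly what upgrades "dual K\"othe space'' to "$C_E$ is a dual space,'' is where I would expect to spend most of the effort.
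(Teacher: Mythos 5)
Your commutative cycle $(1)\Rightarrow(3)\Rightarrow(4)\Rightarrow(2)\Rightarrow(1)$ is essentially sound (with the small caveat that in $(3)\Rightarrow(4)$ you must first extract the Fatou property of $E$ from bounded completeness of $\{\phi_n\}$ before writing $E=E^{\times\times}=[(E^\times)_a]^*$; this is doable but is asserted rather than argued), and the bridges you use --- Xu's transfer of the $RNP$ together with Corollary \ref{cor:isomglobal}, and the predual identification $C_E=(C_{(E^\times)_a})^*$ via \cite[Theorem 12.2]{GK} --- are exactly the ingredients appearing in the sketch given in this survey (the survey itself does not prove the theorem; it cites Arazy). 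The genuine gap is in your $C_E$-cycle, specifically in $(5)\Rightarrow(7)$: you invoke Proposition \ref{prop:czero} "applied to $C_E$", but that proposition is a Banach-lattice ($KB$-space) statement about function and sequence spaces, and $C_E$ is not a Banach lattice, so it simply does not apply. Bounded completeness of the shell decomposition is precisely the noncommutative content of Arazy's proposition: one must show that if the corner compressions $\sigma_N=Q_N x Q_N$-type partial sums are norm bounded in $C_E$ then they converge, and this requires an argument (lower semicontinuity of singular numbers under weak operator limits plus a Fatou-type argument in $E$ to see the limit lies in $C_E$, then order continuity of $C_E$ to upgrade to norm convergence), none of which is supplied by citing a lattice proposition. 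Your "main obstacle" paragraph also misplaces the difficulty: the shell decomposition is defined through the standard matrix units, so $H=\ell_2$ is separable in this setting and the "non-separable $H$" worry is moot; the real work is exactly the step you skipped.

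A viable repair within your framework: from $(5)$ pass to $E$ via the isometric embedding to get $(1)$--$(4)$, obtain $(8)$ from $(4)$ by the Gohberg--Krein duality $C_E=(C_{(E^\times)_a})^*$ (noting $(E^\times)_a$ is separable and $\ne\ell_1$), and then prove $(8)\Rightarrow(7)$ directly: a norm-bounded sequence of shell partial sums has a weak$^*$ cluster point $x\in C_E$, the matrix-coefficient functionals are weak$^*$ continuous (they are given by trace against finitely supported matrices lying in the predual), so $Q_NxQ_N$ equals the $N$-th partial sum, and order continuity of $C_E$ (Proposition \ref{prop:occe}) gives $\|x-Q_NxQ_N\|_{C_E}\to 0$. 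Similarly, in $(7)\Rightarrow(8)$ you should note that restricting the shell decomposition to diagonal operators yields $(3)$, which is how one legitimately obtains the Fatou property of $E$ needed for $((E^\times)_a)^\times=E$. With those two pieces made explicit the eightfold equivalence assembles as you intend; without them the noncommutative half of the argument is not yet a proof.
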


\section{Stability in the sense of Krivine-Maurey}

J. L. Krivine and B. Maurey introduced the notion of stable Banach spaces in    \cite{KM1981} in 1981. They proved that any stable space contains an almost isometric subspace of $\ell_p$, $1\le p < \infty$. They also proved that any subspace of $\ell_p$ or $L_p[0,\alpha)$, $0<\alpha\le \infty$, is stable. It is well known that any finite-dimensional or Hilbert space is stable, while $c_0$ is not stable \cite{guerre}. The Orlicz-Bochner space $L_\varphi(X)$ over the probability measure space is stable whenever $\varphi$ satisfies condition $\Delta_2$ , that is the Orlicz space $L_\varphi$ is separable,  and a Banach space $X$ is separable and stable \cite[Theorem 16]{G}. The Bochner-Lorentz spaces $L_{p,q}(X)$, $1\le p,q<\infty$, are stable if the Banach space $X$ is stable \cite{Ray1981}. A generalization of this result to the Lorentz space $\Lambda_{p,w}$ for $1\le p <\infty$ and the weight $w$ decreasing,  has been done by Yves Raynaud in his doctoral thesis. 

Since $c_0$ is not stable, a symmetric space $E$ cannot be stable whenever it contains an isomorphic copy of $c_0$. Thus any stable space $E$ must be separable by Proposition \ref{prop:infinity}. 

 Let $E$ be an order continuous symmetric Banach sequence space. Then $\{\phi_n\}$ forms a  symmetric basis in $E$. In \cite{A1983},  J. Arazy studied basic sequences in unitary matrix spaces $C_E$. In Theorem 2.4 and Corollary 2.8 in \cite{A1983} it was proved that every basic sequence in $C_E$ has a subsequence equivalent to a basic sequence in $\ell_2 \oplus E$. This result and its several variants is a powerful method in reducing the studies of properties that depend on asymptotic behavior of sequences in $C_E$ to analogous properties in $E$. Stability in the sense of Krivine-Maurey is one of such properties.

\begin{definition} \label{def:stab} A Banach space $(X, \|\cdot\|)$ is said to be {\it stable} (in the sense of Krivine-Maurey) if for every pair $\{x_n\}$ and $\{y_n\}$ of bounded sequences in $X$ and for every pair of ultrafilters $\mathcal{U}$ and $\mathcal{V}$ on the set of natural numbers $\mathbb{N}$, one has
\begin{equation}
\lim_{m,\mathcal{V}}\, \left(\lim_{n,\mathcal{U}} \|x_n + y_m\|\right) = \lim_{n,\mathcal{U}}\, \left(\lim_{m,\mathcal{V}} \|x_n + y_m\|\right).
\end{equation}
\end{definition}

The next result by J. L. Krivine and B. Maurey  states the equivalent condition for stability which does not use ultrafilters.

\begin{proposition}\cite{KM1981} \label{prop:stab}
A Banach space $(X,\|\cdot\|)$ is stable if and only if for every pair $\{x_n\}$, $\{y_n\}$ of bounded sequences in $X$,
\begin{equation}
\inf_{n>m} \|x_n + x_m\| \le \sup_{n<m} \|x_n + x_m\|.
\end{equation}
\end{proposition}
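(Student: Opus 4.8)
The plan is to prove the equivalence of the two formulations of stability by a direct double implication, using only elementary properties of ultrafilters on $\mathbb{N}$. The statement to be proved is that a Banach space $(X,\|\cdot\|)$ is stable in the sense of Definition~\ref{def:stab} if and only if for every pair of bounded sequences $\{x_n\}$, $\{y_n\}$ one has $\inf_{n>m}\|x_n+x_m\|\le\sup_{n<m}\|x_n+x_m\|$; note that in Proposition~\ref{prop:stab} as stated, the right-hand inequality involves a single sequence $\{x_n\}$, so the first reduction is to observe that it suffices to test the inequality on a single bounded sequence, since one can always interleave $\{x_n\}$ and $\{y_n\}$ into one sequence (placing $x_n$ at odd indices and $y_n$ at even indices) and recover both ``cross'' and ``same'' sums.

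For the forward direction, assume $X$ is stable and suppose, toward a contradiction, that there is a bounded sequence $\{x_n\}$ with $\inf_{n>m}\|x_n+x_m\| > \sup_{n<m}\|x_n+x_m\|$. Pick a real number $c$ strictly between these two quantities. First I would pass to a subsequence along a nonprincipal ultrafilter $\mathcal U$ so that $\lim_{n,\mathcal U}\|x_n+z\|$ exists for all relevant $z$; more carefully, the key step is to build, by a diagonal/ultrafilter argument, two ultrafilters $\mathcal U$ and $\mathcal V$ such that the iterated double limit $\lim_{m,\mathcal V}\lim_{n,\mathcal U}\|x_n+x_m\|$ picks up values coming from pairs with $n$ ``after'' $m$ (hence $\ge c$ by the $\inf$ bound), while $\lim_{n,\mathcal U}\lim_{m,\mathcal V}\|x_n+x_m\|$ picks up values from pairs with $m$ ``after'' $n$, i.e. $n<m$ (hence $\le\sup_{n<m}\|x_n+x_m\| < c$). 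This contradicts the stability identity. The technical heart here is ensuring that when one takes the inner limit $\lim_{n,\mathcal U}$ for a \emph{fixed} $m$, the indices $n$ that matter are eventually larger than $m$ (true automatically since $\mathcal U$ is nonprincipal and $\{n : n>m\}\in\mathcal U$), and symmetrically for the other order.

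For the reverse direction, assume the inequality holds for all bounded sequences and let $\{x_n\}$, $\{y_n\}$ be bounded with ultrafilters $\mathcal U$, $\mathcal V$. Write $L = \lim_{n,\mathcal U}\lim_{m,\mathcal V}\|x_n+y_m\|$ and $R=\lim_{m,\mathcal V}\lim_{n,\mathcal U}\|x_n+y_m\|$; both exist because each $\lim$ over an ultrafilter of a bounded real net exists. I would show $L\le R$ and $R\le L$ by symmetry. For $L\le R$: given $\varepsilon>0$, choose by the ultrafilter limits a strictly increasing sequence of indices $n_1<m_1<n_2<m_2<\cdots$ such that $\|x_{n_k}+y_{m_j}\|$ is within $\varepsilon$ of $R$ when $j<k$ (so $n_k$ comes ``after'' $m_j$) and within $\varepsilon$ of $L$ when $j>k$; this selection is the main combinatorial obstacle and is done by alternately invoking the two iterated limits and using that the relevant index sets lie in $\mathcal U$ resp. $\mathcal V$. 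Forming the single bounded sequence $z_{2k-1}=x_{n_k}$, $z_{2k}=y_{m_k}$ and applying the assumed inequality $\inf_{i>j}\|z_i+z_j\|\le\sup_{i<j}\|z_i+z_j\|$, the left side is $\ge R-\varepsilon$ and the right side is $\le L+\varepsilon$, yielding $R\le L+2\varepsilon$; interchanging the roles of $\mathcal U$ and $\mathcal V$ gives $L\le R+2\varepsilon$, and letting $\varepsilon\to 0$ finishes the proof. The main obstacle throughout is the bookkeeping of which index ``comes after'' which in the interleaved sequence and matching that to the $\inf_{i>j}$ versus $\sup_{i<j}$ sides; everything else is routine manipulation of ultrafilter limits of bounded real nets.
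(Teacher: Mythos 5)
The paper gives no proof of this proposition (it is quoted from Krivine--Maurey), so your argument has to stand on its own; note also that the displayed inequality in the statement contains an evident misprint: it must read $\inf_{n>m}\|x_n+y_m\|\le\sup_{n<m}\|x_n+y_m\|$, since both sequences of the "pair" have to appear. Your opening reduction --- that it suffices to verify the inequality for a \emph{single} bounded sequence, because one can interleave $\{x_n\}$ and $\{y_n\}$ --- is exactly where the proof breaks. For one sequence the condition $\inf_{i>j}\|z_i+z_j\|\le\sup_{i<j}\|z_i+z_j\|$ is trivially true in \emph{every} normed space: since $\|z_i+z_j\|=\|z_j+z_i\|$, the set of values indexed by $i>j$ coincides with the set indexed by $i<j$, so the infimum of that set cannot exceed its supremum. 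A condition that holds in every Banach space cannot characterize stability (e.g.\ $c_0$ is not stable), so no interleaving trick can recover the two-sequence inequality from the one-sequence one. Concretely, in your reverse direction the quantitative claims fail: with $z_{2k-1}=x_{n_k}$, $z_{2k}=y_{m_k}$, the supremum $\sup_{i<j}\|z_i+z_j\|$ already contains the terms $\|z_{2k}+z_{2l-1}\|=\|x_{n_l}+y_{m_k}\|$ with $k<l$, which by your own selection are within $\varepsilon$ of $R$, so it is not $\le L+\varepsilon$; and the infimum $\inf_{i>j}\|z_i+z_j\|$ contains the completely uncontrolled same-sequence sums $\|x_{n_k}+x_{n_l}\|$, $\|y_{m_k}+y_{m_l}\|$ (and the diagonal terms $\|x_{n_k}+y_{m_k}\|$), so it is not $\ge R-\varepsilon$. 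Likewise your forward direction, as framed for a single sequence, starts from a hypothesis ($\inf_{n>m}>\sup_{n<m}$ for one sequence) that can never occur, so it proves nothing about the actual statement.

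The good news is that the rest of your skeleton is the right one and needs no interleaving at all: prove the two-sequence form directly. For the forward direction, if $\M$... rather, if the inequality failed for some pair, take nonprincipal ultrafilters $\mathcal U,\mathcal V$; for fixed $m$ the set $\{n:n>m\}$ lies in $\mathcal U$, so $\lim_{m,\mathcal V}\lim_{n,\mathcal U}\|x_n+y_m\|\ge\inf_{n>m}\|x_n+y_m\|$, while $\lim_{n,\mathcal U}\lim_{m,\mathcal V}\|x_n+y_m\|\le\sup_{n<m}\|x_n+y_m\|$, contradicting stability. For the reverse direction, carry out exactly the back-and-forth extraction you describe (first disposing of principal ultrafilters, for which $L=R$ trivially), obtaining subsequences with $\|x_{n_k}+y_{m_j}\|$ within $\varepsilon$ of $R$ for $j<k$ and within $\varepsilon$ of $L$ for $j\ge k$, and then apply the hypothesis to the \emph{pair} $\{x_{n_k}\}$, $\{y_{m_j}\}$ (not to an interleaved single sequence): this gives $R-\varepsilon\le\inf_{k>j}\|x_{n_k}+y_{m_j}\|\le\sup_{k<j}\|x_{n_k}+y_{m_j}\|\le L+\varepsilon$, and exchanging the roles of the two sequences gives the opposite inequality. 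With that repair the argument is correct.
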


\begin{proposition}\cite[Theorem 1]{GL}\label{prop:wsc}
Every stable Banach space is weakly sequentially complete.
\end{proposition}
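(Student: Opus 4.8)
The plan is to argue by contradiction: assume $X$ is stable but some weakly Cauchy sequence $(x_n)$ in $X$ has no weak limit, and produce inside a spreading model of $X$ an isomorphic copy of $c_0$. This is absurd, because $c_0$ is not stable (recalled before the statement --- its summing and unit vector bases violate the inequality of Proposition~\ref{prop:stab}), while stability passes to closed subspaces.

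First I would reduce to a clean configuration. Since a weakly Cauchy sequence possessing a norm-convergent subsequence converges weakly, our $(x_n)$ has no norm-convergent subsequence, so $\{x_n\}$ is not precompact; hence, after passing to a subsequence, $(x_n)$ is $\varepsilon$-separated for some $\varepsilon>0$, and we set $M=\sup_n\|x_n\|<\infty$. By the Brunel--Sucheston theorem, after a further subsequence $(x_n)$ generates a spreading model $(\tilde x_i)_{i\ge 1}$ in a Banach space $\widetilde X$, with $\varepsilon\le\|\tilde x_i-\tilde x_j\|\le 2M$ for $i\ne j$. I would then pass to the difference sequence $\tilde u_i=\tilde x_{i+1}-\tilde x_i$: it is seminormalized, and since these differences telescope, $\sum_{i=1}^n\tilde u_i=\tilde x_{n+1}-\tilde x_1$, so $\|\sum_{i=1}^n\tilde u_i\|\le 2M$ for all $n$. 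Using the classical fact that the difference sequence of a spreading model is unconditional with a uniform constant $K$, together with spreading-invariance, one gets $\|\sum_{i\in A}\tilde u_i\|\le C$ for every finite set $A$ (with $C=C(K,M)$), whence $\|\sum_i a_i\tilde u_i\|\le KC\max_i|a_i|$ for all scalars by convexity of $a\mapsto\|\sum_i a_i\tilde u_i\|$ over the cube $[0,1]^n$; since $(\tilde u_i)$ is seminormalized and unconditional the norm is also bounded below by a positive multiple of $\max_i|a_i|$. Thus $(\tilde u_i)$ is equivalent to the unit vector basis of $c_0$, and its closed linear span is an infinite-dimensional subspace of $\widetilde X$ isomorphic to $c_0$.

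To close the argument I need that $\widetilde X$ is itself stable, so that the subspace $c_0\subset\widetilde X$ would be stable, contradicting the non-stability of $c_0$. This is the one nonroutine point, and I would obtain it from the Krivine--Maurey theory of types \cite{KM1981}: on a stable space the types $\tau(x)=\lim_{n,\mathcal U}\|x+x_n\|$ form a set closed under a well-defined, separately continuous, commutative convolution, and precisely this convolution structure is transported to the spreading model, which is constructed from iterated ultralimits of such types; hence the convolution on $\widetilde X$ is again commutative, i.e.\ $\widetilde X$ is stable. This is essentially the content of \cite[Theorem~1]{GL}, and it is where the bulk of the work lies --- the reduction in the preceding paragraph being standard. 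Once the contradiction is reached, we conclude that every weakly Cauchy sequence in $X$ converges weakly, i.e.\ $X$ is weakly sequentially complete.
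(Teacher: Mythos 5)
The decisive gap is in your construction of $c_0$ inside the spreading model, i.e.\ before stability is ever used. The classical unconditionality theorem for spreading models applies to the \emph{disjoint} differences $\tilde x_{2i}-\tilde x_{2i-1}$, not to the consecutive differences $\tilde u_i=\tilde x_{i+1}-\tilde x_i$ that you need for the telescoping bound; the two coefficient patterns $(-b_1,b_1,-b_2,b_2,\dots)$ and $(-b_1,b_1-b_2,\dots,b_n)$ are not matched by spreading invariance, so no single sequence enjoys both properties, and your deduction ``bounded partial sums $+$ unconditionality $\Rightarrow$ bounded sums over all finite sets'' does not go through. In fact the intermediate claim itself --- that the spreading model of a weakly Cauchy, non--weakly convergent sequence contains $c_0$ --- is false for general Banach spaces: in the James space $J$ the vectors $s_n=e_1+\dots+e_n$ form a weakly Cauchy, non--weakly convergent sequence (the basis is shrinking), yet its spreading model is equivalent to the unit vector basis of $\ell_2$; there the consecutive differences are not unconditional, sums $\sum_{i\in A}\tilde u_i$ over alternating sets grow like $\sqrt{|A|}$, and the closed span contains no copy of $c_0$. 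Since your first two paragraphs use no stability at all, they cannot be ``standard''; stability would have to enter precisely at this step.

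The second problem is that the only place where stability does appear --- the assertion that the spreading model $\widetilde X$ of a stable space is again stable --- is left unproved and is attributed to ``essentially the content of \cite[Theorem 1]{GL}''. But \cite[Theorem 1]{GL} \emph{is} the statement being proved (the paper quotes it without proof), so as organized the argument is circular. The actual Guerre--Laprest\'e proof does not pass through $c_0$ or spreading models at all: it applies the Krivine--Maurey calculus of types directly to the weakly Cauchy sequence (equivalently to the weakly null differences $x_{2n}-x_{2n-1}$), exploiting commutativity and separate continuity of the convolution of types to force weak convergence. To salvage a spreading-model route you would need both a stable-space-specific argument showing the spreading model over such a sequence has the structure you want, and an independent proof that spreading models of stable spaces are stable; neither is supplied here.
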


The next corollary follows from Proposition 3.7 in \cite{A1981} and the fact that $c_0$ is not weakly sequentially complete. 

\begin{corollary}\label{cor:shell}
 Let $E$ be a symmetric separable sequence space. If $E$ is stable then $C_E$ does not contain a subspace isomorphic to $c_0$, and the shell decomposition of $C_E$ is boundedly complete.
\end{corollary}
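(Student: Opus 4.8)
\textbf{Proof proposal for Corollary \ref{cor:shell}.}

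The plan is to combine two ingredients already available in the excerpt. The first is Proposition \ref{prop:wsc}, which asserts that every stable Banach space is weakly sequentially complete; the second is the list of equivalent properties in \cite[Proposition 3.7]{A1981} quoted immediately before, which in particular says that for a symmetric separable sequence space $E$ the condition ``$C_E$ does not contain a subspace isomorphic to $c_0$'' is equivalent to ``the shell decomposition of $C_E$ is boundedly complete'' (items $(5)$ and $(7)$ of that theorem). So it suffices to establish just one of these, say that $C_E$ contains no isomorphic copy of $c_0$; the other then follows for free.

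First I would record that stability passes to the space $C_E$. By hypothesis $E$ is stable, and since $c_0$ is not stable (hence not weakly sequentially complete), Proposition \ref{prop:infinity} forces $E$ to be order continuous, so $\{\phi_n\}$ is a symmetric basis of $E$ and the space $C_E$ is well-defined with its shell decomposition. The key structural input is J. Arazy's theorem on basic sequences in $C_E$ (\cite[Theorem 2.4, Corollary 2.8]{A1983}), recalled in the paragraph preceding Definition \ref{def:stab}: every basic sequence in $C_E$ has a subsequence equivalent to a basic sequence in $\ell_2 \oplus E$. Suppose, for contradiction, that $C_E$ contains a subspace isomorphic to $c_0$. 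Then $C_E$ contains a basic sequence equivalent to the unit vector basis of $c_0$; by the Arazy reduction a subsequence of it is equivalent to a basic sequence $\{z_n\}$ in $\ell_2 \oplus E$. Since equivalence of basic sequences is transitive, $\{z_n\}$ is itself equivalent to the $c_0$-basis, so $\ell_2 \oplus E$ contains an isomorphic copy of $c_0$. But $\ell_2$ is reflexive, hence weakly sequentially complete, and $E$ — being stable and order continuous with the Fatou-type structure — is weakly sequentially complete by Proposition \ref{prop:wsc}; a finite direct sum of weakly sequentially complete spaces is weakly sequentially complete, so $\ell_2 \oplus E$ is weakly sequentially complete and cannot contain $c_0$. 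This contradiction shows no such subspace exists.

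Having established that $C_E$ contains no copy of $c_0$, the equivalence $(5)\iff(7)$ in \cite[Proposition 3.7]{A1981} immediately yields that the shell decomposition of $C_E$ is boundedly complete, completing the proof.

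\textbf{Anticipated main obstacle.} The only delicate point is the passage from ``$C_E$ contains $c_0$'' to ``a basic sequence in $\ell_2\oplus E$ equivalent to the $c_0$-basis'': one must be careful that Arazy's subsequence-extraction theorem applies to the specific basic sequence spanning the $c_0$-copy, and that the resulting equivalence is a genuine two-sided equivalence of basic sequences (not merely a domination in one direction), so that the $c_0$-structure is transported intact into $\ell_2\oplus E$. Once that is granted, ruling out $c_0$ in $\ell_2\oplus E$ via weak sequential completeness is routine, and the rest is a citation.
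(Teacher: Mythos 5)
Your argument is correct, but it takes a somewhat different (and heavier) route than the paper's. The paper's proof is a two-line citation: since $E$ is stable, Proposition \ref{prop:wsc} makes $E$ weakly sequentially complete, and since $c_0$ is not weakly sequentially complete (and weak sequential completeness passes to closed subspaces), $E$ itself contains no copy of $c_0$; then \emph{all} of the conclusions follow at once from the chain of equivalences $(1)\iff(5)\iff(7)$ in \cite[Proposition 3.7]{A1981}, with no need to work inside $C_E$ at all. You instead exclude $c_0$ from $C_E$ directly, transporting a putative $c_0$-basic sequence into $\ell_2\oplus E$ via Arazy's subsequence theorem from \cite{A1983} and using weak sequential completeness of $\ell_2\oplus E$; you then invoke only $(5)\iff(7)$ of \cite[Proposition 3.7]{A1981}. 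This is valid: the $c_0$-basis is subsymmetric, so the subsequence extracted by Arazy's theorem is still equivalent to the $c_0$-basis, and a finite direct sum of weakly sequentially complete spaces is weakly sequentially complete — so your worry in the last paragraph is indeed only a matter of bookkeeping. What your route buys is independence from the implication $(1)\Rightarrow(5)$ of Arazy's proposition; what it costs is reliance on the deeper basic-sequence machinery of \cite{A1983}, which is exactly the tool reserved in this section for the genuinely hard statement (lifting stability itself to $C_E$), and which is unnecessary here since you are already citing \cite[Proposition 3.7]{A1981} anyway. Two small slips worth fixing: order continuity of $E$ follows from the hypothesis of separability via Proposition \ref{prop:infinity} (or from the absence of $c_0$ via Proposition \ref{prop:czero}), not from Proposition \ref{prop:infinity} applied to ``$c_0$ is not stable''; and you should say explicitly that you use subsymmetry of the $c_0$-basis when passing to the subsequence.
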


The main result lifting the property of stability from $E$ to $C_E$ was obtained independently by J. Arazy \cite{A1983} and Y. Raynaud \cite{Ray1982}.

\begin{theorem}
Let $E$ be a symmetric separable  sequence  space. Then $E$ is stable if and only if $C_E$ is stable. 
\end{theorem}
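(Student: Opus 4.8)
The plan is to reduce the question about stability of the operator ideal $C_E$ to stability of the sequence space $E$ by means of the basic-sequence structure theory that J. Arazy developed in \cite{A1983}. The key input is that every basic sequence in $C_E$ has a subsequence equivalent to a basic sequence in $\ell_2 \oplus E$, and, more precisely, the refined versions of that dichotomy which describe the asymptotic behaviour of general bounded sequences in $C_E$. Using Proposition \ref{prop:stab}, stability of $C_E$ is witnessed by the inequality $\inf_{n>m}\|x_n+x_m\|_{C_E}\le \sup_{n<m}\|x_n+x_m\|_{C_E}$ for every pair of bounded sequences, so the whole argument will be phrased in terms of such two-indexed double limits, passing repeatedly to subsequences.

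First I would record the easy direction: since $E$ embeds isometrically into $C_E$ as the diagonal subspace by Proposition \ref{prop:isomarazy} (this requires $E$ to be order continuous, which is automatic because a stable space is separable hence, by Proposition \ref{prop:infinity}, order continuous), and since stability passes to subspaces, stability of $C_E$ immediately implies stability of $E$. For the forward direction I would first reduce to weakly null perturbations: given bounded sequences $\{x_n\},\{y_n\}$ in $C_E$, pass to a subsequence along which each sequence splits, via the Arazy structure theorem, into a convergent part plus a block-type part supported on far-apart shells; because $E$ is stable it is weakly sequentially complete by Proposition \ref{prop:wsc}, and $C_E$ inherits enough of this (through Corollary \ref{cor:shell} and boundedly completeness of the shell decomposition) to handle the convergent parts by continuity of the norm. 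This localizes the problem to sequences that are, after an arbitrarily small norm perturbation, disjointly supported with respect to the shell decomposition of $C_E$.

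The main step is then to compute, for such shell-disjoint sequences, the double limit of $\|x_n+y_m\|_{C_E}$ and to show it is symmetric in the two orders of taking limits. Here I would invoke Arazy's equivalence of block bases in $C_E$ with block bases in $\ell_2\oplus E$: after passing to further subsequences, the quantity $\|x_n+y_m\|_{C_E}$ is, up to $(1+\varepsilon)$, comparable to an expression built from the $\ell_2$-norms of the ``column/row'' coordinates of the pieces and from the $E$-norm of the sequence of their magnitudes. Both the Hilbertian contribution (which is symmetric because $\ell_2$ is stable) and the $E$-contribution (symmetric because $E$ is stable, again via Proposition \ref{prop:stab}) are order-independent in the relevant double limit, and one combines them. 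Letting $\varepsilon\to 0$ through a diagonal argument over the successive subsequence extractions yields $\inf_{n>m}\|x_n+x_m\|_{C_E}\le \sup_{n<m}\|x_n+x_m\|_{C_E}$, which is stability of $C_E$ by Proposition \ref{prop:stab}.

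The hard part will be the bookkeeping in that last step: controlling the interaction between the $\ell_2$-type (Hilbert–Schmidt, off-shell) contributions and the $E$-type (diagonal magnitude) contributions simultaneously for a \emph{pair} of sequences with \emph{two} free indices, while only finitely many subsequence passages are allowed before the diagonalization. One must be careful that the norm-perturbations introduced when replacing $x_n,y_m$ by genuinely shell-disjoint vectors are uniform in both indices, and that the comparison constant in Arazy's block-basis equivalence can be pushed to $1+\varepsilon$ after passing to a further subsequence; this is exactly where \cite{A1983} (Theorem 2.4 and Corollary 2.8 there, together with their $\ell_2\oplus E$ refinements) does the heavy lifting, so I would quote those results rather than reprove them. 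Everything else — the reductions to order continuity, weak sequential completeness, and shell-disjointness — is routine given the machinery already assembled in the excerpt.
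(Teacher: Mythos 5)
Your proposal is correct in outline and follows essentially the same route the paper indicates: the easy direction via the isometric embedding of $E$ into $C_E$ and heredity of stability, and the hard direction via the Krivine--Maurey double-limit criterion, weak sequential completeness of stable spaces, the boundedly complete shell decomposition, and Arazy's $\ell_2\oplus E$ modeling of block sequences, with the remaining two-index bookkeeping deferred to the technical lemmas of Arazy and Raynaud. Since the survey itself only sketches the nontrivial direction by naming exactly these ingredients, your plan matches it at a comparable (indeed slightly greater) level of detail.
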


Here we use the fact that $E$ is an isometric subspace of $C_E$ (see Proposition \ref{prop:isomarazy}) and clearly stability is inherited by subspaces, so if $C_E$ is stable then $E$ is stable. The non-trivial proof is in the opposite direction. The proof is based on Propositions \ref{prop:stab}, \ref{prop:wsc}, Corollary \ref{cor:shell} and two technical lemmas.  

In 1997 Marcolino Nhany  found a necessary and sufficient condition for stabilty of noncommutative  $L_p(\mathcal{M}, \tau)$ spaces.

\begin{theorem}\cite[La Th\'eor\`eme principale]{MN} The following properties are equivalent.
\begin{itemize}
\item[$(1)$] The von Neumann algebra $\mathcal{M}$ is of type I.
\item[$(2)$]    $L_p(\mathcal{M}, \tau)$ is stable for all $1\le p < \infty$. 
\item[$(3)$] There exists $1\le p < \infty$, $p\ne 2$, such that $L_p(\mathcal{M}, \tau)$ is stable.
\end{itemize}
\end{theorem}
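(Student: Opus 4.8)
The plan is to establish the cycle $(1)\Rightarrow(2)\Rightarrow(3)\Rightarrow(1)$. The middle implication is immediate: specialise to $p=1$. So the content lies in $(1)\Rightarrow(2)$ and in the converse $(3)\Rightarrow(1)$.

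For $(1)\Rightarrow(2)$ I would use the structure theory of type $\mathrm{I}$ algebras. A type $\mathrm{I}$ von Neumann algebra carrying a semifinite trace decomposes as a direct sum $\mathcal{M}=\bigoplus_{\alpha}M_{n_\alpha}\overline\otimes L_\infty(\Omega_\alpha)$ with $1\le n_\alpha\le\infty$, the trace splitting accordingly as $\bigoplus_\alpha(\tr_{n_\alpha}\otimes\nu_\alpha)$; hence $L_p(\mathcal{M},\tau)$ is the $\ell_p$-direct sum of the Bochner spaces $L_p(\Omega_\alpha,\nu_\alpha;C_p^{n_\alpha})$, where $C_p^{n}$ is the $n$-dimensional (or, for $n=\infty$, the full) Schatten class. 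I would then combine three ingredients: (a) each $C_p^{n}$ is a stable Banach space — for $p=2$ it is Hilbertian, and for general $p$ this is precisely the Arazy--Raynaud result quoted above ($C_E$ stable whenever $E$ is, \cite{A1983}, \cite{Ray1982}) applied to $E=\ell_p^{n}$, which is stable as a subspace of $L_p$; (b) if $X$ is stable then $L_p(\mu;X)$ is stable for every $1\le p<\infty$ (Krivine--Maurey, \cite{KM1981}); and (c) an $\ell_p$-direct sum of stable spaces is stable. Concatenating (a)--(c) shows $L_p(\mathcal{M},\tau)$ is stable for all $1\le p<\infty$ (both permanence facts being in \cite{guerre}).

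For $(3)\Rightarrow(1)$ I would argue contrapositively: if $\mathcal{M}$ is not of type $\mathrm{I}$, then $L_p(\mathcal{M},\tau)$ is not stable for any $p\ne2$. Since $\tau$ is semifinite there is no type $\mathrm{III}$ summand, so $\mathcal{M}$ has a nonzero type $\mathrm{II}$ summand; cutting by a finite projection and normalising the trace yields a type $\mathrm{II}_1$ algebra $\mathcal{N}$ with $\tau(\one)=1$. Every such $\mathcal{N}$ contains a unital copy of the hyperfinite $\mathrm{II}_1$ factor $R=\bigotimes_{k\ge1}(M_2,\tr)$, built by repeatedly halving projections to produce nested $2^k\times2^k$ systems of matrix units; the trace of $\mathcal{N}$ restricts to the canonical trace of $R$, so the $L_p$-norm of an element of $R$ is the same whether computed in $R$, in $\mathcal{N}$, or in $\mathcal{M}$, and $L_p(R)$ embeds isometrically into $L_p(\mathcal{M},\tau)$. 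As stability passes to subspaces (immediate from Proposition~\ref{prop:stab}), it remains to show $L_p(R)$ is not stable for $p\ne2$. Here I would use explicit sequences of Pauli words in the tensor slots: writing $\sigma_x^{(k)},\sigma_z^{(k)}$ for the self-adjoint unitaries $\sigma_x,\sigma_z$ in the $k$-th factor, put $x_n=\sigma_x^{(1)}\sigma_x^{(2)}\cdots\sigma_x^{(n)}$ and $y_m=\sigma_z^{(m)}$, both of $L_p$-norm $1$. If $n<m$ the unitaries $x_n$ and $y_m$ commute and $\tau(x_ny_m)=0$, so $(x_n+y_m)^2=2(\one+x_ny_m)$ is four times a projection of trace $\tfrac12$ and $\|x_n+y_m\|_p=2^{1-1/p}$; if $n>m$ then $\sigma_x^{(m)}$ and $\sigma_z^{(m)}$ anticommute, forcing $x_n$ and $y_m$ to anticommute, so $(x_n+y_m)^2=2\one$ and $\|x_n+y_m\|_p=2^{1/2}$. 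Since $2^{1-1/p}\ne2^{1/2}$ when $p\ne2$, the Krivine--Maurey test $\inf_{n>m}\|\cdot\|\le\sup_{n<m}\|\cdot\|$ of Proposition~\ref{prop:stab} fails — for the pair $((x_n),(y_m))$ when $1\le p<2$, and for the reversed pair $((y_n),(x_m))$ when $2<p<\infty$. Thus $L_p(R)$, and hence $L_p(\mathcal{M},\tau)$, is not stable.

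I expect the main obstacle to be the $(3)\Rightarrow(1)$ direction, and within it the two structural points: that a non-type-$\mathrm{I}$ semifinite algebra genuinely contains a trace-compatible unital copy of $R$ (this rests on the halving of projections available in type $\mathrm{II}_1$ algebras together with the intrinsic computation of the $L_p$-norm on a unital subalgebra), and the verification that these particular Pauli sequences — rather than some ad hoc substitute — separate the two iterated limits uniformly for all $p\ne2$. The permanence properties underlying $(1)\Rightarrow(2)$ and the stability of $C_p$ are routine once cited. It is worth noting that the soft obstruction to stability via weak sequential completeness (Proposition~\ref{prop:wsc}) is unavailable here, since $L_p(R)$ is reflexive for $1<p<\infty$ and weakly sequentially complete for $p=1$; the failure of stability must be exhibited by a genuinely noncommutative phenomenon, which is exactly what the anticommuting Pauli words provide.
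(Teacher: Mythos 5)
The survey states this theorem purely as a citation of Marcolino Nhany \cite{MN} and reproduces no proof, so there is nothing in the paper itself to measure your argument against; judged on its own terms, your proof is essentially correct and almost certainly close in spirit to the original. The heart of $(3)\Rightarrow(1)$ checks out: for $n<m$ the words commute, $(x_n+y_m)^2=2(\mathds{1}+x_ny_m)$ is four times a projection of trace $\tfrac12$, giving $2^{1-1/p}$, while for $n>m$ anticommutation in the $m$-th slot gives $2^{1/2}$, and reversing the roles of the two sequences handles $2<p<\infty$; since stability passes to isometric subspaces and is insensitive to rescaling the trace, non-stability of the corner transfers to $L_p(\mathcal{M},\tau)$. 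Note also that you do not actually need the full hyperfinite factor $R$: the halving construction in a type $\mathrm{II}_1$ algebra (every projection splits into two equivalent halves, because equal center-valued traces give equivalence) already produces nested $2^k\times 2^k$ matrix unit systems with the correct traces, and your Pauli words live at the finite levels, so the passage to the weak closure is a dispensable flourish. In $(1)\Rightarrow(2)$ the permanence facts deserve more careful sourcing than ``Krivine--Maurey'': the Bochner-space statement ($X$ stable implies $L_p(\mu;X)$ stable) is, in the form quoted in this survey, Garling's result \cite{G} (stated for separable $X$ and finite measures), and the stability of $\ell_p$-sums is true but is not the one-line termwise limit exchange your sketch suggests, since the tails of the $\ell_p$-sums are not uniformly controlled; either cite it (cf.\ \cite{guerre}) or bypass it by embedding the type I algebra trace-preservingly into $L_\infty(\sqcup_\alpha\Omega_\alpha)\overline\otimes B(\oplus_\alpha H_\alpha)$, so that $L_p(\mathcal{M},\tau)$ sits isometrically inside a single Bochner space $L_p(\nu;S_p)$. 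The reduction of the general (non-$\sigma$-finite, non-separable) semifinite situation to the separable one should be said explicitly but is harmless, because stability is a statement about pairs of sequences and is therefore detected on separable subspaces. Finally, you are implicitly using the corrected two-sequence form of Proposition \ref{prop:stab} (the displayed criterion in the paper has $x_m$ where $y_m$ is intended), which is indeed the right criterion to apply.
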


\begin{remark}
The semifinite von Neumann algebra $\mathcal{M}$ is always of type $I$ or type $II$. For precise definition of types of von Neumann algebra we refer to \cite{Takesaki}.
\end{remark}

\begin{problem}

Assume $E$ is stable and $\mathcal{M}$ is of type $I$. Is the space $E\Mtau$ stable?

\end{problem}

\bibliographystyle{amsplain}

\end{document}